   \newtheorem{theorem}[subsubsection]{Theorem}
      \newtheorem*{theorem*}{Theorem}
   \newtheorem{proposition}[subsubsection]{Proposition}
   \newtheorem{lemma}[subsubsection]{Lemma}
   \newtheorem{corollary}[subsubsection]{Corollary}
   \newtheorem*{conjecture*}{Conjecture}
\theoremstyle{definition}
          \newtheorem*{exercise*}{Exercise}
   \newtheorem{example}[subsubsection]{Example}
   \newtheorem*{example*}{Example}
   \newtheorem*{definition*}{Definition}
   \newtheorem{remark}[subsubsection]{Remark}
\newcommand{\RR}{{\mathbb{R}}}
\newcommand{\QQ}{{\mathbb{Q}}}
\newcommand{\NN}{{\mathbb{N}}}
\newcommand{\ZZ}{{\mathbb{Z}}}
\newcommand{\GG}{{\mathbb{G}}}
\renewcommand{\AA}{{\mathbb{A}}}
\newcommand{\bB}{{\mathbf{B}}}
\def\logdim{{\rm logrk}}
\def\toto{{\rightrightarrows}}
\def\hatcO{{\widehat\cO}}
\def\hatcJ{{\widehat\cJ}}
\def\ucI{{\underline\cI}}
\def\ucS{{\underline\cS}}
\def\ucP{{\underline\cP}}
\def\ucJ{{\underline\cJ}}
\def\ucC{{\underline\cC}}
\def\hatX{{\widehat X}}
\def\hati{{\widehat i}}
\def\hatZ{{\widehat Z}}
\def\hatV{{\widehat V}}
\def\ket{_{\rm k\acute et}}
\newcommand{\ff}{{\mathfrak{f}}}
\newcommand{\fX}{{\mathfrak{X}}}
\newcommand{\fY}{{\mathfrak{Y}}}
\newcommand{\fy}{{\mathfrak{y}}}
\newcommand{\fZ}{{\mathfrak{Z}}}
\newcommand{\fz}{{\mathfrak{z}}}
\newcommand{\fSp}{{\mathfrak{Sp}}}
\newcommand{\cA}{{\mathcal A}}
\newcommand{\cB}{{\mathcal B}}
\newcommand{\cC}{{\mathcal C}}
\renewcommand{\cD}{{\mathcal D}}
\newcommand{\cF}{{\mathcal F}}
\newcommand{\cI}{{\mathcal I}}
\newcommand{\cJ}{{\mathcal J}}
\newcommand{\cM}{{\mathcal M}}
\newcommand{\cN}{{\mathcal N}}
\newcommand{\cO}{{\mathcal O}}
\newcommand{\cP}{{\mathcal P}}
\renewcommand{\cR}{{\mathcal R}}
\newcommand{\cS}{{\mathcal S}}
\newcommand{\cT}{{\mathcal T}}
\newcommand{\cW}{{\mathcal W}}
\newcommand{\cX}{{\mathcal X}}
\newcommand{\cY}{{\mathcal Y}}
\newcommand{\cZ}{{\mathcal Z}}
\newcommand{\fB}{{\mathfrak B}}
\newcommand{\ophi}{{\overline{\phi}}}
\newcommand{\oM}{{\overline{M}}}
\newcommand{\oP}{{\overline{P}}}
\newcommand{\oQ}{{\overline{Q}}}
\newcommand{\oR}{{\overline{R}}}
\newcommand{\of}{{\overline{f}}}
\newcommand{\oa}{{\overline{a}}}
\newcommand{\ob}{{\overline{b}}}
\newcommand{\oc}{{\overline{c}}}
\newcommand{\oy}{{\overline{y}}}
\newcommand{\cs}{{\rm cs}}
\def\<{\langle}
\def\>{\rangle}
\newcommand{\GL}{\operatorname{GL}}
\newcommand{\Spec}{\operatorname{Spec}}
\newcommand{\Spf}{\operatorname{Spf}}
\newcommand{\Proj}{\operatorname{Proj}}
\newcommand{\cProj}{{{\mathcal P}roj}}
\newcommand{\colim}{\operatorname{colim}}
\newcommand{\RZ}{{\operatorname{RZ}}}
\newcommand{\Val}{{\operatorname{Val}}}
\newcommand{\Hom}{{\operatorname{Hom}}}
\newcommand{\Ker}{{\operatorname{Ker}}}
\newcommand{\Coker}{{\operatorname{Coker}}}
\newcommand{\sh}{{\operatorname{sh}}}
\newcommand{\bfA}{{\mathbf A}}
\newcommand{\ocI}{\overline{{\mathcal I}}}
\newcommand{\osigma}{{\overline{\sigma}}}
\newcommand{\oC}{{\overline{C}}}
\newcommand{\oA}{{\overline{A}}}
\newcommand{\oD}{{\overline{D}}}
\newcommand{\oX}{{\overline{X}}}
\newcommand{\ox}{{\overline{x}}}
\newcommand{\ot}{{\overline{t}}}
\newcommand{\st}{{\operatorname{st}}}
\newcommand{\sing}{{\operatorname{sing}}}
\newcommand{\lcm}{{\operatorname{lcm}}}
\def\:{{\colon}}
\def\.{{,\dots,}}
\def\dim{{\rm dim}}
\def\inv{{\rm inv}}
\def\cln{{\rm cln}}
\def\Inv{{\rm Inv}}
\def\tilY{{\widetilde Y}}
\newcommand{\double}{\genfrac..{0pt}1
{\raise -1pt\hbox{$\scriptstyle\longrightarrow$}}{\raise 3pt\hbox
{$\scriptstyle\longrightarrow$}}}
\renewcommand{\setminus}{\smallsetminus}
\def\sat{{\rm sat}}
\def\nor{{\rm nor}}
\def\int{{\rm int}}
\def\triv{{\rm triv}}
\def\trdeg{{\rm tr.deg.}}
\def\et{_{\rm \acute et}}
\def\tototi{\mathbin{\mathop{\otimes}\limits^{\raise-1pt\hbox
{$\scriptscriptstyle {\rm L}$}}}}
\def\indlim{\mathop{\vrule width0pt height7pt depth
4pt\smash{\lim\limits_{\raise 1pt\hbox to 14.5pt
{\rightarrowfill}}}}}
\def\projlim{\mathop{\vrule width0pt height7pt depth
4pt\smash{\lim\limits_{\raise 1pt\hbox to 14.5pt
{\leftarrowfill}}}}}
\newcommand\displaceamount{3pt}
\newcommand{\doubledown}{\ar@<\displaceamount>[d]\ar@<-\displaceamount>[d]}
\newcommand{\doubleup}{\ar@<\displaceamount>[u]\ar@<-\displaceamount>[u]}
\newcommand{\doubleright}{\ar@<\displaceamount>[r]\ar@<-\displaceamount>[r]}
\newcommand{\res}{{\operatorname{res}}}
\newcommand{\rk}{\operatorname{rk}}
\def\into{\hookrightarrow}
\def\onto{\twoheadrightarrow}
\def\gp{\text{gp}}
\newcommand{\tr}{{\operatorname{tr}}}
\newcommand{\logord}{{\operatorname{logord}}}
\newcommand{\Der}{{\cD er}}
\newcommand{\ord}{{\operatorname{ord}}}
\newcommand{\Log}{{\mathbf{Log}}}
\def\GGm{{\mathbb G}_m}
\def\st{\text{st}}
\def\tilcJ{{\widetilde\cJ}}
\def\tilcI{{\widetilde\cI}}
\def\tilcR{{\widetilde\cR}}
\def\tilucI{{\widetilde\ucI}}
\def\tilucJ{{\widetilde\ucJ}}
\def\tilt{{\widetilde t}}
\def\tils{{\widetilde s}}
\def\tilm{{\widetilde m}}
\def\tilZ{{\widetilde Z}}
\def\tilQ{{\widetilde Q}}
\def\tilP{{\widetilde P}}
\def\tilq{{\widetilde q}}
\def\tilz{{\widetilde z}}
\def\tilf{{\widetilde f}}
\def\tilg{{\widetilde g}}
\def\tilg{{\widetilde r}}
\def\tili{{\widetilde i}}
\def\tilh{{\widetilde h}}
\def\tilX{{\widetilde X}}
\def\tilU{{\widetilde U}}
\def\tilB{{\widetilde B}}
\def\tilH{{\widetilde H}}
\def\toisom{\xrightarrow{{_\sim}}}
\def\hatA{{\widehat A}}
\def\hatC{{\widehat C}}
\def\hatB{{\widehat B}}
\def\bfD{{\mathbf{D}}}
\def\wtimes{{\widehat\otimes}}
\def\hatO{{\widehat O}}
\def\hatphi{{\widehat\phi}}
\def\reg{{\operatorname{reg}}}
\def\supp{{\operatorname{supp}}}
\def\Frac{{\rm Frac}}
\begin{document}
\title{Relative desingularization and principalization of ideals}

\author[D. Abramovich]{Dan Abramovich}
\address{Department of Mathematics, Box 1917, Brown University,
Providence, RI, 02912, U.S.A}
\email{abrmovic@math.brown.edu}

\author[M. Temkin]{Michael Temkin}
\address{Einstein Institute of Mathematics\\
               The Hebrew University of Jerusalem\\
                Edmond J. Safra Campus, Giv'at Ram, Jerusalem, 91904, Israel}
\email{temkin@math.huji.ac.il}

\author[J. W{\l}odarczyk] {Jaros{\l}aw W{\l}odarczyk}
\address{Department of Mathematics, Purdue University\\
150 N. University Street,\\ West Lafayette, IN 47907-2067}
\email{wlodar@math.purdue.edu}

\thanks{This research is supported by  BSF grants 2014365 and 2018193, ERC Consolidator Grant 770922 - BirNonArchGeom, and NSF grant DMS-1759514.}

\date{\today}

\begin{abstract}
In characteristic zero, we construct relative principalization of ideals for logarithmically regular morphisms of logarithmic schemes, and use it to construct logarithmically regular desingularization of morphisms. These constructions are relatively canonical and even functorial with respect to logarithmically regular morphisms and arbitrary base changes. {\em Relative canonicity} means, that the principalization requires a fine enough non-canonical modification of the base, and once it is chosen the process is canonical. As a consequence we deduce the semistable reduction theorem over arbitrary valuation rings.
\end{abstract}
\maketitle

\section{Introduction}
If not said to the contrary, all schemes and stacks considered in this paper are assumed to be qe (quasi-excellent), noetherian and of characteristic zero, though we recall these assumptions when formulating the main results.

\subsection{Motivation}
Comparing to desingularization of schemes and varieties, the theory of resolution of morphisms or families has a much shorter history. The classical example of such problem is stable reduction of curves. This case is extremely important and the solution is relatively canonical, and even holds in mixed characteristics, but it is a low-dimensional exception. With this exception there were two main achievements in the characteristic zero case, and both were based on breakthroughs in the absolute desingularization theory.

First, Mumford et al. observed in \cite{KKMS} that Hironaka's theorem implies that if $R$ is a DVR of pure characteristic 0 and $X$ is flat, of finite type and with a smooth generic fiber $X_\eta$ over $S=\Spec(R)$, then there exists a modification $X'\to X$ such that $X'_\eta=X_\eta$ and $X'\to S$ is logarithmically smooth (or toroidal). This implication is in fact obvious and the main goal of \cite{KKMS} is to prove that $X'$ can be made semistable, after base change, by combinatorial methods. By later works Hironaka's method can be made canonical, and then the modification $X'\to X$ becomes canonical, but it heavily depends on $R$ and changes completely after any ramified extension of $R$.

Second, Abramovich and Karu used in \cite{AK} de Jong's method of alterations to prove that any dominant morphism of finite type between integral schemes $Z\to B$ can be made logarithmically smooth after modifying $Z$ and $B$. This was the only known result applying to any dimension of the base and of the fibers, but the method is non-canonical and even a smooth generic fiber $Z_\eta$ can be modified by it.

In addition, there were a series of works by Cutkosky, see \cite{Cutkosky-m32} and \cite{Cutkosky-toroidalization}, where toroidalization of a dominant morphism $Z\to B$ with $\dim(Z)\le 3$ of characteristic zero was achieved. These works have different goals: one starts with smooth $Z$ and $B$ ---  not a real restriction due to Hironaka --- and the main challenge is to achieve torodalization by only blowing up $Z$ and $B$ at smooth centers, with the added flexibility of working locally. The restriction to blowings up  makes the problem much subtler. It leads to applications somewhat  different from ours --- with a stronger emphasis on strong factorization of birational maps rather than families and moduli.

The following questions were open until now: Can one resolve $Z/B$ relatively canonically? Can one at least achieve that the smooth locus of $Z/B$ is kept unchanged? And, finally, an even more specific but famous question extending the semistable reduction to non-discrete valuation rings $R$: given a smooth proper variety $X_\eta$ over ${\rm Frac}(R)$, can one extend it to a proper logarithmically smooth $R$-scheme $X$?

In this paper we answer all these questions affirmatively (in characteristic zero). Moreover, we construct a \emph{relatively canonical} resolution of morphisms compatible with arbitrary base changes $B'\to B$ and logarithmically smooth morphisms $Z'\to Z$.\footnote{\emph{Relatively canonical} is given precise meaning below.} Our construction is strongly based on ideas and methods of the classical desingularization, but involves completely new ingredients, such as extensive use of logarithmic geometry and non-representable modifications. To the best of our knowledge, this is the first relative desingularization algorithm for arbitrary dimensions. We are also working on another paper, where a \emph{canonical} resolution of proper morphisms will be achieved.

Finally, in the end of our project Belotto Da Silva and Bierstone informed us about their work \cite{BB}. Its main result provides \emph{local monomialization} of dominant morphisms in characteristic zero in arbitrary dimensions. A careful comparison of our methods should be done in the future, here we only note that there are definitely some similar ideas, such as the use of relative logarithmic differentials and the centers they define.  As in the works of Cutkosky, the paper \cite{BB} does not aim at a global functorial procedure, and on the other hand, they restrict the class of modifications of the base to the simplest possible ones. In comparison, in our work we are mainly concerned with the relative method, and pay much less care to modifications of the base. Another major difference is that we use arbitrary logarithmically smooth morphisms and allow to blow up fractional powers of monomials ideals. Our resulting algorithm is faster and more functorial, but it has to work with (mild) stacks.

\subsection{Statement of main results}\label{mainres}

\subsubsection{Overview}
In this paper we accomplish our program on functorial desingularization (or semistable reduction) of morphisms in characteristic zero. Oversimplifying, the main idea is to take the classical desingularization method of Hironaka, Giraud, Bierstone and Milman, Villamayor, W{\l}odarczyk, Kollar, and others and adjust all its ingredients to logarithmic and relative settings. In particular, one should first obtain a logarithmic relative principalization of ideals on logarithmically smooth morphisms, and deduce desingularization from it. Half of this work was done in \cite{ATW-principalization}, where we constructed logarithmically functorial desingularization of logarithmic varieties. It turned out that the stronger functoriality forces one to use non-representable stack theoretic modifications, that we called Kummer blow ups, but the algorithm's structure  became simpler than in the classical case.

In this paper, we show that the algorithm constructed in \cite{ATW-principalization} applies to morphisms $Z\to B$ once one adjusts all definitions to the relative setting. Developing plenty of foundations in the relative setting is one of main technical contributions of the paper. However, there is also a principally new relative phenomenon which required a new ingredient: one might need to modify the base $B$ before --- or in the process of --- running the desingularization algorithm. Technically this happens because relative differential saturation does not have to produce a monomial ideal. To overcome this issue we prove a surprisingly hard monomialization theorem \ref{monomialization}, which ``repairs'' the differential saturation by a modification of the base.

An additional challenge we took on in this paper is to deal with morphisms not necessarily of finite type. In particular, we study principalization for logarithmically regular morphisms (defined and studied in a separate paper \cite{Molho-Temkin}) rather than for logarithmically smooth morphisms. This has the advantage of applications to other categories, such as formal schemes and analytic spaces. This direction is new even for classical desingularization, since \cite{Temkin-embedded} does not construct principalization for qe schemes.

\subsubsection{Absolute desingularization}
Our first application of working with morphisms not of finite type  generalizes \cite[Theorem~8.3.4]{ATW-principalization}  to qe schemes, see \S\ref{abscase}:

\begin{theorem}\label{absdesingth}
For any formally equidimensional generically logarithmically regular qe noetherian logarithmic scheme $Z$ of characteristic zero, there exists a modification $\cF(Z)\:Z_\res\to Z$ such that $Z_\res$ is logarithmically regular. Moreover, one can achieve that the process assigning the morphism $\cF(Z)$ to $Z$ is functorial for logarithmically regular quasi-saturated morphisms $Z'\to Z$. In particular, $\cF(Z)$ is an isomorphism over logarithmic regularity locus $Z_\reg\subseteq Z$. Finally, each $\cF(Z)$ can be realized as a blow up whose center is disjoint from $Z_\reg$.
\end{theorem}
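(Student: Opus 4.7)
The plan is to reduce the absolute desingularization of $Z$ to a principalization problem for a canonically defined ideal $\cI_Z\subseteq\cO_Z$ whose cosupport is exactly the non-log-regular locus $Z\setminus Z_\reg$, and then invoke the relative principalization algorithm that is the technical heart of the paper. A natural choice of $\cI_Z$ is a Fitting-type ideal built from the sheaf of logarithmic differentials $\Omega^{1,\log}_{Z}$, which vanishes precisely where the logarithmic cotangent sheaf fails to be locally free of the expected rank. Formal equidimensionality is used to guarantee that this vanishing locus coincides with $Z\setminus Z_\reg$ uniformly, and the formation of $\cI_Z$ is functorial for logarithmically regular quasi-saturated morphisms, since such morphisms pull back logarithmic differentials via a flat base change.

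One then runs the principalization algorithm on the pair $(Z,\cI_Z)$. The output is a logarithmic (Kummer) modification $\pi\: Z_\res\to Z$ whose successive centers are contained in $V(\cI_Z)$ and hence disjoint from $Z_\reg$; combining them yields a realization of $\pi$ as a single blow-up along a product of the pushforwards of these centers, establishing the last clause of the theorem. In the enriched log structure on $Z_\res$ the transform of $\cI_Z$ becomes locally monomial, and since a locally monomial ideal in a candidate log scheme is exactly compatible with log-regular structure, and since $\cI_Z$ precisely measures the obstruction to log regularity, one deduces that $Z_\res$ is logarithmically regular. The functoriality of $\cF$ for logarithmically regular quasi-saturated morphisms $Z'\to Z$ is then inherited directly from the analogous functoriality of the principalization algorithm together with that of $\cI_Z$; isomorphism over $Z_\reg$ is immediate from the placement of the centers.

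The main obstacle, and the principal novelty over \cite[Theorem~8.3.4]{ATW-principalization}, is the extension of the entire machine from the finite-type setting to the qe noetherian setting. The algorithm depends on Hironaka-style logarithmic invariants --- maximal log order, derived ideals, coefficient ideals --- and on the principle that all of these are determined on completions at closed points. For qe schemes one has to verify that the regularity of the formal fibre maps makes these invariants, and the resulting Kummer centers, commute with completion, thus generalizing Temkin's treatment of non-logarithmic qe desingularization to the logarithmic Kummer setting. Once this compatibility is in hand, the argument reduces to the finite-type case already handled, and the generic log regularity hypothesis enters only to guarantee that $\cI_Z$ is a genuine ideal on which the algorithm makes non-trivial progress.
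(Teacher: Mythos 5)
Your plan has a fundamental gap at its core: you propose to ``run the principalization algorithm on the pair $(Z,\cI_Z)$'', but the principalization machinery of this paper (Theorem~\ref{Proj:principalization} and the order reduction of Theorem~\ref{mainth}) is only defined for ideals on a \emph{relative logarithmic orbifold} $X\to B$, i.e.\ on an ambient space which is already logarithmically regular over the base and has enough derivations. A generically log regular qe scheme $Z$ is in general singular, so the algorithm cannot be applied to an intrinsic ideal on $Z$ itself; one must embed $Z$ into a log orbifold and desingularize via embedded principalization of $\cI_{Z\subset X}$ (this is Proposition~\ref{localdesingprop}, the ``stop before blowing up the strict transform'' step, which moreover needs the embedding to have constant codimension --- that is where formal equidimensionality enters, not in any Fitting-ideal statement). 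Your second key claim, that monomializing a Fitting-type ideal of $\Omega^{\log}_Z$ forces $Z_\res$ to be log regular, is also unsubstantiated: no such implication is established or true in this generality, and for non-finite-type qe schemes Fitting ideals of (log) differentials do not detect (log) regularity --- this is exactly the pathology of derivations/differentials in the non-finite-type setting that the paper is careful about (cf.\ Remark~\ref{relmanrem}).

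The real difficulty you wave at (``reduces to the finite-type case already handled'') is precisely what needs a construction, and the paper never reduces to the finite-type case globally, because a general qe $Z$ admits no global embedding into a log orbifold. Instead the proof proceeds by induction on codimension: at stage $i$ one looks at the finitely many points $z$ of codimension $i+1$ in the image of the singular locus, completes, embeds the completion $\hatZ_z$ into $\Spec(k\llbracket P\rrbracket\llbracket t_1\.t_n\rrbracket)$ by the logarithmic Cohen structure theorem (Lemma~\ref{Cohen}, Corollary~\ref{embcor}), notes that this formal model is a logarithmic manifold (Lemma~\ref{logmanlem}) so that Theorem~\ref{Proj:schemedesing} applies to the non-finite-type morphism $\hatZ_i\to\Spec(\QQ)$, then descends the resulting center from the completion to the localization (it is contained in $V(m_z^n)$), spreads it out by schematic closure, and blows up; termination and the $Z_\sing$-supported blow up realization come from the codimension bookkeeping, and functoriality from regularity of completion maps for qe schemes together with functoriality of Theorem~\ref{Proj:schemedesing}. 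None of these steps --- localization at points of fixed codimension, formal embedding, descent of centers from $\hatcO_z$ to $\cO_z$, globalization by schematic image --- appears in your proposal, and without them the argument does not go through.
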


See  \cite[Section~8.2.2]{ATW-principalization} for the notion of \emph{quasi-saturated morphisms} used in this statement.
\begin{remark}
(i) It might be the case that the center of the blow up can be chosen functorially with respect to surjective logarithmically regular morphisms, but we do not pursue this direction.

(ii) By functoriality, the morphism $\cF(Z)\:Z_\res\to Z$ is defined even without $Z$ noetherian, since the local constructions glue.

(iii) Any normal qe scheme is formally equidimensional. So desingularization can be achieved without assuming $Z$ is formally equidimensional, by taking the composition $Z'_\res\to Z'\to Z$, where $Z'$ is the normalization of $Z$.
\end{remark}

\subsubsection{Functorial relative principalization}\label{Assumption B}
Throughout this paper, we say that a morphism $f\:X\to B$ of qe noetherian fs logarithmic DM stacks of characteristic zero is a {\em relative logarithmic orbifold}\footnote{When the morphism $X \to B$ is specified we might just call it a \emph{logarithmic orbifold}, suppressing the word \emph{relative}.} if $X \to B$ is logarithmically regular and has enough derivations, see \S\ref{logregsec}--\ref{logorbsec}. We will work with fs logarithmic structures, hence only saturated base changes $X'=(X\times_BB')^\sat$ will be considered and we may omit the superscript ``sat". See also \cite[\S1.2.1]{ATW-principalization}. In addition, we have to impose some restrictions on $B$: we assume throughout that
\begin{itemize}
\item[($\bB$)] $B$ belongs to the class $\bB$ of logarithmically regular logarithmic DM stacks.\end{itemize}

See Remark \ref{Reb:B} for implications beyond $B\in \bB$.

A Kummer ideal $\cJ\subseteq\cO_{X\ket}$ is called {\em submonomial} if it is the sum of a suborbifold ideal and a Kummer monomial ideal, see \S\ref{kumidsec}, and the submonomial Kummer blow up along $\cJ$ is a stacky Proj of the corresponding Rees algebra $h\:X'=\cProj_X(\oplus_n\pi_*(\cI^n))\to X$, where $\pi\:X\ket\to X\et$ is the natural morphism, see \S\ref{submonomblowsec}. Note that $h^{-1}\cJ$ is an invertible ideal. By a {\em principalization method} we mean a rule $\cF$ which obtains a relative logarithmic orbifold $f\:X\to B$ with $B\in\bB$ and an ideal $\cI\subseteq\cO_X$ and outputs either $\cF(f,\cI)=\emptyset$, which means ``$\cF$ fails over the given $B$, blow it up first'', or a sequence of submonomial Kummer blowings up $\cF(f,\cI)\:X'=X_n\to\dots\to X_0=X$ such that each $X_i\to B$ is a relative logarithmic orbifold, the center of each $X_{i+1}\to X_i$ is supported on $V(\cI\cO_{X_i})$, and $\cI\cO_{X'}$ is a monomial ideal.

Our first main result states that there exists a principalization method that does not fail after a large enough modification of the base and satisfies three functoriality conditions as follows:

\begin{theorem}[Principalization]\label{Proj:principalization}
There exists a principalization method in characteristic zero $\cF$ satisfying the following properties:

(i) Existence: let $f\:X\to B$ be a relative logarithmic orbifold with an ideal $\cI\subseteq\cO_X$, and assume that $B\in\bB$ and either $\dim(B)\le 1$ or $f$ has abundance of derivations. Then there exists a blow up $g\:B'\to B$ with saturated pullback $f'\:X'\to B'$ and $\cI'=\cI\cO_{X'}$ such that $\cF(f',\cI')\neq\emptyset$. Moreover, if $B$ is a scheme, then one can also achieve that the center of $g$ is monomial away from the closure $\overline{f(V(\cI))}\subset B$.

(ii) Base change functoriality: if $\cF(f,\cI)\neq\emptyset$ and $B'\to B$ is any morphism of logarithmic stacks from $\bB$ with noetherian saturated base change $f'\:X'\to B'$ and $\cI'=\cI\cO_{X'}$, then the sequence $\cF(f',\cI')$ is obtained from the saturated pullback sequence $\cF(f,\cI)\times_BB'$ by removing Kummer blowings up with empty centers.

(iii) Logarithmically regular functoriality: if $\cF(f,\cI)\neq\emptyset$ and $\cI'=\cI\cO_{X'}$ for a logarithmically regular morphism $X'\to X$ such that $X'\to B$ is a relative logarithmic orbifold (i.e. has enough derivations), then $\cF(f',\cI')$ is obtained from the saturated pullback sequence $\cF(f,\cI)\times_XX'$ by removing Kummer blowings up with empty centers.

(iv) Compatibility with closed embeddings: if $\of\:\oX\to B$ is another relative logarithmic orbifold and $i\:X\into\oX$ is a $B$-suborbifold embedding of pure codimension, then the pushforward sequence $i_*(\cF(f,\cI))$ coincides with $\cF(\of, \ocI)$, where $\ocI :=  (i^*)^{-1}\cI$ and $i^*: \cO_{\oX} \to \cO_X$ is the reduction homomorphism.
\end{theorem}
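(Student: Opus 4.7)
The plan is to extend the absolute principalization algorithm of \cite{ATW-principalization} to the relative setting by systematically replacing the sheaf $\cD_X$ of logarithmic derivations with the sheaf $\cD_{X/B}$ of relative logarithmic derivations, and to overcome the genuinely new relative obstruction via the monomialization theorem \ref{monomialization}.

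I would attach to the pair $(f, \cI)$ a lexicographic invariant $\inv_{X/B}(\cI,-)$ modeled on the one in \cite{ATW-principalization}: its leading entry is the relative logarithmic order of $\cI$, the next is the order along a maximal contact suborbifold cut out by a suitable section of $\cD_{X/B}$, and so on in strictly decreasing codimension, with monomial parts extracted at every stage. The ``enough derivations'' hypothesis ensures that at each stage the local sections of $\cD_{X/B}$ produce an honest maximal contact suborbifold and that $\inv_{X/B}$ is well defined and upper semicontinuous. The center of the first Kummer blow up is the submonomial Kummer ideal assembled from the monomial part of $\cI$ on the maximum locus of $\inv_{X/B}$ plus the pure suborbifold part coming from maximal contact. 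After the blow up the invariant strictly drops in the lexicographic order (the step-by-step argument from the absolute case carries over because it is local and uses only the formal properties of $\cD$), and noetherian induction on the invariant terminates the procedure.

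The main obstacle, and the conceptually new content, is that the differential saturation $\cD_{X/B}^{n}(\cI)$ need not be monomial on the locus where $\cI$ looks monomial, which prevents the submonomial Kummer ideal from being defined and is precisely what forces $\cF$ to return $\emptyset$. To repair this I would apply the monomialization theorem \ref{monomialization} to the finitely many differential saturations used in constructing the next center, obtaining a blow up $g\: B' \to B$ after which each of them becomes monomial. The saturated pullback $X' = (X \times_B B')^\sat$ remains a relative logarithmic orbifold (logarithmic regularity is preserved under saturated base change and enough derivations is inherited from $f$), so the algorithm can now proceed; iterating this between successive invariant drops yields part (i). The control that the center of $g$ is monomial away from $\overline{f(V(\cI))}$ is inherited from the corresponding control in \ref{monomialization}, since monomialization is needed only over the image of the non-monomial locus of the saturation, which lies in $\overline{f(V(\cI))}$. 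The case $\dim(B)\le 1$ avoids the abundance hypothesis because over a one-dimensional base the monomialization issue is combinatorially trivial and is handled by a direct toroidal blow up of $B$.

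The functoriality properties (ii)--(iv) follow from the local design of the invariant and the center. For (ii), relative log derivations and relative log orders commute with saturated base change along any $B' \to B$ in $\bB$, so $\inv_{X/B}$ and the submonomial Kummer center pull back; the only wrinkle is that over parts of $B'$ where the pulled-back center becomes empty, the corresponding Kummer blow up is trivial and must be deleted, exactly as the statement prescribes. For (iii), a logarithmically regular morphism $\varphi\: X' \to X$ with $X' \to B$ having enough derivations induces a compatible action of $\cD_{X'/B}$ on $\varphi^{-1}\cI$ preserving relative log orders, so $\inv_{X'/B}(\cI',-) = \inv_{X/B}(\cI,-) \circ \varphi$ and the submonomial centers pull back. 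For (iv), compatibility with a pure-codimension suborbifold embedding $i\: X \hookrightarrow \oX$ rests on the fact that transversal restriction preserves log derivations and log orders, giving $\inv_{\oX/B}(\ocI,-)|_X = \inv_{X/B}(\cI,-)$ and matching the maximum loci and submonomial centers after pushforward by $i$. All three verifications are local and essentially identical to those in \cite{ATW-principalization}, the only adjustment being to replace absolute derivations with relative ones.
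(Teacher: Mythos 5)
Your overall architecture does match the paper's: relative derivations $\cD_{X/B}$, maximal contact, submonomial Kummer blow ups, the monomialization theorem \ref{monomialization} to repair failures by blowing up $B$, and functoriality from locality of all constructions. But there is a genuine gap at the heart of the argument: you never explain how the \'etale-local choices of maximal contact glue to a global Kummer blow up sequence, nor why the output is independent of those choices in the relative, non-finite-type setting. Asserting that ``the step-by-step argument from the absolute case carries over because it is local'' is not enough: here $\cD_{X/B}$ need not be quasi-coherent, one has to work with chosen submodules $\cF\subseteq\cD_{X/B}$ and their controlled transforms, and comparing coefficient ideals on different maximal contacts is precisely where the paper builds new machinery --- functorial equivalence classes of marked ideals on suborbifolds, the exact transform formula for homogenized coefficient ideals (Theorem~\ref{coefprop}), the restriction theorem (Theorem~\ref{Htheorem}), and the logarithmic Hironaka trick (Theorem~\ref{equivth}) showing that the normalized order and monomial part depend only on the equivalence class. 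This same machinery is what delivers property (iv). Your justification of (iv) rests on the equality $\inv_{\oX/B}(\ocI)|_X=\inv_{X/B}(\cI)$, which is false as stated (at points of $X$ the invariant of $\ocI$ begins with $c$ entries equal to $1$, where $c$ is the codimension of $X$ in $\oX$); more importantly, (iv) requires comparing the algorithm run on $\oX$ --- whose local maximal contacts need not align with $X$ beyond the \'etale-local level --- with the pushforward of the algorithm on $X$, i.e.\ it is another instance of the independence-of-choices problem, which your sketch does not address. In the paper the induction claim is formulated directly in terms of equivalence classes inside $\oX$ (Theorem~\ref{mainth}(iv)) exactly so that this comparison is available.

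On existence (i), your repair-and-iterate scheme is the right idea but omits the points that carry the proof: failures occur not only for $\cI$ on $X$ but at arbitrary recursive depth (for coefficient ideals on maximal contacts after earlier Kummer blow ups), so one needs (a) that abundance of derivations is inherited by maximal contacts and by the blown-up orbifolds (Theorem~\ref{manifoldth} together with Theorem~\ref{kummerblowth}), (b) that the already-performed part of the sequence remains the algorithm's output after each new base change --- the base-change compatibility of the truncated algorithms $\cF_{>\mu}$ --- and (c) a termination argument; the paper organizes this as a decreasing induction on the order $\mu$ nested inside the induction on $\logdim(X/B)$. You also do not ensure that the repaired base $B'$ lies again in $\bB$, which is needed for $\cF(f',\cI')$ even to be defined; the paper handles this with the auxiliary class $\bB^\st$ and removes the distinction only after deducing absolute desingularization (Theorem~\ref{absdesingth}) from the case $B=\Spec(\QQ)$. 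These latter points are fixable along the lines you sketch, but the descent/independence issue in the first paragraph is where the real content of the paper's proof lies and is missing from your proposal.
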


\begin{remark}
First, a few comments on the obtained principalization.

(i) In brief, the main result is that there exists a relative principalization algorithm $\cF$, which works after a fine enough modification of $B$ assuming $B$ is reasonable and $X/B$ has abundance of derivations, for example, $X\to B$ is logarithmically smooth.

(ii) When $\dim(B)\le 1$ the abundance condition is not needed. In particular, the algorithm automatically succeeds if $B=\Spec(R)$ for a field or a DVR $R$ with the logarithmic structure $R\setminus\{0\}$, because the logarithmic scheme $B$ has no non-trivial logarithmic modifications.

(iii) Once $B$ is fine enough, our method is constructive and applies beyond algebraic geometry. However, the result on existence of the modification $B' \to B$ is purely existential and might not apply as broadly, see \S\ref{othersec}. This is the main reason why our formulation separates the canonical construction of $X'\to X$ from the non-canonical refinement $B'\to B$.

(iv) Ideally, one would like $g$ in \ref{Proj:principalization}(i) to be $T$-supported for $T=\overline{f(V(\cI))}$. At least when $B$ is a scheme we achieve just a bit less -- $g$ is a logarithmic blow up over $U=X\setminus T$. In particular, the triviality locus $U_\tr$ of the logarithmic structure is kept unchanged.
\end{remark}

\begin{remark}
Next, let us compare the relative principalization to its predecessor from \cite{ATW-principalization}.

(i) The functoriality property (iii) is the analogue of the functoriality in \cite{ATW-principalization}. Property (iv) strengthens the re-embedding principle in \cite{ATW-principalization}. Finally, property (ii) only makes sense in the relative situation, and it is necessary since the algorithm often fails without base change.

(ii) The algorithm in \cite{ATW-principalization} is the particular case of the relative principalization when $B=\Spec(k)$ and $f$ is of finite type. The only new feature of the algorithm is that when $\dim(B)>1$ it may fail in the initial cleaning stage (see \S\ref{relalgsec}), which means that a modification of $B$ is needed. The justification of the algorithm in this paper differs in one aspect: independence of the choice of a maximal contact is proven using equivalence of marked ideals. In particular, we prove a slightly stronger claim, see Theorem~\ref{mainth}(iv).
\end{remark}

\begin{remark}\label{Reb:B}
Finally, let us discuss what happens if $B\notin\bB$.

(i) In claim (i) of the Theorem one can take any qe $B$ and make it logarithmically regular by Theorem~\ref{absdesingth}. The only weakening of the formulation in this case is that the modification locus will also contain $B_\sing$.

(ii)  So far, our definition of submonomial Kummer blow ups is developed only for a logarithmically regular $B$. Probably, this assumption can be eliminated, but in this paper the notion of principalization does not make sense otherwise. In addition, logarithmic regularity of $B$ is used a couple of times in the proofs. On the other hand, we definitely know that the class $\bB$ is not the widest possible, see Remark~\ref{Brem}.
\end{remark}

\subsubsection{Order reduction}
As in other cases, the principalization theorem is a particular case of an order reduction theorem for marked ideals on $X$. The formulation is similar to Theorem \ref{Proj:principalization} and will be given in \ref{mainth}. In a sense, this is the main theorem of the paper, since principalization and desingularization are its corollaries.

\subsubsection{Functorial relative desingularization}
Similarly to the absolute case, the main application of relative principalization is the following relative desingularization theorem that was the main motivation for our project started at \cite{ATW-principalization}.

In \S\ref{desingmethodsec} we introduce a class of morphisms locally embeddable into relative logarithmic orbifolds. In particular, this class includes all morphisms of finite type. By a {\em relative desingularization method} we mean a rule as follows:
\begin{itemize}
\item {\sc Its input}  is a generically logarithmically regular morphism $g\:Z\to B$ of logarithmic DM stacks such that $Z$ is formally equidimensional, $B\in\bB$ and $g$ is locally embeddable;
\item {\sc Its ouput}  is
  either the empty value (``sorry, please blow up $B$ first"), or a stack theoretic modification $Z_\res\to Z$ such that $g_\res\:Z_\res\to B$ is logarithmically regular, $Z_\res\to Z$ is a composition of stack-theoretic $\Proj$ constructions (in particular, $I_{Z_\res/Z}$ is finite and diagonalizable), and $I_{Z_\res/Z}$ acts trivially on the stalks of $\oM_{Z_\res}$.
  \end{itemize}
  The following theorem will be deduced from Theorem \ref{Proj:principalization} in \S\ref{desingsec}.

\begin{theorem}[Relative desingularization]\label{Proj:toroidalization}
There exists a relative desingularization method $\cR$ in characteristic zero such that

(i) Existence: assume that $g\:Z\to B$ is a generically logarithmically regular morphism of qe noetherian logarithmic DM stacks of characteristic 0, $B\in\bB$, $Z$ is formally equidimensional, and $g$ is embeddable into a logarithmic orbifold $f\:X\to B$, so that either $\dim(B)\le 1$ or $f$ has abundance of derivations. Then there exists a blow up $h\:B'\to B$ with saturated base change $g'\:Z'\to B'$ such that $\cR(g')$ does not fail. Moreover, if $B$ is a scheme, then one can also achieve that the center of $h$ is monomial over any open $U\subseteq B$ such that $Z\times_BU\to U$ is logarithmically regular.

(ii) Base change functoriality: if $\cR(g)$ does not fail and $B'\to B$ is a morphism of logarithmic stacks from $\bB$ with noetherian saturated base change $g'\:Z'\to B'$, then $\cR(g')$ is the saturated pullback of $\cR(g)$.

(iii) Logarithmically regular functoriality: if $\cR(g)$ does not fail and $Z'\to Z$ is a logarithmically regular morphism such that $g'\:Z'\to B$ is locally embeddable into a relative logarithmic orbifold, then $\cR(g')$ is the saturated pullback of $\cR(g)$.
\end{theorem}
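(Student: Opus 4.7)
The plan is to deduce Theorem~\ref{Proj:toroidalization} from the principalization Theorem~\ref{Proj:principalization} by running $\cF$ on the ideal sheaf $\cI_Z\subseteq\cO_X$ of $Z$ inside a local embedding $Z\hookrightarrow X$ and then extracting a strict transform of $Z$ from the resulting tower. \'Etale-locally on $Z$, local embeddability supplies a closed embedding $i\:Z\hookrightarrow X$ with $f\:X\to B$ a relative logarithmic orbifold. I set $\cI_Z=\ker(i^*)$ and apply $\cF$ to $(f,\cI_Z)$: if $\cF(f,\cI_Z)=\emptyset$ I return the empty value, and otherwise $\cF$ produces a tower $X_n\to\cdots\to X_0=X$ of submonomial Kummer blow-ups along which $\cI_Z$ becomes monomial, with centers supported on $V(\cI_Z\cO_{X_i})$. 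The local $Z_\res$ is then the last surviving strict transform of $Z$ in this tower: at each generic point $\eta$ of $Z$, generic logarithmic regularity forces the existence of a maximal index $n(\eta)$ beyond which the strict transform of $Z$ is untouched, and at that index monomiality of $\cI_Z\cO_{X_{n(\eta)}}$ cuts out a logarithmically regular suborbifold, which I take to be the local $Z_\res$. Because the tower is a composition of stack-theoretic $\Proj$'s with centers disjoint from the locus where $\cI_Z$ is already monomial, the induced $Z_\res\to Z$ inherits the shape required by the definition of a relative desingularization method, including triviality of the inertia action on $\oM_{Z_\res}$.

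To obtain a canonical global object I glue these local outputs. Given two ambient embeddings $Z\hookrightarrow X_1$ and $Z\hookrightarrow X_2$ over $B$, I use the diagonal $Z\hookrightarrow X_1\times_BX_2$ (saturated product), which is again a relative logarithmic orbifold, and verify that each $X_j\hookrightarrow X_1\times_BX_2$ is of pure codimension by relative logarithmic smoothness; then compatibility with closed embeddings (Theorem~\ref{Proj:principalization}(iv)) realizes both local principalizations as restrictions of a common sequence on $X_1\times_BX_2$, so the two candidate $Z_\res$ agree on overlaps. Properties (ii) and (iii) of $\cR$ then follow from (ii) and (iii) of $\cF$: a base change $B'\to B$ lifts to a saturated base change of the ambient $X$, and a logarithmically regular $Z'\to Z$ with $g'$ locally embeddable extends, after passing to a common ambient re-embedding via a fibre product if necessary, to a logarithmically regular morphism $X'\to X$ whose composition with $X\to B$ is a relative logarithmic orbifold; in both cases strict-transform formation commutes with the pullback. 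The existence statement (i) is obtained by covering $Z$ by finitely many local embeddings, applying Theorem~\ref{Proj:principalization}(i) on each, and taking a common refinement of the resulting finitely many base blow-ups of $B$. The monomial-locus statement on $h\:B'\to B$ is inherited from the analogous statement for $\cF$, since $\overline{f(V(\cI_Z))}$ is precisely the closure of the non-logarithmically-regular locus of $Z\to B$.

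The hardest step will be verifying that the strict transform $Z_\res$ just described is indeed logarithmically regular over $B$, rather than merely that the ambient $\cI_Z$ has become monomial. This requires a local analysis of the algorithm $\cF$ near the generic points of $Z$: generic logarithmic regularity provides \'etale-locally a system of logarithmic coordinates on $X$ in which $Z$ is cut out by parameters transverse to the monomial stratification, and one must trace through $\cF$ to check that its sequence of Kummer blow-ups converts these parameters into a monomial form whose zero locus is exactly a logarithmically regular stratum. A secondary technical point is the pure-codimension verification for the diagonal embeddings used in gluing, which again relies on relative logarithmic smoothness of a logarithmic orbifold together with the saturation of fibre products in the fs logarithmic category.
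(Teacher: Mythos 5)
Your overall strategy is the paper's: principalize $\cI_Z$ on an ambient relative logarithmic orbifold, stop at the stage where $Z$'s strict transform is about to be blown up, and glue the local outputs. But the two steps that carry all the weight are left open or argued by the wrong mechanism. First, the local statement (Proposition~\ref{localdesingprop} in the paper) is exactly what you defer as ``the hardest step,'' and your sketched reason --- that monomiality of $\cI_Z\cO_{X_{n(\eta)}}$ ``cuts out a logarithmically regular suborbifold'' --- is not how it works: monomiality of the controlled transform says nothing about the strict transform of $Z$ being cut out by monomials. The correct argument uses the structure of $\cF$ itself: localizing at a generic point $z$ of $Z$ (where $g$ is logarithmically regular), $z$ is a $B$-suborbifold of codimension $d$ in the ambient localization, so the algorithm restricts $d$ times to maximal contacts and blows up the $d$-th maximal contact at its initial cleaning step; hence all generic points of $Z$ are blown up at the same stage $l$, the strict transform $Z_l$ is a union of connected components of the center $H^d$ (an iterated maximal contact), and logarithmic regularity of $Z_l\to B$ follows because $H^d$ is a $B$-suborbifold --- not from monomiality. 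This also shows why the pure-codimension hypothesis on the embedding is essential, a hypothesis your construction never imposes. A side error of the same kind: $V(\cI_Z)=Z$, so $\overline{f(V(\cI_Z))}=\overline{g(Z)}$ is \emph{not} the closure of the non-logarithmically-regular locus of $Z\to B$, so the ``monomial over $U$'' clause in (i) does not follow the way you assert.

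Second, your gluing step asserts closed embeddings $X_j\into X_1\times_BX_2$ compatible with the diagonal embedding of $Z$, but no such embeddings exist a priori: to run the product/graph trick you must first extend $Z\to X_{3-j}$ to a $B$-morphism $X_j\to X_{3-j}$ (at least locally), and in this setting the ambient spaces are logarithmically regular but not of finite type, so such extensions are not available \'etale-locally in any obvious way. The paper circumvents this by working on formal completions: Lemma~\ref{liftpreslem} lifts logarithmically regular morphisms along strict surjections of complete local rings, Lemma~\ref{reemblem} shows any two logarithmically regular formal presentations of $\hatcO_{Z,z}$ over $\hatcO_{B,b}$ are comparable (one factors through the other), and then Theorem~\ref{mainth}(iii)--(iv) (functoriality for logarithmically regular morphisms together with the re-embedding principle) yields independence of the embedding (Proposition~\ref{indepprop}); the global method is then obtained by \'etale and base-change descent. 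Your functoriality arguments for (ii) and (iii) similarly presuppose that a logarithmically regular $Z'\to Z$ ``extends'' to a logarithmically regular $X'\to X$ between ambient spaces --- again exactly the lifting problem the formal-local route is designed to solve. So the proposal needs both the actual proof of the local proposition and a replacement (formal-local or otherwise) for the unjustified ambient extensions before it closes.
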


\subsubsection{Destackification}
Starting with a scheme, for some applications it is important  to obtain a scheme-theoretic desingularization. As in \cite{ATW-principalization}, it can be obtained from the stack-theoretic desingularization by applying a destackification functor. Moreover, almost all nice properties of the desingularization are preserved, but logarithmic functoriality is weakened. Unfortunately, properties of the destackification functor were studied in \cite[Secion~4]{ATW-destackification} only in the absolute setting. In appendix~\ref{destackapp} we briefly indicate how they are extended to the relative case.

By a {\em representable relative desingularization method} we mean a construction assigning to a locally embeddable morphism of logarithmic schemes $Z\to B$ a blow up $h\:Z_\res\to Z$ such that $g_\res\:Z_\res\to B$ is logarithmically regular and the center of $h$ is disjoint from the logarithmic regularity locus of $Z\to B$.

\begin{theorem}\label{Proj:schemedesing}
Composing $\cR$ from Theorem~\ref{Proj:toroidalization} with destackification and taking coarse moduli space, one obtains a representable relative desingularization method $\cR'$ in characteristic zero which satisfies properties (i) and (ii) of Theorem~\ref{Proj:toroidalization}, while (iii) is only satisfied for logarithmically regular quasi-saturated morphisms $Y\to Z$.
\end{theorem}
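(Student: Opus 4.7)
The plan is to define $\cR'(g)$ as the three-step composition: first apply $\cR(g)$ from Theorem~\ref{Proj:toroidalization} to obtain a stack-theoretic modification $Z^\st_\res\to Z$ with $Z^\st_\res\to B$ logarithmically regular; then apply the relative destackification functor of Appendix~\ref{destackapp} to produce a further blow up $\widetilde Z \to Z^\st_\res$ along a monomial Kummer center that makes the result representable over $B$; and finally take the coarse moduli space to obtain a scheme-theoretic modification $h\:Z_\res\to Z$. Each stage is a composition of stack-theoretic $\Proj$ constructions, and once we have reached a representable stack the coarse space construction commutes with $\Proj$, so $h$ is itself a blow up.

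To verify that $\cR'$ is a representable relative desingularization method, I invoke the fact that both relative destackification and coarse moduli formation preserve logarithmic regularity of the morphism to $B$; this is the content of the extension of \cite[Section~4]{ATW-destackification} recorded in Appendix~\ref{destackapp}. Moreover, $\cR(g)$ is an isomorphism over the logarithmic regularity locus of $g$, and destackification only modifies the stacky locus, which is contained in the logarithmic singular locus of $g^\st_\res$; so the center of $h$ is disjoint from the logarithmic regularity locus of $g\:Z\to B$, as required in the definition of a representable relative desingularization method.

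Properties~(i) and~(ii) then follow almost directly. Existence, including the monomial center clause over any open $U\subseteq B$ along which $Z\times_B U\to U$ is already logarithmically regular, is inherited from Theorem~\ref{Proj:toroidalization}(i): any blow up $h\:B'\to B$ that makes $\cR(g')$ non-empty works for $\cR'(g')$ as well, with the monomiality over $U$ preserved by the two further stages. Base change functoriality combines Theorem~\ref{Proj:toroidalization}(ii) with the compatibility of relative destackification and coarse space formation with saturated base change, which is precisely the relative counterpart of the absolute statement and is the principal content of the appendix.

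The weakening of property~(iii) to quasi-saturated morphisms is essentially forced by the construction and is the main obstacle. Destackification is defined in terms of the sheaf of characteristic monoids and behaves well only under Kummer-type, i.e.\ quasi-saturated, logarithmically regular morphisms; an arbitrary logarithmically regular pullback can fail to commute with saturation and with coarse space formation. For a quasi-saturated logarithmically regular $Y\to Z$, however, the pullback of the destackifying Rees algebra is the destackifying Rees algebra of the pullback, and coarse spaces commute with quasi-saturated pullback; together with Theorem~\ref{Proj:toroidalization}(iii) this yields the required equality $\cR'(g') = \cR'(g)\times_Z Y$ after removing the trivial stages. The hard part of the proof is precisely this compatibility check, which I expect to carry out étale-locally on $B$ and reduce to the absolute case treated in \cite[Section~4]{ATW-destackification} via the relative statements of the appendix.
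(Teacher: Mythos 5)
Your construction --- compose $\cR$ with the relative destackification of Appendix~\ref{destackapp} and then take the coarse space, invoking Theorem~\ref{destackth1} for preservation of logarithmic regularity over $B$ and for base-change compatibility, inheriting (i) directly from Theorem~\ref{Proj:toroidalization}, and checking quasi-saturated functoriality as in the absolute case --- is exactly the route the paper takes. One small correction: destackification does not make the stack representable (over $B$ or otherwise); its role is to ensure that the subsequent coarse space $Z''_{\cs/Z}$ (taken relative to $Z$, as the paper does) remains logarithmically regular over $B$ and that $Z''_{\cs/Z}\to Z$ is a $T$-supported blow up, representability being achieved only by the coarse space step itself.
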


Again, see  \cite[Section~8.2.2]{ATW-principalization} for the notion of \emph{quasi-saturated morphisms}.

\begin{proof}
Assume that $g\:Z\to B$ is locally embeddable into {a qe relative} logarithmic orbifold and $\cR(g)\:Z'\to Z$ is its desingularization. Let $T$ denote the complement of the locus on which $Z\to B$ is logarithmically regular. Let $Z''=\cD_Z(Z')$ be the relative destackification of $Z'$ over $Z$ (see \S\ref{destackapp}). Then $Z''_{\cs/Z}\to Z$ is the composition of $T$-supported blow ups $Z''_{\cs/Z}\to Z'_{cs/Z}\to Z$, hence is a $T$-supported blow up. In addition, $Z''_{\cs/Z}\to B$ is logarithmically regular and the destackification is compatible with base change $B' \to B$ by Theorem~\ref{destackth1}. Functoriality of the construction with respect to quasi-saturated morphisms $Y\to Z$ is checked in the same way as in the proof of \cite[Theorem~8.3.4]{ATW-principalization}.
\end{proof}

\subsubsection{Weak semistable reduction}\label{Sec:weak-semistable} In \cite{Molcho} Molcho shows that, assuming $Z\to B$ is a proper surjective logarithmically smooth morphism of toroidal embeddings, there is a universal stack theoretic toroidal modification $\cB \to B$ with toroidal pullback $\cZ \to \cB$ which is \emph{weakly semistable}.  Molcho's work is a refinement of \cite{AK, ADK}. In his treatment a logarithmically smooth morphism is \emph{weakly semistable} if it is an integral and saturated morphism of logarithmic structures - this comes down to requiring the morphism to be flat with reduced fibers. This notion is clearly stable under logarithmic pullbacks. It is more flexible than the terminology of \cite{AK, ADK} which also requires the base to be regular. Molcho's stack theoretic modification $\cB \to B$ is naturally the composition of a representable modification and a generalized root stack.

We note that Molcho's  toroidal modification  $\cB \to B$ is necessarily functorial, in the following sense. Assume $B' \to B$ is a logarithmic morphism of toroidal embeddings, let $Z' \to B'$ be the pullback of $Z \to B$ in the fs category, so that $Z'$ is a toroidal embedding and $Z' \to B' $ logarithmically smooth. Molcho provides a universal $\cB' \to B'$, and we claim that $\cB' = \cB \times_B B'$, the pullback in the fs category.

 Indeed let $T \to B'$ is a dominant morphism of toroidal embeddings  such that $Z_T \to T$ is weakly semistable. Then the composite morphism $T \to B$ factors uniquely through $\cB \to \cB$ by its universal property. Hence $T \to B'$ factors uniquely through  the fs pullback $\cB \times_B B'$, as needed.

A similar argument shows that Molcho's modification   $\cB \to B$ is functorial for proper, surjective, logarithmically smooth $Z' \to Z$: if we denote by $\cZ' \to \cB'$ the modification associated to $Z' \to Z$ then  $Z \times_B \cB' \to \cB'$ is necessarily integral and saturated, since it is dominated by $\cZ'$. Thus $\cB' \to B$ factors uniquely through $\cB$. If, moreover, $Z'\to Z$ is smooth and strict, then in fact $\cB' = \cB$.

Molcho's work implies that, if we allow the base to inherit a stack structure and insist on the morphism $Z \to B$ to be a proper morphism of varieties, the results of our earlier theorems can be made weakly semistable, with functoriality properties preserved.

We note that, in another manuscript, we show that when $X \to B$ in Theorem \ref{Proj:principalization} is proper, the modification $B' \to B$ in part (i) of the theorem can be made functorial as well. Together with Molcho's result this allows weak semistable reduction of proper morphisms of varieties to be functorial also for base change. It would be interesting to understand the extent to which functoriality can be retained when $Z \to B$ is not necessarily proper.

Molcho's constructions are entirely combinatorial, and certainly extend beyond varieties. Note also that \cite[Section~2.3]{semistable} achieves weak semistable reduction for general logarithmically smooth morphisms of logarithmic schemes. It only uses scheme-theoretic modifications of the base, but achieves a weaker form of functoriality.

Using Kawamata's trick, the stack theoretic modification $\cB \to B$ can be replaced by an alteration $B_1 \to B$, see  \cite[Section 5]{AK}. Unfortunately Kawamata's trick requires choices and cannot be done in a functorial way. For example, if $B=\Spec(O)$ with $O=k[t]_{(t)})$ is the localization of $\AA^1_k$ at 0 and $X=\Spec(O[x,y]/(x^2y-t))$, then $B_1\to B$ must have even ramification degree. For any $a\in k^\times$,  sending $t$ and $y$ to $t/(t-a)$ and $y/(t-a)$ defines an automorphism of $X\to B$. This automorphism translates any ramified cover of $B_1\to B$ to a non-isomorphic one.

\subsubsection{Semistable reduction}

It is proved in \cite{semistable} that by toroidal methods $g_\res$ can be even made semistable. A logarithmically regular morphism $Z \to B$ is \emph{semistable} if it is integral and saturated, and both $Z$ and $B$ are regular and logarithmically regular. In local monomial coordinates, the morphism is given by a system of monomial equations of the form

\begin{eqnarray*}
 t_1 & = & y_1 \cdots y_{k_1}\\
 \vdots & & \vdots \\
 t_\ell & = & y_{k_{\ell-1} + 1}  \cdots y_{k_\ell},
 \end{eqnarray*}
  in other words it is, locally, a product of $\ell$ one-parameter semistable families. Combining the main theorem of \cite{semistable} with Molcho's formalism\S\ref{Sec:weak-semistable} and Theorem \ref{Proj:principalization} we obtain:

\begin{theorem}\label{Th:semistable}
Let $g:Z \to B$ be a morphism as in Theorem~\ref{Proj:toroidalization}. There exists a logarithmic alteration $B_1 \to B$ and a logarithmic modification $Z_1 \to B_1 \times_B Z$ of the saturated pullback,  such that $Z_1 \to B_1$ is semistable. The construction is compatible with regular surjective morphisms $Z' \to Z$, in particular with arbitrary actions of groups on $Z$ over $B$.
\end{theorem}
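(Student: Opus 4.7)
The plan is to concatenate four constructions: the relative desingularization of Theorem~\ref{Proj:toroidalization}, Molcho's weak semistable reduction recalled in \S\ref{Sec:weak-semistable}, the main semistability theorem of \cite{semistable}, and Kawamata's trick as used in \cite[\S5]{AK}.

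First, apply Theorem~\ref{Proj:toroidalization}(i) to $g$ to obtain a blow up $B_0\to B$ with saturated pullback $g_0\:Z_0\to B_0$ for which $\cR(g_0)$ does not fail; its output is a logarithmic modification $Z_\res\to Z_0$ making $g_\res\:Z_\res\to B_0$ logarithmically regular. By part~(iii) of that theorem this step is compatible with regular surjective source maps $Z'\to Z$. Next, apply Molcho's universal construction from \S\ref{Sec:weak-semistable} to $g_\res$ to obtain a stack-theoretic toroidal modification $\cB_0\to B_0$ whose saturated pullback $\cZ_0\to \cB_0$ is weakly semistable (integral and saturated); then apply the main theorem of \cite{semistable} to $\cZ_0\to\cB_0$ to produce a further logarithmic modification $\cZ_1\to\cB_1$ that is genuinely semistable, i.e., both source and base are regular and logarithmically regular and, locally on the base, the morphism is a product of one-parameter semistable families. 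Functoriality of both intermediate steps (Molcho's via its universal property, and \cite{semistable}'s by its own compatibility clauses) keeps them compatible with regular surjective source maps, with strict equality under smooth strict source maps as recorded in \S\ref{Sec:weak-semistable}.

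Finally, invoke Kawamata's trick as in \cite[\S5]{AK}: suitable cyclic covers eliminate the stacky stabilizers of $\cB_1$ and, after passing to a coarse moduli space, yield an alteration $B_1\to\cB_1$ with $B_1$ a regular logarithmically regular algebraic space. Set $Z_1:=\cZ_1\times_{\cB_1}B_1$ (saturated pullback); semistability is preserved under this strict base change by a regular alteration. The composite $B_1\to B$ is then a logarithmic alteration, and $Z_1\to B_1\times_B Z$ is a logarithmic modification because it arises as the composition of saturated pullbacks and logarithmic modifications from $Z_0=B_0\times_B Z$. The main obstacle is that Kawamata's trick requires non-canonical choices and hence destroys base-change functoriality; however, the theorem only demands compatibility with regular surjective $Z'\to Z$, which is inherited from the first three steps via their functoriality clauses and is unaffected by the cyclic-cover choices on the base.
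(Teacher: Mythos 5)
Your argument is essentially the paper's: Theorem~\ref{Th:semistable} is stated there without a separate proof, as the direct combination of Theorem~\ref{Proj:toroidalization} (hence Theorem~\ref{Proj:principalization}), Molcho's formalism of \S\ref{Sec:weak-semistable}, the main theorem of \cite{semistable}, and Kawamata's trick from \cite[Section 5]{AK}, which is exactly the chain you spell out. Your handling of the functoriality clause --- that compatibility with regular surjective $Z'\to Z$ survives because the base-side constructions are unchanged under such strict source maps, so the non-functorial choices in Kawamata's trick are harmless --- is also the intended reading.
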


Semistable reduction is inherently not stable under base change.  A version which is stable under base change is \emph{polystability}, a notion which goes back to de Jong and Berkovich, which is defined in local monomial coordinates by equations of the form

\begin{eqnarray*}
 m_1 & = & y_1 \cdots y_{k_1}\\
 \vdots & & \vdots \\
 m_\ell & = & y_{k_{\ell-1} + 1}  \cdots y_{k_\ell},
 \end{eqnarray*}
 with $m_i$ arbitrary monomials on the base.

 The main theorem of \cite{semistable} of course implies the existence of polystable reduction. It would be interesting to provide a functor giving polystable reduction. This must necessarily allow the base to be a stack. The combinatorial methods of \cite{semistable} suggest that this should be possible.
%
%
%
%

\subsubsection{Morphisms of finite presentation}
For morphisms $f\:X\to B$ of finite presentation one can use the noetherian approximation theory from \cite[${\rm IV}_3$, \S8]{ega} to substantially weaken assumptions on $B$. We assume the stacks are quasi-compact quasi-separated (abbreviated qcqs). By an {\em approximation} of $f$ we mean a morphism $\tilf\:\tilX\to\tilB$ such that $\tilf$ is of finite type, $\tilB$ is of finite type over $\QQ$ fitting in a base-change diagram:
$$\xymatrix{ X\ar[r]\ar[d]_f & \tilX \ar[d]^{\tilf} \\ B \ar[r] & \tilB.}$$
Approximation of a pair $(f,\cI\subseteq\cO_X)$ consists also of an ideal $\tilcI\subseteq\cO_{\tilX}$ such that $\cI=\tilcI\cO_X$. An approximation is called a {\em $\bB$-approximation} if, in addition, $\tilB\in\bB$.

If $f$ possesses a $\bB$-approximation $\tilf$ such that the desingularization of $\tilf$ succeeds, then its pullback is a desingularization of $f$. We will show in \S\ref{fpsec} that the latter is independent of the choice of the $\bB$-approximation and deduce that the desingularization algorithms extend to finitely presented morphisms $f$ which possess $\bB$-approximation \'etale-locally. A similar result will be obtained for principalizations. Combining this with the standard approximation technique we will obtain the following result. Here in part (2) we say that a morphism $g\:Z\to B$ of qcqs logarithmic DM stacks is {\em nice} if it is generically logarithmically smooth and any connected component of $Z$ has equidimensional generic fibers over $B$.

\begin{theorem}\label{finpres}
In the case of finitely presented morphisms Theorems \ref{Proj:principalization}, \ref{Proj:toroidalization} and \ref{Proj:schemedesing} extend to non-noetherian qcqs DM logarithmic stacks of characteristic zero as follows:

(1) The principalization algorithm of Theorem \ref{Proj:principalization} extends to a principalization $\cF$ of ideals $\cI$ on logarithmically smooth morphisms $f\:X\to B$ such that the pair possesses a $\bB$-approximation \'etale-locally on $X$ and $B$. It satisfies properties (ii), (iii) and (iv) for arbitrary base changes $B'\to B$ and log smooth morphisms $X'\to X$ and $\oX\to B$. In addition, for any log smooth $f\:X\to B$ and $\cI\subseteq\cO_X$ with integral qcqs $B$ whose logarithmic structure is saturated quasi-coherent and generically trivial, there exists a blow up $B'\to B$ such that the saturated base change $f'$ of $f$ and $\cI'=\cI\cO_{X'}$ possess a $\bB$-approximation \'etale locally and $\cF(f',\cI')$ succeeds.

(2) The desingularization algorithm of Theorems \ref{Proj:toroidalization} and \ref{Proj:schemedesing} extend to nice morphisms $g\:Z\to B$ which possesses a $\bB$-approximation \'etale-locally on $Z$ and $B$. They satisfy properties (ii) and (iii) for arbitrary base changes $B'\to B$ and log smooth morphisms $Z'\to Z$. In addition, for any nice $g\:Z\to B$ with an integral stack $B$ whose logarithmic structure is quasi-coherent, saturated and generically trivial, there exists a blow up $B'\to B$ such that the saturated base change $g'$ of $g$ possesses a $\bB$-approximation \'etale locally and $\cR(g')$ and $\cR'(g')$ succeed.
\end{theorem}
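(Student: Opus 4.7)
The overall plan is to bootstrap from the noetherian principalization and desingularization theorems \ref{Proj:principalization}, \ref{Proj:toroidalization}, \ref{Proj:schemedesing} via the standard noetherian approximation technique of \cite[${\rm IV}_3$, \S8]{ega}, together with its extension to qcqs DM stacks. Given an input pair $(f,\cI)$ (resp.\ a nice $g$) equipped with a $\bB$-approximation $(\tilf,\tilcI)$ over $\tilB\in\bB$, I define the extended algorithm by saturated base change:
\[
\cF(f,\cI)\ :=\ \cF(\tilf,\tilcI)\times_{\tilB}B,
\]
and analogously for $\cR$ and $\cR'$. The three technical tasks are then (a) verify independence of the chosen approximation; (b) glue the local constructions over étale covers of $X$ and $B$; and (c) produce the modification $B'\to B$ ensuring that an approximation exists and the algorithm succeeds.

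For (a), given two $\bB$-approximations $(\tilf_i,\tilcI_i)$ over $\tilB_i$, $i=1,2$, the agreement of their saturated pullbacks with $(f,\cI)$ is a finitely presented datum over $B$, so by the usual EGA IV.8 limit arguments it descends to an agreement over a common third approximation $(\tilf_{12},\tilcI_{12})$ over some $\tilB_{12}$ of finite type over $\QQ$ that receives maps to both $\tilB_i$. Applying Theorem~\ref{absdesingth} we may modify $\tilB_{12}$ to arrange $\tilB_{12}\in\bB$, and base change functoriality (property (ii) in Theorem~\ref{Proj:principalization}) then identifies both pullbacks $\cF(\tilf_i,\tilcI_i)\times_{\tilB_i}\tilB_{12}$ with $\cF(\tilf_{12},\tilcI_{12})$, hence produces the same sequence after further base change to $B$. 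Since étale pullbacks of $\bB$-approximations are again $\bB$-approximations, property (ii) applied to étale morphisms implies that the construction descends along étale covers, giving (b). The functoriality properties (ii)--(iv) for arbitrary base changes, logarithmically smooth morphisms, and closed embeddings are then obtained by approximating the morphism in question and invoking the corresponding noetherian functoriality together with the independence established in (a).

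The main obstacle is the existence statement (c). Here I exploit the hypothesis that $B$ is integral qcqs with saturated quasi-coherent generically trivial logarithmic structure: such a $B$ is a cofiltered limit of noetherian logarithmic schemes (or stacks, via Rydh's approximation for algebraic stacks) $\tilB_\lambda$ of finite type over $\QQ$ with saturated generically trivial log structure, and the finitely presented data $f$ and $\cI$ descend, for $\lambda$ sufficiently large, to $\tilf_0\:\tilX_0\to\tilB_0$ and $\tilcI_0\subseteq\cO_{\tilX_0}$; working étale-locally on $B$ absorbs any obstruction to global descent in the stack case. The scheme $\tilB_0$ need not lie in $\bB$, but Theorem~\ref{absdesingth} provides a modification $\tilB_1\to\tilB_0$ with $\tilB_1\in\bB$. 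The saturated pullbacks to $\tilB_1$ form a $\bB$-approximation of $(f,\cI)$ over $B_1:=B\times_{\tilB_0}\tilB_1$. Now applying the noetherian existence statement \ref{Proj:principalization}(i) to $(\tilf_1,\tilcI_1)$ yields a further blow up $\tilB_2\to\tilB_1$ over which $\cF(\tilf_2,\tilcI_2)$ does not fail; saturated pullback to $B$ gives the required $B'\to B$, and the extended $\cF(f',\cI')$ succeeds by construction. Part (2) is handled identically by substituting Theorems~\ref{Proj:toroidalization} and \ref{Proj:schemedesing} for \ref{Proj:principalization}; the "nice" hypothesis (generic logarithmic smoothness and equidimensional generic fibres) is preserved by sufficiently fine noetherian approximation. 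The hardest ingredient will be the combination of stack-theoretic noetherian approximation with the globalization of the modification from étale charts, which is precisely where the independence from (a) and the functoriality (ii) are used to assemble a single global $B'\to B$ out of étale-local modifications.
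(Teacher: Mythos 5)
Your overall strategy coincides with the paper's: define the extended algorithm by pulling back $\cF(\tilf,\tilcI)$ (resp.\ $\cR(\tilg)$) from a $\bB$-approximation, prove independence of the approximation via a common refinement and base-change functoriality, glue by \'etale descent using that independence, and obtain existence by approximating and invoking the noetherian existence statements. However, there is a genuine gap in your existence argument when $B$ is a stack. You propose to ``assemble a single global $B'\to B$ out of \'etale-local modifications'' using the independence from (a) and functoriality (ii). This cannot work as stated: the blow up $B_0'\to B_0$ produced on an \'etale chart $B_0\to B$ comes from a purely existential statement and is not canonical, so its two pullbacks to $B_0\times_BB_0$ have no reason to agree and there is no descent datum; functoriality of the algorithm constrains the output sequences over $X$ (resp.\ $Z$), not these auxiliary, non-functorial modifications of the base. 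The paper closes exactly this point with a separate input: Rydh's extension theorem \cite[Theorem A]{Rydh}, which supplies a blow up $B'\to B$ of the stack such that $B'\times_BB_0\to B_0$ factors through $B_0'$; one then concludes with the base-change functoriality of the already-defined extended algorithm applied to $B'\to B$. Without a domination result of this kind your argument does not produce the global $B'$.

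Two smaller points. In your independence step (a) you modify the common refinement $\tilB_{12}$ by Theorem~\ref{absdesingth} to place it in $\bB$; but after such a modification the structure morphism from $B$ (or from the chart $B_0$) need not lift to the blown-up model, so you lose the ability to pull the comparison back down to $B$ --- the identification of the two pullbacks must be carried out over a model that $B$ actually maps to, which is how the paper argues (compare over the common refinement itself via base-change compatibility, then pull back to $B$). And in the scheme case of the existence statement, the saturated pullback $B\times_{\tilB_0}\tilB_2$ need not literally be a blow up of $B$; as in the paper, one should instead choose a blow up $B'\to B$ whose map to $\tilB_0$ factors through $\tilB_2$ (for instance by blowing up the pulled-back centers), and then observe that $g'\:Z'\to B'$ possesses the $\bB$-approximation $\tilg_2$, so the extended algorithm succeeds by construction.
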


In particular, this covers the case when $B$ is the spectrum of a valuation ring $R$ and $M_B=R\setminus \{0\}$.

\subsubsection{Logarithmically smooth compactification theorem}
As another application we obtain the following compactification result.

\begin{theorem}\label{Th:compactification}
Let $f\:X\to B$ be a separated logarithmically smooth morphism of qcqs logarithmic schemes. Assume that $B\in\bB$ and the logarithmic structures are Zariski. Then there exists a logarithmic blow up $g\:B' \to B$, a proper logarithmically smooth morphism $\oX\to B'$ and an open immersion $X_{B'}\into\oX$ of logarithmic schemes over $B'$. In addition, if $U\subset B$ is an open subscheme such that $X_U\to U$ is proper, then one can achieve that the center of $g$ is monomial on $U$
\end{theorem}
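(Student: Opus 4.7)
The plan is to combine Nagata's scheme-theoretic compactification with the relative desingularization of Theorem~\ref{Proj:schemedesing}. First, since $f$ is separated, logarithmically smooth (hence locally of finite presentation), and between qcqs schemes with noetherian $B\in\bB$, the underlying morphism $\underline f\:\underline X\to\underline B$ is of finite type and separated. By the Nagata compactification theorem for qcqs schemes there is a proper morphism $\bar f_0\:\bar X_0\to B$ and a schematic open $B$-immersion $j_0\:X\hookrightarrow\bar X_0$.

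Next, we extend the logarithmic structure from $X$ to $\bar X_0$. Set $D_0:=(\bar X_0\setminus X)_\red$. The sheaf $j_{0*}(M_X^{\gp})\cap\cO_{\bar X_0}$ restricts to $M_X$ on $X$ but need not be a fine Zariski log structure. By normalizing $\bar X_0$ inside $X$ and blowing up ideals supported on $D_0$ so that boundary components become Cartier divisors, one obtains a modification $\bar X\to\bar X_0$ on which the resulting Zariski log structure $M_{\bar X}$ is fine and saturated, $j\:X\hookrightarrow\bar X$ becomes a strict open immersion, and $\bar X\to B$ is a proper morphism of fs log schemes. Since $X\to B$ is log smooth, $\bar X\to B$ is log smooth over $X$ and therefore generically logarithmically regular; normality yields formal equidimensionality.

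We then apply Theorem~\ref{Proj:schemedesing} to $\bar X\to B$. The hypotheses are met: $B\in\bB$, $\bar X$ is formally equidimensional, the morphism is generically log regular, it is locally embeddable into a relative logarithmic orbifold via local toroidal charts, and either $\dim(B)\le 1$ or abundance of derivations can be arranged on such an embedding. The theorem produces a blow up $g\:B'\to B$, a saturated pullback $\bar X_{B'}\to B'$, and a representable modification $\bar X_\res\to\bar X_{B'}$ with $\bar X_\res\to B'$ proper and logarithmically smooth. Applying logarithmically regular functoriality (Theorem~\ref{Proj:toroidalization}(iii)) to the strict log smooth open immersion $X_{B'}\hookrightarrow\bar X_{B'}$ shows that $\bar X_\res\to\bar X_{B'}$ is trivial over $X_{B'}$, so $X_{B'}\hookrightarrow\bar X_\res$ remains an open immersion; setting $\oX:=\bar X_\res$ gives the desired compactification.

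For the monomiality assertion, if $U\subset B$ is open with $X_U\to U$ proper, we glue $X_U$ with a Nagata compactification of $X_{B\setminus U}\to B\setminus U$ so that $\bar X_0|_U=X_U$. Then $\bar X\to B$ is already log smooth above $U$, the desingularization locus is disjoint from $X_U$, and the ``moreover'' clause of Theorem~\ref{Proj:principalization}(i) (inherited by Theorem~\ref{Proj:toroidalization}(i)) lets $g$ be chosen monomial over $U$. The main obstacle is the log structure extension in the second step: the structure on $X$ encodes nontrivial relative toroidal data that must be coherently extended across the boundary, and producing a fine saturated Zariski extension is what necessitates the auxiliary modifications supported on $D_0$.
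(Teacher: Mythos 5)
Your overall skeleton---Nagata compactification followed by the representable relative desingularization of Theorem~\ref{Proj:schemedesing}, with logarithmically regular functoriality giving triviality over $X_{B'}$ and the ``moreover'' clause giving monomiality over $U$---is the same as the paper's. The genuine gap is in the step you yourself flag as the main obstacle: extending the fs Zariski logarithmic structure from $X$ to the compactification. Blowing up ideals supported on $D_0=(\oX_0\setminus X)_\red$ so that ``boundary components become Cartier'' produces (at best) the divisorial log structure attached to the boundary, which restricts to the \emph{trivial} log structure on $X$, not to $M_X$; and the sheaf $j_{0*}(M_X^{\gp})\cap\cO_{\oX_0}$ has no reason to become fine, or even coherent, after boundary-supported blow ups. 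What must be made Cartier are not the boundary components but the closures of the log divisors of $X$ itself. This is exactly where the Zariski hypothesis---which your argument never uses, although the Whitney-umbrella remark after the theorem shows it cannot be dropped---enters in the paper: one covers $X$ by finitely many global charts $X_i\to\Spec(\QQ[P_i])$, so that $M_X$ is generated by the finitely many Cartier divisors $D_{ij}=V(u^{p_{ij}})$, and one blows up their schematic closures $\oD_{ij}$ in $\oX_0$ (an isomorphism over $X$, since the $D_{ij}$ are already Cartier there). After this the $\oD_{ij}$ are Cartier, the fine log structure they generate visibly extends $M_X$, and saturating yields an fs logarithmic compactification $\oX_0\to B$ to which Theorem~\ref{Proj:schemedesing} applies.

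A smaller point: the gluing you propose for the last clause does not parse---$B\setminus U$ is closed, so a compactification of $X_{B\setminus U}\to B\setminus U$ lives over a closed subscheme and cannot be glued with $X_U$ to a scheme over $B$. No gluing is needed: take $X$ schematically dense in $\oX_0$; then, since $X_U\to U$ is proper and $\of_0$ is separated, $X_U$ is open and closed in $(\oX_0)_U$, and density forces $(\oX_0)_U=X_U$. Hence $\oX_0\to B$ is logarithmically smooth over $U$, so the ``in addition'' clause of Theorem~\ref{Proj:toroidalization}(i) lets the center of $B'\to B$ be taken monomial on $U$, as in the paper.
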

\begin{proof}
First, by Nagata compactification theorem (e.g. see \cite{ConradNagata}) there exists a compactification of $f$ to a proper morphism of schemes $\of_0\:\oX_0\to B$. Find a finite covering $X=\cup_iX_i$ whose elements possess charts $X_i\to\Spec(\QQ[P_i])$ and let $\{p_{ij}\}$ be finite sets of generators of $P_i$. Then the log structure on $X_i$ is given by the Cartier divisors $D_{ij}=V(u^{p_{ij}})$ and blowing up the schematic closures $\oD_{ij}$ of $D_{ij}$ in $\oX_0$ we obtain a finer compactification of $X$, still denoted $\oX_0$ for shortness, such that each $\oD_{ij}$ is a Cartier divisor. Then the divisors $\oD_{ij}$ define a fine log structure on $\oX_0$ which extends the log structure on $X$, hence we obtain a logarithmic compactification $\of_0\:\oX_0\to B$ of $f$. Replacing $\oX_0$ by its saturation we can assume that $\oX_0$ is fs, and then by Theorem~\ref{Proj:schemedesing} there exists a relative desingularization $\oX\to B'$ of $\oX_0 \to B$. Since $\of_0$ is logarithmically smooth on $X$ and over $U$, the center of $B'\to B$ is monomial on $U$ and $\oX\to\oX_0\times_BB'$ is an isomorphism over $X_{B'}$. Thus, $\of\:\oX\to B'$ is as required.
\end{proof}

Examples such as the  Whitney umbrella punctured at the origin show that one cannot remove the assumption that the logarithmic structures are Zariski in Theorem~\ref{Th:compactification}.

\subsubsection{Extension to other geometries}\label{othersec}
Part of our motivation stems from profound questions arising in non-archimedean geometry, see \cite[Page 364]{BLR}. Being functorial for regular morphisms, our relative principalization and desingularization methods extend to these categories immediately, and existence would be equivalent to extending the monomialization theorem to these contexts. Remark~\ref{intromonomrem} below shows that at the very least, one should either restrict the class of objects one resolves or allow non-proper base changes. This is a separate question not related much to the content of this paper, so we will not study it here. On the positive side, we expect that our main results extend to other categories verbatim when the base is at most one-dimensional, and this is explained in \S\ref{lastsec}.

\subsection{Methods}\label{methodssec}
Now, let us describe our methods and discuss certain choices we make and possible alternatives.

\subsubsection{Ambient spaces}
We resolve morphisms $Z\to B$ by embedding them into logarithmically regular morphisms $X\to B$ and principializing $\cI_Z$ on $X/B$. Similarly to \cite{ATW-principalization}, principalization is done by a sequence of non-representable modifications $\sigma\:X'\to X$ with finite diagonalizable inertia $I_{X'/X}$, which are called {\em submonomial Kummer blow ups} in this paper. This forces us to work with stacks.

\begin{remark}
(i) Similarly to \cite{ATW-principalization} one could consider only stacks with finite diagonalizable inertia, but we decided to extend the generality to logarithmic DM stacks because this does not change a single argument.

(ii) Modifications of this type were called Kummer blow ups in \cite{ATW-principalization}. We prefer to change the terminology because one can, more generally, construct Kummer blow ups along arbitrary centers using the same stack theoretic $\cProj$ formula as in \S\ref{submonomblowsec}.
\end{remark}

\subsubsection{Derivations}
As in \cite{ATW-principalization}, all constructions of our method are expressed in terms of sheaves of logarithmic derivations and differential operators of order at most $a$, but this time we use the relative sheaves $\cD^{(\le a)}_{X/B}$. In particular, the {\em logarithmic order} $a$ of $\cI$ is the minimal number such that $\cD^{(\le a)}_{X/B}(\cI)=\cO_X$, the ideal $\cI$ is {\em clean} if $a<\infty$, and the {\em (hypersurface of) maximal contact} to $\cI$ is $H=V(t)$, where $t$ is an element of logarithmic order one in $\cD^{(\le a-1)}_{X/B}(\cI)$.

\subsubsection{Marked ideals}
Order reduction operates with marked (or weighted) ideals denoted $\ucI=(\cI,a)$ throughout the paper. We define products, equivalence and domination as usual, but replace sums with the homogenized sums, see \S\ref{basicfactssec} and Remark~\ref{sumrem}. Also, we use the {\em homogenized coefficient ideals} $\ucC=\sum_{i=0}^{a-1}\cD_{X/B}^{(\le i)}\ucI$ defined by use of homogenized sums. In fact, this is precisely the logarithmic analogue of Kollar's tuning ideal $W_{a!}(\ucI)$, see \cite{Kollar}.

\subsubsection{Modules of derivations}
Certain submodules of $\cD_X$ suffice to run various parts of the principalization algorithms, for example, see \cite[Exercise~4.4]{Bierstone-Milman-funct}. This was essentially used in \cite{BMT} to show that the algorithm of Bierstone-Milman depends only on the (huge) module of absolute derivations and hence resolutions of varieties over different fields are compatible. Submodules of $\cD_X$ were also used in \cite{ATW-principalization}.

For an arbitrary logarithmically regular morphism the module $\cD_{X/B}$ can be very large or very small, even 0. Also it is may be not quasi-coherent and does not satisfy good functoriality properties, see \S\ref{logdifsec}. Therefore, we prefer to work with concrete $\cO_X$-submodules $\cF\subseteq\cD_{X/B}$ and their transforms. In particular, in Proposition~\ref{Flem} and Theorem~\ref{coefprop} we obtain equalities for transforms of derivations and coefficient ideals, while in the classical situation and in \cite{ATW-principalization} one only proved inclusions. This equality, which only holds for homogenized coefficient ideals, allows us to simplify the formalism of \cite{ATW-principalization} -- we do not have to consider normal closures of marked ideals anymore, see \cite[Proposition~6.1.3]{ATW-principalization} for comparison. {Note that} we still have to consider normal closures of powers of admissible centers, see Remark~\ref{norrem} and Lemma~\ref{operlem}(ii). {This technicality is avoided in \cite{ATW-weighted} by systematically  using valuative $\QQ$-ideals.}

\subsubsection{Relative logarithmic orbifolds}
At each step of the algorithm we precisely describe the conditions {that} modules of derivations $\cF$ should satisfy. We say that $\cF$ is {\em separating} (resp. {\em logarithmically separating}) if it separates all parameters (resp. {parameters} and logarithmic parameters) on $X/B$. It turns out that almost all stages of the algorithm only require that $\cF$ is separating, for example, this provides enough derivations to compute the logarithmic order and define the maximal contact. However, only the property of being logarithmically separating is stable under blow ups, so we call a logarithmically regular morphism $X\to B$ a {\em relative logarithmic orbifold} if $\cD_{X/B}$ is logarithmically separating, and we construct relative principalization for arbitrary relative logarithmic orbifolds.

\subsubsection{Base change}
Loosely speaking everything described in \S\ref{methodssec} up to now is obtained from the method of \cite{ATW-principalization} by passing to the relative and logarithmically regular setting and making some (relatively minor) improvements. The really new issue is that one has to take base changes into account. First, they provide a new form of functoriality which one should establish. This task is easy. Second, {one has to allow that the algorithm might fail without prior base-change, and to resolve this by modifying} the base first and running the algorithm after that. Here is the simplest example.

\begin{example}\label{stuckexample}
(i) We start with a ``non-trivial'' example. Take $B=\AA^2_k=\Spec(k[x,y])$ and $X=\AA^3_k=\Spec(k[x,y,t])$ with the trivial logarithmic structures, and let $\cI=(x,y,t)$. A minimal submonomial center $\cJ$ containing $P=V(\cI)$ is two-dimensional, for example $(t)$, hence there exists no $\cJ$ with support at $P$, and $\cI$ cannot be principalized. The situation can be solved by modifiying $B$. In this case, the most natural way is to blow up $(x,y)$ and enrich the logarithmic structure with the exceptional divisor, but one can even simply enlarge the logarithmic structure to $x^\NN\times y^\NN$ so that $\cI=(t)+(x,y)$ becomes submonomial.

(ii) Precisely the same example works when $B=\Spec(k[x])$, but it is less illuminating since blowing up $(x)$ and increasing the logarithmic structure to $x^\NN$ have the same effect.

(iii) In fact, even in the ``most trivial'' case of $X=B=\AA^2_k$ or $X=B=\AA^1_k$, the only way to principalize $\cI=(x,y)$ or $\cI=(x)$ is by base change.
\end{example}

\subsubsection{Monomialization}\label{intromonomsec}
For a monomial ideal $\cI$ we denote its saturation by $\cI^\sat$, see \cite[Remark 3.2.1(1)]{ATW-principalization}. Similarly to the absolute case in \cite{ATW-principalization}, our relative order reduction of $\ucI=(\cI,a)$ starts with blowing up the weighted monomial saturation $\cW(\ucI):=(\cM(\cI)^\sat)^{1/a}$, as this is the best possible attempt to get rid of the monomial part. If the transformed ideal is clean, one proceeds precisely as in the absolute case. However, the main new issue one has to deal with is that the new ideal may be not clean, and then the algorithm fails. The reason is that, unlike the absolute case, the differential saturation $\cD_{X/B}^{\infty}(\cI)$ can be strictly smaller than the monomial saturation $\cM(\cI)$. For example this is what happens in Example~\ref{stuckexample}(iii). Our main result in this direction is that if $\dim(B)\le 1$ or $X/B$ has abundance of derivations (for example, $X\to B$ is logarithmically smooth, see \S\ref{abundsec}), then the situation can be restored by a preliminary blow up of $B$, which monomializes $\cD_{X/B}^{\infty}(\cI)$. This is proved in a surprisingly difficult  monomialization theorem \ref{monomialization}. The subtlety of this result is also indicated by the fact that its naive generalizations fail:

\begin{remark}\label{intromonomrem}
(i) Monomialization does not hold for complex analytic spaces. For example, take $X\into B$ an open immersion and $\cI\subseteq\cO_X$ an ideal which cannot be extended to $\cO_B$. Then $\cI$ cannot be monomialized by modifications of $B$. Similar ``non-trivial'' examples exist when $X\to B$ factors through an open immersion. This indicates that monomialization requires some restrictions: either restrict the class of objects one resolves, for example, by imposing a properness assumption, or allow non-proper base changes, for example, open covers of $B$. We plan to study this question elsewhere.

(ii) In view of the above remark, it also makes sense to study whether Theorem \ref{monomialization} can be extended to more general relative logarithmic orbifolds by enlarging the class of base changes.
\end{remark}

\subsubsection{The relative order reduction algorithm}\label{relalgsec}
As usual, the principalization algorithm of $\cI$ makes an order reduction of $(\cI,1)$. Our order reduction algorithm for a marked ideal $\ucI=(\cI,a)$ {follows the steps} of the algorithm from \cite{ATW-principalization}. Here is an outline, see \S\ref{princsec} and \S\ref{ordersec} for details. The algorithm runs by induction on the relative dimension of $X/B$ and consists of three steps.

Step 1. {\em Initial cleaning}. Blow up the weighted monomial saturation $\cW(\ucI)$, and output the fail value if the controlled transform is not clean. This step certainly successes if $\cD_{X/B}^{\infty}(\cI)$ is monomial.

Step 2. {\em Reducing the logarithmic order of the clean part below $a$.} Throughout this step the ideal is balanced, that is, $\cI=\cM\cdot\cI^\cln$ with an invertible monomial $\cM=\cM(\cI)$ and a clean $\cI^\cln$ of order $b$. By induction, one simply performs the order reduction of the maximal order marked ideal $\ucI^\cln=(\cI^\cln,b)$, which automatically results in reducing the order of the clean part of the transform. The order reduction of $\ucI^\cln$ is constructed by finding \'etale-locally a maximal contact $H$ to $\ucI^\cln$ and descending to $X$ the order reduction of the equivalent marked ideal $(\cC(\cI^\cln)|_H,b!)$ on $H$.

Step 3. {\em Final cleaning.} At this stage the clean part is resolved, so one simply blows up $\cM(\cI)^{1/a}$.

\subsubsection{Equivalence classes}
Similarly to other principalization algorithms, the most subtle thing is to show that the process is independent of the choice of $H$ and hence descends to a Kummer blow up sequence of $X$. As in \cite{ATW-principalization}, one possible way would be to prove that $\cC(\cI^\cln)|_H$ is unique up to a formal isomorphism, where we use that our coefficient ideal is homogenized. However, we decided to use equivalence classes instead, in a manner modeled on \cite{Bierstone-Milman-funct}. This route is a bit more sophisticated, but it also proves the stronger claim that the order reduction only depends on the functorial equivalence class of $\ucI$ as defined in \S\ref{equivsec} and \S\ref{equivsubsec}. We explain in Remark~\ref{equivrem} why it is more natural to consider functorial logarithmic equivalence, and how it sheds a new light on the classical definitions of Hironaka and Bierstone-Milman. In addition, we prove in Theorem~\ref{equivth} that the functorial equivalence class determines the main invariants of $\ucI$ used in the order reduction -- the weighted logarithmic order and the weighted monomial part. This is a logarithmic version of \emph{Hironaka's trick,} and the proof is more straightforward than in the classical case, see \S\ref{modelsec}.

\subsubsection{Induction on length}
All proofs in this paper are designed so that claims about blow up sequences are proved by simple and formal induction on the length of the sequence. In this way we achieve that any coordinate-dependent check is done for a single Kummer blow up, and there is no need to consider charts of sequences, which might be unpleasant due to non-representability of Kummer blow ups. This becomes possible because we use transforms of derivations in Theorem~\ref{coefprop}. In particular, we manage to restrict the use of explicit Taylor series (or another form of a Weierstrass preparation) only to the proof of Proposition~\ref{liftlem}, which deals with lift of admissibility of a single Kummer center and not a whole blow up sequence.

\subsubsection{The relative desingularization method}
The relative desingularization is obtained by embedding into relative logarithmic orbifolds and principalizing ideals there, and the main issue is to prove that there is a unique minimal embedding. In \cite{ATW-principalization} we did this in \'etale topology. In this paper we use formal topology instead. This is possible because principalization was constructed for arbitrary logarithmically regular morphisms, and this even leads to simpler proofs because uniqueness of minimal formal embedding is less technically demanding.


\subsection{Conventiones}\label{convsec}
Unless said to the contrary, all schemes and stacks we consider are noetherian of characteristic zero. In particular, we will only consider {\em noetherian base changes} of a morphism $f\:Y\to Z$, that is, base changes $f'\:Y'=Y\times_ZZ'\to Z'$ with noetherian $Y'$ and $Z'$.

Unless said to the contrary, all logarithmic schemes and logarithmic stacks we consider are assumed to be fs. In particular, fiber products are taken in the fs category. All logarithmic structures are defined in the \'etale topology. In particular, it will be usually convenient to work \'etale locally and use geometric points. By convention, we only consider minimal such points $\ox\to X$, that is, $\cO_\ox$ is the strict henselization $\cO_x^{sh}$ of the local ring of the underlying Zariski point $x\in X$. Zariski points will be sometimes used when a choice of a geometric point above is not important.

\setcounter{tocdepth}{1}


\tableofcontents

\section{Relative logarithmic orbifolds}\label{basicsec}
\addtocontents{toc}
{\noindent Recollections of logarithmic structures. Logarithmical regularity. Logarithmic differential operators. Relative logarithmic orbifolds.}
Our principalization algorithm works with ideals on a {\em relative logarithmic orbifold} $f\:X\to B$ of characteristic zero. Throughout this paper this means that $X$ is a logarithmic DM stack, $f$ is logarithmically regular and the sheaf of relative logarithmic derivations $\cD_{X/B}$ is logarithmically separating. Section~\ref{basicsec} is devoted to defining and studying these properties.

\subsection{Charts}\label{chafrtssec}
In this subsection we fix notation and recall basic facts. Especially important will be the notion of neat charts.

\subsubsection{Monoids}
Unless the opposite is clear from the context, all monoids are assumed to be fs, and by $\rk_\QQ(P)$ we denote the number $\dim_\QQ(P^\gp\otimes\QQ)$.

\subsubsection{Charts of logarithmic schemes}\label{Sec:charts}
By $\bfA$ we denote the natural functor from monoids to affine schemes: $\bfA_P=\Spec(\QQ[P])$. If $P$ is a group, we will also use the notation $\bfD_P=\bfA_P$ to stress that it is a diagonalizable group, Cartier-dual to $P$.

We will use conventiones of \cite[II.2.3.1]{Ogus-logbook}: A chart $Z\to\bfA_P$ is {\em exact} at a geometric point $z\to Z$ if $\oP=\oM_z$, and this happens if and only if the homomorphism $P\to M_z$ is local. Thus, the chart is exact at $z$ if and only if $z$ is mapped to the closed logarithmic stratum of the chart. Moreover, a chart is {\em neat} if it is sharp (\S\ref{Sec:sharp-chart} below) and exact, that is, $P=\oM_z$. Neat charts always exist; in a sense, they are the minimal charts at $z$.

\begin{example}
Take $P=\NN$ and let $Z=\bfA_P$, $Z'=\bfA_{P^\gp}$ and $z\in Z'$ be any point. Then the tautological chart $Z=\bfA_P$ is not exact at $z$, the chart $Z'=\bfA_{P^\gp}$ is exact but not neat at $z$, and the trivial chart $Z_0\to\bfA_0=\Spec(\QQ)$ is neat at $z$.
\end{example}

\subsubsection{Base change of monoids}
Assume that $A$ is a ring, $P$ is a monoid and $u\:P\to(A,\cdot)$ a homomorphism, which will be written exponentially: $p\mapsto u^p$. For any homomorphism of monoids $P\to Q$ we will use the notation $A_P[Q]=A\otimes_{\ZZ[P]}\ZZ[Q]$. If $(A,m)$ is local and $Q$ is sharp, then the completion of $A_P[Q]$ with respect to the maximal ideal generated by $m$ and $Q_+$ will be denoted $\hatA_P\llbracket Q\rrbracket$.

\subsubsection{Relative toric schemes}
If $Z\to\bfA_P$ is a chart and $P\to Q$ is a homomorphism of monoids, we let $Z_P[Q]=Z\times_{\bfA_P}\bfA_Q$ denote the associated relative toric scheme.

\begin{remark}\label{strictfactorrem}
Any chart $Z\to\bfA_P$, $Y\to\bfA_Q$, $\phi\:P\to Q$ of a morphism $f\:Y\to Z$ factors $f$ into a composition $Y\stackrel{h}\to Z_P[Q]\to Z$ of a strict morphism $h$ and a relative toric scheme, the pullback of the toric morphism $\bfA_Q\to\bfA_P$. For brevity, we will usually encode a chart by writing down $Y\to Z_P[Q]$.
\end{remark}

\subsubsection{Relative characteristic}
Given a geometric point $y\to Y$ with $z=f(y)$ we will use notation $\oM^\gp_{y/z}=\Coker(\oM^\gp_z\to\oM^\gp_y)$. In fact, $\oM^\gp_{y/z}$ is the stalk at $y$ of the sheaf $M_{Y/Z}$ (see \cite[p. 187]{Ogus-logbook}), though we will not need this fact.


\subsubsection{Exact charts}
We say that a chart $Y\to Z_P[Q]$ of $f$ is {\em exact at} $y$ if both $Y\to\bfA_Q$ and $Z\to\bfA_P$ are exact. We say that a chart is {\em exact} if it is exact at some geometric point $y\to Y$. This happens if and only if $\oP=\Gamma(\oM_Z)$ and $\oQ=\Gamma(\oM_Y)$. In fact, any chart can be made exact by appropriate localizations of $P$ and $Q$: simply invert all elements that become units in $M_Z$ and $M_Y$.

\subsubsection{Sharp charts}\label{Sec:sharp-chart}
We say that $f$ is {\em sharp} at $y$ if the homomorphism $\phi_y\:\oM_{z}\to\oM_y$ is injective. If this is the case, then a chart of $f$ is called {\em sharp at} $y$ if it is modeled on the homomorphism $\phi_y\:\oM_{z}\to\oM_y$, where $z=f(y)$. We will see below that any sharp morphism possesses a sharp chart.

The quintessential logarithmically smooth morphism which is not sharp is the blowing up of the affine plane at the origin, with the toric logarithmic structure.

\subsubsection{Neat chart}
We say that a chart $Y\to Z_P[Q]$ is {\em neat at} $y$ if $P^\gp\into Q^\gp$ and $Q^\gp/P^\gp=\oM^\gp_{y/z}$. This notion was introduced by Kato and studied in detail in \cite{Ogus-logbook}.

\begin{remark}
(i) The meaning of the definition is that once the chart $Z\to\bfA_P$ is fixed, one chooses a minimal chart for $Y$ such that $P\into Q$. In particular, if $f$ is sharp, then a chart is sharp if and only if it is neat and $P$ is sharp.

(ii) In general, neat charts provide a natural generalization of the notion of sharp charts to the case of non-sharp morphisms. In order to work with neat charts, one has to consider a non-sharp $Q$ even when $P$ is sharp. Furthermore, in order to work with compositions one should also study the case where $P$ itself is not sharp.
\end{remark}

Working with neat charts often simplifies arguments, and fortunately they do exist in the case of interest. By \cite[Theorem~III.1.2.7]{Ogus-logbook} we have:

\begin{theorem}\label{neatth}
Assume that $f\:Y\to Z$ is a morphism of logarithmic schemes of characteristic zero,\footnote{This is a point where characteristic 0 is used for a technical purpose. We believe that the only characteristic 0 assumpition that cannot be circumvented in this paper is the use of maximal contact hypersurfaces --- the usual obstruction for procedures following \cite{Hironaka}.} $y\to Y$ is a geometric point, and $z=f(y)$. Then locally at $y$ and $z$ there exists a neat chart of $f$. Moreover, any local chart of $Z$ at $z$ can extended to a local neat chart of $f$ at $y$.
\end{theorem}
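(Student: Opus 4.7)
The plan is to construct the desired neat chart in two main stages: first normalizing a chart on $Z$, and then lifting it to $Y$ while adjoining exactly enough new generators to capture $\oM^\gp_{y/z}$.

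I would begin by reducing to a convenient form for the chart of $Z$. Given any local chart $\beta\:P_0\to M_Z$ at $z$, one may localize $P_0$ at the face $F=\{p\in P_0 : \beta(p)\in M_z^*\}$ to arrange exactness at $z$, so that $\oP=\oM_z$. This reduction does not disturb the task of extending $\beta$, so it suffices to extend the exactified chart to a neat one on $Y$.

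Next, I would construct $Q$ combinatorially. Let $\phi\:P^\gp\to\oM_y^\gp$ be the composition induced by $f$. Form $Q^\gp$ as the pushout (in abelian groups) of $P^\gp\twoheadrightarrow\phi(P^\gp)\hookrightarrow\oM_y^\gp$. Then $P^\gp\hookrightarrow Q^\gp$ by construction, there is a natural map $Q^\gp\to\oM_y^\gp$, and $Q^\gp/P^\gp\cong\oM_y^\gp/\phi(P^\gp)=\oM^\gp_{y/z}$. Take $Q\subset Q^\gp$ to be the fs submonoid generated by the image of $P$ together with lifts of a finite generating set of $\oM_y\subset\oM_y^\gp$. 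By design $P\to Q\to\oM_y$ is the underlying characteristic data of a chart satisfying the neatness cokernel condition at $y$.

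The heart of the proof is the lifting step: one must promote the map $Q\to\oM_y$ to a monoid homomorphism $\gamma\:Q\to M_y$ extending $\beta$, i.e., such that $P\to Q\xrightarrow{\gamma}M_y$ equals the composition $P\to M_z\to M_y$. Passing to groupifications, this amounts to splitting the pulled-back extension
\[
1\longrightarrow\cO_y^*\longrightarrow\pi^{-1}(Q^\gp)\longrightarrow Q^\gp\longrightarrow 1
\]
of $\pi\:M_y^\gp\to\oM_y^\gp$, consistently with the section over $P^\gp$ already supplied by $\beta$. The obstruction to such a lift is a class in $\Ext^1(Q^\gp/P^\gp,\cO_y^*)=\Ext^1(\oM^\gp_{y/z},\cO_y^*)$, and this group vanishes because in characteristic zero the unit group $\cO_y^*$ of the strictly henselian local ring $\cO_{Y,y}^{sh}$ is divisible, while $\oM^\gp_{y/z}$ is a finitely generated abelian group. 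This is precisely the characteristic-zero input flagged in the footnote, and it is the main obstacle in the argument; once $\gamma$ is produced, it is defined on an \'etale neighborhood of $y$, and the resulting morphism $Y\to Z_P[Q]$ is a neat chart of $f$ at $y$ extending $\beta$.
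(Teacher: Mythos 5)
The paper does not prove this statement at all: it is quoted directly from \cite[Theorem~III.1.2.7]{Ogus-logbook}, so the only comparison available is with Ogus's argument, whose overall shape (build $Q^\gp$ with $P^\gp\into Q^\gp$ and $Q^\gp/P^\gp\cong\oM^\gp_{y/z}$, then lift to $M_y$ using that $\cO_y^*$ is divisible, hence an injective $\ZZ$-module, in characteristic zero over a strictly henselian local ring) you have correctly identified. Your lifting step and the identification of the characteristic-zero input are fine.

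However, there is a genuine gap in the construction of the monoid $Q$. First, a minor point: the ``pushout of $P^\gp\onto\phi(P^\gp)\into\oM_y^\gp$'' is not a defined operation (a pushout needs a span, not a composable pair); this is repairable, e.g.\ by choosing a splitting of $P^\gp\onto\phi(P^\gp)$ (possible since $\phi(P^\gp)\subseteq\oM_y^\gp$ is free, $\oM_y$ being sharp fs) and taking the amalgamated sum. The serious problem is the assertion that $Q$, defined as the fs submonoid of $Q^\gp$ generated by the image of $P$ together with lifts of generators of $\oM_y$, ``is by design the underlying characteristic data of a chart.'' Surjectivity of $Q\to\oM_y$ does \emph{not} imply that the induced map of log structures $Q^a\to M_Y$ is an isomorphism at $y$: the obstruction is the kernel $N=\Ker(Q^\gp\to\oM_y^\gp)=\Ker(P^\gp\to\oM_y^\gp)$, which need not be contained in the subgroup generated by elements of $Q$ mapping to units. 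Concretely, take $Z=\AA^2$ and $Y=\AA^1$ with their divisorial (toric) log structures, $f(x)=(x,x)$, $y$ and $z$ the origins, and $P=\NN^2$ the standard chart at $z$. Then $\oM^\gp_{y/z}=0$, so $Q^\gp=P^\gp=\ZZ^2$, and your $Q$ is $\NN^2$ (possibly enlarged by one lift of the generator of $\oM_y=\NN$); the resulting $\gamma\:Q\to M_y$ sends both standard generators to $x$, so the characteristic monoid of $Q^a$ at $y$ is $\NN^2\neq\NN=\oM_y$, and $\gamma$ is not a chart. The correct choice (and Ogus's) is to take $Q$ to be the full preimage of $\oM_y$ under $Q^\gp\to\oM_y^\gp$ --- in the example $Q=\{(a,b)\in\ZZ^2:a+b\ge 0\}$ --- which is again fs and finitely generated; then $N\subseteq Q$ and $\gamma(N)\subseteq\cO_y^*$ after your lifting step, and the chart criterion (surjectivity onto $\oM_y$ plus $N$ lying in the face of unit-valued elements) does apply. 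With that replacement, and the splitting remark above, the rest of your argument goes through.
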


\subsubsection{Composition of neat charts}\label{compossec}
Let $g\:X\to Y$ and $f\:Y\to Z$ be two morphisms of logarithmic schemes, $h=f\circ g$ the composition, $x\to X$ a geometric point, $y=g(x)$, $z=h(x)$. Any pair of charts $\alpha\:X\to Y_Q[R]$ and $\beta\:Y\to Z_P[Q]$ induce a chart $\gamma\:X\to Z_P[R]$ composed from $\alpha$ and the base change $\beta_Q[R]$ of $\beta$:
$$
\xymatrix{
X\ar[r]^\alpha\ar[dr] & Y_Q[R]\ar[r]^{\beta_Q[R]}{\ar[d]\ar@{}[dr] | {\Box}} & Z_P[R]\ar[d]\\
 & Y \ar[dr]\ar[r]^\beta & Z_P[Q]\ar[d]\\
 & & Z
}
$$

Assume now that $\alpha $ and $\beta$ are neat at $x$ and $y$, respectively. If $\oM_{Y,y}\stackrel\phi\to\oM_{X,x}$ is injective, then $\gamma$ is neat at $x$ by \cite[Corollary II.2.4.7]{Ogus-logbook}. Here is a simple case, when $\gamma$ is not neat.

\begin{example}
Take $P=0$, $Q=\NN^2$ with basis $v,w$, and $R=\NN^2$ with basis $v,w-v$ and consider the toric schemes $Z=z=\bfA_P$, $Y=\bfA_Q$ and $X=\bfA_R$ with the tautological charts. In particular, $X$ is the $v$-chart of the logarithmic blow up of $Y$ at the origin $y$. Note that $P=\oM_z$, $Q=\oM_y$, and the chart $Y\toisom Z_P[Q]$ is neat at $y$.

Let $x$ be any point of the exceptional divisor of $X$ except the origin, that is, $x\in V(v)\setminus V(\frac{w}{v})$. Then the chart $X=\bfA_R$ is not exact at $x$ as $\frac{w}{v}\in\cO^\times_x$. In fact, $R\to M_x$ factors through the local homomorphism $R_x\to M_{X,x}$, where $R_x=R[v-w]\toisom\NN\cdot v+\ZZ\cdot(v-w)$, and hence $\oM_x=\oR_x=\NN$ with basis $v$. Note that $X_x:=\bfA_{R_x}$ is a neighborhood of $x$ and its tautological chart is exact at $x$. Since $\oM_y\onto\oM_x$, both charts $X\to Y_Q[R]$ and $X_x\to Y_Q[R_x]$ are neat at $x$, and the latter is even exact. On the other hand, the composed charts $X\to Z_P[R]$ and $X_x\to Z_P[R_x]$ are not neat at $x$ because $R^\gp/P^\gp=R^\gp_x/P^\gp=\ZZ^2$, while $\oM_{x/z}^\gp=\ZZ$. 
\end{example}




\subsection{Logarithmic fibers}

\subsubsection{Relative toric stacks}
If $Z\to\bfA_P$ is a chart and $\phi\:P\to Q$ is a homomorphism of monoids, then the multiplicative group $\bfD_\phi:=\bfA_N$, where $N=\Coker(P^\gp\to Q^\gp)$, acts on the $Z$-scheme $Z_P[Q]$. Indeed, $\bfD_\phi$ is the kernel of the homomorphism $\bfD_{Q^\gp}\to \bfD_{P^\gp}$, hence it acts on $\bfA_Q$ in an $\bfA_P$-equivariant way, and by base change it also acts on $Z_P[Q]$ in a $Z$-equivariant way.

The relative toric stack is defined to be the stack theoretic quotient $\cZ_P[Q]=[Z_P[Q]/\bfD_\phi]$. It is easy to see that the $Z$-stack $\cZ_P[Q]$ depends only on the homomorphisms $\oP\to\Gamma(\oM_Z)$ and $\oP\to\oQ$, for example, use \cite[Lemma~2.2.4]{Molho-Temkin}. In particular, given a morphism $Y\to Z$, the stack $\cZ_P[Q]$ is the same for all exact charts of $f$.

\begin{remark}\label{torstackrem}
Olsson introduced in \cite{Olsson-logarithmic} a stack $\Log(Z)$ parameterizing logarithmic structures on $Z$-schemes. For a morphism of logarithmic schemes $f\:Y\to Z$, the morphism $l_f\:Y\to\Log(Z)$ corresponding to the logarithmic structure of $Y$ can be used to define for $f$ logarithmic analogues of usual properties of morphisms, such as flatness, smoothness, etc. In fact, working \'etale locally, one can assume that a global chart exists and then the morphism $Y\to\cZ_P[Q]$ can often be used as a replacement of $l_f$ because $\cZ_P[Q]\to\Log(Z)$ is \'etale. Moreover, invariance of $\cZ_P[Q]$ can be used for an alternative ad hoc approach to defining various logarithmic properties in a rather elementary way -- one works with $Z\to Z_P[Q]$ and takes the action of $\bfD_\phi$ into account.
\end{remark}

\subsubsection{Neat charts}
Now we can provide a very geometric characterization of neat charts.

\begin{lemma}\label{neatchartlem}
Assume that a morphism of logarithmic schemes $f\:Y\to Z$ possesses a chart $Y\to Z_P[Q]$ modeled on an injective homomorphism $\phi\:P\into Q$, and $y\to Y$ is a geometric point with image $z'\in Z_P[Q]$. Let $Q_y$ be obtained by inverting all elements of $Q$ whose images in $\cO_y$ are units. Then,

(i) The stabilizer of $z'$ under the action of $\bfD_\phi$ is $\bfD_L$, where $$L=Q^\gp/(Q_y^\times+P^\gp).$$

(ii) The chart is neat at $y$ if and only if $z'$ is a single orbit of $\bfD_\phi$. The subfield $k(z')\subseteq k(y)$ is the same for all neat charts.
\end{lemma}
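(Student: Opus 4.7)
The plan is to exploit the character-theoretic description of the $\bfD_\phi$-action. For (i), the action of $\bfD_\phi = \bfD_{Q^\gp/P^\gp}$ on $Z_P[Q]$ is the restriction of the natural action of $\bfD_{Q^\gp}$ on $\bfA_Q$, under which a character $\chi$ rescales each monomial $e^q$ by $\chi(q)$. Thus $\chi \in \bfD_{Q^\gp}$ fixes $z'$ precisely when $\chi(q)=1$ for every $q\in Q$ that is a unit at $z'$; since $\cO_{z'}\to\cO_y$ is local, this is the condition that $\chi$ vanishes on the subgroup $Q_y^\times\subseteq Q^\gp$. Consequently the stabilizer in $\bfD_{Q^\gp}$ is $\bfD_{Q^\gp/Q_y^\times}$, and intersecting with $\bfD_\phi$ further imposes $\chi|_{P^\gp}=0$, giving the stabilizer $\bfD_{Q^\gp/(Q_y^\times+P^\gp)}=\bfD_L$.

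For the equivalence in (ii), the key input is $\oM_{y/z}^\gp = L$. Localizing $Q$ (resp.\ $P$) at the elements whose images in $\cO_y$ (resp.\ $\cO_z$) are units produces a chart exact at $y$ (resp.\ $z$), so $\oM_y^\gp = Q^\gp/Q_y^\times$ and $\oM_z^\gp = P^\gp/P_z^\times$; taking the cokernel of $\oM_z^\gp \to \oM_y^\gp$ yields $Q^\gp/(P^\gp+Q_y^\times) = L$. Combining with (i), neatness---i.e., the canonical surjection $Q^\gp/P^\gp \twoheadrightarrow L$ being an isomorphism---is equivalent to $\bfD_L=\bfD_\phi$, equivalently the orbit of $z'$ reducing to $\{z'\}$. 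This is what the statement reads as ``$z'$ is a single orbit''.

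For the residue field claim I would prove the stronger equality $k(z') = k(z)$ inside $k(y)$, which is manifestly intrinsic to $y$. The inclusion $k(z)\subseteq k(z')$ is given by $Z_P[Q]\to Z$. For the reverse, write $q'$ for the image of $y$ in $\bfA_Q$; since $Z_P[Q] = Z\times_{\bfA_P}\bfA_Q$, the subfield $k(z')\subseteq k(y)$ is the compositum of $k(z)$ and $k(q')$. Now $k(q')$ is generated over $\QQ$ by the residues of $e^q$ for $q \in Q\cap Q_y^\times$. For a neat chart, $Q_y^\times\subseteq P^\gp$, so each such $q$ equals $\phi^\gp(p)$ for a unique $p\in P^\gp$; chart compatibility $\alpha_Y\circ\phi = f^*\alpha_Z$, extended to group completions, gives $e^q = f^*(e^p)$ as units of $\cO_y$. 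Since $f$ reflects units ($f^*\:\cO_z\to\cO_y$ being local), $e^p$ is a unit in $\cO_z$, so $p\in P_z^\times$ and the residue of $e^q$ is the image of $\overline{e^p}\in k(z)^\times$ under $k(z)\hookrightarrow k(y)$. Hence $k(q')\subseteq k(z)$, which yields $k(z') = k(z)$.

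The main technical subtlety is the identification $\oM^\gp_y = Q^\gp/Q_y^\times$, which requires exactness of the localized chart in the non-sharp setting; once this is in place, (i) reduces to a direct character computation and (ii) follows by manipulating chart compatibility at the level of group completions.
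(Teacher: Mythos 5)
Your part (i) and the neatness criterion in part (ii) are correct and essentially identical to the paper's argument: the stabilizer of $z'$ in $\bfD_{Q^\gp}$ is $\bfD_{Q^\gp/Q_y^\times}$, intersecting with $\bfD_\phi$ gives $\bfD_L$, and neatness translates (via $\oM^\gp_y=Q^\gp/Q_y^\times$, $\oM^\gp_z=P^\gp/P_z^\times$) into $Q_y^\times\subseteq P^\gp$, i.e.\ $L=Q^\gp/P^\gp$, i.e.\ $\bfD_L=\bfD_\phi$, which by (i) is the single-orbit condition.

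The proof of the last claim of (ii) has a genuine gap: the strengthening you aim for, $k(z')=k(z)$, is false. Take $P=\NN v+\NN w$, $Q=\NN v+\NN(w-v)$, $Z=\bfA_P$, $Y=Z_P[Q]=\bfA_Q$ with the tautological chart, and let $y$ be the generic point of the exceptional divisor $V(u^v)$; this is exactly the example of Remark \ref{tilzrem}(iii). Here $\bfD_\phi$ is trivial, the chart is neat at $y$ (indeed $Q_y^\times=\ZZ(w-v)\subseteq P^\gp$ and $\oM^\gp_{y/z}=0=Q^\gp/P^\gp$), and $z'=y$, so $k(z')=\QQ(u^{w-v})$ while $k(z)=\QQ$. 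The step that fails in your argument is the following: for $q\in Q\cap Q_y^\times$ neatness only gives $q=\phi^\gp(p)$ with $p\in P^\gp$, not $p\in P$, so $u^p$ is not an element of $\cO_z$ and ``$u^q=f^*(u^p)$, and $f$ reflects units'' is not available. Writing $p=p_1-p_2$ with $p_i\in P$ one only gets the relation $u^q\cdot f^*(u^{p_2})=f^*(u^{p_1})$ in $\cO_y$, and when $u^{p_2}$ vanishes at $y$ (as $u^v$ does in the example) this gives no control on the residue of $u^q$, which indeed need not lie in $k(z)$ --- in the example it is $u^{w-v}$, transcendental over $\QQ$. (Your reduction of $k(z')$ to the compositum of $k(z)$ and $k(q')$ is fine; it is the inclusion $k(q')\subseteq k(z)$ that is wrong.) The paper proves chart-independence without comparing $k(z')$ to $k(z)$: since $z'$ is a single $\bfD_\phi$-orbit, it defines a point of the quotient stack $\cZ_P[Q]$ with residue field $k(z')$, namely the image of $y$, and the $Z$-stack $\cZ_P[Q]$ together with the morphism $Y\to\cZ_P[Q]$ is independent of the chart; hence so is the subfield $k(z')\subseteq k(y)$. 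Any repair of your argument should go through such an intrinsic description of $z'/\bfD_\phi$ (e.g.\ via $\Log(Z)$) rather than through $k(z)$.
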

\begin{proof}
(i) Note that $\bfD_{Q^\gp}$ acts on $Z_P[Q]$ through its action on $\bfA_Q$, and the stabilizer of $z'$ is $\bfD_{Q^\gp/Q_y^\times}$. Restricting to the action of $\bfD_\phi$, we obtain that the stabilizer is $\bfD_L$.

(ii) The chart is neat at $y$ if and only if the homomorphism $Q^\gp/P^\gp\to\oM^\gp_y/\oM^\gp_x$ is injective. Since the homomorphisms $Q^\gp\to\oM_y^\gp$ and $P^\gp\to\oM_x^\gp$ are surjective, this happens if and only if $\Ker(Q^\gp\to\oM_y^\gp)=Q_y^\times$ is contained in $P^\gp$. The latter means that $L=Q^\gp/P^\gp$ or $\bfD_L=\bfD_\phi$. By (i), this happens if and only if the stabilizer of $Z'$ is the whole $\bfD_\phi$.

The second claim follows from the fact that $z'/\bfD_\phi$ is the stacky point of $\cZ_P[Q]$, which is the image of $y$. In particular, the morphism $y\to z'/\bfD_\phi$ is independent of the chart.
\end{proof}

\subsubsection{Sharp factorization}\label{sharpfactorsec}
Recall that a homomorphism of monoids $\phi\:P\to Q$ is {\em exact} if $P$ is the preimage of $Q$ under $\phi^\gp\:P^\gp\to Q^\gp$. In general, setting $\tilP=(\phi^\gp)^{-1}(Q)$ we obtain a canonical factorization of $\phi$ into the homomorphism $\psi\:P\to\tilP$, such that $\psi^\gp$ is an isomorphism, and the exact homomorphism $\widetilde\phi\:\tilP\to Q$. In particular, if $\phi$ is injective, then the sharpening of $\widetilde\phi$ is injective.

Given a chart $Y\to Z_P[Q]$ modelled on $\phi\:P\to Q$ we obtain a canonical {\em sharp factorization} of $f$ into the composition $Y\stackrel\tilf\to\tilZ=Z_P[\tilP]\stackrel {f_0}\to Z$, where $\tilf$ is sharp (even exact) at any point of $Y$, and $f_0$ is logarithmically \'etale (even a chart of a logarithmic blow up). For any point $y\to Y$ with $z=f(y)$ we will also use the notation $\tilz=\tilf(y)$.

\begin{remark}\label{tilzrem}
(i) Sharp factorization isolates all pathologies related to non-sharp\-ness of $f$ in the logarithmically \'etale morphism $f_0$, which is often more convenient to deal with.

(ii) If $z'$ is the image of $y$ in $Z_P[Q]$, then it is easy to see that $k(\tilz)=k(z')$, and hence $k(\tilz)\subseteq k(y)$ is independent of the chart.

(iii) It may happen that the inclusion $k(z)\subseteq k(\tilz)$ is not an equality even when $\bfD_\phi$ is trivial, and no stack-theoretic issues show up. For example, let $P=\NN^2$ with basis $(v,w)$ and $Q=\NN^2$ with basis $(v,w-v)$. Then $Z:=\bfA_P$ is a plane, $Y:=\bfA_Q$ is the $v$-chart of the blow up, $\bfD_\phi$ is trivial, and $Y=Z_P[Q]=\cZ_P[Q]=\tilZ$. If $y$ is the generic point of the exceptional divisor, then $k(y)=k(\tilz)=\QQ(\frac{w}{v})$ and $k(z)=\QQ$. Note also that the logarithmic fibers --- defined below ---  of $Y\to Z$ are trivial, because they are the fibers of $Y\to Z_P[Q]$.
\end{remark}

\subsubsection{Logarithmic fibers}
Given a morphism of logarithmic schemes $f\:Y\to Z$, by its {\em logarithmic fibers} we mean connected components of the fibers of the morphism $Y\to\Log(Z)$. In particular, if $f$ possesses a chart $Y\to Z_P[Q]$, then the logarithmic fibers are also the connected components of the fibers of $Y\to\cZ_P[Q]$. Furthermore, the fibers of $Z_P[Q]\to\cZ_P[Q]$ are the orbits of $\bfD_\phi$, hence the logarithmic fibers of $f$ are the connected components of the preimages of the orbits of $\bfD_\phi$ under the morphism $Y\to Z_P[Q]$. The logarithmic fiber of a point $y$ --- geometric or Zariski --- will be denoted $S_y$.

\begin{remark}
The description with orbits can be used as an alternative definition of the logarithmic fibers, as independence of choices can be checked by comparing the charts via mutual refinements. This approach does not need to use stacks at all.
\end{remark}

\subsubsection{Another description of logarithmic fibers.}
Combining the above description of logarithmic fibers with Lemma~\ref{neatchartlem}(ii), we obtain the following result, which will be our main tool when working with logarithmic fibers.

\begin{lemma}\label{logfibchartlem}
Assume that $f\:Y\to Z$ is a morphism of logarithmic schemes and $Y\stackrel h\to Z_P[Q]$ is a chart which is neat at $y\in Y$. Then locally at $y$ the logarithmic fiber $S_y$ is the connected component of $y$ in the corresponding fiber of $h$.
\end{lemma}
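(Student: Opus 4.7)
The strategy is to reduce the statement to a direct application of Lemma~\ref{neatchartlem}(ii). Recall from the discussion preceding the lemma that, since $h\:Y\to Z_P[Q]$ is a chart of $f$, the logarithmic fibers of $f$ are exactly the connected components of the preimages, under $h$, of the $\bfD_\phi$-orbits in $Z_P[Q]$. In particular, if $z'=h(y)$, then $S_y$ is the connected component of $y$ in $h^{-1}(\bfD_\phi\cdot z')$.

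The key input is Lemma~\ref{neatchartlem}(ii): neatness of the chart at $y$ is equivalent to the condition that $z'$ constitutes a single $\bfD_\phi$-orbit, that is, $\bfD_\phi\cdot z'=\{z'\}$. (Equivalently, via part (i) of the same lemma, the stabilizer $\bfD_L$ of $z'$ fills out the whole of $\bfD_\phi$.) Substituting this into the previous paragraph, the preimage of the orbit reduces to the scheme-theoretic fiber $h^{-1}(z')$, and so $S_y$ is the connected component of $y$ in $h^{-1}(z')$, which is the desired conclusion.

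The only thing that needs to be said about ``locally at $y$'' is that the orbit description of logarithmic fibers was set up in an \'etale neighborhood where the chart is defined; once we restrict to such a neighborhood, the identification $\bfD_\phi\cdot z'=\{z'\}$ holds at the level of geometric points and no further argument is required. I do not anticipate a serious obstacle here: the content of the lemma is essentially a translation of Lemma~\ref{neatchartlem}(ii) from the language of orbits of $\bfD_\phi$ to the language of fibers of $h$, and the only subtlety --- that neatness is being used at $y$ rather than at nearby points --- is absorbed by the phrase ``locally at $y$'' and by the fact that $S_y$ itself is by definition a single connected set containing $y$.
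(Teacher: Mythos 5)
Your proposal is correct and follows exactly the route the paper intends: the lemma is stated there as an immediate consequence of the orbit description of logarithmic fibers (connected components of preimages of $\bfD_\phi$-orbits under $h$) combined with Lemma~\ref{neatchartlem}(ii), which is precisely your argument, with the hypothesis of Lemma~\ref{neatchartlem} satisfied since neatness forces $P^\gp\into Q^\gp$ and hence $P\into Q$ injective. Nothing further is needed.
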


Logarithmic fibers also possess the following geometric interpretation. It is obtained similarly but will not be used in the paper.

\begin{remark}
(i) The logarithmic fiber $S_y$ is the logarithmic stratum of $y$ in the fiber of $\tilf\:Y\to\tilZ$ over $\tilz$. If $f$ is sharp at $y$, then $S_y$ is a logarithmic stratum of the fiber of $f$ itself.

(ii) Note also that $\tilf$ restricts to regular morphisms between logarithmic strata of $Y$ and $\tilZ$, and the logarithmic fibers of $f$ are the fibers of these morphisms between logarithmic strata. In particular, from this description it is obvious that the logarithmic fibers are regular.
\end{remark}

\subsubsection{Functoriality}
We will also need the following basic properties of logarithmic fibers.

\begin{lemma}\label{strictlogfib}
Let $g\:X\to Y$ be a strict morphism. Then for each logarithmic fiber $C$ of $f\:Y\to Z$, the connected components of its pullback $C\times_YX$ are the logarithmic fibers of $h=f\circ g$.
\end{lemma}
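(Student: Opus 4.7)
The plan is to reduce everything to Lemma~\ref{logfibchartlem} by pulling back a neat chart from $Y$ to $X$. Work locally at geometric points $x \to X$, $y = g(x) \to Y$, $z = f(y) \to Z$. By Theorem~\ref{neatth}, pick a neat chart $Y \to Z_P[Q]$ of $f$ at $y$. Let $h = f \circ g$, and let $z'$ denote the image of $y$ (equivalently, of $x$) in $Z_P[Q]$.

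Next I would verify that the composed chart $X \to Z_P[Q]$ (with $P$ coming from $Z$ and $Q$ coming from $Y$) is itself a neat chart of $h$ at $x$. Strictness of $g$ gives $\oM_{X,x} = \oM_{Y,y}$, and in particular $\oM^\gp_{x/z} = \oM^\gp_{y/z}$. Since $Y \to Z_P[Q]$ was neat at $y$, we have $P^\gp \hookrightarrow Q^\gp$ and $Q^\gp/P^\gp = \oM^\gp_{y/z} = \oM^\gp_{x/z}$, which is exactly the neatness condition for the composed chart at $x$.

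Now apply Lemma~\ref{logfibchartlem} twice: locally at $y$, the log fiber $S_y$ is the connected component of $y$ in the fiber $F_y$ of $Y \to Z_P[Q]$ over $z'$; locally at any preimage $x' \in g^{-1}(S_y)$ of $S_y$, the log fiber $S_{x'}$ of $h$ is the connected component of $x'$ in the fiber $F_{x'}$ of $X \to Z_P[Q]$ over $z'$. The base-change identity
\[
F_{x'} \;=\; X \times_{Z_P[Q]} z' \;=\; X \times_Y \bigl(Y \times_{Z_P[Q]} z'\bigr) \;=\; X \times_Y F_y
\]
is immediate. Since $S_y$ is a connected component of $F_y$, its preimage $X \times_Y S_y = S_y \times_Y X$ is a union of connected components of $F_{x'}$; these connected components are exactly the log fibers of $h$ meeting $g^{-1}(S_y)$. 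Taking $C = S_y$ yields the claim.

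The only nontrivial step is the neatness verification in the second paragraph, but once strictness of $g$ is used to identify the relative characteristics, the rest is formal. Everything that follows is a direct comparison of scheme-theoretic fibers under a flat base change of charts, so no delicate geometry is needed beyond what is already packaged in Lemma~\ref{logfibchartlem}.
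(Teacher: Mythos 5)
Your route is genuinely different from the paper's. The paper's proof is a one-line global argument: strictness means $M_X=g^*M_Y$, so the classifying morphism $X\to\Log(Z)$ defining the logarithmic fibers of $h$ is literally the composition of $g$ with $Y\to\Log(Z)$; hence every fiber of $X\to\Log(Z)$ is the pullback along $g$ of the corresponding fiber of $Y\to\Log(Z)$, and the statement about connected components follows at once, with no charts and no localization. Your chart computation at the single point $x$ (neatness of the composed chart via $\oM_x=\oM_y$, hence $\oM^\gp_{x/z}=\oM^\gp_{y/z}$) is correct, and the base-change identity for the fibers of the chart morphisms is fine.

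The gaps appear when you quantify over ``any preimage $x'\in g^{-1}(S_y)$'' while still using the one chart chosen at $x$. That chart exists only on an \'etale neighborhood of $x$, which need not contain all of $g^{-1}(S_y)$, and you never verify neatness of the composed chart at $x'$ --- you checked it only at $x$. (It does hold for the points of the fiber over $z'$ lying in the chart neighborhood, since by Lemma~\ref{neatchartlem}(ii) neatness depends only on the image point $z'$, but this has to be said, or else the chart must be re-chosen at each point.) More importantly, everything Lemma~\ref{logfibchartlem} gives you is ``locally at'' a point, whereas the statement concerns the global connected components of $C\times_YX$ and the global logarithmic fibers of $h$; the passage from the local identifications to the global one (each log fiber of $h$ meeting $g^{-1}(C)$ equals a whole connected component of $C\times_YX$) is asserted in your last step but not proved, and it requires an openness/closedness-plus-connectedness gluing argument. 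All of this can be repaired, but the cleanest repair is precisely the paper's observation that strictness makes $X\to\Log(Z)$ factor through $Y\to\Log(Z)$, which removes both the chart bookkeeping and the local-to-global step.
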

\begin{proof}
The morphism $X\to\Log(Z)$ is the composition of $g$ and $Y\to\Log(Z)$ because $g$ is strict. The lemma follows easily.
\end{proof}

\begin{lemma}\label{fibchangelem}
Assume that $f\:Y\to Z$ and $g\:Z'\to Z$ are morphisms of logarithmic schemes and $f'\:Y'\to Z'$ is the base change. Then each logarithmic fiber of $f'$ is an open subscheme of a ground field extension of a logarithmic fiber of $f$.
\end{lemma}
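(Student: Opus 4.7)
The plan is to work \'etale locally at a chosen geometric point $y'\to Y'$ with images $y\to Y$, $z'\to Z'$ and $z\to Z$, and to reduce the claim to a computation of fibers of a relative toric morphism. By Theorem~\ref{neatth} applied to $f$, after shrinking we may choose a chart $Z\to\bfA_P$ and a neat chart $Y\to Z_P[Q]$ of $f$ at $y$. Again by Theorem~\ref{neatth} applied to $g$, possibly after further \'etale shrinking, we extend $Z\to\bfA_P$ to a chart $Z'\to\bfA_{P'}$ of $g$ modeled on some homomorphism $P\to P'$. Taking the saturated base change produces a chart $Y'\to Z'_{P'}[Q']$ of $f'$, where $Q'=(P'\oplus_PQ)^{\sat}$.

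The natural map $Q\to Q'$ yields a $Z$-morphism $Z'_{P'}[Q']\to Z_P[Q]$ and hence a commutative square
$$\xymatrix{Y'\ar[r]\ar[d] & Z'_{P'}[Q']\ar[d]\\ Y\ar[r] & Z_P[Q].}$$
The right vertical map is, up to the saturation issue, the fs pullback of $Z'\to Z$. The next step is to use Theorem~\ref{neatth} one more time to refine the $Y'$-chart to a chart $Y'\to Z'_{P'}[\widetilde Q']$ that is neat at $y'$, keeping the chart of $Z'$ fixed. By Lemma~\ref{logfibchartlem}, $S_{y'}$ is the connected component of $y'$ in the fiber of $Y'\to Z'_{P'}[\widetilde Q']$ over the image of $y'$, and similarly $S_y$ is the connected component of $y$ in the fiber of $Y\to Z_P[Q]$ over the image of $y$.

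The two fibers are now directly comparable: both are fibers of a strict morphism in each argument, and the fiber over $Z'_{P'}[\widetilde Q']$ maps to the fiber over $Z_P[Q]$ as a base change along the residue field extension $k(\widetilde z)\subseteq k(\widetilde z')$, where $\widetilde z,\widetilde z'$ are the sharp-factorization images of $y$, $y'$ from Remark~\ref{tilzrem}. Concretely, the fiber of $Y\to Z_P[Q]$ at the image of $y$ pulls back (as schemes) to the fiber of $Y'\to Z'_{P'}[\widetilde Q']$ at the image of $y'$, tensored over $k(\widetilde z)$ with $k(\widetilde z')$, because the saturation of $P'\oplus_PQ$ only modifies the toric stacky structure and not the underlying scheme-theoretic fiber above a given orbit. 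Taking the connected component of $y'$ identifies $S_{y'}$ with an open subscheme of $S_y\otimes_{k(\widetilde z)}k(\widetilde z')$, which is a ground field extension of $S_y$ in the required sense.

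The main obstacle is bookkeeping around the saturation: the raw base-change chart $P'\oplus_PQ$ need not be saturated, and the resulting $Y'$-chart need not be neat at $y'$, so one must pass to a neat refinement before invoking Lemma~\ref{logfibchartlem}. Once this is done and one works with the sharp factorizations of $f$ and $f'$, the required identification is a direct computation with relative toric schemes and the fact that the formation of stacky toric fibers over an orbit is compatible with extensions of residue fields.
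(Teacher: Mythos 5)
Your overall strategy (reduce to charts, use Theorem~\ref{neatth} and Lemma~\ref{logfibchartlem}, compare fibers of chart morphisms) is different from the paper's proof, which avoids charts entirely: the paper factors the tautological map $Y'\to\Log(Z')$ through the strict base change $\alpha\:Y'\to\Log(Z)\times_ZZ'$ and invokes Lemma~\ref{logmaplem} (étaleness of $\gamma\:\Log(Z)\times_ZZ'\to\Log(Z')$), so that logarithmic fibers of $f'$ are clopen pieces of ground field extensions of fibers of $Y\to\Log(Z)$. A chart proof along your lines can in principle be made to work, but as written it has a genuine gap at its central step.

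The gap is the sentence claiming that the fiber of $Y'\to Z'_{P'}[\widetilde Q']$ at the image of $y'$ is the fiber of $Y\to Z_P[Q]$ tensored with $k(\widetilde z')$, ``because the saturation of $P'\oplus_PQ$ only modifies the toric stacky structure and not the underlying scheme-theoretic fiber above a given orbit.'' This is precisely the assertion that needs proof, and the stated justification is not correct as a general principle: the saturation morphism does change the underlying scheme of $Y'$ relative to $Y\times_ZZ'$ (it is finite and in general not an isomorphism). What actually saves the argument is the identity $Y'=Y\times_{Z_P[Q]}Z'_{P'}[Q']$ of underlying schemes for the \emph{induced} chart $Q'=(Q\oplus_PP')^{\sat}$ --- this is the chart incarnation of the displayed identity in the proof of Lemma~\ref{logmaplem} --- which you never state or use. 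Moreover, you transfer the comparison to the \emph{neat} refinement $\widetilde Q'$, but the base-change identity holds for $Q'$, which need not be neat at $y'$ (e.g.\ for $P=\NN\xrightarrow{2}Q=\NN$ and a Kummer base change, $Q'^{\gp}/P'^{\gp}$ acquires extra torsion not present in $\oM^{\gp}_{y'/z'}$), and the fibers of the two chart morphisms $Y'\to Z'_{P'}[Q']$ and $Y'\to Z'_{P'}[\widetilde Q']$ differ in general. To close the argument you must either compare these two chart morphisms directly, or work with the non-neat chart $Q'$ and the orbit description of logarithmic fibers: $S_{y'}$ is the component of $y'$ in the preimage of the $\bfD_{\phi'}$-orbit $O'$ of the image point; equivariance together with neatness of the chart of $f$ at $y$ (so that the $\bfD_\phi$-orbit of the image $w$ of $y$ is $\{w\}$, Lemma~\ref{neatchartlem}) forces $O'$ to be finite over $k(w)$, whence the preimage of $O'$ is $Y_w\times_{k(w)}O'$ and its component through $y'$ is clopen in a ground field extension of $S_y$. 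Without this (or an equivalent verification) the key step of your proof is only asserted, not proved.
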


\begin{proof}
Starting with the strict morphism $Y\to\Log(Z)$ and changing the base from $Z$ to $Z'$ we obtain a strict morphism $\alpha\:Y'\to\Log(Z)\times_ZZ'$ of logarithmic stacks over $Z'$. Since $\alpha$ is strict, the tautological morphisms $\beta\:Y'\to\Log(Z')$ and $\gamma\:\Log(Z)\times_ZZ'\to\Log(Z')$ are compatible: $\beta=\gamma\circ\alpha$. By Lemma \ref{logmaplem} below $\gamma$ is \'etale and hence the log fibers of $f'$, which are the fibers of $\beta$, are open in the fibers of $\alpha$, which are ground field extensions of logarithmic fibers of $f$.
\end{proof}

\begin{lemma}\label{logmaplem}
Let $f\:X\to Y$ be a morphism of logarithmic schemes and let $\gamma\:\Log(Y)\times_YX\to\Log(X)$ be the tautological morphism corresponding to the saturated base change $\Log(Y)\times_YX\to X$. Then $\gamma$ is \'etale.
\end{lemma}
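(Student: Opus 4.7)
The plan is first to make $\gamma$ explicit using the universal property of $\Log(X)$. The fs base change $\Log(Y)\times_Y X$ carries a tautological log structure, namely the saturated pushout of $f^*M_Y\to M_X$ with $f^*M_Y\to M^{\text{univ}}_Y$, where $M^{\text{univ}}_Y$ denotes the universal log structure on $\Log(Y)$; this makes $\Log(Y)\times_Y X$ into a log $X$-stack, and the associated classifying morphism is $\gamma$. Since étaleness is strictly étale-local, I will work in a neighborhood where, by Theorem~\ref{neatth}, $f$ admits a chart $\phi\:P\to Q$ with $X\to Y_P[Q]$ strict.

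The core of the argument will be a chart computation based on Olsson's local description of $\Log$. Specifically, $\Log(Y)$ admits a strictly étale cover by relative toric stacks $\cZ_P[R]\times_{\bfA_P} Y$ indexed by morphisms $P\to R$ of fs monoids, where the universal log structure on each chart is pulled back from the toric log structure on $\bfA_R$; the analogous description holds for $\Log(X)$ via charts $Q\to R'$. I will pull back such a chart of $\Log(Y)$ along $f$ in the fs category, using that $\bfA_R\times^{\sat}_{\bfA_P}\bfA_Q=\bfA_{R'}$ for the saturated pushout $R':=R\oplus^{\sat}_P Q$, to arrive at
$$\bigl(\cZ_P[R]\times_{\bfA_P} Y\bigr)\times^{\sat}_Y X \;=\; \bigl[\bfA_{R'}\times_{\bfA_Q} X \;\big/\; \bfD_{R^\gp/P^\gp}\bigr].$$
Since cokernel commutes with pushout, $(R')^\gp/Q^\gp=R^\gp/P^\gp$, and the right-hand side is precisely the chart $\cZ_Q[R']\times_{\bfA_Q} X$ of $\Log(X)$. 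Tracing through the universal log structures will confirm that $\gamma$ restricts on each chart to the tautological identification, so $\gamma$ is locally an open immersion onto an étale chart of $\Log(X)$ and hence étale.

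The main obstacle I anticipate is the bookkeeping around the log structures and stabilizer groups: verifying that the saturated pushout log structure used to define $\gamma$ matches, chart by chart, the toric log structure on $\cZ_Q[R']\times_{\bfA_Q}X$, and that the torus quotients $\bfD_{R^\gp/P^\gp}$ and $\bfD_{(R')^\gp/Q^\gp}$ are compatibly identified under the isomorphism above. Once this identification is secured, the étaleness of $\gamma$ follows formally from Olsson's classification of local charts of $\Log(-)$.
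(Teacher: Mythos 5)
Your proposal is correct and follows essentially the same route as the paper: reduce étale-locally to the case where $f$ has a global chart $P\to Q$, invoke Olsson's chart description of $\Log(Y)$ (resp.\ $\Log(X)$) by relative toric stacks indexed by monoid maps $P\to R$ (resp.\ $Q\to R'$), and identify the saturated pullback of each chart of $\Log(Y)$ with the chart of $\Log(X)$ given by the saturated pushout $R'=(R\oplus_P Q)^{\sat}$, so that $\gamma$ lifts to an open immersion of étale presentations. The extra bookkeeping you flag (matching the pushout log structures and the torus quotients via $(R')^{\gp}/Q^{\gp}=R^{\gp}/P^{\gp}$) is exactly what the paper's displayed identity encodes implicitly, so there is no gap.
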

\begin{proof}
The claim is \'etale-local hence it suffices to consider the case when $f$ possesses a global chart $X\to Y_P[Q]$. Then $\Log(Y)$ possesses an \'etale presentation $\coprod_{P\to P'}\cY_P[P']$, see \cite[Corollary~5.25]{Olsson-logarithmic}, and similarly for $\Log(X)$. The map $\gamma$ then lifts to the open immersion of \'etale presentations $$\coprod_{P\to P'}(\cY_P[P']\times_XY)^\sat=\coprod_{P\to P'}\cX_Q[(Q\otimes_PP')^\sat]\into\coprod_{Q\to Q'}\cX_Q[Q'].$$
\end{proof}

\begin{remark}
(i) We are grateful to Sam Molcho for the idea of proving Lemma \ref{logmaplem}. Note that it also holds for non-saturated base changes and the same argument applies with $*^\sat$ replaced by $*^\mathrm{int}$.

(ii) Lemma \ref{logmaplem} also follows from results of \cite{Olsson-LCC}. Here is a short argument provided by Martin Olsson (we consider the non-saturated case). Define $L$ to be the stack over $X$ classifying commutative squares of log structures
$$
\xymatrix{
f^*M_Y\ar[d]\ar[r]& M_X\ar[d]\\
M_2\ar[r]& M_3}
$$
By \cite[Lemma~3.12 (i)]{Olsson-LCC}
$\Log(Y)\times_YX\rightarrow L$ sending $f^*M_Y\rightarrow M_2$ to the pushout square is an open immersion. It remains to notice that $\gamma$ is the composition of this open immersion with the map $q_1$ in \cite[Example~2.9]{Olsson-LCC} base-changed to $X$.
\end{remark}

\subsection{Logarithmically regular morphisms}\label{logregsec}
The theory of logarithmically regular morphisms was developed in \cite{Molho-Temkin} as a generalization of logarithmically smooth morphisms. We  recall some relevant points.

\subsubsection{The definition}
A morphism $f\:Y\to Z$ of logarithmic schemes is called {\em logarithmically regular} if the morphism $\Log(f)\:\Log(Y)\to\Log(Z)$ is {\em regular}, that is, it is flat and has regular geometric fibers. It is proved in \cite{Molho-Temkin} that this happens if and only if $Y\to\Log(Z)$ is regular. Also it is proved there that a morphism is logarithmically smooth if and only if it is logarithmically regular and locally of finite presentation.

\subsubsection{Functoriality}
Basic properties of logarithmic regularity are proved in \cite[Corollary 5.1.3 and \S5.3]{Molho-Temkin}, but here we only recall those that we will use. Recall also that logarithmic regularity of logarithmic schemes was introduced by Kato for Zariski logarithmic structures, see \cite[Definition~2.1]{Kato-toric}, but the same definition works in general, see \cite[Definition~2.2]{Niziol}.

\begin{lemma}\label{logreglem}
(i) Logarithmic regularity is flat local on the base and \'etale local on the source: if $f\:Y\to Z$, $a\:Z'\to Z$ and $b\:Y'\to Y\times_ZZ'$ are morphisms of noetherian logarithmic schemes, $a$ is flat, $b$ is \'etale and both are strict and surjective, then $f$ is logarithmically regular if and only if $Y'\to Z'$ is logarithmically regular.

(ii) Logarithmically regular morphisms are preserved by compositions and noetherian base changes.

(iii) A logarithmic scheme $Y$ of characteristic 0 is logarithmically regular if and only if $Y \to \Spec(\QQ)$ is logarithmically regular, where  $\Spec(\QQ)$ is endowed with the trivial logarithmic structure.
\end{lemma}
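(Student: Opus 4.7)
The plan is to reduce each claim to the analogous property of \emph{regular morphisms of Deligne-Mumford stacks}, via the definition: $f\:Y\to Z$ is logarithmically regular iff $Y\to\Log(Z)$ is regular.

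For part (i), the key input is the behavior of the functor $\Log(-)$ under strict morphisms. If $a\:Z'\to Z$ is strict, then $\Log(Z')=Z'\times_Z\Log(Z)$, so when $a$ is strict flat (resp.\ strict \'etale) surjective, the induced map $\Log(Z')\to\Log(Z)$ is flat (resp.\ \'etale) and surjective; furthermore, when $b\:Y'\to Y\times_ZZ'$ is strict \'etale surjective, the morphism $Y'\to\Log(Z')$ is the pullback of $Y\to\Log(Z)$ along $\Log(Z')\to\Log(Z)$, composed with a strict \'etale surjection. Then standard faithfully-flat/\'etale descent for regular morphisms of noetherian schemes (regularity descends under faithfully flat maps and ascends under \'etale ones) gives the equivalence.

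For part (ii), composition is immediate: if $g\:Z\to W$ is logarithmically regular then $\Log(g)\:\Log(Z)\to\Log(W)$ is regular, and composing with $Y\to\Log(Z)$ preserves regularity, so $Y\to\Log(W)$ is regular, i.e., $g\circ f$ is logarithmically regular. For arbitrary noetherian base change $a\:Z'\to Z$, we invoke Lemma~\ref{logmaplem}: the natural map $\gamma\:\Log(Z)\times_ZZ'\to\Log(Z')$ is \'etale. Pulling back $Y\to\Log(Z)$ via the morphism $\Log(Z')\to\Log(Z)$ produces a regular morphism (since regular morphisms are stable under arbitrary base change in the target: flatness is preserved and geometric fibers are unchanged), and Lemma~\ref{logmaplem} identifies this pullback---after adjusting by the \'etale $\gamma$---with the tautological morphism $Y'\to\Log(Z')$ for the saturated base change $Y'=(Y\times_ZZ')^\sat$. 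Hence $Y'\to Z'$ is logarithmically regular.

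For part (iii), when $B=\Spec(\QQ)$ is equipped with the trivial logarithmic structure, the stack $\Log(\Spec(\QQ))$ is the stack parametrizing all fs logarithmic structures on $\QQ$-schemes, and the morphism $Y\to\Log(\Spec(\QQ))$ is simply the classifying map of the logarithmic structure on $Y$. Regularity of this classifying map is, by the main result of \cite{Molho-Temkin}, precisely the classical Kato definition of logarithmic regularity of $Y$ in characteristic zero.

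The main obstacle is the saturated base change issue in part (ii): one must carefully identify $(Y\times_ZZ')^\sat\to\Log(Z')$ with the pullback of $Y\to\Log(Z)$, for which Lemma~\ref{logmaplem} is exactly the needed tool. The remaining content is standard descent theory for regular morphisms.
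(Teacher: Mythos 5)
Your proposal is essentially sound, but note that the paper itself gives no proof of this lemma: it is recalled verbatim from \cite[Corollary~5.1.3 and \S5.3]{Molho-Temkin}, so there is no in-paper argument to match. What you supply is the natural expansion of that reference, using the paper's own definition (logarithmic regularity of $f$ means $Y\to\Log(Z)$ is regular), the identity $\Log(Z')=Z'\times_Z\Log(Z)$ for strict $Z'\to Z$, standard permanence and fppf/\'etale descent properties of regular morphisms of noetherian stacks, and Lemma~\ref{logmaplem} for saturated base change. There is no circularity in invoking Lemma~\ref{logmaplem}: its proof is a direct chart computation with Olsson's presentation of $\Log$ and does not use Lemma~\ref{logreglem}.

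One step is stated imprecisely in part (ii). Pulling back $Y\to\Log(Z)$ ``via the morphism $\Log(Z')\to\Log(Z)$'' gives $Y\times_{\Log(Z)}\Log(Z')$, which is not directly the object you need; the correct diagram is to pull back along the projection $\Log(Z)\times_ZZ'\to\Log(Z)$. Since $Y\to\Log(Z)$ is strict (it classifies $M_Y$, which is the pullback of the tautological logarithmic structure), the fs fiber product $Y\times^{\rm fs}_{\Log(Z)}(\Log(Z)\times_ZZ')$ has the plain stack-theoretic fiber product as underlying stack, and by transitivity of fs fiber products it is exactly $Y'=(Y\times_ZZ')^{\sat}$. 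Thus $Y'\to\Log(Z)\times_ZZ'$ is a base change of a regular morphism, hence regular, and composing with the \'etale $\gamma\:\Log(Z)\times_ZZ'\to\Log(Z')$ of Lemma~\ref{logmaplem} gives regularity of $Y'\to\Log(Z')$, which is what you want. With this repair, and with part (iii) treated (as you do, and as the paper does) by citation to \cite{Molho-Temkin}, the argument goes through.
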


\subsubsection{Parameters}
Now we will introduce the notion of local parameters of $f\:Y\to Z$ at $y$. In principle, these are the parameters of the regular morphism $Y\to\Log(Z)$. So, there are parameters corresponding to the tangent space and to the extension of the residue fields, that we call regular and constant parameters, respectively. In addition, there is a family of parameters corresponding to the stabilizer of the image of $y$, which is $\bfD_{\oM^\gp_{y/z}}$. They can be naturally interpreted as monomial parameters.

\subsubsection{Regular parameters}\label{paramsec}
Set $S=S_y$ for shortness. By a {\em family of regular parameters} of $f$ at $y$ we mean any set $t=(t_1\.t_d)\subset\cO_y$, whose image is a family of regular parameters of the regular ring $\cO_{S,y}$, that is, its image is a basis of the cotangent space $T^*_{S,y}=m_{S,y}/m^2_{S,y}$ to $S$ at $y$.

\subsubsection{Monomial parameters}\label{logparam}
By a {\em family of monomial parameters} at a geometric point $y$ we mean any set of monomials $q=(q_1\.q_n)\subset M_y\subset\cO_y$ such that its image in the vector space $\oM^\gp_{y/z}\otimes\QQ$ is a basis.

\subsubsection{Constant parameters}\label{constparam}
Recall that the subfield $k(\tilz)\subseteq k(y)$ is independent of the choice of chart at $y$. By a {\em family of constant parameters} at $y$ we mean any set $u=\{u_i\}_i\subset\cO_y$, whose image  $u(y)\subset k(y)$ is a transcendence basis over $k(\tilz)$.

\subsubsection{Full family of parameters}
The full family of parameters $(t,q,u)\subset\cO_y$ at $y$ consists of families of parameters of the three types above. Any element (resp. subset) of such a family will be called a {\em parameter at $y$} (resp. a {\em partial family} of parameters at $y$), and we will specify the type when needed.

\subsubsection{Relative logarithmic ranks}\label{dimsec}
The sizes of families of parameters of $f$ at $y$ are invariants that we denote
$$
r_y=|t|=\dim_y(S_y),\ \
r'_y=|u|=\trdeg(k(y)/k(\tilz)),\ \
r''_y=|q|=\rk_\QQ(\oM^\gp_{y/z}).$$
The number $\logdim_y(Y/Z):=r_y+r''_y$ is finite, and we will use the number $\logdim(Y/Z)=\max_{y\in Y}\logdim_y(Y/Z)$ to run induction on ``dimension''. If $r'_y$ is finite, then $r_y+r'_y+r''_y$ is another reasonable invariant, which sometimes behaves better. For example, if $f$ is logarithmically smooth, then the morphism $Y\to\Log(Z)$ is smooth and its relative dimension at $y$ is precisely $r_y+r'_y+r''_y$.

\subsubsection{Charts}
Our work with logarithmically regular morphisms will be based on the following result proved in \cite[Theorem 5.2.5]{Molho-Temkin}.

\begin{theorem}\label{logregth}
Assume that $f\:Y\to Z$ is a morphism of logarithmic schemes with a chart $h\:Y\to Z_P[Q]$, and $y\to Y$ is a geometric point such that the chart is neat at $y$. Then $f$ is logarithmically regular at $y$ if and only if the morphism $h$ is regular at $y$.
\end{theorem}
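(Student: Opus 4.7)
The plan is to reduce the statement to a comparison of regularity along the tower $Y \xrightarrow{h} Z_P[Q] \xrightarrow{\pi} \cZ_P[Q] \to \Log(Z)$, and then to exploit the neatness hypothesis to handle the two directions separately.

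First I would set up the reduction. By definition $f$ is logarithmically regular at $y$ if and only if the morphism $Y\to\Log(Z)$ is regular at $y$. By Remark~\ref{torstackrem} the canonical map $\cZ_P[Q]\to\Log(Z)$ associated with our chart is \'etale, and the composition $Y\to Z_P[Q]\to\cZ_P[Q]\to\Log(Z)$ agrees \'etale-locally with $Y\to\Log(Z)$. Hence $f$ is logarithmically regular at $y$ if and only if the morphism $Y\to\cZ_P[Q]$ is regular at $y$, and the theorem reduces to the equivalence of regularity of $h$ at $y$ and of $\pi\circ h$ at $y$.

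The easy direction is: if $h$ is regular at $y$, then $\pi\circ h$ is regular at $y$. Indeed, $\pi\:Z_P[Q]\to\cZ_P[Q]=[Z_P[Q]/\bfD_\phi]$ is a $\bfD_\phi$-torsor and therefore smooth, so this follows from the fact that the composition of a regular morphism with a smooth morphism is regular.

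For the converse I would use the neatness hypothesis as follows. By Lemma~\ref{neatchartlem}(ii), neatness at $y$ says precisely that the stabilizer of $z'=h(y)$ under $\bfD_\phi$ equals all of $\bfD_\phi$, i.e.\ $z'$ is a fixed point of the action of $\bfD_\phi$ on $Z_P[Q]$ locally at $z'$. Working on the strict henselization $A=\cO^{sh}_{Z_P[Q],z'}$, the $\bfD_\phi$-action decomposes $A$ into a direct sum $A=\bigoplus_{n\in N}A_n$ of its $N$-graded pieces, where $N=Q^\gp/P^\gp$; since $z'$ is a fixed point the maximal ideal is homogeneous and the degree-zero subring $A_0$ is canonically identified with the strict henselization of $\cZ_P[Q]$ at the image of $z'$. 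Pulling $\pi$ back along $Y\to\cZ_P[Q]$ produces a $\bfD_\phi$-torsor over $Y$ that is trivialized by the section $h$, hence $Y\times_{\cZ_P[Q]}Z_P[Q]\simeq Y\times\bfD_\phi$ and the projection to $Z_P[Q]$, which is the pullback of a regular morphism along the smooth map $\pi$, is itself regular. It then remains to identify $h$ locally with the ``identity section'' followed by this regular projection and to extract regularity at $y$ from regularity of the projection at $(y,1)$; here the grading above lets one exhibit a family of parameters of $h$ at $y$ out of parameters for the projection plus a chosen splitting $N\to \bfD_\phi$-eigenvectors in $A$ lifting the monomial parameters in the cotangent space.

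I expect the last step in the converse direction to be the main obstacle: while the torsor description makes the comparison at the level of pulled-back morphisms transparent, translating regularity of $Y\times\bfD_\phi\to Z_P[Q]$ into regularity of $h$ itself requires the explicit local structure of a $\bfD_\phi$-fixed point together with the coincidence $L=Q^\gp/P^\gp$ supplied by neatness. Once that local splitting is in place the argument becomes a direct regularity check on strict henselian local rings. The same descent argument is what ultimately identifies the full family of parameters of $f$ at $y$ (regular, monomial and constant, as in~\S\ref{paramsec}--\ref{constparam}) with a family of parameters of $h$ at $y$, and yields the characterization in one go.
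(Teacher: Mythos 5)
First, a point of comparison: the paper does not prove Theorem~\ref{logregth} at all — it is quoted from \cite[Theorem~5.2.5]{Molho-Temkin} (see the sentence immediately preceding the statement), so there is no internal argument to measure your proposal against. Judged on its own, your reduction is the natural one and its first half is sound: since log regularity of $f$ means regularity of $Y\to\Log(Z)$, and $\cZ_P[Q]\to\Log(Z)$ is \'etale (Remark~\ref{torstackrem}), the theorem is equivalent to ``$h$ regular at $y$ $\Leftrightarrow$ $\pi\circ h$ regular at $y$'', and the forward implication is immediate from smoothness of the torsor $\pi\:Z_P[Q]\to\cZ_P[Q]$. The use of Lemma~\ref{neatchartlem}(ii) to translate neatness into the statement that $z'=h(y)$ is a $\bfD_\phi$-fixed point is also the right move.

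The genuine gap is the converse direction, which is exactly where the content and the neatness hypothesis live, and which you flag as ``the main obstacle'' without closing it. Two concrete problems. (a) Your factorization writes $h$ as the unit section $Y\into Y\times\bfD_\phi$ followed by the regular projection $(y',g)\mapsto g\cdot h(y')$; the section is a closed immersion, so no composition formalism yields regularity of $h$, and ``exhibiting a family of parameters of $h$ at $y$'' does not address the real issue: regularity is flatness plus geometric regularity of fibers, and flatness of $h$ is precisely what fails for non-neat charts (candidate parameters can be written down even for non-flat $h$, and the appendix notion of parameters of $h$ presupposes regularity). What actually has to be proved is that, at a $\bfD_\phi$-fixed image point, flatness (and fiber regularity) of the twisted map $Y\times\bfD_\phi\to Z_P[Q]$ at $(y,1)$ forces the same for $h$ at $y$ — e.g.\ via a graded local criterion of flatness for the $Q^\gp/P^\gp$-graded local ring of $Z_P[Q]$ at $z'$; none of this is carried out. (b) The identification of the degree-zero subring with ``the strict henselization of $\cZ_P[Q]$ at the image of $z'$'' is not meaningful as stated: at that point $\cZ_P[Q]$ is an Artin stack with a positive-dimensional diagonalizable stabilizer, the invariant subring is the local ring of a coarse/good quotient, and regularity of $Y\to\cZ_P[Q]$ is not regularity over that subring, so this part of the local analysis does not do the work you assign to it. In short, the strategy is plausible and likely repairable, but the decisive descent-of-regularity step along the torsor at a fixed point is missing, so the proposal does not yet constitute a proof.
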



\begin{corollary}\label{logfibcor}
If $f\:Y\to Z$ is a logarithmically regular morphism, then its logarithmic fibers are regular schemes.
\end{corollary}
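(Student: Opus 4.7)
The plan is to reduce the question to a purely schematic statement about the regularity of fibers of a regular morphism, by using the existence of neat charts together with the characterization of logarithmic regularity given in Theorem~\ref{logregth} and the description of logarithmic fibers in Lemma~\ref{logfibchartlem}.

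The issue is étale-local on $Y$, so fix a geometric point $y\to Y$ with image $z=f(y)$ and let $S_y$ denote the logarithmic fiber through $y$. By Theorem~\ref{neatth}, after passing to an étale neighborhood of $y$ (and an étale neighborhood of $z$) we may assume that $f$ admits a chart $h\:Y\to Z_P[Q]$ which is neat at $y$. Since $f$ is logarithmically regular at $y$, Theorem~\ref{logregth} then gives that the underlying morphism of schemes $h$ is regular at $y$.

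Now apply Lemma~\ref{logfibchartlem}: locally at $y$, the logarithmic fiber $S_y$ coincides with the connected component of $y$ in the scheme-theoretic fiber $h^{-1}(h(y))$. But the fiber of a regular morphism over a point is, by definition of regularity, a regular scheme; hence each of its connected components is regular. Therefore $S_y$ is regular at $y$, and since this holds for every geometric point of $S_y$, the logarithmic fiber $S_y$ is a regular scheme, as claimed.

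I don't anticipate any real obstacle here: the entire content of the corollary has already been packaged into Theorem~\ref{logregth} (the existence, after choosing a neat chart, of an honest regular schematic morphism whose fibers compute the logarithmic fibers) and into Lemma~\ref{logfibchartlem} (which identifies the logarithmic fiber with a connected component of the schematic fiber of this chart). The only slightly delicate point is being careful that the equality between $S_y$ and a connected component of the schematic fiber is only local near $y$, but since regularity is a local property this suffices.
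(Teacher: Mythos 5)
Your proof is correct and follows essentially the same route as the paper: étale-localize, invoke Theorem~\ref{neatth} to get a neat chart, use Lemma~\ref{logfibchartlem} to identify the logarithmic fiber locally with (a connected component of) the chart fiber, and conclude by Theorem~\ref{logregth} since regular morphisms have regular fibers. No gaps; the care you take about the identification being only local near $y$ is exactly the right (and only) point to watch.
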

\begin{proof}
We can work locally at a geometric point $y\to Y$. By Theorem~\ref{neatth} we can assume that there exists a chart $h\:Y\to Z_P[Q]$ neat at $y$. Then the logarithmic fiber $S_y$ is the fiber of $h$ by Lemma~\ref{logfibchartlem}, and hence $S_y$ is regular by Theorem~\ref{logregth}.
\end{proof}

\subsubsection{Formal description}
As another corollary, we can describe logarithmically regular morphisms on formal completions.

\begin{lemma}\label{logregchart}
Let $f\:Y\to Z$ be a logarithmically regular morphism of noetherian schemes of characteristic zero, $y\to Y$ a geometric point and $z\in Z$ its image. Assume that $f$ is sharp at $y$ and the logarithmic structure at $z$ is Zariski. Then any choice of a regular family of parameters $t_1\. t_n\in\hatcO_y$, a sharp chart $P=\oM_z\to\cO_z$, $Q=\oM_y\to\cO_y$ of $f$ at $y$ and compatible fields of coefficients $k(z)\into\hatcO_z$ and $k(y)\into\hatcO_y$ induces an isomorphism $$(\hatcO_z)_P\llbracket Q\rrbracket\llbracket t_1\. t_n\rrbracket\wtimes_{k(z)}k(y)\toisom\hatcO_y.$$
\end{lemma}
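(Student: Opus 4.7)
The plan is to translate the statement, via the given chart, into an instance of Cohen's structure theorem for a regular local homomorphism of complete noetherian local rings in equicharacteristic zero. Sharpness of $f$ at $y$ implies that $\phi\:P=\oM_z\into Q=\oM_y$ is injective, and the given chart $h\:Y\to W:=Z_P[Q]$ is sharp and therefore neat at $y$. By Theorem~\ref{logregth}, $h$ is regular at $y$. Let $z'=h(y)$. Because the logarithmic structure on $Z$ is Zariski and $P=\oM_z$, the chart $Z\to\bfA_P$ is exact at $z$, and $z'$ is cut out in $W$ by the maximal ideal $\m_z+\langle Q_+\rangle$ of $\cO_z\otimes_{\ZZ[P]}\ZZ[Q]$. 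Injectivity of $\phi$ together with sharpness of $Q$ gives $k(z')=k(z)$ (since $\ZZ[Q]/\langle Q_+\rangle=\ZZ$ and this factors through $\ZZ[P]\to\ZZ$), and completing at this maximal ideal yields, by the convention of \S\ref{chafrtssec},
$$\hatcO_{W,z'}=(\hatcO_z)_P\llbracket Q\rrbracket.$$

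Regularity of $h$ at $y$ now asserts that the induced homomorphism $\hatcO_{W,z'}\to\hatcO_y$ is a regular local homomorphism between complete noetherian local rings in equicharacteristic zero, with residue-field extension $k(z)\to k(y)$. By Lemma~\ref{logfibchartlem} the log fiber $S_y$ coincides locally at $y$ with the fiber of $h$ over $z'$, and this fiber is regular by Corollary~\ref{logfibcor}; hence $t_1\.t_n$ map to a regular system of parameters for the closed fiber $\hatcO_y/\m_{W,z'}\hatcO_y$ of dimension $n$. With the prescribed compatible coefficient fields $k(z)\subset\hatcO_z\subset\hatcO_{W,z'}$ and $k(y)\subset\hatcO_y$ fixed, the standard form of Cohen's structure theorem for a regular local homomorphism of complete noetherian equicharacteristic-zero local rings yields the isomorphism
$$\hatcO_y\;\cong\;\bigl(\hatcO_{W,z'}\wtimes_{k(z)}k(y)\bigr)\llbracket t_1\.t_n\rrbracket.$$
Since completed tensor product with $k(y)$ over $k(z)$ commutes with adjoining formal variables, substituting $\hatcO_{W,z'}=(\hatcO_z)_P\llbracket Q\rrbracket$ produces exactly the claimed isomorphism.

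The main obstacle is the Cohen-type structure result invoked in the second paragraph: establishing, for a regular local homomorphism with nontrivial residue-field extension, the explicit isomorphism with a completed tensor product power series extension. In characteristic zero this is essentially classical and can be obtained by building the isomorphism modulo successive powers of the maximal ideal, using the lifts of the $t_i$ together with formal smoothness of $k(y)/k(z)$, and then passing to the inverse limit; once this is in place, the remaining verifications about the sharp chart, the point $z'$, and the conventional notation $(\hatcO_z)_P\llbracket Q\rrbracket$ are routine.
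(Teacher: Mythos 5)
Your proposal follows essentially the same route as the paper's proof: pass to a sharp (hence neat) chart $h\:Y\to Z_P[Q]$, invoke Theorem~\ref{logregth} to get regularity of $h$ at $y$, and conclude by the formal structure theorem for regular homomorphisms of complete local rings in characteristic zero; your added verifications that $k(z')=k(z)$ and $\hatcO_{W,z'}=(\hatcO_z)_P\llbracket Q\rrbracket$ are correct and are left implicit in the paper. The one step you assert without justification is the passage ``regularity of $h$ at $y$ implies that $\hatcO_{W,z'}\to\hatcO_y$ is regular'': this is \emph{not} a formal consequence of regularity of $\cO_{W,z'}\to\cO_y$, since the fibers of the completed homomorphism over non-closed primes involve the formal fibers of $\cO_y$, and it can fail for general noetherian local rings. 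This is exactly where the standing quasi-excellence hypothesis enters; the paper handles it with the citation \cite[Corollary~2.4.5]{Temkin-qe}, and with that justification inserted your argument is complete.
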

\begin{proof}
Since the logarithmic structure at $z$ is Zariski, locally at $y$ and $z$ there exists a sharp chart. It is automatically neat since $f$ is sharp at $y$. By Theorem~\ref{logregth} the homomorphism $(\cO_z)_P[Q]\to\cO_y$ is regular. Thanks to the quasi-excellence assumption its completion is also regular, for example, see \cite[Corollary~2.4.5]{Temkin-qe}. By the classical formal description of regular morphisms in characteristic zero (e.g. see \cite[Remark~2.2.12]{ATLuna}), we obtain the asserted isomorphism.
\end{proof}

\subsubsection{Submanifolds}
Given a logarithmically regular morphism $f\:Y\to Z$, by a {\em $Z$-submanifold} of $Y$ we mean any strict closed logarithmic subscheme $Y'\into Y$ such that the induced morphism $f'\:Y'\to Z$ is logarithmically regular. An ideal $I\subseteq\cO_Y$ will be called a {\em $Z$-submanifold ideal} if the strict closed logarithmic subscheme it defines is a $Z$-submanifold.

\begin{lemma}\label{submanlem}
Let $Y\to Z$ be a logarithmically regular morphism and $I\subseteq\cO_Y$ an ideal. Then $I$ is a $Z$-submanifold ideal at $y\in Y$ if and only if it is generated by a partial family of regular parameters at $y$.
\end{lemma}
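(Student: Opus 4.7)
The plan is to work \'etale-locally at $y$, reduce to the statement for a neat chart via Theorem~\ref{logregth}, and then invoke a standard commutative-algebra fact about submanifold ideals in a regular morphism. Fix a geometric point $y\to Y$ with image $z=f(y)$; by Theorem~\ref{neatth} we may choose a chart $h\:Y\to Z_P[Q]$ that is neat at $y$. Since the logarithmic subscheme $Y'\into Y$ cut out by $I$ is strict, $h$ restricts to a chart of $Y'\to Z$ which is again neat at $y$ (neatness depends only on the characteristic of the log structure, which is unchanged by a strict closed immersion). Hence by Theorem~\ref{logregth} the conditions ``$Y\to Z$ is logarithmically regular at $y$'' and ``$Y'\to Z$ is logarithmically regular at $y$'' translate to regularity of $h$ and $h|_{Y'}$ at $y$, respectively.

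Let $A=\cO_{Z_P[Q],z'}$ where $z'$ is the image of $y$, and $B=\cO_y$. Then $A\to B$ is a regular local homomorphism (this is the content of Theorem~\ref{logregth}), and the fiber $\bar B:=B/\m_A B$ is the local ring of $S_y$ at $y$; its regular parameters are precisely the families $(t_1\.t_r)$ of \S\ref{paramsec}. We must therefore show: an ideal $I\subseteq B$ has the property that $A\to B/I$ is regular if and only if $I$ is generated by a subset of a regular system of parameters of $\bar B$ lifted to $B$.

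For the ``if'' direction: if $I=(t_1\.t_k)$ with $(t_1\.t_k)$ extending to a regular system of parameters of $\bar B$, then $(t_1\.t_k)$ is a regular sequence in $\bar B$ and, by flatness of $A\to B$ together with the local criterion of flatness, also a regular sequence in $B$. Hence $B/I$ is $A$-flat and its fiber $\bar B/(\bar t_1\.\bar t_k)$ is a regular quotient of $\bar B$, so $A\to B/I$ is regular. For the ``only if'' direction: assume $A\to B/I$ is regular. Then $\bar B/\bar I=(B/I)\otimes_A k(z')$ is a regular quotient of the regular local ring $\bar B$, so $\bar I$ is generated by part of a regular system of parameters $\bar t_1\.\bar t_k$ of $\bar B$. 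Choose lifts $t_1\.t_k\in I$ and set $J=(t_1\.t_k)\subseteq I$. By the ``if'' direction $B/J$ is $A$-flat, and the surjection $\alpha\:B/J\onto B/I$ induces an isomorphism after $\otimes_A k(z')$; hence $K=\ker(\alpha)=I/J$ is $A$-flat with $K/\m_AK=0$.

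The final step is the Nakayama argument: $K$ is finitely generated over $B$ (as a subquotient of the finitely generated ideal $I$), and $K=\m_A B\cdot K\subseteq\m_B K$ since $A\to B$ is local. Nakayama's lemma over $B$ forces $K=0$, so $I=J=(t_1\.t_k)$ as required. The only step that is at all delicate is this last lift from fiber to total space; the rest is a packaging of Theorem~\ref{logregth} with classical results on regular local rings.
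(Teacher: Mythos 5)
Your proof is correct and follows essentially the same route as the paper's: reduce to a neat chart via Theorem~\ref{neatth}, observe that strictness of $V(I)\into Y$ makes the induced chart of $V(I)$ neat as well, and use Theorem~\ref{logregth} together with the identification of the logarithmic fiber with the fiber of the chart (Lemma~\ref{logfibchartlem}) to translate both conditions into a statement about the regular local homomorphism $A\to B$; the paper additionally spells out why both conditions are \'etale-local (Lemma~\ref{logreglem}(i) plus a Nakayama remark), which you handle implicitly by working at the strictly henselian stalk $\cO_y$. The only divergence is the final classical step: the paper simply cites Lemma~\ref{regularlem}(ii), proved in the appendix by a completion argument with coefficient fields, whereas you re-prove that ingredient directly via the local criterion of flatness and a Nakayama argument for $K=I/J$; your version is sound (flatness of $B/I$ gives the needed vanishing of $\Tor_1$ so that $K/\m_AK=0$), avoids completions, and uses characteristic zero only to pass from regularity to geometric regularity of the closed fibre, so the difference is one of packaging rather than substance.
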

\begin{proof}
If the logarithmic structures are trivial, then the morphisms are regular and the claim is classical and recalled in Lemma~\ref{regularlem}(ii)). We will deduce the lemma from this case. The first condition is \'etale local on $Y$ by Lemma \ref{logreglem}(i). By Nakayama's lemma, the second condition is equivalent to injectivity of the map $I/m_yI\to T^*_{S,y}$, where $S=S_y$. Therefore it is \'etale local too, and we can assume that $f$ possesses a chart $h\:Y\to Z_P[Q]$ neat at $y$. Since the closed immersion $Y'=V(I)\into Y$ is strict, the composition $h'\:Y'\to T\to Z_P[Q]$ is a chart of $Y'$. By Theorem~\ref{logregth}, $h$ is regular at $y$, and $I$ is a submanifold ideal if and only if $h'$ is regular at $y'$. Since locally at $y$ the logarithmic fiber $S=S_y$ is the fiber of $h$, the claim reduces to the classical case discussed in the above paragraph.
\end{proof}

\subsubsection{Increasing the logarithmic structure}
A standard tool in the theory of logarithmically regular schemes is to increase or decrease the logarithmic structure. For example, it is used in \cite[Sections 3.4, 3.5]{AT1} to study toroidal actions. We will only need a relative version of the increasing operation.

\begin{lemma}\label{incrloglem}
Assume that $f\:Y\to Z$ is a logarithmically regular morphism and $t_1\.t_l$ a partial family of regular parameters at a point $y\in Y$. Let $W$ be a neighborhood of $y$ where $t_i$ are defined and consider the logarithmic structure $M'_W$ obtained from $M_W$ by adding $t_1\.t_l$, that is, $M'_W$ is associated with the prelogarithmic structure $M_W\oplus\NN^l\to\cO_W$, where the basis elements of $\NN^l$ are sent to $t_i$. Then the morphism $(W,M'_W)\to Z$ is logarithmically regular at $y$ and $\oM'_y=\oM_y\oplus\NN^l$.
\end{lemma}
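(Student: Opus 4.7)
The plan is to reduce both claims to an application of Theorem~\ref{logregth} for a judiciously chosen neat chart. Work locally at $y$, and by Theorem~\ref{neatth} choose a neat chart $h\:Y\to Z_P[Q]$ of $f$ at $y$; Theorem~\ref{logregth} then guarantees that $h$ itself is regular at $y$ as a morphism of schemes. I would extend this to a chart $h'\:W\to Z_P[Q\oplus\NN^l]$ for the enriched logarithmic morphism $f'\:(W,M'_W)\to Z$ by sending the $i$-th basis element of $\NN^l$ to $t_i$; this is forced since we are adjoining exactly those monomial generators.

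The stalk computation $\oM'_y=\oM_y\oplus\NN^l$ is the core input. The stalk $M'_y$ is the associated logarithmic structure of the prelogarithmic structure $M_y\oplus\NN^l\to\cO_y$, i.e., the pushout of $\cO_y^\times$ along the preimage of units. Because $t_i\in m_{S_y,y}\subseteq m_y$, any product $\alpha(m)\prod t_i^{n_i}$ can be a unit only when all $n_i=0$ and $m\in M_y^\times$; hence the preimage of units is exactly $M_y^\times\oplus 0$, and the pushout is $\oM_y\oplus\NN^l$ as claimed. This also gives $(Q\oplus\NN^l)^\gp/P^\gp=\oM^\gp_{y/z}\oplus\ZZ^l=\oM'^\gp_{y/z}$, so the extended chart $h'$ is still neat at~$y$.

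For logarithmic regularity of $f'$ at $y$, by Theorem~\ref{logregth} applied to the neat chart $h'$ it suffices to show $h'$ is regular at $y$. Identify $Z_P[Q\oplus\NN^l]$ with $Z_P[Q]\times\AA^l$, so $h'=(h,t_1\.t_l)$ maps $y$ to $(h(y),0)$. The fiber of $h'$ over $(h(y),0)$ is the closed subscheme of the regular fiber $S_y$ of $h$ cut out by $t_1\.t_l$; since these form part of a regular system of parameters in $\cO_{S_y,y}$, the fiber is regular at $y$. Flatness of $h'$ at $y$ follows from the local criterion of flatness: $h$ is flat at $y$, and $t_1\.t_l$ is a regular sequence on $\cO_{W,y}\otimes k(h(y))=\cO_{S_y,y}$, so multiplication by the $t_i$ corresponds to a regular sequence on the fiber over any point of the $\AA^l$-factor.

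There is no real obstacle — the proof reduces cleanly to the stalk calculation plus a flatness-and-regular-fiber check — but the mildly delicate point is confirming that the extended chart remains neat and that the new factor $\AA^l$ indeed sees a regular sequence on the logarithmic fiber; both depend on the hypothesis that $t_1\.t_l$ be regular parameters of $S_y$ rather than mere elements of $m_y$.
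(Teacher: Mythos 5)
Your proof is correct and follows essentially the same route as the paper: reduce to a neat chart $h\:Y\to Z_P[Q]$, extend it to a neat chart $Y'\to Z_P[Q]\times\bfA^l$ by sending the basis of $\NN^l$ to the $t_i$, and conclude with Theorem~\ref{logregth}. The only difference is that where you re-derive regularity of the extended chart morphism by hand (regular fibers plus flatness by the local criterion), the paper simply invokes Lemma~\ref{regularlem}(i), which states exactly that $(h,t_1\.t_l)$ is regular at $y$ if and only if $t_1\.t_l$ is a partial family of regular parameters of $h$ at $y$.
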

\begin{proof}
Working \'etale locally one reduces to the case, when $Y=W$ and $f$ possesses a chart $h\:Y\to Z_P[Q]$ neat at $y$. Let $Y'=(Y,M'_Y)$ denote the logarithmic scheme with the enlarged logarithmic structure. Sending the basis of the second summand of $Q'=Q\oplus\NN^l$ to $t_i$ gives rise to a chart $h\:Y'\to Z_P[Q']=Z_P[Q]\times\bfA^l$ of $f'\:Y'\to Z$ which is neat at $y$. Since $h$ is regular at $y$ by Theorem~\ref{logregth} and $t_1\. t_l$ form a partial family of regular parameters of $h$ at $y$, the morphism $h'$ is regular at $y$ by Lemma~\ref{regularlem}(i). 
By Theorem~\ref{logregth} $f'$ is logarithmically regular at $y$, as claimed.
\end{proof}



\subsection{Logarithmic derivations and differential operators}\label{logdifsec}

\subsubsection{Logarithmic derivations}
Let $f\:Y\to Z$ be a morphism of logarithmic schemes. Recall that a {\em logarithmic $\cO_Z$-derivations} on $\cO_Y$ with values in an $\cO_Y$-module $L$ consists of an $\cO_Z$-derivation $\partial\:\cO_Y\to L$ and an (additive) homomorphism $\delta\:M_Y\to L$ such that $\partial(u^q)=u^q\delta(q)$ for any monomial $u^q$, see \cite[Definition~IV.1.2.1]{Ogus-logbook}. For a $Y$-scheme $U$ let $\cD_{Y/Z}(U)$ denote the set of logarithmic $\cO_Z$-derivations $\cO_Y\to\cO_U$. This defines a presheaf of $\cO_{Y\et}$-modules, which is easily seen to be a sheaf even for the fpqc topology. We call it the sheaf of logarithmic $Z$-derivations on $Y$.

\begin{example}
Assume that the underlying scheme of $Y$ is locally integral, for example, $Y$ is logarithmically regular. Then the data $(\partial,\delta)$ is equivalent to the data of an $\cO_Z$-derivation $\partial\:\cO_Y\to\cO_U$ which preserves monomial ideals: $\partial(u^q)\in u^q\cO_U$. In particular, $\cD_{Y/Z}$ is a subsheaf of the usual sheaf of relative derivations $\Der_{Y/Z}$.
\end{example}

\subsubsection{Logarithmic differentials}
As in the classical theory, there exists a universal logarithmic derivation $d_{Y/Z}\:\cO_Y\to\Omega^{\rm log}_{Y/Z}$, whose target is the {\em logarithmic module of differentials}. In particular, $\cD_{Y/Z}=\mathcal{H}om_{\cO_Z}(\Omega^{\rm log}_{Y/Z},\cO_Y)$, yielding another explanation why it is a sheaf. As opposed to $\cD_{Y/Z}$, the module $\Omega^{\rm log}_{Y/Z}$ is always quasi-coherent and possesses good functoriality properties.

\begin{lemma}
If $f\:Y\to Z$ is a morphism of logarithmic schemes, then $\cD_{Y/Z}=\Der_{Y/\Log(Z)}$ and $\Omega^{\rm log}_{Y/Z}=\Omega_{Y/\Log(Z)}$.
\end{lemma}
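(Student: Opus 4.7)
The plan is to establish the equality of differentials $\Omega^{\rm log}_{Y/Z} = \Omega_{Y/\Log(Z)}$, since the derivation statement then follows formally by duality: by construction, $\cD_{Y/Z}(U) = \Hom_{\cO_Y}(\Omega^{\rm log}_{Y/Z}, \cO_U)$ and $\Der_{Y/\Log(Z)}(U) = \Hom_{\cO_Y}(\Omega_{Y/\Log(Z)}, \cO_U)$. Both sheaves are quasi-coherent and their formation is \'etale local on $Y$ (and $Z$), so I may work near a geometric point $y\to Y$ and assume that $f$ admits a chart $h\: Y \to Z_P[Q]$ by Theorem~\ref{neatth}.

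The chart writes $f$ as the strict morphism $h$ followed by the toric base change $Z_P[Q] \to Z$, and it factors the tautological morphism $Y \to \Log(Z)$ through $\cZ_P[Q] = [Z_P[Q]/\bfD_\phi]$. As recalled in the proof of Lemma~\ref{logmaplem}, the map $\cZ_P[Q] \to \Log(Z)$ is \'etale, so $\Omega_{Y/\Log(Z)} \cong \Omega_{Y/\cZ_P[Q]}$, and the task reduces to identifying $\Omega^{\rm log}_{Y/Z}$ with $\Omega_{Y/\cZ_P[Q]}$.

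I would then compute both sides by means of the two factorizations $Y \to Z_P[Q] \to Z$ and $Y \to Z_P[Q] \to \cZ_P[Q]$. For the first, strictness of $h$ combined with the standard computation $\Omega^{\rm log}_{Z_P[Q]/Z} \cong \cO_{Z_P[Q]} \otimes_\ZZ (Q^{\gp}/P^{\gp})$ yields the log cotangent sequence
\[
0 \to \cO_Y \otimes_\ZZ (Q^{\gp}/P^{\gp}) \to \Omega^{\rm log}_{Y/Z} \to \Omega_{Y/Z_P[Q]} \to 0.
\]
For the second, $Z_P[Q] \to \cZ_P[Q]$ is a $\bfD_\phi$-torsor with $\bfD_\phi$ having character group $Q^{\gp}/P^{\gp}$, so its relative cotangent sheaf is again $\cO_{Z_P[Q]} \otimes_\ZZ (Q^{\gp}/P^{\gp})$, and the cotangent sequence reads
\[
0 \to \cO_Y \otimes_\ZZ (Q^{\gp}/P^{\gp}) \to \Omega_{Y/\cZ_P[Q]} \to \Omega_{Y/Z_P[Q]} \to 0.
\]
In both sequences the leftmost map sends a class $[q]$ to $d\log(q)$, so the two sequences are canonically isomorphic, giving the desired identification.

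The main obstacle is the verification that the leftmost maps coincide and that both sequences are exact on the left. This should reduce to a direct chart-level computation, exploiting smoothness of $\cZ_P[Q]$ over $\Spec(\ZZ)$ and the fact that the toric base change $Z_P[Q] \to Z$ has locally free log cotangent sheaf with the indicated basis, so that the torsor description and the log description of the same rank-$\rk Q^{\gp}/P^{\gp}$ piece match via $d\log$.
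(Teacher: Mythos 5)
Your route is the exact dual of the paper's: you reduce, via a chart and the \'etale map $\cZ_P[Q]\to\Log(Z)$, to comparing $\Omega^{\rm log}_{Y/Z}$ with $\Omega_{Y/\cZ_P[Q]}$ through the two factorizations $Y\to Z_P[Q]\to Z$ and $Y\to Z_P[Q]\to\cZ_P[Q]$, whereas the paper runs the same two factorizations through sequences of derivations. However, the step you defer to ``a direct chart-level computation'' is a genuine gap, for two reasons. First, the left-exactness you want is simply false: for a merely strict $h\:Y\to Z_P[Q]$ the first map of the (log or ordinary) first fundamental sequence need not be injective. Take $Z=\Spec(\QQ)$ with trivial log structure, $P=0$, $Q=\NN^2$, and $Y=\Spec(\QQ[t])$ with the log structure associated to $\NN^2\to\QQ[t]$ sending both generators $e_1,e_2$ to $t$; the induced strict map $Y\to\bfA_Q$ is a chart, neat at the origin. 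The chart presentation gives $\Omega^{\rm log}_{Y/Z}=\bigl(\cO_Y\,dt\oplus\cO_Y e_1\oplus\cO_Y e_2\bigr)/\bigl(dt-te_1,\ dt-te_2\bigr)$, and the map $\cO_Y\otimes_\ZZ Q^\gp\to\Omega^{\rm log}_{Y/Z}$, $e_i\mapsto d\log e_i$, kills $t(e_1-e_2)\neq 0$. The same failure occurs in the torsor sequence, so no chart computation can establish the exactness you plan to use (note that neatness of the chart does not help).

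Second, even if both sequences were short exact, knowing that the outer terms agree does not identify the middle terms: two extensions of $\Omega_{Y/Z_P[Q]}$ by $\cO_Y\otimes_\ZZ(Q^\gp/P^\gp)$ need not be isomorphic. The statement ``the leftmost maps coincide'' has no meaning until you have a map between $\Omega^{\rm log}_{Y/Z}$ and $\Omega_{Y/\cZ_P[Q]}$ (or a common receptacle) compatible with the two sequences, and constructing that comparison map is really the content of the lemma; your proposal never produces it. Two ways to close the gap: (a) exhibit both modules as the quotient of $\Omega_{Y/Z}\oplus\bigl(\cO_Y\otimes_\ZZ Q^\gp/P^\gp\bigr)$ by the same submodule of relations --- Kato's chart presentation on the log side, and descent of differentials along the $\bfD_\phi$-torsor $Z_P[Q]\to\cZ_P[Q]$ on the stack side; or (b) dualize and argue with derivations, as the paper does, where the two fundamental sequences are tautologically left exact, strictness of $h$ gives $\Der_{Y/Z_P[Q]}=\cD_{Y/Z_P[Q]}$, and the torsor structure identifies both third terms with $\Hom(\Coker(\phi^\gp),\cO_Y)$, so that the comparison amounts to evident restriction maps rather than a comparison of extensions.
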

\begin{proof}
It suffices to construct the first isomorphism. The claim is \'etale local, hence we can assume that there exists a global chart $h\:Y\to Z_P[Q]$ modeled on $\phi\:P\to Q$. Since $\cZ_P[Q]\to\Log(Z)$ is \'etale, $\Der_{Y/\Log(Z)}=\Der_{Y/\cZ_P[Q]}$. We have the exact sequence $$0\to \Der_{Y/Z_P[Q]}\to\Der_{Y/\cZ_P[Q]}\to\Der_{Z_P[Q]/\cZ_P[Q]}(Y)$$ associated to the morphisms of stacks $Y\to Z_P[Q]\to\cZ_P[Q]$ and
$$0\to\cD_{Y/Z_P[Q]}\to\cD_{Y/Z}\to\cD_{Z_P[Q]/Z}(Y)$$ associated to the morphisms of logarithmic schemes $Y\to Z_P[Q]\to Z$. It remains to note that $\Der_{Y/Z_P[Q]}=\cD_{Y/Z_P[Q]}$ because $h$ is strict, and both $\Der_{Z_P[Q]/\cZ_P[Q]}(Y)$ and $\cD_{Z_P[Q]/Z}(Y)$ are naturally isomorphic to $\Hom(\Coker(\phi^\gp),\cO_Y)$ (in the first case, one uses that $Z_P[Q]\to\cZ_P[Q]$ is a $\bfD_\phi$-torsor).
\end{proof}

As an immediate corollary we obtain the following result, which is also a consequence of Kato's computation of $\Omega^{\rm log}_{Y/Z}$.

\begin{corollary}\label{derfreelem}
If $f$ is logarithmically smooth, then $\Omega^{\rm log}_{Y/Z}$ and $\cD_{Y/Z}$ are locally free, and the rank at a point $y$ equals to the relative dimension of $Y\to\Log(Z)$ at $y$.
\end{corollary}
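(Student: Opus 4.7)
The plan is to reduce everything to the preceding lemma and then apply standard facts about smooth morphisms.

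First I would invoke the identifications $\Omega^{\rm log}_{Y/Z}=\Omega_{Y/\Log(Z)}$ and $\cD_{Y/Z}=\Der_{Y/\Log(Z)}$ from the lemma just proved. This reduces the corollary to proving the statement for these ordinary (i.e.\ non-logarithmic) modules attached to the morphism $Y \to \Log(Z)$ of (logarithmic) stacks.

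Next I would unpack the hypothesis. As recalled earlier in \S\ref{logregsec}, $f$ is logarithmically smooth iff it is logarithmically regular and locally of finite presentation, and logarithmic regularity of $f$ is by definition regularity of $Y\to\Log(Z)$. So under the assumption $f$ is logarithmically smooth, $Y\to\Log(Z)$ is flat, locally of finite presentation, and has regular (equivalently, smooth in characteristic zero) geometric fibers. That is, $Y\to\Log(Z)$ is smooth as a morphism to an algebraic stack.

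Now I would apply the standard consequence of smoothness: for a smooth morphism to an algebraic stack, the sheaf of relative differentials $\Omega_{Y/\Log(Z)}$ is locally free, with rank at $y$ equal to the relative dimension of $Y\to\Log(Z)$ at $y$. Since smoothness, local freeness of the relative cotangent sheaf, and the computation of its rank are all \'etale-local on both source and target, one can verify this after pulling back $\Log(Z)$ to a smooth \'etale scheme presentation, reducing to the classical case of a smooth morphism of schemes. Dualizing, $\cD_{Y/Z}=\mathcal{H}om_{\cO_Y}(\Omega^{\rm log}_{Y/Z},\cO_Y)$ is then also locally free of the same rank.

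There is no real obstacle: the only subtle point is that $\Log(Z)$ is a stack and not a scheme, but this is bypassed by the \'etale-local nature of the properties in question and the fact that $\Log(Z)$ admits a smooth (indeed \'etale, after choice of charts as in Remark~\ref{torstackrem}) presentation by schemes of the form $\cZ_P[P']$, which together with the chart $Y\to Z_P[Q]$ reduces the claim to the well-known computation for smooth morphisms of schemes.
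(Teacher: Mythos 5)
Your proposal is correct and follows essentially the same route as the paper: the corollary is deduced immediately from the identifications $\Omega^{\rm log}_{Y/Z}=\Omega_{Y/\Log(Z)}$, $\cD_{Y/Z}=\Der_{Y/\Log(Z)}$, together with the fact that logarithmic smoothness (logarithmic regularity plus local finite presentation) makes $Y\to\Log(Z)$ smooth, so its cotangent sheaf is locally free of rank equal to the relative dimension, and dualizing gives the statement for $\cD_{Y/Z}$.
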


\subsubsection{Pullback}\label{difpullbacksec}
Let $g\:X\to Y$ be a morphism and $h=f\circ g$. By $g^!\cD_{Y/Z}$ we denote the restriction of the fpqc sheaf $\cD_{Y/Z}$ on $X\et$, that is, $g^!\cD_{Y/Z}(U)=\cD_{Y/Z}(U)$ for an \'etale $X$-scheme $U$. Clearly, $g^!\cD_{Y/Z}$ is an $\cO_{X\et}$-module, which can also be characterized as $\mathcal{H}om_{h^{-1}\cO_Z}(g^{-1}\Omega^{\rm log}_{Y/Z},\cO_X)$. There exists a natural homomorphism of $\cO_X$-modules $g^*\cD_{Y/Z}\to g^!\cD_{Y/Z}$, but in general $g^*\cD_{Y/Z}$ can be much smaller than $g^!\cD_{Y/Z}$, similarly to an infinite direct sum embedded into the direct product. If $f$ is logarithmically smooth, then $\Omega_{Y/Z}$ is locally free of finite rank, and hence $g^*\cD_{Y/Z}=g^!\cD_{Y/Z}$.

By a slight abuse of language, for any $\cO_Y$-submodule $\cF\subseteq\cD_{Y/Z}$ the image of $g^*\cF$ in $g^!\cD_{Y/Z}$ will be denoted by $g^*\cF$.

\begin{remark}
We had to introduce $g^!$ because it shows up in base changes and the fundamental sequence. However, distinguishing $g^!\cD_{Y/Z}$ and $g^*\cD_{Y/Z}$ plays a purely technical role and can be essentially ignored by the reader.
\end{remark}

\subsubsection{Base change}
Base change is compatible with logarithmic derivations via $g^!$.

\begin{lemma}\label{basechangelem}
Let $f\:Y\to Z$ and $g\:Z'\to Z$ be morphisms of logarithmic schemes with base changes $f'\:Y'\to Z'$ and $g'\:Y'\to Y$. Then there is a natural isomorphism $g'^!(\cD_{Y/Z})=\cD_{Y'/Z'}$.
\end{lemma}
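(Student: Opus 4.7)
\smallskip

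\noindent\textbf{Proof proposal.}
My plan is to reduce the statement to the fs base-change formula for logarithmic differentials, $\Omega^{\rm log}_{Y'/Z'}\cong g'^*\Omega^{\rm log}_{Y/Z}$, and then dualize. Since $\cD_{Y/Z}=\cHom_{\cO_Z}(\Omega^{\rm log}_{Y/Z},\cO_Y)$ as an fpqc sheaf on $Y\et$ and similarly for $\cD_{Y'/Z'}$, for any \'etale $U\to Y'$ one has
\[g'^!\cD_{Y/Z}(U)=\cD_{Y/Z}(U)=\Hom_{\cO_Y}(\Omega^{\rm log}_{Y/Z},\cO_U)\]
and $\cD_{Y'/Z'}(U)=\Hom_{\cO_{Y'}}(\Omega^{\rm log}_{Y'/Z'},\cO_U)$. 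Once the base-change statement for $\Omega^{\rm log}$ is known, the tensor-hom adjunction, together with the identity $\Omega^{\rm log}_{Y/Z}\otimes_{\cO_Y}\cO_U=g'^*\Omega^{\rm log}_{Y/Z}\otimes_{\cO_{Y'}}\cO_U$, identifies these two expressions naturally in $U$, yielding the claimed isomorphism.

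To establish the base-change for $\Omega^{\rm log}$ I will use the identification $\Omega^{\rm log}_{Y/Z}=\Omega_{Y/\Log(Z)}$ proved earlier in this subsection. By Lemma~\ref{logmaplem} the morphism $\Log(Z)\times_ZZ'\to\Log(Z')$ is \'etale, hence $\Omega_{Y'/\Log(Z')}=\Omega_{Y'/\Log(Z)\times_ZZ'}$. Next I will identify $Y'$ with the strict base change of $Y\to\Log(Z)$ along $Z'\to\Log(Z)$ (where the latter classifies the log structure of $Z'$), using that the tautological morphism $Y\to\Log(Z)$ is strict and that an fs fiber product is preserved under strict base change of the log-stack target. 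Standard base change of ordinary relative differentials then gives $\Omega_{Y'/\Log(Z)\times_ZZ'}\cong g'^*\Omega_{Y/\Log(Z)}$, and combining with the \'etaleness above gives $\Omega^{\rm log}_{Y'/Z'}\cong g'^*\Omega^{\rm log}_{Y/Z}$.

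The main technical obstacle is the clean identification of the fs fiber product $Y'$ as a strict base change of $Y\to\Log(Z)$, needed in order to apply ordinary base change of differentials. A purely local alternative, which avoids any stack-theoretic manipulation, is to reduce \'etale-locally to the existence of a global chart $Y\to Z_P[Q]$ modeled on $\phi\colon P\to Q$; the pullback chart $Y'\to Z'_P[Q]$ is again a chart for the fs base change, and the Kato presentation of $\Omega^{\rm log}$ in terms of $\Omega_{Y/Z_P[Q]}$ and $\Coker(\phi^\gp)\otimes\cO_Y$ is manifestly preserved by pullback along $g'$. Either route produces the required base-change isomorphism and hence the lemma.
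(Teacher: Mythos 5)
Your reduction --- dualize a base-change isomorphism $\Omega^{\rm log}_{Y'/Z'}\cong g'^*\Omega^{\rm log}_{Y/Z}$ and apply the extension-of-scalars adjunction --- is sound, and the input you need is a true standard fact; the paper itself argues more directly, exhibiting the mutually inverse maps on derivations (restriction in one direction, extension by $\cO_{Z'}$-linearity in the other, with the saturation issue delegated to \cite[Proposition~IV.1.2.3(2a)]{Ogus-logbook}). The genuine gap is in both of your proposed proofs of the base-change isomorphism itself. Route (1) rests on identifying $Y'$ with the strict base change of $Y\to\Log(Z)$ along $Z'\to\Log(Z)$, and this identification is false: $Y\times_{\Log(Z)}Z'$ parametrizes pairs of points together with an \emph{isomorphism} of the two induced log structures, not the fs fiber product. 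For example, if $Z$ is a point with trivial log structure and $Y=Z'$ is the standard log point, the fs fiber product is a point with characteristic monoid $\NN^2$, whereas $Y\times_{\Log(Z)}Z'$ is a copy of $\GG_m$ parametrizing isomorphisms of the two pulled-back log structures. Relatedly, the square formed by $Y'\to Y$ and $\Log(Z)\times_ZZ'\to\Log(Z)$ is not cartesian, precisely because $Y'$ differs from $Y\times_ZZ'$ by integralization/saturation, so ordinary base change of $\Omega$ cannot be invoked; this is not a side condition to be arranged but exactly where the content of the statement lies.

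Route (2) does not close this gap. First, $Q$ is in general not a chart of $M_{Y'}$: the morphism $Y'\to Z'_P[Q]$ is indeed strict (base change of the strict $Y\to Z_P[Q]$), but the log structure of $Z'_P[Q]$ is the saturated amalgam of $M_{Z'}$ and $Q$, so a chart of $f'$ in the paper's sense requires enlarging the monoid, e.g.\ to $(Q\oplus_PP')^\sat$ for a chart $P'$ of $Z'$. Second, and more seriously, expressing $\Omega^{\rm log}_{Y/Z}$ ``in terms of $\Omega_{Y/Z_P[Q]}$ and $\Coker(\phi^\gp)\otimes\cO_Y$'' is only the right-exact first fundamental sequence for $Y\to Z_P[Q]\to Z$; it is not a presentation, and compatibility of the two outer terms with pullback does not yield an isomorphism in the middle (there is no left-exactness here, since $Y\to Z_P[Q]$ is merely strict, not log smooth). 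The honest Kato presentation, $\bigl(\Omega_{Y/Z}\oplus(\cO_Y\otimes Q^\gp)\bigr)$ modulo the usual relations, is also not ``manifestly preserved'' by pullback, because the underlying scheme of the fs fiber product is not the scheme-theoretic fiber product: saturation adjoins new roots of monomials, and $g'^*\Omega_{Y/Z}\to\Omega_{Y'/Z'}$ need not even be surjective. The correct repair is the standard one: prove base change for the fiber product in the category of all (or fine) log structures, where it follows from the universal property of log derivations by extending $\cO_{Z'}$-linearly, and then check that the integralization/saturation morphism changes neither $\Omega^{\rm log}$ nor $\cD_{\bullet/Z'}$, since it induces an isomorphism on groupified characteristic monoids (it is logarithmically \'etale, so log derivations extend uniquely across it). Once this is said, you have essentially reproduced the paper's one-line proof, and the detour through $\Omega^{\rm log}$ and duality is no longer needed.
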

\begin{proof}
The inverse isomorphisms are given by the restriction map and by extending logarithmic $\cO_Z$-derivations $\cO_Y\to\cO_{Y'}$ to logarithmic $\cO_{Z'}$-derivations $\cO_{Y'}\to\cO_{Y'}$ by $\cO_{Z'}$-linearity (see also \cite[Proposition~IV.1.2.3(2a)]{Ogus-logbook}).
\end{proof}

\subsubsection{The first fundamental sequence}
Logarithmic derivations and compositions are related via the following exact sequence:

\begin{lemma}\label{firstseqlem}
Given morphisms of logarithmic schemes $g\:X\to Y$ and $f\:Y\to Z$, there exist natural exact sequences $$0\to\cD_{X/Y}\to\cD_{X/Z}\stackrel\phi\to g^!(\cD_{Y/Z}),\ \ \ g^*(\Omega^{\rm log}_{Y/Z})\stackrel\psi\to\Omega^{\rm log}_{X/Z}\to\Omega^{\rm log}_{X/Y} \to 0.$$
In addition, if $g$ is logarithmically smooth, then $\psi$ is injective and $\phi$ is surjective. 
In particular, if $g$ is logarithmically \'etale then $\cD_{X/Z}=g^!(\cD_{Y/Z})$.
\end{lemma}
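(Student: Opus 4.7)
My plan is to construct the maps directly from definitions, verify exactness on the kernel side by an elementary calculation, and then use Kato-style analysis on log smooth charts for the \emph{in addition} clauses.

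First, for the derivation sequence: the inclusion $\cD_{X/Y} \hookrightarrow \cD_{X/Z}$ is tautological, since a log $\cO_Y$-derivation on $\cO_X$ is a fortiori a log $\cO_Z$-derivation. The map $\phi$ sends $(D,\delta_X) \in \cD_{X/Z}(U)$ (for \'etale $U \to X$, valued in $\cO_U$) to the precomposition $(D \circ g^\sharp,\, \delta_X \circ g^\flat) \in \cD_{Y/Z}(U) = g^!\cD_{Y/Z}(U)$, viewing $\cO_U$ as an $\cO_Y$-module through $g$; the compatibility $(D \circ g^\sharp)(u^q) = u^q\,(\delta_X \circ g^\flat)(q)$ follows from $g^\sharp(u^q) = u^{g^\flat(q)}$. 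Exactness at $\cD_{X/Z}$ is then immediate: $\ker \phi$ consists of derivations vanishing on $g^\sharp(\cO_Y)$ and $g^\flat(M_Y)$, which is by definition $\cD_{X/Y}$. The differentials sequence is constructed dually using the universal property of $\Omega^{\rm log}$, and right exactness is routine --- the cokernel of $\psi$ represents log derivations on $\cO_X$ killing $g^\sharp(\cO_Y)$ and $g^\flat(M_Y)$, which is $\Omega^{\rm log}_{X/Y}$.

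For the ``in addition'' clause with $g$ log smooth, I would reduce \'etale locally to a log smooth chart. By Kato's criterion, $g$ factors locally as a strict smooth map followed by a toric morphism $Y_Q[R] \to Y$ with $R^\gp/Q^\gp$ torsion-free; combined with a compatible neat chart $P \to Q$ for $f$, the explicit description of $\Omega^{\rm log}$ yields short exactness of $0 \to g^*\Omega^{\rm log}_{Y/Z} \to \Omega^{\rm log}_{X/Z} \to \Omega^{\rm log}_{X/Y} \to 0$, with the monoid-group sequence $P^\gp \to Q^\gp \to R^\gp$ as the key input. This gives injectivity of $\psi$. For surjectivity of $\phi$ I would dualize via $\cHom_{\cO_X}(-,\cO_X)$: since $\Omega^{\rm log}_{X/Y}$ is locally free by Corollary~\ref{derfreelem}, the relevant $\Ext^1$ vanishes and one recovers the dual short exact sequence. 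The log \'etale case then follows immediately from $\Omega^{\rm log}_{X/Y} = 0 = \cD_{X/Y}$, forcing $\psi$ and $\phi$ to be isomorphisms.

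I expect the main technical point to be the chart-level computation in the log smooth case --- standard but requiring a careful compatible choice of charts. Alternatively, one can invoke Olsson's identification of log smoothness of $g$ with smoothness of the stack morphism $X \to \Log(Y)$, and appeal to the classical smooth-case fundamental sequence for the stack composition $X \to \Log(Y) \to \Log(Z)$, using the preceding lemma's identifications $\Omega^{\rm log}_{*/*} = \Omega_{*/\Log(*)}$ to identify the resulting terms.
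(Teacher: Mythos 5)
Your proposal is correct and follows essentially the same route as the paper: the derivation sequence is tautological, the differentials sequence is its corepresenting counterpart (the paper cites Ogus IV.2.3.1), and in the log smooth case one uses that $\Omega^{\rm log}_{X/Y}$ is locally free of finite rank so that the short exact sequence of differentials is locally split and dualizes to give surjectivity of $\phi$. The only difference is cosmetic — you spell out the chart-level verification of injectivity of $\psi$, which the paper asserts by appeal to the standard Kato/Ogus result.
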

\begin{proof}
The left exact sequence is tautological: any logarithmic $Y$-derivation on $X$ is also a logarithmic $Z$-derivation, and any logarithmic $Z$-derivation on $X$ can be restricted to $Y$. As a corollary one obtains the first fundamental sequence of logarithmic differentials (see also \cite[Proposition~IV.2.3.1]{Ogus-logbook}). Note that the left sequence is obtained from the right one by applying the functor $\mathcal{H}om_{\cO_X}(\cdot,\cO_X)$. If $g$ is logarithmically smooth, then $\Omega^{\rm log}_{X/Y}$ is a locally free module of finite rank and $\psi$ is injective. Therefore $\phi$ is surjective in this case.
\end{proof}

\subsubsection{Differential operators}
In this subsection we essentially use that the characteristic is zero. By the algebra of {\em logarithmic $\cO_Z$-differential operators} $\cD^{\infty}_{Y/Z}$ on $\cO_Y$ we mean the algebra of $\cO_Z$-linear operators on $\cO_Y$ generated by $\cD_{Y/Z}$. It has a natural filtration by modules of operators $\cD^{(\le d)}_{Y/Z}$ of degree at most $d$. In particular, $\cD^{(\le 1)}_{Y/Z}$ is a direct sum of $\cO_Y$ and $\cD_{Y/Z}$. For any $\cO_Y$-submodule $\cF\subseteq\cD_{Y/Z}$ we denote by $\cF^{\infty}$ the $\cO_{Z}$-subalgebra of $\cD_{Y/Z}^{\infty}$ generated by $\cF$ and set $\cF^{(\le i)}=\cF^{\infty}\cap\cD_{Y/Z}^{(\le i)}$.

\begin{remark}
We will not need this, but $\cD^{\infty}_{Y/Z}$ is the sheaf of differential operators of $Y/\Log(Z)$.
\end{remark}

\subsection{Abundance of derivations}
In this section, $f\:Y\to Z$ is a logarithmically regular morphisms, $\cD=\cD_{Y/Z}$, $y\to Y$ a geometric point with image $z\in Z$, $S=S_y$ the logarithmic fiber of $y$. Our aim is to study the stalk $\cD_y$ and the ``fiber'' $\cD(y)=\cD(k(y))$. The restriction map induces a natural embedding of the usual fiber $i\:\cD\otimes k(y)\into\cD(y)$. Note that $\cD(y)$ can be viewed as the tangent space at $y$ to the fiber of $Y\to\Log(Z)$ or the relative logarithmic tangent space, though this is precise only when $f$ is of finite type and hence $i$ is an isomorphism.

\subsubsection{Formal description}\label{fsec}
We start with studying the formal picture, so assume for now that $f$ is sharp at $y$. Let $O=\hatcO_z$ and $A=\hatcO_y$ with the induced logarithmic structures, and let $t_1\.t_n$ be regular parameters at $y$, $P=\oM_z$, $Q=\oM_y$, $k=k(z)$ and $l=k(y)$. In particular, $\oM^\gp_{y/z}=Q^\gp/P^\gp$. By Lemma~\ref{logregchart} there exists an isomorphism $A=O_P\llbracket Q\rrbracket \llbracket t_1\. t_n\rrbracket\wtimes_kl$ depending on the choice of charts and fields of definition. We will work with the usual module of logarithmic derivations $\cD_{A/O}$ because any derivation $\partial$ on $A$ takes $m_A^n$ to $m_A^{n-1}$ and hence is continuous. In particular, this implies that a derivation $\partial\in\cD_{A/O}$ is determined by its action on $l$, $Q$ and $t_1\.t_n$.

Consider the following three types of derivations, whose uniqueness follows from the above and existence is an easy exercise:

\begin{itemize}
\item[(1)] Ordinary derivations: $\partial_i$ is defined by $\partial_i(t_j)=\delta_{ij}$, $\partial_i(l)=\partial(u^Q)=0$.

\item[(2)] Monomial derivations: for any $\phi\in\Hom_\ZZ(\oM^\gp_{y/z},A)$ $\partial_\phi$ is defined by $\partial_\phi(u^q)=\phi(q)u^q$, $\partial_\phi(t_j)=\partial_\phi(l)=0$.

\item[(3)] Constant derivations: any $\partial_l\in\cD_{l/k}$ uniquely extends to $\cD_{A/O}$ so that  $\partial_l(t_j)=\partial_l(u^Q)=0$.
\end{itemize}

\begin{lemma}\label{formderlem}
Keep the above notation. Then
$$\cD_{A/O}=(\cD_{l/k}\wtimes_lA)\oplus\Hom_\ZZ(\oM^\gp_{y/z},A)\oplus(\oplus_{i=1}^n A\partial_{i}).$$ In particular, if $m=\trdeg(l/k)<\infty$, then $\cD_{A/O}$ is free of rank $\rk_\QQ(\oM^\gp_{y/z})+m+n$.
\end{lemma}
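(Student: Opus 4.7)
The plan is to construct an evaluation map
$$\sigma\:\cD_{A/O}\longrightarrow(\cD_{l/k}\wtimes_lA)\oplus\Hom_\ZZ(\oM^\gp_{y/z},A)\oplus\bigoplus_{i=1}^nA\partial_i$$
sending $\partial$ to the triple $(\partial|_l,\,\delta_\partial,\,(\partial(t_i))_i)$ with $\delta_\partial(q):=u^{-q}\partial(u^q)$, and to show it is an isomorphism whose inverse is the sum construction $(\partial_l,\phi,(a_i))\mapsto\partial_l+\partial_\phi+\sum_ia_i\partial_i$ built from the three derivation types (1)--(3) listed just before the lemma. The directness of the asserted decomposition will then be automatic: each summand is pinned down by exactly one component of $\sigma$, being trivial on the data controlling the other two.

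First I would check that $\sigma$ lands where claimed. The restriction $\partial|_l$ is a $k$-derivation of a field into $A$, hence an element of $\Der_k(l,A)=\cD_{l/k}\wtimes_lA$. The function $\delta_\partial$ is additive by the log-Leibniz rule applied to $u^{q_1+q_2}=u^{q_1}u^{q_2}$, and it vanishes on $P^\gp$ because $\partial$ is an $O$-derivation and $u^P\subset O$; hence it factors through $Q^\gp/P^\gp=\oM^\gp_{y/z}$. For injectivity I would invoke the continuity remark made in \S\ref{fsec} just before the lemma: $\partial(m_A^N)\subseteq m_A^{N-1}$ forces $m_A$-adic continuity, and Lemma~\ref{logregchart} exhibits $A$ as $(\hatO)_P\llbracket Q\rrbracket\llbracket t_1,\dots,t_n\rrbracket\wtimes_kl$, so the set $l\cup u^Q\cup\{t_1,\dots,t_n\}$ topologically generates $A$ over $O$, whence $\sigma(\partial)=0$ entails $\partial=0$. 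Surjectivity is immediate from the granted existence of the three derivation types: checking $\sigma(\partial_l+\partial_\phi+\sum_ia_i\partial_i)=(\partial_l,\phi,(a_i))$ reduces to a one-line inspection on the generators $l$, $u^Q$, $t_j$ using the defining properties (1)--(3).

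The rank statement under $m<\infty$ then assembles from three standard facts. First, in characteristic zero $\cD_{l/k}$ is free of rank $m$ over $l$, since derivations extend uniquely across algebraic extensions of characteristic zero fields, reducing the count to the rank over a purely transcendental subextension. Second, the group $\oM^\gp_{y/z}$ is finitely generated and $A$ is a $\QQ$-algebra, so the torsion of $\oM^\gp_{y/z}$ kills every homomorphism to $A$, giving $\Hom_\ZZ(\oM^\gp_{y/z},A)\cong A^{\rk_\QQ(\oM^\gp_{y/z})}$. Third, the ordinary part is visibly $A^n$. The main bookkeeping obstacle I anticipate is ensuring that the summation inverse really produces a derivation on the full completion $A$ rather than just on the polynomial--monoid--field subring; but this is a direct consequence of the $m_A$-adic continuity already used for injectivity, which allows each of the three pieces to extend term-by-term in the $m_A$-adic topology and sum compatibly.
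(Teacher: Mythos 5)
Your proposal is correct and follows essentially the same route as the paper: restrict a logarithmic derivation to $l$, to the monoid $Q$ (via its $\delta$-component, which factors through $\oM^\gp_{y/z}=Q^\gp/P^\gp$ since $u^P\subset O$), and to $t_1,\dots,t_n$; injectivity follows because a continuous derivation is determined by these restrictions, and surjectivity by summing the three explicit derivation types (1)--(3). The only cosmetic caveat is that $\delta_\partial(q)=u^{-q}\partial(u^q)$ should be read as the $\delta$-part of the logarithmic derivation datum (since $u^q$ need not be a nonzerodivisor), which is exactly how the paper treats it.
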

\begin{proof}
Restricting derivations onto $l$, $Q$ and $t_1\.t_n$ one obtains projections of $\cD_{A/O}$ to the direct summands. The induced map to the direct sum is injective because derivations are determined by the restrictions. The surjectivity follows because any element of the direct sum lifts to the corresponding linear combination of derivations $\partial_i,\partial_\phi$ and $\partial_l$.
\end{proof}

\subsubsection{A filtration on $\cD(y)$}
The isomorphism in Lemma~\ref{formderlem} depends on choices. The only natural local structure on $\cD$ is described by the following three homomorphisms and induced filtration on $\cD(y)$

\begin{itemize}
\item[(1)] $\phi_1\:\cD(y)\to T_{S,y}=\Hom_{k(y)}(m_{S,y}/m^2_{S,y},k(y))$, where $\phi_1(\partial)(a)=\partial(a)$.
\item[(2)] $\phi_2\:\cD'(y)\to \cD_{k(y)/k(\tilz)}$, where $\cD'(y)=\Ker(\phi_1)$ and $\phi_2$ is the restriction map.
\item[(3)] $\phi_3\:\cD''(y)\to \Hom(\oM^\gp_{y/z},k(y))$, where $\cD''(y)=\Ker(\phi_2)$ and $\phi_3(\partial)(q)=\delta(q)=q^{-1}\partial(q)$.
\end{itemize}
Indeed, any $\partial\in\cD'(y)$ satisfies $\partial(m_y)=0$ because $m_y$ is generated by the maximal monomial ideal and the preimage of the maximal ideal of the logarithmic fiber, and both are taken to 0 by $\partial$. Therefore, $\partial(a)(y)$ depends only on $a(y)$ and the restriction map $\phi_2$ is well defined. Note also that we have only defined $k(\tilz)$ using charts, hence one should work \'etale locally on $Z$, but this poses no problems because replacing $k(\tilz)$ by a finite (separable) extension does not modify the module of derivations $\cD_{k(y)/k(\tilz)}$.

Similarly, $\phi_3$ is independent of the choice of a representative $q\in M^\gp_y$ in a class $\overline{q}\in\oM^\gp_{y/z}$, because for another choice $q'=qu$ with $u\in\cO_y^\times$ we have that $\delta(q')=\delta(q)+\delta(u)$, and $\delta(u)=0$ for any element $\partial\in\cD''(y)$.

\begin{lemma}\label{filtrlem}
The homomorphisms $\phi_i$ induce embeddings of the graded pieces $$\cD(y)/\cD'(y)\into T_{S,y},\ \ \cD'(y)/\cD''(y)\into\cD_{k(y)/k(\tilz)},\ \ \cD''(y)\into\Hom_\ZZ(\oM^\gp_{y/z},k(y)).$$
\end{lemma}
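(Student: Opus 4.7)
The first two claimed injections are tautological: since $\cD'(y) := \Ker\phi_1$ and $\cD''(y) := \Ker\phi_2$, the first isomorphism theorem yields the embeddings $\cD(y)/\cD'(y) \into T_{S,y}$ and $\cD'(y)/\cD''(y) \into \cD_{k(y)/k(\tilz)}$ directly from the definitions. The actual content is the injectivity of $\phi_3$, which I would prove by reducing to the sharp case and then reading off the formal decomposition of Lemma~\ref{formderlem}.

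To reduce to the sharp case, apply the sharp factorization $Y \stackrel{\tilf}{\to} \tilZ \to Z$ from \S\ref{sharpfactorsec}. The map $\tilZ \to Z$ is logarithmically \'etale, so Lemma~\ref{firstseqlem} gives $\cD_{Y/Z} = \cD_{Y/\tilZ}$. Moreover $\tilP^\gp = P^\gp$ by construction, which forces $\oM^\gp_{y/z} = \oM^\gp_{y/\tilz}$, and by Remark~\ref{tilzrem}(ii) the residue field $k(\tilz)$ is unchanged. Hence all three maps $\phi_i$ together with their targets are preserved under replacing $Z$ by $\tilZ$, and we may assume that $f$ is sharp at $y$.

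Next, fix a sharp chart of $f$ at $y$, fields of coefficients $k := k(z)\into\hatcO_z$ and $l := k(y)\into\hatcO_y$, and regular parameters $t_1,\dots,t_n$, as provided by Lemma~\ref{logregchart}. Write $A=\hatcO_y$ and $O=\hatcO_z$. Any $\partial\in\cD(y)$ is a logarithmic $\cO_Z$-derivation $\cO_Y\to k(y)$; Leibniz applied to elements of $m_y$ whose residues vanish gives $\partial(m_y^2)=0$, so $\partial$ is continuous for the $m_y$-adic topology and extends uniquely to a logarithmic $O$-derivation $A\to k(y)$. This produces the identification $\cD(y)=\cD_{A/O}(k(y))$, and Lemma~\ref{formderlem} then yields the direct sum decomposition
$$\cD(y)\;=\;\cD_{l/k}\;\oplus\;\Hom_{\ZZ}(\oM^\gp_{y/z},k(y))\;\oplus\;k(y)^n,$$
whose summands are the constant, monomial, and ordinary derivations respectively.

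Finally, I would match the filtration to this decomposition. The derivations $\partial_i$ satisfy $\partial_i(t_j)=\delta_{ij}$ while the constant and monomial summands annihilate every $t_j$, so under the identification $T_{S,y}=k(y)^n$ coming from the basis dual to $\{t_j\}$, the map $\phi_1$ is exactly projection onto the third summand; hence $\cD'(y)$ equals the sum of the first two summands. On this subspace, the constant summand restricts to $l$ as the identity of $\cD_{l/k}=\cD_{k(y)/k(\tilz)}$ while the monomial summand kills $l$, so $\phi_2$ is projection onto the first summand and $\cD''(y)$ is identified exactly with $\Hom_\ZZ(\oM^\gp_{y/z},k(y))$. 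Under this identification $\phi_3$ becomes the identity, hence injective (and in fact an isomorphism). The main subtle step is the identification $\cD(y)=\cD_{A/O}(k(y))$ via passage to the completion; once that is in place, the rest is bookkeeping of direct summands.
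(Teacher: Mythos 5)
Your proof is correct and follows essentially the same route as the paper: the first two embeddings are immediate from the definitions of $\phi_1,\phi_2$, and injectivity of $\phi_3$ is obtained by reducing to the sharp case via the sharp factorization (using $\cD_{Y/Z}=\cD_{Y/\tilZ}$ from Lemma~\ref{firstseqlem}), extending a derivation to $\hatcO_y$ by continuity, and reading off from the formal description of Lemma~\ref{logregchart} that it is determined by its values on $k(y)$, the $t_i$ and $\oM_y$. Your extra bookkeeping via the full direct-sum decomposition of Lemma~\ref{formderlem} (giving that $\phi_3$ is even an isomorphism in the sharp case) is a harmless strengthening of what the paper records.
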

\begin{proof}
The first two claims are obvious, so we should only prove that $\phi_3$ is injective. It suffices to consider the case when $f$ possesses a chart and hence a sharp factorization $Y\to\tilZ\to Z$. By Lemma~\ref{firstseqlem} $\cD_{Y/\tilZ}=\cD_{Y/Z}$, hence we can replace $Z$ by $\tilZ$ and assume that $f$ is sharp at $y$. Any $\cO_z$-logarithmic derivation $\partial\:\cO_y\to k(y)$ extends to an $\hatcO_z$-logarithmic derivation $\partial\:\hatcO_y\to k(y)$ by continuity. Fix a description of $\hatcO_y$ as in Lemma~\ref{logregchart}, then $\partial$ is completely determined by its values on a field of coefficients $k(y)$, parameters $t_i$ and the monoid $Q=\oM_y$. Any element in $\Ker(\phi_3)$ vanishes on them all and is therefore zero.
\end{proof}

\subsubsection{Abundance of derivations}\label{abundsec}
If $Y\to Z$ is of finite type, then the sheaf $\cD_{Y/Z}$ is quasi-coherent, but in general $\cD_{Y/Z}$ can be very bad and may even have zero stalks and fibers: see \cite[Example~2.3.5(ii)]{Temkin-survey} for a pathological example already in the case of  trivial logarithmic structures. Our algorithm may only work when $\cD_{Y/Z}$ has large enough stalks. We formalize this below. For technical reasons which will become clearer later, we prefer to work with an arbitrary $\cO_Y$-submodule $\cF\subseteq\cD_{Y/Z}$.

The homomorphism $\cF_y\subseteq\cD_y\to\cD(y)$ induces homomorphisms $\psi_1\:\cF_y\to T_{S,y}$, $\psi_2\:\Ker(\psi_1)\to\cD_{k(y)/k(\tilz)}$ and $\psi_3\:\Ker(\psi_2)\to\Hom(\oM^\gp_{y/z},k(y))$. We say that $\cF$ is {\em separating at} $y$ if $\psi_1$ is onto, {\em logarithmically separating at} $y$ if $\psi_1$ and $\psi_3$ are onto, {\em abundant at} $y$ if in addition $\psi_2$ has a dense image with respect to the weak topology. We say that $\cF$ is {\em separating}, {\em logarithmically separating} or {\em abundant} if it is so at all points of $Y$.

\begin{remark}
(i) Informally speaking, $\cF$ is separating if it distinguishes regular parameters, it is logarithmically separating if it also distinguishes monomials, and it is abundant if it distinguishes arbitrary finite families of parameters at $y$.

(ii) We will not need this, but it is easy to see that $\cD_{Y/Z}$ has abundance of derivations at $y$ if and only if the natural map $\cD_{Y/Z,y}\to\cD_{\hatcO_y/\hatcO_z}$ has a dense image.
\end{remark}

\subsubsection{Derivations and parameters}
Given a partial family of parameters $S\subset\cO_y$, by a {\em dual family of derivations} we mean a set of derivations $\partial_a\in\cD_{Y/Z,y}$ indexed by elements of $S$ such that $\partial_a(b)=0$ if $a\neq b$, $\partial_a(a)=1$ if $a$ is not a monomial, and $\partial_a(u^a)=u^a$ if $u^a$ is a monomial parameter.

\begin{lemma}\label{standardderiv}
Let $f\:Y\to Z$ be a logarithmically regular morphism and $y\to Y$ a geometric point $z=f(y)$. Let $(t,v,q)\subset\cO_y$ be a family of parameters at $y$ and let $\cF\subseteq\cD_{Y/Z}$ be an $\cO_Y$-submodule. Then

(i) $\cF$ is separating at $y$ if and only if $t$ possesses a dual family of derivations from $\cF_y$.

(ii) $\cF$ is logarithmically separating at $y$ if and only if $\{t,q\}$ possesses a dual family of derivations from $\cF_y$.

(iii) $\cF$ is abundant at $y$ if and only if $\{t,v_0,q\}$ possesses a dual family of derivations from $\cF_y$ for any finite subset $v_0\subseteq v$.
\end{lemma}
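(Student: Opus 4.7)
The proof is parallel across the three parts and uses Nakayama-style linear algebra on a value matrix of derivations against parameters. I record first that, modulo $m_y$, a derivation $\partial\in\cD_{Y/Z,y}$ is encoded by three pieces of data: the action $\partial(t_j)$ on regular parameters, the induced derivation on $k(y)$, and the $\delta$-part $\delta(q)=\partial(u^q)/u^q$ on $\oM^\gp_{y/z}$; these correspond to $\phi_1,\phi_2,\phi_3$ respectively.

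For ``$\Leftarrow$'', given the dual family I read off the images under the $\psi_i$: in (i), $\partial_{t_i}(t_j)=\delta_{ij}$ gives $\psi_1(\partial_{t_i})=t_i^\ast$, so $\psi_1$ is onto; in (ii), the $\partial_{q_k}$ lie in $\Ker(\psi_1)$ (since $\partial_{q_k}(t_j)=0$) and their $\delta$-parts on the fixed $\QQ$-basis $\{q_l\}$ of $\oM^\gp_{y/z}$ realize the dual basis exactly, which translates to $\psi_3$ being onto after subtracting the ``constant-direction'' correction that moves $\partial_{q_k}$ into $\Ker(\psi_2)$ without altering the intrinsic $\delta$-value on the fixed generators; in (iii), the $\partial_{v_i}$ supply, for any finite $v_0$, elements of $\cF_y\cap\Ker(\psi_1)$ whose $\psi_2$-images realize the dual basis to $v_0$, giving density in the weak topology of $\cD_{k(y)/k(\tilz)}$.

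For ``$\Rightarrow$'', the plan is lift-and-invert. Surjectivity of $\psi_1$ yields $\partial^{(0)}_i\in\cF_y$ with $\partial^{(0)}_i(t_j)\equiv\delta_{ij}\pmod{m_y}$; since the $\cO_y$-matrix $(\partial^{(0)}_i(t_j))$ is the identity mod $m_y$, it lies in $\GL_d(\cO_y)$, and its inverse produces the dual family for (i). For (ii), surjectivity of $\psi_3$ supplies $\partial^{(1)}_k\in\cF_y\cap\Ker(\psi_2)\subseteq\Ker(\psi_1)$ whose $\delta$-part on $q_l$ is $\delta_{kl}$ mod $m_y$ and whose action on $t_j$ lies in $m_y$. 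The combined $(d+n)\times(d+n)$ block value matrix---with $\partial(t_j)$ entries for ordinary parameters and $\partial(u^{q_l})/u^{q_l}$ entries for monomial parameters---reduces to the identity mod $m_y$, hence is invertible in $\cO_y$, and its inverse yields the dual family. For (iii), density of $\psi_2$ additionally produces $\partial^{(2)}_i\in\cF_y\cap\Ker(\psi_1)$ whose $\psi_2$-images form the dual basis to $v_0$ in $\cD_{k(y)/k(\tilz)}$; the same Nakayama inversion over the enlarged block-identity matrix for $\{t,v_0,q\}$ delivers the full dual family.

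The main technical obstacle is ensuring the full value matrix reduces to the identity modulo $m_y$, not merely its diagonal blocks. The off-diagonal blocks vanish mod $m_y$ as follows: $\partial^{(1)}_k(t_j)\in m_y$ and $\partial^{(2)}_i(t_j)\in m_y$ because $\psi_1(\partial^{(1)}_k)=\psi_1(\partial^{(2)}_i)=0$; $\partial^{(2)}_i(u^{q_l})/u^{q_l}\in m_y$ because $\psi_3$-image is trivial on $\Ker(\psi_2)\cap\cF_y$ for these; and $\partial^{(0)}_i(u^{q_l})/u^{q_l}$ is brought into $m_y$ by a preliminary adjustment of $\partial^{(0)}_i$ using $\cO_y$-combinations of the $\partial^{(1)}_k$ (itself a Nakayama inversion on the monomial sub-block). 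After this block-identity reduction is established, the global inversion in $\GL(\cO_y)$ and readout of the dual family are purely formal.
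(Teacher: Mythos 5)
Your ``$\Rightarrow$'' half is essentially the paper's own proof: the paper treats (i) by lifting approximate dual derivations $\partial'_k$ with $\partial'_kt_j\in\delta_{kj}+m_y$, observing that the value matrix lies in $\GL_n(\cO_y)$ because it is the identity modulo $m_y$, and solving the linear system over $\cO_y$; it then declares (ii) and (iii) to be the same argument, which is exactly your block version. Two remarks on your execution of this half. First, the preliminary adjustment making the whole matrix congruent to the identity modulo $m_y$ is not needed: ordering the parameters as $(t,v_0,q)$ and the rows as $(\partial^{(0)},\partial^{(2)},\partial^{(1)})$, the matrix is block triangular modulo $m_y$ with identity diagonal blocks, hence already invertible. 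Second, your stated reason that $\partial^{(2)}_i(u^{q_l})/u^{q_l}\in m_y$ is incorrect: $\partial^{(2)}_i$ is chosen with nonzero $\psi_2$-image, so it does not lie in $\Ker(\psi_2)$ and $\psi_3$ does not apply to it; that block can be arbitrary modulo $m_y$ (and you also never discuss $\partial^{(0)}_i(v_j)$). These slips are harmless precisely because of the triangularity just mentioned, but as written your ``identity mod $m_y$'' verification is not correct.

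The genuine gap is in your ``$\Leftarrow$'' direction for (ii) (hence also (iii)), a direction the paper's proof does not write out at all. To show $\psi_3$ is onto you need elements of $\Ker(\psi_2)$, i.e.\ elements of $\cF_y$ killing both $T_{S,y}$ and the residue field, with prescribed $\delta$-values on $\oM^\gp_{y/z}$. The dual family only provides $\partial_{q_k}\in\Ker(\psi_1)$, and your ``constant-direction correction'' moving them into $\Ker(\psi_2)$ would require elements of $\cF_y\cap\Ker(\psi_1)$ with prescribed $\psi_2$-image and trivial $\delta$-part; nothing in the hypothesis supplies such elements, since $\cF$ is an arbitrary submodule. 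In fact the correction is impossible in general: take $Z=\Spec(\QQ)$ with trivial logarithmic structure, $Y=\Spec(\QQ(v)[x,t])$ with the logarithmic structure generated by $x$, $y$ a geometric point over $V(x,t)$, and $\cF$ generated by $\partial_t$ and $x\partial_x+\partial_v$. Then $\{\partial_t,\;x\partial_x+\partial_v\}$ is a dual family to $\{t,q\}$ (where $u^q=x$), but every element of $\Ker(\psi_2)$ has both coefficients in $m_y$, so $\psi_3$ vanishes identically and $\cF$ is not logarithmically separating in the sense of \S\ref{abundsec}. So with the literal definition of a dual family this implication cannot be proved as you propose; it becomes correct only if the derivations dual to the monomial parameters are additionally required to annihilate the constant parameters (equivalently, to induce the zero derivation of $k(y)$), or if the converse is asserted only in the form used for (i). The safe course, matching the paper, is to prove the direction ``property $\Rightarrow$ dual family'' by the matrix argument and to state explicitly which reading of duality you use if you include the converse.
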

\begin{proof}
All three claims are proved by the same argument, so we stick to (i) for simplicity of notation. Let $t=\{t_1\.t_n\}$. Since $\cF$ is separating, there exist derivations $\partial'_1\.\partial'_n\in\cF_y$ such that $\partial'_kt_j\in\delta_{kj}+m_y$. In particular, the square matrix $D$ with entries $d_{kj}=\partial'_kt_j$ lies in $\GL_n(\cO_y)$. We claim that for each $i\in\{1\.n\}$ there exist $f_1\.f_n\in\cO_y$ such that $\partial_i=\sum_{k=1}^nf_k\partial'_k$ is as required. Indeed, this is equivalent to solving the system $\sum_{k=1}^nf_kd_{kj}=\delta_{ij}$, which is possible since $D$ is invertible.
\end{proof}

\subsubsection{Relative logarithmic manifolds}\label{logmansec}
Let $f\:Y\to Z$ be a logarithmically regular morphism of logarithmic schemes. If $\cD_{Y/Z}$ is logarithmically separating, then we say that $f$ is a {\em relative logarithmic manifold} or $Y$ is a {\em logarithmic $Z$-manifold}. If, moreover, $\cD_{Y/Z}$ is abundant, then we say that $f$ has {\em abundance of derivations}. These notions are very important for what follows, so let us make some comments.

\begin{remark}\label{relmanrem}
(i) In the particular case of schemes with the trivial log structure over $\QQ$, we say that a regular $Y$ is a {\em manifold} if $\cD_{Y/\QQ}$ is separating. Manifolds were studied by Matsumura in detail; he called them schemes satisfying the weak Jacobian condition (WJ), see \cite[\S30]{Matsumura-ringtheory}. Matsumura proved, that if $X,Y$ are regular, $X\to Y$ is of finite type, and $Y$ is a manifold, then $X$ is a manifold. Also, he proved that it suffices to check the weak Jacobian condition (WJ) only at maximal ideals, and deduced that many natural examples of excellent regular schemes (spectra of convergent power series, etc.) are manifolds.

(ii) We postpone a detailed study of the notion of relative logarithmic manifold to another paper, and will only prove the necessary minimum -- Theorem~\ref{manifoldth} and Lemma~\ref{logmanlem}. However, it seems that at least if $\dim(Z)\le 1$, then most natural logarithmically regular morphisms $Y\to Z$ arising from formal or analytic geometries are logarithmic $Z$-manifolds. On the other hand, it seems that the class of abundant morphisms is rather small.

(iii) Intuitively, tangent spaces on a manifold are glued into a vector bundle $\cD_{Y/Z}$, and this is what happens in the logarithmically smooth case. In general, the condition of being logarithmically separating is the minimal assumption needed to guarantee that $\cD_{Y/Z}$ is large enough to be useful for us. Note also that it would not be enough to work with derivations of the localizations $\cD_{\cO_y/\cO_{f(y)}}$, since it can be larger than the stalk $\cD_{Y/Z,y}$ when $\cD_{Y/Z}$ is not quasi-coherent. So, certain coherence condition is built into the definition of logarithmic manifolds.

(iv) In fact, most constructions of our principalization algorithm only need that $\cD_{Y/Z}$ is separating. However, this property is not stable in the following sense: there might exist logarithmically smooth morphisms $Y'\to Y$, even logarithmic blow ups, such that $\cD_{Y'/Z}$ is not separating. The technical reason why this happens is that monomial parameters can be ``transformed'' to regular parameters as we will see in Proposition~\ref{composlem}. It seems that logarithmically separating is the weakest stable property that covers our needs.
\end{remark}

\subsubsection{The logarithmically smooth case}
If $f$ is logarithmically smooth, then $\cD$ is free and its rank at $y$ equals to the size of a family of parameters $(t,q,u)$ at $y$. Clearly, $|t|=\dim(T_{S,y})$, and using that the characteristic is zero, we also have that $$|q|=\rk_\QQ(\oM^\gp_{y/z})=\dim(\Hom(\oM^\gp_{y/z},k(y))),\ \ |u|=\trdeg(k(y)/k(\tilz))=\dim(\cD_{k(y)/k(\tilz)}).$$ Therefore, all homomorphisms $\phi_i$ have to be surjective and we obtain

\begin{lemma}\label{logsmoothlem}
Any logarithmically smooth morphism $f\:Y\to Z$ has abundance of derivations. In addition, for any choice of parameters $(t,q,v)$ at a geometric point $y$ of $Y$, there exists a unique dual family of logarithmic derivations $(\partial_t,\partial_q,\partial_v)$ and they form a basis of $\cD_{Y/Z,y}$.
\end{lemma}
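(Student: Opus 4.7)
The plan is a straightforward dimension count that forces the filtration from Lemma \ref{filtrlem} to be tight, which then yields abundance, a canonical basis, and uniqueness of the dual family via Nakayama.

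First I would record the rank of $\cD_{Y/Z,y}$. By Corollary \ref{derfreelem} it is free of rank equal to the relative dimension of $Y\to\Log(Z)$ at $y$, which (working in characteristic zero as explained at the end of \S\ref{dimsec}) is exactly $r_y + r'_y + r''_y = |t|+|v|+|q|$. Observe moreover that $\dim_{k(y)}T_{S,y}=|t|$, $\dim_{k(y)}\cD_{k(y)/k(\tilz)}=\trdeg(k(y)/k(\tilz))=|v|$ (here we use characteristic zero so the extension is separable), and $\dim_{k(y)}\Hom_\ZZ(\oM^\gp_{y/z},k(y))=\rk_\QQ(\oM^\gp_{y/z})=|q|$.

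Next I would show the filtration of Lemma \ref{filtrlem} is an equality. That lemma exhibits a filtration of $\cD(y)$ whose three graded pieces embed via $\phi_1,\phi_2,\phi_3$ into spaces of total $k(y)$-dimension $|t|+|v|+|q|$. On the other hand the natural injection $\cD\otimes_{\cO_y}k(y)\into\cD(y)$ noted in \S\ref{fsec} has source of dimension $|t|+|v|+|q|$ by the preceding paragraph. All dimensions must therefore agree, so this injection is an isomorphism and $\phi_1,\phi_2,\phi_3$ are surjective. Taking $\cF=\cD$, the maps $\psi_i$ factor as $\cD_y\onto\cD\otimes k(y)\toisom\cD(y)\stackrel{\phi_i}{\onto}\cdots$, so each $\psi_i$ is surjective. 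This is precisely the definition of abundance of derivations (surjectivity of $\psi_2$ is strictly stronger than having dense image).

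For the second statement, Lemma \ref{standardderiv}(iii) applied to the just-established abundance of $\cD$ produces a dual family $(\partial_t,\partial_q,\partial_v)$. Their images in $\cD(y)=\cD\otimes k(y)$ are linearly independent: $\phi_1(\partial_{t_i})$ is the coordinate function on $T_{S,y}$ dual to $t_i$; the $\partial_{v_k}$ lie in $\cD'(y)$, and $\phi_2(\partial_{v_k})$ is the $\cD_{k(y)/k(\tilz)}$-derivation dual to $v_k$; the $\partial_{q_j}$ lie in $\cD''(y)$, and $\phi_3(\partial_{q_j})$ is the functional on $\oM^\gp_{y/z}$ dual to $q_j$. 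Since the graded pieces are now bases of their targets, the images of $(\partial_t,\partial_q,\partial_v)$ form a $k(y)$-basis of $\cD(y)$, hence of $\cD_y/m_y\cD_y$. By Nakayama's lemma they generate $\cD_y$, and since $\cD_y$ is free of rank $|t|+|v|+|q|$, they form a basis.

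Finally, uniqueness follows by a one-line calculation: if $(\partial'_t,\partial'_q,\partial'_v)$ is another dual family and we expand $\partial'_a-\partial_a=\sum_b c_b\partial_b$ in the basis just obtained, then evaluating at each $b'\in t\cup v$ gives $c_{b'}=0$ directly, and evaluating at $u^{q_j}$ gives $c_{q_j}u^{q_j}=0$, hence $c_{q_j}=0$; so $\partial'_a=\partial_a$. There is no real obstacle here: the whole argument is a dimension count, with the only subtlety being the bookkeeping to confirm that the three filtered pieces contribute exactly $|t|$, $|v|$, and $|q|$ in characteristic zero.
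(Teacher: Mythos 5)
Your argument is the same one the paper intends: the proof in the text is precisely the dimension count in the paragraph preceding the lemma (rank of the free module $\cD_{Y/Z,y}$ equals $|t|+|v|+|q|$ by Corollary~\ref{derfreelem} and \S\ref{dimsec}, the filtration of Lemma~\ref{filtrlem} bounds $\dim\cD(y)$ by the same number, so the embedding $\cD\otimes k(y)\into\cD(y)$ and all three $\phi_i$ are forced to be isomorphisms/surjections), and your deduction of abundance and of the basis property via Lemma~\ref{standardderiv}(iii) and Nakayama fills in details the paper leaves implicit. That part is fine.

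There is, however, a genuine wobble in your uniqueness step (and, less visibly, in your identification of $\phi_3(\partial_{q_j})$ with the dual functional). From $c_{q_j}u^{q_j}=0$ you conclude $c_{q_j}=0$, which needs $u^{q_j}$ to be a nonzerodivisor in $\cO_y$; this is not automatic for a logarithmically smooth morphism over an arbitrary base. Take $Z$ the standard log point $\Spec(k)$ with chart $\NN\to k$, $1\mapsto 0$, and $Y=\Spec(k[x,y]/(xy))$ with chart $\NN^2$, so $f$ is log smooth; at the origin the monomial parameter $u^q=x$ is a zerodivisor, $\cD_{Y/Z,y}$ is free of rank one on $D=(x\partial_x-y\partial_y,\delta)$ with $\delta(q)=1$, and every $(1+ay)D$ satisfies the literal duality condition $\partial_q(u^q)=u^q$ — so the evaluation trick cannot separate them. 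The repair is to remember that an element of $\cD_{Y/Z,y}=\cHom(\Omega^{\rm log}_{Y/Z},\cO_Y)_y$ is the pair $(\partial,\delta)$ and to read the monomial duality condition on the logarithmic derivative, $\delta_a(q_j)=\delta_{aj}$, which is exactly how the paper interprets it in the proof of Proposition~\ref{composlem}/\ref{coverprop}. Running your dimension-count-plus-Nakayama argument on $\Omega^{\rm log}_{Y/Z,y}$ instead of on $\cD$ shows that $(dt_i,\,dv_k,\,d\log q_j)$ is a basis of the free module $\Omega^{\rm log}_{Y/Z,y}$; the dual family in this corrected sense is then literally the dual basis, so existence, uniqueness and the basis property of $(\partial_t,\partial_q,\partial_v)$ all follow at once, with no nonzerodivisor hypothesis. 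Since the paper asserts uniqueness without proof, this is a refinement of, not a departure from, its argument — but as written your last step would fail in the example above.
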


\subsection{Relative logarithmic orbifolds}\label{logorbsec}
As in the absolute case of \cite{ATW-principalization}, the relative principalization algorithm involves non-representable analogues of blow ups. So, we are going to extend the theory of relative logarithmic manifolds to DM stacks.

\subsubsection{Logarithmically regular morphisms of stacks}
A morphism $f\:Y\to Z$ of logarithmic DM stacks is called {\em logarithmically regular} if it logarithmically regular \'etale-locally on $Y$ and $Z$. The latter means that there exist a compatible strict \'etale coverings of $Y$ and $Z$ by logarithmic schemes $Y_0$ and $Z_0$ such that the morphism $f_0\:Y_0\to Z_0$ is logarithmically regular. Moreover, this property is actually independent of the choice of a covering because logarithmic regularity is local with respect to strict \'etale morphisms.

A strict closed substack $Y'\into Y$ is called a {\em logarithmic $Z$-submanifold} if its preimage $Y'\times_YY_0$ is a logarithmic $Z_0$-submanifold.

\subsubsection{Sheaves of derivations}\label{sheafdersec}
Given a logarithmic $Z$-orbifold $Y$ find a strict \'etale covering $p\:Y_0\to Y$, $Z_0\to Z$, and set $Y_1=Y_0\times_YY_0$. Strict \'etale (and even logarithmically \'etale) morphisms are compatible with the sheaves of logarithmic derivations, hence $p_1^!\cD_{Y_0/Z_0}=\cD_{Y_1/Z_0}=p_2^!\cD_{Y_0/Z_0}$ and this isomorphism satisfies the cocycle condition. By \'etale descent we obtain a sheaf $\cD_{Y/Z}$ of logarithmic derivations of $Y/Z$. In the same way the sheaves of operators $\cD^{(\le i)}_{Y/Z}$ are defined. A submodule $\cF\subseteq\cD_{Y/Z}$ is {\em separating}, {\em logarithmically separating} or {\em abundant} if this property is satisfied by the submodule $\cF_0=p^!\cF$ of $\cD_{Y_0/Z}=p^!\cD_{Y/Z}$.

\subsubsection{Logarithmic orbifolds}
We say that $f$ is a {\em relative logarithmic orbifold} if it is logarithmically regular and $\cD_{Y/Z}$ is logarithmically separating. Sometimes, if $f$ is representable we will stress this by saying that $f$ is a {\em relative logarithmic manifold.} This is compatible with the terminology of Section \ref{logmansec}.

\subsection{Functoriality}\label{functorsec}
It will be important throughout the text to check that all our constructions on logarithmic $Z$-orbifolds $Y$ are functorial with respect to logarithmically smooth morphisms $Y'\to Y$ and arbitrary base changes $Z'\to Z$. So, let us study functoriality of notions introduced so far.

\subsubsection{Base change}
Base change functoriality is easy.

\begin{proposition}\label{basechangeprop}
Let $f\:Y\to Z$ and $g\:Z'\to Z$ be morphisms of logarithmic DM stacks with base changes $f'\:Y'\to Z'$ and $g'\:Y'\to Y$, and assume that all stacks are noetherian and $f$ is logarithmically regular. Then

(i) The morphism $f'$ is logarithmically regular. Furthermore, if $f$ is a relative logarithmic orbifold or has an abundance of derivations, then $f'$ satisfies the same property.

(ii) If a submodule $\cF$ of $\cD_{Y/Z}$ is separating, logarithmically separating or abundant, then $g'^*(\cF)$ satisfies the same property.

(iii) Let $y'\to Y'$ be a geometric point with $y=g'(y')$. If $t\subset\cO_y$ is a family of regular, logarithmic or constant parameters to $f$ at $y$, then its image in $\cO_{y'}$ is such a family for $f'$ at $y'$.

(iv) If $T\into Y$ is a logarithmic $Z$-submanifold, then $T'=T\times_YY'$ is a logarithmic $Z'$-submanifold of $Y'$.
\end{proposition}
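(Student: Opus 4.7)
My plan is to prove the four parts in the order (iii), (ii), (i), (iv), with each later part reduced to its predecessors. Throughout I will work \'etale locally, so that we may assume $Y,Z,Y',Z'$ are schemes with global charts. Pick a geometric point $y'\to Y'$ with images $y=g'(y')\in Y$, $z'=f'(y')\in Z'$, $z=g(z')=f(y)\in Z$, and use Theorem~\ref{neatth} to obtain a neat chart $Y\to Z_P[Q]$ at $y$ and a chart $Z'\to\bfA_{P'}$ at $z'$ extending $P\to\Gamma(\oM_Z)$. In the fs category, the base change gives a chart $Y'\to Z'_{P'}[Q']$ at $y'$ with $Q'=(Q\oplus_PP')^\sat$, and since saturation does not change the groupification, $Q'^\gp=Q^\gp\oplus_{P^\gp}P'^\gp$. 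Consequently $\oM^\gp_{y'/z'}=Q'^\gp/P'^\gp=Q^\gp/P^\gp=\oM^\gp_{y/z}$, so the monomial parameters at $y$ pull back to monomial parameters at $y'$.

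For the regular parameters in (iii), I will use Lemma~\ref{fibchangelem}: the logarithmic fiber $S_{y'}$ is an open subscheme of a connected component of a ground-field extension of $S_y$. Since $S_y$ and $S_{y'}$ are regular by Corollary~\ref{logfibcor}, any family generating $m_{S_y,y}/m_{S_y,y}^2$ generates $m_{S_{y'},y'}/m_{S_{y'},y'}^2$: this is a standard flatness argument, since the local homomorphism $\cO_{S_y,y}\to\cO_{S_{y'},y'}$ is regular and the two rings have the same embedding dimension (as $\oM^\gp_{y'/z'}=\oM^\gp_{y/z}$ and $Y\to Z_P[Q]$ is regular at $y$ by Theorem~\ref{logregth}). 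For constant parameters, a transcendence basis of $k(y)/k(\tilz)$ remains algebraically independent over $k(\tilz')$ in $k(y')$: this follows from Remark~\ref{tilzrem}(ii), since $k(\tilz')=k(z'')$ where $z''$ is the image of $y'$ in $Z'_{P'}[Q']$ and one checks that the composite $k(\tilz)\cdot k(\tilz')\subseteq k(y')$ realizes the right transcendence degree along the chain $k(\tilz)\to k(\tilz')\to k(y)\otimes_{k(\tilz)}k(\tilz')\to k(y')$ using flatness of $k(y)/k(\tilz)\otimes(-)$.

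Part (ii) is a direct consequence of (iii) combined with Lemma~\ref{standardderiv}: given a family of parameters $(t,v,q)$ at $y$ admitting a dual family of derivations $\{\partial_a\}\subset\cF_y$, the pullback $(t',v',q')$ at $y'$ is a family of parameters by (iii), and the pulled-back derivations $\{g'^*\partial_a\}\subset(g'^*\cF)_{y'}$ form a dual family (this is compatible with the identification $g'^!\cD_{Y/Z}=\cD_{Y'/Z'}$ of Lemma~\ref{basechangelem}, since pullback of derivations commutes with evaluation on parameters). Applying the three characterizations in Lemma~\ref{standardderiv} yields separating/logarithmically separating/abundant for $g'^*\cF$. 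Part (i) then follows: logarithmic regularity of $f'$ is Lemma~\ref{logreglem}(ii), and taking $\cF=\cD_{Y/Z}$ in (ii) shows that the image of $g'^*\cD_{Y/Z}$, hence $\cD_{Y'/Z'}=g'^!\cD_{Y/Z}$ itself, is logarithmically separating or abundant whenever $\cD_{Y/Z}$ is. Part (iv) is immediate from (iii) and Lemma~\ref{submanlem}: if $T\into Y$ is cut out \'etale locally by a partial family of regular parameters $t_1,\ldots,t_k$, then $T'=T\times_YY'$ is cut out by their pullbacks, which form a partial family of regular parameters at each point of $T'$.

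The main obstacle is the constant-parameter case in (iii): unlike regular and monomial parameters, constancy is an intrinsically residue-field statement, and the behavior of the subfield $k(\tilz)\subseteq k(y)$ under fs saturation of a base change is not purely formal. The argument requires carefully tracking the sharp factorization of Section~\ref{sharpfactorsec} through the base change, using that the residue field $k(\tilz)$ is intrinsic (Remark~\ref{tilzrem}(ii)) together with the fact that $k(y)$ is the residue field of a regular local ring over $k(\tilz)$ to control how transcendence degrees behave under tensoring with $k(\tilz')$.
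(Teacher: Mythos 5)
Your overall route is the paper's: logarithmic regularity of $f'$ via Lemma~\ref{logreglem}, the monomial case of (iii) via the pushout computation of $\oM^\gp_{y'/z'}$, the regular/constant cases via Lemma~\ref{fibchangelem}, and (ii) via Lemma~\ref{standardderiv}; the only real divergence is (iv), which the paper gets in one stroke from Lemma~\ref{logreglem} applied to $T\to Z$ (stability of logarithmic regularity under noetherian base change), whereas you route it through parameters and Lemma~\ref{submanlem}. However, the two bridging claims you insert at the step you yourself flag as the main obstacle do not hold. The assertion that $\cO_{S_y,y}$ and $\cO_{S_{y'},y'}$ ``have the same embedding dimension because $\oM^\gp_{y'/z'}=\oM^\gp_{y/z}$'' is a non sequitur: the relative characteristic controls the monomial directions, not the dimension of the local ring of the logarithmic fiber at the chosen point. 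Take $Y=\AA^1_k\to Z=\Spec(k)$ with trivial logarithmic structures, $Z'=\Spec(k(t))$, $y$ the generic point of the fiber and $y'$ the point $s=t$ of $\AA^1_{k(t)}$, which lies over $y$: then $\dim\cO_{S_y,y}=0$ while $\dim\cO_{S_{y'},y'}=1$. What regularity of the local homomorphism $\cO_{S_y,y}\to\cO_{S_{y'},y'}$ (flat with geometrically regular fibers in characteristic zero) does give is that the image of a family of regular parameters at $y$ is a \emph{partial} family of regular parameters at $y'$, since the quotient by it stays regular; it does not give that the image spans $m_{S_{y'},y'}/m^2_{S_{y'},y'}$, and in the example it does not.

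The constant-parameter argument breaks at the same place: in the example, $s$ is a transcendence basis of $k(y)/k(\tilz)=k(s)/k$, but its image in $k(y')=k(t)$ is $t$, which is algebraic over $k(\tilz')=k(t)$. The drop in $\trdeg(k(y')/k(\tilz'))$ is exactly compensated by the jump in $\dim\cO_{S_{y'},y'}$: under base change a constant parameter at $y$ can turn into (part of) a regular parameter at $y'$, so no chain-of-fields or flatness argument can yield algebraic independence over $k(\tilz')$. This also affects your deduction of (ii): Lemma~\ref{standardderiv} tests separating/abundance against a family of parameters chosen at $y'$ (e.g.\ $s-t$ above), which need not be the image of a family chosen at $y$; in the example the derivation dual to the regular parameter $s-t$ at $y'$ is the pullback of the derivation dual to the \emph{constant} parameter $s$ at $y$, so one must rechoose the family at $y'$ and re-match it with the transformed derivations rather than simply pull back a dual family. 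The paper handles the regular and constant cases of (iii) only by citing Lemma~\ref{fibchangelem}, so you are on the intended route, but the specific claims you supply to fill that citation --- equal embedding dimensions, and preservation of algebraic independence over $k(\tilz')$ --- are precisely the assertions that fail and would have to be replaced by an argument that accounts for the possible conversion of constant parameters into regular ones.
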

\begin{proof}
Note that (iv) and the first claim of (i) are covered by Lemma \ref{logreglem}. Claim (iii) for regular and constant parameters follows from Lemma~\ref{fibchangelem}. In addition, it is easy to see that $\oM_{y/z}=\oM_{y'/z'}$ by a general property of monoid pushouts, and we obtain claim (iii) for monomial parameters. Finally, (ii) is a direct consequence of (iii), and (ii) implies the second claim of (i).
\end{proof}

\subsubsection{Parameters of a composition}\label{paramcompossec}
Next we will study compositions $h=f\circ g$ of logarithmically regular morphisms. The main point will be to describe local parameters of $h$, and loosely speaking, they can be obtained by combining parameters of $f$ and $g$ but with one important subtlety: some monomial parameters of the second morphism should be replaced by parameters of other types. This behavior is well-understood in case of logarithmically smooth morphisms, and our task is to describe it when $f$ is only logarithmically regular. We start with isolating the ``redundant'' monomial parameters.

Let $X\stackrel g\to Y\stackrel f\to Z$, points $x,y,z$ and neat charts be as in \S\ref{compossec}, and assume that $f$ and $g$ are logarithmically regular. In particular, $Q^\gp/P^\gp=\oM^\gp_{y/z}$ and $R^\gp/Q^\gp=\oM^\gp_{x/y}$, but the kernel $N$ of the surjection $R^\gp/P^\gp\onto\oM^\gp_{x/z}$ can be non-trivial. By a simple diagram chase (see \cite[Remark II.2.4.6]{Ogus-logbook}), there is an exact sequence
$$0\to N\to\oM^\gp_{y/z}\to\oM^\gp_{x/z}\to\oM^\gp_{x/y}\to 0.$$ Choose a family $r\subset R$ of monomial parameters of $g$ at $x$ and subsets $n\subset Q^\gp$, $q\subset Q$ such that the image of $(n,q)$ in $\oM_{y/z}^\gp\otimes\QQ$ is a basis and the image of $n$ is a basis of $N\otimes\QQ$. One can view $(n,q)$ as virtual monomial parameters of $f$ at $y$. In fact, it is easy to see that the preimage of $N$ in $Q^\gp$ has zero intersection with $Q$ and hence one cannot choose $n$ in $Q$.

Let $N'\subseteq Q^\gp\subseteq R^\gp$ be the lattice with basis $n$, and let $Z'=Z\times\bfD_{N'}$. Since $N'$ is mapped to 0 in $\oM^\gp_{x/z}$, it is contained in $P^\gp+R^\times$. In particular, shifting the elements of $n$ by elements of $P^\gp$ we can achieve that in addition $n\subset R^\times$, and hence a natural morphism $X\to X_P[R]\to Z'$ arises. We denote the image of $x$ by $z'\in Z'$.

\begin{proposition}\label{composlem}
Keep the above notation and let $(t,w)$ and $(s,v)$ be families of regular and constant parameters of $f$ and $g$ at $y$ and $x$, respectively. Then there exists a set $n'=(n'_1,n'_2)\subset\cO_{z'}$ such that $|n'|=|n|$ and the sets $(t,s,n'_1)$, $(w,v,n'_2)$, $(q,r)$ are the families of regular, constant and monomial parameters of $h$ at $x$.
\end{proposition}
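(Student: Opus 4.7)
I would first handle the monomial parameters, which is immediate. From the four-term exact sequence
$$0\to N\to\oM^\gp_{y/z}\to\oM^\gp_{x/z}\to\oM^\gp_{x/y}\to 0,$$
the $r_i$ project to a basis of $\oM^\gp_{x/y}\otimes\QQ$ and the $q_j$ lift a basis of $(\oM^\gp_{y/z}/N)\otimes\QQ$, so $(q,r)$ is a $\QQ$-basis of $\oM^\gp_{x/z}\otimes\QQ$ and thus a family of monomial parameters of $h$ at $x$.

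For the regular and constant parameters, my approach is to combine the formal description from Lemma~\ref{logregchart} with the data encoded by $Z'$. Since the given neat charts of $f$ and $g$ are sharp at $y$ and $x$, the composition $P\into Q\into R$ is injective, so $h$ is sharp at $x$. Working \'etale-locally to bring the logarithmic structure of $Z$ into Zariski form, Lemma~\ref{logregchart} gives formal isomorphisms
$$\hatcO_y\cong(\hatcO_z)_P\llbracket Q\rrbracket\llbracket t\rrbracket\wtimes_{k(z)}k(y),\qquad \hatcO_x\cong(\hatcO_y)_Q\llbracket R\rrbracket\llbracket s\rrbracket\wtimes_{k(y)}k(x),$$
which compose to a description of $\hatcO_x$ over $\hatcO_z$ in terms of the monoid $R$ and the regular parameters $t,s$. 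By hypothesis each element of $N'\subset Q^\gp$ lies in $P^\gp+R^\times$, so the monomials $u^{n_i}$ pull back to units in $\cO_x$; this unit data defines the morphism $X\to Z'=Z\times\bfD_{N'}$ and identifies $\cO_{z'}$, with $k(z')=k(z)(c_1,\ldots,c_{|n|})$ where $c_i=u^{n_i}(x)\in k(x)^\times$.

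The partition of $n$ is then dictated by how $k(z')$ sits relative to $k(\widetilde z_h)\subseteq k(x)$, where $\widetilde z_h$ is the image of $x$ under the sharp factorization of $h$. I would choose $I_2\subseteq\{1,\ldots,|n|\}$ so that $\{c_i\}_{i\in I_2}$ is a transcendence basis of $k(z')$ over $k(\widetilde z_h)\cap k(z')$, and let $I_1$ be the complement. For $j\in I_1$ the value $c_j$ is algebraic over the intersection, hence lifts to some $\widetilde c_j\in\cO_{z'}$, and I would set
$$n'_{2,i}=u^{n_i}\quad (i\in I_2),\qquad n'_{1,j}=u^{n_j}-\widetilde c_j\in m_{z'}\quad (j\in I_1).$$
A rank count using the formal description together with Lemma~\ref{formderlem} then yields $\dim_x S^h_x=|t|+|s|+|I_1|$ and $\trdeg(k(x)/k(\widetilde z_h))=|w|+|v|+|I_2|$, verifying the two remaining claims with $|n'_1|+|n'_2|=|n|$.

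The principal obstacle is the precise identification of $k(\widetilde z_h)$ inside $k(x)$ and its relation to $k(z')$. Unwinding the sharp factorization $X\to\widetilde Z_h\to Z$ with $\widetilde P_h=P^\gp\cap R\subseteq R^\gp$, and showing that the transcendental contributions to $k(\widetilde z_h)$ coming from $Q$-monomials in $N$ are captured exactly by $k(z')$, is the key technical point; once this is in place the dimension counts above are forced by the exact sequence and the known formal description.
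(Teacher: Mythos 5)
Your proposal has a decisive gap right at the start of the non-trivial part. You write ``Since the given neat charts of $f$ and $g$ are sharp at $y$ and $x$, the composition $P\into Q\into R$ is injective, so $h$ is sharp at $x$.'' Neatness of a chart does not imply sharpness of the morphism: a chart of $g$ is neat when $Q^\gp\into R^\gp$ and $R^\gp/Q^\gp=\oM^\gp_{x/y}$, whereas sharpness of $g$ at $x$ means that $\oM_y\to\oM_x$ is injective, and the paper explicitly warns that neat charts force one to work with non-sharp monoids. Worse, if $g$ \emph{were} sharp at $x$, then by \cite[Corollary II.2.4.7]{Ogus-logbook} (quoted in \S\ref{compossec}) the composed chart would already be neat, so $N=0$ and $n=\emptyset$ — your hypothesis trivializes exactly the case the proposition is about. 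The motivating example (the $v$-chart of a logarithmic blow up, where $\oM_y=\NN^2\onto\oM_x=\NN$) is non-sharp, and it is there that the extra parameters $n'$ must be produced. Consequently your two applications of Lemma~\ref{logregchart}, whose hypothesis is sharpness, are not available in the generality needed. In addition, the step you yourself flag as ``the key technical point'' — identifying $k(\tilz_h)$ inside $k(x)$ and showing the transcendental contributions of the $N$-monomials are captured by $k(z')$ — is precisely where the content of the statement lives, and it is left unproved; the dimension counts you invoke afterwards are not forced without it.

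For comparison, the paper's argument avoids both issues by never completing and never computing residue fields explicitly: it combines parameters along $X\to Z_P[R]\to Z$ (the first arrow being strict and regular thanks to the neat charts), reads off $(q,r)$ as monomial parameters of $\lambda\:Z_P[R]\to Z$, and then factors $\lambda$ through $Z_P[R]\to Z'\to Z$. Since $Z'=Z\times\bfD_{N'}\to Z$ is a \emph{strict regular} morphism of relative dimension $|n|$, it automatically admits a full family of parameters at $z'$ consisting only of regular and constant ones; these pull back, and a rank count over $\QQ$ closes the argument. If you want to salvage your approach, you would need to replace the sharpness reduction by the sharp factorization of $h$ (not of $f$ and $g$ separately) and then actually carry out the identification of $k(\tilz_h)$ — at which point you would essentially be reconstructing the factorization through $Z'$ that the paper uses directly.
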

\begin{proof}
We use the notation and  diagram from \S\ref{compossec} describing the composition of neat charts $\alpha:X \to Y_Q[R]$ and $\beta:Y \to Z_P[Q]$. By the usual theory of regular morphisms, parameters of $\gamma: X \to Z_P[R]$ at $x$ can be obtained by combining parameters of $\alpha$ and the base change $\beta_Q[R]$ of $\beta$, hence $(t,s)$ and $(w,v)$ are regular and constant parameters of $\gamma$. Since $\gamma$ is strict and regular, the parameters of $h$ are obtained by combining the parameters of $\gamma$ and $\lambda\:Z_P[R]\to Z$, hence it remains to describe parameters for $\lambda$. At this stage we already proved that constant and regular parameters of $f$ are pulled back to constant and regular parameters of $h$.

By construction, $q$ and $r$ are mapped to bases of the vector spaces $\oM^\gp_{x/y}\otimes\QQ$ and $(\oM^\gp_{y/z}/N)\otimes\QQ$, hence $(q,r)$ is mapped to a basis of their extension $\oM^\gp_{x/z}$, that is, $(q,r)$ is a family of monomial parameters of $\lambda$ at $x'=\gamma(x)$.

Finally, notice that $\lambda$ factors into the composition $Z_P[R]\to Z'\to Z$ of logarithmically regular morphisms. Since $Z'\to Z$ is a strict regular morphism of relative dimension $|n|=\operatorname{Rank} N'$, it possesses a set $n'=(n'_1,n'_2)$ of parameters at $z'$ consisting only of regular and constant parameters, and clearly $|n|=|n'|$.

In the first paragraph above we have proved that certain parameters pull back to parameters of the same type under composition. Applying this to $Z_P[R]\to Z'\to Z$, we obtain that the parameters $n'$ are pulled back to parameters of corresponding type of $\lambda$ at $x' \in Z_P[R]$, that we also denote by $n'$. Since the types are different, the union $(n',q,r)$ is a partial family of parameters of $\lambda$ at $x'$. To show that this is a full family  it remains to count dimensions: $$|(n',q,r)|=\rk_\QQ(N)+\rk_\QQ(\oM^\gp_{x/z})=\rk_\QQ(R^\gp/P^\gp),$$ hence $(n',q,r)$ is a full family of parameters at $x'$.
\end{proof}

\subsubsection{Compositions}
Now, we can establish the following properties of compositions.

\begin{proposition}\label{coverprop}
Assume that $g\:X\to Y$ and $f\:Y\to Z$ are logarithmically regular morphisms of DM logarithmic stacks and $h\:X\to Z$ is the composition. Then

(i) The morphism $h$ is logarithmically regular. Furthermore, if $g$ is of finite type and $f$ is a relative logarithmic orbifold or has abundance of derivations, then $h$ satisfies the same property as $f$ does.

(ii) If $g$ is of finite type and a submodule $\cF$ of $\cD_{Y/Z}$ is logarithmically separating or abundant, then the preimage of $g^*\cF$ under the homomorphism $\cD_{X/Z}\to g^!(\cD_{Y/Z})$ satisfies the same property.

(iii) Assume that $X,Y,Z$ are logarithmic schemes, $t_1\.t_n$ are regular parameters of $g$ at a point $x\in X$, and $s_1\.s_l$ are regular parameters of $f$ at $g(x)$. Then $t_1\.t_n,g^*(s_1)\.g^*(s_l)$ is a partial family of regular parameters of $h$ at $x$.

(iv) If $T\into Y$ is a logarithmic $Z$-submanifold, then $T\times_YX$ is a logarithmic $Z$-submanifold of $X$.
\end{proposition}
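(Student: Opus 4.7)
The plan is to establish (iii), (i), (ii), (iv) in this order, with the substantive content residing in (ii) and the others following formally.

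For (iii), set $y = g(x)$ and apply Proposition \ref{composlem} to regular parameters of $g$ at $x$ and of $f$ at $y$. That proposition outputs a full family of regular parameters of $h$ at $x$ of the form $(s_f, s_g, n'_1)$, so the concatenation of the $f$-pullbacks $g^*s_1,\ldots,g^*s_\ell$ with $t_1,\ldots,t_n$ is a partial family of regular parameters of $h$, which is precisely the claim. For the first assertion of (i), logarithmic regularity is strict-\'etale local on source and target by definition, so one reduces to schemes where it is Lemma \ref{logreglem}(ii). The orbifold and abundance cases in (i) will fall out as the special case of (ii) with $\cF = \cD_{Y/Z}$.

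The technical heart is (ii). Since $g$ is of finite type and logarithmically regular it is logarithmically smooth, so by Lemma \ref{firstseqlem} the fundamental sequence
\[
0 \to \cD_{X/Y} \to \cD_{X/Z} \to g^!\cD_{Y/Z} \to 0
\]
is short exact, and by Lemma \ref{logsmoothlem} $\cD_{X/Y}$ is locally free with a basis dual to any chosen family of parameters of $g$. Let $\cE \subseteq \cD_{X/Z}$ denote the preimage of $g^*\cF$. The plan is to verify the logarithmically separating (or abundance) property of $\cE$ at $x$ via the dual-family criterion of Lemma \ref{standardderiv}. Using Proposition \ref{composlem} at $x$, I split a full family of parameters of $h$ into $g$-intrinsic ones (the families $s,r,v$ in the notation of that proposition) and $f$-inherited ones (the families $t,q,w$ together with the auxiliary $n'_1,n'_2$ arising from the virtual monomial set $n$ shifted into $R^\times$). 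For the $g$-intrinsic parameters a dual family comes from $\cD_{X/Y,x}$, which sits in the kernel of the fundamental sequence and hence inside $\cE_x$ automatically. For the $f$-inherited parameters, pick a dual family in $\cF_y$ (which exists by logarithmically separating, resp.\ abundance, of $\cF$ applied to $(t, q \cup n)$ and $w$), lift each derivation to $\cD_{X/Z,x}$ using surjectivity of $\cD_{X/Z} \to g^!\cD_{Y/Z}$, and correct by $\cO_x$-linear combinations of the $\cD_{X/Y,x}$-derivations above to cancel residual action on the $g$-intrinsic directions. The last check is that the lifts act correctly on $n'_1, n'_2$: since $N' \subseteq P^\gp + R^\times$, a derivation $\partial \in \cF_y$ with $\delta_\partial(n_i) = 1$ satisfies $\partial(u^{n_i}) = u^{n_i}$, and the image of $u^{n_i}$ in $\cO_x$ is a unit whose shift is precisely the regular parameter $n'_{1,i}$; after a unit rescaling and a further $\cD_{X/Y}$-correction the dual-family condition for $n'_1$ is achieved. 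The constant parameters $n'_2$ are handled by the same type of computation using the constant-derivation component.

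Part (iv) then follows immediately: $T \times_Y X \hookrightarrow X$ is strict closed as the strict base change of $T \hookrightarrow Y$, the projection $T \times_Y X \to T$ is logarithmically regular by Proposition \ref{basechangeprop}(i), and composing with the logarithmically regular morphism $T \to Z$ gives, via (i), that $T \times_Y X \to Z$ is logarithmically regular. The main obstacle in the whole proof is the bookkeeping at the end of (ii): tracking how the virtual monomials $n \subset \oM^\gp_y$ translate into genuine regular and constant parameters $n'_1, n'_2$ on $Z'$ under the shift into $R^\times$, and verifying that derivations lifted from $\cF_y$ interact with them as a dual family requires. The other parts are essentially formal consequences of base change, the fundamental exact sequence, and Proposition \ref{composlem}.
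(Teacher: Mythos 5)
Your overall architecture coincides with the paper's proof: (iii) is read off from Proposition~\ref{composlem}, the first claim of (i) from Lemma~\ref{logreglem} with the rest delegated to (ii), (iv) from base change and composition, and (ii) via the exact sequence of Lemma~\ref{firstseqlem}, the dual-family criterion of Lemma~\ref{standardderiv}, abundance of $\cD_{X/Y}$ for the $g$-intrinsic parameters $(s,v,r)$, and lifting a (virtually) dual family from $\cF_y$ for the parameters inherited from $Y$. The gap is exactly in the step you yourself flag as the main obstacle: converting the virtually dual derivations $g^*\partial_n$ (dual to the virtual monomial family $n$) into a family dual to the genuine parameters $n'=(n'_1,n'_2)$. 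Your assertion that ``the image of $u^{n_i}$ in $\cO_x$ is a unit whose shift is precisely the regular parameter $n'_{1,i}$'' is not true in general: by the construction in Proposition~\ref{composlem}, $n'$ is merely \emph{some} system of regular and constant parameters of the regular morphism $Z'=Z\times\bfD_{N'}\to Z$ at $z'$. When the residue of $u^{n_i}$ at $z'$ is transcendental over $k(z)$, the corresponding element of $n'$ is a constant parameter and is not a shifted unit at all; and even in the regular-parameter case the relevant ``shift constant'' lives in $k(z')$ and need not lift to $\cO_z$. Moreover, ``a unit rescaling and a further $\cD_{X/Y}$-correction'' cannot repair this: derivations in $\cD_{X/Y}$ annihilate all pullbacks from $\cO_{z'}$ (they kill functions factoring through $Y$, and they kill the units $u^{\pm n_i}$ because $n$ lies in the image of $M_y^{\gp}$), so such corrections cannot change the values on $n'$, while multiplying each $\partial_{n_i}$ by a unit cannot produce derivations dual to an arbitrary parameter system.

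What the step actually requires --- and what the paper supplies --- is an invertible change of basis among the $g^*\partial_n$: since $n\subset M_x^\times$, the ordinary derivations $\partial'_a=u^{-a}\partial_a$ lie in $g^*\cD_{Y/Z}$ and are dual to $n$; their restrictions form a basis of the free module $\cD_{Z'/Z}$, and because $n'$ is a parameter system of the regular morphism $Z'\to Z$ at $z'$ it possesses a dual family $\partial_{n'}\in\cD_{Z'/Z,z'}$, whence $\partial_{n'}=A\partial_n$ for a matrix $A$ with entries in $\cO_{z'}$. Replacing $g^*\partial_n$ by $Ag^*\partial_n$ gives the required dual family for $(t,w_0,q,n')$, and any preimage under $\phi$ then finishes the argument (the $\cD_{X/Y}$-corrections only serve to restore vanishing on $(s,v,r)$). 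Without this matrix argument the crucial case of (ii) is not established; apart from this, your proposal follows the paper's route, and your treatments of (i), (iii) and (iv) are essentially identical to the paper's (for (iv) the paper additionally invokes Lemma~\ref{submanlem}, but your composition-plus-base-change argument covers what the definition of a $Z$-submanifold asks for).
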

\begin{proof}
Lemma \ref{logreglem} implies that $h$ is logarithmically regular, and the remaining claims of (i) follow from (ii). Part (iii) is covered by Proposition~\ref{composlem}.

We prove  (iv). Lemma  \ref{logreglem} implies that $T\times_YX \to Z$ is logarithmically regular, being the composition of the logarithmically regular morphism $T\to Z$ and the base change $T\times_YX\to T$ of the logarithmically regular morphism $g$. By Lemma \ref{submanlem} it suffices to note that regular parameters defining   $T \subset Y$ at a point $y= g(x)$ provide regular parameters defining  $T\times_YX \subset X$ at $x$.

It remains to prove (ii). For concreteness, we assume that $\cF$ is abundant, but the other case can be obtained by taking $w_0=\emptyset$ below. By Lemma~\ref{firstseqlem} there is an exact sequence
$$0\to\cD_{X/Y}\to\cD_{X/Z}\stackrel\phi\to g^!(\cD_{Y/Z})\to 0.$$
We will work locally at a geometric point $x$ with images $y$, $z$. Choose parameters $(t,s,n'_1)$, $(w,v,n'_2)$, $(q,r)$ at $x$ as in Proposition~\ref{composlem}. Note that only $w$ can be infinite by our assumptions. By Lemma~\ref{standardderiv} it suffices to prove that any finite subfamily of parameters has a dual family of derivations. It thus suffices to work with a subfamily, where $w$ is replaced by a finite subset $w_0$ and the rest is unchanged. Since $\cD_{X/Y}$ is abundant, the family $(s,v,r)$ possesses a dual family $(\partial_s,\partial_v,\partial_r)$, whose image in $\cD_{X/Z}$ vanish on all parameters coming from $Y$. So, these derivations form a part of the dual family for $(s,v,r,t,w_0,q,n')$. Moreover, it suffices now to find a family dual to $(t,w_0,q,n')$, because by adding a linear combination of $\partial_s,\partial_v,\partial_r$ one can also achieve that its elements vanish on $(s,v,r)$.

Let $n\subset M^\gp_{y/z}$ be as in \S\ref{paramcompossec} where we noted that $(q,n)$ is only a family of virtual monomial parameters; we correspondingly define a virtually dual family of derivations $(\partial_t,\partial_w,\partial_q,\partial_n)$, where the virtual equation $\partial_a(u^a)=u^a$ for $a\in n$ means that the logarithmic derivative $\delta_a(u^a)=1$, where $\delta\:M_y\to\cO_Y$ is extended to $M^\gp_y$ by linearity. Using abundance of $\cF$ it is easy to see that it contains such a virtually dual family. Viewing its elements as derivations with values in $\cO_X$ we obtain a family $(g^*\partial_t,g^*\partial_w,g^*\partial_q,g^*\partial_n)$ in $g^*\cF$. It is not dual to the parameters $(t,w_0,q,n')$ only because $g^*\partial_n$ is dual to $n$ but not to $n'$. We claim that there exists a matrix $A\in M_{|n|\times|n|}(\cO_x)$ such that $Ag^*\partial_n$ is a family of $|n|$ logarithmic derivations $\cO_y\to\cO_x$ dual to $n'$. Once this is proved, the family $(g^*\partial_t,g^*\partial_w,g^*\partial_q,Ag^*\partial_n)$ is dual to $(t,w_0,q,n')$, and hence any its preimage under $\phi$ is such a family in $\cD_{X/Z,x}$, completing the proof.

It remains to construct $A$. Note that its coefficients solve a system of $|n|^2$ linear equations and we should only prove that a solution exists in $\cO_x$. First we recall that $n\subset M_x^\times$ hence for any $a\in n$ we have that $\partial'_a=u^{-a}\partial_a\in g^*\cD_{Y/Z}$, and the family $\partial'_n$ is dual to $n$ in the sense of usual derivations. (Here we have already used that the coefficients in $g^*\cF$ are in $\cO_x$ rather than $\cO_y$.) Their restrictions onto $Z'=Z\times \bfD_{N'}$ is the basis of the free module $\cD_{Z'/Z}$. Since $n'$ is a parameter system for the regular morphism $Z'\to Z$ at $z'$, it has a dual family of derivations $\partial_{n'}\in\cD_{Z'/Z,z'}$. Therefore already over $\cO_{z'}$ there exists a matrix $A$ such that $\partial_{n'}=A\partial_n$.
\end{proof}

\begin{corollary}\label{logfibreg}
Let $g\:X\to Y$ and $f\:Y\to Z$ be logarithmically regular morphisms, and let $S$ be a logarithmic fiber of $h=f\circ g$. Then $g$ takes $S$ to a logarithmic fiber $T$ of $f$ and for any point $x\in S$ with $y=g(x)$ the homomorphism $\phi_x\:\hatcO_{T,y}\to\hatcO_{S,x}$ is regular.
\end{corollary}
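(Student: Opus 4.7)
First, for the topological claim that $g(S) \subseteq T$: since $h = f \circ g$, the morphism $X \to \Log(Z)$ factors through $Y \to \Log(Z)$, so every fiber of $X \to \Log(Z)$ is the $g$-preimage of a fiber of $Y \to \Log(Z)$. Since $S$ is connected, $g(S)$ lies in a single connected component of that fiber containing $y$, i.e.\ in the log fiber $T = T_y$ of $f$. Moreover $g^{-1}(T)$ is open and closed in the full fiber of $X \to \Log(Z)$, so $S$ is in fact a connected component of $g^{-1}(T)$; in particular $\hatcO_{S,x} = \hatcO_{g^{-1}(T),x}$ as local rings.

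To prove regularity of $\phi_x$, I would pass to formal completions and use explicit chart descriptions. Using the sharp factorization of \S\ref{sharpfactorsec} I may reduce to the case where $f$ and $g$ are sharp at $y$ and $x$, so pick sharp neat charts $Y \to Z_P[Q]$ at $y$ and $X \to Y_Q[R]$ at $x$, regular parameter families $t$ for $f$ at $y$ and $s$ for $g$ at $x$, and compatible fields of coefficients. By Lemma~\ref{logregchart}, $\hatcO_{Y,y} \cong (\hatcO_{z})_P\llbracket Q\rrbracket\llbracket t\rrbracket\wtimes_{k(z)}k(y)$; by Lemma~\ref{logfibchartlem} and the analysis of $\bfD_\phi$-orbits, the ideal of $T$ in $\hatcO_{Y,y}$ is $(\m_z, Q_+)$, so (using sharpness of $P,Q$) $\hatcO_{T,y} \cong k(y)\llbracket t\rrbracket$. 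Proposition~\ref{composlem} supplies a full regular parameter family $(t', s, n'_1)$ for $h$ at $x$ with $t' = g^*t$, and the analogous application of Lemma~\ref{logregchart} to $h$ yields $\hatcO_{S,x} \cong k(x)\llbracket t', s, n'_1\rrbracket$, in which $\phi_x$ acts as $t_i \mapsto t'_i$ and extends $k(y) \hookrightarrow k(x)$.

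The map $\phi_x$ then factors as
\[
k(y)\llbracket t\rrbracket \;\longrightarrow\; k(x)\llbracket t\rrbracket \;\longrightarrow\; k(x)\llbracket t', s, n'_1\rrbracket,
\]
with the second arrow a formal power series extension (formally smooth, hence regular), and the first induced by the residue field extension $k(y)\to k(x)$, which is separable in characteristic zero and hence yields a regular map on power series rings. Composition of regular homomorphisms is regular, so $\phi_x$ is regular. The main obstacle is the chart bookkeeping around the possibly non-neat composed chart $X \to Z_P[R]$, resolved by Proposition~\ref{composlem} which introduces the auxiliary regular parameters $n'_1$; once this is in place, the rest reduces to standard facts about complete regular local rings.
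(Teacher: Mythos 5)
Your argument for the regularity of $\phi_x$ is essentially the paper's: the paper observes that $\hatcO_{T,y}$ and $\hatcO_{S,x}$ are power series rings over fields (Corollary~\ref{logfibcor}) and that $\phi_x$ takes a regular system of parameters to a partial family of regular parameters (Proposition~\ref{coverprop}(iii), which is itself deduced from Proposition~\ref{composlem}); you just unwind the same facts more explicitly. That part is fine.

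The genuine gap is in your first paragraph. The morphism $l_h\:X\to\Log(Z)$ does \emph{not} factor as $l_f\circ g$ unless $g$ is strict: the correct factorization is $l_h=\Log(f)\circ l_g$ through $\Log(Y)$, not through $Y$. Consequently your assertion that every fiber of $X\to\Log(Z)$ is the $g$-preimage of a fiber of $Y\to\Log(Z)$, and hence that $S$ is a connected component of $g^{-1}(T)$, is false. A minimal counterexample: $Z=Y=\Spec(k)$ with trivial logarithmic structure, $X=\AA^1_k$ with the logarithmic structure given by the origin, $g$ the structure map and $f=\id$. Here the logarithmic fiber $S$ of $h$ through the origin is the closed logarithmic stratum $\{0\}$, while $g^{-1}(T)=X=\AA^1_k$ is connected; so $S$ is not a connected component of $g^{-1}(T)$, and the fibers of $l_h$ are strictly finer than the $g$-preimages of the fibers of $l_f$. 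The conclusion $g(S)\subseteq T$ is of course still true, but it does not follow from a factorization through $Y$; the paper proves it by choosing charts $Y\to Z_P[Q]$ and $X\to Y_Q[R]$ as in \S\ref{compossec} and noting that $Z_P[R]\to Z_P[Q]$ is equivariant for $\bfD_{R^\gp/P^\gp}\to\bfD_{Q^\gp/P^\gp}$, hence carries orbits into orbits, so that connected components of preimages of orbits (the logarithmic fibers of $h$) map into connected components of preimages of orbits (the logarithmic fibers of $f$). You need some argument of this kind — working with the composed chart or with $\Log(Y)$ — to justify the first claim of the corollary.
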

\begin{proof}
The claim can be checked \'etale locally, hence we can assume that $f$ and $g$ possess charts $Y\to Z_P[Q]$ and $X\to Y_Q[R]$ and a diagram as in \S\ref{compossec} arises. The logarithmic fibers of $h$ and $f$ are the connected components of the preimages in $X$ and $Y$ of $T_{R^\gp/P^\gp}$-orbits on $Z_P[R]$ and $T_{Q^\gp/P^\gp}$-orbits on $Z_P[Q]$, respectively. The map $X_P[R]\to X_P[Q]$ is equivariant with respect to the homomorphism $T_{R^\gp/P^\gp}\to T_{Q^\gp/P^\gp}$, hence it takes orbits of the first group to the orbits of the second group. It follows that $g$ takes the logarithmic fibers of $h$ to the logarithmic fibers of $f$.

By Corollary~\ref{logfibcor} the rings $\hatcO_{T,y}$ and $\hatcO_{S,x}$ are regular and hence of the form $k\llbracket t_1\.t_s\rrbracket$. By Proposition~\ref{coverprop}(iii) $\phi_x$ takes a family of parameters of $\hatcO_y$ to a partial family of parameters of $\hatcO_x$, hence it is regular.
\end{proof}

\subsubsection{Finite type stability}
Finally, we show that largeness of $\cD_{Y/Z}$ is preserved under morphisms of finite type:

\begin{theorem}\label{manifoldth}
Assume that $f\:Y\to Z$ is a relative logarithmic orbifold and $g\:Y'\to Y$ is a morphism of finite type such that $f'\:Y'\to Z$ is logarithmically regular. Then $f'$ is a relative logarithmic orbifold too. In particular, a logarithmic $Z$-submanifold of a logarithmic $Z$-orbifold is a logarithmic $Z$-orbifold too. In addition, if $f$ has abundance of derivations then so does $f'$.
\end{theorem}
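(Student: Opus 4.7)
My plan is to reduce to the case of a strict closed log submanifold and then handle that case directly using the exact vanishing built into Lemma~\ref{standardderiv}. Working \'etale-locally on $Y'$ and $Y$, I would factor the finite type morphism $g$ as a closed immersion $Y' \hookrightarrow \AA^n_Y$ followed by the projection $\pi \: \AA^n_Y \to Y$, equipping $\AA^n_Y$ with the logarithmic structure pulled back from $Y$ so that $\pi$ is strict and smooth (in particular logarithmically smooth and of finite type). By Proposition~\ref{coverprop}(i), the composition $\AA^n_Y \to Z$ is then a relative logarithmic orbifold, and has abundance of derivations if $f$ does. The inclusion $Y' \hookrightarrow \AA^n_Y$ is strict (both sides inherit their logarithmic structures from $Y$) and closed, and $Y' \to Z$ is logarithmically regular by assumption. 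Thus $Y'$ is a logarithmic $Z$-submanifold of $\AA^n_Y$, so it suffices to prove the second assertion of the theorem.

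For the submanifold case, let $i \: Y' \hookrightarrow Y$ be a logarithmic $Z$-submanifold with ideal $I \subseteq \cO_Y$, and fix a geometric point $y' \to Y'$ with underlying point $y \in Y$. By Lemma~\ref{submanlem}, there exist $s_1, \dots, s_k \in \cO_y$ that form a partial family of regular parameters of $f$ at $y$ and generate $I_{y'}$. Extend these to a full family of parameters $(t_1,\dots,t_r, s_1,\dots,s_k;\, u;\, q)$ of $f$ at $y$. Since $i$ is strict with $k(y') = k(y)$ and $\oM^\gp_{y'/z} = \oM^\gp_{y/z}$, and since the logarithmic fiber $S_{y'}$ is cut out inside $S_y$ by the regular parameters $\bar s_j$, the images $(\bar t;\, \bar u;\, q)$ form a full family of parameters of $f' \: Y' \to Z$ at $y'$.

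The key step is now the descent of dual derivations. Because $\cD_{Y/Z}$ is logarithmically separating, Lemma~\ref{standardderiv}(ii) provides derivations $\partial_{t_i}, \partial_{s_j}, \partial_{q_l} \in \cD_{Y/Z,y}$ dual to the family $(t, s, q)$, and in particular satisfying $\partial_{t_i}(s_j) = 0$ and $\partial_{q_l}(s_j) = 0$ as elements of $\cO_y$ (not merely modulo $m_y$). For any $a = \sum c_j s_j \in I_{y'}$ we then compute
\[
\partial_{t_i}(a) = \sum_j \partial_{t_i}(c_j) s_j + c_j \partial_{t_i}(s_j) = \sum_j \partial_{t_i}(c_j)\, s_j \in I_{y'},
\]
and similarly for $\partial_{q_l}$. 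Hence $\partial_{t_i}$ and $\partial_{q_l}$ descend to logarithmic $\cO_Z$-derivations $\bar\partial_{t_i}, \bar\partial_{q_l}$ on $\cO_{y'}$, which lie in $\cD_{Y'/Z,y'}$ and form a dual family to $(\bar t; q)$. By Lemma~\ref{standardderiv}(ii) this shows $\cD_{Y'/Z}$ is logarithmically separating at $y'$. For the abundance claim one argues identically: given a finite subset $u_0 \subseteq u$, Lemma~\ref{standardderiv}(iii) furnishes additional dual derivations $\partial_{u_m}$ which likewise vanish on the $s_j$, hence descend and produce a dual family for $(\bar t; \bar u_0; q)$.

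The stack version follows by working on a strict \'etale presentation, since logarithmic regularity and the separating/abundance properties of $\cD_{Y/Z}$ are defined \'etale-locally in \S\ref{sheafdersec}. The only technical point worth checking is the exactness of $\partial_a(b)=0$ (versus vanishing modulo $m_y$), which is precisely what Lemma~\ref{standardderiv} establishes; once that is granted, descent of derivations across a strict closed immersion with a submanifold ideal is automatic, so I do not expect a genuine obstacle.
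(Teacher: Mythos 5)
Your treatment of the submanifold case is essentially the paper's own argument: complete the generators of the ideal to a full family of parameters, take an exactly-dual family of derivations provided by Lemma~\ref{standardderiv}, observe that the derivations dual to the remaining parameters kill the ideal and hence restrict to $Y'$, and conclude by Lemma~\ref{standardderiv}(ii)--(iii). That part is fine.

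The gap is in your reduction of the general finite-type case to the submanifold case. You factor $g$ through a closed immersion $Y'\hookrightarrow \AA^n_Y$ with the logarithmic structure on $\AA^n_Y$ pulled back from $Y$, and assert that this immersion is strict because ``both sides inherit their logarithmic structures from $Y$.'' But the hypothesis is only that $g\:Y'\to Y$ is a morphism of logarithmic schemes of finite type; nothing forces $M_{Y'}$ to be the pullback $g^*M_Y$. If $g$ is not strict --- for instance if $Y'$ is a chart of a logarithmic blow up of $Y$, or is $Y$ itself with the logarithmic structure enlarged by a regular parameter as in Lemma~\ref{incrloglem} --- then $Y'$ is a morphism to $\AA^n_Y$ but not a strict closed subscheme of it, so it is not a logarithmic $Z$-submanifold of $\AA^n_Y$ in the sense of the paper, and your second step does not apply. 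The paper closes exactly this hole before embedding: by Theorem~\ref{neatth} one chooses locally a neat chart $Y'\to Y_Q[R]$, which factors $g$ as a \emph{strict} morphism $Y'\to Y_Q[R]$ followed by the relative toric (hence logarithmically smooth, finite type) morphism $Y_Q[R]\to Y$; the strict finite-type piece then embeds as a strict closed immersion into some $\AA^n_{Y_Q[R]}$, whose structure morphism to $Z$ is a relative logarithmic orbifold (with abundance when $f$ has it) by Proposition~\ref{coverprop}(i), and only then does one run the submanifold argument. Your proof becomes correct once you insert this chart step; as written, the strictness claim on which the reduction rests is false for general $g$.
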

\begin{proof}
The question is \'etale local, hence we can assume that $Y$ and $Y'$ are logarithmic $Z$-manifolds, and it suffices to work locally at a geometric point $y'\in Y'$ with $y=g(y')$. Notice that any morphism of finite type between logarithmic schemes locally can be split into a composition of a strict closed immersion followed by a logarithmically smooth morphism. Indeed, Theorem~\ref{neatth} reduces the claim to the case of strict morphisms, which follows since $Y'$ locally embeds in some $\AA^n_Y$. In particular, locally $Y'$ can be embedded as a logarithmic $Z$-submanifold into a logarithmic $Z$-manifold $W$ such that $W$ is logarithmically smooth over $Y$. Therefore, it suffices to consider two cases: $Y'\to Y$ is logarithmically smooth, $Y'\into Y$ is a logarithmic $Z$-submanifold. The first case is covered by Proposition~\ref{coverprop}(i).

In the second case, choose a family $(t,u,q)$ of parameters at $y$ such that $t=(t',t'')$ and $Y'$ is given by the vanishing of $(t')$ locally at $y$. In particular, $(t'',u,q)$ is a family of parameters at $y'$. If $f$ is abundant, let $u_0$ be a finite subset of $u$, and take $u_0=\emptyset$ otherwise. By Lemma~\ref{standardderiv}(ii) there exists a family of derivations $(\partial_{t'},\partial_{t''},\partial_u,\partial_q)$ dual to $(t,u_0,q)$. Since $(\partial_{t''},\partial_u,\partial_q)$ vanish on $(t')$, they restrict to derivations on $Y'$ which form a dual family to $(t'',u_0,q)$. So, $f'$ is abundant (resp. logarithmically separating) at $y'$ by Lemma~\ref{standardderiv}(ii).
\end{proof}

\begin{lemma}\label{logmanlem}
Assume that $f\:Y\to Z$ is a logarithmically regular morphism of logarithmically regular logarithmic schemes, $Y=\Spec(A)$, $Z=\Spec(R)$ are spectra of complete local rings, and $\dim(Z)\le 1$. Then $f$ is a relative logarithmic manifold.
\end{lemma}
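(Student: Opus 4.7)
The plan is to reduce to the sharp case via Lemma~\ref{firstseqlem}, apply the explicit description of $A$ at the closed point (Lemmas~\ref{logregchart} and~\ref{formderlem}), and verify logarithmic separating at every geometric point of $Y$ using Lemma~\ref{standardderiv}(ii). First, I would pass to the sharp factorization $Y\to\tilZ\to Z$ from \S\ref{sharpfactorsec}; since $\tilZ\to Z$ is logarithmically \'etale, Lemma~\ref{firstseqlem} gives $\cD_{Y/Z}=\cD_{Y/\tilZ}$, so we may assume $f$ is sharp at the closed point $y$, while $\tilZ$ remains complete local, logarithmically regular, and of dimension $\le 1$. Lemma~\ref{logregchart} then yields
$$A=R_P\llbracket Q\rrbracket\llbracket t_1,\ldots,t_n\rrbracket\wtimes_{k(z)}k(y)$$
with $P=\oM_z$, $Q=\oM_y$, and Lemma~\ref{formderlem} decomposes $\cD_{A/R}$ as the direct sum of constant, monomial ($\Hom_\ZZ(\oM^\gp_{y/z},A)$), and regular ($\bigoplus A\partial_{t_i}$) pieces.

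Let $\bar{y}'\to Y$ be an arbitrary geometric point with image $\bar{z}'\to Z$. By Lemma~\ref{standardderiv}(ii) and the matrix-inversion argument in its proof, it suffices to construct, for any parameters $(t',q')$ at $\bar{y}'$, derivations in $\cD_{Y/Z,\bar{y}'}$ dual to them modulo $\m_{\bar{y}'}$. For the monomial direction, the surjection $\oM^\gp_{y/z}\onto\oM^\gp_{\bar{y}'/\bar{z}'}$ (both sides obtained by quotienting $\oM^\gp_y,\oM^\gp_z$ by faces) lets me lift $q'_k$ to $\oM^\gp_{y/z}$ and choose $\phi_k\in\Hom_\ZZ(\oM^\gp_{y/z},A)$ with $\phi_k(\tilde{q}'_l)\equiv\delta_{kl}\pmod{\m_{\bar{y}'}}$; the resulting $\partial_{\phi_k}\in\cD_{A/R}\subset\cD_{Y/Z,\bar{y}'}$ acts correctly on monomials and vanishes on the $t_i$ and on the constant piece.

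For the regular direction, the hypothesis $\dim Z\le 1$ becomes essential: either $\bar{z}'$ is the closed point of $Z$ (so $\hatcO_{\bar{z}'}=R$), or $R$ is a DVR and $\bar{z}'$ is its generic point, with $\hatcO_{\bar{z}'}=\Frac(R)$ carrying the trivial logarithmic structure; the latter case reduces locally to an absolute logarithmically regular morphism over a field. I would apply Lemma~\ref{logregchart} to $A'=\hatcO_{\bar{y}'}$ over $\hatcO_{\bar{z}'}$ to obtain an explicit formal structure with regular parameters $t'_1,\ldots,t'_m$, and invoke Lemma~\ref{formderlem} to exhibit derivations $\partial_{t'_j}\in\cD_{A'/\hatcO_{\bar{z}'}}$ dual to them. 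Solving for $\partial_{t'_j}$ modulo $\m_{\bar{y}'}$ as $\cO_{\bar{y}'}$-linear combinations of $\partial_{t_i}$ and $\partial_{\phi_k}$ reduces to a Jacobian invertibility statement: the matrix expressing the $t'_j$ in terms of $(t_i,u^{q'_k})$ is invertible over $\cO_{\bar{y}'}$, since the $t'_j$ form a regular parameter system of the logarithmic fiber $S_{\bar{y}'}$. The main obstacle will be justifying this Jacobian invertibility by carefully tracking the passage from $(t_i,u^{q_l})$ at the closed point $y$ to $(t'_j,u^{q'_k})$ at $\bar{y}'$ through the formal structures of $A$ and $A'$; the condition $\dim Z\le 1$ is precisely what keeps this relative change-of-variables calculation finite-dimensional and tractable, as the base contributes either no additional regular parameter (field case) or a single one (DVR case), matching what is already available in the chart at the closed point.
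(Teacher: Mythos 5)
Your treatment of the monomial directions and the reduction to the sharp case are fine, and at the closed point $y$ your argument works. The genuine gap is at the other geometric points $y'$, in the regular directions, which is exactly where the content of the lemma lies. Your key claim is that the matrix expressing the parameters $t'_j$ at $y'$ in terms of the closed-point chart $(t_i,u^{q_k})$ is invertible over $\cO_{y'}$ ``since the $t'_j$ form a regular parameter system of the logarithmic fiber''. That is not a proof but a restatement of what must be shown: it is precisely the weak Jacobian property at the (typically non-closed) point $y'$, i.e.\ the assertion that the finitely many honest derivations $\partial_{t_i},\partial_{\phi_k}\in\cD_{A/R}$ already separate parameters at every prime, and this does not follow formally from $t'_j$ being regular parameters of $\cO_{S_{y'},y'}$. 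Nor does producing dual derivations in $\cD_{\hatcO_{y'}/\hatcO_{z'}}$ via Lemma~\ref{logregchart} at $y'$ help: such derivations of the completion need not lie in the stalk $\cD_{Y/Z,y'}$, because $\cD_{Y/Z}$ is not quasi-coherent --- this is exactly the pitfall flagged in Remark~\ref{relmanrem}(iii). There is also no finite-dimensional ``change of variables'' between the chart at $y$ and parameters at $y'$: the residue field $k(y')$ is in general of infinite transcendence degree over $l$, so an element $t'_j\in\cO_{y'}$ mixes regular and constant directions of the chart at $y'$ in a way not controlled by the data at $y$, and the role of $\dim(Z)\le 1$ is not to make this comparison finite-dimensional.

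The paper closes this gap with a different mechanism that your proposal lacks, namely finite-type stability (Theorem~\ref{manifoldth}) combined with a reduction to \emph{coordinate} points. Concretely: using the structure theorem one writes $A=l\llbracket Q\rrbracket\llbracket t_1\. t_n\rrbracket$ with $R=k$, $k\llbracket x\rrbracket$ or $k\llbracket P\rrbracket$; a surjection $Q'=P\oplus\NN^r\onto Q$ makes $Y$ finite over $\Spec(l\llbracket Q'\rrbracket\llbracket t\rrbracket)$, and Theorem~\ref{manifoldth} reduces to the case $Q=P\oplus\NN^r$. At a coordinate point (one whose maximal ideal is generated by a subset of $\{p,q_1\.q_r,t_1\.t_n\}$) the derivations $q_i\partial_{q_i},\partial_{t_j}$ are visibly logarithmically separating. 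For an arbitrary point $y$, one chooses $f_1\.f_n\in A$ so that $p,q_1\.q_r,f_1\.f_n$ generate an open ideal and $m_y$ is generated by the relevant $q_i$, $f_j$ and $m_y\cap\{p\}$; then $Y\to Y'=\Spec(l\llbracket Q\rrbracket\llbracket f_1\.f_n\rrbracket)$ is finite and \'etale at $y$, sends $y$ to a coordinate point of $Y'$, and Theorem~\ref{manifoldth} transfers the manifold property back to $Y$ at $y$. Some device of this kind --- or an appeal to (a logarithmic extension of) Matsumura's theorem that the weak Jacobian condition for power series rings needs only be checked at maximal ideals --- is what your Jacobian step would have to be replaced by.
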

\begin{proof}
Recall that $A=l\llbracket Q\rrbracket\llbracket t_1\.t_n\rrbracket$ with the logarithmic structure given by a sharp fs monoid $Q$. A similar description holds for $R$ and yields three possible cases: $R=k$, $R=k\llbracket x\rrbracket$, $R=k\llbracket P\rrbracket$, where the logarithmic structure is given by $P=\NN$ in the third case and is trivial otherwise. We will deal only with the third case, as the other two are similar and a bit simpler.

Extend the homomorphism $P\to Q$ to a surjective homomorphism $Q'=P\oplus\NN^r\onto Q$ and set $A'=l\llbracket Q'\rrbracket\llbracket t_1\.t_n\rrbracket$. Then $Y$ is finite over $Y'=\Spec(A')$, and by Theorem~\ref{manifoldth} it suffices to prove the claim for $Y'$ instead of $Y$. So, we assume that $Q=P\oplus\NN^r$ and denote its free generators $p,q_1\.q_r$.

Let us now prove that $\cD_{Y/Z}$ is logarithmically separating at a point $y\in Y$. Assume first that $y$ is a {\em coordinate point} in the sense that $m_y$ is generated by a subset $S$ of $\{p,q_1\.q_r,t_1\.t_n\}$. Then already the subsheaf of $\cD_{Y/Z}$ generated by $q_i\partial_{q_i}$ and $\partial_{t_j}$ with $q_i,t_j\in S$ is logarithmically separating at $y$ over $Z$. Assume now that $y$ is arbitrary. Renumbering $q$'s we can assume that $q_i\in m_y$ if and only if $1\le i\le r'$, and then $\oM_y=\oplus_{i=1}^{r'}q_i\NN$. Let $r'+n'$ be the codimension of $y$ in the fiber over $Z$, then there exists $f_1\.f_{n'}\in A$ such that $m_y$ is generated by $q_1\.q_{r'},f_1\.f_{n'}$ and $m_y\cap\{p\}$. It is easy to see that there exist $f_{n'+1}\.f_n$ such that $p,q_1\.q_r,f_1\.f_n$ generate an open ideal. Therefore, the induced morphism $Y\to Y'=\Spec(l\llbracket Q\rrbracket\llbracket f_1\.f_n\rrbracket)$ is finite, \'etale at $y$ and takes $y$ to a coordinate point $y'$. Since $Y'$ is a logarithmic $Z$-manifold at $y'$, we obtain that $Y$ is a logarithmic $Z$-manifold at $y$.
\end{proof}

\subsection{Relative logarithmic orders of ideals}

\subsubsection{Monomial saturation}\label{monomsatsec}
By the {\em monomial saturation} $\cM(\cI)$ of an ideal $\cI$ on a logarithmic DM stack $Y$ we mean the minimal monomial ideal containing $\cI$.

\subsubsection{Differential saturation}\label{difsatur}
Let $Y\to Z$ be a logarithmically regular morphism and $\cF\subseteq\cD_{Y/Z}$ an $\cO_Y$-submodule. We say that an ideal $\cI\subseteq\cO_Y$ is {\em $\cF$-saturated} if $\cF^{(\le 1)}(\cI)=\cI$. For example, any monomial ideal is $\cF$-saturated. The ideal, $\cF^{\infty}(\cI)$ will be called the {\em $\cF$-saturation} of $\cI$. Obviously, it is the minimal $\cF$-saturated ideal containing $\cI$, in particular, $\cF^{\infty}(\cI)\subseteq\cM(\cI)$. The ideal $\cD_{Y/Z}^{\infty}(\cI)$ will be called the $\cD$-saturation of $\cI$ over $Z$.

\subsubsection{Logarithmically clean ideals}\label{Sec:log-clean}
We say that $\cI$ is {\em logarithmically clean at} a point $y\in Y$ over $Z$ if $\cD_{Y/Z}^{\infty}(\cI)_y=\cM(\cI)_y$, and $\cI$ is {\em logarithmically clean} over $Z$ if $\cD_{Y/Z}^{\infty}(\cI)=\cM(\cI)$. We record the following obvious fact:

\begin{lemma}\label{logcleanlem}
Let $f\:Y\to Z$ be a logarithmically regular morphism, $\cI\subseteq\cO_Y$ an ideal, and $\cF\subseteq\cD_{Y/Z}$ an $\cO_Y$-submodule. If $\cF^{\infty}(\cI)$ is monomial, then $\cI$ is logarithmically clean over $Z$ and $\cF^{\infty}(\cI)=\cD_{Y/Z}^{\infty}(\cI)=\cM(\cI)$.
\end{lemma}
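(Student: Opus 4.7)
The plan is to simply chase inclusions between the three ideals in question, using the definitions already established in \S\ref{monomsatsec} and \S\ref{difsatur}. There is no geometric obstacle here; the assertion is really a formal consequence of the fact that $\cF^{\infty}(\cI)$ is the minimal $\cF$-saturated ideal containing $\cI$ and lies inside $\cM(\cI)$, combined with the minimality characterization of $\cM(\cI)$ among monomial ideals.

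First I would record the two general inclusions that hold unconditionally: since $\cF\subseteq\cD_{Y/Z}$, every $\cD_{Y/Z}$-saturated ideal is $\cF$-saturated, so applying minimality of $\cF^{\infty}(\cI)$ yields
\[
\cF^{\infty}(\cI)\ \subseteq\ \cD_{Y/Z}^{\infty}(\cI).
\]
Moreover, as noted in \S\ref{difsatur}, every monomial ideal is $\cD_{Y/Z}$-saturated (hence $\cF$-saturated), so minimality gives
\[
\cD_{Y/Z}^{\infty}(\cI)\ \subseteq\ \cM(\cI).
\]

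Next I would use the hypothesis. Assume $\cF^{\infty}(\cI)$ is monomial. Then $\cF^{\infty}(\cI)$ is itself a monomial ideal containing $\cI$, so by the minimality built into the definition of $\cM(\cI)$,
\[
\cM(\cI)\ \subseteq\ \cF^{\infty}(\cI).
\]
Stringing this together with the two inclusions above yields the sandwich
\[
\cM(\cI)\ \subseteq\ \cF^{\infty}(\cI)\ \subseteq\ \cD_{Y/Z}^{\infty}(\cI)\ \subseteq\ \cM(\cI),
\]
which forces all three ideals to coincide. In particular $\cD_{Y/Z}^{\infty}(\cI)=\cM(\cI)$, which is precisely the definition of $\cI$ being logarithmically clean over $Z$ given in \S\ref{Sec:log-clean}.

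The only step requiring any thought is the observation that monomial ideals are automatically $\cF$-saturated (so that the minimality of $\cF^{\infty}(\cI)$ does produce the inclusion into $\cM(\cI)$), and this is already recorded in \S\ref{difsatur}. Hence there is no real obstacle: the lemma is a direct consequence of unwinding definitions.
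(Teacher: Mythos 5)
Your argument is correct and is exactly the definition-chase the paper has in mind: the lemma is recorded there as an ``obvious fact'' with no written proof, and the sandwich $\cM(\cI)\subseteq\cF^{\infty}(\cI)\subseteq\cD_{Y/Z}^{\infty}(\cI)\subseteq\cM(\cI)$ you give (using minimality of $\cM(\cI)$ among monomial ideals, minimality of $\cF^{\infty}(\cI)$ among $\cF$-saturated ideals, and the fact that monomial ideals are saturated, all recorded in \S\ref{monomsatsec}--\S\ref{difsatur}) is precisely the intended justification.
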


\begin{remark}
(i) In the absolute situation all ideals are logarithmically clean, see \cite[Theorem~3.4.2]{ATW-principalization}.

(ii) A typical example is illustrated by the following model case: $Z=\Spec(O)$ with a chart $P\to O$ and $Y=\Spec(A)$ with $A=O_P[Q][t_1\.t_n]$. If $I\subseteq A$ is generated by elements $h_i=\sum_{q\in Q,l\in\NN^n}b_{iql}u^qt^l$, then it is easy to see (and will be shown in Section~\ref{monomsec}) that $\cD_{A/O}^{\infty}(I)$ is the ideal generated by the elements $b_{iql}u^q$. In particular, if $b_{iql}$ are monomials, then $\cI$ is logarithmically clean. This can be achieved by a modification of the base, and even just by enlarging $P$. Surprisingly, extending this observation to more general logarithmically regular morphisms, and even logarithmically smooth ones, is substantially more difficult. This will be the central topic of Section~\ref{monomsec}.
\end{remark}

\subsubsection{The logarithmic order}
Let $Y\to Z$ be a logarithmically regular morphisms and $\cI$ an ideal on $Y$. Similarly to \cite[\S3.6.1]{ATW-principalization}, if $Y$ is a scheme then by the \emph{logarithmic order $\logord_{\cI/Z}(y)$ of $\cI$ relative to $Z$ at a point $y\in Y$} we mean the usual order of $\cI|_S$ at $y$, where $S=S_y$ is the logarithmic fiber of $y$. Since logarithmic order is compatible with strict \'etale morphisms $Y'\to Y$, this definition extends to the case when $Y$ is a logarithmic DM stack. In particular, we obtain a function $\logord_{\cI/Z}\:|Y|\to\NN\cup\{\infty\}$. By $\logord_{\cI/Z}(Y)=\max_{y\in Y}\logord_{\cI/Z}(y)$ we denote the {\em logarithmic order} of $\cI$ on $Y$.

\subsubsection{Clean ideals}\label{balancedsec}
Let $Y\to Z$ be a logarithmically regular morphism. An ideal $\cI$ is called {\em $Z$-clean at} a point $y\in|Y|$ if $\logord_{\cI/Z}(y)<\infty$, and $\cI$ is called {\em $Z$-clean} or simply {\em clean} if it is $Z$-clean at all points of $Y$.

\subsubsection{Relation to derivations}
As in the absolute case (\cite[Lemma 3.6.3]{ATW-principalization}), the relative logarithmic order can be computed using derivations.

\begin{lemma}\label{logordlem}
Let $Y\to Z$ be a logarithmically regular morphism, $\cF\subseteq\cD_{Y/Z}$ a separating $\cO_Y$-submodule, $\cI$ an ideal, and $y\in|Y|$ a point. Then $$\logord_{\cI/Z}(y)=\min\{a\in\NN\mid {\cF^{(\leq a)}(\cI)}_y=\cO_{Y,y}\},$$ where $\min(\emptyset)=\infty$ by convention. In particular, $\cI$ is clean at $y$ if and only if $y\notin V(\cF^{\infty}(\cI))$, and $\cI$ is clean if and only if $\cF^{\infty}(\cI)=1$.
\end{lemma}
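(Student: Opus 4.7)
The plan is to reduce the equality to the classical statement that, in characteristic zero, the order of an ideal in a regular local ring is the minimal number of derivations needed to produce a unit. By definition $\logord_{\cI/Z}(y)$ is the order of $\cI|_S$ at $y$ in the regular local ring $\cO_{S,y}$, where $S=S_y$; its completion $\widehat\cO_{S,y}$ is a formal power series ring over $k(y)$ in regular parameters $t_1,\dots,t_n$ at $y$ (see \S\ref{paramsec}), and the order is read off in the usual way.

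The key technical input is that $\cD_{Y/Z}=\Der_{Y/\Log(Z)}$ (proved earlier) and that $S_y$ is a connected component of a fiber of $Y\to\Log(Z)$ (Lemma~\ref{logfibchartlem}). Consequently, any $\partial\in\cF_y\subseteq\cD_{Y/Z,y}$ preserves the ideal $J$ of $S$ locally at $y$ and descends to a derivation $\bar\partial$ of $\cO_{S,y}$; iterating, any composition of at most $a$ derivations from $\cF$ restricts to a differential operator of order $\le a$ on $\cO_{S,y}$, compatibly with reduction modulo $J$. In addition, by the separating hypothesis and Lemma~\ref{standardderiv}(i), there exist $\partial_1,\dots,\partial_n\in\cF_y$ dual to $t_1,\dots,t_n$, whose restrictions $\bar\partial_i$ are dual to the regular parameters of $\cO_{S,y}$.

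With these tools the two inequalities are short. For ``$\ge$'': if some composition $\partial$ of at most $a$ derivations in $\cF_y$ satisfies that $\partial(f)$ is a unit at $y$ for some $f\in\cI_y$, then $\bar\partial(\bar f)$ is a unit in $\cO_{S,y}$, forcing $\bar f\notin m_{S,y}^{a+1}$, and hence $\logord_{\cI/Z}(y)\le a$. For ``$\le$'': given $a=\logord_{\cI/Z}(y)$, choose $f\in\cI_y$ with $\bar f\in m_{S,y}^a\smallsetminus m_{S,y}^{a+1}$; writing $\bar f\equiv \sum_{|\alpha|=a}c_\alpha t^\alpha\pmod{m_{S,y}^{a+1}}$ in the completion, characteristic zero gives $\bar\partial^\alpha(\bar f)(y)=\alpha!\,c_\alpha\neq 0$ for some multi-index $\alpha$, so the lift $\partial^\alpha(f)\in\cF^{(\le a)}(\cI)_y$ is a unit at $y$.

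The ``in particular'' claims follow at once: $\cI$ is clean at $y$ iff $\logord_{\cI/Z}(y)<\infty$ iff $\cF^{(\le a)}(\cI)_y=\cO_{Y,y}$ for some $a$ iff $\cF^\infty(\cI)_y=\cO_{Y,y}$ iff $y\notin V(\cF^\infty(\cI))$; the global statement is just this at every $y$. The only conceptual obstacle is restricting logarithmic derivations to the logarithmic fiber, and this is essentially automatic once one invokes the identification $\cD_{Y/Z}=\Der_{Y/\Log(Z)}$.
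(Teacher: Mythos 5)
Your proposal is correct and, at the structural level, follows the same reduction as the paper: restrict everything to the logarithmic fiber $S=S_y$, pass to the completion $\hatcO_{S,y}\cong k(y)\llbracket t_1\.t_n\rrbracket$, and use the separating hypothesis through Lemma~\ref{standardderiv} to obtain derivations $\partial_1\.\partial_n\in\cF_y$ dual to the regular parameters. Your justification of the restriction step via $\cD_{Y/Z}=\Der_{Y/\Log(Z)}$ (relative derivations kill pullbacks from the base, hence preserve the fiber ideal and descend to $S$) is a nice way to make explicit what the paper only asserts as $\cF^{(\leq a)}(\cI)|_S=(\cF_S)^{(\le a)}(\cI_S)$. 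The ``$\ge$'' direction and the ``in particular'' statements are handled as in the paper.

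Where you genuinely diverge is the computation for ``$\le$'', and this is also where your write-up is thinner than it should be. The paper proves that for $h$ of order $a>0$ some single $\partial_i h$ has order exactly $a-1$ and then inducts; its power-series manipulation is designed precisely to cope with the fact that the dual derivations need not be $k(y)$-linear (the paper explicitly warns that ``the action on $k(y)$ can be non-trivial''). You instead apply a full composition $\partial^\alpha$ with $|\alpha|=a$ and evaluate at $y$, asserting $\overline{\partial}^{\,\alpha}(\overline{f})(y)=\alpha!\,c_\alpha$ ``by characteristic zero''. As written this reads as an appeal to $k(y)$-linearity, which is not available: the $\partial_i$ need not preserve the chosen coefficient field $k(y)\into\hatcO_{S,y}$, so the expansion of $\partial^\alpha(c_\beta t^\beta)$ contains cross terms in which some of the derivations hit $c_\beta$. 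Your identity nevertheless holds, because by the generalized Leibniz rule $\partial_{i_1}\cdots\partial_{i_a}(uv)=\sum_{A\sqcup B=\{1\.a\}}(\partial_A u)(\partial_B v)$ every cross term has its monomial factor differentiated fewer than $a=|\beta|$ times, hence lies in $m_{S,y}$ and dies upon evaluation at $y$; only the term where all $a$ derivations hit the monomial survives, contributing $\alpha!\,c_\alpha\delta_{\alpha\beta}$, while $\partial^\alpha$ applied to the $m_{S,y}^{a+1}$ tail also lands in $m_{S,y}$. Add this one-line observation and your argument is complete; it is then an attractive alternative to the paper's order-drop induction, since evaluation at the point neutralizes the non-trivial action on $k(y)$ that the paper has to track through orders.
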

\begin{proof}
Let $S$ be the logarithmic fiber of $y$, and set $\cI_S=\cI|{_S}$ and $\cF_S=\cF|{_S}$. Since $\cF^{(\leq a)}(\cI)|{_S}=(\cF_S)^{(\le a)}(\cI_S)$, we should check that $\ord_y(\cI_S)$ is the minimal $a$ such that $\cF_S^{(\leq a)}(\cI_S)_y=\cO_{S,y}$. It suffices to check the latter in the formal completion $\hatcO_{S,y}$. Fix a family of regular parameters $t_1\.t_n$. Then $\hatcO_{S,y}\toisom k(y)\llbracket t_1\.t_n\rrbracket$, and by Lemma~\ref{standardderiv}, $\cF_S$ contains derivations $\partial_i$ such that $\partial_i(t_j)=\delta_{ij}$ (though the action on $k(y)$ can be non-trivial).

Now, it suffices to prove that for any $h\in k(y)\llbracket t_1\.t_n\rrbracket$ of order $a>0$ there exists $i\in\{1\. n\}$ such that $\partial_i h$ is of order $a-1$. Without restriction of generality, $h=\sum_n h_nt_1^n $, where $h_n\in k(y)\llbracket t_2\.t_n\rrbracket$ and the inequality $a\le\ord(h_n)+n$ is an equality for some $n_0>0$. It is easy to see that $\ord(\partial_1 g)\ge\ord(g)$ for any $g\in k(y)\llbracket t_2\.t_n\rrbracket$, and using that $\partial_1(h_nt_1^n)=(n-1)h_nt_1^{n-1}+t_1^n\partial_1(h_n)$ one obtains that $\ord(\partial_1(h))=\max_{n>0}(\ord(h_n)+n-1)=a-1$.
\end{proof}

\begin{remark}
Lemma \ref{logordlem} implies that if $\cD_{Y/Z}$ is separating, then any clean ideal is logarithmically clean.
\end{remark}

\subsubsection{Balanced ideals}
An ideal $\cI\subseteq\cO_Y$ is called {\em balanced} if it is logarithmically clean and $\cM(\cI)$ is invertible.

\begin{lemma}\label{balncedlem}
An ideal $\cI$ is balanced if and only if it is of the form $\cI=\cN\cdot\cI^\cln$, where $\cN$ is invertible monomial and $\cI^\cln$ is clean.
\end{lemma}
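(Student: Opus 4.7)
The plan is to reduce the entire statement to a local computation on opens where the invertible monomial ideal $\cN:=\cM(\cI)$ is principal, say $\cN=(u^q)$ for a monomial $u^q$. The technical heart will be the identity
\[
\cD_{Y/Z}^{\infty}(u^q\cdot \cJ)\;=\;u^q\cdot\cD_{Y/Z}^{\infty}(\cJ),
\]
valid for any ideal $\cJ$, together with the parallel and easier identity $\cM(u^q\cJ)=u^q\cM(\cJ)$. Combined with the observation that clean implies $\cD_{Y/Z}^{\infty}(\cI^\cln)=\cO_Y$ (by Lemma~\ref{logordlem}) and hence also $\cM(\cI^\cln)=\cO_Y$ (since $\cD^\infty\subseteq\cM$ by \S\ref{difsatur}), the two directions of the equivalence become bookkeeping.

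\textbf{Proof of the key identity.} To prove $\cD^\infty(u^q\cJ)\subseteq u^q\cD^\infty(\cJ)$, I would argue that $u^q\cdot\cD^\infty(\cJ)$ is $\cD$-stable: for any $\partial\in\cD_{Y/Z}$ and $j\in\cD^\infty(\cJ)$ the Leibniz rule gives $\partial(u^q j)=u^q(\delta(q)j+\partial j)\in u^q\cD^\infty(\cJ)$. Since it contains $u^q\cJ$, it contains the $\cD$-saturation. For the reverse inclusion, set $\cK=\cD^\infty(u^q\cJ)$ and let $\cL=\{f\in\cO_Y\mid u^q f\in\cK\}$; this is an ideal because $u^q$ is a nonzero-divisor. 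Then $\cL\supseteq\cJ$, and for $f\in\cL$ and $\partial\in\cD_{Y/Z}$ the identity $u^q\partial(f)=\partial(u^q f)-u^q\delta(q)f$ shows $u^q\partial(f)\in\cK$, so $\partial f\in\cL$; thus $\cL$ is $\cD$-stable and contains $\cD^\infty(\cJ)$, giving $\cK\supseteq u^q\cD^\infty(\cJ)$.

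\textbf{The two directions.} For $(\Leftarrow)$, assume $\cI=\cN\cdot\cI^\cln$ with $\cN$ invertible monomial and $\cI^\cln$ clean. Locally $\cN=(u^q)$. By Lemma~\ref{logordlem} cleanness of $\cI^\cln$ means $\cD_{Y/Z}^{\infty}(\cI^\cln)=\cO_Y$, hence $\cM(\cI^\cln)=\cO_Y$. The key identity and its monomial-saturation analogue then give $\cM(\cI)=u^q\cM(\cI^\cln)=(u^q)=\cN$ (which is invertible) and $\cD_{Y/Z}^{\infty}(\cI)=u^q\cD_{Y/Z}^{\infty}(\cI^\cln)=(u^q)=\cM(\cI)$, so $\cI$ is logarithmically clean. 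For $(\Rightarrow)$, assume $\cI$ is balanced so that $\cN:=\cM(\cI)$ is invertible and $\cD_{Y/Z}^{\infty}(\cI)=\cN$. Using invertibility of $\cN$, define globally the ideal $\cI^\cln:=\cN^{-1}\cdot\cI\subseteq\cO_Y$; by construction $\cI=\cN\cdot\cI^\cln$. Locally where $\cN=(u^q)$ we have $\cI^\cln=u^{-q}\cI$, so by the key identity
\[
\cD_{Y/Z}^{\infty}(\cI^\cln)\;=\;u^{-q}\cdot\cD_{Y/Z}^{\infty}(\cI)\;=\;u^{-q}\cdot(u^q)\;=\;\cO_Y,
\]
which means $\cI^\cln$ is clean by Lemma~\ref{logordlem}. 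Since cleanness is a pointwise condition these local verifications glue, completing the proof.

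\textbf{Main obstacle.} The only non-formal step is the key identity $\cD^\infty(u^q\cJ)=u^q\cD^\infty(\cJ)$, because one needs to handle infinite-order operators; both inclusions, however, follow cleanly from the Leibniz rule and the fact that $u^q$ is a nonzero-divisor, so this is a short argument rather than a genuine obstruction.
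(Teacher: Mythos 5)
Your proof is correct and follows essentially the same route as the paper: for the direct implication the paper likewise sets $\cI^\cln=\cN^{-1}\cI$ with $\cN=\cD_{Y/Z}^{\infty}(\cI)$, uses the identity $\cD_{Y/Z}^{\infty}(\cN\cI^\cln)=\cN\,\cD_{Y/Z}^{\infty}(\cI^\cln)$ (which it states without proof and you verify via the Leibniz rule), cancels the invertible $\cN$, and concludes cleanness from Lemma~\ref{logordlem}. The only difference is that the paper dismisses the converse as not needing proof, whereas you spell it out; this is a matter of detail, not of method.
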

\begin{proof}
Only the direct implication needs a proof. If $\cI$ is balanced, then $\cN=\cD_{Y/Z}^{\infty}(\cI)$ is invertible, hence an ideal $\cI^\cln=\cN^{-1}\cI$ is defined. Since $$\cN=\cD_{Y/Z}^{\infty}(\cN\cI^\cln)=\cN\cD_{Y/Z}^{\infty}(\cI^\cln),$$ we obtain that $\cD_{Y/Z}^{\infty}(\cI^\cln)=1$, and hence $\cI^\cln$ is clean by Lemma~\ref{logordlem}.
\end{proof}

The ideal $\cI^\cln=\cN^{-1}\cI$ will be called the {\em clean part} of $\cI$.

\subsubsection{Functoriality}
We conclude this section with studying functoriality of $\cD$-saturation and logarithmic order. We start with base change functoriality. Recall that we only consider noetherian base changes in the sense of \S\ref{convsec}.

\begin{lemma}\label{functorlogorder1}
Let $f\:Y\to Z$ be a logarithmically regular morphism of logarithmic DM stacks, $\cI\subseteq\cO_Y$ an ideal, $\cF\subseteq\cD_{Y/Z}$ an $\cO_Y$-submodule, $g\:Z'\to Z$ a morphism of logarithmic DM stacks with noetherian base changes $g'\:Y'\to Y$ and $f'\:Y'\to Z'$, and $\cI'=\cI\cO_{Y'}$. Then

(i) For a geometric point $y'$ of $Y'$ we have $\logord_{\cI/Z}(f'(y'))=\logord_{\cI'/Z'}(y')$. In particular, if $\cI$ is $Z$-clean, then $\cI'$ is $Z'$-clean.

(ii) If $\cF^{\infty}(\cI)$ is monomial, then $g'^{-1}(\cF^{\infty}(\cI))=g'^*\cF^{\infty}(\cI')$.

(iii) If $\cI$ is logarithmically clean over $Z$, then $\cI'$ is logarithmically clean over $Z'$.
\end{lemma}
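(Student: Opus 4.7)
My plan is to prove the three parts in order, with (iii) following formally from (ii).

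For (i), I would work with the definition of logarithmic order via the log fiber. Let $y=g'(y')\in Y$, and let $S=S_y$, $S'=S'_{y'}$ denote the log fibers, which are regular by Corollary~\ref{logfibcor}. By Lemma~\ref{fibchangelem}, locally at $y'$ the log fiber $S'$ is an open subscheme of a ground field extension of $S$, so the local homomorphism $\cO_{S,y}\to\cO_{S',y'}$ is flat. By Proposition~\ref{basechangeprop}(iii), a full family of regular parameters of $f$ at $y$ pulls back to a full family of regular parameters of $f'$ at $y'$; since these generate $m_{S,y}$ and $m_{S',y'}$ respectively, we have $m_{S,y}\cO_{S',y'}=m_{S',y'}$, and hence $m_{S,y}^n\cO_{S',y'}=m_{S',y'}^n$ for all $n$. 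Because the local flat homomorphism is automatically faithfully flat, $\cI|_S\subseteq m_{S,y}^n$ if and only if $\cI'|_{S'}\subseteq m_{S',y'}^n$, which gives $\logord_{\cI/Z}(y)=\logord_{\cI'/Z'}(y')$. The cleanness conclusion is immediate.

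For (ii), I would establish equality by double inclusion. The inclusion $g'^{-1}(\cF^{\infty}(\cI))\subseteq(g'^*\cF)^{\infty}(\cI')$ holds without any monomial hypothesis: a section of $\cF^{\infty}(\cI)$ is locally a finite sum of expressions $D_1\cdots D_n(f)$ with $D_i\in\cF$ and $f\in\cI$, and by Lemma~\ref{basechangelem} each $D_i$ pulls back to a section of $g'^*\cF\subseteq\cD_{Y'/Z'}$, so the pullback of $D_1\cdots D_n(f)$ is the corresponding composite applied to $f\otimes 1\in\cI'$. For the reverse inclusion, use the monomial hypothesis: if $M:=\cF^{\infty}(\cI)$ is monomial, then $M\cO_{Y'}$ is monomial, hence closed under every logarithmic $Z'$-derivation of $\cO_{Y'}$ (since such a derivation sends $u^q$ to $u^q\delta(q)$), and therefore closed under $g'^*\cF$; as $M\cO_{Y'}$ contains $\cI'$ it contains $(g'^*\cF)^{\infty}(\cI')$.

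Part (iii) follows from (ii) by a sandwich argument. Apply (ii) with $\cF=\cD_{Y/Z}$: the assumption that $\cI$ is logarithmically clean over $Z$ gives $\cD_{Y/Z}^{\infty}(\cI)=\cM(\cI)$, which is monomial, so (ii) yields $\cM(\cI)\cO_{Y'}=(g'^*\cD_{Y/Z})^{\infty}(\cI')$. Since $g'^*\cD_{Y/Z}\subseteq\cD_{Y'/Z'}$, this is contained in $\cD_{Y'/Z'}^{\infty}(\cI')$, which is in turn contained in $\cM(\cI')$ because monomial ideals are closed under all logarithmic derivations. On the other hand, $\cM(\cI)\cO_{Y'}$ is a monomial ideal containing $\cI'$, whence $\cM(\cI')\subseteq\cM(\cI)\cO_{Y'}$. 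Chaining these forces all four ideals to coincide, giving $\cD_{Y'/Z'}^{\infty}(\cI')=\cM(\cI')$, as required.

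The main obstacle is the parameter-matching step in (i), which ensures $m_{S,y}\cO_{S',y'}=m_{S',y'}$ so that orders are preserved under the local flat extension $\cO_{S,y}\to\cO_{S',y'}$. Once this is in hand, parts (ii) and (iii) are essentially formal manipulations with the algebra generated by the derivations, using only Lemma~\ref{basechangelem} and the invariance of monomial ideals under logarithmic derivations.
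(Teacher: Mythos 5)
Parts (ii) and (iii) of your proposal are correct and follow essentially the paper's route: (ii) is the étale-local "tautology" spelled out as a double inclusion (with monomiality of $\cF^\infty(\cI)$ giving stability of $\cM:=\cF^{\infty}(\cI)\cO_{Y'}$ under all logarithmic derivations), and your sandwich in (iii) is exactly the paper's appeal to Lemma~\ref{logcleanlem} with $\cF=\cD_{Y/Z}$ unwound.

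Part (i), however, contains a genuine gap. The step ``a full family of regular parameters of $f$ at $y$ pulls back to a full family of regular parameters of $f'$ at $y'$, hence $m_{S,y}\cO_{S',y'}=m_{S',y'}$'' is false for an arbitrary geometric point $y'$ of $Y'$: the point $y'$ may sit at a non-generic point of the fiber of the ground field extension over $y$, so the dimension of the logarithmic fiber at the point can jump. Concretely, take $Z=\Spec(k)$ and $Z'=\Spec(k(t))$ with trivial logarithmic structures, $Y=\AA^1_k=\Spec(k[x])$, $y$ the generic point of $Y$ and $y'$ the point $x=t$ of $Y'=\AA^1_{k(t)}$. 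Then $\cO_{S,y}=k(x)$ is a field while $\cO_{S',y'}$ is a DVR with parameter $x-t$, so $m_{S,y}\cO_{S',y'}=0\neq m_{S',y'}$ (note that Proposition~\ref{basechangeprop}(iii), read literally as a statement about full families, fails in the same example and can only be used for partial families, so citing it does not repair the step). Without this identity your faithful-flatness argument only yields the inequality $\logord_{\cI'/Z'}(y')\ge\logord_{\cI/Z}(y)$, since $\cI_y\subseteq m_{S,y}^n$ implies $\cI'_{y'}\subseteq m_{S,y}^n\cO_{S',y'}\subseteq m_{S',y'}^n$; the reverse inequality is the real content. What is missing is the statement that the order of an ideal on a regular scheme is preserved under a (characteristic zero, hence separable) ground field extension at \emph{every} point of the fiber over $y$, not only where the maximal ideal extends to the maximal ideal. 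This can be proved, e.g., by flatness of $\cO_{S,y}\to\cO_{S',y'}$ together with a comparison of initial forms in the associated graded rings (or, after completing, by choosing compatible coefficient fields so that $\hatcO_{S,y}=k(y)\llbracket u\rrbracket\to\hatcO_{S',y'}=k(y')\llbracket u,t\rrbracket$ sends the $u_i$ to part of a regular system of parameters); combined with Lemma~\ref{fibchangelem} this is exactly how the paper's one-line proof of (i) is to be fleshed out. So your conclusion is correct, but the argument as written would fail at this step and needs to be replaced by the initial-form (or completion) comparison.
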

\begin{proof}
Claim (i) follows from Lemma~\ref{fibchangelem}. Claim (ii) can be checked \'etale-locally using global sections $h\in\Gamma(\cI)$ and $\partial\in\Gamma(\cF)$, and then it becomes a tautology. Taking $\cF=\cD_{Y/Z}$ and applying Lemma~\ref{logcleanlem} we obtain (iii).
\end{proof}

Functoriality for logarithmically regular morphisms is checked similarly.

\begin{lemma}\label{functorlogorder2}
Let $g\:X\to Y$ and $f\:Y\to Z$ be logarithmically regular morphism of logarithmic DM stacks, $\cI\subseteq\cO_Y$ an ideal with $\cI'=\cI\cO_X$, and $\cF\subseteq\cD_{Y/Z}$ a submodule. Then,

(i) $\logord_{\cI/Z}=\logord_{\cI'/Z}\circ|g|$. In particular, if $\cI$ is $Z$-clean, then $\cI'$ is $Z$-clean too.

(ii) If $\cF^{\infty}(\cI)$ is monomial, then $g^{-1}(\cF^{\infty}(\cI))=g^*\cF^{\infty}(g^{-1}\cI)$.

(iii) If $\cI$ is logarithmically clean over $Z$, then $\cI'$ is logarithmically clean over $Z$ and $\cM(\cI')=g^{-1}(\cM(\cI))$.
\end{lemma}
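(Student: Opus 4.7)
The plan is to follow the template of Lemma \ref{functorlogorder1}, replacing Lemma \ref{fibchangelem} (which handled base change) by Corollary \ref{logfibreg} (which describes logarithmic fibers under a logarithmically regular morphism). The proof will split naturally as (i), (ii) a tautological check with a monomial-closure twist, and (iii) a formal consequence of (ii).

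For (i), I would fix a geometric point $x\in|X|$ with $y=g(x)$ and write $h=f\circ g$. By Corollary \ref{logfibreg}, $g$ sends the logarithmic fiber $S$ of $h$ at $x$ into the logarithmic fiber $T$ of $f$ at $y$, and the induced map on formal completions $\hatcO_{T,y}\to\hatcO_{S,x}$ is regular. Both rings are regular local by Corollary \ref{logfibcor}, and in characteristic zero a regular extension of regular local rings is, up to a residue field extension, a formal power series extension (compare Lemma \ref{logregchart} specialized to the trivial logarithmic structure). Such an extension preserves orders of ideals, so $\logord_{\cI/Z}(y)=\ord_y(\cI\cO_T)=\ord_x(\cI'\cO_S)=\logord_{\cI'/Z}(x)$; the cleanness statement is then immediate.

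For (ii), I would check the equality étale-locally on generators, as in the proof of Lemma \ref{functorlogorder1}(ii). The inclusion $g^{-1}\cF^{\infty}(\cI)\subseteq(g^*\cF)^{\infty}(\cI')$ is the formal tautology $g^*(\partial_1\cdots\partial_k h)=(g^*\partial_1)\cdots(g^*\partial_k)(g^*h)$, obtained by lifting each $g^*\partial_i\in g^!\cD_{Y/Z}$ to a section of $\cD_{X/Z}$, with the understanding that the resulting ideal does not depend on the lifts. The reverse containment is where the monomial hypothesis does the real work: $g^{-1}\cF^{\infty}(\cI)$ is a monomial ideal of $\cO_X$ (pullbacks of monomial ideals are monomial for any morphism of logarithmic schemes), and every logarithmic derivation of $\cO_X$, in particular every extension to $\cO_X$ of a section of $g^*\cF$, preserves monomial ideals. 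Since $\cI'\subseteq g^{-1}\cF^{\infty}(\cI)$, iterated application of $g^*\cF$ to $\cI'$ stays in this monomial ideal, giving $(g^*\cF)^{\infty}(\cI')\subseteq g^{-1}\cF^{\infty}(\cI)$.

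For (iii), I would apply (ii) with $\cF=\cD_{Y/Z}$. Logarithmic cleanness means $\cD_{Y/Z}^{\infty}(\cI)=\cM(\cI)$ is monomial, so (ii) yields $g^{-1}\cM(\cI)=(g^*\cD_{Y/Z})^{\infty}(\cI')$. Combining this with the evident containments $(g^*\cD_{Y/Z})^{\infty}(\cI')\subseteq\cD_{X/Z}^{\infty}(\cI')\subseteq\cM(\cI')$ and the minimality of $\cM(\cI')$ as a monomial ideal containing $\cI'$ (so $\cM(\cI')\subseteq g^{-1}\cM(\cI)$, since the latter is monomial and contains $\cI'$) produces the chain
$$g^{-1}\cM(\cI)\subseteq\cD_{X/Z}^{\infty}(\cI')\subseteq\cM(\cI')\subseteq g^{-1}\cM(\cI),$$
which collapses to equality throughout, proving $\cD_{X/Z}^{\infty}(\cI')=\cM(\cI')=g^{-1}\cM(\cI)$. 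The one genuine subtlety, which I expect to be the only real obstacle, is making sense of $g^*\cF$ as an operator algebra on $\cO_X$ in (ii): since $g$ is only assumed logarithmically regular, the map $\cD_{X/Z}\to g^!\cD_{Y/Z}$ of Lemma \ref{firstseqlem} need not be surjective, and lifts of $g^*\cF$ may exist only locally. The monomial hypothesis in (ii) is precisely what allows us to sidestep this, because the whole argument then depends only on invariants of monomial ideals, which are stable under any logarithmic derivation regardless of how $g^*\cF$ is lifted.
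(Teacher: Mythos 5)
Your proof is correct and follows essentially the same route as the paper: part (i) via Corollary \ref{logfibreg} together with the formal-local computation of orders on the completed logarithmic fibers, part (ii) by the \'etale-local check on sections, and part (iii) by taking $\cF=\cD_{Y/Z}$ and invoking minimality of the monomial saturation, which is exactly the content of Lemma \ref{logcleanlem} that the paper cites. Your closing concern about lifting $g^*\cF$ to $\cD_{X/Z}$ is the same point the paper leaves implicit in its one-line reduction to Lemma \ref{functorlogorder1}; it only touches the tautological inclusion $g^{-1}(\cF^{\infty}(\cI))\subseteq (g^*\cF)^{\infty}(\cI')$ and is harmless in the situations where the lemma is applied (e.g.\ $g$ logarithmically \'etale or smooth, where $\cD_{X/Z}\to g^!\cD_{Y/Z}$ is surjective), so it does not constitute a gap relative to the paper's own argument.
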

\begin{proof}
Order of ideals on regular schemes can be computed formally locally, hence (i) follows from Corollary~\ref{logfibreg}. Claims (ii) and (iii) are checked as in Lemma~\ref{functorlogorder1}.
\end{proof}

\section{Monomialization of $\cD$-saturated ideals}\label{monomsec}
\addtocontents{toc}
{\noindent We introduce monomial ideals, $B$-monomial ideals and $\cD$-saturated ideals, and study their relations and properties. We introduce modifications of logarithmic schemes. Finally we show that $\cD$-saturated ideals can be monomialized  by base change.}

This section is devoted to proving a monomialization theorem, Theorem \ref{monomialization}. This is the only ingredient in our algorithms resulting in a base change $B'\to B$. Unfortunately, the argument is non-functorial and involves choices. It consists of two parts: (1) induction on codimension based on results of \S\ref{formsec}, and (2) localization arguments in Lemmas~\ref{monomdescentlem}, \ref{henslem} and \ref{secondmonom} based on results of \S\ref{modifsec}, cofinality arguments and RZ spaces. Part (2) is non-functorial. This seems to be unavoidable in general. In a separate manuscript, we will provide more functorial arguments in case $f$ is proper.

\subsection{Monomial ideals}
We say that an ideal $\cI$ on a logarithmic stack $Y$ is {\em monomial} at a geometric point $y\to Y$ if \'etale locally it is generated by monomials $u^q\in M_y$. Monomial ideals play an important role in the relative resolution algorithm. To begin we establish some their properties.

\subsubsection{The class $\bB$}\label{Bsec}
Let $\bB$ be the class of noetherian qe logarithmically regular logarithmic DM stacks of characteristic zero. In the sequel, we will usually stick to the following notation: $f\:Y\to Z$ denotes a logarithmically regular morphism, $f\:Y\to B$ is used if in addition the target is in $\bB$, and $f\:X\to B$ is used if in addition $f$ is a relative logarithmic orbifold.

\subsubsection{Schematic closure}
In our case, monomiality is preserved by closures.

\begin{lemma}\label{closlem}
Assume that $Y\in\bB$, $Y_0\subseteq Y$ is open, $Z_0\into Y_0$ is a monomial substack and $Z\into Y$ is the schematic closure of $Z_0$. Then $Z$ is also monomial.
\end{lemma}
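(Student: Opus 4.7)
The plan is to reduce the statement to showing that the defining ideal $\cI\subseteq\cO_Y$ of $Z$ is $\cD_Y$-saturated, where $\cD_Y:=\cD_{Y/\Spec(\QQ)}$ is the sheaf of absolute logarithmic derivations. Monomiality will then follow immediately from the absolute logarithmic cleanliness recorded just after Lemma~\ref{logcleanlem} (\cite[Theorem~3.4.2]{ATW-principalization}): for any $Y\in\bB$ and any ideal $\cJ\subseteq\cO_Y$ one has $\cD_Y^\infty(\cJ)=\cM(\cJ)$, so $\cJ$ is monomial precisely when $\cJ=\cD_Y^\infty(\cJ)$.

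To verify $\cD_Y$-saturatedness I would use the defining presentation of the schematic closure, $\cI=\ker\bigl(\cO_Y\to j_*(\cO_{Y_0}/\cI_0)\bigr)$, where $j\colon Y_0\hookrightarrow Y$ is the open immersion and $\cI_0\subset\cO_{Y_0}$ is the monomial ideal defining $Z_0$. Since $j$ is strict \'etale, restriction identifies $\cD_Y|_{Y_0}$ with $\cD_{Y_0}$ (by Lemma~\ref{basechangelem}, or directly by the \'etale-descent definition of the sheaf in \S\ref{sheafdersec}). Because $\cI_0$ is monomial on $Y_0$, the forward direction of the same cleanliness statement gives $\cD_{Y_0}(\cI_0)\subseteq\cI_0$. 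For any local section $f\in\cI$ and any $\partial\in\cD_Y$, the restriction satisfies $\partial(f)|_{Y_0}=\partial|_{Y_0}(f|_{Y_0})\in\cI_0$; since $\partial(f)\in\cO_Y$ already, the defining property of the schematic closure forces $\partial(f)\in\cI$. Iterating this yields $\cD_Y^\infty(\cI)=\cI$, and invoking absolute cleanliness one more time gives $\cI=\cM(\cI)$, i.e.\ $\cI$ is monomial.

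I do not anticipate a substantive obstacle here: once the ``monomial $\iff$ $\cD_Y$-saturated'' characterization in the absolute setting is imported, the remainder is a formal manipulation with schematic closures and restriction of log derivations under a strict open immersion. In particular, no stratification, Kato-fan, or formal-completion arguments should be needed, and the argument works uniformly on logarithmic DM stacks by \'etale descent.
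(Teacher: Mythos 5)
Your reduction of the statement to ``the ideal $\cI$ of $Z$ is $\cD_{Y/\QQ}$-saturated'' is fine as far as it goes: monomial ideals are stable under logarithmic derivations, derivations restrict to the open $Y_0$, and the kernel description of the schematic closure then forces $\partial(f)\in\cI$ for every $f\in\cI$ and $\partial\in\cD_{Y/\QQ}$. The gap is in the step you use to conclude, namely the implication ``$\cD_{Y/\QQ}$-saturated $\Rightarrow$ monomial'', i.e.\ $\cD_{Y/\QQ}^\infty(\cJ)=\cM(\cJ)$ for every ideal on every $Y\in\bB$. The statement you import (\cite[Theorem~3.4.2]{ATW-principalization}, quoted in the remark after Lemma~\ref{logcleanlem}) is proved only for logarithmically smooth varieties over a field. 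The class $\bB$ consists of arbitrary qe logarithmically regular stacks, not of finite type over a field, and for these the absolute sheaf $\cD_{Y/\QQ}$ need not be logarithmically separating --- its stalks can be far too small, even zero (see \S\ref{abundsec} and the pathology cited there from \cite{Temkin-survey}). When $\cD_{Y/\QQ}$ is small, \emph{every} ideal is $\cD_{Y/\QQ}$-saturated and your criterion detects nothing; compare the discussion in \S\ref{stablesec}, where exactly this failure (``if $\cO_x=\hatcO_b$ there are no non-trivial derivations, hence any ideal is $\cD$-saturated, but\dots'') is pointed out. Repairing the argument by passing to formal completions, where derivations are abundant by Lemma~\ref{formderlem}, would force you to reprove the descent statements of \S\ref{formsec} (Propositions~\ref{redprop} and~\ref{formalstabprop}), which appear only later and carry their own hypotheses; in other words, the ``characterization in the absolute setting'' cannot simply be imported in the generality of $\bB$.

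The paper's proof avoids derivations entirely. Since monomiality is an open condition, an \'etale-local reduction plus noetherian induction reduces to $Y_0=Y\setminus\{y\}$ for a closed point $y$; localizing and completing at $y$ (flat, hence compatible with schematic closure and with detecting monomiality) one may assume $Y=\Spec(B\llbracket P\rrbracket)$. One then uses the flat projection $Y\to\bfA_P$: the monomial subschemes of $\bfA_P$ inducing $Z_0$ glue to a single $V_0$ (the set of monomial subschemes is discrete and the fibers are connected), and the schematic closure $V$ of $V_0$ in $\bfA_P$ is $\bfD_{P^\gp}$-equivariant, hence monomial; $Z$ is its preimage. If you want to keep a derivation-based approach, you would first have to establish that $\cD_{Y/\QQ}$ is logarithmically separating for the particular $Y$ at hand (as in Lemma~\ref{logmanlem} for complete local rings), which is an additional, nontrivial input.
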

\begin{proof}
The claim can be checked \'etale-locally, hence we can assume that $Y$ is a scheme. Monomiality of a subscheme is an open condition, hence by noetherian induction it suffices to consider the case when $Y_0=Y\setminus\{y\}$ for a closed point $y$. The latter case is local at $y$, hence we can assume that $Y=\Spec(\cO_y)$. Flatness of the completion implies that it is compatible with schematic closures and an ideal $\cI\subseteq\cO_y$ is monomial if and only if $\cI\hatcO_y$ is monomial. This reduces the claim to the case when $Y=\Spec(A)$ for a complete local ring $A$. Since $Y\in\bB$, we have that $A=B\llbracket P\rrbracket$, where $B$ is a complete normal local ring.

Consider the flat morphism $Y\to T=\bfA_P$ and let $T_0$ be the image of $Y_0$. If $A$ is not a field, then $T_0=T$, and $T_0=T\setminus\{0\}$ otherwise. By assumption, locally $Z_0$ is induced from monomial subschemes of $T_0$. Since the set of monomial schemes is discrete and the fibers of the map $Y_0\to T_0$ are easily seen to be connected, these subschemes glue to a single monomial subscheme $V_0\into T_0$ whose preimage in $Y_0$ is $Z_0$. It follows that $Z$ is the preimage of the schematic closure $V\into T$ of $V_0$. Using notation from \ref{Sec:charts}, since $V_0$ is $\bfD_{P^\gp}$-equivariant, $V$ is $\bfD_{P^\gp}$-equivariant too, which means that it is monomial. Thus, its preimage $Z$ is monomial too.
\end{proof}

\subsubsection{Kummer descent}
On nice enough logarithmic schemes monomiality is local in the Kummer topology.

\begin{lemma}\label{monomcoverlem}
Assume that $f\:X\to B$ is a Kummer logarithmically \'etale cover, $B\in\bB$ and $\cI\subseteq\cO_B$ is an ideal. Then $\cI\cO_X$ is monomial if and only if $\cI$ is monomial.
\end{lemma}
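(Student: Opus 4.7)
The ``only if'' direction is immediate: a local generator $u^p$ of $\cI$ with $p\in M_B$ pulls back to the monomial $u^p\in M_X$. For the converse, monomiality is \'etale local on $B$, so I would fix a geometric point $b\to B$ and a neat chart $B\to\bfA_P$ at $b$, so that $P=\oM_{B,b}$. By the standard \'etale local structure of Kummer logarithmically \'etale covers, after further \'etale localization above $b$ one may assume $X=B_P[Q]:=B\times_{\bfA_P}\bfA_Q$ for an exact Kummer extension $\phi\:P\into Q$ of fs monoids with finite cokernel $Q^\gp/P^\gp$. Choosing representatives $q_1=0,q_2,\dots,q_n\in Q$ of the classes in $Q^\gp/P^\gp$, exactness of $\phi$ yields a free $\cO_B$-module decomposition
\[
\cO_X \;=\; \bigoplus_{i=1}^n u^{q_i}\cO_B.
\]

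Set $\cJ:=\cI\cO_X$. The plan is to exploit this decomposition to extract monomial generators for $\cI=\cJ\cap\cO_B$. Choose a preimage $x\to X$ of $b$ at which $\cJ$ is monomial; then locally at $x$ we may write $\cJ=\sum_j u^{r_j}\cO_X$ for some $r_j\in Q$ (after absorbing unit factors into the exponents, which is harmless since $\oM_{X,x}^\times$-multiples generate the same ideal). Expanding
\[
\cJ \;=\; \sum_{j,k} u^{r_j+q_k}\cO_B
\]
via the decomposition and projecting onto the $u^{q_1}=1$ summand selects precisely those $(j,k)$ for which $r_j+q_k\in P^\gp$. For each $j$ there is a unique such $k=k(j)$, and exactness of $\phi$ then places $p'_j:=r_j+q_{k(j)}\in Q\cap P^\gp=P$. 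Hence
\[
\cI \;=\; \cJ\cap\cO_B \;=\; \sum_j u^{p'_j}\cO_B,
\]
a monomial ideal of $\cO_B$ near $b$, as required.

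I expect the main obstacle to be the foundational setup rather than the final algebraic computation. One must verify that a Kummer logarithmically \'etale cover does have the shape $B_P[Q]$ \'etale locally with $\phi$ exact and saturated, since both the free decomposition above and the identity $Q\cap P^\gp=P$ rely on this; in the fs framework of the paper (\S\ref{convsec}) exactness is built into the Kummer convention and saturation is automatic, but the local model of Kummer \'etale covers should be invoked carefully. A minor subsidiary point is the legitimacy of the intersection and cancellation in $\cO_X$: this is guaranteed by $B\in\bB$, which ensures via Lemma~\ref{logregchart} that the local rings of $X$ and $B$ admit transparent formal descriptions as power series over $k(b)\llbracket P\rrbracket$ and $k(x)\llbracket Q\rrbracket$ respectively, in which every monomial is a non-zero-divisor and the free decomposition is manifest. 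Once these foundations are in place the computation is entirely formal, and the result holds at every geometric point of $B$ by running the same argument with $P$ and $Q$ replaced by the appropriate localizations.
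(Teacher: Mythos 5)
Your ``only if'' direction is fine, but the converse as written rests on a false structural claim. The decomposition $\cO_X=\bigoplus_{i=1}^n u^{q_i}\cO_B$ with exactly one basis monomial per class of $Q^\gp/P^\gp$ is equivalent to saying that the chart $P\into Q$ is an \emph{integral} homomorphism (this is exactly Remark~\ref{inthom}), and exact Kummer extensions of fs monoids need not be integral; the paper is deliberately built around this failure (Lemma~\ref{nonintlem}, Lemma~\ref{formalgradedcor}). Concretely, take $P=\{(a,b)\in\NN^2: a+b\ \text{even}\}\into Q=\NN^2$, the chart of the Kummer \'etale cover $\bfA^2\to\bfA^2/\mu_2$: the fiber of $Q$ over the nontrivial class of $Q^\gp/P^\gp$ is $\bigl((1,0)+P\bigr)\cup\bigl((0,1)+P\bigr)$, which needs two generators, and $\ZZ[Q]$ is not even flat over $\ZZ[P]$. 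So exactness does not buy you freeness, and your displayed formula $\cI=\sum_j u^{p'_j}\cO_B$ with a single monomial $p'_j$ per generator $r_j$ is not justified. What does survive is only the $Q^\gp/P^\gp$-grading of $\cO_X$ whose graded pieces are finitely generated $\cO_B$-modules spanned by the monomials of each coset (Lemma~\ref{nonintlem}(i)) and whose degree-zero piece equals $\cO_B$, by exactness and saturation ($Q\cap P^\gp=P$).

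That weaker statement is in fact all one needs, and it is precisely how the paper argues in Lemma~\ref{monomlem2}: for $f$ in the ideal downstairs, each monomial $u^p$ of its support lies in the monomial ideal upstairs, one writes $u^p=\sum c_i g_i$ with $g_i$ downstairs and takes degree-zero components, using only that the weight-zero part of the big ring is the small ring; no rank-one freeness enters, and each fiber $Q_{\tilq}$ is allowed to contribute several monomials, all of which land in $P$. If you repair your argument this way it becomes essentially the paper's proof, transported from formal completions to the \'etale-local chart. A second, smaller gap then remains in your version: monomiality of $\cJ$ is only known stalkwise (\'etale-locally) at the chosen point $x$, while your projection to the trivial coset is performed over the whole chart $B_P[Q]$, and the grading does not survive localization at $x$. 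The paper avoids this by checking monomiality on completed local rings at a point where the chart is exact (legitimate by flatness of completion, as in the proof of Lemma~\ref{closlem}), where the coset decomposition is the finite formal grading of Lemma~\ref{formalgradedcor}; you would need either this completion step or a more careful spreading-out argument before the graded projection is available.
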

\begin{proof}
Monomiality of ideals is an \'etale-local property, and it can be checked at formal completions. For a fine enough \'etale cover of $B$ the completed local rings are of the form $l\llbracket t_1\. t_n\rrbracket\llbracket P\rrbracket$. Therefore the assertion follows from Lemma~\ref{monomlem2} below.
\end{proof}

\begin{lemma}\label{monomlem2}
Assume that $P\into Q$ is a Kummer embedding of sharp fs monoids, $O$ is a ring, $A=O\llbracket P\rrbracket$ and $C=O\llbracket Q\rrbracket$. Then an ideal $I\subset A$ is monomial if and only if $J=IC$ is monomial.
\end{lemma}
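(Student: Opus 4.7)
The forward implication is immediate: if $I$ is generated by monomials $u^{p_1},\dots,u^{p_k}$ with $p_j\in P$, then these same elements, viewed through $P\hookrightarrow Q$, are monomials in $C$ that generate $J=IC$. So the real content is the converse.

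For the converse, I plan to proceed in three steps. First, I would establish an explicit description of monomial ideals in $C=O\llbracket Q\rrbracket$: if $J\subset C$ is generated by monomials $u^{q_1},\dots,u^{q_m}$, then $J=\{f\in C:\supp(f)\subset S\}$, where $S=\bigcup_i(q_i+Q)$ is the monoid ideal of $Q$ generated by the $q_i$ and $\supp(f)$ denotes the set of $q\in Q$ appearing with nonzero coefficient in the (unique) power series expansion of $f$. The containment $J\subset\{\supp\subset S\}$ is clear, while for the reverse one regroups any power series $f=\sum_{q\in S}a_qu^q$ as $\sum_i u^{q_i}g_i$ with $g_i=\sum_{q:\,i(q)=i}a_q u^{q-q_i}\in C$; the regrouping is well defined since $Q$ is a sharp fs monoid and the residues $q-q_i\in Q$ are distinct.

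Second, I would show that $A\hookrightarrow C$ is faithfully flat, so that $I=IC\cap A=J\cap A$. The plan is to pick representatives $q_1,\dots,q_n\in Q$ of the finite group $Q^\gp/P^\gp$ with the property that $Q=\bigsqcup_{i=1}^n(q_i+P)$ as $P$-sets. This uses that Kummer embeddings of fs monoids are exact, so $P=P^\gp\cap Q$, and a careful choice of ``minimal'' representatives within each coset $V_g:=Q\cap(P^\gp+\tilde g)$ realizes $V_g-q_g=P$. Granting this fundamental-domain claim, one obtains
\[
C\;=\;\bigoplus_{i=1}^n u^{q_i}\cdot A
\]
as $A$-modules, so $C$ is free of positive rank over $A$, hence faithfully flat.

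Finally, I would conclude by a direct computation in $A$: for $f\in A$ one has $\supp(f)\subset P$, so $f\in J$ iff $\supp(f)\subset T$, where $T:=S\cap P$. Since $T+P\subset(S+Q)\cap P=T$, the set $T$ is a $P$-monoid ideal, and by Noetherianness of the fs monoid $P$ it is finitely generated, say by $p_1,\dots,p_k$. Applying the same support description now in $A$, $J\cap A=\{f\in A:\supp(f)\subset T\}=(u^{p_1},\dots,u^{p_k})$, a monomial ideal of $A$. Combined with $I=J\cap A$ this yields the desired monomiality of $I$.

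The main obstacle I anticipate is the fundamental-domain claim $Q=\bigsqcup_i(q_i+P)$, which is the heart of the faithful flatness statement. It is not literally true for an arbitrary choice of representatives (as scaling one representative by an element of $P^\gp\setminus P$ can fail), so one must invoke exactness of Kummer maps of fs monoids and choose representatives minimal for $\leq_Q$. Everything else reduces to a power-series computation that is essentially formal given the sharpness of $Q$.
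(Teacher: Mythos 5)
Your first and third steps are sound, but the second step --- the heart of your argument --- rests on a false claim. A Kummer embedding of sharp fs monoids does \emph{not} in general admit a fundamental domain $Q=\bigsqcup_i(q_i+P)$ with one representative per class of $Q^\gp/P^\gp$, and $C$ need not even be flat over $A$. Take $Q=\NN^2$ and $P=Q\cap P^\gp$ with $P^\gp=\{(a,b)\in\ZZ^2:\ a+b\equiv 0\ (\mathrm{mod}\ 2)\}$, so that $P=\langle(2,0),(1,1),(0,2)\rangle$ and $Q^\gp/P^\gp\cong\ZZ/2\ZZ$. The nontrivial fiber $Q_1=\{(a,b)\in\NN^2:\ a+b\ \text{odd}\}$ has two incomparable minimal elements $(1,0)$ and $(0,1)$, so it is a union of two \emph{overlapping} translates of $P$ and not a single coset; no choice of minimal representatives can repair this. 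In fact $C$ is not flat over $A$ here: multiplication by $u^{(1,0)}$ identifies the $A$-module of series supported on $Q_1$ with the ideal $(u^{(2,0)},u^{(1,1)})\subset A$, which for $O$ a field is a non-principal height-one prime of the $A_1$-singularity $O\llbracket s,t,w\rrbracket/(st-w^2)$, hence non-free and non-flat. This is precisely the phenomenon isolated in Lemma~\ref{nonintlem} and Remark~\ref{inthom}: the fibers $Q_{\tilde q}$ are single cosets only when $P\into Q$ is \emph{integral}, which Kummer embeddings typically are not.

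The argument is salvageable because what you actually need is much weaker than flatness: it suffices that $A$ be a direct summand of $C$ as an $A$-module. The ring $C$ carries a formal $Q^\gp/P^\gp$-grading by the supports $Q_{\tilde q}$, and its weight-zero component is exactly $A$, since $Q\cap P^\gp=P$ (every $q\in Q\cap P^\gp$ has $nq\in P$ by the Kummer condition, hence $q\in P$ by saturation). The weight-zero projection $C\to A$ is $A$-linear and restricts to the identity on $A$, so $I=IC\cap A=J\cap A$ for every ideal $I\subseteq A$, and your steps one and three then conclude. This grading is exactly the mechanism of the paper's proof, which applies it even more directly: for $f=\sum_p f_pu^p\in I$ each monomial $u^p$ lies in the monomial ideal $J=IC$, so $u^p=\sum_ic_ig_i$ with $g_i\in I$ and $c_i\in C$, and taking weight-zero components gives $u^p=\sum_i(c_i)_0g_i\in I$.
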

\begin{proof}
Only the inverse implication needs a proof, so assume that $J$ is monomial. If $f=\sum_{p\in P} f_pu^p$ is an element of $I$, then each $u^p$ lies in $J$, say, $$u^p\ \ =\ \ \sum c_i\cdot g_i$$ with $g_i\in I$ and $c_i\in C$. Note that $C$ is $Q^\gp/P^\gp$-graded and the component of weight zero is $A$. Taking the weight-zero component of the above equality we obtain that $$u^p\ \ =\ \ \sum (c_i)_0\cdot g_i$$ with $(c_i)_0\in A$, and hence $u^p\in I$. So $I$ is monomial, as claimed.
\end{proof}

\subsubsection{Integral closure and saturation}
We refer to \cite[\S4.3.1]{ATW-principalization} for the definition of the integral closure $I^\nor$ of an ideal and to \cite[Corollary~5.3.6]{AT1} for the following result.

\begin{lemma}\label{norsatlem}
If $Y\in\bB$ and $\cI$ is a monomial ideal on $Y$, then $\cI^\nor=\cI^\sat$.
\end{lemma}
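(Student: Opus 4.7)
The plan is to reduce to a formal local version of the statement, where the equality is precisely the content of the cited result from \cite{AT1}. Both integral closure and saturation of monomial ideals are étale-local on $Y$, so I would first reduce to the case in which $Y = \Spec(A)$ is affine and we have a neat chart $Y \to \bfA_P$ exact at a closed geometric point $y$, with $P = \oM_y$.

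Next, both operations are compatible with faithfully flat base change: for $(-)^{\nor}$ this is standard (integral closure commutes with flat maps with geometrically regular fibers, in particular with completion of a qe ring), and for the saturation of a monomial ideal it is immediate from the explicit combinatorial description, since the completion $\hatA$ of $A$ at the maximal ideal of $y$ is again $P$-graded in a compatible way. Applying Lemma~\ref{logregchart} (with $Z = \Spec(\QQ)$, which is allowed by Lemma~\ref{logreglem}(iii)) the completion has the form
\[
\hatA \;\cong\; k(y) \llbracket P \rrbracket \llbracket t_1, \ldots, t_n \rrbracket,
\]
and it suffices to prove the equality $\cI^{\nor} = \cI^{\sat}$ for monomial ideals on such a complete local logarithmically regular ring.

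In this formal toric situation the equality is exactly \cite[Corollary~5.3.6]{AT1}. The main thing to verify is therefore the commutation of $(-)^{\nor}$ and $(-)^{\sat}$ with completion in this setting; once that is in hand, the lemma follows directly by citation. I do not foresee a genuine obstacle beyond carefully unpacking these compatibility statements, since the characteristic zero and qe hypotheses built into the class $\bB$ are exactly what make the descent from $\hatA$ back to $A$ work.
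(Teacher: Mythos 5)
Your proposal is correct, but it is worth pointing out that the paper does not prove Lemma~\ref{norsatlem} at all: the statement is imported verbatim from \cite[Corollary~5.3.6]{AT1}, which is already formulated for logarithmically regular (toroidal) schemes in exactly the generality of the class $\bB$, so no reduction is performed in the text. Your route instead localizes \'etale-locally, completes, uses the structure result of Lemma~\ref{logregchart} to land in $k(y)\llbracket P\rrbracket\llbracket t_1,\ldots,t_n\rrbracket$, and only then cites the same corollary, under the (slightly inaccurate) reading that it concerns the formal toric model; since the general statement of that corollary specializes to the complete local case, nothing breaks, but the scaffolding is redundant. The intermediate compatibilities you invoke are all sound: integral closure of ideals commutes with completion because $Y\in\bB$ is quasi-excellent and normal, hence analytically unramified (more generally, completion is a regular homomorphism here), the saturation $\cI^\sat$ of a monomial ideal is determined by the stalks of $\oM_Y$ and is therefore unaffected by completion, and faithfully flat descent recovers the equality $\cI^\nor=\cI^\sat$ downstairs. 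So what your argument buys is a self-contained reduction to the formal toric case, at the cost of re-proving compatibilities that the citation renders unnecessary; what the paper's treatment buys is brevity, since the external result already covers the stated level of generality.
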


\subsubsection{Special logarithmic schemes}\label{specialsec}
Let $Y$ be a logarithmic scheme and let $i\:U=Y_\triv\into Y$ denote the maximal open subscheme on which the logarithmic structure is trivial. We say that $Y$ is {\em special} if the underlying scheme is normal and $M_Y=i_*\cO^\times_{U\et}\cap\cO_{Y\et}$. The latter condition simply means that $M_Y\into\cO_Y$ and any $f\in \cO_Y$ dividing a monomial is monomial.

\begin{lemma}\label{stablem}
(i) Any logarithmic stack in $\bB$ is special.

(ii) If $Y\to B$ is logarithmically regular and $B\in\bB$, then $Y\in\bB$.
\end{lemma}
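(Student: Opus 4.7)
The plan is to reduce both claims to the case of logarithmic schemes by working \'etale locally, since all properties involved---being special, being logarithmically regular, quasi-excellence, and normality---are \'etale local. Once this reduction is made, (i) becomes a question about the structure of log regular log schemes, and (ii) becomes a question about preservation of the three defining properties of $\bB$.

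For (i), after passing to an \'etale chart, let $Y$ be a noetherian qe logarithmically regular log scheme of characteristic zero. Normality is Kato's foundational theorem for log regular schemes. The identification $M_Y = i_*\cO^\times_{U\et}\cap\cO_{Y\et}$, where $i\:U = Y_\triv\into Y$, is the other classical content of Kato's theorem: on a log regular scheme the log structure is recovered as the monoid of sections that are defined on $Y$ and become units on the trivial locus. The \'etale variant needed here is verified in Nizio{\l}'s paper. So (i) is essentially a direct citation.

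For (ii), I verify each condition in the definition of $\bB$ in turn. Characteristic zero is inherited from $B$ via the structure morphism $Y\to B$, and noetherianness is part of our standing conventions (\S\ref{convsec}). Logarithmic regularity of $Y$ follows by applying Lemma~\ref{logreglem}: since $B\in\bB$, part (iii) of that lemma shows that $B\to\Spec(\QQ)$ is logarithmically regular, so the composition $Y\to B\to\Spec(\QQ)$ is logarithmically regular by part~(ii), and a final application of (iii) to $Y$ gives logarithmic regularity of $Y$ itself.

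The only nontrivial point is quasi-excellence of $Y$. Here the plan is to use that, by definition of logarithmic regularity, the morphism $Y\to\Log(B)$ is regular. An \'etale atlas of $\Log(B)$ is provided by the relative toric stacks $\cB_P[P']$ of \S\ref{Sec:charts}--\ref{basicsec}, which are \'etale locally finite-type quotients over $B$; since $B$ is quasi-excellent, so are these charts, and hence so is $\Log(B)$ \'etale locally. As quasi-excellence is preserved by regular morphisms (Grothendieck), $Y$ inherits it from $\Log(B)$. The main technical obstacle is just this descent argument for qe through $\Log(B)$; once verified, the rest of (ii) is formal.
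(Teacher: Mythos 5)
Your treatment of (i) and of logarithmic regularity in (ii) matches the paper's proof: (i) is exactly the citation of Kato's structure theorem for log regular schemes (with Nizio\l's \'etale variant), checked on an \'etale atlas, and the log regularity of $Y$ is obtained by the same two applications of Lemma~\ref{logreglem} (parts (ii) and (iii)) that the paper has in mind.

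The problem is your quasi-excellence step. First, it is unnecessary: quasi-excellence is a blanket hypothesis of the paper (first paragraph of the introduction: ``all schemes and stacks considered in this paper are assumed to be qe''), so $Y\in\bB$ requires no new argument beyond logarithmic regularity; this is why the paper can dispose of (ii) by citing Lemma~\ref{logreglem} alone. Second, and more seriously, the argument you offer for it would fail. Quasi-excellence is preserved under morphisms of finite type and under localization, but it is \emph{not} preserved under arbitrary regular morphisms: already $\Spec(R)\to\Spec(\QQ)$ is regular for any regular noetherian $\QQ$-algebra $R$, and there exist regular noetherian local rings containing $\QQ$ that are not quasi-excellent. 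In the present setting $Y\to\Log(B)$ is regular but emphatically not of finite type in general --- allowing such morphisms (completions, localizations, etc.) is the whole point of working with logarithmically regular rather than logarithmically smooth morphisms --- so ``Grothendieck'' gives you nothing here. You should delete that step and instead invoke the standing qe convention (together with \S\ref{convsec} for noetherianness and characteristic zero), after which your verification of the remaining condition, logarithmic regularity of $Y$, completes the proof.
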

\begin{proof}
The first claim is proved in \cite[Theorem 11.6]{Kato-toric}. The second one follows from Lemma \ref{logreglem}.
\end{proof}

\begin{remark}\label{Brem}
(i) In fact, our choice of $\bB$ is only dictated by Lemmas~\ref{closlem}, \ref{monomcoverlem}, \ref{norsatlem} and \ref{stablem}. Any wider class for which these hold would work as well.

(ii) At the very least, the class $\bB$ can be enlarged to include logarithmic schemes $B$ such that \'etale locally on $B$ formal completions are of the form $\hatcO_{Y,y}=A\llbracket P\rrbracket$, where $A$ is a normal complete local ring, $P$ is a sharp fs monoid and the logarithmic structure is induced by $P$. We do not pursue such generality because of a current lack of potential applications.
\end{remark}

\subsection{Gradings}\label{intsec}
Throughout \S\ref{intsec}, $\phi\:P\into Q$ is an injective homomorphism of sharp fs monoids. In addition, we assume that $\phi$ is {\em exact}, that is, $P=Q\cap P^\gp$.

\subsubsection{The monoid $\tilQ$}\label{the-monoid-tilQ}
In Section \ref{monomsec}, we will not use the sharp factorizations and the notation $\tilP$ from Section~\ref{basicsec}, but we set $\tilQ^\gp=Q^\gp/P^\gp$ instead and denote the image of $Q$ in it by $\tilQ$. Note that $(\tilQ)^\gp=\tilQ^\gp$, so there is no ambiguity in the notation. For any $\tilq\in\tilQ$ let $Q_\tilq$ denote the preimage of $\tilq$ in $Q$. It is a $P$-set such that $P^\gp$ acts transitively on $Q_\tilq+P^\gp$.

The following result is certainly not new. In particular, it can be deduced from the results of \cite{Bruns-Gubeladze}. Since we could not find a reference, we provide a sketch of the proof.

\begin{lemma}\label{nonintlem}
Keep the above notation and let $\tilq\in\tilQ$. Then

(i) The $P$-set $Q_\tilq$ is finitely generated: there exist elements $q_1\.q_n\in Q_\tilq$ such that $Q_\tilq=\cup_i(q_i+P)$.

(ii) There exist $p\in P$ and $q\in Q_\tilq$ such that $p+Q_\tilq\subseteq q+P$.
\end{lemma}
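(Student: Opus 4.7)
The plan is to translate the question into classical rational polyhedral geometry in $P^{\gp}\otimes\RR$, and then invoke Gordan's lemma for (i) and an interior point translation for (ii). Fix an element $q_0\in Q_\tilq$, which exists by definition of $\tilQ$, and let $C_P, C_Q\subseteq Q^{\gp}\otimes\RR$ denote the rational polyhedral cones generated by $P$ and $Q$. Consider the polyhedron
$$\Pi\ :=\ \bigl(C_Q-q_0\bigr)\cap\bigl(P^{\gp}\otimes\RR\bigr)\ \subseteq\ P^{\gp}\otimes\RR.$$
Because $Q$ is saturated fs we have $Q=C_Q\cap Q^{\gp}$, and since $q_0+v\in Q^{\gp}$ whenever $v\in P^{\gp}$, one obtains $Q_\tilq-q_0=\Pi\cap P^{\gp}$. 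The recession cone of $\Pi$ (as a polyhedron in $P^{\gp}\otimes\RR$) equals $C_Q\cap(P^{\gp}\otimes\RR)$, and I claim that this in turn equals $C_P\otimes\RR$: for a rational $v$ in the intersection, some positive multiple $nv$ lies in $P^{\gp}\cap C_Q\cap Q^{\gp}=P^{\gp}\cap Q=P$ by exactness, so $v\in C_P\otimes\QQ$, and the equality follows by closure. Since $P$ is sharp, $C_P\otimes\RR$ is pointed, so by Minkowski--Weyl $\Pi=K+C_P\otimes\RR$ for a bounded rational polytope $K$.

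For (i), I would apply Gordan's lemma to the homogenization cone $\widetilde\Pi=\overline{\RR_{>0}\cdot(\Pi\times\{1\})}\subseteq(P^{\gp}\otimes\RR)\oplus\RR$, which is a rational polyhedral cone with ambient lattice $P^{\gp}\oplus\ZZ$. The monoid $M:=\widetilde\Pi\cap(P^{\gp}\oplus\ZZ)$ is finitely generated; its height-$0$ part is $P\times\{0\}$, while any element of the height-$1$ slice $(\Pi\cap P^{\gp})\times\{1\}$ decomposes as a sum of generators whose heights are nonnegative integers summing to $1$. Hence exactly one generator of height $1$ participates, and this forces $\Pi\cap P^{\gp}=\bigcup_{i=1}^{n}(v_i+P)$ where the $v_i$ are the projections of the finitely many height-$1$ generators. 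Translating by $q_0$ yields $Q_\tilq=\bigcup_i(q_i+P)$ with $q_i=q_0+v_i$, proving (i).

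For (ii), I would pick $u\in P$ lying in the relative interior of $C_P\otimes\RR$; such $u$ exists because $P$ generates $P^{\gp}$, so $C_P\otimes\RR$ is full-dimensional in $P^{\gp}\otimes\RR$, and one may for instance take $u$ to be the sum of a finite set of monoid generators of $P$. Since $K$ is compact and $u$ is interior, for large enough $N$ we have $Nu+K\subseteq C_P\otimes\RR$, whence $\Pi+Nu=K+Nu+C_P\otimes\RR\subseteq C_P\otimes\RR$, and therefore $(\Pi\cap P^{\gp})+Nu\subseteq C_P\otimes\RR\cap P^{\gp}=P$. Setting $p:=Nu\in P$ and $q:=q_0\in Q_\tilq$ and translating back by $q_0$ gives $p+Q_\tilq\subseteq q+P$.

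The main conceptual step is the identification of the recession cone of $\Pi$ with $C_P\otimes\RR$, which is where exactness of $\phi$ together with saturatedness of $Q$ enter essentially. Once that is in place, (i) is a standard application of Gordan's lemma via homogenization, and (ii) amounts to the elementary observation that any bounded set can be translated into a full-dimensional pointed cone by adding a sufficiently large multiple of an interior point.
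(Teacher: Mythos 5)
Your proof is correct, and its geometric core is the same as the paper's: both arguments reduce to the polyhedron obtained by slicing the real cone of $Q$ along (a translate of) $P^\gp\otimes\RR$, identify its recession cone with the real cone $P_\RR$ of $P$ using exactness of $P\into Q$ together with saturation, and invoke the Minkowski--Weyl decomposition into a polytope plus $P_\RR$. The differences are only in the finishing moves. For (i), the paper does not pass through Gordan's lemma: it intersects the lattice with the bounded set $X_0+\sum_{i}[0,1)v_i$ (a half-open parallelepiped spanned by monoid generators of $P$, added to the polytope) and checks directly, via integer parts of the coefficients, that the finitely many lattice points in this set generate $Q_\tilq$ as a $P$-set; your homogenization-plus-Gordan route reaches the same finiteness with slightly heavier standard machinery but less explicit bookkeeping (and note the reverse inclusion in your "this forces $\Pi\cap P^\gp=\bigcup_i(v_i+P)$" step is immediate since each $v_i$ lies in $\Pi\cap P^\gp$ and $P$ is contained in the recession cone, or simply because $Q_\tilq$ is a $P$-set). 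For (ii), the paper deduces the claim from (i) by translating the finitely many generators into $P$, using that the full-dimensional cone $P_\RR$ absorbs any finite subset of $P^\gp$ after a suitable translation, whereas you argue directly by pushing the compact part $K$ into $P_\RR$ with a large multiple of an interior lattice point; both are essentially one-line observations once the decomposition $\Pi=K+P_\RR$ is available, and your version has the minor aesthetic advantage of not depending on (i).
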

\begin{proof}
Note that (ii) follows from (i) and the fact that translations of a strictly convex cone cover the whole space. To prove (i) we consider the real cone $P_\RR$ spanned by $P$ in $P^\gp_\RR:=P^\gp\otimes\RR$. Note that $P_\RR=Q_\RR\cap P^\gp_\RR$ by the exactness of $P\into Q$. Fix a lift $q\in Q$ of $\tilq$. Then $Q_\tilq$ is the intersection of the lattice $q+P^\gp$ with the section $X=Q_\RR\cap(q+P^\gp_\RR)$ of $Q_\RR$. Exactness implies that $X$ is a polyhedron with recession cone $P_\RR=Q_\RR\cap P^\gp_\RR$, hence by \cite[Theorem~1.2.7 and Proposition~1.2.8]{Bruns-Gubeladze} $X=P_\RR+X_0$ for a polytope $X_0$. Let $\{v_1\.v_m\}$ be a set of generators of $P$, consider the bounded set $X_1=X_0+Y$ with $Y :=\sum_{i=1}^m[0,1)v_i$, and consider the finite set $X_1\cap(q+P^\gp)=\{q_{1}\.q_n\}$. Clearly, the set $q_1\.q_n$ generates $Q_\tilq$.
\end{proof}

\begin{figure}[htb]
\input{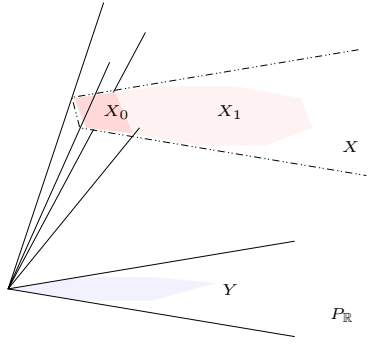}
\caption{The sets $X_0, Y$ and  $X_1$ (with $\tilQ$ being the vertical axis).  The set $X_1$  contains $P$-representatives of $Q_\tilq=X\cap Q$.}
\label{Fig:not-rigid}
\end{figure}

\begin{remark}\label{inthom}
It is well known (e.g. see \cite[Proposition~I.4.6.9]{Ogus-logbook}) that $\phi$ is an integral homomorphism if and only if each $Q_\tilq$ is of the form $q+P$. In this case $q$ is unique, hence one obtains a set-theoretic section $\tilQ\into Q$ and a bijection $\tilQ\times P=Q$. The above lemma is a technical tool, which will allow us to work with morphisms which are exact but not necessarily integral in a similar fashion.
\end{remark}


\subsubsection{The grading of $O_P[Q]$}
If $O$ is a ring and $u\:P\to (O,\cdot)$ is a homomorphism, then $A=O_P[Q]$ acquires a natural $\tilQ^\gp$-grading $A=\oplus_{\tilq\in \tilQ}A_\tilq$ trivial on $O$. One can define it directly or simply take the grading corresponding to the action of $T_\phi$ on $X_P[Q]$, where $X=\Spec(A)$. By Lemma~\ref{nonintlem}(i), each $A_\tilq$ is an $O$-module generated by finitely many elements $u^q$ with $q\in Q_\tilq$.

\subsubsection{The formal grading of $O_P\llbracket Q\rrbracket$}
If $O$ is local with maximal ideal $m$, then $$n=m\oplus\left(\oplus_{0\neq\tilq\in \tilQ}A_\tilq\right)$$ is a maximal ideal of $A$, and the following explicit description of the formal completion $\hatO_P\llbracket Q\rrbracket$ immediately follows from \cite[Prposition~4.5.6]{ATLuna}. Notice that a formal grading is a product of $\hatO$-modules rather than a sum, and it makes sense only when all non-trivially graded components lie in the maximal ideal.

\begin{lemma}\label{formalgradedcor}
If $O$ is local and $A=O_P[Q]$, then $\hatA=\hatO_P\llbracket Q\rrbracket$ acquires a formal grading $\hatA=\prod_{\tilq\in \tilQ} \hatA_\tilq$, where each $\hatA_\tilq$ is the $m$-adic completion of $A_\tilq$. In particular, $\hatA_\tilq$ is an $\hatO$-module generated by finitely many elements $u^q$ with $q\in Q_\tilq$.
\end{lemma}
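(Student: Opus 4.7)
The plan is to invoke the cited general result \cite[Proposition~4.5.6]{ATLuna} on formal completions of graded rings whose positive parts lie in a maximal ideal, after checking that the grading on $A = O_P[Q]$ has exactly this shape, and then to extract the finite generation statement from Lemma~\ref{nonintlem}(i).

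First I would verify the setup. By the preceding discussion, $A = \oplus_{\tilq \in \tilQ} A_\tilq$ is $\tilQ^{\gp}$-graded, trivial on $O$ and with $A_0 \supseteq O$. For any $\tilq \neq 0$ in $\tilQ$, every generator $u^q$ of $A_\tilq$ satisfies $q \notin P^{\gp}$, so in particular $q \neq 0$ in the sharp monoid $Q$; thus $u^q \in n$. Combined with $m \subset n$, this says that $n = m \oplus \bigl(\oplus_{0\neq \tilq \in \tilQ} A_\tilq\bigr)$ is an ideal containing every component of nonzero degree, and it is maximal because $A/n = O/m$. This is precisely the hypothesis needed to apply the cited proposition.

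Next I would apply \cite[Proposition~4.5.6]{ATLuna}: the $n$-adic completion of a graded ring whose positively graded components all lie in a maximal ideal acquires a canonical formal grading obtained by completing each component individually, and the product replaces the direct sum. Since the $m$-adic and $n$-adic topologies on $A$ coincide (both are generated by $m$ together with elements of positive $\tilQ$-degree, and by the sharpness/exactness this is the maximal ideal of $A$ localized at $n$), we obtain a canonical isomorphism
\[
\hatA \;=\; \hatO_P\llbracket Q\rrbracket \;=\; \prod_{\tilq \in \tilQ} \hatA_\tilq,
\]
where $\hatA_\tilq$ is the $m$-adic completion of $A_\tilq$ viewed as an $O$-module.

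Finally, for the finite generation claim, I would invoke Lemma~\ref{nonintlem}(i): there exist finitely many elements $q_1, \dots, q_n \in Q_\tilq$ such that $Q_\tilq = \bigcup_i (q_i + P)$. Since $A_\tilq$ is the $O$-submodule of $A$ spanned by $\{u^q : q \in Q_\tilq\}$ and $u^{q_i + p} = u^{q_i} \cdot u^p$ with $u^p \in O$, the elements $u^{q_1}, \dots, u^{q_n}$ generate $A_\tilq$ as an $O$-module. Completion preserves finite generation, so $\hatA_\tilq$ is generated over $\hatO$ by the same elements $u^{q_1}, \dots, u^{q_n}$. The only subtle point is the interplay between the $m$-adic topology on the individual $A_\tilq$ and the $n$-adic topology on $A$, but since each $A_\tilq$ is finitely generated over $O$ this is automatic (the two induced topologies on $A_\tilq$ agree), so there is no real obstacle here; the entire content of the lemma rests on the cited proposition plus the combinatorial Lemma~\ref{nonintlem}(i).
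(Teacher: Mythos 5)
Your skeleton is the same as the paper's: the paper offers no argument beyond observing that the formal grading follows immediately from \cite[Proposition~4.5.6]{ATLuna} and that finite generation of each $A_\tilq$ (hence of $\hatA_\tilq$) is exactly Lemma~\ref{nonintlem}(i), which is precisely your plan, so in substance you are fine. One justification step is wrong as written, though not fatal: the $m$-adic and $n$-adic topologies on $A$ do \emph{not} coincide (take $O$ a field, $P=0$, $Q=\NN$, so $mA=0$ while $n=(u^q : q\neq 0)$). What you actually need is only that the ideal with respect to which $\hatO_P\llbracket Q\rrbracket$ is defined, namely the one generated by $m$ and $Q_+$, equals $n=m\oplus\bigl(\oplus_{0\neq\tilq\in\tilQ}A_\tilq\bigr)$: this holds because exactness gives $A_0=O$ and because, as in the paper's definition of $\hatA_P\llbracket Q\rrbracket$ and in all its applications, the chart is local, i.e.\ $u(P_+)\subseteq m$ (this hypothesis is also what makes $n$ an ideal, and hence maximal, when $\tilQ$ is not sharp, since $A_\tilq\cdot A_{-\tilq}$ lands in $A_0=O$). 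The $m$-adic topology enters only componentwise; and the agreement of the $m$-adic topology on $A_\tilq$ with the topology induced by the $n$-adic filtration of $A$ is not ``automatic'' from finite generation over $O$ alone (one has to see, for instance, that monomials of $Q_\tilq$ lying deep in powers of $n$ differ from the finitely many generators by elements of $P$ mapping deep into $m$). Since that identification is part of what \cite[Proposition~4.5.6]{ATLuna} delivers, and the paper itself delegates it there, your appeal to the citation still carries the proof; just replace the incorrect parenthetical by the identification of the two defining ideals.
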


\subsubsection{Homogeneous ideals}\label{gridealsec}
An ideal $\hatcJ$ in $\hatA$ is (formally) {\em homogeneous} if it is of the form $\prod_{\tilq\in \tilQ}\hatcJ_\tilq$. This happens if and only if for any $f\in\hatcJ$ with homogeneous decomposition $f=\sum_{\tilq\in \tilQ}f_\tilq$, all homogeneous components $f_\tilq$ belong to $\hatcJ$.

\subsection{$\cD$-saturated and $B$-monomial ideals}\label{stablesec}
Until the end of Section \ref{monomsec} we are  given  $f\:X\to B$ such that
\begin{itemize}
\item[A1.] $f\:X\to B$ is a logarithmic orbifold, and
\item[A2.] $B\in\bB$.
\end{itemize}

\subsubsection{$\cD$-saturatedness}\label{Sec:saturated}
We say that an ideal $\cJ\subseteq\cO_X$ is {\em $\cD$-saturated over $B$ at a geometric point $x\to X$} if $\cD^{(\le 1)}_{X/B}(\cJ_x)=\cJ_x$, and $\cJ$ is {\em $\cD$-saturated over $B$} if it is $\cD$-saturated at all geometric points of $X$. When $B$ is clear from the context we will simply say that $\cJ$ is {\em $\cD$-saturated}. For example, if $X\to B$ is logarithmically \'etale, then any ideal is $\cD$-saturated. We will use the term \emph{$\cF$-saturated} if the property holds with respect to a submodule $\cF \subseteq \cD^{(\le 1)}_{X/B}$. The following immediate result will be our main source of producing $\cD$-saturated ideals:

\begin{lemma}\label{inftylem}
For any ideal $\cJ\subseteq\cO_X$, the ideal $\cD^\infty_{X/B}(\cJ)$ is the minimal $\cD$-saturated ideal that contains $\cJ$.
\end{lemma}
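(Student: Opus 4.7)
The plan is to check three things: (a) containment $\cJ\subseteq\cD^\infty_{X/B}(\cJ)$; (b) that $\cD^\infty_{X/B}(\cJ)$ is itself $\cD$-saturated in the sense of \S\ref{Sec:saturated}; and (c) minimality, i.e.\ any $\cD$-saturated ideal $\cI$ containing $\cJ$ also contains $\cD^\infty_{X/B}(\cJ)$. Part (a) is immediate since the identity operator lies in $\cO_X\subseteq\cD^{(\le 0)}_{X/B}\subseteq\cD^\infty_{X/B}$.

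For (b), first note that $\cD^\infty_{X/B}(\cJ)$ is an ideal: by definition it is the $\cO_X$-submodule generated by $\partial(h)$ for $\partial\in\cD^\infty_{X/B}$ and $h\in\cJ$, and closure under $\cO_X$-multiplication is automatic because $\cO_X\subseteq\cD^{(\le 1)}_{X/B}$ acts on $\cD^\infty_{X/B}$ from the left. For $\cD$-saturatedness, take a local section $g=\sum_i\partial_i(h_i)\in\cD^\infty_{X/B}(\cJ)$ with $h_i\in\cJ$ and $\partial_i\in\cD^\infty_{X/B}$, and any $D\in\cD^{(\le 1)}_{X/B}$. Then
\[
D(g)=\sum_i(D\circ\partial_i)(h_i),
\]
and each composite $D\circ\partial_i$ is again in $\cD^\infty_{X/B}$ since the latter is an $\cO_Z$-subalgebra of $\cD_{X/B}^\infty$ that contains $\cD^{(\le 1)}_{X/B}$. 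Hence $D(g)\in\cD^\infty_{X/B}(\cJ)$.

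For (c), I would proceed by induction on $n$ to show $\cD^{(\le n)}_{X/B}(\cJ)\subseteq\cI$ for every $n\ge 0$. The base case $n=0$ reads $\cJ\subseteq\cI$, which is the hypothesis. For the inductive step, I use the standard property of the order filtration that $\cD^{(\le n+1)}_{X/B}=\cD^{(\le 1)}_{X/B}\cdot\cD^{(\le n)}_{X/B}$ (in characteristic zero every operator of order $\le n+1$ is a sum of compositions of a derivation with an operator of order $\le n$, plus $\cO_X$-multiples of lower-order terms). This yields
\[
\cD^{(\le n+1)}_{X/B}(\cJ)=\cD^{(\le 1)}_{X/B}\bigl(\cD^{(\le n)}_{X/B}(\cJ)\bigr)\subseteq\cD^{(\le 1)}_{X/B}(\cI)=\cI,
\]
where the final equality is $\cD$-saturation of $\cI$. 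Passing to the union $\cD^\infty_{X/B}(\cJ)=\bigcup_n\cD^{(\le n)}_{X/B}(\cJ)$ completes the proof.

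No real obstacle is expected: the lemma is purely a formal unpacking of the definitions, the only nontrivial input being the standard generation property $\cD^{(\le n+1)}=\cD^{(\le 1)}\cdot\cD^{(\le n)}$ of the order filtration on differential operators.
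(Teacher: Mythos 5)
Your proof is correct, and it is essentially the argument the paper intends: the paper offers no proof at all, labelling the lemma ``immediate'' (and the same claim is called obvious in \S\ref{difsatur}), so the expected justification is exactly your formal unpacking. Your three steps --- containment via $1\in\cO_X\subseteq\cD^{(\le 1)}_{X/B}$, saturatedness by composing with operators of order $\le 1$, and minimality by induction along the filtration using $\cD^{(\le n+1)}_{X/B}=\cD^{(\le 1)}_{X/B}\cdot\cD^{(\le n)}_{X/B}$ (immediate here since $\cD^\infty_{X/B}$ is by definition generated by derivations and $\cO_X$) --- fill that gap correctly.
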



\subsubsection{Why the monomialization theorem is needed}
In some sense, $\cD$-saturated ideals are ``impenetrable'' for our principalization algorithm and its main tool, the sheaf $\cD$. So, the principalization algorithm can only treat a $\cD$-saturated ideal $\cJ$  very primitively: it would blow up the monomial saturation $\cM(\cJ)$. This works fine when $\cJ$ is monomial, but usually fails when it is not. Our main result about $\cD$-saturated ideals will be that they can be made monomial by an appropriate logarithmic blowing up of $B$. In particular, if $B$ is of dimension at most one, then $\cD$-saturated ideals are automatically monomial, once $B$ is provided with an appropriate logarithmic structure.

\subsubsection{Ideals defined over the base}
Given a ring homomorphism $R\to A$ we say that an ideal $J\subseteq A$ is {\em defined over $R$} or {\em $R$-defined} if it is generated by elements of $R$. Geometrically this means that the subscheme $V(J)\into\Spec(A)$ is the pullback of a subscheme of $\Spec(R)$. We say that $\cJ\subseteq\cO_X$ is {\em $B$-defined at} a geometric point $x\to X$ if $\cJ_x$ is defined over $\cO_{f(x)}$.

\begin{remark}
Simple examples with \'etale homomorphisms $R\to A$ show that even if $J$ is $R$-defined at every point of $\Spec(A)$, it does not have to be defined over $R$ globally. The local notion will be more important for us.
\end{remark}

\subsubsection{$B$-monomial ideals}\label{def:B-monomial}
We will also need a logarithmic analogue of being $B$-defined. For simplicity, we introduce this notion only when $f$ is sharp (\S \ref{Sec:sharp-chart}) at all geometric points of $X$ and $B$ is a logarithmic schemes whose logarithmic structure is Zariski. Then, an ideal $\cJ\subseteq\cO_X$ is {\em $B$-monomial at $x$} if $\cJ_x$ is generated by a homogeneous ideal of the $\tilQ^\gp$-graded ring $(\cO_b)_P[Q]$, where $b=f(x)$, $P=\oM_b$ and $Q=\oM_x$. We say that $\cJ$ is {\em $B$-monomial} if it is $B$-monomial at all geometric points. (This is a local property, in contrast to being $B$-defined.)

\subsubsection{The strategy}
Relative logarithmic derivations take any homogeneous ideal of the $\tilQ^\gp$-graded ring $(\cO_b)_P[Q]$ to itself. In particular, if $\cJ$ is $B$-monomial, then it is $\cD$-saturated. The converse is not true in general, {even if $P=Q=1$. For example, if $\cO_x=\hatcO_b$ then there are no non-trivial derivations (though there are enough derivations), hence any ideal in $\cO_x$ is $\cD$-saturated, but one can easily construct examples of ideals in $\cO_x$ not defined over $\cO_b$. However, the converse does hold true} in some specific cases, including the formal case (Proposition~\ref{formalstabprop}). If $\cJ$ is $B$-monomial, then it can be easily monomialized (Lemma~\ref{secondmonom}). Loosely speaking, our approach to proving the monomialization theorem is to control the difference between a $\cD$-saturated ideal $\cI$ and its maximal $B$-monomial subideal. This will be done by studying a formal case and applying an appropriate localization procedure on $B$.

\subsection{Descent}\label{formsec}

\subsubsection{Notation and assumptions}\label{assumsec}
Let $f\:X\to B$ satisfy Assumptions A1--A2 of \S\ref{stablesec}. Until the end of \S\ref{formsec}, we study a $\cD$-saturated ideal $\cJ\subseteq\cO_x$ and work locally on $B$ and \'etale-locally on $X$. So fix a point $b\in B$ and a geometric point $x\to X$ over it, and let $P=\oM_b$, $Q=\oM_x$. In addition, until \S\ref{complsec} we make the following assumptions:
\begin{itemize}
\item[A3.] $f$ has abundance of derivations,
\item[A4.] $k(b)$ is algebraically closed,
\item[A5.] the logarithmic structure at $b$ is Zariski,
\item[A6.] $f$ is exact, and
\item[A7.] $Q^\gp/P^\gp$ is torsion free.
\end{itemize}
Recall that the exactness at $x$ means that the homomorphism $P\to Q$ is exact (\S\ref{sharpfactorsec}, \S\ref{intsec}).

\subsubsection{Formal descent - trivial logarithmic structure}
We start with studying the situation on formal completions. When the logarithmic structure is trivial we have:

\begin{proposition}\label{redprop}
Assume that $R$ is a complete noetherian local ring with residue field $k=R/m$ of characteristic zero and $l/k$ is a field extension such that $k$ is algebraically closed in $l$. Let $k\into R$ be a field of definition and $A=R\llbracket t_1\. t_n\rrbracket\wtimes_{k}l$. Let $\cF\subseteq\cD_{A/R}$ be a submodule of the module of $R$-derivations of $A$ which contains $\partial_{t_1}\.\partial_{t_n}$ and a dense submodule $\cF_0\subseteq\cD_{l/k}$. Then an ideal $J\subseteq A$ is $\cF$-stable if and only if it is defined over $R$.
\end{proposition}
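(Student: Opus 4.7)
The forward implication is immediate: each derivation $\partial\in\mathcal F\subseteq\mathcal D_{A/R}$ is by definition $R$-linear, so if $J$ is generated by elements $r_1,\ldots,r_m$ of $R$ and $f=\sum a_ir_i\in J$, then $\partial(f)=\sum\partial(a_i)\,r_i\in J$. For the converse, write $R_l:=R\wtimes_k l$, so $A=R_l\llbracket t_1,\ldots,t_n\rrbracket$. The plan is a two-stage reduction: first, use $\partial_{t_i}$-stability to show $J=(J\cap R_l)\cdot A$, reducing to an ideal of $R_l$; second, use $\mathcal F_0$-stability together with its density in $\mathcal D_{l/k}$ to show $J\cap R_l=(J\cap R)\cdot R_l$.

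The first stage rests on a formal Taylor trick. For $f\in J$, introduce auxiliary variables $s=(s_1,\ldots,s_n)$ and work in $A\llbracket s\rrbracket=R_l\llbracket t,s\rrbracket$. Using characteristic zero,
$$f(t-s)\;=\;\sum_\alpha\frac{(-s)^\alpha}{\alpha!}\,\partial^\alpha f(t),$$
and each $\partial^\alpha f\in J$ by $\partial_{t_i}$-stability, so $f(t-s)\in J\cdot R_l\llbracket s\rrbracket\subseteq A\llbracket s\rrbracket$. The continuous substitution homomorphism $A\llbracket s\rrbracket\to A$, $s_i\mapsto t_i$, carries $f(t-s)$ to $f(0)$ and sends $J\cdot R_l\llbracket s\rrbracket$ into the $A$-ideal $J$, so $f(0)\in J\cap R_l$. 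Running the same argument on each $\partial^\alpha f\in J$ shows every Taylor coefficient $f_\alpha$ lies in $J\cap R_l$; by $m_A$-adic completeness of $A$ and closedness of the finitely generated ideal $J$, the expansion $f=\sum f_\alpha t^\alpha$ places $f$ in $(J\cap R_l)\cdot A$.

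For the second stage, set $I:=J\cap R_l$. Each $\partial\in\mathcal F_0$ preserves both $J$ and $R_l$ (the latter since $\partial$ acts $R$-linearly), hence preserves $I$; the density of $\mathcal F_0$ in $\mathcal D_{l/k}$ (evaluated pointwise) then upgrades $I$ to a $\mathcal D_{l/k}$-stable ideal, since for each $\partial\in\mathcal D_{l/k}$ and each $x\in I$ one finds $\partial'\in\mathcal F_0$ with $\partial'(x)=\partial(x)$. It remains to show that a $\mathcal D_{l/k}$-stable ideal of $R_l$ is extended from $R$. The central input is the fixed-ring statement $l^{\mathcal D_{l/k}}=k$: choose a separating transcendence basis $\{u_i\}$ of $l/k$, use the unique extension of each $\partial_{u_i}$ to the separable algebraic extension $l/k(u_i)$, and invoke that $k$ is algebraically closed in $l$ to identify the fixed field with $k$. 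Promoting this scalar statement to the required ideal statement---the main obstacle---I plan to mimic the Taylor trick of the first stage in the $l/k$-direction: reducing by Noetherianness to generators of $I$ defined over $R\wtimes_k l'$ for a finitely generated subextension $l'\subseteq l$, and applying the $\partial_{u_i}$ to extract, coefficient by coefficient in a chosen $k$-basis of $l'$, the $R$-parts of these generators. The obstacle is that there are no nilpotents in the $l/k$-direction, so the extraction is less transparent than in the first stage, which is purely formal; careful bookkeeping of the completed tensor product $R\wtimes_k l$ and of how the density hypothesis interacts with the Noetherian approximation will be required. Combining the two stages yields $J=(J\cap R)\cdot A$, proving $R$-definedness of $J$.
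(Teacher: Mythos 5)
Your easy direction and your Stage 1 are fine: the translation/Taylor trick in the formal variables, applied to all $\partial^\alpha f\in J$, together with closedness of ideals in the noetherian complete local rings $A$ and $R_l=R\widehat\otimes_k l$, does reduce the problem to showing that $I=J\cap R_l$ is extended from $R$ (the paper instead isolates coefficients one variable at a time using $t\partial_t$-eigenvectors and a Vandermonde argument, but both routes are correct). Even in setting up Stage 2, though, two of your assertions need repair: density of $\mathcal F_0$ in the weak topology does not give, for $x\in I\subseteq R_l$, a $\partial'\in\mathcal F_0$ with $\partial'(x)=\partial(x)$ exactly — it only gives agreement on finitely many basis elements of $l$, hence $\partial'(x)\equiv\partial(x)\ (\mathrm{mod}\ m^NR_l)$, after which one concludes $\partial(x)\in I$ from closedness of $I$; and a generator of $I$ need not lie in $R\widehat\otimes_k l'$ for any finitely generated subextension $l'$, since elements of the completed tensor product involve infinitely many basis vectors, so your "Noetherian reduction" already requires an approximation-and-limit argument (this is exactly what the paper's Step 2 supplies, approximating an element by finite partial sums and passing to limits of operators inside the closed ideal).

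The genuine gap is Stage 2 itself, which is the heart of the proposition and which you leave as a plan with acknowledged obstacles. The fixed-field fact $l^{\mathcal D_{l/k}}=k$ is far from sufficient, and your proposed mechanism — extracting the $R$-coefficients "coefficient by coefficient in a chosen $k$-basis of $l'$ by applying the $\partial_{u_i}$" — fails: unlike the $\partial_{t_i}$, whose divided powers evaluated at $t=0$ are honest coordinate projections, derivations dual to a transcendence basis are not dual functionals to an arbitrary $k$-basis of $l'$ (the basis elements may be algebraic over $k(u)$, and e.g. no derivation can send $e_1=1$ to a unit while killing the others), so first-order operators cannot isolate coefficients in the $l/k$-direction. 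The missing idea, which the paper supplies, is to use genuinely higher-order operators: after reducing to $J=\mathcal F^\infty(h)$ with $h=\sum_{i=1}^n a_ie_i$, $a_i\in R$, $e_i\in l$ linearly independent over $k$, and to transcendence degree one, one builds $\delta\in\mathcal D^\infty_{l/k}$ as an alternating composition of $\partial_x$ with multiplications by elements such as $e_n^{-1}$; the hypothesis that $k$ is algebraically closed in $l$ enters precisely here via $\ker(\partial_x)=k$, which guarantees each step preserves $k$-linear independence, so that the iteration terminates with $\delta(e_1)=1$, $\delta(e_i)=0$ for $i>1$, whence $a_1=\delta(h)\in J$. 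Without this (or an equivalent) construction, your argument stops exactly where the difficulty begins.
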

\begin{proof}
Only the direct implication needs a proof, so assume that $J$ {is $\cF$-stable}. The general claim follows by applying successively two cases: (a) $A=R\llbracket t\rrbracket$, (b) $n=0$.

In case (a), choose $h=\sum_i a_it^i\in J$. Since ideals of $A$ are closed, {the series} $h$ lies in the ideal generated by the monomials $a_nt^n$; it suffices to prove that $a_nt^n\in J$ for {every} $n$, because then $a_n=\partial_t^n(a_nt^n)/{n!}\in J$. Choose $N>n$ and set $h_N=\sum_{i=0}^N a_it^i$. Since $a_it^i$ is an eigenvector of $\partial=t\partial_t$ of eigenvalue $i$ and the characteristic is zero, there exists an operator $\delta=\sum_{j=0}^Nl_j\partial^j$ such that $\delta(h_N)=a_nt^n$. (Use Vandermonde for $0\. N$.) Therefore $J$ contains the element $\delta(f)=a_nt^n+\delta(h-h_N)\in a_nt^n+t^{N+1}A$, and using that $J$ is closed and $N$ is arbitrary, we obtain that $a_nt^n\in J$.

In case (b), fix a basis $S$ of $l$ over $k$. Then any element $h\in A$ possesses a unique presentation $h=\sum_{e\in S} a_e e$, where $a_e\in R$; by definition of the completed tensor product $\wtimes$, for any $n\in\NN$, there exists a finite subset $T_n\subseteq S$ such that if $e\in S\setminus T_n$, then $a_e\in m^n$. We will prove the claim by first reducing it to a very particular case.

Step 1. {\it It suffices to establish the case when $J=\cF^\infty(h)$.} Indeed, choose $g\in J$. Then $J$ contains the $\cF$-stable ideal $I=\cF^\infty(g)$ generated by $g$, and it suffices to prove that $g\in(I\cap R)A$.

Step 2.  {\it It suffices to consider the case where $h=\sum_{e\in T}a_ee$ for a finite subset $T\subset S$.} Indeed, if this case is proved, then in the general case setting $h_n=\sum_{e\in T_n}a_ee$, we obtain that $a_e\in\cF^\infty(h_n)$ for any $e\in T_n$. Thus, for any $n$ and $e\in T_n$ there exists an operator $\delta_{e,n}$ with $a_e=\delta_{e,n}(h_n)$. Since $m_A=mA$ is preserved by any derivation in $\cD_{A/R}$, it follows that any $m_A^n$ is also preserved by $\delta$, and hence the sequence $\delta_{e,n}(h)$ tends to $a_e$. Since $J$ is closed, we obtain that $a_e\in J$ and, moreover, $J$ is generated by these elements.

Step 3. {\it It suffices to consider the case when $\trdeg(l/k)<\infty$ and $\cF=\cD_{A/R}$.} By steps 1--2 we can assume that $J=\cF^\infty(h)$, $h=\sum_{e\in T}a_ee$ and $|T|<\infty$. Let $l'=k(T)$ and $A'=R\wtimes_kl'$. If $t_1\.t_n$ form a transcendence basis of $l'/k$, then there exists a dual family of derivations $\partial_i\in\cF_0$, and their restrictions form an $l$-basis of $\cD_{l'/k}$ and hence also an $A$-basis of $\cD_{A'/R}=\cD_{l'/k}\otimes_{l'}A'$. Therefore, $J\cap A'$ contains the ideal $\cD^\infty_{A'/R}(h)$ and it suffices to show that the latter is defined over $R$.

Step 4. {\it It suffices to consider the case when $\trdeg(l/k)=1$.} Find a tower $k=l_0\subset l_1\subset\ldots l_n=l$ such that $l_i$ is algebraically closed in $l_{i+1}$ and $\trdeg(l_{i+1}/l_i)=1$. It suffices to establish descent from $R\wtimes_kl_{i+1}$ to $R\wtimes_kl_i$, hence the claim reduces to the case of extensions of transcendence degree one..

Step 5. {\it If the conditions of steps 1--4 are satisfied, then the claim holds true.} We can assume that $h=\sum_{i=1}^n{a_ie_i}$ with $a_i\in R$ and $e_1\.e_n\in l$ linearly independent over $k$. It suffices to prove that there exists an operator $\delta\in\cD^\infty_{l/k}$ such that $\delta(e_1)=1 $ and $\delta(e_i)=0$ for $i>1$, because then $a_1=\delta(h)\in J$, and similarly for other $a_i$.

Let $\{x\}$ be a transcendence basis of $l/k$, then $\{\partial_x\}$ is a basis of $\cD_{l/k}$. Since $k$ is algebraically closed in $l$, the kernel of $\partial_x$ is $k$. Therefore the operator $\partial_x\cdot e_n^{-1}$ {annihilates} $e_n$  and takes $e_1\.e_{n-1}$ to a family $$e_{i}^{(1)}:= \partial_x(e_{i}/e_n)$$ linearly independent over $k$. Iterating this as $${\delta'}:= \ldots\partial_x\cdot(e_{n-1}^{(1)})^{-1}\cdot\partial_x\cdot e_n^{-1}$$ we obtain an operator vanishing on $e_2\.e_n$, but $\delta'(e_1)\neq 0$. It remains to take $\delta = (\delta'(e_1))^{-1} \cdot\delta'$.
\end{proof}

\subsubsection{Logarithmic descent to $\hatO_b$}
Let $A=\hatcO_x$, $O=\hatO_b$, $k=k(b)$, $l=k(x)$, and $R=O_P\llbracket Q\rrbracket$. Note that $A=R\llbracket t_1\. t_n\rrbracket\wtimes_{k}l$ by Lemma~\ref{logregchart}. We consider the submodule $\cF=\cD_{X/B,x}\otimes_{\cO_x}A$ of $\cD_{A/O}$. Recall that $\cD_{A/O}$ was described in Lemma~\ref{formderlem}, and $\cF$ contains $\partial_1\.\partial_n$, $\Hom(\tilQ^\gp,A)$ and a dense submodule of $\cD_{l/k}$ by Lemma~\ref{standardderiv}.

\begin{proposition}\label{formalstabprop}
Keep the above notation. Then for an ideal $J\subseteq A$ the following conditions are equivalent:

(i) $J$ is $\cF$-saturated over $O$ (\S\ref{Sec:saturated}),

(ii) $J$ is $O$-monomial (\S\ref{def:B-monomial}),

(iii) $J$ is generated by finite sums of the form $a_\tilq=\sum_{q\in Q_\tilq}a_qu^q$, with $\tilq\in \tilQ$ and $a_q\in O$.
\end{proposition}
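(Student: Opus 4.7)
The equivalence (ii)$\Leftrightarrow$(iii) is immediate, since the $\tilq$-homogeneous elements of the $\tilQ^\gp$-graded ring $O_P[Q]$ are precisely the finite sums $a_\tilq=\sum_{q\in Q_\tilq}a_qu^q$ with $a_q\in O$. For (ii)$\Rightarrow$(i), decompose any $\partial\in\cF$ as in Lemma~\ref{formderlem}: its monomial part $\partial_\phi$ scales each $A_\tilq$ by the scalar $\phi(\tilq)$, while its ordinary and constant parts annihilate all monomials; hence every derivation in $\cF$ preserves the $\tilQ^\gp$-grading, and in particular any $O$-monomial ideal is $\cF$-saturated.

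For the main direction (i)$\Rightarrow$(iii), the plan is to descend $J$ first from $A$ to the intermediate ring $R:=O_P\llbracket Q\rrbracket$, and then from $R$ down to $O_P[Q]$. The first descent is accomplished by Proposition~\ref{redprop} applied with base ring $R$: Lemma~\ref{logregchart} provides the isomorphism $A=R\llbracket t_1,\dots,t_n\rrbracket\widehat\otimes_{k}l$; the residue field $k=k(b)$ of $R$ is algebraically closed in $l$ by A4; and by abundance (A3) and Lemma~\ref{standardderiv}(iii), the subset $\cF\cap\cD_{A/R}$ contains $\partial_{t_1},\dots,\partial_{t_n}$ and, for every finite subfamily $v_0$ of a transcendence basis of $l/k$, the derivations dual to the elements of $v_0$; these dual derivations are pure elements of $\cD_{l/k}$ since they kill all $t_i$ and all monomials, and as $v_0$ varies they form a dense submodule of $\cD_{l/k}$. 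Proposition~\ref{redprop} then yields $J=J_R\cdot A$ with $J_R:=J\cap R$.

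To show that $J_R$ is $\tilQ^\gp$-graded, use A7 to pick a $\ZZ$-basis $\tilq_1,\dots,\tilq_r$ of $\tilQ^\gp$ with representatives $q_i\in Q$, and let $\phi_i\in\Hom(\tilQ^\gp,\ZZ)$ be the dual basis. By abundance, $\cF$ contains derivations $\partial_i$ dual to the $q_i$; in the Lemma~\ref{formderlem} decomposition the non-monomial parts of $\partial_i$ annihilate $R$, so the restriction $\partial_i\rest{R}$ coincides with the pure monomial derivation $\partial_{\phi_i}\rest{R}$, which multiplies each $R_\tilq$ by the integer $\phi_i(\tilq)$. Given $f=\sum_\tilq f_\tilq\in J_R$ (formal grading of Lemma~\ref{formalgradedcor}), a target degree $\tilq$, and $n\ge 1$, only finitely many $\tilq'\in T_n$ satisfy $R_{\tilq'}\not\subseteq m_R^n$; choose $\phi=\sum c_i\phi_i\in\Hom(\tilQ^\gp,\ZZ)$ with $\phi(\tilq)\neq\phi(\tilq')$ for all $\tilq'\in T_n\setminus\{\tilq\}$ and form the Lagrange polynomial $p_n(x)\in\QQ[x]$ satisfying $p_n(\phi(\tilq))=1$ and $p_n(\phi(\tilq'))=0$. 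Since $\partial_\phi$ is $O$-linear on $R$ and preserves $m_R^n$, the same holds for $p_n(\partial_\phi)$, whence $p_n(\partial_\phi)(f)\equiv f_\tilq\pmod{m_R^n}$; combined with $p_n(\partial_\phi)(f)\in J_R$ by $\cF$-stability and $m_R$-adic closedness of the noetherian ideal $J_R$, we conclude $f_\tilq\in J_R$.

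Finally, Lemma~\ref{nonintlem}(i) gives that $M_\tilq:=(O_P[Q])_\tilq$ is a finitely generated $O$-module and hence $m_O$-adically complete; since the generators $u^{q'}$ of $m_R$ with $q'\in Q_+$ shift the grading, the $m_R$-adic topology on $R_\tilq$ restricts to the $m_O$-adic topology on $M_\tilq$, yielding $R_\tilq=M_\tilq$ and thus $J_{R,\tilq}\subseteq(O_P[Q])_\tilq$. Every $f\in J_R$ is the $m_R$-adic limit of the finite partial sums $\sum_{\tilq\in T_n}f_\tilq$, which lie in the noetherian ideal $(J_R\cap O_P[Q])R$, so closedness forces $J_R=(J_R\cap O_P[Q])R$; multiplying by $A$ yields $J=(J\cap O_P[Q])A$, which is condition (iii). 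The principal obstacle is, in the proof of gradedness, producing operators with integer eigenvalues on each graded component; the decisive observation that unlocks it is that the non-monomial parts of any $\partial\in\cF$ automatically vanish on $R$, so the pure monomial derivations required for Lagrange interpolation appear upon restriction to $R$ without any further construction.
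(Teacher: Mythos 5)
Your proposal follows the paper's proof in all essentials: (ii)$\Leftrightarrow$(iii) is immediate; (iii)$\Rightarrow$(i) via the observation that homogeneous elements are eigenvectors of the derivations in Lemma~\ref{formderlem}; and the main direction is done exactly as in the paper, by first descending $J$ to $R=O_P\llbracket Q\rrbracket$ with Proposition~\ref{redprop} and then proving homogeneity of the descended ideal by separating the finitely many degrees that matter modulo $m_R^n$ with monomial derivations (this is where A7 enters), applying Lagrange/Vandermonde interpolation, and concluding by closedness of the ideal; your final passage from $R$ to finite sums with coefficients in $O$ is what the paper absorbs into Lemma~\ref{formalgradedcor}. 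Your extra care in checking that the dual derivations supplied by abundance restrict on $R$ to pure monomial derivations is the justification behind the paper's blanket assertion, made just before the proposition, that $\cF$ contains $\Hom(\tilQ^\gp,A)$, the $\partial_{t_i}$ and a dense piece of $\cD_{l/k}$; so this is the same route, not a different one. (Minor wording point: a general $\partial\in\cF$ does not literally preserve the grading, since its coefficients in the decomposition of Lemma~\ref{formderlem} need not be homogeneous; what you actually use, and what suffices, is that each homogeneous generator $a_\tilq$ satisfies $\partial(a_\tilq)\in A\cdot a_\tilq$.)

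One concrete slip: you cannot in general ``pick a $\ZZ$-basis $\tilq_1,\dots,\tilq_r$ of $\tilQ^\gp$ with representatives $q_i\in Q$''. The image $\tilQ$ of $Q$ generates $\tilQ^\gp$ as a group but need not contain a basis of it (already abstractly, a submonoid such as $\langle 2,3\rangle\subset\ZZ$ generates $\ZZ$ but contains no basis), so the required monomials $u^{q_i}$ may not exist. The repair is immediate and brings you back to what the paper does: take genuine monomial parameters $q_1,\dots,q_r\in Q$, i.e.\ elements whose classes form a $\QQ$-basis of $\tilQ^\gp\otimes\QQ$ (this is all that \S\ref{logparam} and Lemma~\ref{standardderiv} provide), and note that since $\tilQ^\gp$ is torsion free (A7) and the characteristic is zero, any additive functional on $\tilQ^\gp$ taking the values $\delta_{ij}$ on these classes is automatically $\QQ$-valued on all of $\tilQ^\gp$. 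Hence the restrictions $\partial_i|_R$ are monomial derivations with rational, constant eigenvalues, they still separate any finite set of degrees after taking a suitable $\QQ$-linear combination, and your Lagrange interpolation goes through verbatim with rational instead of integral eigenvalues; this is precisely the level of generality at which the paper invokes the derivations $\partial_\phi$, $\phi\in\Hom(\tilQ^\gp,R)$, together with torsion-freeness.
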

\begin{proof}
Clearly, (ii) and (iii) are equivalent. It follows from the description of $\cD_{A/O}$ that any element $a_\tilq$ is an eigenvector of $\cD_{A/O}$, hence (iii) implies (i). Conversely, assume that $J$ is $\cD$-saturated and let us prove (ii).
By Proposition~\ref{redprop}, $J$ is defined over $R$, reducing our problem to the case $A=R$.
Recall that $R$ is graded, and an element is homogeneous if and only if it is of the form $\sum_{q\in Q_\tilq}a_qu^q$. So we need to prove that $J$ is homogeneous, and since $J$ is closed, it suffices to show that if $h\in J$ and $h=\sum_{\tilq\in \tilQ}h_\tilq$ is its homogeneous formal decomposition (\S\ref{gridealsec}), then each $h_\tilq$ is in $J$.

The $R$-algebra $\cD^\infty_{R/O}$ is generated by logarithmic derivations $\partial_\phi$ with $\phi\in\Hom(\tilQ^\gp,R)$. By the assumptions of \S\ref{assumsec}, $\tilQ^\gp$ has no torsion, and hence these derivations separate elements of $\tilQ$. Using Vandermonde as in the proof of Proposition \ref{redprop}, it follows that for any finite subset $S\subset \tilQ^\gp$ and an element $\tilq\in S$, there exists a differential operator $\partial_S\in\cD^\infty_{R/O}$ such that $\partial_S(\tilq)=\tilq$ and $\partial_S(\tilq')=0$ for any other $\tilq'\in S$. Let $h_S=\sum_{\tilq\in S}h_\tilq$. For any $N>0$, taking $S$ large enough we achieve that $h-h_S\in m^N$ and then $\partial_S(h)=h_\tilq+\partial_S(h-h_S)\in h_\tilq+m^{N-1}$. So, $h_\tilq\in J$, as claimed.
\end{proof}

\subsubsection{Descent to $\cO_b$}
So far we have used $\cO_b$-derivations to show that $\cF$-saturated ideals on $\hatcO_x$ are defined over $\hatcO_b$. We would like to know when they are defined over $\cO_b$ itself, but derivations are not useful anymore, because they vanish on $\hatcO_b$ by continuity. The only tool which we have at our disposal is that open ideals are automatically defined over $\cO_b$. Surprisingly, this obvious fact can be used to make non-trivial conclusions. Here is the first step:

\begin{proposition}\label{algebrth}
Let $f\:X\to B$, $b\in B$, $x\to X$, and $\cJ\subseteq\cO_x$ be as in \S\ref{assumsec}. Assume that there exists an ideal $\cI\subset\cO_b$ such that $\cJ$  is monomial over the complement of $V(\cI)$ and $\hatcJ$ is $\hatcO_{b,\cI}$-monomial, where $\hatcJ$ and $\hatcO_{b,\cI}$ are the completions of $\cJ$ and $\cO_b$ along $m_x\cJ$ and $\cI$, respectively. Then $\cJ$ is $\cO_b$-monomial.
\end{proposition}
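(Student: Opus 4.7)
The plan is to translate $\cO_b$-monomiality into a $T$-equivariance condition for the diagonalizable group $T:=\mathbf{D}_{\tilQ^\gp}$ acting through the chart, and then to glue the two hypothesized pieces of data by a Beauville--Laszlo style descent along the faithfully flat cover of $\Spec(\cO_b)$ by $U:=\Spec(\cO_b)\setminus V(\cI)$ and $\Spec(\hatcO_{b,\cI})$. Under assumptions A5--A6 Theorem~\ref{neatth} supplies, after a strict étale localization on $X$, a neat exact chart factoring $\Spec(\cO_x)\to\Spec(\cO_b)$ through $R:=(\cO_b)_P[Q]$, with $\Spec(\cO_x)\to\Spec(R)$ regular. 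The $\tilQ^\gp$-grading of $R$ corresponds to a $T$-action on $\Spec(R)$ over $\Spec(\cO_b)$, and an ideal of $R$ is homogeneous iff it is $T$-stable. By definition, $\cJ$ is $\cO_b$-monomial at $x$ iff it is generated by the image of a homogeneous ideal $K\subseteq R$.

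I would split the construction of such a $K$ into three steps. \emph{First}, over $U$ the ideal $\cJ$ is monomial by hypothesis, so it is generated there by elements $u^q\in M_x\subseteq R$; taking the schematic closure in $\Spec(R)$ produces a homogeneous ideal $K_U\subseteq R$ whose restriction to $U$ pulls back to $\cJ|_{X\times_B U}$ (this uses Lemma~\ref{closlem} to guarantee that the closure is monomial, hence homogeneous). \emph{Second}, the hypothesis on $\hatcJ$ provides a homogeneous ideal $\hat K\subseteq\hat R_\cI:=(\hatcO_{b,\cI})_P[Q]$ whose image generates $\hatcJ$. \emph{Third}, I would check that $K_U\otimes_{\cO_b}\hatcO_{b,\cI}[\cI^{-1}]$ and $\hat K[\cI^{-1}]$ agree inside $\hat R_\cI[\cI^{-1}]$, using that both are the homogeneous reorganization of the same ideal $\cJ$ pulled back to the overlap; here the key point is that the $T$-eigendecomposition of an element of $\hat R_\cI$ is intrinsic and commutes with localization and completion by Lemma~\ref{formalgradedcor}.

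With the compatibility in place, Beauville--Laszlo gluing for modules over the noetherian ring $\cO_b$ (applied componentwise to each homogeneous eigencomponent, each of which is a finitely generated $\cO_b$-submodule of $R$) produces a single homogeneous ideal $K\subseteq R$ whose localization to $U$ is $K_U$ and whose $\cI$-adic completion is $\hat K$. The equality $K\cdot\cO_x=\cJ$ can then be verified on the same Beauville--Laszlo cover: both sides agree on $X\times_B U$ by construction of $K_U$, and their $\cI$-adic completions agree by construction of $\hat K$. The main obstacle I anticipate is this last gluing step---ensuring that $K_U$ and $\hat K$ glue to an honest $R$-ideal rather than merely an fpqc-local one, and that the result really recovers $\cJ$ and not just a $T$-invariant envelope with the same monomial locus---which is where the noetherian qe hypothesis on $B$ (so that Beauville--Laszlo applies componentwise to the $\tilQ$-graded pieces of $R$) and the regularity of $\Spec(\cO_x)\to\Spec(R)$ (so that flat base change faithfully transports the ideal) are essential.
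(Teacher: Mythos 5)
Your Step 3 is a genuine gap, and it sits exactly where the content of the proposition lies. The two halves of your gluing datum are not canonically attached to $\cJ$: the hypothesis only gives you \emph{some} homogeneous elements $a_i\in\hatC_P[Q]$ (with $\hatC=\hatcO_{b,\cI}$) generating $\hatcJ$ in $\hatcO_x$, and the homogeneous ideal $\hat K\subseteq\hat R_\cI$ they generate is not determined by $\cJ$, because passing from ideals of the chart ring to the ideals they generate upstairs is far from injective: every prime of $\hat R_\cI$ hit by $\Spec(\hatcO_x)$ is contained in the contraction $\mathfrak{q}$ of $m_x$, so on the part of the punctured formal neighborhood $\Spec(\hat R_\cI[\cI^{-1}])$ consisting of primes not contained in $\mathfrak{q}$ the data about $\cJ$ says nothing, the phrase ``both are the homogeneous reorganization of the same ideal $\cJ$'' has no meaning, and the required equality with $K_U$ can simply fail for the given $\hat K$. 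The obvious repair (localize $R$ at $\mathfrak{q}$ so the chart map becomes faithfully flat) destroys the grading and changes the completion, so it does not feed back into your componentwise gluing. There are also secondary inaccuracies: Beauville--Laszlo is the principal-ideal statement, so for general $\cI$ you need formal glueing for noetherian rings (componentwise over $\cO_b$ this is fine, since each graded piece of $R$ is a finite $\cO_b$-module, but note that $\hatC_P[Q]$ is \emph{not} the $\cI$-adic completion of $R$ itself); and Lemma~\ref{formalgradedcor} concerns the $m$-adic completion and only yields a formal product decomposition, so eigendecompositions do not ``commute with completion'' in the way you invoke it.

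The compatibility can in fact be forced, but only by importing precisely the ingredients of the paper's proof: one needs (a) the exponent $l$ with $\cI^l\cJ'\subseteq\cJ$, where $\cJ'=u^{J'}\cO_x$ is the monomial closure you build in Step 1 (so one may replace $\hat K$ by $(a_1,\dots,a_n)+\cI^l\,u^{J'}\hat R_\cI$, which still generates $\hatcJ$), and (b) the nontrivial containment $a_i\in u^{J'}\hatC_P[Q]$, i.e.\ that the homogeneous generators already lie in the monomial ideal at the level of $\hatC_P[Q]$ and not merely after extension to $\hatcO_x$ (a contraction statement in the spirit of Lemma~\ref{monomlem2}). Once you have (a) and (b), however, the paper finishes in two lines with no gluing at all: expand $a_i=\sum_j a_{ij}u^{q_{ij}}$ with $a_{ij}\in\hatC$, $q_{ij}\in J'\cap Q_{\tilq_i}$, approximate $a_{ij}$ by $b_{ij}\in\cO_b$ modulo $\cI^{l+1}$, observe that $a_i-b_i\in\cI^{l+1}u^{J'}\hatcO_x\subseteq\cI\hatcJ\subseteq m_x\hatcJ$, conclude by Nakayama that the homogeneous elements $b_i\in(\cO_b)_P[Q]$ generate $\hatcJ$, and by faithful flatness of $\cO_x\to\hatcO_x$ that they generate $\cJ$. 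So as written your proposal is incomplete at its decisive step, and the natural way to complete it collapses onto the approximation-plus-Nakayama argument of the paper.
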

We proved earlier that $\hatcJ$ is $\hatcO_b$-monomial, where $\hatcO_b$ is the $m_b$-adic completion. The proposition assumes that $\hatcJ$ is monomial with respect to the slightly smaller ring $\hatcO_{b,\cI}$.
\begin{proof}
We can replace $X$ by $\Spec(\cO_x)$. Let $T=V(\cJ)$ be the closed subscheme defined by $\cJ$. Set $B_0=B\setminus V(\cI)$, $X_0=X\times_BB_0$ and $T_0=T\times_XX_0$, and let $T'$ be the schematic closure of $T_0$ in $X$. Since $T_0$ is monomial by our assumption and $X$ is logarithmically regular by Lemma~\ref{logreglem}, $T'$ is also monomial by Lemma~\ref{closlem}. Let $\cJ'\subseteq\cO_x$ be the monomial ideal defining $T'$ at $x$, say $\cJ'=u^{J'}\cO_x$ for an ideal $J'\subseteq Q$. By the construction, $\cJ\subseteq\cJ'$ and $\cJ'/\cJ$ is annihilated by a power $\cI^l$ of $\cI$. In particular, we have inclusions of the $m_x$-adic completions $\cI^l\hatcJ'\subseteq\hatcJ\subseteq\hatcJ'$ in $\hatcO_x$.

Set $C=\cO_b$ and $\hatC=\hatcO_{b,\cI}$ for brevity. By assumption, $\hatcJ$ is generated by homogeneous elements $a_i\in\hatC_P[Q]$. Since each $a_i$ lies in the monomial ideal $\cJ'\hatC_P[Q]$, one can present it as $\sum x_ru^{q_r}$ with $q_r\in J'$. Expanding $x_r$ in $\hatC_P[Q]$ we obtain a presentation $a_i=\sum_j a_{ij}u^{q_{ij}}$, where $a_{ij}\in\hatC$ and $q_{ij}\in J'$. Moreover, the degree of the homogeneous element $a_i$ is an element $\tilq_j\in\tilQ$, and all $a_{ij}$ vanish except those with $q_{ij}\in Q_{\tilq_i}$, {using the notation of Section \ref{the-monoid-tilQ}}. Thus we can assume that $q_{ij}\in Q_{\tilq_i}$.

Consider approximations $b_{ij}\in C$ such that $a_{ij}-b_{ij}\in\cI^{l+1}\hatC$, and set $b_i=\sum_j b_{ij}u^{q_{ij}}$. Then $a_i-b_i\in\cI^{l+1}u^J\hatC\subset\cI\hatcJ\subseteq m_x\hatcJ$ in $\hatcO_x$, and hence $b_i$ also generate $\hatcJ$ by Nakayama's lemma. By flatness of the completion, $b_i$ are generators of $\cJ$ in $\cO_x$, and it remains to notice that $b_i$ are homogeneous elements of $C_P[Q]$.
\end{proof}

\subsubsection{The summary}
We summarize \S\ref{formsec} in the following result.

\begin{proposition}\label{algebrcor}
Let $f\:X\to B$ and $b\in B$ satisfy Assumptions A1--A6 as in \S\ref{assumsec}, $\cJ\subseteq\cO_X$ a $\cD$-saturated ideal, $B'\to B$ a modification, $X'=X\times_BB'$ and $\cJ'=\cJ\cO_{X'}$. Assume that $b$ is closed, $B'\in\bB$ and the restriction of $\cJ'$ to $X'\times_BB_0$ is monomial, where $B_0=B\setminus\{b\}$. Then $\cJ'$ is $B'$-monomial at any geometric point $x'\to X'$.
\end{proposition}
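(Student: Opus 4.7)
\emph{Plan.} My plan is to apply Proposition \ref{algebrth} to the base-changed morphism $f'\:X' \to B'$ at each geometric point of $X'$, taking the ideal $\cI' \subseteq \cO_{\bar b'}$ to be the pullback of $m_b$.

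Let $\bar x' \to X'$ be a geometric point with images $\bar b' \to B'$, $\bar x \to X$, and $\bar b \to B$. If $\bar b \ne b$, then $\bar x'$ lies in $X' \times_B B_0$, where $\cJ'$ is monomial (and thus $B'$-monomial) by hypothesis. I therefore focus on the case $\bar b = b$ and apply Proposition \ref{algebrth} to $f'$ at $(\bar b', \bar x')$ with
\[
  \cI' \;:=\; m_b\cO_{\bar b'} \;\subseteq\; \cO_{\bar b'}.
\]

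First I transfer the setup of \S\ref{assumsec} from $f$ at $(b,x)$ to $f'$ at $(\bar b', \bar x')$: Assumptions A1 and A3 are preserved under the fs base change by Proposition \ref{basechangeprop}; A2 is hypothesized; A4 holds at a geometric point in characteristic zero; A5 holds at the strict local ring of $\bar b'$; and the fs pushout $Q' = Q \oplus_P^{\mathrm{fs}} P'$ yields the canonical identification $\tilQ'^\gp = Q'^\gp/P'^\gp \cong Q^\gp/P^\gp = \tilQ^\gp$, transferring torsion-freeness (A7) and preserving exactness of $P \into Q$ (A6) in the present log-regular setting.

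Next I verify the two hypotheses of Proposition \ref{algebrth}. Condition (i) --- that $\cJ'$ is monomial on the complement of $V(\cI')$ --- is immediate, since $V(\cI')$ is the scheme-theoretic preimage of $b$ in $\Spec(\cO_{\bar b'})$, whose complement lies in $X' \times_B B_0$ where $\cJ'$ is monomial. For condition (ii) --- that $\widehat{\cJ'}$ is $\widehat{\cO_{\bar b',\cI'}}$-monomial --- I invoke Proposition \ref{formalstabprop} applied to $f$ at $(b,x)$: it produces homogeneous generators $a_\tilq = \sum_{q \in Q_\tilq} a_q u^q$ of $\widehat{\cJ}$, each a finite sum (by Lemma \ref{nonintlem}) with coefficients $a_q \in \widehat{\cO_b}$. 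Since $m_b$ maps to $\cI'$ under $\cO_b \to \cO_{\bar b'}$, one obtains $\widehat{\cO_b} \to \widehat{\cO_{\bar b',\cI'}}$ on completions and hence a homomorphism
\[
  \widehat{\cO_b}{}_P[Q] \;\longrightarrow\; \widehat{\cO_{\bar b',\cI'}}{}_{P'}[Q']
\]
under which each $a_\tilq$ maps to a finite element of the same degree $\tilq$ in the $\tilQ'^\gp$-grading. Since these images generate $\widehat{\cJ'}$, condition (ii) holds, and Proposition \ref{algebrth} concludes that $\cJ'_{\bar x'}$ is $\cO_{\bar b'}$-monomial, i.e., $B'$-monomial at $\bar x'$.

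The main obstacle is condition (ii): the formal monomialization supplied by Proposition \ref{formalstabprop} lives in the completion of $\cO_b$ along the maximal ideal $m_b$, whereas Proposition \ref{algebrth} demands homogeneous generators in the completion of $\cO_{\bar b'}$ along the smaller ideal $\cI'$. The transfer works cleanly only because the generators are \emph{finite} sums and because the fs pushout preserves the grading group $\tilQ^\gp$; were either property to fail, one would need a more delicate localization argument rather than the direct pullback used here.
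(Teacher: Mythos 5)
Your proof is correct and takes essentially the same route as the paper's: reduce to geometric points lying over $b$, apply Proposition~\ref{algebrth} to $X'\to B'$ with $\cI=m_b\cO_{b'}$, verify condition (i) by identifying the complement of $V(\cI)$ with the locus over $B_0$, and verify condition (ii) by combining Proposition~\ref{formalstabprop} at $(b,x)$ with the factorization $\hatcO_b\to\hatcO_{b',\cI}\to\hatcO_{b'}$. Your additional checks (transfer of the assumptions of \S\ref{assumsec} under the base change and the explicit tracking of finite homogeneous generators) merely spell out steps the paper leaves implicit.
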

\begin{proof}
This is only non-trivial when $x'$ is in the preimage of $b$. So, assume this is the case, and let $b'\in B'$ and $x\to X$ denote the images of $x'$. It suffices to check that $\cJ'$ and $X'\to B'$ satisfy the assumptions of Proposition~\ref{algebrth} at $x'$ for the ideal $\cI=m_b\cO_{b'}$. First, the complement of $V(\cI\cO_{X'})$ coincides with $X'\times_BB_0$, hence the restriction of  $\cJ'$ to $V(\cI\cO_{X'})$ is monomial. Second, $\hatcJ$ is $\hatcO_b$-monomial by Proposition~\ref{formalstabprop}, and since the homomorphism $\hatcO_b\to\hatcO_{b'}$ factors through the completion $\hatcO_{b',\cI}$ along $\cI$, we obtain that $\hatcJ'$ is $\hatcO_{b',\cI}$-monomial.
\end{proof}

\subsubsection{A complement in dimension $\leq 1$}\label{complsec}
The above results can be essentially strengthened when the base is at most one-dimensional, in particular, Assumptions A3--A7 are not needed. This case is used in the proof of Theorem \ref{monomialization}.

\begin{proposition}\label{logorbprop}
Assume that $f\:X\to B$ is a logarithmic orbifold and $B=\Spec(O)$ is local, logarithmically regular, of dimension at most one, and with the logarithmic structure $O\setminus\{0\}$. Then any $\cD$-saturated ideal $\cJ\subseteq\cO_X$ is monomial.
\end{proposition}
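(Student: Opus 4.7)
Plan: I would work étale locally on $X$ and pass to the formal completion $\hat\cO_x$ at a geometric point $x \to X$, since monomiality descends along the faithfully flat map $\cO_x \to \hat\cO_x$ together with the observation that monomial generators lie in $M_x$. The assumptions on $B$ force $O$ to be either a field or a DVR with logarithmic structure generated by the uniformizer $\pi$; in the DVR case, points over the generic point of $B$ reduce to the field case by Proposition~\ref{basechangeprop}, so I may assume $b = f(x)$ is closed and $P := \oM_b$ is either $0$ or $\NN$. Then $f$ is automatically sharp at $x$, since $\pi$ pulls back to a nonzerodivisor in the regular local ring $\hat\cO_x$ and hence has nontrivial class in $Q := \oM_x$; furthermore, for $P = \NN$ any injection into the sharp fs monoid $Q$ is exact and hence integral, so each fiber $Q_\tilq = q_\tilq + P$ is a free rank-one $P$-orbit. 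Lemma~\ref{logregchart} then provides $\hat\cO_x = (\hat O)_P\llbracket Q \rrbracket \llbracket t_1,\dots,t_n \rrbracket \wtimes_{k(b)} l$ with $l = k(x)$.

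Assume first that $\tilQ^\gp := Q^\gp/P^\gp$ is torsion-free. Then $\Lambda := (\hat O)_P\llbracket Q\rrbracket$ is $\tilQ^\gp$-graded with rank-one homogeneous components $\Lambda_\tilq = u^{q_\tilq}\hat O$ for $\tilq \in \tilQ$. The logarithmically separating hypothesis guarantees, after passage to the completion, a rich family of logarithmic derivations $\delta_\phi$ indexed by $\phi \in \Hom(\tilQ^\gp, \hat\cO_x)$, acting on $\Lambda_\tilq$ with eigenvalue $\phi(\tilq)$; Vandermonde applied to finite truncations modulo $m_x^N$, combined with Krull's intersection theorem, then decomposes $\hat\cJ = \bigoplus_{\tilq \in \tilQ} u^{q_\tilq} J_\tilq$, where $J_\tilq \subseteq \hat O\llbracket t\rrbracket \wtimes_k l$ is an ideal stable under the $\partial_{t_i}$. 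The argument of Case~(a) in the proof of Proposition~\ref{redprop}---which only invokes Vandermonde in the eigenvalues of $t_i\partial_{t_i}$ and the extraction $\partial_{t_i}^n/n!$, and in particular requires no density in $\cD_{l/k}$---shows that $J_\tilq$ is generated by elements of $\hat O\wtimes_k l$. This ring is a field when $\dim B = 0$ and the DVR $l\llbracket\pi\rrbracket$ when $\dim B = 1$, so $J_\tilq$ is either trivial or generated by a single power of $\pi$, whence $\hat\cJ_\tilq = (u^{q_\tilq + c_\tilq p})$ is a monomial ideal.

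If $\tilQ^\gp$ has torsion, I would reduce to the torsion-free case by Kummer descent. Since $P = \NN$, Smith normal form shows the torsion $T$ of $\tilQ^\gp$ is cyclic of some order $d$, and the Kummer cover $B' = \Spec(O[\pi^{1/d}]) \to B$ is logarithmically étale. The saturated pullback $X' = (X \times_B B')^\sat$ remains a relative logarithmic orbifold by Proposition~\ref{basechangeprop}, while $\cJ\cO_{X'}$ remains $\cD_{X'/B'}$-saturated, since Lemma~\ref{firstseqlem} applied to the logarithmically étale morphisms $B' \to B$ and $X' \to X$ yields that $\cD_{X'/B'}$ is generated by the pullback of $\cD_{X/B}$. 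A direct saturated pushout computation shows that at any point of $X'$ over $x$ the new relative characteristic group is $\tilQ^\gp/T$, which is torsion-free, so the torsion-free case gives monomiality of $\cJ\cO_{X'}$, and Lemma~\ref{monomcoverlem} descends monomiality back to $\cJ$.

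The main technical obstacle is the decomposition step in the torsion-free case: since elements of $\hat\cO_x$ can have infinite $\tilQ$-support, the Vandermonde isolation must be applied to finite truncations of $\hat\cJ$ modulo $m_x^N$, and one must then invoke the closedness of $\hat\cJ$ in the $m_x$-adic topology to exhibit the exact homogeneous decomposition.
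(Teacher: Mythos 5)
Your core strategy is the paper's: localize so that $x$ lies over the closed point, pass to $\hatcO_x$ via Lemma~\ref{logregchart}, use the derivations supplied by logarithmic separation to run the arguments of Proposition~\ref{redprop} (only case (a), as you rightly note, so abundance is never needed) and of Proposition~\ref{formalstabprop} (homogeneity with respect to the $\tilQ$-grading), and finish by observing that the coefficient ring $\hatO\wtimes_k l$ is a field or a DVR whose maximal ideal is generated by the monomial $\pi$. Where you deviate is the monoid bookkeeping: the paper handles the orbit structure by Lemma~\ref{nonintlem} (shifting by $u^p$) and is silent about torsion in $\tilQ^\gp$, whereas you prove integrality of $P\into Q$ directly and kill the torsion by a Kummer base change $O\to O[\pi^{1/d}]$ followed by descent via Lemma~\ref{monomcoverlem}. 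Your integrality claim is true, but not for the stated reason: ``exact and hence integral'' is false in general --- Lemma~\ref{nonintlem} exists in the paper precisely because exact maps need not be integral. What saves you is that $P$ has rank at most one: for $P=\NN p$ injecting into a sharp fs $Q$, any fiber $Q_\tilq$ is closed under adding $p$ and is bounded below by a strictly positive functional $\lambda\in\Hom(Q,\NN)$, hence is a single orbit $q_\tilq+P$. Add that one-line argument (or simply quote Lemma~\ref{nonintlem} as the paper does). Similarly, sharpness at $x$ needs $\pi$ to be a non-unit in $\cO_x$ (which holds because $f(x)=b$ is closed), not a nonzerodivisor.

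The one step whose justification does not hold up as written is the claim that $\cJ\cO_{X'}$ remains $\cD_{X'/B'}$-saturated after the Kummer base change. Lemma~\ref{firstseqlem} gives $\cD_{X'/B}=g^!\cD_{X/B}$ for the logarithmically \'etale $g\:X'\to X$; it does \emph{not} say that $\cD_{X'/B'}$ is generated by $g^*\cD_{X/B}$, and since $\cD_{X/B}$ need not be quasi-coherent the image of $g^*$ can be strictly smaller than $g^!$ (the paper stresses exactly this distinction in \S\ref{difpullbacksec}). Saturation of $\cJ$ only controls derivations defined on \'etale neighborhoods in $X$ with values in $\cO_X$, so stability of $\cJ\cO_{X'}$ under sections of $\cD_{X'/B'}$ over $X'$ is not immediate. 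The step is repairable, and in two ways: (a) avoid the sheaf-level statement altogether by performing the base change only on completions --- $\hatcO_x\otimes_{\hatO}\hatO[\pi^{1/d}]$ is stable under the $\hatO[\pi^{1/d}]$-linear extensions of the very same stalk derivations (these suffice for your torsion-free computation, since the monomial parameters span the same $\QQ$-vector space $\tilQ^\gp\otimes\QQ$), and monomiality descends by the weight-zero-component argument of Lemma~\ref{monomlem2}; or (b) follow the paper: grade only modulo torsion and note that, because $Q$ is saturated, the generator $g_0$ of the rank-one group $\QQ p\cap Q^\gp$ already lies in $Q$, so each coarse graded piece is free of rank one over the DVR with uniformizer $u^{g_0}$ and the same endgame applies. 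To your credit, the torsion issue is one the paper's own proof passes over (its appeal to Proposition~\ref{formalstabprop} formally presupposes assumption A7), so flagging it is a genuine improvement; only the particular descent argument you chose needs to be fixed.
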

\begin{proof}
Note that $O$ is either a field or a DVR and fix a chart $u\:P\to O$, where either $P=0$ or $P=\NN$ and $u=u^1$ is a uniformizer of $O$. The claim is \'etale local on $X$, hence we can assume that $X=\Spec(A)$ is local and the logarithmic structure is Zariski. Replacing $B$ by the generic point if necessary, we can assume that $f$ is surjective, and then $P\to O$ extends to a sharp chart $Q\to A$.

Since $\cJ$ is determined by its completion $\hatcJ\subseteq\hatA$, it suffices to prove that the latter is monomial. Consider the residue fields $k=O/m_O$ and $l=A/m_A$, fix a field of coefficients $k\into\hatO$ and set $R=\hatO\wtimes_kl$. Then $\hatA=R_P\llbracket Q\rrbracket\llbracket t_1\.t_n\rrbracket$ by Lemma~\ref{logregchart}, and by Propositions~\ref{redprop} and \ref{formalstabprop}, the ideal $\hatcJ$ is $R$-monomial. It suffices to prove that $u^p\hatcJ$ is monomial for an appropriate $p\in P$, hence by Lemma~\ref{nonintlem} we can assume that $\hatcJ$ is generated by non-zero elements of the form $ru^q$ with $r\in R$. It remains to note that $R$ is a field or a DVR and $m_R=m_OR$, hence $r$ is a monomial.
\end{proof}

\subsection{Modifications of logarithmic schemes}\label{modifsec}

\subsubsection{Modifications and blow ups}
By a {\em modification} of logarithmic schemes we mean any morphism $h\:Y'\to Y$ which is proper and restricts to an isomorphism of dense open subschemes. We say that $h$ is a {\em blow up along} an ideal $\cI\subseteq\cO_Y$ if the underlying morphism is the {blowing up along} $\cI$, the ideal $\cI\cO_{Y'}$ is monomial and $h$ is an isomorphism over $Y\setminus V(\cI)$.

\begin{remark}
The blow up of a logarithmic scheme is a bit of an artificial notion. In particular, it is not uniquely determined by $\cI$, as the following example shows. Nevertheless, it will be convenient to use this notion in the paper.
\end{remark}

\begin{example}
(i) Let $Y=\Spec(k[x])$ with the trivial logarithmic structure and let $\cI=(x)$. Then the blow up along $\cI$ is the same underlying scheme provided with the logarithmic structure generated by $x^\NN$. In this specific case, $Y'$ is also logarithmically smooth over $k$, while the morphism $Y'\to Y$ is not logarithmically smooth, though it is an isomorphism on the level of schemes.

(ii) With our definition, $Y'\to Y$ is also a blow up along $\cI_1=(x^2)$, though the natural (or universal) blow up $Y'_1$ along $\cI_1$ is the scheme $Y$ provided with the logarithmic structure generated by $x^{2\NN}$. It is not logarithmically smooth over $k$.
\end{example}

\subsubsection{Supports}
We will also want to control supports of blow ups. Assume given a morphism $g\:Y\to S$ and a closed subset $T\subseteq S$. A modification $h\:Y'\to Y$ is called a {\em $T$-modification} if it induces an isomorphism over $g^{-1}(S\setminus T)$. In particular, this is the case when $h$ is a {\em $T$-supported} blow up, that is, a blow up along $\cI$ such that $V(\cI)\subseteq g^{-1}(T)$.

\subsubsection{The class $\bB^\st$}\label{obBsec}
The class $\bB$ of \S\ref{Bsec} is not a priori stable under modifications, and we stabilize it as follows: the class $\bB^\st$ consists of logarithmic stacks $Y\in\bB$ such that for any closed $T\subseteq Y$ and a $T$-modification $Y'\to Y$ there exists a $T$-supported blow up $Y''\to Y$ which factors through $Y'\to Y$ and satisfies $Y''\in\bB$. In fact, it follows from Theorem~\ref{absdesingth} that $\bB=\bB^\st$, but until we prove it we have to distinguish the two classes.

\subsubsection{Basic properties of blow ups}
It is easy to see that the well-known properties of usual blow ups imply their logarithmic analogues: $T$-supported blow ups of logarithmic stacks are preserved by compositions, are compatible with flat morphisms, can be extended from open substacks, and are cofinal in the family of all $T$-modifications. The latter property is a version of Chow's lemma, which was extended to DM stacks by Rydh, see \cite[Corollary~5.1]{Rydh}.

\subsubsection{Kummer extension of cofinality}
We will also need the following subtle property. The main argument of its proof is based on relative RZ spaces and requires that we recall some material not related to anything else in the paper. It is given in appendix \ref{RZappend}, and here we only deduce a corollary about logarithmic schemes.

\begin{proposition}\label{cofinalprop}
Assume that $h\:Y\to Z$ is a Kummer logarithmically \'etale covering with $Z\in\bB^\st$ and $T\subseteq Z$ is a closed subset. Then for any $T$-modification $\tilY\to Y$ there exists a $T$-supported blow up $Z'\to Z$ such that $Z'\in\bB$ and $Y\times_ZZ'\to Y$ factors through $\tilY$. 
\end{proposition}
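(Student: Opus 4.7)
The plan is a two-step reduction followed by descent through a relative Riemann--Zariski space, as suggested by the appendix reference.

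\emph{Step one: reduce to a blow up and pass to a Galois Kummer closure.} Since $h\:Y\to Z$ is Kummer logarithmically \'etale (hence logarithmically regular) and $Z\in\bB^\st\subseteq\bB$, Lemma~\ref{stablem}(ii) gives $Y\in\bB$. By Rydh's Chow lemma for DM stacks (cited above), the $T$-modification $\tilY\to Y$ is dominated by a $T$-supported blow up $Y_1\to Y$ along an ideal $\cJ\subseteq\cO_Y$ with $V(\cJ)\subseteq h^{-1}(T)$. So it suffices to find a $T$-supported blow up $Z'\to Z$ in $\bB$ such that $\cJ\cO_{Y\times_ZZ'}$ is invertible monomial, for then $Y\times_ZZ'\to Y$ factors through $Y_1$ and hence through $\tilY$. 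Next, choose a Galois Kummer closure $h'\:Y'\to Z$ of $h$, with finite abelian group $G=\operatorname{Gal}(Y'/Z)$ (which exists since $h$ is locally given by an injection $P\into Q$ of sharp fs monoids with $Q^\gp/P^\gp$ finite). Set $\cJ'':=\prod_{g\in G}g^*(\cJ\cO_{Y'})$; this is a $G$-invariant $T$-supported ideal on $Y'$ whose blow up dominates the base change of $Y_1$, so it is enough to make $\cJ''\cO_{Y'\times_ZZ'}$ invertible monomial.

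\emph{Step two: descend through a cofiltered limit.} Form $\fX:=\projlim_\alpha Z_\alpha$, the inverse limit over the cofiltered system of $T$-supported blow ups $Z_\alpha\to Z$ in $\bB$; this system is non-empty and cofiltered by the definition of $\bB^\st$ together with cofinality of blow ups among $T$-modifications. Let $\fY:=\fX\times_Z Y'=\projlim_\alpha(Y'\times_ZZ_\alpha)$, which is a (pro-)finite Galois cover of $\fX$ with group $G$. At the limit, $\cJ''\cO_\fY$ is invertible --- an ideal becomes principal after enough blow ups --- and this invertible ideal is monomial, since pullback along the Kummer cover preserves monomiality by Lemma~\ref{monomcoverlem}. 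By $G$-invariance of $\cJ''$ and the descent direction of Lemma~\ref{monomlem2} (applied levelwise once the pullback of $\cJ''$ has been made monomial), this ideal is the pullback of a monomial ideal $\cJ^\flat$ on $\fX$. Quasi-compactness of $\fX$, together with the standard fact that coherent ideals on an RZ-style cofiltered limit are defined at a finite level, implies $\cJ^\flat$ is already defined on some $Z_\alpha=:Z_0$, and a further $T$-supported blow up, permitted by $Z\in\bB^\st$, yields the required $Z'\in\bB$.

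\emph{Main obstacle.} The hard part is the descent step: one must justify that cofiltered limits of $T$-supported blow ups in $\bB$ realize as quasi-compact relative Riemann--Zariski spaces, that the Galois Kummer cover $Y'\to Z$ base-changes compatibly to this limit, and that a $G$-invariant monomial ideal on $\fY$ descends to a monomial ideal on $\fX$ that in turn is defined at some finite stage. These commutations --- of cofiltered limits with finite covers and with the Kummer descent of Lemma~\ref{monomcoverlem}/Lemma~\ref{monomlem2} --- are precisely what the relative Riemann--Zariski space machinery of the appendix is designed to control.
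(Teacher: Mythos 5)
Your Step one is broadly in the spirit of the paper's proof (reduce to schemes/blow ups and exploit normality), but Step two asserts precisely the point that requires proof, and the assertion as written is circular. You claim that on $\fY=\projlim_\alpha(Y'\times_ZZ_\alpha)$ the ideal $\cJ''\cO_\fY$ becomes invertible because ``an ideal becomes principal after enough blow ups''. That principle holds for blow ups of the space the ideal lives on; here the only modifications available are pullbacks along the (in general non-flat) Kummer cover of $T$-supported blow ups of the \emph{base} $Z$. Saying that $\cJ''\cO_\fY$ is invertible at the limit is, by quasi-compactness and the universal property of blowing up (exactly as in Lemma~\ref{cofinallem}), equivalent to saying that some $Y'\times_ZZ_\alpha$ dominates $Bl_{\cJ''}(Y')$ --- which is the cofinality statement you are trying to prove. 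The paper's whole effort goes into this step: Theorem~\ref{cofinalth} shows, via the valuative description of $\RZ_V(Y)$ and the semivaluation-ring Lemma~\ref{semivallem}, that the map from $\RZ_V(Y)$ to the limit of the (closures in the) base-pulled-back blow ups is an isomorphism, and even then only after enlarging the system by finite $T$-modifications of those closures --- a wrinkle your limit argument does not account for, and which the paper later absorbs using that the fs pullback $Y\times_ZZ'$ lies in $\bB$, hence is special and normal.

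Two further points would need repair even granting the main step. First, your criterion only produces a factorization of underlying schemes/stacks through the chosen blow-up model $Y_1$; the proposition asks for a factorization through $\tilY$ as \emph{logarithmic} schemes, and since the logarithmic structure on a blow up of logarithmic schemes is not determined by the center, this must be checked separately (the paper does it by showing $Y'_\triv\subseteq\phi^{-1}(\tilY_\triv)$ using that $Y\times_ZZ'$ is special). Second, the descent of the invertible monomial ideal from the Galois Kummer cover to $\fX$ is not automatic: a $G$-invariant invertible monomial ideal on the cover is generated by a fractional monomial $u^q$ with $q$ not necessarily in the base monoid, so it need not descend to an ordinary (as opposed to Kummer) monomial ideal on $Z$; Lemma~\ref{monomlem2} descends monomiality of an ideal already induced from the base, not arbitrary $G$-invariant ideals. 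In short, the relative Riemann--Zariski input cannot be invoked as a black box about ``ideals defined at a finite level''; the cofinality of base-pulled-back blow ups is the theorem, not a formal consequence of quasi-compactness.
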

\begin{proof}
We can replace $Y$ and $\tilY$ by an \'etale cover $X$ of $Y$ and the $T$-modification $\tilX=X\times_Y\tilY$. In this way we can achieve that $h$ factors into the composition of a Kummer cover of logarithmic \emph{schemes} $h_1\:Y\to Y_0$ and a strict \'etale cover \emph{of stacks} $h_0\:Y_0\to Z$. So, it suffices to prove the claim for $h_0$ and $h_1$, and we restrict to two cases: (1) $Y$ and $Z$ are schemes, (2) $h$ is \'etale.

Since the proofs only differ in one ingredient, we prove case (1) and then indicate the change for case (2). We can assume that $T\subsetneq Z$, as otherwise we can simply replace it by any proper closed subset $T'\subsetneq Z$ such that $\tilY\to Y$ is a $T'$-modification. Note that $U=Z\setminus T$ and its preimage $V=Y\setminus h^{-1}(T)$ lie in $\bB$ and hence are normal. By Theorem~\ref{cofinalth} applied to $Y\to Z$ and $V\to U$, there exists a $T$-supported blow up of schemes $g\:Z'=Bl_\cI(Z)\to Z$ and a finite $T$-modification $Y'$ of the closure of $V$ in $Z'\times_ZY$ such that the $T$-modification $Y'\to Y$ factors through $\tilY$. If $\cI'$ is the ideal of $T$, then replacing $Z'$ by $Bl_{\cI\cI'}(Z)$ we obtain a finer blow up whose center is a closed subscheme with reduction $T$. Choosing an appropriate logarithmic structure on $Z'$ so that $\cI\cI'\cO_Z$ is monomial we obtain an enrichment of $g$ to a blow up of logarithmic schemes, which will also be denoted $g\:Z'\to Z$. Finally, since $Z\in\bB^\st$, we can replace $g$ by a finer blow up again so that $Z'\in\bB$.

We claim that $Z'\to Z$ is as required. The morphism $h'\:Y'\to Z'$ is Kummer logarithmically \'etale, so by Lemma \ref{stablem} the logarithmic scheme $Y'=Z'\times_ZY$ lies in $\bB$ and hence is special (\S\ref{specialsec}). In particular, its underlying scheme is normal and hence coincides with the closure of $V$ in the scheme-theoretic fiber product. Therefore, the morphism of schemes $Y'\to Y$ factors through a morphism, of underlying schemes, $\phi\:Y'\to\tilY$, and it remains to show that the latter extends to a morphism of logarithmic schemes. Since $Y'$ is special, it suffices to show that $Y'_\triv\subseteq\phi^{-1}(\tilY_\triv)$. By our construction, $Y'_\triv=h'^{-1}(Z'_\triv)$, which is the preimage of $Z_\triv\setminus T$. On the other hand, $Y_\triv=h^{-1}(Z_\triv)$ and since $\tilY\to Z$ is a $T$-modification, $\tilY_\triv$ contains the preimage of $Z_\triv\setminus T$. The claim follows.

Case (2) is proved similarly, replacing the use of  Theorem \ref{cofinalth} by the flattening theorem, extended by Rydh to stacks in \cite[Theorem~D]{Rydh}.
\end{proof}

\subsection{Monomialization of $\cD$-saturated ideals}\label{monomthsec}
Throughout \S\ref{monomthsec}, $f\:X\to B$ satisfies Assumptions A1--A2 of \S\ref{stablesec}, $\cJ\subseteq\cO_X$ is an ideal and $T\subseteq B$ is the closure of $f(V(\cJ))$.

\subsubsection{Ideals monomializable over a base}

We say that a blow up $g\:B'\to B$ {\em monomializes} (resp. {\em almost monomializes}) $\cJ$ if  the pullback $\cJ'=\cJ\cO_{X'}$ of $\cJ$ to the saturated base change $X'=X\times_BB'$ is monomial and $g$ is $T$-supported (resp. the center of $g$ is monomial outside of $T$). If such a blow up exists we say that $\cJ$ is {\em (almost) monomializable}.

\subsubsection{Descent}
It seems probable that any $\cD$-saturated ideal $\cJ$ is monomializable, and we will prove this in the end of \S\ref{monomsec} for an integral $f$. However, in the general case we will only prove that $\cJ$ is almost monomializable. Our proof will run by showing that the class of almost monomializable ideals is large enough. We start with descent.

\begin{lemma}\label{monomdescentlem}
The classes of monomializable and almost monomializable ideals are Kummer local both on the source and on the base.
\end{lemma}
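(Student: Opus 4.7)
The plan is to treat source and base separately, with the forward directions being formal consequences of base change, and the reverse directions relying on Lemma~\ref{monomcoverlem} for the source and on Proposition~\ref{cofinalprop} for the base.

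First I would handle \emph{source locality}. Let $p\:X''\to X$ be a Kummer logarithmically \'etale cover and set $\cJ''=\cJ\cO_{X''}$. Since $p$ is surjective, the associated closed set $T''=\overline{(f\circ p)(V(\cJ''))}$ coincides with $T$, so the support conditions match on the nose. The forward direction is immediate: if $g\:B'\to B$ is a $T$-supported (respectively $T$-almost-monomial) blow up with $\cJ\cO_{X\times_BB'}$ monomial, then its pullback via $X''\to X$ remains monomial, since monomiality is preserved under saturated base change and logarithmically \'etale pullback. For the converse, suppose $g\:B'\to B$ monomializes $\cJ''$. Applying Lemma~\ref{monomcoverlem} to the Kummer logarithmically \'etale cover $X''\times_BB'\to X\times_BB'$ (the target lies in $\bB$ by Lemma~\ref{stablem}(ii), since $B'\in\bB$ and $X\to B$ is logarithmically regular) yields that $\cJ\cO_{X\times_BB'}$ is monomial, and the support condition on $g$ is unchanged.

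Next I would handle the forward direction of \emph{base locality}: if $h\:\tilde B\to B$ is a Kummer logarithmically \'etale cover and $g\:B'\to B$ monomializes $\cJ$, then the saturated base change $g\times_B\tilde B\:B'\times_B\tilde B\to\tilde B$ monomializes $\cJ'=\cJ\cO_{(X\times_B\tilde B)^\sat}$, because the pullback of the monomial ideal $\cJ\cO_{X\times_BB'}$ along the saturated pullback is monomial. The support condition transfers since the base change of a $T$-supported blow up is $h^{-1}(T)$-supported, and $h^{-1}(T)\supseteq\overline{\tilde f(V(\cJ'))}$.

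The main step, and where Proposition~\ref{cofinalprop} enters, is the reverse direction of base locality. Assume $g\:B''\to\tilde B$ (almost) monomializes $\cJ'$; in particular $g$ is supported over $h^{-1}(T)$, hence is a $T$-modification of $\tilde B\to B$ in the sense of \S\ref{modifsec}. Applying Proposition~\ref{cofinalprop} to the Kummer logarithmically \'etale cover $h$ and the $T$-modification $B''\to\tilde B$ produces a $T$-supported blow up $g_0\:B_0\to B$ with $B_0\in\bB$ and a factorization $\tilde B\times_BB_0\to B''$ over $\tilde B$. Pulling $\cJ'\cO_{X'\times_{\tilde B}B''}$ back along this factorization gives a monomial ideal on $X'\times_{\tilde B}(\tilde B\times_BB_0)$, which agrees with the pullback of $\cJ\cO_{X\times_BB_0}$ along the Kummer logarithmically \'etale cover $X'\times_{\tilde B}(\tilde B\times_BB_0)\to X\times_BB_0$. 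Invoking source locality (proved in the first paragraph) to this Kummer cover yields monomiality of $\cJ\cO_{X\times_BB_0}$, so $g_0$ monomializes $\cJ$. The almost-monomializable variant is identical, only tracking that the center of $g_0$ is monomial outside $T$; this is automatic from the support clause in Proposition~\ref{cofinalprop}.

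The main technical obstacle is the use of Proposition~\ref{cofinalprop}, which is stated for bases in $\bB^\st$ rather than in $\bB$. As noted in \S\ref{obBsec}, this distinction is ultimately immaterial once Theorem~\ref{absdesingth} is available, but at this point one should either take $B\in\bB^\st$ throughout or apply a further blow up provided by the $\st$-condition to land in $\bB$; the extra blow up is $T$-supported and so does not disturb the conclusion.
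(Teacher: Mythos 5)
Your argument is correct and follows essentially the same route as the paper: the nontrivial direction of source locality is Kummer descent of monomiality via Lemma~\ref{monomcoverlem}, and for base locality one invokes Proposition~\ref{cofinalprop} to replace the given modification of the Kummer cover of the base by the pullback of a $T$-supported blow up of $B$, and then descends monomiality once more through Lemma~\ref{monomcoverlem}. Your closing caveat about $\bB$ versus $\bB^\st$ is well taken: the paper passes over it silently, and the lemma is in fact only applied inside the proof of Theorem~\ref{monomialization}, where $B\in\bB^\st$ is assumed.
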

\begin{proof}
The two cases are similar, so we will only deal with monomializable ideals. Say $X_1 \to X$ is a Kummer cover and $B'\to B$ a blowing up such that $\cJ\cO_{X_1'}$ is monomial as in the following cartesian diagram:
$$\xymatrix{X_1' \ar[r]\ar[d] & X_1 \ar[d] \\  X' \ar[r]\ar[d] & X \ar[d] \\ B' \ar[r] & B.}$$
By Lemma \ref{monomcoverlem} applied to ${X'_1} \to X'$ the ideal $\cJ'=\cJ\cO_{X'}$ is monomial, proving that the property is local on the source.

The second claim asserts that if $B_1\to B$ is a Kummer logarithmically \'etale cover and the pullback $\cJ_1$ of $\cJ$ to $X_1=X\times_BB_1$ is monomializable over $B_1$, then $\cJ$ is monomializable over $B$. Let $g_1\:B''_1\to B_1$ be a modification that monomializes $\cJ_1$. By Proposition~\ref{cofinalprop} there exists a $T$-supported modification $g\:B'\to B$ such that $B'_1=B'\times_BB_1\to B_1$ factors through $g_1$, 
and hence monomializes $\cJ_1$:
$$ \xymatrix{B_1'' \ar[dr]_{g_1} & B_1'\ar@{.>}[l]\ar[d]\ar[r] & B' \ar[d]^{\exists g} \\ & B_1 \ar[r] &B.}$$
We claim that $g$ monomializes $\cJ$.
Set $X'=X\times_BB'$, $\cJ'=\cJ\cO_{X'}$, $X'_1=X\times_BB'_1$ and $\cJ'_1=\cJ\cO_{X'_1}$:
$$\xymatrix{X_1' \ar[r]\ar[d] & X' \ar[d] \\  B_1' \ar[r]\ar[d] & B' \ar[d]^g \\ B_1 \ar[r] & B.}$$
As we noted, $\cJ'_1$ is monomial, and since $X'_1=X'\times_BB_1$ is Kummer logarithmically \'etale over $X'$, we obtain that $\cJ'$ is monomial by Lemma~\ref{monomcoverlem}. Thus, $g$ monomializes $\cJ$.
\end{proof}

\begin{corollary}\label{firstmonom}
If $f\:X\to B$ is Kummer logarithmically \'etale, then any ideal $\cJ\subseteq\cO_X$ is monomializable.
\end{corollary}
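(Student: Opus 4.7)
Plan: The approach is to use Lemma~\ref{monomdescentlem} (Kummer locality of monomializability on the base) to trivialize the morphism $f$, reducing the claim to monomializing an ordinary ideal on a logarithmically regular scheme in $\bB$, which follows from classical combinatorial principalization.

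Since $f\:X\to B$ is Kummer logarithmically étale, étale-locally on $B$ there is a chart $X\to B_P[Q]$ modeled on a Kummer embedding $\phi\:P\into Q$ of sharp fs monoids. Consider the saturated base change $B_1=(B_P[Q])^\sat\to B$: it is itself Kummer logarithmically étale, and surjective onto the relevant locus. By Lemma~\ref{monomdescentlem} it suffices to monomialize the pullback $\cJ_1$ of $\cJ$ over $B_1$. The key algebraic point, essentially Lemma~\ref{monomlem2}, is that the saturated pushout $Q\oplus^\sat_P Q$ splits as $Q\oplus(Q^\gp/P^\gp)$ (since $Q^\gp/P^\gp$ is finite), so that $X\times_B B_1\to B_1$ is a torsor under the finite diagonalizable group $\bfD_{Q^\gp/P^\gp}$, and étale-locally on $B_1$ it becomes a disjoint union of finitely many copies of $B_1$. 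Hence, étale-locally, $\cJ_1$ is encoded by a finite collection of ideals $\cI_1,\ldots,\cI_n\subseteq\cO_{B_1}$, and any blow up of $B_1$ along the product $\cI=\cI_1\cdots\cI_n$ followed by a monomialization of each factor will monomialize $\cJ_1$.

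For the final step, we must monomialize a single ideal $\cI$ on $B_1\in\bB$. First blow up $B_1$ along $\cI$ to make it invertible; since $B_1$ is logarithmically regular, the resulting invertible ideal can be made monomial by a further combinatorial (toric) blow up — a standard consequence of the absolute principalization theory on logarithmically regular schemes. By construction the entire modification has center in $V(\cI)$, which projects into $T=\overline{f(V(\cJ))}$; so the resulting modification of $B$ obtained by descending through $B_1\to B$ is $T$-supported, as required.

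The main obstacle is making the Kummer descent rigorous — verifying the claimed splitting of the saturated pullback $X\times_B B_1$ and assembling the étale-local blow ups into a single $T$-supported global blow up of $B$, using Kummer locality on both source and base as established in Lemma~\ref{monomdescentlem}. The last step — principalization of an arbitrary ideal on a logarithmically regular scheme in $\bB$ — is a standard result in the logarithmic resolution literature, which we take as an input.
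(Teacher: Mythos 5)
Your reduction to the base (Kummer descent via Lemma~\ref{monomdescentlem}, base change along a chart, splitting of the self-fibre product of a Kummer cover) is essentially the paper's reduction, only in a more laborious form: the paper simply observes that Kummer-locally on the base $f$ is an isomorphism, so by Lemma~\ref{monomdescentlem} one may assume $X=B$. The decisive problem is your final step. Once you are down to a single ideal $\cI$ on the base, no principalization input is needed at all: in this paper a \emph{blow up of logarithmic schemes} along $\cI$ (\S\ref{modifsec}) is, by definition, the underlying scheme-theoretic blow up equipped with a logarithmic structure for which $\cI\cO_{B'}$ \emph{is} monomial (see the Example there, where blowing up $(x)$ on $\AA^1$ with trivial log structure just enlarges the log structure by $x^\NN$), and it is $V(\cI)$-supported. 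So when $X=B$ one monomializes $\cJ$ by a single blow up along $\cJ$ itself, which is exactly the paper's proof.

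By contrast, your final step as written is both incorrect and not a legitimate input. If the logarithmic structure on $B_1$ is regarded as fixed, an invertible ideal need not be monomializable by any further "combinatorial (toric)" blow up: take $B_1=\Spec(k[x])$ with trivial logarithmic structure and $\cI=(x)$. If instead you implicitly allow the logarithmic structure to be enlarged, the step is immediate and the appeal to principalization is vacuous. Worse, "absolute principalization on logarithmically regular schemes in $\bB$" is not available as a standard external result in the generality you need: elements of $\bB$ are qe schemes not of finite type over a field, the paper explicitly notes that \cite{Temkin-embedded} does not provide principalization for qe schemes, and inside this paper such principalization statements are downstream of the monomialization machinery of Section~\ref{monomsec} that Corollary~\ref{firstmonom} belongs to — so invoking them here risks circularity. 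Finally, your gluing step (assembling the \'etale-local ideals $\cI_1,\dots,\cI_n$ and their blow ups into a global $T$-supported blow up of $B$) is acknowledged but not carried out; in the paper this kind of assembly is exactly what Lemma~\ref{monomdescentlem} (and, for Zariski/\'etale localization, Lemma~\ref{henslem}) is for, and the short route through $X=B$ avoids it entirely.
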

\begin{proof}
Kummer locally on the base $f$ is an isomorphism. By Lemma~\ref{monomdescentlem} we can assume that $X=B$ and then $\cJ$ is monomializable by a blow up along it.
\end{proof}

\subsubsection{Localization}
We will need a finer result about localization on the base.

\begin{lemma}\label{henslem}
Let $f\:X\to B$ be a logarithmic manifold, and assume that $X$ and $B\in\bB$ are logarithmic schemes. Then an ideal $\cJ\subseteq\cO_X$ is monomializable over $B$ if and only if for any point $b\in B$ with the strict henselization $B_\ob=\Spec(\cO_\ob^{sh})$ and pullback $X_\ob=X\times_BB_\ob$, the ideal $\cJ_\ob=\cJ\cO_{X_\ob}$ is monomializable over $B_\ob$.
\end{lemma}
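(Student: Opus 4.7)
The ``only if'' direction is routine: given a blow up $g\:B'\to B$ monomializing $\cJ$ along some ideal $\cI\subseteq\cO_B$, the base change $g_\ob\:B'_\ob=B'\times_BB_\ob\to B_\ob$ is a blow up along $\cI\cO_{B_\ob}$, its underlying scheme is the corresponding Proj, and $\cJ\cO_{X'}$ monomial implies $\cJ_\ob\cO_{X_\ob\times_{B_\ob}B'_\ob}$ monomial. The $T$-supported condition (resp.\ monomiality outside $T$) pulls back directly since $T_\ob=T\times_BB_\ob$ and the blow up center pulls back.

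For the converse, the plan is to reduce to étale-local monomializability and then extend local blow ups from strict henselizations to étale neighborhoods by noetherian approximation. By Lemma~\ref{monomdescentlem}, applied to the strict étale (hence Kummer logarithmically étale) cover $B_1=\coprod_iU_i\to B$ arising from any finite étale cover $\{U_i\to B\}$, it suffices to produce, for each $b\in B$, an étale neighborhood $U_b\to B$ of $b$ such that $\cJ|_{X\times_BU_b}$ is monomializable over $U_b$. By hypothesis we have a monomializing blow up $g_\ob\:B'_\ob\to B_\ob$ along a finitely generated ideal $\cI_\ob\subseteq\cO_{B_\ob}$. Since $\cO_{B,\ob}^{sh}=\varinjlim_{U\to B}\cO_U$ with the colimit over étale neighborhoods of $b$, and $\cI_\ob$ is finitely generated, there exists an étale neighborhood $U_b\to B$ and an ideal $\cI_b\subseteq\cO_{U_b}$ with $\cI_b\cO_{B_\ob}=\cI_\ob$.

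Now I would verify that, after possibly shrinking $U_b$, the underlying scheme-theoretic blow up $g_b\:U'_b\to U_b$ along $\cI_b$ inherits from $g_\ob$ all the required properties. Three items need to be checked: (1) $V(\cI_b)\subseteq T\times_BU_b$ --- since $V(\cI_b)\setminus(T\times_BU_b)$ is constructible in $U_b$ and has empty preimage in $B_\ob$, it is empty on some étale neighborhood by Chevalley/noetherian approximation; (2) $U'_b$ admits a logarithmic structure in $\bB$ making $\cI_b\cO_{U'_b}$ monomial --- this follows because $B'_\ob\in\bB$ (as $\bB$ is étale local, using Lemma~\ref{logreglem}), the relevant monomial generators on the finitely many affine charts of $\Proj$ descend to $U'_b$, and logarithmic regularity and its refinement to $\bB$ are étale local; (3) $\cJ\cO_{X'_b}$ is monomial --- being monomial means, on finitely many charts, the finitely many generators of $\cJ$ (present on some étale neighborhood) lie in the ideal generated by $M_{X'_b}$, and this is a constructible condition on $U_b$ whose generic fibre over $B_\ob$ holds by hypothesis.

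Collecting the resulting $U_b$'s, by noetherianness of $B$ finitely many cover $B$, and their disjoint union $B_1=\coprod U_{b_i}$ is a strict étale cover; the disjoint union of the $g_{b_i}$ monomializes $\cJ\cO_{X\times_BB_1}$ over $B_1$, so Lemma~\ref{monomdescentlem} yields that $\cJ$ is monomializable over $B$. The main obstacle I anticipate is item~(2) above: extending the \emph{logarithmic} enrichment of the blow up (with target in $\bB$) from $B_\ob$ to an étale neighborhood. All other ingredients are standard noetherian approximation and constructibility arguments, but the passage from a logarithmic blow up in $\bB$ over the strict henselization to one over an étale neighborhood requires that the finitely many new monomial generators on the Proj charts of $g_\ob$ come from the corresponding Proj charts of $g_b$, which I would verify using the explicit local description of logarithmically regular objects in $\bB$ and the fact that $\cI_\ob$ and its powers are pulled back from $\cI_b$.
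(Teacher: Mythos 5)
Your proposal is essentially correct, but it is organized along a genuinely different route than the paper's proof, so a comparison is in order. The paper also exploits the presentation of $\cO_\ob^{sh}$ as a filtered colimit of \'etale algebras, but proceeds in two Zariski-flavoured steps: it descends the center of the monomializing blow up to an \'etale $\cO_b$-subalgebra $A_i\subseteq\cO_\ob^{sh}$, observes that monomiality of the pullback of $\cJ$ already holds at a finite stage $j\ge i$, and applies Lemma~\ref{monomdescentlem} only along the \'etale cover $\Spec(A_j)\to B_b=\Spec(\cO_b)$ to obtain monomializability over the local ring $B_b$; it then passes from $B_b$ to a Zariski neighbourhood of $b$ by taking the Zariski closure of the center inside $B$ and using that being monomial is an open condition, and finally glues the finitely many local blow ups by a single blow up dominating all of them whose center lies in the union of their centers. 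You instead spread out directly from $B_\ob$ to an \'etale neighbourhood $U_b$ (descending the center, the logarithmic structure of the blow up, and monomiality of $\cJ\cO_{X'}$), and then globalize by a second application of Lemma~\ref{monomdescentlem} along the strict \'etale cover $\coprod_i U_{b_i}\to B$. Your route avoids the closure-of-the-center and dominating-blow-up steps, at the price of (a) having to spread out the fs logarithmic structure of $B'_\ob$ to a finite stage, whereas the paper only ever descends the center, an ordinary ideal, and recovers the log data via openness of monomiality, and (b) leaning on Lemma~\ref{monomdescentlem} (hence on the RZ-space cofinality result, Proposition~\ref{cofinalprop}) for the final gluing, where the paper's gluing is elementary. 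Both proofs share the same reliance on Lemma~\ref{monomdescentlem} elsewhere, so this is a matter of weight, not of correctness.

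Two small corrections to your write-up. First, the hypothesis does not give $B'_\ob\in\bB$, but this is also not needed: a monomializing blow up is not required to lie in $\bB$, so that part of your item (2) should be dropped; what remains is the descent of charts and of the finitely many monomial generators along the limit, in the spirit of the approximation arguments of \S\ref{fpsec}, which does work. Second, ``monomial'' means generated by sections of $M$, not contained in the ideal generated by $M$; accordingly, in item (3) the correct argument is that the closed non-monomial locus in the qcqs scheme $X'_b$ has empty preimage in the limit and is therefore empty at a finite stage --- one should not pass through constructibility of its image in $U_b$, since $X\to B$ need not be of finite type and Chevalley is unavailable. With these adjustments your argument closes.
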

\begin{proof}
Only the inverse implication needs a proof, so assume that each $\cJ_\ob$ is monomializable. First we claim that for any $b\in B$ with $B_b=\Spec(\cO_b)$ and $X_b=X\times_BB_b$, each $\cJ_b=\cJ\cO_{X_b}$ is monomializable. By definition, $\cO_b^{sh}$ is the filtered union of \'etale $\cO_b$-subalgebras $A_i$. Choosing $i$ large enough we can assume that the center $I\subseteq \cO_b^{sh}$ of a monomializing blow up $B'_\ob\to B_\ob$ is of the form $I_i\cO_b^\sh$ for an ideal $I_i\subset A_i$. Since $B_\ob\to Y_i=\Spec(A_i)$ is flat, $B'_\ob\to B_\ob$ is the pullback of the blow up $Y'_i\to Y_i$ along $I_i$. Furthermore, for any $j\ge i$ one can take $I_j=I_i\cO_{Y_j}$ and then $Y'_j=Y_j\times_{Y_i}Y'_i$. Since $\cJ\cO_{B'_\ob}$ is monomial, the ideal $\cJ\cO_{Y'_j}$ is monomial for a large enough $j$. This means that $\cJ\cO_{Y_j}$ is monomializable by the blow up $Y'_j\to Y_j$, and hence $\cJ_b$ is monomializable by Lemma~\ref{monomdescentlem}.

For a point $b\in B$ consider the blow up of $B_b$ that monomializes $\cJ_b$ and extend it to a blow up of $B$ simply by taking the Zariski closure of the center. 
Since being monomial is an open condition, the new blow up monomializes $\cJ$ over a neighborhood of $b$. So, there exists an open cover $X=\cup_{i=1}^n X_i$ such that $\cJ|_{X_i}$ is monomializable by a blow up $g_i\:B_i\to B$. Any blow up which dominates all $g_i$ and whose center lies in the union of the centers of $g_i$ monomializes $\cJ$.
\end{proof}

\subsubsection{$B$-monomial ideals}
Finally, let us obtain some criteria for monomializability of $B$-monomial ideals (see \S\ref{def:B-monomial}).

\begin{lemma}\label{secondmonom}
Let $f\:X\to B$ be a logarithmic manifold with $X$ and $B\in\bB$ schemes and $\cJ\subseteq\cO_X$ an ideal. Assume that $f$ is integral (\cite[Definition~III.2.5.1]{Ogus-logbook}), the logarithmic structure on $B$ is Zariski and $\cJ$ is $B$-monomial. Then $\cJ$ is monomializable. 
\end{lemma}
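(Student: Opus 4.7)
The plan is to reduce the problem, via integrality and the $B$-monomial hypothesis, to monomializing a finite family of ideals on $B$, and then carry out this monomialization by a log-blowing-up argument on the log regular scheme $B\in\bB$.

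By Lemma~\ref{henslem} it suffices to treat the case where $B=\Spec(\cO_\ob^{sh})$ is strictly henselian local at a point $b$; I work \'etale-locally at a closed geometric point $x\to X$ over $b$. By Theorem~\ref{neatth}, choose a neat chart $X\to Y:=B_P[Q]$ at $x$ with $P=\oM_b$ (using the Zariski log structure on $B$), $Q=\oM_x$, and $P\into Q$ exact. Integrality together with Remark~\ref{inthom} supplies a set-theoretic section $\sigma\:\tilQ\to Q$ realizing $Q=\bigsqcup_{\tilq\in\tilQ}(\sigma(\tilq)+P)$ as a $P$-set. The scheme $Y$ is strict log smooth over $B$, so $Y\in\bB$ by Lemma~\ref{stablem}(ii), and $X\to Y$ is logarithmically regular. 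Since $\cJ$ is $B$-monomial at $x$, its stalk is generated by elements of the graded subring $\cO_{Y,y}=(\cO_b)_P[Q]$, so $\cJ_x=\cJ_Y\cdot\cO_x$ for $\cJ_Y:=\cJ\cap\cO_Y$; because pullback along logarithmically regular morphisms preserves monomial ideals, it suffices to monomialize $\cJ_Y$ on $Y$ via a $T$-supported blow up of $B$.

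Integrality implies that each $P$-orbit $Q_\tilq\subseteq Q$ is a single coset, so the $\tilQ^{\gp}$-grading on $\cO_Y$ has graded pieces free of rank one over $\cO_B$, and homogeneity of $\cJ_Y$ yields a decomposition
\[
\cJ_Y=\bigoplus_{\tilq\in\tilQ}\cI_\tilq\cdot u^{\sigma(\tilq)},\qquad \cI_\tilq\subseteq\cO_B,
\]
with only finitely many $\cI_\tilq$ nonzero, say $\cI_1,\dots,\cI_r$ in degrees $\tilq_1,\dots,\tilq_r$. One checks directly that $\bigcap_k V(\cI_k)\subseteq T=\overline{f(V(\cJ))}$: indeed, if every $\cI_k\subseteq m_b$, then $\cJ\subseteq m_x\cO_x$ for every $x$ over $b$, so $b\in f(V(\cJ))$. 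The problem thus reduces to finding a $T$-supported blow up $g\:B'\to B$ within $\bB$ on which each $\cI_k\cO_{B'}$ is monomial. Granted such a $g$, one obtains on $X'=X\times_B B'$ the equality
\[
\cJ\cO_{X'}=\sum_k(\cI_k\cO_{B'})\cdot u^{\sigma(\tilq_k)}\cO_{X'},
\]
a sum of products of monomials on $B'$ with monomials from the toric fiber direction, hence monomial on $X'$.

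The construction of $g$ is the main obstacle. The plan is to blow up the sum $\sum_k\cI_k$ on $B$ (a $T$-supported center by the check above), enrich the log structure with the resulting exceptional Cartier divisor, and iterate, invoking Theorem~\ref{absdesingth} to resolve intermediate singularities and remain within $\bB$. This is a logarithmic principalization problem for a finite family of ideals on a log regular scheme; it crucially exploits that $B\in\bB$ and the combinatorial structure of log blow ups on log regular schemes. Controlling termination of the iteration and $T$-supportedness throughout is the hardest aspect, and it is here that the hypotheses $B\in\bB$ and the Zariski log structure on $B$ are indispensable.
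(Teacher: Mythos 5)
Your reduction breaks down at the decisive point. The setup (localizing on the base, choosing a chart, and using integrality via Remark~\ref{inthom} to write $\cJ$ as generated by elements $c_iu^{q_i}$ with $c_i\in\cO_B$ --- your graded decomposition $\bigoplus_{\tilq}\cI_\tilq u^{\sigma(\tilq)}$ is the same thing) agrees with the paper. But you then reduce to: \emph{find a $T$-supported blow up of $B$ making every $\cI_k\cO_{B'}$ monomial}. This is strictly stronger than what is needed and is in general impossible: only the intersection $\bigcap_kV(\cI_k)$ lies in $T$, while the locus where an individual $\cI_k$ fails to be monomial can stick out of $T$, and no $T$-supported blow up changes anything over $B\setminus T$. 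This is exactly the trap the paper flags (``a naive solution would be to blow up the principal ideals $(c_i)$\dots however, $V(c_i)$ does not have to be contained in $T$''). Your proposed iteration (blow up $\sum_k\cI_k$, enlarge the log structure, repeat) inherits the same problem, and you acknowledge that termination and $T$-supportedness are unresolved --- but that unresolved part \emph{is} the lemma; nothing in the proposal supplies it.

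The paper's actual mechanism is different and avoids ever making the $c_i$ individually monomial. One inducts on the number $n$ of non-monomial generators $c_iu^{q_i}$ in a decomposition $\cJ=\cJ'+\cN$ with $\cN$ the maximal monomial subideal, and blows up the single center $\cI=(c_1\.c_n)+\cN_0$ on $B$, where $\cN_0=\cN\cap\cO_B$ is shown to be monomial (schematic image of $V(\cN)$); the inclusion of $\cN_0$ is what makes $\cI$ $T$-supported, since $V(\cI')\cap V(\cN_0)\subseteq T$ even though $V(\cI')$ need not be. On a $c_i$-chart, $c_i$ becomes monomial and $n$ drops; on a $u^{p_j}$-chart, $u^{p_j}$ divides every $c_i$, and since $u^{p_j}\in\cN\subseteq\cJ$ the pulled-back ideal is already monomial --- the $c_i$ are simply absorbed, never monomialized. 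Your proposal is missing this interplay between the non-monomial generators and the monomial part of $\cJ$, which is the heart of the proof. A further caution: invoking Theorem~\ref{absdesingth} inside this argument risks circularity, since that theorem is deduced from the principalization results which rest on the monomialization theorem and hence on this very lemma; the paper sidesteps this by working with the class $\bB^\st$ and, in this lemma, by needing no desingularization input at all.
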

\begin{proof}
By Lemma \ref{monomdescentlem}, we can work \'etale locally on $X$ and $B$, hence we can assume that $X=\Spec(A)$, $B=\Spec(O)$, $f$ is dominant and possesses a chart $X\to B_P[Q]$, where $P\into Q$ is an integral embedding of sharp monoids. By definition of $B$-monomiality, localizing further we can assume that $\cJ\subseteq A$ is generated by elements $a_i=\sum_j a_{ij}u^{q_{ij}}$, where $q_{ij}\in Q_{\tilq_i}$ and $a_{ij}\in O$. By Remark~\ref{inthom} $Q_{\tilq_i}=q_i+P$, hence $q_{ij}=q_i+p_{ij}$ and $\cJ$ is generated by the elements $c_iu^{q_i}$, where $c_i=\sum_j a_{ij}u^{p_{ij}}\in O$.

Let $\cN$ be the maximal monomial subideal of $\cJ$, and choose a decomposition $\cJ=\cJ'+\cN$ with $\cJ'=(c_1u^{q_1}\.c_nu^{q_n})$ and minimal possible $n$. Then each $c_i$ is non-monomial, and by induction on $n$ it suffices to find a $T$-supported blow up $B'\to B$ such that the pullback of $\cJ$ possesses an analogous decomposition with a smaller $n$. A naive solution now would be to blow up the principal ideals $(c_i)$ on $B$, that is, to just enlarge the logarithmic structure of $B$ so that each $c_i$ becomes a monomial. However, $V(c_i)$ does not have to be contained in $T$, and we have to be more careful.

Consider the ideals $\cI'=(c_1\.c_n)\subseteq O$ and $\cN_0=\cN\cap O$. We claim that $\cN_0$ is monomial, say $\cN_0=(u^{p_1}\.u^{p_m})$ with $p_i\in P$. In fact, the claim is that the schematic image of $V(\cN)$ is a monomial subscheme of $B$. As in the proof of Lemma~\ref{closlem}, it suffices to check this for the case $X=\bfA_Q$ and $B=\bfA_P$, but then the schematic image is $\bfD_{P^\gp}$-equivariant and hence monomial.

Next we claim that the ideal $$\cI=\cI'+\cN_0=(c_1\.c_n,u^{p_1}\.u^{p_m})$$ is $T$-supported. Since $\cJ\subseteq \cI'A+\cN$, we have that $f^{-1}(V(\cI'))\cap V(\cN)\subseteq V(\cJ)$ and $$V(\cI')\cap f(V(\cN))=f(f^{-1}(V(\cI'))\cap V(\cN))\subseteq T.$$ Hence $T$ contains the closure of the left hand side, which is $V(\cI')\cap V(\cN_0)=V(\cI)$.

Let $g\:B'\to B$ be the $T$-supported blow up along $\cI$ and let $X'=X\times_BB'$ and $\cJ'=\cJ\cO_{X'}$. It suffices to prove, with the induction scheme above, that $\cJ'$ is monomializable over $B'$, and since the property is local it suffices to restrict to a single chart of $g$. There are two cases.

\begin{enumerate} \item If $B'_i$ is the chart corresponding to $c_i$, set $X'_i=B'_i\times_BX$ and $\cJ'_i=\cJ\cO_{X'_i}$. Since $c_i$ becomes monomial on $B_i$, the set $c_1 u^{q_1}\.c_lu^{q_l},u^{q_{n+1}}\.u^{q_l}$ of generators of $\cJ'_i$ contains at most $n-1$ non-monomial elements, and hence $\cJ'_i$ is monomializable by induction.
\item
If $B'_j$ is the chart corresponding to $u^{p_j}$ and $X'_j$, $\cJ'_j$ are defined as above, then $u^{p_j}$ divides each $c_i$ on $B'_j$, and since $u^{p_j}\in\cN\subseteq\cJ$ the ideal $\cJ'_j=(u^{p_j},u^{q_{n+1}}\.u^{q_l})$ is already monomial.
\end{enumerate}
\end{proof}

\begin{remark}\label{exactrem}
With an additional work Lemma~\ref{secondmonom} can be extended to the case of an exact $f$ as follows. By Lemma~\ref{nonintlem} the above argument can be run for an appropriate ideal $u^p\cJ$. In such a case, one obtains that $\cJ$ is almost monomializable and, moreover, the blow up center is principal and monomial on $B\setminus T$. In particular, $B'\to B$ is a $T$-modification, and by Proposition~\ref{cofinalprop} enlarging $B'\to B$ one can make it a $T$-supported blow up.
\end{remark}

\subsubsection{Integralization and saturization}\label{saturatingsec}
We will need the following results known to experts, and proven in different context in the literature: 
\begin{proposition} Suppose given $f:X \to B$ as above. 
\begin{itemize} 
\item There exists a logarithmic blowing up $B' \to B$ such that the  fs base change $f'\:X'\to B'$ is integral.
\item There exists a further Kummer \'etale covering Kummer \'etale covering $B''\to B'$ such that the fs base change $f''\:X''\to B''$ is saturated. 
\end{itemize}
\end{proposition}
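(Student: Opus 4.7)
The plan is to reduce both statements to combinatorial facts about fs monoid homomorphisms and then globalize via \'etale descent and the cofinality of toric refinements of log blow ups.

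For the first bullet, work \'etale-locally on $X$ and $B$ and, using Theorem~\ref{neatth}, choose a neat chart $X\to B_P[Q]$ at each geometric point, modeled on an exact injection $P\hookrightarrow Q$ of sharp fs monoids (replacing $P$ by $(\phi^\gp)^{-1}(Q)$ if needed, as in \S\ref{sharpfactorsec}). By Remark~\ref{inthom}, the homomorphism $P\to Q$ is integral exactly when each fiber $Q_\tilq$ of $Q\twoheadrightarrow\tilde Q=Q/P$ is a single $P$-translate. Kato's integralization result provides a subdivision of the fan of $P$ in $P^\gp_\RR$ such that, on each new cone $\sigma$, the pullback $P_\sigma\to Q\oplus_P P_\sigma$ has this property; combinatorially this is constructed by subdividing along the tropicalizations of the finitely many generators of the fibers $Q_\tilq$ produced by Lemma~\ref{nonintlem}(i). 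Such a subdivision corresponds to a logarithmic blow up of the toric chart $\bfA_P$, which pulls back to a logarithmic blow up of a neighborhood of $B$.

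To globalize, cover $X$ by finitely many quasi-compact \'etale opens carrying such charts (possible since $B$ is noetherian and we argue locally on $B$), produce the corresponding local blow ups $B_i\to B$, and take their common logarithmic refinement $B'\to B$, which exists because the category of log blow ups of $B\in\bB^\st$ admits finite meets (cofinality of log blow ups, \S\ref{obBsec}). On each chart the fs base change becomes integral, and since integrality of $f'\:X'\to B'$ can be checked \'etale-locally at geometric points of $X'$ (where it is the statement that $P\to Q$ pulls back to an integral homomorphism), the global $f'$ is integral.

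For the second bullet, assume $f$ is integral and choose local charts $X\to B_P[Q]$ with $P\hookrightarrow Q$ integral. The obstruction to saturation lies in the finite torsion group $T=(Q^\gp/P^\gp)_\tor$: for any common annihilator $n$ of $T$, the Kummer extension $P\hookrightarrow\tfrac{1}{n}P$ has the property that $\tfrac{1}{n}P\oplus_P Q$ is already saturated (this is the standard fact that Kummer roots kill torsion quotients). A finite set of charts suffices by quasi-compactness; taking $n$ to be a common multiple produces a single Kummer \'etale cover $B''\to B'$, realized globally as the $n$-th root stack along the fs log structure of $B'$, such that the fs base change $f''\:X''\to B''$ is saturated on each chart and hence globally. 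The main obstacle, as in the integralization step, is the globalization: one must ensure the local combinatorial constructions descend to canonical operations on the sheaf $\oM_{B'}$, which is why we phrase them as subdivisions of the characteristic monoid sheaf and as a root stack respectively.
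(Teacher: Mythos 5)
The decisive gap is in your second bullet. Your claim that the obstruction to saturatedness is the torsion subgroup $(Q^\gp/P^\gp)_\tor$, and that extracting $n$-th roots for $n$ annihilating this torsion makes the fs base change saturated, is false. Take $P=\NN\to Q=\NN^2$, $1\mapsto(1,2)$, i.e.\ the family $t=xy^2$: this homomorphism is integral (any homomorphism out of $\NN$ is, and indeed $k[t]\to k[x,y]$ is flat), and $Q^\gp/P^\gp\cong\ZZ^2/\ZZ(1,2)\cong\ZZ$ is torsion free, so your recipe takes $n=1$ and does nothing; yet the morphism is not saturated, since the fiber over $t=0$ is non-reduced along $y=0$, and a genuine degree-two Kummer cover $t=s^2$ of the base is required. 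The correct integer is not visible in $(Q^\gp/P^\gp)_\tor$: as in the paper's proof (following \cite[Proposition~5.1]{AK} and Tsuji), it is governed by the lattice-index condition that $\phi^{-1}(m_{\sigma\tau}N_\tau)\to N_\tau$ be surjective, equivalently by the multiplicities of the components of the fibers, and one takes the lcm of these indices cone by cone. So the "standard fact that Kummer roots kill torsion quotients" is not the relevant fact, and this step of your argument fails.

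Your first bullet is also not yet a proof. Invoking "Kato's integralization result" is essentially invoking the statement to be proven (its local toric form; the paper cites \cite{FKato} for exactly this), and the sketched construction --- subdividing along "tropicalizations of the finitely many generators of the fibers $Q_\tilq$" --- is neither well defined (there are infinitely many $\tilq\in\tilQ$; Lemma~\ref{nonintlem}(i) only bounds each fiber separately) nor accompanied by an argument that the resulting pullbacks satisfy the criterion of Remark~\ref{inthom}. The paper proceeds differently: it subdivides the cone complex of $B$ so that cones of $\Delta_X'$ map onto cones of $\Delta_B'$ and $\Delta_B'$ is regular, deduces equidimensionality from \cite[Lemma~4.1]{AK}, and then obtains flatness by miracle flatness (regular base, Cohen--Macaulay source), hence integrality; if you want a purely combinatorial argument you must either reproduce F.~Kato's proof or supply the missing subdivision criterion. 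Finally, your globalization step is glossed: blow ups produced \'etale-locally on $B$ do not automatically glue to, or extend to, a blow up of $B$; working with the global cone complex (or extending centers by schematic closure, as done elsewhere in the paper) is precisely how this is handled.
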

\begin{proof} For part (1) we follow the argument of \cite[Proposition 4.4 and  Remark 4.6]{AK}. That paper is set in the language of toroidal embeddings over an algebraically closed field, but the argument is general. An almost identical result, with similar argument, in the logarithmic setting is  given as the main result of \cite{FKato}\footnote{We are informed by the author that that paper is expected to appear in revised form, with results stated in their full power, in the near future.}.

Choose a subdivision of cone complexes $\Delta_B' \to \Delta_B$, with induced subdivision $\Delta_X' \to \Delta_X$ and morphism $\Delta_f' : \Delta_X' \to \Delta_B'$ so that $\Delta_B'$ is regular and  $f'$ maps cones to cones (rather than subcones). Let $f':X' \to B'$ be the resulting  fs pullback. By \cite[Lemma 4.1]{AK} the morphism $f'$ is equidimensional. Since  $B'$ is regular and $X'$ Cohen-Macaulay, the morphism $f'$ is flat, hence integral.

For part (2), we follow the argument of \cite[Proposition 5.1]{AK} but skipping the use of Kawamata's trick\footnote{See \cite{Molcho} for a global result in the language of stacks.}. Once again that paper is set in the language of toroidal embeddings over an algebraically closed field, but the argument is general. A logarithmic argument in a different context is given in \cite[Appendix~ A]{Illusie-Kato-Nakayama}, see the proof of \cite[Theorem~3.7]{Illusie-Temkin-X} for an explanation why this applies to schemes.\footnote{We note that Tsuji's long-lost article \cite{Tsuji} is key to the logarithmic arguments.}

For each cone $\sigma\in \Delta_X'$ with image $\tau \in \Delta_B'$ corresponding to atomic open $B'_\tau\subset B'$ consider the corresponding lattice homomorphism $\phi:N_\sigma \to N_\tau.$ There is a positive  integer $m_{\sigma\tau}$ such that $\phi^{-1} (m_{\sigma\tau}N_\tau) \to N_\tau$ is surjective. Take $m_\tau = \lcm\{m_{\sigma\tau'} | \sigma \text{ maps to a face } \tau' \text{ of } \tau\}$ and choose a Kummer-\'etale morphism $B''_\tau \to B'_\tau$ corresponding to the sublattice $m_\tau N_\tau.$ Let $X''_\tau$ be the fs pullback. By \cite[Lemma 5.2]{AK} the flat logarithmically smooth morphism $X''_\tau \to B''_\tau$ has reduced fibers, hence it is saturated. Taking $B'' = \sqcup B''_\tau$ and $ X'' = \sqcup X''_\tau$ gives the result.

\end{proof}

%
%
%
%

\subsubsection{The monomialization theorem}
Finally, we can prove the main result of \S\ref{monomsec}.

\begin{theorem}\label{monomialization}
Let $f\:X\to B$ be a relative logarithmic orbifold with $B\in\bB^\st$, let $\cJ\in\cO_B$ be a $\cD$-saturated ideal with $T=\overline{f(V(\cJ))}$, and assume that either $\dim(B)\le 1$ or $f$ has abundance of derivations. Then

(i) There exists a blow up $g\:B'\to B$ with saturated base change $X'=X\times_BB'$ such that $\cJ\cO_{X'}$ is monomial.

(ii) If $B$ is a scheme, then $\cJ$ is almost monomializable: one can also achieve that the center of $g$ is monomial outside of $T$.

(iii) If $f$ is integral, then $\cJ$ is monomializable: one can also achieve that $g$ is $T$-supported.
\end{theorem}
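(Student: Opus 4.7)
\smallskip
\noindent\textbf{Proof proposal.} The plan is to treat both hypotheses in parallel, reducing each to the descent machinery of \S\ref{formsec} via the localization tools of \S\ref{monomthsec}. In both cases, I would first apply the integralization step of \S\ref{saturatingsec}: a logarithmic blow up $B_1 \to B$ makes the saturated pullback $f_1\:X_1 \to B_1$ integral, and a further Kummer \'etale cover $B_2 \to B_1$ makes it saturated (so that $\oM^\gp_{X_2/B_2}$ becomes torsion-free, yielding assumption A7 of \S\ref{assumsec}). By Lemma~\ref{monomdescentlem}, monomializability of $\cJ$ over $B$ is equivalent to monomializability of its pullback over $B_2$, so we may assume $f$ itself is integral, exact, and that the relative characteristic cokernel is torsion-free. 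Then, by Lemma~\ref{henslem}, monomializability is testable on strict henselizations of $B$ at closed points $\ob$, so we may further assume $B = \Spec(\cO_\ob^{sh})$, securing A4 (algebraically closed residue field) and A5 (Zariski logarithmic structure).

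The case $\dim(B) \le 1$ is now immediate from Proposition~\ref{logorbprop}: enlarge the logarithmic structure on $B$ to the divisorial one $\cO_B \setminus \{0\}$, which is realized by a blow up $g\:B' \to B$ whose center is $\{b\} \subseteq T$. The pulled-back ideal is still $\cD$-saturated by Lemma~\ref{functorlogorder1}(iii), and hence is monomial by Proposition~\ref{logorbprop}; this yields (i), (ii), and (iii) at once in this case, since the center is automatically contained in $T$.

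For the case of abundance of derivations (securing A3), I would proceed by noetherian induction on the irreducible component structure of $T$, or equivalently on $\dim \cO_{B,b}$; the base case $\dim = 0$ is covered by Proposition~\ref{formalstabprop}. For the inductive step, apply the inductive hypothesis to the punctured spectrum $B \setminus \{b\}$ to obtain a blow up monomializing $\cJ$ there, and extend it across $b$ by taking the schematic closure of the center; since $B \in \bB^\st$, we may refine within the same $T$-support so that the resulting $B'$ lies in $\bB$. By construction the pulled-back ideal $\cJ'$ is monomial away from the preimage of $b$, so Proposition~\ref{algebrcor} applies: at each geometric point $x'$ over $b$, $\cJ'$ is $B'$-monomial. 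Lemma~\ref{secondmonom} (for (iii) in the integral case) or Remark~\ref{exactrem} (for (ii) in the merely exact case) then produces the required monomializing blow up, and the overall $g$ is assembled by composing the successive blow ups produced at each step of the induction.

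The main obstacle is keeping assumptions A3--A7 intact throughout the induction: the integralization and saturation of the initial step must be performed once at the outset and not revisited, since a subsequent logarithmic blow up of $B$ can easily destroy exactness or torsion-freeness on the new exceptional strata, and re-applying those steps would undo the control over $T$ built up in the induction. A secondary technicality is arranging the extension of the inductive blow up across $b$ to be $T$-supported (for (iii)) with target in $\bB$, which is where the cofinality of logarithmic blow ups among modifications in $\bB^\st$ (Proposition~\ref{cofinalprop}) is invoked.
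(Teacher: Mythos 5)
Your skeleton coincides with the paper's argument: reduce to a saturated morphism via \S\ref{saturatingsec} and Kummer descent, pass to strict henselizations by Lemma~\ref{henslem}, settle $\dim\le 1$ by enlarging the logarithmic structure and quoting Proposition~\ref{logorbprop}, and run induction on $\dim(B)$ through Proposition~\ref{algebrcor} and Lemma~\ref{secondmonom}, using $\bB^\st$ to keep the intermediate bases logarithmically regular. The problem is the justification of your very first reduction. Lemma~\ref{monomdescentlem} is only about Kummer \'etale covers; it says nothing about the logarithmic blow up $B_1\to B$ you use to make $f$ integral, so "monomializability over $B_2$ is equivalent to monomializability over $B$" is not what that lemma gives. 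For part (iii) you must not perform that blow up at all: a logarithmic blow up of $B$ need not be $T$-supported, so a $T$-supported monomialization over $B_1$ does not descend to one over $B$; the point is that in (iii) integrality is a hypothesis, so only the Kummer cover (which Lemma~\ref{monomdescentlem} does handle) is needed to reach saturatedness and torsion-freeness. For part (ii) the logarithmic blow up is legitimate, but the correct justification is not descent: it is that the center of a logarithmic blow up is monomial everywhere, so composing it with a later blow up whose center is monomial outside $T$ again has center monomial outside $T$ --- this is exactly how the paper reduces (ii) to (iii) for saturated morphisms. Your fallback on Remark~\ref{exactrem} for (ii) "in the merely exact case" is then both inconsistent with your claim to have made $f$ integral and unsupported: exactness, like integrality, is achieved by modifying $B$, not by a Kummer cover, so you would face the same issue you were trying to avoid.

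Two further points. The "main obstacle" you describe is not one: integrality and saturatedness (hence A6--A7) are stable under saturated base change, so the later $T$-supported blow ups of $B$ occurring in the induction do not destroy them --- this stability is precisely what lets Proposition~\ref{algebrcor} and Lemma~\ref{secondmonom} be applied to the pullback after the inductive blow up, whereas what the definition of $\bB^\st$ (not Proposition~\ref{cofinalprop}, which concerns Kummer covers) supplies is the refinement making the extended blow up land in $\bB$. Finally, you never treat claim (i) when $B$ is a stack: Lemmas~\ref{henslem} and \ref{secondmonom} are stated for schemes, and the paper first reduces (i) to the scheme case by applying Proposition~\ref{cofinalprop} to an \'etale cover of $B$ by a scheme; some such reduction must appear in your argument. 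Also, in the abundance branch your base case "$\dim=0$ covered by Proposition~\ref{formalstabprop}" is incomplete as stated --- that proposition only controls the formal completion, and one still needs the descent step (Proposition~\ref{algebrth}, or simply the $\dim\le 1$ case via Proposition~\ref{logorbprop}) to conclude.
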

For the sake of completeness we note that part (iii) can be extended to the case of exact morphisms using Remark~\ref{exactrem}.
\begin{proof}
{\sc Reduction to schemes.} Applying Proposition \ref{cofinalprop} to an \'etale cover of $B$ by a scheme reduces claim (i) to the case when $B$ is a scheme. 
In the sequel we assume that $B$ is a scheme, and it suffices to prove (ii) and (iii).

{\sc Reduction to saturated morphisms.} By Lemma~\ref{monomdescentlem} in the process of proof we can replace $f$ by the saturated base change $f'\:X'\to B'$ with respect to a Kummer \'etale covering $B'\to B$. In addition, while proving (ii) we can also do saturated base changes with respect to log blow ups $B'\to B$, hence by \S\ref{saturatingsec} we can assume that $f$ is saturated, in particular, integral. Thus, our task reduces to proving (iii) for saturated morphisms.

{\sc Localization.} Working \'etale locally we can assume that $X$ and $B$ are logarithmic schemes and $f$ possesses a global sharp chart $X\to B_P[Q]$. Since $f$ is saturated, $Q^\gp/P^\gp$ is torsion free.

{\sc Induction setup and reduction to strictly henselian schemes.} We proceed with few more reductions. By induction on $d=\dim(B)$ we can assume that the claim is proved for smaller dimensions. By Lemma~\ref{henslem} it suffices to prove the theorem when $B=\Spec(\cO_b^{sh})$ is local and strictly henselian. In particular, $k(b)$ is algebraically closed. We can also assume that $d=\dim(\cO_b)$.

{\sc The base case $d\le 1$.} If $d=1$ and the logarithmic structure is trivial, increase the logarithmic structure to $\cO_b\setminus\{0\}$ (that is, blow up the closed point). Then $\cJ$ becomes monomial by Proposition~\ref{logorbprop}.

{\sc The induction step.} Assume now that $d>1$ and $f$ has abundance of derivations. Set $B_0=B\setminus\{b\}$ and $X_0=X\times_BB_0$. Since $\dim(B_0)<d$,  $\cJ_0=\cJ|_{X_0}$ is monomializable by a $T$-supported blow up $B'_0\to B_0$ by the induction assumption. Consider an arbitrary extension of $B'_0\to B_0$ to a blow up $B'\to B$. By assumption of the theorem, replacing $B'$ by its $T$-supported blow up we can also assume that $B'$ is logarithmically regular. By the construction, if $f'\:X'\to B'$ denotes the base change of $f$, then the restriction of $\cJ'=\cJ\cO_{X'}$ onto $X'_0=X'\times_BB_0$ is monomial. Moreover, $\cJ'$ is $B'$-monomial at any point of $X'$ by Proposition~\ref{algebrcor}, hence it is monomializable by Lemma~\ref{secondmonom}.
\end{proof}

\section{Kummer blow ups and transforms}
\addtocontents{toc}
{\noindent Submonomial Kummer blow ups. Transforms of ideals and differential operators.}

\subsection{Submonomial Kummer ideals}\label{kumidsec}

\subsubsection{Submonomial ideals}\label{submonomialsec}
Let $f\:Y\to Z$ be a logarithmically regular morphism of logarithmic DM stacks. Then by a {\em $Z$-submonomial} ideal on $Y$ we mean any ideal $\cI\subseteq\cO_Y$ locally generated by a monomial ideal and a $Z$-submanifold ideal.
Equivalently, one has that locally $V(\cI)$ is a monomial substack of a $Z$-submanifold of $Y$. Our principalization algorithm runs by blowing up submonomial centers and more general centers involving fractional powers of monomials. To introduce them we use the Kummer topology.

\subsubsection{Kummer topology and ideals}\label{ketsec}
First, we recall generalities that apply to an arbitrary logarithmic scheme $Y$ and a morphism $Y\to Z$. Kummer \'etale morphisms and covers form a Kummer \'etale topology $Y\ket$, that we simply call the Kummer topology of $Y$, e.g. see \cite[\S5.3]{ATW-destackification} or \cite{ATW-principalization}. The presheaf $\cO_{Y\ket}$ is a sheaf and its finitely generated ideals will be called {\em Kummer ideals} on $Y$. Similarly to the argument in \S\ref{sheafdersec}, the presheaf of derivations $\cD_{Y\ket/Z}$, which assigns to a Kummer \'etale morphism $Y'\to Y$ the $\cO_{Y'}$-module $\cD_{Y/Z}(Y')$, is also a sheaf.

Often, we will view an ordinary ideal $\cI\subseteq\cO_Y$ as the Kummer ideal $\cI\ket=\cI\cO_{Y\ket}$ it generates. In other words, $\cI\ket$ is the pullback of $\cI$ with respect to $\pi\:Y_{\ket}\to Y$. This pullback operation is functorial and preserves arithmetic operations and derivations: $(\cI\cJ)\ket=\cI\ket \cJ\ket$, $(\cI+\cJ)\ket= \cI\ket+\cJ\ket$, $(\cD_{Y/Z}\cI)\ket=\cD_{Y\ket/Z}(\cI\ket)$, etc. In the sequel we will safely write $\cI$ instead of $\cI\ket$ and $\cD_{Y/Z}$ instead of $\cD_{Y\ket/Z}$. With these conventions, any Kummer ideal $\cI$ is an ideal Kummer-locally: there exists a Kummer covering $Y'\to Y$ such that $\cI|_{Y'}$ is an ordinary ideal. By the vanishing locus $V(\cJ)$ we denote the minimal closed set such that  $\cJ$ is the unit ideal on its complement.

Finally, we note that all the definitions above are local with respect to strict \'etale (and even Kummer \'etale) morphisms and hence extend to logarithmic DM stacks.

\subsubsection{Submonomial Kummer ideals}
Now, assume that $Y\to Z$ is logarithmically regular. A Kummer ideal $\cJ$ is called {\em $Z$-submonomial} (resp. {\em monomial}) if Kummer locally it is a $Z$-submonomial (resp. monomial) ideal, as in \ref{submonomialsec}.

\subsubsection{Integral closure}
The integral closure $\cJ^\nor$ of a Kummer ideal $\cJ$ is defined via Kummer \'etale sheafification of the usual integral closure, see \cite[\S4.3.1]{ATW-principalization}. This operation will only be used in the following special case: for any submonomial ideal $\cJ$ we set $\cJ^{(a)}=(\cJ^a)^\nor$ for convenience of notation.

\begin{remark}\label{norrem}
If may happen that $\cJ$ is integrally closed (even monomial and saturated) but $\cJ^a$ is not, and it will be convenient to use the ideals $\cJ^{(a)}$ in the definition of admissibility because for any ideal $\cI$ on $X$, if $x^n\subseteq\cJ^{(an)}$ then $x\subseteq\cJ^{(a)}$. This property is an immediate consequence of integral closedness.
\end{remark}

Part (2) of the following lemma is the only other result about $\cJ^{(a)}$ we will need. However, to prove this we have to describe this ideal precisely.

\begin{lemma}\label{Jnorlem}
Assume that $Y\to B$ is a logarithmically regular morphism of logarithmic DM stacks and $B\in\bB$, $\cJ\subseteq\cO_Y$ is a $B$-submonomial ideal, and $a\ge 0$.

(i) Fix a presentation $\cJ=\cI+\cN$, where $\cN$ is a monomial Kummer ideal and $\cI$ is a $B$-submanifold ideal. Then $$\cJ^{(a)}=\sum_{j=0}^a(\cN^j)^\sat\cdot\cI^{a-j}.$$

(ii) For any $i$ with $0\le i\le a$ the inclusion $\cD_{Y/B}^{(\le i)}\left(\cJ^{(a)}\right)\subseteq\cJ^{(a-i)}$ holds.
\end{lemma}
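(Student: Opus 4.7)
The plan is to reduce both claims to local monomial computations by enlarging the logarithmic structure on $Y$, so that integral closures become monomial saturations that can be handled combinatorially.

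First I will work Kummer-locally and \'etale-locally to assume $\cI$ and $\cN$ are ordinary ideals on an \'etale neighborhood of a geometric point $y$, and use Lemma~\ref{submanlem} to write $\cI=(t_1,\dots,t_k)$ in terms of a partial family of regular parameters. Applying Lemma~\ref{incrloglem}, I enlarge the logarithmic structure by adjoining $t_1,\dots,t_k$; by Lemma~\ref{stablem}(ii) this enlargement still lies in $\bB$, and $\cI$, $\cN$, and hence $\cJ$ are now all monomial Kummer ideals in the new monoid $\oM_y\oplus\NN^k$. Under this enlargement Lemma~\ref{norsatlem} identifies $(\cJ^a)^\nor$ with its combinatorial saturation $(\cJ^a)^\sat$, and $(\cN^j)^\nor=(\cN^j)^\sat$; a direct check shows that $(\cN^j)^\sat$ is unchanged, as an ideal of $\cO_Y$, by the passage between the two logarithmic structures.

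For (i), the inclusion $\supseteq$ will be obtained formally: if $g\in(\cN^j)^\nor$ satisfies $g^n+\sum_{k=1}^n c_k g^{n-k}=0$ with $c_k\in\cN^{jk}$, then for $h\in\cI^{a-j}$, multiplying by $h^n$ shows $gh$ is integral over $\cN^j\cI^{a-j}\subseteq\cJ^a$, hence $(\cN^j)^\sat\cdot\cI^{a-j}\subseteq(\cJ^a)^\nor=\cJ^{(a)}$. The reverse inclusion---the main obstacle---I plan to prove by a Newton-polytope computation in the enlarged structure. Given an exponent $(\beta,\alpha)\in\exp((\cJ^a)^\sat)$, I will set $j:=\max(0,a-|\alpha|)$: the inequality $|\alpha|\ge a-j$ places the $\NN^k$-component inside $\exp(\cI^{a-j})$, and starting from any decomposition $\beta=\sum_\ell\lambda_\ell p_\ell+r_\beta$ arising from the definition of $(\cJ^a)^\sat$---with $\lambda_\ell\ge 0$ and $\sum\lambda_\ell=|\lambda|\ge j$---rescaling by $j/|\lambda|$ produces coefficients $\lambda_\ell':=(j/|\lambda|)\lambda_\ell$ summing to $j$, while the excess $(1-j/|\lambda|)\sum\lambda_\ell p_\ell$ is absorbed into the positive cone of $\oM$ since the $p_\ell$ lie there. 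This places $\beta\in\exp((\cN^j)^\sat)$ and completes (i).

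For (ii), Leibniz's rule $\cD^{(\le i)}_{Y/B}(fg)\subseteq\sum_{k+l\le i}\cD^{(\le k)}_{Y/B}(f)\cdot\cD^{(\le l)}_{Y/B}(g)$ reduces the problem to two routine computations: logarithmic derivations preserve monomial ideals, so $\cD^{(\le k)}_{Y/B}((\cN^j)^\sat)\subseteq(\cN^j)^\sat$, and Leibniz plus induction on $m$ gives $\cD^{(\le l)}_{Y/B}(\cI^m)\subseteq\cI^{\max(m-l,0)}$ for the submanifold ideal $\cI$. Together these bound $\cD^{(\le i)}_{Y/B}\bigl((\cN^j)^\sat\cdot\cI^{a-j}\bigr)$ by a sum of terms $(\cN^j)^\sat\cdot\cI^{a-j-l}$ with $l\le i$; a short case analysis---taking $j':=j$ and noting $\cI^{a-j-l}\subseteq\cI^{a-i-j}$ when $j\le a-i$, and otherwise using $(\cN^j)^\sat\subseteq(\cN^{a-i})^\sat$ to land in the $j'=a-i$ summand---places each into $\cJ^{(a-i)}$ via the description from (i).
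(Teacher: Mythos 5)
Your proposal is correct and follows essentially the same route as the paper: enlarge the logarithmic structure via Lemma~\ref{incrloglem} so that $\cJ$ becomes monomial, identify $(\cJ^a)^\nor$ with $(\cJ^a)^\sat$ via Lemma~\ref{norsatlem}, verify (i) by a direct combinatorial computation with exponents, and deduce (ii) from the facts that $\cD^{(\le i)}_{Y/B}$ preserves the saturated monomial ideals $(\cN^j)^\sat$ and lowers powers of $\cI$. You simply spell out the "direct computation" and the Leibniz-rule case analysis that the paper leaves implicit.
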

\begin{proof}
The argument from \cite[Lemma 4.3.2]{ATW-principalization} applies here too: (ii) follows from (i) because $\cD_{Y/B}^{(\le i)}$ preserves $(\cN^j)^\sat$ and takes $\cI^b$ to $\cI^{b-i}$, and (1) is proved by reducing to the monomial case as follows. First, one increases the logarithmic structure via Lemma~\ref{incrloglem} so that $\cJ$ becomes monomial. Then a direct computation shows that the sum in (i) describes the saturation $(\cJ^a)^\sat$, which by  Lemma~\ref{norsatlem} coincides with $\cJ^{(a)}$.
\end{proof}

\subsection{Submonomial Kummer blow ups}\label{submonomblowsec}
Blow ups of submonomial Kummer centers can be defined as in the absolute case, see \cite[Section~5]{ATW-destackification}. Rydh observed that one can give a more conceptual construction using a stack theoretic  Proj construction --- as also used in \cite[Section~3]{ATW-weighted}. We will follow the latter approach with one important difference: instead of the $h$-topology and corresponding ideals we will use the Kummer topology and ideals. This is possible because although our blow ups are weighted, all regular parameters have weight one. Our construction will use non-saturated logarithmic schemes as an intermediary, so this is the only section where we work with arbitrary integral logarithmic schemes.

\subsubsection{$\cProj$ construction}
As in \cite[\S3.1]{ATW-weighted}, given a scheme $Y$ and a graded $\cO_Y$-algebra $\cA=\oplus_{e=0}^\infty\cA_e$
by $\cProj_Y(\cA)$ we denote the stack-theoretic enhancement of the usual Proj construction, which is defined similarly to the latter but with the quotient by $\GG_m$ taken stack-theoretically: $$\cProj_Y(\cA)=[(\Spec_Y(\cA)\setminus V_\cA)/\GG_m],$$ where $V_\cA=V_\cA(\cA_{\ge 1})$ is the vanishing locus of the ideal $\cA_{\ge 1}=\oplus_{e>0}\cA_e$.

Note also that $\Proj$ extends to graded $\cO_{Y\et}$-algebras by flat descent. In this way one extends the operation to DM stacks (and even further to Artin stacks). We claim that $\cProj$ extends too, and the only non-trivial part is to divide the stack $Z=\Spec_Y(\cA)\setminus V$ by $\GG_m$. For this we consider a smooth groupoid in schemes $p_{1,2}\:Y_1\rightrightarrows Y_0$ with quotient $Y$ and note that $Z_i=\Spec_{Y_i}(\cA|_{Y_i})\setminus V_i$ with maps $q_{1,2}\:Z_1\rightrightarrows Z_0$ induced by $p_{1,2}$ form a groupoid with quotient $Z$. The natural actions of $\GG_m$ on $Z_i$ are compatible with $q_{1,2}$ hence they induce a strict action of $\GG_m$ on $Z$. By \cite[Theorem~4.1]{Romagny} there exists a stack-theoretic quotient $[Z/\GG_m]$, which is an Artin stack.

\begin{lemma}\label{projfun}
If $f\:Y'\to Y$ is a flat morphism, $\cA$ is a graded $\cO_Z$-algebra and $\cA'=f^*(\cA)$, then $\cProj_{Y'}(\cA')=\cProj_Y(\cA)\times_YY'$.
\end{lemma}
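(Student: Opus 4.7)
The plan is to reduce to the case where $Y$ and $Y'$ are schemes and then verify the three ingredients in the definition of $\cProj$ commute separately with flat base change: the relative spectrum, the removal of the vanishing locus $V_\cA$ of the irrelevant ideal, and the stack-theoretic quotient by $\GG_m$.

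First I would reduce to schemes. Choose a smooth presentation $p\:Y_0\to Y$ by a scheme, and pull back along $f$ to obtain $p'\:Y'_0=Y_0\times_YY'\to Y'$, which is again a smooth presentation by a scheme because $f$ is flat and smoothness is preserved under base change. Since $\cProj$ on stacks was defined via flat descent from the scheme case applied to the groupoid $Y_1\toto Y_0$, it suffices to prove the statement after pulling back along the smooth covers, i.e.\ in the case that both $Y$ and $Y'$ are schemes; base change compatibility of the descended construction then follows because the groupoid $Y_1\toto Y_0$ pulls back to $Y'_1\toto Y'_0$ under the flat morphism $f$, and the identifications in the scheme case are automatically compatible with the groupoid relations.

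Now assume $Y$ and $Y'$ are schemes. Formation of the relative spectrum commutes with arbitrary base change: $\Spec_{Y'}(\cA')=\Spec_Y(\cA)\times_YY'$ tautologically because $\cA'=f^*\cA$. The ideal $\cA'_{\ge 1}=f^*(\cA_{\ge 1})$ since pullback commutes with direct sums of modules, so the pullback of the closed subscheme $V_\cA\subseteq\Spec_Y(\cA)$ is $V_{\cA'}\subseteq\Spec_{Y'}(\cA')$ (vanishing loci of ideal sheaves are preserved by pullback, flatness is not even needed here). Taking complements, we get $(\Spec_Y(\cA)\setminus V_\cA)\times_YY'=\Spec_{Y'}(\cA')\setminus V_{\cA'}$ as $Y'$-schemes.

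It remains to check that stack-theoretic quotients by $\GG_m$ commute with base change. The natural $\GG_m$-action on $\Spec_Y(\cA)$ coming from the grading pulls back to the analogous action on $\Spec_{Y'}(\cA')$, and both restrict to actions on the open subschemes obtained by removing the vanishing loci. Since the formation of a stack-theoretic quotient $[Z/\GG_m]$ commutes with arbitrary base change on the base $Y$ (the groupoid $\GG_m\times Z\toto Z$ presenting $[Z/\GG_m]$ pulls back to the corresponding groupoid on $Z\times_YY'$, and the quotient stack is characterized by this groupoid presentation via \cite[Theorem~4.1]{Romagny}), we obtain
\[
\cProj_{Y'}(\cA')=[(\Spec_{Y'}(\cA')\setminus V_{\cA'})/\GG_m]=[(\Spec_Y(\cA)\setminus V_\cA)/\GG_m]\times_YY'=\cProj_Y(\cA)\times_YY',
\]
as claimed. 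The mildly delicate point is the compatibility of the stack quotient with base change, but this is a general fact about quotients by flat group schemes and requires no new input beyond Romagny's theorem already cited.
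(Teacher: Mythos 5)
Your proof is correct and follows essentially the same route as the paper: identify $\Spec_{Y'}(\cA')\setminus V_{\cA'}$ with $(\Spec_Y(\cA)\setminus V_\cA)\times_YY'$ and then invoke Romagny's Theorem~4.1 for compatibility of the $\GG_m$-quotient with base change. The paper does this in two lines directly at the level of stacks (where Romagny's theorem is applied to the $\GG_m$-action on the stack $Z$ over $Y$, which is where the flatness hypothesis is actually consumed), while your version adds an explicit reduction to schemes; both are fine.
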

\begin{proof}
Clearly, $\Spec_Y(\cA')\setminus V'=(\Spec_Y(\cA)\setminus V)\times_YY'$. It remains to use that by \cite[Theorem~4.1]{Romagny}, taking the quotient by the action of $\GG_m$ is compatible with the base change with respect to $f$.
\end{proof}

\subsubsection{$\cProj$ and inertia}
The $\cProj$-construction may increase inertia of a stack, but one has a certain control.

\begin{lemma}\label{inertialem}
Let $Y$ be a DM stack, $\cA$  a graded $\cO_Y$-algebra, and $X=\cProj_Y(\cA)$.

(i) The relative inertia $I_{X/Y}$ is a finite subgroup of $(\GG_m)_X$. In particular, the relative stabilizers $I_{X/Y,\ox}$ are $\mu_n$.

(ii) Assume that $Y\to S$ is a morphism of stacks such that $I_{Y/S}$ is diagonalizable (resp. finite). Then $I_{X/S}$ is also diagonalizable (resp. finite).
\end{lemma}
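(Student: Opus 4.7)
Plan:

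The strategy is to work with the presentation $X=[U/\GG_m]$ where $U=\Spec_Y(\cA)\setminus V_\cA$ is a DM stack that is affine (hence representable) over $Y$.

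For (i), I would identify the relative inertia $I_{X/Y}$ with the stabilizer group scheme of the $\GG_m$-action on $U$ over $Y$, descended to $X$; this is a closed subgroup scheme of $(\GG_m)_X$. At a geometric point $\ox$ of $X$ lifting to $\bar u\in U$, since $\bar u\notin V_\cA$, some local section $a\in\cA_e$ with $e\ge 1$ is non-vanishing at $\bar u$. Because $\GG_m$ acts on $\cA_e$ with weight $e$, any element of the stabilizer $I_{X/Y,\ox}$ must satisfy $g^e=1$, so $I_{X/Y,\ox}\subset\mu_e$, and being a closed subgroup of $\mu_e$ over an algebraically closed field it equals $\mu_d$ for some $d\mid e$. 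In particular $I_{X/Y}\to X$ is finite.

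For (ii), I would construct an embedding of $I_{X/S}$ into a diagonalizable-times-$\GG_m$ group scheme by combining the structure map $X\to Y$ with the classifying map $X\to B\GG_m$ of the torsor $U\to X$, yielding $X\to Y\times B\GG_m$. The induced map on relative inertia over $S$ is
$$I_{X/S}\ \longrightarrow\ I_{Y/S}|_X\times_X(\GG_m)_X.$$
To verify that this is a monomorphism, I unpack the $2$-categorical description of $X(T)=[U/\GG_m](T)$: an automorphism of an object $x=(P,f)\in X(T)$ is a pair $(\phi,\alpha)$ where $\phi$ is a $\GG_m$-bundle automorphism of $P$ and $\alpha$ is a $2$-isomorphism $f\circ\phi\Rightarrow f$. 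The projection to $B\GG_m$ records $\phi$ and the projection to $Y$ records $\alpha$, interpreted as an element of $I_Y|_y$ via the identification $I_U=I_Y|_U$ (which holds because $U\to Y$ is representable). Simultaneous triviality in both factors forces $\phi=\id$ and $\alpha=\id$, so $I_{X/S}$ is a closed subgroup scheme of $I_{Y/S}|_X\times_X(\GG_m)_X$. When $I_{Y/S}$ is diagonalizable, so is this target, and closed subgroup schemes of diagonalizable groups are diagonalizable, giving the conclusion. For the finite case the embedding is not directly useful since $\GG_m$ is not finite; instead I would use the left exact sequence $1\to I_{X/Y}\to I_{X/S}\to I_{Y/S}|_X$, whose kernel is finite by (i) and whose image is a closed, hence finite, subgroup of the finite $I_{Y/S}|_X$, making $I_{X/S}$ finite.

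The main obstacle is the careful $2$-categorical bookkeeping in (ii): when $Y$ has nontrivial inertia so does $U$, meaning automorphisms of the morphism $f:P\to U$ need not be the identity, and the map $I_X\to I_Y|_X$ is \emph{not} trivial in general (in contrast to the scheme case). The representability of $U\to Y$ is the key structural fact that identifies these extra $\alpha$-automorphisms with elements of $I_Y|_y$, making the injectivity check for the combined map tractable.
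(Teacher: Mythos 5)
Your proof is correct and follows essentially the same route as the paper: present $X$ as the $\GG_m$-quotient of $U=\Spec_Y(\cA)\setminus V_\cA$, which is representable over $Y$, so that $I_{X/Y}$ is a closed subgroup of $(\GG_m)_X$ with finite stabilizers precisely because the $\GG_m$-fixed locus $V_\cA$ has been removed, and then realize $I_{X/S}$ as a closed subgroup of $(I_{Y/S}\times_YX)\times(\GG_m)_X$ to get diagonalizability and finiteness. Your write-up merely fills in details the paper leaves implicit (the weight argument giving $g^e=1$, the $2$-categorical verification that the map to $I_{Y/S}|_X\times_X(\GG_m)_X$ is a monomorphism, and the use of part (i) to handle the finite case), which is consistent with the paper's terse argument.
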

\begin{proof}
(i) Recall that $X$ is obtained as the $\GG_m$-quotient of the stack $\Spec_Y(\cA)\setminus V$, which is representable over $Y$. Hence $I_{X/Y}$ is a closed subgroup of $(\GG_m)_X$. Moreover, $V$ is precisely the locus of $\GG_m$-invariant points, hence $I_{X/Y}$ is a finite subgroup.

(ii) Similarly, $I_{X/S} \subset (I_{Y/S}  \times_Y X)  \times \GG_m$ is closed, hence diagonalizable, and finite if  $I_{Y/S}$ is finite.
\end{proof}

\subsubsection{Kummer blow ups}
Let $Y$ be a logarithmic DM stack and $\pi\:Y\ket\to Y\et$ denote the natural map. For any Kummer ideal $\cI$ we consider the associated Rees algebra $\cR_\cI=\oplus_{e=0}^\infty\cI^e$, push it forward to $Y$ and apply $\cProj$. The resulting stack $\cProj_Y(\pi_*(\cR_\cI))$ will be called the Kummer blow up of $Y$ along $\cI$. If $\cI$ is a usual ideal on $Y$, then $\pi_*(\cI\cO_{Y\ket})=\cI$ and we recover the usual definition of Rees algebra of $\cI$ and blow up along $\cI$. This construction is \'etale local on $Y$ because all its ingredients, including $\pi_*$ and quotient by $\GG_m$, are \'etale local: if $f\:Z\to Y$ is \'etale, then $\pi_*(\cR_{f^{-1}\cI})=f^*(\pi_*(\cR_\cI))$, and by Lemma~\ref{projfun} $$\cProj_Z(\pi_*(\cR_{f^{-1}\cI)})=\cProj_Y(\pi_*(\cR_\cI))\times_YZ.$$

\subsubsection{The submonomial case}
The only case of Kummer blow ups used in this paper is when the center $\cJ$ is $Z$-submonomial for a logarithmically regular morphism $f\:Y\to Z$. Set $\cR'=\cR'_\cJ=\pi_*(\cR_\cJ)$ and $Y'=\cProj_Y(\cR')$ with the natural morphism $g'\:Y'\to Y$. In this case, we will also promote $Y'$ to a logarithmic stack by pulling back the logarithmic structure of $Y$ and enlarging it by the exceptional divisor. The saturation of the resulting logarithmic scheme $Y'$ will be called the {\em Kummer blow up} of $Y$ along $\cJ$ and denoted $Bl_\cJ(Y)$. This plan is worked out in detail in \S\S\ref{logstrsec}--\ref{kummersec}.

\subsubsection{Local charts}
We start with studying the situation \'etale locally, so assume that $Y=\Spec(A)$ and $$\cJ=(t_1\.t_l,m^{1/d}_1\.m^{1/d}_r),$$ where $V(t)$ is a $Z$-submanifold of codimension $l$ and $m_j=u^{q_j}$ are monomials.

For convenience of notation we embed $\cR'$ into $\cA=\oplus_{e\in\ZZ}A_e=A[z^{\pm 1}]$, where $A_e=A$ and $z$ corresponds to $1\in A_1$. By the {\em Rees subalgebra associated} with this choice of generators and the number $d$ we mean the graded algebra $$\tilcR=\oplus_{e\ge 0}\tilcR_e\subseteq\cR'$$ generated by the sets of homogeneous elements $\tilt=zt\subset A_1$ and $z^im\subset A_i$ with $1\le i\le d$. Also, we denote $\tilm=z^dm$ and $g\:\tilY=\cProj_Y(\tilcR)\to Y$.

\begin{lemma}\label{chartlem}
The morphism $h\:\Spec(\cR')\to\Spec(\tilcR)$ is integral, hence $V_{\cR'}=h^{-1}(V_\tilcR)$ and $h$ induces an integral morphism $Y'\to\tilY$.
\end{lemma}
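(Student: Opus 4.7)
The plan is to reduce everything to proving that the inclusion of graded $\cO_Y$-algebras $\tilcR\subseteq\cR'$ is an integral extension; the statements about vanishing loci and the induced morphism on the $\cProj$-stacks then follow by a routine argument.

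For integrality, I would pass to a Kummer \'etale cover $f\:W\to Y$ of sufficiently large degree on which $\cJ|_W=(t_1,\dots,t_l,n_1,\dots,n_r)$ is an honest submonomial ideal, where $n_j$ is a chosen $d$-th root of $m_j$. The pulled-back Rees algebra $\cR_W:=\oplus_e(\cJ|_W)^e\cdot z^e$ is generated as an $\cO_W$-algebra by the degree-one elements $zt_i$ and $zn_j$. Of these, every $zt_i$ lies already in $\tilcR\subseteq\cR'$, while each $zn_j$ satisfies the monic relation $(zn_j)^d=z^d m_j=\tilm_j\in\tilcR$. Since $\cO_W$ is finite, hence integral, over $\cO_Y\subseteq\tilcR$, we conclude by transitivity of integral dependence that $\cR_W$ is integral over $\tilcR$. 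The algebra $\cR'=\pi_*\cR_\cJ$ embeds as a graded $\cO_Y$-subalgebra of $\cR_W$, so $\cR'$ is integral over $\tilcR$ as well.

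Once integrality is established, the identification $V_{\cR'}=h^{-1}(V_\tilcR)$ is a formality: any homogeneous element $x\in\cR'_{\ge 1}$ of positive degree satisfies a monic, homogeneous integral relation $x^n+\sum_{i<n}a_ix^i=0$ over $\tilcR$, whose lower coefficients $a_i$ are forced by the grading to lie in $\tilcR_{\ge 1}$. Hence $x^n\in\tilcR_{\ge 1}\cdot\cR'$, so $\cR'_{\ge 1}\subseteq\sqrt{\tilcR_{\ge 1}\cdot\cR'}$, which yields the set-theoretic equality of vanishing loci. Because $\tilcR\subseteq\cR'$ is a morphism of graded algebras the induced map $h$ is $\gm$-equivariant, so it descends to a morphism $Y'\to\tilY$ on the stack-theoretic quotients after removing the vanishing loci; integrality is preserved because it can be checked after pullback to a smooth presentation of $\tilY$, where it reduces to the affine statement we already proved.

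The main potential obstacle is bookkeeping with the Kummer pushforward $\pi_*$: one must verify that the Rees algebra built from the Kummer ideal $\cJ$ on $Y$ really sits inside the Rees algebra $\cR_W$ of its honest pullback, and that no additional generators appear in $\cR'$ beyond those controlled by the generators $zt_i$ and $(zn_j)^d=\tilm_j$ at the Kummer-cover level. This is where one uses that, by construction, $\cR'=\oplus_e\pi_*(\cJ^e)$ coincides with the $\cO_Y$-subalgebra of $\cR_W$ consisting of Kummer-descended sections, and that integrality is inherited by arbitrary graded $\cO_Y$-subalgebras. Beyond this, the rest of the proof is a direct translation of the classical Proj computation.
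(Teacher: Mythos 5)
Your proof is correct and follows essentially the same route as the paper: pass to a finite Kummer cover where $\cJ$ becomes an ordinary ideal, observe that the resulting Rees algebra is integral (indeed finite) over $\tilcR$ since $zt\in\tilcR$ and $(zm^{1/d})^d=\tilm\in\tilcR$, and conclude via the sandwich $\tilcR\subseteq\cR'\subseteq\cR_\cI$. Your extra paragraph spelling out the graded radical argument for $V_{\cR'}=h^{-1}(V_\tilcR)$ and the descent to the $\GG_m$-quotients just makes explicit what the paper leaves as "hence."
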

\begin{proof}
Find a finite Kummer cover $\Spec(B)\to Y$ such that $m^{1/d}$ is a set of usual monomials in $B$ and hence $\cI=\cJ|_B$ is an ordinary ideal. Then $\cR_\cI$ is the usual Rees algebra generated over $B$ by $zt$ and $zm^{1/d}$ and hence it is finite over $\tilcR$. It remains to note that $\tilcR\subseteq\cR'\subseteq\cR_\cI$.
\end{proof}

For any $\tils\in\{\tilt,\tilm\}$ consider the open substacks $\tilY_\tils=[\Spec(\tilcR_s)/\GG_m]$ and $Y'_\tils=[\Spec(\cR'_s)/\GG_m]$ of $\tilY$ and $Y'$, respectively.

\begin{corollary}\label{chartcor}
The closed sets $V_\tilcR$ and $V_{\cR'}$ are given by the vanishing of $(\tilt,\tilm)$. In particular, $\tilY=\cup_\tils\tilY_\tils$ and $Y'=\cup_\tils Y'_\tils$, and these coverings are compatible with the morphism $Y'\to\tilY$.
\end{corollary}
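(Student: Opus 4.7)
My plan is to first establish the equality $V_\tilcR = V(\tilt,\tilm)$ inside $\Spec(\tilcR)$, then transfer this to $V_{\cR'}$ using Lemma~\ref{chartlem}, and finally read off the covering statements from the definition of $\cProj$.

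For the first step, recall that $\tilcR$ is generated in positive degrees by $\tilt$ together with the degree-$i$ elements $z^i m_j$ for $1\le i\le d$, so $V_\tilcR$ is cut out by the vanishing of all of $\tilt$ and all of the $z^i m_j$. The inclusion $V_\tilcR\subseteq V(\tilt,\tilm)$ is then immediate, and the reverse inclusion is the content. For this I would use the homogeneous relation
\[
(z^i m_j)^d \;=\; z^{id}\, m_j^d \;=\; \tilm_j^{\,i}\cdot m_j^{d-i}
\]
in $\tilcR$, where $m_j^{d-i}\in A=\tilcR_0$. Hence $(z^i m_j)^d$ lies in the ideal generated by $\tilm_j$, so $z^i m_j$ is nilpotent modulo $\tilm$, giving $V(\tilt,\tilm)\subseteq V_\tilcR$.

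For the second step, Lemma~\ref{chartlem} tells us that $h\:\Spec(\cR')\to\Spec(\tilcR)$ is integral and $V_{\cR'}=h^{-1}(V_\tilcR)$. Since $\tilt$ and $\tilm$ lie in $\tilcR$, the vanishing locus of the images of $\tilt,\tilm$ in $\cR'$ equals $h^{-1}(V(\tilt,\tilm))$, and combining this with the first step gives
\[
V(\tilt,\tilm)_{\cR'} \;=\; h^{-1}\bigl(V(\tilt,\tilm)_{\tilcR}\bigr) \;=\; h^{-1}(V_\tilcR) \;=\; V_{\cR'}.
\]

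For the covering statements, by definition $\tilY=[(\Spec(\tilcR)\setminus V_\tilcR)/\GG_m]$; and $V(\tilt,\tilm)=\bigcap_\tils V(\tils)$ where $\tils$ runs over the individual components of $\tilt$ and $\tilm$. Thus $\Spec(\tilcR)\setminus V_\tilcR=\bigcup_\tils D(\tils)$, and since each $D(\tils)$ is $\GG_m$-invariant, passing to the stack quotient yields $\tilY=\bigcup_\tils\tilY_\tils$. The identical argument applied to $\cR'$ gives $Y'=\bigcup_\tils Y'_\tils$. Compatibility with $Y'\to\tilY$ is immediate: each $\tils$ lies in $\tilcR$ and $\cR'_\tils=\cR'\otimes_\tilcR\tilcR_\tils$, so $Y'_\tils=Y'\times_{\tilY}\tilY_\tils$. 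The main (and essentially only) obstacle is the nilpotence computation in the first step; everything else is bookkeeping.
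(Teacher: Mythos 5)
Your proof is correct and follows the same route as the paper: the identity $(z^im_j)^d=\tilm_j^{\,i}m_j^{d-i}$ (which the paper leaves as ``obvious'') gives $V_\tilcR=V(\tilt,\tilm)$, the statement for $V_{\cR'}$ is transferred via Lemma~\ref{chartlem} exactly as in the paper, and the covering and compatibility claims are the same bookkeeping with the $\cProj$ definition.
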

\begin{proof}
The claim for $Y_\tilcR$ is obvious, and the claim for $Y_{\cR'}$ follows by Lemma~\ref{chartlem}.
\end{proof}

\subsubsection{Computation of charts}
Let $n\in\{1,d\}$ be the degree of $\tils$. In particular, $s=z^{-n}\tils\in\{t,m\}$. To divide by $\GG_m$ one can first divide by $\mu_n$ and then by $\GG_m/\mu_n$. The invertible homogeneous element $\tils\in\tilcR_s$ of degree $n$ trivializes the latter operation in the following sense: $V(\tils-1)$ is a $\mu_n$-invariant slice of $\Spec(\tilcR_\tils)$ and $[V(\tils-1)/\mu_n]=\tilY_\tils$. The same is true for $Y'_s$, so $$Y'_\tils=[\Spec(\cR'/(\tils-1))/\mu_n]\ \ \ {\rm and}\ \ \ \tilY_\tils=[\Spec(\tilcR/(\tils-1))/\mu_n].$$

We will now describe $\tilY_\tils$ explicitly. An analogous computation of $Y'_s$ is messier, since it should take into account divisibility properties of the monomials $m_j$. So, we prefer not to do it. In particular, the integral morphism $Y'\to\tilY$ is finite, but we will not prove this.

\begin{lemma}\label{chart2lem}
(i) If $Y$ is integral, then $Y'$ and $\tilY$ are integral. In particular, $Y'$ is a partial normalization of $\tilY$.

(ii) Both $g$ and $g'$ are isomorphisms over $Y\setminus V(t,m)$. In particular, $g$ is a $V(t,m)$-supported modification.

(iii) For any $\tils\in\{\tilt,\tilm\}$ $$\tilY_\tils=\left[\Spec\left(A\left[s^{1/n},s^{-1/n}t,s^{-d/n}m\right]\right)/\mu_n\right].$$

(iv) The Kummer ideal $\cJ$ pulls back to invertible ordinary ideals $\cJ'$ on $Y'$ and $\tilcJ$ on $\tilY$. On the $s$-chart both are principal generated by the element $(s^{1/n})$.
\end{lemma}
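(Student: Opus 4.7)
The plan is to handle the four claims in the order (iii), (iv), (ii), (i), since each feeds into the next.

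For (iii), the key point is to work directly with the embedding $\tilcR\hookrightarrow A[z^{\pm1}]$, where $\tilt_j = zt_j\in A_1$ and $\tilm_k = z^dm_k\in A_d$. Given $\tils$ of degree $n$, the stack $\tilY_\tils = [\Spec(\tilcR/(\tils-1))/\mu_n]$ admits an evident map to the subring of $A[z^{\pm1}]/(\tils-1)$ generated by the images of the $\tilt_j$ and the $\tilm_k$. First I would argue that the resulting map is an isomorphism: injectivity follows because $\tilcR\hookrightarrow A[z^{\pm1}]$ with $\tils$ a non-zero-divisor (since $A$ is an integral domain --- we reduce to the integral case by flat descent, or we simply note that the map of Rees-type algebras is flat/faithfully flat over the $\tils$-chart), so $\tilcR/(\tils-1)$ embeds into $A[z^{\pm1}]/(\tils-1)$. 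Now in $A[z^{\pm1}]/(\tils-1)$ the equation $\tils = 1$ forces $z^n = s^{-1}$, i.e.\ $z$ is a formal $n$-th root of $s^{-1}$; setting $w = z^{-1}$, which satisfies $w^n = s$, identifies the image as $A[w,w^{-1}t, w^{-d}m]$. Renaming $w = s^{1/n}$ produces the asserted presentation. The $\mu_n$-action comes from the residual $\GG_m/\GG_m^n = \mu_n$-action on $\Spec(\tilcR)$ after quotienting by the slice $\tils = 1$, and one checks this matches the action scaling $s^{1/n}$ by an $n$-th root of unity.

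For (iv), on the $\tils$-chart, the identity $t_j = s^{1/n}\cdot(s^{-1/n}t_j)$ shows $(t_j)\subseteq (s^{1/n})$, and similarly the identity $m_k^{1/d} = s^{1/n}\cdot(s^{-d/n}m_k)^{1/d}$ (valid on the Kummer cover where $m_k^{1/d}$ exists, using $n\in\{1,d\}$) gives $(m_k^{1/d})\subseteq (s^{1/n})$. Conversely $s^{1/n}$ is itself one of the generators $t_i$ or $m_k^{1/d}$ of $\cJ$, so $s^{1/n}\in\tilcJ$. Hence $\tilcJ = (s^{1/n})$ on the chart, which is principal and invertible as a Kummer ideal. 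For $Y'$ the same argument applies after pulling back along the integral morphism $Y'\to\tilY$ of Lemma~\ref{chartlem}.

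For (ii), over the open substack $U = Y\setminus V(t,m)$ the Kummer ideal $\cJ$ becomes the unit Kummer ideal: at every point of $U$, either some $t_i$ is a unit or some $m_k$ is a unit (and hence $m_k^{1/d}$ is a unit on the Kummer cover), so $\cJ|_U = \cO_U$ as Kummer ideals. Consequently $\pi_*\cR_{\cJ|_U}$ is the polynomial algebra $\cO_U[z]$, and $\cProj_U(\cO_U[z]) = U$; the same holds for $\tilcR$. Hence $g$ and $g'$ restrict to isomorphisms over $U$, which is exactly the statement that $g$ is a $V(t,m)$-supported modification.

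For (i), if $Y = \Spec(A)$ with $A$ integral then $A[z^{\pm1}]$ is integral, so its subalgebra $\tilcR$ is integral, and $\Spec(\tilcR)\setminus V_\tilcR$ is an integral scheme. Taking the stack-theoretic quotient by $\GG_m$ preserves integrality (checked smooth-locally), so $\tilY$ is integral. For $Y'$, I would combine integrality of $\cR_\cJ$ (which embeds into the Rees algebra of the ordinary ideal $\cJ|_B$ after a Kummer cover) with the integral morphism $Y'\to\tilY$ of Lemma~\ref{chartlem}: an integral morphism from a reduced irreducible target to an integral scheme factors through the normalization, so $Y'$ is a partial normalization of $\tilY$.

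The main obstacle I anticipate is the rigorous identification of $\tilcR/(\tils-1)$ with the subring $A[s^{1/n}, s^{-1/n}t, s^{-d/n}m]$ of the Kummer localization --- in particular, the injectivity check and the correct identification of the $\mu_n$-action; once this computational core in (iii) is in place, (iv), (ii), and (i) follow fairly formally.
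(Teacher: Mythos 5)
Your proposal is correct and follows essentially the same route as the paper's proof: the slice description $\tilY_\tils=[\Spec(\tilcR/(\tils-1))/\mu_n]$, the observation that $z^{-1}\tils\in\tilcR$, so that modulo $(\tils-1)$ one has $z^{-1}=s^{1/n}$ and the generators $\tilt,\tilm$ become $s^{-1/n}t$, $s^{-d/n}m$, the deduction of (iv) from this chart formula, and triviality of both $\cProj$'s away from $V(t,m)$ (the paper localizes at each $s\in\{t,m\}$ and notes $\tilcR_s=\cR'_s=A_s[z]$, which amounts to the same thing as your remark that $\cJ$ restricts to the unit Kummer ideal on $Y\setminus V(t,m)$). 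Claim (i) the paper dismisses as obvious; your argument for it is fine, since $\cR'=\pi_*\cR_\cJ\subseteq\cO_Y[z]$ is a subring of a domain and $Y'\to\tilY$ is integral and birational by Lemma~\ref{chartlem} and part (ii).

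The one weak point is your justification of the injectivity of $\tilcR/(\tils-1)\to A[z^{\pm 1}]/(\tils-1)$: the facts that $\tilcR\hookrightarrow A[z^{\pm1}]$ and that $\tils$ (or $\tils-1$) is a non-zero-divisor do not by themselves give injectivity after dividing by $(\tils-1)$; what is needed is the equality $(\tils-1)A[z^{\pm1}]\cap\tilcR=(\tils-1)\tilcR$, and the appeal to ``flatness over the $\tils$-chart'' is unproved. The equality does hold, with a short graded argument: if $f=(\tils-1)g$ with $f\in\tilcR$ and $g\in A[z^{\pm1}]$, compare homogeneous components within a fixed residue class modulo $n$; in the lowest degree $e$ with $g_e\neq 0$ one has $g_e=-f_e$, which forces $e\ge 0$ (as $f$ has no negative-degree components) and $g_e\in\tilcR$, and then inductively $g_{e+n}=\tils g_e-f_{e+n}\in\tilcR$, so $g\in\tilcR$ and $f\in(\tils-1)\tilcR$. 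Alternatively, you can avoid the issue entirely by reading (iii) as the paper implicitly does: the chart ring \emph{is} $\tilcR/(\tils-1)$, generated over $A$ by $z^{-1}$ and the images of $\tilt,\tilm$, and the displayed formula is just the renaming $s^{1/n}=z^{-1}$, $s^{-1/n}t=\tilt$, $s^{-d/n}m=\tilm$, recorded by the relation $(z^{-1})^n=s$. With that point repaired or reinterpreted, your deduction of (iv), (ii) and (i) goes through as in the paper.
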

\begin{proof}
Claim (i) is obvious. To prove (ii) note that for any $s\in\{t,m\}$ one has that $zs\in\tilcR$ and hence $\tilcR_s=\cR'_s=A_s[z]$. It follows that everything trivializes over $Y\setminus V(s)$.

By definition, $z^{-1}\tils\in\tilcR$. Hence $z^{-1}\in\tilcR_\tils$, and $s=z^{-d}\tils=z^{-d}$ in the $\ZZ/n\ZZ$-graded ring $\tilcR/(s-1)$. It follows that $\tilcR/(\tils-1)$ is generated by $z^{-1}=s^{1/n}$ and the sets $\tilt=s^{-1/n}t$, $\tilm=s^{-d/n}m$ yielding the formula for the chart in (iii). Finally, this formula implies that $\cJ\cO_{(\tilY_s)\ket}$ is generated by the element $s^{1/n}$, and similarly for $Y'_s$.
\end{proof}

Note that claim (iii) above  is a generalization of the usual blow ups formulas to a weighted case, and it reduces to description of a usual blow up when $d=1$ and hence $\cJ$ is an ideal.

\subsubsection{Divisorial enlargements of logarithmic structures}\label{logstrsec}
If $Y$ is an integral logarithmic scheme and $\cI$ is an invertible ideal, by $M_Y(\cI)$ we mean the canonical enlargement of $M_Y$ obtained by locally adding the generator of $\cI$ and all possible quotients of monomials by it. More concretely, locally at a geometric point $y$ we have that $\cI=(m)$ and we enlarge $M_y$ by adding $s$ with $u^s=m$ and elements $q-ks$ with $u^{q-ks}=u^q/m^k$ whenever $k\in\NN$ and $q\in M_y$ satisfy $u^q\in(m^k)$.

\begin{remark}
(i) Since we work in the category of integral logarithmic structures, some relations are automatically imposed. For example, if $q\in M_y$ satisfies $u^q=m^k$, then automatically $ks=q$. In particular, if $q'\in M_y$ is another element with $u^{q'}=m^k$, then $q=q'$ in $M_Y(\cI)$. Thus, the homomorphism $M_Y\to M_Y(\cI)$ does not have to be injective when $u\:M_Y\to\cO_Y$ is not injective.

(ii) In general, $M_Y(\cI)$ does not have to be fine even when $M_Y$ is. It is somewhat analogous to the fact that in pathological situations normalization might not be finite.
\end{remark}

\subsubsection{The unsaturated model}
We provide $\tilY$ and $Y'$ with the logarithmic structures $M_\tilY=g^*M_Y(\tilcJ)$ and $M_{Y'}=g'^*M_Y(\cJ')$, and denote the resulting fine logarithmic schemes over $Y$ by $g\:\tilY\to Y$ and $g'\:Y'\to Y$. We call $\tilY$ the {\em unsaturated model} associated with the choice $(t,m^{1/d})$ of generators and $d$. First, let us describe a model case.

\begin{example}\label{unsatexam}
Assume that $P\to Q$ is an injective homomorphism of toric monoids, $Z^\circ=\bfA_P$ and $Y^\circ=\Spec(A^\circ)$, with $A^\circ=\QQ[Q][t_1\.t_l]$ and the logarithmic structure given by $Q$. Let $\cJ^\circ=(t,m^{1/d})$ be a $Z^\circ$-submonomial Kummer ideal on $Y^\circ$ and let $\tilY^\circ$ be the unsaturated model of Kummer blow up along $\cJ^\circ$. It then follows from Lemma~\ref{chart2lem} that
\begin{itemize}
\item[(1)] If $s=t_i$, then $\tilY^\circ_\tils=\Spec(\QQ[Q_i][\frac{t_1}{t_i}\.\frac{t_n}{t_i}])$, where $\frac{t_i}{t_i}$ is skipped, $Q_i$ is the submonoid of $Q^\gp\oplus\ZZ q$ generated by $Q$ and $q,q_1-dq\.q_r-dq$ and the logarithmic structure is given by $Q_i$ with $u^q=s$.
\item[(2)] If $s=m_j^{1/d}$, then $\tilY^\circ_\tils=\Spec(\QQ[Q_j][\frac{t_1}{m_j}\.\frac{t_n}{m_j}])$, where the logarithmic structure is given by $Q_j$, which is the submonoid of $\frac{1}{d}Q^\gp$ generated by $Q$ and the elements $q=\frac{1}{d}q_j,q_1-q_j\.q_r-q_j$.
\end{itemize}
In fact, $\tilY^\circ$ is an unsaturated version of Kummer blow up of the smooth logarithmic variety $Y^\circ$ as described in \cite[\S5]{ATW-destackification}.
\end{example}

All properties of submonomial blow ups will be deduced from this example by use of the following

\begin{lemma}\label{modelkummerlem}
Let $f\:Y\to Z$ be a relative logarithmic orbifold and $\cJ=(t,m^{1/d})$ a submonomial ideal, and suppose that $f$ possesses a chart $h\:Y\to Z_P[Q]$ such that $h$ is a regular morphism (for example, the chart is neat) and $Z\to\bfA_P$ is flat (for example, $Z\in\bB$). Let $Y^\circ\to Z^\circ$, $\cJ^\circ$ and $\tilY^\circ$ be as in Example~\ref{unsatexam}, with $\cJ = \cJ^\circ\cO_Y$. Then $\tilY=\tilY^\circ\times_{Y^\circ}Y$ both as DM stacks and as fine logarithmic DM stacks.
\end{lemma}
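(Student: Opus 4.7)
The plan is to verify that the two constructions agree first as DM stacks and then as fine logarithmic DM stacks. The key observation is that all pieces used to build $\tilY$ --- the Rees subalgebra, the $\cProj$, and the divisorial enlargement of the log structure --- depend only on the data $(t, m^{1/d}, d)$, and these pull back from $Y^\circ$ to $Y$.

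I would begin by establishing that the canonical morphism $Y\to Y^\circ=\bfA_Q\times\AA^l$ (given by the chart $h$ composed with the projection $Z_P[Q]\to\bfA_Q$, together with the functions $t_1,\dots,t_l$) is flat. This morphism factors as $Y\to Z_P[Q]\times\AA^l\to\bfA_Q\times\AA^l$. The second arrow is the product with $\AA^l$ of the base change $Z_P[Q]\to\bfA_Q$ of the flat morphism $Z\to\bfA_P$, so it is flat. For the first arrow, factor it as the graph immersion $Y\hookrightarrow Y\times\AA^l$ --- a regular immersion of codimension $l$ cut out by $(T_i-t_i)$ --- followed by $h\times\mathrm{id}\colon Y\times\AA^l\to Z_P[Q]\times\AA^l$, which is flat by assumption on $h$. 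Since the $t_i$ form a partial regular parameter system on every fiber of $h$ (by Lemma~\ref{submanlem} and regularity of the fibers via Theorem~\ref{logregth}), the sequence $T_i-t_i$ is fiberwise regular over $Z_P[Q]\times\AA^l$, and the local criterion of flatness yields flatness of $Y\to Z_P[Q]\times\AA^l$.

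Second, I would identify the Rees subalgebras. By construction $\tilcR\subseteq A[z^{\pm 1}]$ is the graded subalgebra generated by $zt_i$ and $z^im_j$ (for $1\le i\le d$); similarly $\tilcR^\circ\subseteq A^\circ[z^{\pm 1}]$ is generated by the \emph{same} elements. Because $A^\circ\to A$ is flat and $A^\circ[z^{\pm 1}]\otimes_{A^\circ}A=A[z^{\pm 1}]$, flatness is compatible with intersections, so the subalgebra generated by the images coincides with $\tilcR^\circ\otimes_{A^\circ}A$. Hence $\tilcR=\tilcR^\circ\otimes_{A^\circ}A$, and Lemma~\ref{projfun} applied to the flat morphism $Y\to Y^\circ$ gives the desired identification
$$\tilY=\cProj_Y(\tilcR)=\cProj_{Y^\circ}(\tilcR^\circ)\times_{Y^\circ}Y=\tilY^\circ\times_{Y^\circ}Y$$
as DM stacks.

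Third, I would check compatibility of the fine logarithmic structures. The log structure on $Y$ is pulled back from $Y^\circ$ via the chart (the chart $Q\to A$ factors through $Q\to A^\circ\to A$), and $\tilcJ=\tilcJ^\circ\cO_{\tilY}$ is the pullback of an invertible ideal. Unpacking the definition of the divisorial enlargement in \S\ref{logstrsec}, one sees that adjoining a local generator of $\tilcJ^\circ$ and the quotients $q-ks$ of monomials $u^q$ by its powers is a construction intrinsic to the pair $(M_{Y^\circ},\tilcJ^\circ)$ which commutes with any strict base change in the integral category: the new generators are the pullbacks of the old, and the relations between them are preserved. Hence $M_Y(\tilcJ)$ is the pullback of $M_{Y^\circ}(\tilcJ^\circ)$, and pulling back along $g$ gives $M_\tilY=(\tilY\to\tilY^\circ)^*M_{\tilY^\circ}$, as desired.

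The main obstacle will be the flatness verification in the first paragraph; once flatness is in hand, the Rees-algebra comparison and the log-structure compatibility are essentially formal consequences of the construction being defined by universal generators and relations that transport under base change.
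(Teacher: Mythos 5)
Your overall route is the same as the paper's: establish flatness of $Y\to Y^\circ$, deduce $\tilcR=\tilcR^\circ\otimes_{A^\circ}A$ in each degree and apply Lemma~\ref{projfun}, then compare the enlarged logarithmic structures. The first two steps are fine; the paper gets the flatness more directly by observing that $(h,t)\:Y\to Z_P[Q]\times\bfA^l$ is regular by Lemma~\ref{regularlem}(i), which is essentially the fiberwise statement you re-derive via the graph immersion and the local criterion of flatness, and your identification of the Rees subalgebras via flat extension of the graded pieces is exactly the paper's.

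The gap is in the last step. The divisorial enlargement of \S\ref{logstrsec} does \emph{not} commute with arbitrary strict base change in the integral category: it adjoins $q-ks$ precisely when the divisibility $u^q\in(\tils^k)$ holds in the structure sheaf, and a non-flat base change can create new such divisibilities (for instance, restricting to the closed subscheme $V(u^q-\tils^k f)$ forces $u^q\in(\tils^k)$ there even when it fails upstairs), so the pullback of the enlargement can differ from the enlargement of the pullback. What saves the argument --- and what the paper actually uses --- is that $\tilY\to\tilY^\circ$ is flat, being the base change of the flat morphism $Y\to Y^\circ$ produced in your first step; hence each local homomorphism $\cO_{\tilY^\circ,\tily^\circ}\to\cO_{\tilY,\tily}$ is faithfully flat and $\tils^k\mid u^q$ holds on $\tilY$ if and only if it holds on $\tilY^\circ$. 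Replace your appeal to ``any strict base change'' by this flatness argument and the proof closes.
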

\begin{proof}
The morphism $(h,t)\:Y\to Z_P[Q]\times\bfA^l$ is regular by Lemma~\ref{regularlem}(i). The morphism $Z_P[Q]\times\bfA^l\to Y^\circ$ is the base change of $Z\to\bfA_P$, hence it is flat and the composition $Y\to Y^\circ$ is flat. Let $\tilcR$ and $\tilcR^\circ$ be the model algebras of $\cJ$ and $\cJ^\circ$ associated with the generators $\{t,m^{1/d}\}$ and $d$. By the definition, we have equality of ideals $\tilcR^\circ_iA=\tilcR_i$ and by the flatness of $A^\circ\to A$ we also have that $\tilcR^\circ\otimes_{A^\circ}A=\tilcR^\circ_iA$. So we obtain an isomorphism of schemes $\tilY=\tilY^\circ\times_{Y^\circ}Y$.

Let $g\:\tilY\to Y$ and $g^\circ\:\tilY^\circ\to Y^\circ$ denote the natural morphisms and let $\tilcJ$ and $\tilcJ^\circ$ be the pullbacks of $\cJ$ and $\cJ^\circ$. Since the morphism $Y\to Y^\circ$ is strict, we should show that $M_{\tilY}=g^*M_Y(\tilcJ)$ is the pullback of $M_{\tilY^\circ}=(g^\circ)^*M_{Y^\circ}(\tilcJ^\circ)$. Clearly, $g^*M_Y$ is the pullback of $(g^\circ)^*M_{Y^\circ}$, and it remains to notice that $\tils^k|u^q$ on $\tilY^\circ_\tils$ if and only if the same divisibility holds on $\tilY$ because the morphism $\tilY\to\tilY^\circ$ is flat.
\end{proof}

\begin{corollary}\label{modelkummerreg}
With assumptions as in Lemma~\ref{modelkummerlem}, the morphism $\tilY\to Z$ is logarithmically regular. If $\cJ$ is monomial (that is, $l=0$), then the morphism $\tilY\to Y$ is logarithmically smooth.
\end{corollary}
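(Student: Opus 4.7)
The plan is to first reduce to the model situation via Lemma~\ref{modelkummerlem}, and then handle the model by direct computation on the explicit toric charts of Example~\ref{unsatexam}. Since $f\:Y\to Z$ is logarithmically regular (this is part of the orbifold assumption), and logarithmic regularity is preserved by composition (Lemma~\ref{logreglem}(ii)), it suffices to prove that $g\:\tilY\to Y$ is logarithmically regular, and moreover logarithmically smooth in the case $l=0$. By Lemma~\ref{modelkummerlem} we have the identification $\tilY = \tilY^\circ\times_{Y^\circ}Y$ in the fine logarithmic category. Inspection of the proof of that lemma shows that $Y\to Y^\circ$ is strict (since the chart $h$ produces a strict map to $Z_P[Q]\times\bfA^l$, which factors via $Z_P[Q]\times\bfA^l\to\bfA_Q\times\bfA^l=Y^\circ$) and flat (the map $(h,t)$ is regular and the subsequent projection is a flat base change of $Z\to\bfA_P$). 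Hence $\tilY\to Y$ is the strict flat base change of $\tilY^\circ\to Y^\circ$, and by Lemma~\ref{logreglem}(i) and~(ii) the desired log regularity (resp. log smoothness) of $\tilY\to Y$ descends from the corresponding property of $\tilY^\circ\to Y^\circ$.

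It remains to verify that the model morphism $\tilY^\circ\to Y^\circ$ is log regular, and log smooth when $l=0$. This is a chartwise computation using Example~\ref{unsatexam}. In the $t_i$-chart, $\tilY^\circ_{\tilde t_i}=\Spec(\QQ[Q_i][t_j/t_i:j\neq i])$ with logarithmic structure induced by $Q_i$, and the natural map $\tilY^\circ_{\tilde t_i}\to\bfA_{Q_i}$ is strict and smooth, while $\bfA_{Q_i}\to\bfA_Q$ is the toric morphism associated with the injection $Q\hookrightarrow Q_i$. Composing gives a chart of $\tilY^\circ_{\tilde t_i}\to Y^\circ$; one checks it is neat at every geometric point (the characteristic at a point is a localization of $Q_i$) and that the induced scheme map $\tilY^\circ_{\tilde t_i}\to Y^\circ\times_{\bfA_Q}\bfA_{Q_i}$ is smooth. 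Theorem~\ref{logregth} then gives logarithmic regularity. An entirely analogous analysis on the $m_j^{1/d}$-chart, using $Q_j = \langle Q,\tfrac{1}{d}q_j,q_k-q_j\rangle$, completes the proof of logarithmic regularity of $\tilY^\circ\to Y^\circ$. When $l=0$, the polynomial variables $t_j/t_i$ disappear and each $\tilY^\circ_\tils\to Y^\circ$ becomes a toric morphism modeled on an injection $Q\hookrightarrow Q_s$ of fine monoids; in characteristic zero any torsion in $Q_s^\gp/Q^\gp$ is invertible, so Kato's criterion yields logarithmic smoothness.

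The main technical obstacle is the log smoothness claim when $l=0$ along the $m_j^{1/d}$-chart: the monoid $Q_j$ is fine but generically non-saturated, so one cannot appeal to standard fs-toric log smoothness results and must verify the smoothness of the underlying scheme map together with the chart criterion directly from the monoid presentation. Once this is in place, the reduction via $\tilY=\tilY^\circ\times_{Y^\circ}Y$ is formal.
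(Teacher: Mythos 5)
Your opening reduction is where the argument breaks: you claim it suffices to show that $g\:\tilY\to Y$ is logarithmically regular (log smooth when $l=0$) and then compose with $Y\to Z$. But $\tilY\to Y$ is \emph{not} logarithmically regular in general when $l>0$. Take $Z=\Spec(\QQ)$, $Q=0$, $r=0$, $l=2$, so $Y=\AA^2$ with trivial logarithmic structure and $\cJ=(t_1,t_2)$: then $\tilY$ is the blow up of the origin with the exceptional divisor as logarithmic structure, and $\tilY\to\Log(Y)$ is not flat (strict-locally it is the blow up map itself times a torsor, with fiber dimension jumping over the origin), so $\tilY\to Y$ fails to be logarithmically regular — even though $\tilY\to Z$ is logarithmically smooth, which is exactly what the corollary asserts. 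The same error resurfaces in your chart computation: the map $\tilY^\circ_{\tilde t_i}\to Y^\circ\times_{\bfA_Q}\bfA_{Q_i}$ is \emph{not} smooth, because the target keeps $t_i$ as a free polynomial variable while the source identifies $t_i$ with the monomial $u^q$; the map is a non-flat birational map onto a proper closed subscheme (dimension count: $\rk_\QQ Q_i+(l-1)$ versus $\rk_\QQ Q_i+l$). So the statement you isolate as the ``main technical obstacle'' (non-saturatedness in the $m_j$-chart for $l=0$) is not the real issue; the real issue is that for $l>0$ the comparison must be made over $Z$, not over $Y$.

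The paper's proof does exactly this: from Lemma~\ref{modelkummerlem} one gets $\tilY=\tilY^\circ\times_{Y^\circ}Y$ and a factorization $\tilY\to\tilY^\circ\times_{Z^\circ}Z\to Z$, where the first arrow is the base change of the strict regular chart morphism $h\:Y\to Z_P[Q]$ (hence strict regular), and the second is the base change of $\tilY^\circ\to Z^\circ$, which \emph{is} logarithmically smooth by Example~\ref{unsatexam} because over $Z^\circ$ the parameters $t_1\.t_l$ become fiber coordinates of an affine space and the center is (sub)monomial there. Composing gives logarithmic regularity of $\tilY\to Z$. Only in the monomial case $l=0$ does one argue as you do, via base change of $\tilY^\circ\to Y^\circ$; there your chart becomes an isomorphism $\tilY^\circ_{\tils}=\bfA_{Q_s}$ onto $Y^\circ\times_{\bfA_Q}\bfA_{Q_s}$ and the Kato-criterion argument (torsion invertible in characteristic zero) goes through. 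As written, however, your proof of the main claim does not work and needs to be rerouted through $Z^\circ$.
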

\begin{proof}
By Lemma \ref{modelkummerlem} there exists the following commutative diagram with three cartesian squares
$$
\xymatrix{
\tilY \ar[r]^(.3)\tilh\ar[d] & \tilY^\circ\times_{Z^\circ}Z \ar[r]\ar[d]& \tilY^\circ \ar[d]\\
Y\ar[r]^(.3)h\ar[rd] & Z_P[Q] \ar[r]\ar[d] & Y^\circ\ar[d]\\
 & Z\ar[r]& Z^\circ
}
$$
Since $h$ is logarithmically regular (even strict regular), the same is true for $\tilh$.

By Example \ref{unsatexam}, $\tilY^\circ\to Z^\circ$ is logarithmically smooth, so the base change $\phi\:\tilY^\circ\times_{Z^\circ}Z\to Z$ is logarithmically smooth, and the composition $\phi\circ\tilh\:\tilY\to Z$ is logarithmically regular. In the monomial case, $\tilY^\circ\to Y^\circ$ is logarithmically smooth, hence its base change $\tilY\to Y$ is logarithmically smooth.
\end{proof}

\begin{corollary}\label{modelkummercor}
With assumptions as in Lemma~\ref{modelkummerlem}, $\tilY^\sat=(\tilY^\circ\times_{Y^\circ}Y)^\sat$ is logarithmically regular over $Z$. If, in addition $Z\in\bB$ (see \S\ref{Bsec}), then the morphism $Y'\to\tilY$ induces an isomorphism of saturations $Y'^\sat=\tilY^\sat$. In particular, $\tilY^\sat$ does not depend on the choice of generators of $\cJ$.
\end{corollary}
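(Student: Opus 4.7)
The first assertion follows from Corollary~\ref{modelkummerreg} combined with a general fact: in characteristic zero, the saturation morphism $\tilY^\sat\to\tilY$ from a fine logarithmic DM stack to its fs saturation is Kummer logarithmically étale, since étale-locally it is the pullback of $\bfA_{Q^\sat}\to\bfA_Q$ along the chart of $\tilY$, and $Q\into Q^\sat$ is a Kummer embedding. Kummer logarithmically étale morphisms are logarithmically regular, so by Lemma~\ref{logreglem}(ii) the composition $\tilY^\sat\to\tilY\to Z$ is logarithmically regular.

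For the second assertion, the hypothesis $Z\in\bB$ together with Lemma~\ref{stablem}(ii) gives $Y\in\bB$, so $Y$ is normal and special. I would reduce to the case where $\cJ$ becomes an ordinary ideal by choosing a Kummer logarithmically étale Galois covering $\rho\:Y_1\to Y$ (pulled back from a Kummer cover of the chart $Y^\circ$ that introduces $d$-th roots of the $q_j$) on which $\cJ\cO_{Y_1}$ is the ordinary ideal $\cI=(t,m^{1/d})$. Over $Y_1$ the usual blow-up $Bl_\cI(Y_1)$ is already fs and coincides, on the one hand, with the fs base change of $Y'$ to $Y_1$ (since $\pi_*\cR_\cJ$ pulls back to the classical Rees algebra $\cR_\cI$, and $\cProj$ commutes with flat base change by Lemma~\ref{projfun}), and on the other hand with the fs base change of $\tilY$ to $Y_1$ (using Lemma~\ref{modelkummerlem} together with the observation that $\tilcR|_{Y_1}\subseteq\cR_\cI$ becomes an equality after fs saturation, because $\cR_\cI$ is the integral closure of $\tilcR$ in the same degree-one slice and both give isomorphic $\cProj$ by Corollary~\ref{chartcor} and Lemma~\ref{chart2lem}). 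Thus the finite integral morphism $Y'^\sat\to\tilY^\sat$ of normal fs logarithmic stacks becomes an isomorphism after the Kummer cover $\rho$, and Kummer étale descent for finite morphisms between normal logarithmic stacks in $\bB$ (here one uses specialness from Lemma~\ref{stablem}(i)) shows $Y'^\sat\to\tilY^\sat$ is itself an isomorphism.

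The main obstacle is the bookkeeping in the middle step: one needs to verify precisely that fs saturation of $\tilY\times_Y Y_1$ and of $Y'\times_Y Y_1$ both equal the classical blow-up $Bl_\cI(Y_1)$. This reduces to a graded-algebra calculation analogous to Lemma~\ref{chart2lem}, showing that the integral extension $\tilcR\subset\cR'$ is neutralized by the combination of the Kummer cover (which makes all $m_j^{1/d}$ honest monomials) and fs saturation. Granting this, the third assertion is immediate: the stack $Y'$ and hence $Y'^\sat$ depends only on the Kummer ideal $\cJ$ and not on the choice of the presentation $\cJ=(t,m^{1/d})$, so the equality $\tilY^\sat=Y'^\sat$ transfers this intrinsic character to $\tilY^\sat$.
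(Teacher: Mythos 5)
Your argument for the first assertion is fine (saturation is logarithmically \'etale by Kato's criterion, so the composite $\tilY^\sat\to\tilY\to Z$ is logarithmically regular); the paper gets this more directly from Lemma~\ref{modelkummerlem}, but your route works. The second assertion, however, has a genuine gap, and it sits exactly at the step you yourself flag with ``granting this''. Your plan is to trivialize $\cJ$ on a Kummer cover $Y_1\to Y$ and identify both $Y'\times_YY_1$ and $\tilY\times_YY_1$ (fs) with $Bl_\cI(Y_1)$, but the tools you cite do not deliver this. Lemma~\ref{projfun} requires a \emph{flat} morphism, and Kummer covers of logarithmically regular schemes are in general not flat, only logarithmically flat. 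Worse, $\pi_*\cR_\cJ$ does not pull back to the classical Rees algebra $\cR_\cI$ along the cover: pushforward from the Kummer site does not commute with this (non-flat) base change, and already for $\cJ=(m^{1/d})$ the pullback of $\pi_*(\cJ^e)$ to $Y_1$ is typically a strict subideal of $\cI^e$, so $(\pi_*\cR_\cJ)\cO_{Y_1}$ is a proper graded subalgebra of $\cR_\cI$. Showing that its $\cProj$, and likewise that of $\tilcR|_{Y_1}$ (whose generators sit in degrees $1,\dots,d$, not degree $1$), agree with $Bl_\cI(Y_1)$ after saturation is essentially the same difficulty as the statement $Y'^\sat=\tilY^\sat$ itself, so the reduction is close to circular. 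Finally, the closing ``Kummer \'etale descent for finite morphisms'' is not an available principle here: the cover is not flat and the fibre products are fs (i.e.\ involve saturation), so an isomorphism after fs base change along a Kummer cover does not formally descend; nothing in the paper supplies such a descent statement.

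The paper's own proof avoids all of this and is much shorter: since $\tilY^\sat\to Z$ is logarithmically regular and $Z\in\bB$, Lemma~\ref{stablem} shows $\tilY^\sat$ is special (in particular its underlying scheme is normal and its logarithmic structure is determined by the triviality locus); by Lemma~\ref{chart2lem} the morphism $Y'\to\tilY$ is integral and birational, and by construction $M_{Y'}\into\cO_{Y'}$ is injective and trivial over the triviality locus of $M_\tilY$. An integral birational morphism onto a normal target is an isomorphism of underlying schemes, and speciality then forces the logarithmic structures to agree, so $Y'^\sat\to\tilY^\sat$ is an isomorphism -- no auxiliary cover or descent is needed. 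If you want to salvage your approach, you would have to prove the graded-algebra identification over $Y_1$ directly and then replace the descent step by precisely this kind of normality/speciality argument, at which point the detour through the Kummer cover buys you nothing.
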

\begin{proof}
The first claim follows from Lemma~\ref{modelkummerlem} immediately. If $Z\in\bB$, then $\tilY^\sat$ is special by Lemma~\ref{stablem}. Recall that $Y'\to\tilY$ is integral and birational by Lemma~\ref{chart2lem}, and by the construction the logarithmic structure $M_{Y'}\into\cO_{Y'}$ is injective and is trivial over the triviality locus of $M_\tilY$. Since $Y'$ is special, this necessarily implies that the morphism $Y'^\sat\to\tilY^\sat$  is an isomorphism.
\end{proof}

\subsubsection{Submonomial blow ups}\label{kummersec}
We have established independence of generators in the local case and can now deal with the case when $\cJ$ is an arbitrary submonomial Kummer ideal. In general, our definition of Kummer blow ups is based on Corollary~\ref{modelkummercor}, so we have to assume that the target is in $\bB$ and switch to the notation $f\:Y\to B$.

The pullback $\cJ'=\cJ\cO_{Y'\ket}$ with respect to $g'\:Y'=\cProj_Y(\pi_*\cR_\cJ)\to Y$ is an invertible ideal. Indeed, the claim can be checked \'etale locally, and hence follows from Lemma~\ref{chart2lem}. So, as in the local case we provide $Y'$ with the enlarged pullback logarithmic structure $M_{Y'}=g'^*M_Y(\cJ')$, and define the Kummer blow up along $\cJ$ to be both the logarithmic DM stack $Bl_\cJ(Y)=Y'^\sat$ and the morphism $g\:Bl_\cJ(Y)\to Y$. The pullback of $\cJ$ will be also denoted $\cJ'=\cI_E$, where its vanishing locus $E$ is the {\em exceptional divisor} of the blow up.

\begin{theorem}\label{kummerblowth}
Let $Y\to B$ be a logarithmically regular morphism of DM logarithmic stacks with $B\in\bB$, and let $X=Bl_\cJ(Y)\to Y$ be a $B$-submonomial Kummer blow up with center $\cJ$. Then,

(i) $\sigma\:Bl_\cJ(Y)\to Y$ is a $V(\cJ)$-supported modification with \'etale diagonalizable relative inertia.

(ii) $Bl_\cJ(Y)$ is logarithmically regular over $B$.

(iii) $\sigma^{-1}\left(\cJ^{(a)}\right)=\cI^a_E$ for any $a\ge 0$.

(iv) For any morphism $B'\to B$, the pullback of $\cJ$ to $Y'=Y\times_BB'$ is a $B$-submonomial Kummer ideal $\cJ'$ and $Bl_{\cJ'}(Y')=Y\times_BB'$.

(v) For any logarithmically regular morphism $T\to Y$, the pullback of $\cJ$ to $T$ is a $B$-submonomial Kummer ideal $\cI$ and $Bl_{\cI}(T)=Bl_\cJ(Y)\times_YT$ (in the fs category).

(vi) $I_{X/Y}$ is a finite subgroup of $(\GGm)_X$. In addition, if $Y\to S$ is a morphism of stacks with a finite diagonalizable inertia $I_{Y/S}$ acting trivially on monoids $\oM_\oy$, then $I_{X/S}$ is also finite diagonalizable and acts trivially on monoids $\oM_\ox$.
\end{theorem}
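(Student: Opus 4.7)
The plan is to reduce all six claims to the local model of Example~\ref{unsatexam} via Lemma~\ref{modelkummerlem} and Corollary~\ref{modelkummercor}. Working \'etale-locally and choosing a neat chart $Y\to B_P[Q]$ together with generators $\cJ = (t, m^{1/d})$, we obtain $\tilY = \tilY^\circ\times_{Y^\circ} Y$, and by Corollary~\ref{modelkummercor} the saturation $\tilY^\sat$ coincides with $Bl_\cJ(Y)=Y'^\sat$ (using $B\in\bB$). Thus most properties can be read off from the purely toric model $\tilY^\circ\to Y^\circ$ and then transported along strict base change $Y\to Y^\circ$ and saturation.

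For claim (i), Lemma~\ref{chart2lem}(ii) shows that $Y'\to Y$ is an isomorphism over $Y\setminus V(\cJ)$, and this persists under saturation because the saturation is trivial where the logarithmic structure is already the pulled-back one; thus $\sigma$ is a $V(\cJ)$-supported modification. For the inertia statement, $Y' = \cProj_Y(\pi_*\cR_\cJ)$ has $I_{Y'/Y}\subseteq (\GGm)_{Y'}$ finite by Lemma~\ref{inertialem}(i), and this is preserved by passing to $Y'^\sat$ since saturation is representable; diagonalizability is then automatic. Claim (ii) follows by combining Corollary~\ref{modelkummerreg}, which gives $\tilY\to B$ logarithmically regular, with the fact that saturation of a logarithmically regular morphism remains logarithmically regular (this is where $B\in\bB$ is used, via Lemma~\ref{stablem}).

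For claim (iii), using Lemma~\ref{Jnorlem}(i) to write
\[
\cJ^{(a)}=\sum_{j=0}^a (\cN^j)^\sat\cdot \cI^{a-j},
\]
where $\cI=(t)$ and $\cN=(m^{1/d})$, one checks on each chart of Lemma~\ref{chart2lem}(iii) that each summand pulls back into $\cI_E^a$, and that equality holds after saturation; the key computation is that on the $s$-chart we have $\cJ'=(s^{1/n})$, which raised to the $a$th power recovers precisely the saturated $a$th power of $\cJ$. For claims (iv) and (v), the $\cProj$ construction is compatible with flat and more general base change (Lemma~\ref{projfun}), while submonomial Kummer ideals restrict well under both $B'\to B$ and logarithmically regular $T\to Y$ by Propositions~\ref{basechangeprop} and~\ref{coverprop}; one then verifies that taking the divisorial enlargement $M_Y(\cJ')$ and saturation commutes with these operations, which is essentially Corollary~\ref{modelkummercor} applied in the base-changed setting. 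Functoriality of saturation and the fs fiber product handle the logarithmic structures.

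Finally, for (vi), the statement $I_{X/Y}\subseteq (\GGm)_X$ is immediate from Lemma~\ref{inertialem}(i) combined with the observation that saturation does not introduce new automorphisms (it is representable). For the relative statement over $S$, Lemma~\ref{inertialem}(ii) gives that $I_{X/S}$ is finite diagonalizable whenever $I_{Y/S}$ is. The fact that $I_{X/S}$ acts trivially on $\oM_\ox$ will be the main subtlety: the new $\GGm$-factor from $\cProj$ acts by scaling on the exceptional generator $s^{1/n}$, but since the logarithmic structure $M_{Y'}$ is the pullback enlarged by the exceptional Cartier divisor, this scaling does not affect the image in $\oM_\ox$; on the other hand, the pullback of $I_{Y/S}$ acts on $\oM_\ox$ as it does on $\oM_\oy$, namely trivially. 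The hard part is to check this cleanly at the level of characteristic monoids after saturation, which amounts to tracing through the chart formulas of Lemma~\ref{chart2lem}(iii) and Example~\ref{unsatexam} to verify that all added monomial generators of $M_{Y'}(\cJ')$ are $I_{X/S}$-invariant modulo units.
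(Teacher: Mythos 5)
Your plan follows the paper's strategy almost verbatim: reduce \'etale-locally to the toric model via Lemma~\ref{modelkummerlem} and Corollary~\ref{modelkummercor}, get (i), (ii) from Lemma~\ref{chart2lem} and Corollary~\ref{modelkummerreg}, use Lemma~\ref{inertialem} for the inertia claims, and check (v), (vi) on explicit charts (for (v) the paper's actual content is that the composed chart $T\to B_P[R]$, though not neat, is regular, and that the new monoids satisfy $R_i=Q_i\oplus_QR$ --- your plan gestures at this but leaves it implicit). The one genuine divergence is (iii): you propose expanding $\cJ^{(a)}$ via Lemma~\ref{Jnorlem}(i) and verifying the pullback chart by chart, whereas the paper avoids any chart computation by the sandwich $\cI_E^a=\sigma^{-1}(\cJ^a)\subseteq\sigma^{-1}\bigl(\cJ^{(a)}\bigr)\subseteq(\cI_E^a)^\nor$ and then $(\cI_E^a)^\nor=\cI_E^a$ by Lemma~\ref{norsatlem}, since an invertible monomial ideal is saturated. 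Your route can be completed (each summand $(\cN^j)^\sat\cI^{a-j}$ pulls back into $\cI_E^j\cI_E^{a-j}$ for the same saturatedness reason), but it is heavier and the phrase ``equality holds after saturation'' should be sharpened: the asserted equality is on the nose, not up to saturation. One small correction for (iv): Lemma~\ref{projfun} only covers flat base change, so it cannot by itself handle an arbitrary $B'\to B$; the operative mechanism --- which you also invoke --- is that $\tilY$ is pulled back from the model $\tilY^\circ$ over $\QQ$, so the identification of Corollary~\ref{modelkummercor} persists after any base change of $B$, exactly as in the paper.
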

\begin{proof}
The claim is \'etale local on $B$, $Y$, $B'$ and $T$. Therefore, (i), (ii) and (iv) follow from Lemma~\ref{chart2lem} and Corollaries~\ref{modelkummerreg} and \ref{modelkummercor}. To prove (iii) we note that $\cI^a_E=\sigma^{-1}(\cJ^a)\subseteq\sigma^{-1}\left(\cJ^{(a)}\right)\subseteq(\cI^a_E)^\nor$. Since $\cI^a_E$ is an invertible monomial ideal, it is saturated, and hence $(\cI^a_E)^\nor=\cI^a_E$ by Lemma~\ref{norsatlem}.

Let us prove (v). By Theorem~\ref{neatth} we can assume that there exist neat charts $Y\to B_P[Q]$ and $T\to Y_Q[R]$. The composed chart does not have to be neat, but the morphism $T\to B_P[R]$ is regular by the diagram in \S\ref{compossec}. Therefore, by Lemma~\ref{modelkummerlem} it suffices to prove the claim in the case when $B=\bfA_P$, $Y=\Spec(\QQ[Q][t])$ and $T=\Spec(\QQ[R][t])$ with $t=(t_1\.t_l)$ and $\cJ=(t,m^{1/d})$ for monomials $m=(m_1\.m_r)$. In this case, the claim follows by comparing the explicit charts in Example~\ref{unsatexam} and observing that the new monoids $R_i$ and $Q_i$ are generated over the old monoids $R$ and $Q$ by the same generators and relations, and hence $R_i=Q_i\oplus_QR$.

Since $X$ is obtained as $\cProj_Y$, everything in (vi) except triviality of the action on $\oM_\ox$ follows from Lemma~\ref{inertialem}. The triviality can be easily reduced to the case of charts and then checked explicitly.
\end{proof}

\subsubsection{Sequences of submonomial Kummer blow ups}
Since logarithmic regularity is preserved by submonomial Kummer blow ups, we can define sequences of such blow ups. Typically such a sequence
$$
\xymatrix{Y'=: Y_n \ar[r]^{\sigma_n} &Y_{n-1} \ar[r]^{\sigma_{n-1}} & \dots \ar[r]^{\sigma_2} &Y_{1} \ar[r]^(.4){\sigma_{1}} & Y_0:=  Y.}
$$
will be denoted $\sigma_k\:Y_k\to Y_{k-1}$, $1\le k\le n$, or just $\sigma\:Y'\dashrightarrow Y$.

\subsubsection{Strict transforms}
As in the classical case, by the {\em strict transform} $H'$ of a closed substack $H\into Y$ under a $B$-submonomial Kummer blow up $g\:Y'=Bl_\cJ(Y)\to Y$ we mean the schematic closure of $H\setminus V(\cJ)$ in $Y'$.

\begin{lemma}\label{strictlem}
Keep the above notation and assume that $H$ is a logarithmic $B$-submanifold such that $\cI=\cJ\cO_{H\ket}$ is a $B$-submonomial ideal on $H$. Then $H'$ underlies a $B$-submanifold of $Y'$ and $H'\to H$ is the $B$-submonomial Kummer blow up along $\cI$.
\end{lemma}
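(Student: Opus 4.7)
The assertions are étale and Kummer local on $Y$, so we may work at a geometric point $y\in H$ with a neat chart $Y\to B_P[Q]$ (via Theorem~\ref{neatth}); after passing to a Kummer cover (harmless by Lemma~\ref{monomcoverlem}), we may arrange $\cJ=(t_1,\dots,t_k,m_1,\dots,m_r)$ where the $t_i$ form a partial family of regular parameters for $Y/B$ at $y$ and the $m_j$ are ordinary monomials. The hypothesis is then that $\cI=\cJ\cO_H$, generated by the restrictions $\bar t_i$ together with the $\bar m_j$, is itself of submonomial form on $H$.

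The core of the argument is to apply the universal property of the Kummer blow up twice. Since $\cI\cO_{Bl_\cI(H)}$ is invertible by Theorem~\ref{kummerblowth}(iii), the composite $Bl_\cI(H)\to H\hookrightarrow Y$ sends $\cJ$ to an invertible ideal, and the construction $Y'=\cProj_Y(\pi_*\cR_\cJ)^\sat$ then produces a factorization $\phi:Bl_\cI(H)\to Y'$. Over $H\setminus V(\cJ)\subset Bl_\cI(H)$ the map $\phi$ agrees with the inclusion $H\setminus V(\cJ)\hookrightarrow Y\setminus V(\cJ)\subset Y'$, so the image of $\phi$ lies in the schematic closure $H'\subset Y'$ of $H\setminus V(\cJ)$. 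Conversely, the invertible ideal $\cJ\cO_{Y'}=\cI_E$ restricts to an invertible $\cI\cO_{H'}$, yielding by universality a morphism $H'\to Bl_\cI(H)$. The two morphisms are mutually inverse over $H\setminus V(\cJ)$; since both source and target are fs and special (\S\ref{specialsec}), hence without embedded components, the inverses extend globally, identifying $\phi$ with an isomorphism $Bl_\cI(H)\toisom H'$.

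To conclude that $H'$ underlies a $B$-submanifold of $Y'$, observe that $H\to B$ is logarithmically regular (as a $B$-submanifold of $Y\to B$) and $\cI$ is $B$-submonomial, so Theorem~\ref{kummerblowth}(ii) gives that $Bl_\cI(H)\to B$ is logarithmically regular. The induced closed immersion $H'=Bl_\cI(H)\hookrightarrow Y'$ is strict because the logarithmic enhancement on both sides is governed by the same invertible exceptional ideal $\cI_E$, as one reads off from the chart description in Lemma~\ref{chart2lem}. Hence $H'$ is a strict closed logarithmic substack of $Y'$ whose structure morphism to $B$ is logarithmically regular, i.e.\ a $B$-submanifold. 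The main obstacle is making the universal property of the Kummer blow up precise, since the construction combines a $\cProj$ of the pushforward Rees algebra with fs saturation; an alternative route is a chart-by-chart verification using Lemma~\ref{chart2lem} and Example~\ref{unsatexam}, where the subtlest point is a linear-algebra adjustment of the generators $(t_i)$ of $\cJ$ to align with the defining parameters of $H$ when the $\bar t_i$ are not themselves a partial family of regular parameters on $H$.
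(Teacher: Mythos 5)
Your argument has a genuine gap at its central step. Both factorizations you use --- the map $Bl_\cI(H)\to Y'$ obtained from invertibility of $\cJ\cO_{Bl_\cI(H)}$, and the converse map $H'\to Bl_\cI(H)$ --- rest on a universal property of submonomial Kummer blow ups that is neither stated nor proved in the paper, and it is not automatic here. The blow up $Bl_\cJ(Y)$ is defined as a stack-theoretic $\cProj$ of the \emph{pushforward} $\pi_*(\cR_\cJ)$ from the Kummer site, followed by an fs saturation and an enlargement of the logarithmic structure; for a stacky $\cProj=[(\Spec\cA\setminus V)/\GG_m]$ a morphism from $T$ is not produced merely by invertibility of the pulled-back ideal --- one must exhibit a line bundle on $T$ together with a graded algebra map from $\cA$ (and uniqueness holds only up to $2$-isomorphism), and one must then check compatibility with $\pi_*$, with saturation and with the enhanced log structures. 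You yourself call this ``the main obstacle'' and leave it unresolved, so the proof is incomplete exactly where the work lies. (Two smaller points: the citation of Lemma~\ref{monomcoverlem} does not justify Kummer-local verification of this statement --- that would rather come from Theorem~\ref{kummerblowth}(v) plus descent --- and the restriction of the invertible ideal $\cI_E$ to the closed substack $H'$ being invertible needs the remark that $H'$, as a schematic closure, has no associated points inside the exceptional locus.)

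The fallback you sketch at the end is in fact the paper's actual proof, but the point you defer as a ``linear-algebra adjustment'' is precisely where the hypothesis is used: working \'etale locally one must choose a \emph{single} family of regular parameters adapted simultaneously to $\cJ$ and to $H$, i.e.\ arrange $\cJ=(t_1,\dots,t_l,m)$ and $H=V(t_{l+1},\dots,t_k)$ with all $t_i$ belonging to one family of regular parameters --- this is exactly what the assumption that $\cJ\cO_{H\ket}$ is $B$-submonomial buys. Once this normal form is in place, the paper invokes Corollary~\ref{modelkummercor} (via Lemma~\ref{modelkummerlem}) to reduce by flat base change to the model case $Y^\circ=\Spec(\QQ[Q][t_1,\dots,t_k])$, $H^\circ=V(t_{l+1},\dots,t_k)$, and reads off both assertions from the explicit charts of Example~\ref{unsatexam}. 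If you carry out that chart computation, including the choice of adapted parameters, you recover the intended proof; as written, neither route in your proposal is completed.
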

\begin{proof}
The claim is \'etale local, hence we can assume that there exists regular parameters $(t_1\.t_n)$ and Kummer monomials $m=(m_1\.m_r)$ such that $\cJ=(t_1\.t_l,m)$ and $H=V(t_{l+1}\.t_k)$. Then Corollary~\ref{modelkummercor} reduces the claim to the model case analogous to the one described in Example~\ref{unsatexam} but with additional regular parameters: $$Y^\circ=\Spec(\QQ[Q][t_1\.t_k])\ \ {\rm  and}\ \ H^\circ=V(t_{l+1}\.t_k)=\Spec(\QQ[Q][t_1\.t_l]).$$ In this case, the claim immediately follows from the description of charts in the same example.
\end{proof}

\subsubsection{Pushforwards from submanifolds}\label{pushsec}
For any logarithmic $B$-submanifold $i\:H\into Y$ and a Kummer ideal $\cJ\subseteq\cO_{H\ket}$ we denote its preimage in $\cO_{Y\ket}$ by $i_*(\cJ)$.

\begin{lemma}\label{pushlem}
Assume that $Y_0\to B$ is a logarithmically regular morphism of logarithmic DM stacks with $B\in\bB$, $H_0\into Y_0$ is a logarithmic $B$-submanifold, and $g_k\:H_k\to H_{k-1}$, $1\le k\le n$ a sequence of $B$-submonomial Kummer blow ups with centers $\cJ_k$. Then there exists a unique sequence of $B$-submonomial Kummer blow ups $h_k\:Y_k\to Y_{k-1}$, $1\le k\le n$ with centers $\cI_k$ and logarithmic $B$-submanifolds $i_k\:H_k\into Y_k$ such that $\cI_k=(i_k)_*(\cJ_k)$ for any $k\in\{0\.n-1\}$.
\end{lemma}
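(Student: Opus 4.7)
My plan is to proceed by induction on $n$, with the base case $n=0$ being trivial, so the whole statement reduces to the single blow-up situation: given a $B$-submanifold $i\:H\into Y$ and a $B$-submonomial Kummer center $\cJ$ on $H$, one should produce a $B$-submonomial center $\cI=i_*(\cJ)$ on $Y$ such that $Bl_\cJ(H)$ embeds as a $B$-submanifold into $Bl_\cI(Y)$. Uniqueness is immediate since the required equality $\cI_k=(i_{k-1})_*(\cJ_k)$ pins down each center, and hence each blow up in the sequence.

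For the base case the center $\cI:=i_*(\cJ)$ is already defined in \S\ref{pushsec}, and the work is to verify that it is $B$-submonomial. This is a local computation: \'etale-locally and Kummer-locally on $Y$, Lemma~\ref{submanlem} provides regular parameters $t_1\.t_n$ on $Y$ such that $\cI_{H/Y}=(t_{l+1}\.t_k)$, and Kummer-locally on $H$ one can write $\cJ=(s_1\.s_m,n_1^{1/d}\.n_r^{1/d})$ where $(s_j)$ are regular parameters on $H$ defining a $B$-submanifold and $n_j$ are monomials. Since $H\into Y$ is strict, $S_h^H$ is locally a closed subscheme of $S_h^Y$ and the cotangent space $m_{S_h^H}/m_{S_h^H}^2$ is a quotient of $m_{S_h^Y}/m_{S_h^Y}^2$; thus every $s_j$ lifts to some $\tils_j\in\cO_Y$ so that $(t_{l+1}\.t_k,\tils_1\.\tils_m)$ forms a partial family of regular parameters on $Y$ at the chosen point. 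By Lemma~\ref{submanlem} this family generates a $B$-submanifold ideal, and since the monomials $n_j$ are pulled back from $Y$ along the strict embedding,
\[
\cI=(\tils_1\.\tils_m,t_{l+1}\.t_k,n_1^{1/d}\.n_r^{1/d})
\]
is $B$-submonomial.

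By construction the restriction of $\cI$ to $H$ is exactly $\cJ$, so the hypothesis of Lemma~\ref{strictlem} is satisfied: the strict transform of $H$ under the submonomial Kummer blow up $h\:Y'=Bl_\cI(Y)\to Y$ is a $B$-submanifold of $Y'$, and as a logarithmic stack it is identified with $Bl_{\cJ}(H)$. This gives the required embedding $i'\:Bl_\cJ(H)\into Y'$, completing the $n=1$ case. The inductive step then applies this construction to each pair $(i_{k-1}\:H_{k-1}\into Y_{k-1},\cJ_k)$ in turn, producing $(i_k\:H_k\into Y_k,\cI_k)$ with $\cI_k=(i_{k-1})_*(\cJ_k)$.

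The only technical point to watch is that lifts of regular parameters on the submanifold remain regular parameters on the ambient orbifold; this reduces, via the strictness of $H\into Y$ and the characterization of $B$-submanifolds in Lemma~\ref{submanlem}, to the classical fact that regular parameters on a regular closed subscheme extend to regular parameters on a regular ambient scheme. I do not anticipate any other obstacle, as the remaining compatibilities (functoriality of $i_*$, preservation of the submanifold property under submonomial Kummer blow ups) are already encapsulated in Lemma~\ref{strictlem} and Theorem~\ref{kummerblowth}.
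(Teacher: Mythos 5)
Your proof is correct and follows essentially the same route as the paper: the centers are forced to be $\cI_k=(i_{k-1})_*(\cJ_k)$, one checks $\cI_k|_{H_{k-1}}=\cJ_k$ and invokes Lemma~\ref{strictlem} to identify the strict transform of $H_{k-1}$ with $Bl_{\cJ_k}(H_{k-1})$ as a $B$-submanifold, and then inducts on the length. The only addition is your explicit verification (via lifting regular parameters and Lemma~\ref{submanlem}) that $i_*(\cJ)$ is again $B$-submonomial, a point the paper leaves implicit but which your argument handles correctly.
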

\begin{proof}
These conditions define $h_0$, and since $\cI_0|_{H_0}=\cJ_0$, Lemma~\ref{strictlem} yields that $H_1=Y_1\times_{Y_0}H_0$. The rest follows by induction on $k$.
\end{proof}

In the situation described by Lemma~\ref{pushlem} we call the blow up sequence $h$ the {\em pushforward} of $g$ and use the notation $h=i_*(g)$.

\subsubsection{Admissibility}\label{admisssec}
We say that a $B$-submonomial Kummer ideal $\cJ$ is {\em $a$-admissible with respect to} an ideal $\cI\subseteq\cO_Y$ if $\cI\subseteq\cJ^{(a)}$. In this case, the submonomial Kummer blow up $\sigma\:Y'\to Y$ along $\cJ$ is also called {\em $a$-admissible with respect to $\cI$}.

\begin{remark}
In the classical Hironaka's algorithm $\cJ^a=\cJ^{(a)}$ automatically, but in our case it is more convenient to use $\cJ^{(a)}$ in this definition.

\end{remark}

\subsection{Pullbacks}

\subsubsection{Serre's twist}
If $\sigma\:Y'\to Y$ is a blow up along an ideal $\cJ$, then $\cI_E=g^{-1}(\cJ)$ is the Serre's twisting sheaf $\cO_{Y'}(1)$ for $Y'=\Proj_Y(\oplus_e \cJ^e)$. Informally, we will view the invertible sheaf $\cI_E$ in this way also in the case of submonomial Kummer blow ups. Transforms of various objects under blow ups often involve multiplication by an appropriate power $\cI^d_E$ with $d\in\ZZ$, which can be viewed as Serre's twist.

\subsubsection{Ideals}\label{twistedpullback}
Assume that $\sigma$ is $a$-admissible with respect to an ideal $\cI$. Then $\sigma^{-1}\cI\subseteq\cI_E^a$ by Theorem~\ref{kummerblowth}(iii), and hence the twisted pullback $\sigma^{-1}(\cI,a):=\cI_E^{-a}\sigma^{-1}(\cI)$ is defined as an ideal on $Y'$. The notation follows the one in \cite{ATW-principalization}; note that the twist is by $-a$.

\subsubsection{Derivations}
As in the absolute case (see \cite[Lemma~4.2.1]{ATW-principalization}), differential operators can be pulled back with an opposite twist.

\begin{lemma}\label{transformderivlem}
Assume that $Y\to B$ is a logarithmically regular morphism of DM logarithmic stacks with $B\in\bB$ and $\sigma\:Y'\to Y$ is a submonomial Kummer blow up. Then for any $i\in\NN$ there is a natural embedding $\cI_E^i\cD^{(\le i)}_{Y/B}(Y')\into\cD^{(\le i)}_{Y'/B}$ given by a unique extension of differential operators from $\cO_Y$ to $\cO_{Y'}$.
\end{lemma}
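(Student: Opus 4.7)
The claim is étale-local on $Y$, and hence on $Y'$, so I would first reduce to the model case. By Theorem~\ref{neatth} we may assume $Y$ possesses a neat chart $Y\to B_P[Q]$, and since $B\in\bB$ the hypotheses of Lemma~\ref{modelkummerlem} are satisfied. That lemma, together with Corollary~\ref{modelkummercor} and the base-change compatibility of logarithmic derivations from Lemma~\ref{basechangelem}, reduce the problem to the toric case $Y^\circ = \Spec(\QQ[Q][t_1,\dots,t_l])$, $Z^\circ=\bfA_P$, $\cJ^\circ=(t_1,\dots,t_l,m_1^{1/d},\dots,m_r^{1/d})$ as in Example~\ref{unsatexam}. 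Since the blow up $\cProj_Y(\pi_\ast \cR_\cJ)$ and the unsaturated model $\tilY^\circ$ define the same saturated stack, one may work directly on a single chart $\tilY^\circ_\tils$.

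Next I would treat the first-order case, $i=1$. On a chart $\tilY^\circ_\tils$ the exceptional ideal is principal, generated by $e=s^{1/n}$ where $s\in\{t_i,m_j\}$ and $n\in\{1,d\}$, and the new coordinates on the chart are $e$, the strict transforms $t_k' = t_k/e^{a_k}$ (with $a_k\in\{0,1\}$ — one reads off from Example~\ref{unsatexam} that $a_k\le 1$ both when $s=t_i$, $n=1$ and when $s=m_j$, $n=d$), and monomials $m'_j = m_j u^{-b_j}$ in the enlarged monoid $Q_\tilq$. By Lemma~\ref{standardderiv}, $\cD_{Y/B}$ is locally generated by a dual family consisting of monomial derivations, ordinary derivations $\partial_{t_k}$ and constant derivations. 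Monomial derivations act diagonally on the Kummer monomials in $Q_\tilq$, so they already preserve $\cO_{\tilY^\circ_\tils}$; constant derivations annihilate $e$ and preserve $\cO_{\tilY^\circ_\tils}$; and $\partial_{t_k}$ acts via $t_k'\mapsto e^{-a_k}\in e^{-1}\cO_{\tilY^\circ_\tils}$, so $e\cdot\partial_{t_k}$ extends uniquely to a section of $\cD_{Y'/B}$. This produces the required map $\cI_E\cdot\sigma^*\cD^{(\le 1)}_{Y/B}\to\cD^{(\le 1)}_{Y'/B}$, and injectivity is automatic from the fact that $\sigma$ is an isomorphism over the dense open $Y'\setminus E$.

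For $i\ge 2$ I would induct on $i$. Any section $\partial\in\cD^{(\le i)}_{Y/B}$ is locally a sum of compositions $\partial_1\circ\partial_2$ with $\partial_1\in\cD_{Y/B}$ and $\partial_2\in\cD^{(\le i-1)}_{Y/B}$, plus a term in $\cD^{(\le i-1)}_{Y/B}$. Using the Leibniz identity $\partial_1\circ(e^{i-1}\cdot)=e^{i-1}\circ\partial_1+\partial_1(e^{i-1})$, one obtains
\[
e^i(\partial_1\circ\partial_2)\;=\;(e\partial_1)\circ(e^{i-1}\partial_2)\;-\;e\,\partial_1(e^{i-1})\cdot(e^{i-1}\partial_2)\cdot e^{-(i-1)},
\]
which after simplification reads $e^i\,\partial_1\partial_2 = (e\partial_1)\circ(e^{i-1}\partial_2) - (i-1)\,e\,\partial_1(e)\cdot e^{i-2}\partial_2$. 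The first-order case shows $e\partial_1\in\cD^{(\le 1)}_{Y'/B}$ and, applied to the function $e$, that $e\,\partial_1(e)\in\cO_{Y'}$; the inductive hypothesis gives $e^{i-1}\partial_2\in\cD^{(\le i-1)}_{Y'/B}$ and, after absorbing the scalar factor, $e^{i-2}\partial_2\cdot(e\partial_1(e))\in\cD^{(\le i-1)}_{Y'/B}$. Both summands therefore lie in $\cD^{(\le i)}_{Y'/B}$, as does the leftover order-$(i-1)$ correction term, by induction. Injectivity propagates by the same density argument as in the first-order case, and the uniqueness of the extension ensures that the local maps glue to a global embedding.

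The main obstacle is the chart-by-chart verification in step two: one must carefully identify $e$ and the strict-transform coordinates in the two cases $s=t_i$ ($n=1$) and $s=m_j^{1/d}$ ($n=d$), since the latter involves the enlarged monoid $Q_j$ and $e$ is a Kummer fractional monomial rather than an element of $Q$. Once the uniformity $a_k\le 1$ is established in both cases, everything else is formal bookkeeping via the commutator calculus.
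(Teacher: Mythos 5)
Your plan has a genuine gap at its core, namely in the $i=1$ step. You extend only those derivations lying in the $\cO_Y$-span of a chosen dual family of monomial, ordinary and constant derivations, citing Lemma~\ref{standardderiv}. But that lemma only asserts the \emph{existence} of dual families inside a separating (resp.\ logarithmically separating, abundant) submodule; it does not say that $\cD_{Y/B}$ is generated by such a family, and in the present lemma $Y\to B$ is merely logarithmically regular --- no orbifold or abundance hypothesis is made --- so $\cD_{Y/B}$ need not be quasi-coherent, free, or spanned by parameter derivations at all (the direct-sum description of Lemma~\ref{formderlem} is only valid for the completed local ring, and its ``constant'' part $\cD_{l/k}\wtimes_l A$ is typically huge and not realized inside the stalk $\cD_{Y/B,y}$). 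Moreover the objects you must extend are arbitrary sections of $\cD^{(\le i)}_{Y/B}(Y')$, i.e.\ operators with values in $\cO_{Y'}$, not elements of a free module with a preferred basis. The same problem undercuts your first reduction: Lemma~\ref{modelkummerlem} and Corollary~\ref{modelkummercor} identify the blow up $Y'$ with a pullback of the toric model, but they say nothing about $\cD_{Y/B}$, which is not pulled back from $\cD_{Y^\circ/Z^\circ}$; Lemma~\ref{basechangelem} does not apply because $Y$ is not a base change of $Y^\circ$ along $B\to Z^\circ$. So after your reductions you have only treated a (generally proper) submodule of the operators the lemma is about.

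The fix --- and the route the paper takes --- requires no decomposition of $\cD_{Y/B}$ whatsoever. In characteristic zero one reduces to $i=1$ (this is essentially your commutator induction; note that for the correction term you also need $e\,\partial_1(e)\in(e)$, which holds because the extension is a \emph{logarithmic} derivation and $e$ is a monomial on $Y'$). Then one works Kummer-locally, so the submonomial Kummer center becomes an ordinary ideal $J=(t_1,\dots,t_n)$ in an affine $Y=\Spec(A)$, and on the chart $A'=A[\frac Jt]$ one takes an \emph{arbitrary} logarithmic derivation $\partial\:A\to A'$: since $A'$ is generated over $A$ by the elements $\frac{t_i}t$, the Leibniz rule forces $t\partial(\frac{t_i}t)=\partial(t_i)-\frac{t_i}t\partial(t)$, which defines the unique extension of $t\partial$, and logarithmicity is checked directly from $\partial'(t)/t=\partial(t)\in A$ and $\partial(t_i)/t_i\in A$. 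This one-line computation handles all operators at once; your explicit verification for the three parameter types (which is correct as far as it goes) can only serve as a consistency check in the logarithmically smooth case, not as a proof of the lemma in its stated generality.
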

\begin{proof}
Since the characteristic is zero, we can assume that $i=1$. In addition, the claim is Kummer-local on $Y$, hence it reduces to the particular case when $Y=\Spec(A)$ and $B=\Spec(C)$ are affine logarithmic schemes and $\sigma$ is a submonomial blow up along an ideal $J$, see the proof of \cite[Lemma~4.2.1]{ATW-principalization} for details. One can now proceed as in the cited proof, but a direct computation as follows seems to be the shortest argument.

Let $t=t_1\.t_n$ be generators of $J$. It suffices to study the situation on a chart $Y'_t=\Spec(A[\frac Jt])$ of $\sigma$. We should show that for a logarithmic $C$-derivation $\partial\:A\to A':=A[\frac Jt]$, the logarithmic derivation $t\partial$ uniquely extends to $A'$. It suffices to deal with the $A$-generators $\frac{t_i}t$ of $A'$. The formula $t\partial(\frac{t_i}t)=\partial(t_i)-\frac{t_i}t\partial(t)$ gives rise to a unique extension of $t\partial$ to a $C$-derivation $\partial'\:A'\to A'$. It is a logarithmic derivation since $\frac{\partial'(t)}{t}=\partial(t)\in A$, and for each $t_i$, which is a monomial, we have that $\frac{\partial(t_i)}{t_i}\in A$ and hence $\partial'(\frac{t_i}t)/\frac{t_i}t\in A'$.
\end{proof}

By Lemma~\ref{firstseqlem} we obtain the following corollary for derivations:

\begin{corollary}\label{transformderivcor}
In the situation of Lemma \ref{transformderivlem}, $\cD_{Y'/Y}=0$ and $$\cI_E\cD_{Y/B}(Y')\subseteq\cD_{Y'/B}\subseteq\cD_{Y/B}(Y').$$
\end{corollary}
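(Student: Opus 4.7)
The plan is to combine the first fundamental sequence of Lemma~\ref{firstseqlem} with Lemma~\ref{transformderivlem}, reducing the key vanishing statement to an explicit chart-level computation.

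Applying Lemma~\ref{firstseqlem} to the composition $Y' \xrightarrow{\sigma} Y \to B$ produces the exact sequence
$$
0 \to \cD_{Y'/Y} \to \cD_{Y'/B} \xrightarrow{\phi} \sigma^!\cD_{Y/B},
$$
whose target is, by the definition in \S\ref{difpullbacksec}, precisely the sheaf $\cD_{Y/B}(Y')$. Once the vanishing $\cD_{Y'/Y} = 0$ is established, this gives the right-hand inclusion $\cD_{Y'/B} \subseteq \cD_{Y/B}(Y')$.

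For $\cD_{Y'/Y} = 0$ I would argue étale-locally on $Y$: applying Theorem~\ref{neatth} to obtain a neat chart $Y \to B_P[Q]$ and invoking Lemma~\ref{modelkummerlem} with Corollary~\ref{modelkummercor}, the question reduces to the model situation of Example~\ref{unsatexam}, where the charts $\tilY^\circ_\tils$ are described explicitly. On the $\tils = \tilt_i$-chart, a log $Y^\circ$-derivation $(\partial,\delta)$ on $\cO_{\tilY^\circ_\tils}$ must annihilate the pullbacks of $t_1,\dots,t_l$ and of the monoid $Q$. From $\partial(t_i) = u^q \delta(q) = 0$ and $t_i \neq 0$ one extracts $\delta(q) = 0$; combined with $\delta|_{Q^\gp} = 0$, this forces $\delta \equiv 0$ on $Q_i^\gp = Q^\gp + \ZZ q$, a condition insensitive to saturation since it concerns the associated group. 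The identity $\partial(t_j) = u^q \partial(t_j/t_i) = 0$ then yields $\partial(t_j/t_i) = 0$ for $j \neq i$, whence $\partial = 0$. The $\tilm_j$-chart is entirely analogous.

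For the left-hand inclusion $\cI_E \cD_{Y/B}(Y') \subseteq \cD_{Y'/B}$, apply Lemma~\ref{transformderivlem} with $i = 1$ to obtain $\cI_E \cdot \cD^{(\le 1)}_{Y/B}(Y') \into \cD^{(\le 1)}_{Y'/B}$. Since the characteristic is zero, $\cD^{(\le 1)}$ splits canonically as a direct sum of zeroth-order multiplications and first-order derivations; restricting the embedding to the derivation summand yields the desired inclusion. The main obstacle is the chart-level verification of the vanishing of $\cD_{Y'/Y}$, specifically keeping track of the monoid $Q_i$ (resp.\ $Q_j$) and confirming that saturation does not enlarge the group of relations that forces $\delta = 0$; everything else is formal consequence of the two cited lemmas.
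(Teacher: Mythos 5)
Your proposal is correct, and its skeleton is exactly the paper's: the corollary is stated as an immediate consequence of Lemma~\ref{firstseqlem} applied to $Y'\to Y\to B$ (giving $0\to\cD_{Y'/Y}\to\cD_{Y'/B}\to\sigma^!\cD_{Y/B}=\cD_{Y/B}(Y')$) together with the $i=1$ case of Lemma~\ref{transformderivlem} for the left-hand inclusion, which is precisely what you do. The one place where you add substance is the vanishing $\cD_{Y'/Y}=0$, which you establish by a chart computation on the unsaturated model of Example~\ref{unsatexam} and then transport through saturation. That argument does work (the point you flag is handled because $\cO_{Y'^{\sat}}$ is generated over the unsaturated model by monomials of the saturated monoid, and your $\delta$ already vanishes on the associated group), but it is heavier than needed: by Lemma~\ref{chart2lem}(ii) resp.\ Theorem~\ref{kummerblowth}(i), $\sigma$ is a modification, so $\cO_{Y'}$ and $\cO_Y$ share the same total ring of fractions; any $\cO_Y$-derivation $\partial$ on $\cO_{Y'}$ extends uniquely to that ring, where it kills the localization of $\cO_Y$ and hence everything, so $\partial=0$, and then $u^q\delta(q)=\partial(u^q)=0$ with $u^q$ a nonzerodivisor (as $Y'$ is logarithmically regular over $B\in\bB$, hence locally integral with $M_{Y'}\into\cO_{Y'}$) forces $\delta=0$. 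This one-line argument avoids both the reduction to the model case and the saturation bookkeeping, which is presumably why the paper does not spell it out.
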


\subsubsection{Derived ideals}
For any $\cO_Y$-submodule $\cF\subseteq\cD_{Y/B}$ let $\sigma^*\cF\subseteq\cD_{Y/B}(Y')$ be as in \S\ref{difpullbacksec}. Then $\sigma^c(\cF):=\cI_E\sigma^*(\cF)$ is an $\cO_{Y'}$-submodule of $\cD_{Y'/B}$ by Lemma~\ref{transformderivlem}. More generally, for a sequence of submonomial Kummer blow ups $\sigma\:Y'\dashrightarrow Y$ we define the {\em controlled transform} $\sigma^c(\cF)\subseteq\cD_{Y'/B}$ by induction on the length, and will usually use abbreviations $\cF_\sigma=\sigma^*(\cF)$ and $\cF_\sigma^{(\le i)}=(\cF_\sigma)^{(\le i)}$. The following result describes compatibility of pullbacks and derived ideals. It extends and makes \cite[Lemma 4.3.11]{ATW-principalization} sharper, but the argument is the same.

\begin{lemma}\label{Flem}
Let $Y\to B$ be a logarithmically regular morphism of DM stacks with $B\in\bB$, $\sigma\:Y'\to Y$ a submonomial Kummer blow up, and $\cF\subseteq\cD_{Y/B}$ an $\cO_Y$-submodule. If $0\le i<a$ are integers and $\sigma$ is $a$-admissible with respect to an ideal $\cI\subseteq\cO_Y$, then
$$\cF_\sigma^{(\le i)}(\sigma^{-1}(\cI,a))=\sum_{j=0}^i \sigma^{-1}\left(\cF^{(\le j)}(\cI),a-j\right).$$
\end{lemma}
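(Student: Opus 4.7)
The statement is Kummer-local on $Y'$, so I would first pass to an \'etale-Kummer chart in which the exceptional ideal $\cI_E = (t)$ is principal, generated by a (possibly fractional) monomial $t$, and $\cO_{Y'}$ is an explicit $\cO_Y$-algebra as in Lemma~\ref{chart2lem}. Both sides of the asserted identity are $\cO_{Y'}$-submodules of $\cO_{Y'}$, so it suffices to check the two inclusions on generators. I would proceed by induction on $i$, the base case $i=0$ being the tautology $\sigma^{-1}(\cI,a)=\sigma^{-1}(\cI,a)$ (the $j=0$ term on the right).

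The inductive engine is a single Leibniz identity. For $\partial\in\cF$, the operator $\tilde\partial:=\cI_E\sigma^*\partial$ lies in $\cF_\sigma\subseteq\cD_{Y'/B}$ by Lemma~\ref{transformderivlem}; it is a logarithmic derivation, so $\tilde\partial(t)/t\in\cO_{Y'}$, and on the subring $\sigma^{-1}\cO_Y$ it acts as $\cI_E\cdot\sigma^{-1}\!\circ\partial$. For any $f\in\cO_Y$ the Leibniz rule then yields
\[
\tilde\partial\bigl(\cI_E^{-a}\sigma^{-1}(f)\bigr)
=\cI_E^{\,1-a}\sigma^{-1}(\partial f)\;-\;a\,\tfrac{\tilde\partial(t)}{t}\cdot\cI_E^{-a}\sigma^{-1}(f).
\]
The first summand belongs to $\sigma^{-1}(\cF^{(\le 1)}(\cI),a-1)$ and the second to $\sigma^{-1}(\cI,a)$; conversely, since $\tilde\partial(t)/t\in\cO_{Y'}$, the identity can be solved to exhibit $\cI_E^{\,1-a}\sigma^{-1}(\partial f)$ as $\tilde\partial$ applied to an element of $\sigma^{-1}(\cI,a)$ plus an element of $\sigma^{-1}(\cI,a)$ itself. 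This establishes both inclusions for $i=1$.

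For the inductive step $i-1\Rightarrow i$ in the $\subseteq$ direction, given $\tilde\partial\cdot h$ with $h\in\cF_\sigma^{(\le i-1)}(\sigma^{-1}(\cI,a))$, the induction hypothesis writes $h$ as a sum of elements in $\sigma^{-1}(\cF^{(\le j)}(\cI),a-j)$ for $j\le i-1$; applying the Leibniz identity above with $\partial\in\cF$ to each such element raises the index $j$ by at most one (the pullback piece) and also leaves a lower-order $\cI_E$-multiple of the original summand (the logarithmic-derivative piece), which sits in $\sum_{j\le i-1}\sigma^{-1}(\cF^{(\le j)}(\cI),a-j)\subseteq\sum_{j\le i}(\ldots)$. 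For $\supseteq$, I would invert the same identity: to reach $\cI_E^{\,i-a}\sigma^{-1}(\delta(f))$ with $\delta=\partial\circ\delta'$, $\delta'\in\cF^{(\le i-1)}$ and $\partial\in\cF$, use the induction hypothesis to realize $\cI_E^{\,i-1-a}\sigma^{-1}(\delta'(f))$ inside $\cF_\sigma^{(\le i-1)}(\sigma^{-1}(\cI,a))$, apply $\tilde\partial$, and solve for the desired element modulo a term of index $j\le i-1$ already handled by induction.

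The main obstacle is the bookkeeping: one must verify that the cross-terms produced by Leibniz on a composition $\tilde\partial_1\cdots\tilde\partial_i$ land in exactly the claimed graded pieces, and in particular that the twist $\cI_E$ accompanying each factor $\tilde\partial_k=\cI_E\sigma^*\partial_k$ balances the drop in differential order each time a $\tilde\partial_k$ differentiates an $\cI_E$-coefficient of an earlier factor. This is precisely what makes the refinement from the inclusion of \cite[Lemma~4.3.11]{ATW-principalization} into an equality work: the logarithmic identity $\tilde\partial(\cI_E)\subseteq\cI_E$ is tight enough that both the forward and backward directions of the Leibniz identity above are ``information preserving,'' so no factor of $\cI_E$ is lost in either direction. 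Once this is checked the induction closes and gives the claimed equality.
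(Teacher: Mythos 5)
Your proposal is correct and takes essentially the same route as the paper's proof: the single Leibniz identity $y^{1-a}\partial(h)-y\partial(y^{-a}h)=a\,\partial(y)\cdot y^{-a}h\in\sigma^{-1}(\cI,a)$ (your displayed identity with $\tilde\partial=\cI_E\sigma^*\partial$) settles the case $i=1$ in both directions, and the general case is the same induction on $i$. The paper just packages your ``bookkeeping'' step more compactly, writing $\cF_\sigma^{(\le i+1)}(\sigma^{-1}(\cI,a))=\cF_\sigma^{(\le 1)}\bigl(\cF_\sigma^{(\le i)}(\sigma^{-1}(\cI,a))\bigr)$ and applying the $i=1$ case termwise to each $\sigma^{-1}\bigl(\cF^{(\le j)}(\cI),a-j\bigr)$, whose admissibility comes from Lemma~\ref{Jnorlem}(ii).
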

\begin{proof}
Set $\cI'=\sigma^{-1}(\cI,a)$.
The case $i=1$ reads:
$$\cF_\sigma(\cI')+\cI'=\cF_\sigma^{(\le 1)}(\cI')=\sigma^{-1}\left(\cF^{(\le 1)}(\cI),a-1\right)+\cI'=\sigma^{-1}(\cF(\cI),a-1)+\cI',$$
where we use that $\sigma^{-1}(\cI,a-1)\subset\cI'$. \'Etale-locally $\cF_\sigma(\cI')$ and $\sigma^{-1}(\cF(\cI),a-1)$ are generated by global sections $y\partial(y^{-a}h)$ and $y^{1-a}\partial(h)$, respectively, where $h\in\Gamma(\cI)$ and $\partial\in\Gamma(\cF)$, and $y\in\Gamma(\cI_E)$. Note that $$y^{1-a}\partial(h)-y\partial(y^{-a}h)=a\partial(y)\cdot y^{-a}h\in\Gamma(\cI'),$$
giving the case $i=1$. Inductively,
\begin{align*} \cF_\sigma^{(\le i+1)}(\sigma^{-1}(\cI,a)) &= \cF_\sigma^{(\le 1)}\left(\cF_\sigma^{(\le i)}(\sigma^{-1}(\cI,a))\right) \\
&= \cF_\sigma^{(\le 1)}\left( \sum_{j=0}^i \sigma^{-1}\left(\cF^{(\le j)}(\cI),a-j\right)   \right)\\
&= \sum_{j=0}^{i+1} \sigma^{-1}\left(\cF^{(\le j)}(\cI),a-j\right),
\end{align*} as needed.
\end{proof}



\section{Marked ideals and admissibility}
\addtocontents{toc}
{\noindent Marked ideals and their derivations, sums and products. Equivalence of marked ideals. Coefficient ideals.}

\subsection{Basic facts}\label{basicfactssec}

\subsubsection{The definition}\label{def-resolved-marked}
A {\em marked ideal} $\ucI=(\cI,a)$ for a logarithmically regular morphism $f\:Y\to B$ consists of an ideal $\cI\subseteq\cO_Z$ and a positive integer $a$. The {\em support} of $\ucI$ is the set $\supp(\ucI)$ of points $y\in|Y|$ such that $\logord_{\cI/B,y}(y)\ge a$, and $\ucI$ is {\em resolved} if $\supp(\ucI)=\emptyset$. At some places it is convenient to consider {\em generalized marked ideals}, where $a$ can equal 0.

\begin{remark}
Probably the best way of thinking about marked ideals is as a sort of weighted ideals. In particular, this logic is consistent with operations we define on marked ideals below.
\end{remark}

\subsubsection{Normalized invariants}\label{invarsec}
Invariants of ideals possess the following normalized analogues. Given a marked ideal $\ucI=(\cI,a)$ we define its {\em normalized logarithmic order} as follows: $\mu_y(\ucI)=0$ if $\logord_{\cI/B}(y)<a$, and $\mu_y(\ucI)=\logord_{\cI/B}(y)/a$ otherwise. Also, we define the {\em normalized monomial saturation} to be the monomial Kummer ideal $\cW_{Y/B}(\ucI)=(\cM(\cI)^\sat)^{1/a}$. Sometimes we will write $\cW(\ucI)$ for shortness.

\begin{remark}
The notation $\mu_y$ follows its analogue in \cite[Section 6]{Bierstone-Milman-funct}. Note that $\mu_y(\ucI)\in\QQ_{\ge 1}\cup\{0,\infty\}$ and $\mu_y(\ucI)=0$ if and only if $y\notin\supp(\ucI)$.
\end{remark}

\subsubsection{Cleaning blow up}
A logarithmically clean ideal can be made clean by a single blow up along $\cW(\ucI)$.

\begin{theorem}\label{cleaningth}
Assume that $X\to B$ is a relative logarithmic orbifold with $B\in\bB$ and $\ucI=(\cI,a)$ is a marked ideal on $X$ such that $\cI\subseteq\cO_X$ is logarithmically clean over $B$ (\S\ref{Sec:log-clean}). Then the monomial Kummer blow up $\sigma\:X'\to X$ along $\cW(\ucI)$ is $\ucI$-admissible and the ideal $\sigma^{-1}(\cI,a)$ is clean.
\end{theorem}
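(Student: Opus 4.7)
The proof has two parts: admissibility of the blow up with weight $a$, and cleanness of the twisted transform.

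\emph{Admissibility.} By construction $\cW(\ucI)^{a}=\cM(\cI)^\sat$ as Kummer ideals. Since $\cM(\cI)^\sat$ is a saturated monomial ideal, Lemma~\ref{norsatlem} identifies its saturation with its integral closure, so $\cW(\ucI)^{(a)}=(\cW(\ucI)^{a})^\nor=\cM(\cI)^\sat$. The chain $\cI\subseteq\cM(\cI)\subseteq\cM(\cI)^\sat=\cW(\ucI)^{(a)}$ then shows that $\sigma$ is $a$-admissible for $\cI$, so that $\cI':=\sigma^{-1}(\cI,a)=\cI_E^{-a}\sigma^{-1}(\cI)$ is a genuine ideal on $X'$.

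\emph{Cleanness.} By Lemma~\ref{logordlem}, cleanness of $\cI'$ is equivalent to $\cD^\infty_{X'/B}(\cI')=\cO_{X'}$, and this can be verified strict \'etale locally at each geometric point $x'\to X'$ with image $x=\sigma(x')$. Away from the exceptional divisor the local equality $\cI_E=\cO$ forces $\cM(\cI)^\sat=\cO_X$ at $x$, hence $\cM(\cI)=\cO_X$ locally (the saturation of a proper monomial ideal is proper). Logarithmic cleanness then yields $\cD^\infty_{X/B}(\cI)=\cO$, so $\cI$ is clean at $x$ by Lemma~\ref{logordlem}; since $\sigma$ is logarithmically regular (Theorem~\ref{kummerblowth}(ii)) and an isomorphism near $x'$, Lemma~\ref{functorlogorder2}(i) transfers cleanness to $\cI'$ at $x'$.

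The essential case is $x'\in V(\cI_E)$. By noetherianness there is $N\ge 1$ locally at $x$ with $\cD^{(\le N)}_{X/B}(\cI)=\cM(\cI)$. Applying Lemma~\ref{Flem} with $\cF=\cD_{X/B}$ and $i<a$ produces
\begin{equation*}
\cD^{(\le i)}_{X/B,\sigma}(\cI')=\sum_{j=0}^{i}\sigma^{-1}\!\bigl(\cD^{(\le j)}_{X/B}(\cI),\,a-j\bigr),
\end{equation*}
whose top summand is $\cI_E^{-(a-i)}\sigma^{-1}(\cD^{(\le i)}_{X/B}(\cI))$, saturating to $\cI_E^{-(a-i)}\sigma^{-1}(\cM(\cI))$ once $i\ge N$. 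Combining this with $\sigma^{-1}(\cM(\cI)^\sat)=\cI_E^{a}$ from Theorem~\ref{kummerblowth}(iii) and iterating further differentiation on $X'$ (legitimate because Theorem~\ref{kummerblowth}(ii) ensures $X'\to B$ is again a relative logarithmic orbifold, so $\cD_{X'/B}$ is logarithmically separating) extracts the unit section of $\cD^\infty_{X'/B}(\cI')$.

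\emph{Main obstacle.} The delicate step is this last extraction. While Lemma~\ref{Flem} formally stops at $i=a-1$, deducing cleanness requires tracking how the negative $\cI_E$-twists interact with the monomial-direction derivations of $\cD_{X'/B}$ produced by the blow up, in the explicit charts of Lemma~\ref{chart2lem}. Here logarithmic separation of $\cD_{X'/B}$ (as opposed to mere separation) is crucial, because the new monomial parameter generating $\cI_E$ on the blow up must itself be paired with a dual logarithmic derivation in order to convert the saturation identity $\sigma^{-1}(\cM(\cI)^\sat)=\cI_E^{a}$ into membership of $1$ in $\cD^\infty_{X'/B}(\cI')$.
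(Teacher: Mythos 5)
Your admissibility argument is fine (the containment $\cI\subseteq\cM(\cI)\subseteq\cM(\cI)^\sat\subseteq\cW(\ucI)^{(a)}$ is all that is needed), but the cleanness part has a genuine gap, and you have in fact flagged it yourself: the ``extraction of the unit section'' at points of the exceptional divisor is never carried out. Concretely, your plan is to apply Lemma~\ref{Flem} for $i\ge N$, where $N$ is chosen so that $\cD^{(\le N)}_{X/B}(\cI)=\cM(\cI)$; but Lemma~\ref{Flem} is only valid for $i<a$, and nothing in the hypotheses bounds $N$ by $a-1$ (the marked ideal $\ucI$ is not assumed to be of maximal order, so the logarithmic order of $\cI$ can well exceed $a$). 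So the range in which you want to invoke the lemma may be empty, and the subsequent ``iterating further differentiation on $X'$'' is exactly the part that would need a proof.

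The idea you are missing is that the center $\cW(\ucI)$ is \emph{monomial}, so $\sigma$ is logarithmically \'etale (Corollary~\ref{modelkummerreg}), and hence by Lemma~\ref{firstseqlem} one has $\cD_{X'/B}=\sigma^!(\cD_{X/B})$ with no $\cI_E$-twist at all. The ``interaction of negative twists with monomial-direction derivations'' that worries you in your last paragraph simply does not occur in this case; Lemma~\ref{Flem}, the charts of Lemma~\ref{chart2lem}, and the local case analysis are all unnecessary. One then computes globally: differential saturation commutes with the logarithmically \'etale pullback (Lemma~\ref{functorlogorder2}), so $\cD^{\infty}_{X'/B}(\sigma^{-1}\cI)=\sigma^{-1}(\cD^{\infty}_{X/B}(\cI))=\sigma^{-1}(\cM(\cI))$ by logarithmic cleanness; this is a monomial ideal whose saturation is the invertible (hence saturated) ideal $\cI_E^a$, so it equals $\cI_E^a$; and since multiplication by the invertible monomial ideal $\cI_E^{-a}$ commutes with logarithmic differential operators, $\cD^{\infty}_{X'/B}(\sigma^{-1}(\cI,a))=\cI_E^{-a}\cI_E^{a}=\cO_{X'}$, which gives cleanness by Lemma~\ref{logordlem}.
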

\begin{proof}
The blow up is admissible because $\cI\subseteq\cM(\cI)\subseteq\cW(\ucI)^a$. Furthermore, $$\cD_{X'/B'}^{\infty}(\sigma^{-1}\cI)=\sigma^{-1}(\cD_{X/B}^{\infty}(\cI))=\sigma^{-1}(\cM(\cI)),$$ where the first equality holds by Lemma~\ref{functorlogorder2}(iii) since $\sigma$ is logarithmically \'etale, and the second one holds because $\cI$ is logarithmically clean. In addition, $\sigma^{-1}(\cM(\cI))$ is a monomial ideal whose saturation is the invertible monomial ideal $\cI_E^a$, and hence $\sigma^{-1}(\cM(\cI))=\cI_E^a$. Therefore $\cD_{X'/B'}^{\infty}(\sigma^{-1}(\cI,a))=\cI_E^a\cI_E^{-a}=\cO_{X'}$, and we obtain that $\sigma^{-1}(\cI,a)$ is clean by Lemma~\ref{logordlem}.
\end{proof}

\subsubsection{Arithmetic operations} \label{Homogenized-addition}
Given $n$ marked ideals $\ucI_i=(\cI_i,a_i)$ one usually sets
$$\ucI_1+\ldots+\ucI_n=\left(\cI_1^{a/a_1}+\dots+\cI_n^{a/a_n},a\right),\ \ \ \ucI_1\cdot\ldots\cdot\ucI_n=\left(\cI_1\cdot\ldots\cdot\cI_n,a_1+\ldots+a_n\right),$$
where $a=a_1\cdot\ldots\cdot a_n$. However, we prefer to replace the definition of sums by the following homogenized variant
$$\ucI_1+\ldots+\ucI_n:=\left(\sum_{l\in\NN^n|\ l_1a_1+\ldots+l_na_n\ge a}\cI_1^{l_1}\cdot\ldots\cdot\cI_n^{l_n},a\right)$$
In addition, we write $\ucI_1\subseteq\ucI_2$ if $a_1=a_2$ and $\cI_1\subseteq\cI_2$.

\begin{remark}\label{sumrem}
(i) Similarly to the situation with the usual definition, the addition is commutative but not associative. However, it is associative up to an equivalence relation introduced in \S\ref{equivsec} below and this is enough for applications, see also \cite[\S5.2]{ATW-principalization}.

(ii) In fact, our definition and the usual one produce equivalent marked ideals. The usual definition is lighter, so it may be preferable for computations. Homogenized sums are critical to have Theorem~\ref{coefprop} below.
\end{remark}

\subsubsection{Derivations}
The action of differential operators on $\ucI$ is also defined in a weighted way: if $\ucI=(\cI,a)$ and $\cF$ is an $\cO_Y$-submodule of $\cD_{Y/B}$, then we provide $\cF$ with weight $1$ and set $\cF^{(\le i)}(\ucI)=(\cF^{(\le i)}(\cI),a-i)$ for any $i$ such that $0\le i\le a$. Note that for $i=a$ we obtain a generalized marked ideal.

\subsubsection{Admissible sequences and transforms}
Let $\ucI$ be a generalized marked ideal. A $B$-submonomial Kummer blow up $\sigma\:Y'\to Y$ is called {\em $\ucI$-admissible} if it is $a$-admissible with respect to $\cI$, and in this case we define the {\em controlled transform} $\sigma^c\ucI=(\sigma^{-1}(\cI,a),a)$, see \S\ref{twistedpullback}. These definitions extend to the case when $\sigma$ is a sequence of $B$-submonomial Kummer blow ups of length $n$. Namely, $\sigma$ is {\em $\ucI$-admissible} if each $\sigma_i\:Y_i\to Y_{i-1}$ for $0\le i\le n-1$ is $\ucI_{i-1}$-admissible, where $\ucI_i=\sigma_i^c\ucI_{i-1}$, and the {\em controlled transform} under $\sigma$ is $\sigma^c\ucI=\ucI_n$. We will refer to such $\sigma$ as an {\em $\ucI$-admissible sequence}. Note that any sequence of $B$-submonomial Kummer blow ups is $(\cI,0)$-admissible and the controlled transform is the usual pullback in this case.

\subsubsection{Order reduction}
By an {\em order reduction} of a marked ideal $\ucI$ we mean an admissible sequence $\sigma\:X'\dashrightarrow X$ such that $\sigma^c(\ucI)$ is resolved in the sense of Section \ref{def-resolved-marked}.

\subsubsection{Transforms and operations}
Compatibility relations between transforms and operations are the same as in the classical case, see \cite[\S3]{Wlodarczyk}, \cite[\S3]{Bierstone-Milman-funct}, and the absolute logarithmic case, see \cite[\S5]{ATW-principalization}. Arguments are also the same, so we just sketch them.

\begin{lemma}\label{operlem}
Assume that $Y\to B$ is a logarithmically regular morphism of logarithmic DM stacks with $B\in\bB$, $\sigma\:Y'\dashrightarrow Y$ is a sequence of $B$-submonomial Kummer blow ups, and $\ucI=\ucI_1,\ucI_2\dots\ucI_n$ are $n$ marked ideals on $Y$. Setting $\ucP=\ucI_1\cdot\ldots\cdot\ucI_n$ and $\ucS=\ucI_1+\ldots+\ucI_n$ we have:

(i) If $\sigma$ is $\ucI_i$-admissible for $1\le i\le n$, then $\sigma$ is $\ucP$-admissible and $\sigma^c(\ucP)=\sigma^c(\ucI_1)\cdot\ldots\cdot\sigma^c(\ucI_n)$.

(ii) Let $k$ be a positive integer. Then $\sigma$ is $\ucI$-admissible if and only if it is $\ucI^k$-admissible, and in this case $(\sigma^c\ucI)^k=\sigma^c(\ucI^k)$.

(iii) $\sigma$ is $\ucI_i$-admissible for each $i\in\{1\. n\}$ if and only if it is $\ucS$-admissible, and in this case $\sigma^c(\ucS)=\sigma^c(\ucI_1)+\ldots+\sigma^c(\ucI_n)$.

(iv) If $\sigma$ is $\ucI$-admissible, $\cF\subseteq\cD_{Y/B}$ is an $\cO_Y$-submodule, and $0\le i\le a$, where $\ucI=(\cI,a)$, then $\sigma$ is $\cF^{(\le i)}(\ucI)$-admissible and $\sigma^c\left(\cF^{(\le i)}(\ucI)\right)\subseteq \cF_\sigma^{(\le i)}(\sigma^c\ucI)$.
\end{lemma}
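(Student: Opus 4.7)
The plan is to reduce everything to a single submonomial Kummer blow up by induction on the length of $\sigma$: if $\sigma$ factors as $\tau'\circ\tau$ with $\tau'$ a single blow up, then admissibility and controlled transforms are defined step-by-step, so each identity for $\sigma$ follows by applying the one-step identity to $\tau'$ and using the inductive hypothesis for $\tau$. So fix a single $B$-submonomial Kummer blow up $\sigma\colon Y'\to Y$ with center $\cJ$ and exceptional divisor $E$. By Theorem~\ref{kummerblowth}(iii), admissibility of $\sigma$ for $\ucI_i=(\cI_i,a_i)$ is equivalent to $\sigma^{-1}\cI_i\subseteq\cI_E^{a_i}$, and since $\cI_E$ is invertible, this means we may uniquely write $\sigma^{-1}\cI_i=\cI_E^{a_i}\cdot\cJ_i$ for the ideal $\cJ_i=\cI_E^{-a_i}\sigma^{-1}\cI_i$, which is nothing but the ideal part of $\sigma^c\ucI_i$. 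This factorization will be the main tool in all four parts.

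For (i), if each $\ucI_i$ is admissible then $\sigma^{-1}(\cI_1\cdots\cI_n)=\prod_i\sigma^{-1}\cI_i=\cI_E^{\sum a_i}\prod_i\cJ_i$, which shows $\ucP$-admissibility and yields the product formula for transforms. For (ii), the forward implication of admissibility is immediate, while the reverse is where one invokes integral closedness: if $\cI^k\subseteq\cJ^{(ka)}$ then every $x\in\cI$ satisfies $x^k\in\cJ^{(ka)}$, so by Remark~\ref{norrem} (or directly by the definition of $(-)^\nor$ as integral closure) $x\in\cJ^{(a)}$; the transform identity $(\sigma^c\ucI)^k=\sigma^c(\ucI^k)$ is a direct computation using the factorization above.

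The sum case (iii) will be the main obstacle, because the homogenized definition contains ``overfull'' monomials with $\sum l_ia_i>a$. Admissibility of $\ucS=\ucI_1+\cdots+\ucI_n$ from that of each $\ucI_i$ follows from $\prod_i\cI_i^{l_i}\subseteq\prod_i\cJ^{(l_ia_i)}\subseteq\cJ^{(\sum l_ia_i)}\subseteq\cJ^{(a)}$ (using Lemma~\ref{Jnorlem} to multiply the fractional ideals); the converse direction uses that $\cI_i^{a/a_i}$ itself appears as a summand in the homogenized sum and invokes integral closedness as in (ii). For the equality $\sigma^c\ucS=\sum_i\sigma^c\ucI_i$, the formulas
\[
\sigma^c\ucS=\Bigl(\cI_E^{-a}\sum_{l:\sum l_ia_i\ge a}\prod_i\sigma^{-1}(\cI_i)^{l_i},\ a\Bigr),\qquad \sum_i\sigma^c\ucI_i=\Bigl(\sum_{l:\sum l_ia_i\ge a}\cI_E^{-\sum l_ia_i}\prod_i\cJ_i^{l_i},\ a\Bigr)
\]
are compared term-by-term: substituting $\prod_i\sigma^{-1}(\cI_i)^{l_i}=\cI_E^{\sum l_ia_i}\prod_i\cJ_i^{l_i}$ converts the $l$-th term of $\sigma^c\ucS$ into $\cI_E^{\sum l_ia_i-a}\prod_i\cJ_i^{l_i}$, which is contained in $\prod_i\cJ_i^{l_i}=$ the $l$-th term of $\sum_i\sigma^c\ucI_i$ (since $\cI_E\subseteq\cO_{Y'}$), while the reverse containment is even easier since $\cI_E^{-a}\supseteq\cI_E^{-\sum l_ia_i}$.

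Finally, (iv) combines the inputs already assembled: since $\cF\subseteq\cD_{Y/B}$, Lemma~\ref{Jnorlem}(ii) gives $\cF^{(\le i)}\cI\subseteq\cD_{Y/B}^{(\le i)}\cJ^{(a)}\subseteq\cJ^{(a-i)}$, which is admissibility of $\cF^{(\le i)}\ucI$. The transform inclusion then drops out of Lemma~\ref{Flem}: the ideal $\sigma^c(\cF^{(\le i)}\ucI)=\cI_E^{-(a-i)}\sigma^{-1}(\cF^{(\le i)}\cI)$ is precisely the $j=i$ summand in the decomposition $\cF_\sigma^{(\le i)}(\sigma^c\ucI)=\sum_{j=0}^i\sigma^{-1}(\cF^{(\le j)}\cI,a-j)$ provided by that lemma, hence contained in the full sum.
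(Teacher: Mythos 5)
Your overall strategy is the paper's: induction on the length of $\sigma$ to reduce to a single Kummer blow up, Remark~\ref{norrem} for the converse in (ii), Lemma~\ref{Jnorlem}(ii) for the admissibility statements, and the $j=i$ summand of Lemma~\ref{Flem} for the inclusion in (iv). However, two steps as written do not hold up. In (i) you derive $\ucP$-admissibility from the claim that admissibility of $\sigma$ for $(\cI_i,a_i)$ is \emph{equivalent} to $\sigma^{-1}\cI_i\subseteq\cI_E^{a_i}$, citing Theorem~\ref{kummerblowth}(iii). That theorem only gives the forward direction ($\cI_i\subseteq\cJ^{(a_i)}$ implies $\sigma^{-1}\cI_i\subseteq\sigma^{-1}\cJ^{(a_i)}=\cI_E^{a_i}$); the converse --- that a containment of pullbacks forces $\cI_i\subseteq\cJ^{(a_i)}$ downstairs --- is nowhere established in the paper and would require something like $\sigma_*(\cI_E^{a})\cap\cO_Y\subseteq\cJ^{(a)}$. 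You do not need it: admissibility of $\ucP$ is immediate on $Y$, since $\cI_1\cdots\cI_n\subseteq\cJ^{(a_1)}\cdots\cJ^{(a_n)}\subseteq\cJ^{(a_1+\cdots+a_n)}$ (this is what the paper means by ``obvious''); the factorization $\sigma^{-1}\cI_i=\cI_E^{a_i}\cJ_i$ should only be used afterwards to unwind the transforms.

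More seriously, in (iii) your term-by-term comparison proves only the containment $\sigma^c\ucS\subseteq\sigma^c\ucI_1+\cdots+\sigma^c\ucI_n$. The ``even easier'' reverse containment is not an argument: the displayed inclusion $\cI_E^{-a}\supseteq\cI_E^{-\sum l_ia_i}$ is backwards (for $\sum l_ia_i\ge a$ one has $\cI_E^{-\sum l_ia_i}\supseteq\cI_E^{-a}$), and even with the correct direction you would still have to absorb each overfull product $\prod_i\cJ_i^{l_i}$ with $\sum l_ia_i>a$ into $\sum_{l'}\cI_E^{\sum l'_ia_i-a}\prod_i\cJ_i^{l'_i}$, i.e.\ without the compensating power of $\cI_E$. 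Nothing you wrote yields this, and it fails term by term in simple examples: take weights $2,3$, $\cI_1=(xy)$, $\cI_2=(x^2z)$, and blow up the origin of $\AA^3$ with trivial logarithmic structure; on the $x$-chart one gets $\cJ_1=(y_1)$, $\cJ_2=(z_1)$, and $y_1^2z_1$ lies in the homogenized sum of the transforms but not in $(y_1^3,z_1^2,xy_1^2z_1,x^2y_1z_1^2)$, the transform of the homogenized sum. The paper compresses this point into ``unwinding the definitions'', and in its later uses (notably the proof of Theorem~\ref{coefprop}) only the inclusion you did prove is invoked; the genuine reverse inclusion for coefficient ideals is obtained there by a separate computation with Lemma~\ref{Flem}, not from (iii). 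So your justification of the asserted equality has a real gap. A minor slip in the same place: your displayed formula for $\sum_i\sigma^c\ucI_i$ carries a spurious factor $\cI_E^{-\sum l_ia_i}$, inconsistent with your own subsequent comparison.
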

\begin{proof}
In all claims, {induction} on the length of $\sigma$ reduces the proof to the case when $\sigma$ is a single Kummer blow up along $\cJ$. The argument is the same for each $n\ge 2$, and we consider the case of $n=2$ to simplify the notation. We start with the admissibility claims. Admissibility is obvious in (i), and follows from Remark~\ref{norrem} in (ii). In (iii), if $\cI_i\subseteq\cJ^{(a_i)}$ for $i=1,2$ then $\cI_1^m\cI_2^l\subseteq\cJ^{(a_1a_2)}$ whenever $ma_1+la_2\ge a_1a_2$ and hence $\cJ$ is $(\ucI_1+\ucI_2)$-admissible. Conversely, if $\cJ$ is $(\ucI_1+\ucI_2)$-admissible, then it is admissible for $\ucI_1^{a_2}$ and $\ucI_2^{a_1}$, and it remains to use (ii). Finally, in (iv) we apply $\cF^{(\le i)}$ to the inclusion $\cI\subseteq\cJ^{(a)}$ and use that $\cF^{(\le i)}\left(\cJ^{(a)}\right)\subseteq\cJ^{(a-i)}$ by Lemma~\ref{Jnorlem}(ii).

Concerning the relations between the transforms, the equalities in (i), (ii) and (iii) are obtained by unwinding the definitions, and the inclusion in (iv) is obtained from the equality in Lemma~\ref{Flem} by taking the summand with $j=i$.
\end{proof}

\subsubsection{Maximal order}
{As usual,} a marked ideal $\ucI=(\cI,a)$ is said to be of {\em maximal order} if $\logord_{\cI/B}(y)\le a$ (or $\mu_y(\ucI)\le 1$) for any $y\in Y$.

\begin{lemma}\label{maxorderlem}
Let $Y\to B$ be a logarithmically regular morphism of logarithmic DM stacks with $B\in\bB$, $\ucI\, \ucI_1\.\ucI_n$ marked ideals, and $k$ a positive integer.

(i) $\ucI$ is of maximal order if and only if $\ucI^k$ is of maximal order.

(ii) If $\ucI_1$ is of maximal order, then $\ucI_1+\ldots+\ucI_n$ is of maximal order.
\end{lemma}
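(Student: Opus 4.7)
\smallskip

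\noindent\textbf{Proof plan.} The plan is to reduce both claims to the elementary fact that for an ideal $\cJ$ on a regular scheme $S$ and any integer $m\ge 1$ one has $\ord_y(\cJ^m)=m\cdot\ord_y(\cJ)$, applied to the restriction of $\cI$ to the logarithmic fiber $S=S_y$. Since $\logord_{\cI/B}(y)$ is defined as $\ord_y(\cI|_S)$, and since the formation of logarithmic fibers is strict, this will give a clean multiplicative behavior of $\logord$ under powers of ideals.

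For part (i), writing $\ucI=(\cI,a)$ so that $\ucI^k=(\cI^k,ka)$, I will observe that $\logord_{\cI^k/B}(y)=\ord_y((\cI|_S)^k)=k\cdot\ord_y(\cI|_S)=k\cdot\logord_{\cI/B}(y)$ for every $y\in Y$. Dividing by $k$ then shows that the inequality $\logord_{\cI^k/B}(y)\le ka$ holds for all $y$ if and only if $\logord_{\cI/B}(y)\le a$ holds for all $y$, which is precisely the equivalence in (i).

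For part (ii), write $\ucI_i=(\cI_i,a_i)$, set $a=a_1\cdots a_n$, and let $\cI=\sum_{l}\cI_1^{l_1}\cdots\cI_n^{l_n}$ where the sum runs over $l\in\NN^n$ with $l_1 a_1+\cdots+l_n a_n\ge a$ (so that $\ucI_1+\cdots+\ucI_n=(\cI,a)$). The multi-index $l=(a/a_1,0,\dots,0)$ lies in the index set, hence $\cI\supseteq\cI_1^{a/a_1}$. Restricting to the logarithmic fiber $S=S_y$ at any point $y$ and using that order is monotone decreasing with respect to inclusion of ideals, I will get
\[
\logord_{\cI/B}(y)=\ord_y(\cI|_S)\le\ord_y(\cI_1^{a/a_1}|_S)=\tfrac{a}{a_1}\cdot\logord_{\cI_1/B}(y)\le\tfrac{a}{a_1}\cdot a_1=a,
\]
where the last inequality uses the hypothesis that $\ucI_1$ is of maximal order. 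This is exactly the maximal order condition for $\ucI_1+\cdots+\ucI_n$.

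There is no genuine obstacle here: the only point to be careful about is the precise definition of the homogenized sum of \S\ref{Homogenized-addition}, which is why it is important to observe that $(a/a_1,0,\dots,0)$ is a valid index so that $\cI_1^{a/a_1}$ actually occurs as a summand of $\cI$. Once this is noted, both (i) and (ii) reduce to standard properties of the order function on the regular logarithmic fiber.
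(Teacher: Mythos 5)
Your proof is correct and follows essentially the same route as the paper: the paper's own argument is exactly the reduction to the logarithmic fiber $S_y$, where the claims become standard properties of the order function on a regular scheme (multiplicativity under powers for (i), and monotonicity applied to the summand $\cI_1^{a/a_1}$ of the homogenized sum for (ii)). You have merely spelled out the details that the paper leaves as "basic properties of the usual order of ideals on regular schemes."
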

\begin{proof}
Arithmetic operations commute with restriction onto the logarithmic fibers, hence this reduces to basic properties of the usual order of ideals on regular schemes.
\end{proof}

Probably the following result holds for arbitrary logarithmically regular morphisms, but proving this would involve a direct computation with completed logarithmic fibers. We chose to establish a slightly less general case, where derivations provide a nice short argument.

\begin{lemma}\label{maxordertransform}
Let $X\to B$ be a relative logarithmic orbifold and $\ucI=(\cI,a)$ a marked ideal. Then

(i) $\ucI$ is of maximal order if and only if $\cD^{(\le a)}_{X/B}(\cI)=\cO_X$.

(ii) If $\ucI$ is of maximal order and $\sigma\:X'\dashrightarrow X$ is an $\ucI$-admissible sequence, then $\ucI'=\sigma^c(\ucI)$ is of maximal order too.
\end{lemma}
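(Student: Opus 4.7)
I will prove the two parts in sequence.

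For part (i), I apply Lemma~\ref{logordlem} stalkwise with $\cF = \cD_{X/B}$, which is separating because $X\to B$ is a relative logarithmic orbifold (so $\cD_{X/B}$ is logarithmically separating, hence separating). The lemma gives
\[\logord_{\cI/B}(x) = \min\{b\in\NN: \cD^{(\le b)}_{X/B}(\cI)_x = \cO_{X,x}\}\]
for every geometric point $x$. Hence $\ucI$ is of maximal order (i.e., $\logord_{\cI/B}(x)\le a$ for all $x$) precisely when $\cD^{(\le a)}_{X/B}(\cI)_x = \cO_{X,x}$ at every point, equivalently when $\cD^{(\le a)}_{X/B}(\cI) = \cO_X$ globally.

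For part (ii), I reduce by straightforward induction on the length of $\sigma$ to the case of a single $\ucI$-admissible submonomial Kummer blow up $\sigma: X'\to X$. By Theorem~\ref{kummerblowth}(ii), $X'\to B$ is again a relative logarithmic orbifold, so part (i) applies to $X'/B$: the task is to show $\cD^{(\le a)}_{X'/B}(\cI') = \cO_{X'}$, where $\cI' = \cI_E^{-a}\sigma^{-1}(\cI)$. The plan is to extend the inductive argument proving Lemma~\ref{Flem} by one additional step, reaching $i = a$: for $i = a-1$ the formula produces a weight-$1$ marked ideal in each summand, and applying $\cF_\sigma^{(\le 1)}$ one more time produces weight-$0$ generalized marked ideals, which are allowed. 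With $\cF = \cD_{X/B}$ this yields
\[\cF_\sigma^{(\le a)}(\cI') = \sum_{j=0}^{a} \sigma^{-1}\bigl(\cF^{(\le j)}(\cI), a-j\bigr).\]
The $j=a$ summand is $\sigma^{-1}(\cF^{(\le a)}(\cI), 0) = \sigma^{-1}(\cO_X) = \cO_{X'}$ by part (i) applied to $\ucI$, so $\cO_{X'}\subseteq\cF_\sigma^{(\le a)}(\cI')$.

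The final observation, which resolves the main subtlety of translating between derivations on $X$ and on $X'$, is the inclusion $\cF_\sigma^{(\le a)}(\cI')\subseteq\cD^{(\le a)}_{X'/B}(\cI')$. Following the inductive construction in the proof of Lemma~\ref{Flem}, each generator of $\cF_\sigma^{(\le i)}(\cI')$ is built from an expression of the shape $(y\partial_1)\circ\cdots\circ(y\partial_i)(y^{-a}h)$ with $h\in\cI$, $\partial_k\in\cD_{X/B}$, and $y$ a local generator of $\cI_E$. Each twisted derivation $y\partial_k$ lies in $\cD^{(\le 1)}_{X'/B}$ by Lemma~\ref{transformderivlem}, hence the composition is in $\cD^{(\le i)}_{X'/B}$; applied to $y^{-a}h\in\cI'$ the value lies in $\cD^{(\le i)}_{X'/B}(\cI')$. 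Combining, $\cO_{X'}\subseteq\cD^{(\le a)}_{X'/B}(\cI')$, and part (i) applied to $X'/B$ shows that $\ucI'$ is of maximal order. The only delicate point in the argument is justifying that the inductive formula of Lemma~\ref{Flem} is valid at the boundary $i=a$ once weight-zero generalized marked ideals are admitted, which is immediate from tracking one further step of the existing induction.
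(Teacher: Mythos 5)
Your proof is correct and follows essentially the same route as the paper: part (i) is Lemma~\ref{logordlem}, and part (ii) amounts to producing the unit ideal as the $j=a$ summand of the transform formula, i.e.\ the inclusion $\cO_{X'}=\sigma^c\bigl(\cD^{(\le a)}_{X/B}(\ucI)\bigr)\subseteq \cD^{(\le a)}_{X'/B}(\ucI')$. The only difference is packaging: the paper obtains this inclusion directly from Lemma~\ref{operlem}(iv) (stated for $0\le i\le a$, so the weight-zero boundary case is already built in), whereas you re-run the induction of Lemma~\ref{Flem} one extra step to reach $i=a$ explicitly --- the same computation, just unfolded.
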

\begin{proof}
Claim (i) follows from Lemma~\ref{logordlem}. Using (i) and Lemma~\ref{operlem}(v) we obtain in (ii) that $$(\cO_{X'},0)=\sigma^c\left(\cD^{(\le a)}_{X/B}(\ucI)\right)\subseteq\cD^{(\le a)}_{X'/B'}(\ucI').$$  So $\cD^{(\le a)}_{X'/B'}(\cI')=\cO_{X'}$, and hence $\ucI'$ is of maximal order by (i).
\end{proof}

\subsubsection{Balanced marked ideals}
We say that a marked ideal $\ucI$ is balanced if $\cI$ is, see \S\ref{balancedsec}. By the {\em clean part} of $\ucI$ we mean the marked ideal $\ucI^\cln=(\cI^\cln,b)$, where $b=\max(a,\logord_{\cI/B}(X))$. Note that $\ucI^\cln$ is of maximal order, and if $\ucI$ is clean, then $\ucI^\cln$ is obtained from $\ucI$ by the maximal increase of the weight that keeps it of maximal order.

\begin{corollary}\label{balancedcor}
Let $X\to B$ be a relative logarithmic orbifold with $B\in\bB$ and $\ucI$ a balanced marked ideal. Then any $\ucI^\cln$-admissible sequence $\sigma\:X'\dashrightarrow X$ is also $\ucI$-admissible and $\sigma^c(\ucI)$ is balanced with equality of clean parts  $(\sigma^c(\ucI))^\cln = \sigma^c(\ucI^\cln)$.
\end{corollary}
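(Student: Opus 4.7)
\textbf{Proof plan for Corollary \ref{balancedcor}.}

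By induction on the length of $\sigma$, it suffices to treat a single $B$-submonomial Kummer blow up $\sigma\:X'\to X$ along a center $\cJ$. Write $\cI=\cN\cdot\cI^\cln$ as in Lemma \ref{balncedlem}, so $\cN=\cD_{X/B}^\infty(\cI)$ is invertible monomial and $\cI^\cln$ is clean, and let $b$ denote the weight of $\ucI^\cln$, so that $b\geq a$ and $\ucI^\cln=(\cI^\cln,b)$ is of maximal order. For admissibility, $\sigma$ being $\ucI^\cln$-admissible means $\cI^\cln\subseteq\cJ^{(b)}$; since $b\geq a$ implies $\cJ^b\subseteq\cJ^a$ and hence $\cJ^{(b)}\subseteq\cJ^{(a)}$ after integral closure, we conclude $\cI=\cN\cdot\cI^\cln\subseteq\cN\cdot\cJ^{(a)}\subseteq\cJ^{(a)}$, so $\sigma$ is $\ucI$-admissible.

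Next I would compute the controlled transform. Writing $\cI_E^{-a}=\cI_E^{b-a}\cdot\cI_E^{-b}$ (valid because $b\geq a$),
$$\sigma^c\cI\;=\;\cI_E^{-a}\sigma^{-1}(\cN)\cdot\sigma^{-1}(\cI^\cln)\;=\;\bigl(\cI_E^{b-a}\sigma^{-1}(\cN)\bigr)\cdot\bigl(\cI_E^{-b}\sigma^{-1}(\cI^\cln)\bigr)\;=\;\cN'\cdot\sigma^c(\cI^\cln),$$
where $\cN':=\cI_E^{b-a}\sigma^{-1}(\cN)$ is a product of two invertible monomial ideals, hence invertible monomial. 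By Lemma \ref{maxordertransform}(ii), $\sigma^c(\ucI^\cln)$ is of maximal order, so by Lemma \ref{maxordertransform}(i) one has $\cD^{(\leq b)}_{X'/B}(\sigma^c(\cI^\cln))=\cO_{X'}$, and thus $\cD^\infty_{X'/B}(\sigma^c(\cI^\cln))=\cO_{X'}$; Lemma \ref{logordlem} gives that $\sigma^c(\cI^\cln)$ is clean.

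It remains to identify $\cN'$ with the monomial saturation of $\sigma^c\cI$. Since $\sigma^c\cI\subseteq\cN'$ and $\cN'$ is monomial and hence $\cD$-stable, we get the easy inclusion $\cD^\infty(\sigma^c\cI)\subseteq\cN'$. Conversely, invertibility of $\cN'$ lets us write $\cD^\infty(\sigma^c\cI)=\cN'\cdot\cJ_0$ for a unique ideal $\cJ_0$; the identity $\partial(u^q f)=u^q(\delta(q)f+\partial(f))$ for a log derivation $\partial$ shows that $\cJ_0$ is itself $\cD$-stable, and since $\cJ_0\supseteq(\cN')^{-1}\cdot\sigma^c\cI=\sigma^c(\cI^\cln)$ with the latter clean, we conclude $\cJ_0=\cO_{X'}$. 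Therefore $\cD^\infty(\sigma^c\cI)=\cN'$ is invertible monomial, $\sigma^c\cI$ is balanced by Lemma \ref{balncedlem}, and its ideal clean part is $(\cN')^{-1}\sigma^c\cI=\sigma^c(\cI^\cln)$; the weight of $(\sigma^c\ucI)^\cln$ equals $b$ since $\sigma^c$ preserves the weight of $\ucI^\cln$ and $\cN'$ makes no contribution to the logarithmic order of $\sigma^c\cI$, which by maximal order preservation is bounded by $b$. The main obstacle to be careful about is the reverse inclusion $\cN'\subseteq\cD^\infty(\sigma^c\cI)$, i.e. that cleanness is preserved under multiplication by an invertible monomial; once this is established the remainder is bookkeeping with the product and pullback identities of Lemma \ref{operlem}.
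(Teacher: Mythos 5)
Your proof is correct and follows essentially the same route as the paper's: induction on the length reduces to a single Kummer blow up, admissibility follows from $\cJ^{(b)}\subseteq\cJ^{(a)}$, and the transform is rewritten as $\cI_E^{b-a}\sigma^{-1}(\cM)\cdot\sigma^c(\cI^\cln)$ with the clean factor of maximal order (hence clean) by Lemma~\ref{maxordertransform}. The only difference is that you spell out, via the $\cD$-saturation of the quotient ideal, why an invertible monomial times a clean ideal is balanced with the expected clean part --- a point the paper absorbs into the easy direction of Lemma~\ref{balncedlem} and leaves implicit in its closing remark.
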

\begin{proof}
Let $\ucI=(\cI,a)$ and $\ucI^\cln=(\cI^\cln,b)$, in particular, $\cI=\cM\cdot\cI^\cln$, where $\cM=\cM(\cI)$ is an invertible monomial ideal. By induction on the length it suffices to consider a single blow up and then the admissibility claim is obvious.

By Lemma~\ref{maxordertransform}, $\sigma^c(\ucI^\cln)=(\cI',b)$ is of maximal order, in particular, $\cI'$ is clean. It remains to note that $\sigma^c(\ucI)=(\sigma^{-1}(\cM)\cdot\cI_E^{b-a}\cdot\cI',a)$.
\end{proof}

\subsection{Equivalence and domination of marked ideals}\label{equivsec}

\subsubsection{The definition}
Let $Y\to B$ be a logarithmically regular morphism and let $\ucI_1$ and $\ucI_2$ be marked ideals on $Y$. We say that $\ucI_1$ is {\em dominated} by $\ucI_2$ if any $\ucI_2$-admissible sequence of $B$-submonomial Kummer blow ups is also $\ucI_1$-admissible. If $\ucI_1$ and $\ucI_2$ dominate each other, then we say that the marked ideals are {\em equivalent}. Furthermore, we say that $\ucI_1$ and $\ucI_2$ are {\em functorially equivalent} (resp. {\em functorially dominated}) if they stay equivalent (resp. dominated) after any noetherian base change $B'\to B$ and after pullback to any $Y'$ for a logarithmically regular morphism $Y'\to Y$. We will use notation $\ucI_1\preccurlyeq \ucI_2$ and $\ucI_1\approx\ucI_2$ to denote functorial domination and functorial equivalence.

\begin{remark}\label{equivrem}
(i) Our definition of equivalence extends that of \cite[\S5.1]{ATW-principalization}. Similarly to the approach of Bierstone-Milman \cite{Bierstone-Milman-funct}, our proof of independence of maximal contact is based on equivalence, rather than \'etale isomorphisms used in \cite{Wlodarczyk,ATW-principalization}.

(ii) Non-functorial equivalence is not informative in general since non-resolved marked ideals may admit no non-trivial admissible blow ups, see Example~\ref{stuckexample}.

(iii) In fact, we will only need base changes $B'\to B$ with $B'=\Spec(O)$, where $O$ is either $k(B)$ or a DVR with $\Frac(O)=k(B)$, and pullbacks with respect to morphisms $Y'\to Y$ which are either strict \'etale or localizations. We prefer not to restrict to these cases in the definition for aesthetical reasons. In the absolute case with $B$ the spectrum of a field, it even suffices to use the usual equivalence.

(iv) Naively, one might only want to test (or define) equivalence using only blow up sequences. Already in the classical algorithm this is not enough, so Hironaka and Bierstone-Milman had to introduce additional test morphisms in the definition of equivalence, see \cite[Definition~1.3]{infinitely_near} and \cite[\S1.2]{Bierstone-Milman-funct}. They might look a bit artificial, but logarithmic geometry clarifies the situation: these test morphisms $(Y',E')\to(Y,E)$ are logarithmically smooth, so including them just makes the equivalence partially functorial in the logarithmic sense.

In our principalization algorithm, base changes and logarithmically regular morphisms $Y'\to Y$ are such additional test morphisms. Their role is absolutely clear -- we want the algorithm to be functorial with respect to these classes of morphisms.
\end{remark}

\subsubsection{Main examples of equivalence}
The main cases of functorial equivalence of marked ideals follow from Lemma~\ref{operlem}.

\begin{lemma}\label{equivlem}
Assume that $f\:Y\to B$ is a logarithmically regular morphism of logarithmic DM stacks with $B\in\bB$, $\sigma\:Y'\dashrightarrow Y$ is a sequence of $B$-submonomial Kummer blow ups, and $\ucI=\ucI_1\.\ucI_n$ are marked ideals on $Y$.

(o) If $\ucI_1\subseteq\ucI_2$, then $\ucI_1\preccurlyeq\ucI_2$. In particular, if $\ucI_1\subseteq\ucI_2\subseteq\ucI_3$ and $\ucI_1\approx\ucI_3$, then $\ucI_1\approx\ucI_2\approx\ucI_3$.

(i) If  $({\cI}_i,a_i)\preccurlyeq ({\cI}_1,a_1)$ for $2\leq i\leq n$, then $(\cI_2,a_2)\cdot\ldots\cdot({\cI}_m,a_m)\preccurlyeq({\cI}_1,a_1)$.

(ii) $\ucI\approx\ucI^k$ for any positive natural $k$.

(iii) If $({\cI}_i,a_i)\preccurlyeq ({\cI}_1,a_1)$ for $2\leq i\leq n$, then $(\cI_1,a_1)+\ldots+({\cI}_m,a_m)\approx ({\cI}_1,a_1)$.

(iv) $\ucI\approx\ucI+\cF^{(\le 1)}\ucI+\ldots+\cF^{(\le a-1)}\ucI$ for any $\cO_Y$-submodule $\cF\subseteq\cD_{Y/B}$.
\end{lemma}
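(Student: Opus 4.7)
The plan is to deduce each clause from the correspondingly numbered part of Lemma~\ref{operlem}, using that the operations involved (ideal preimage, product, $k$-th power, homogenized sum, and application of $\cF^{(\le i)}$) are all compatible with pullback along either a base change $B'\to B$ or a logarithmically regular morphism $Y'\to Y$. Thus, apart from the genuinely algebraic content which is exactly what Lemma~\ref{operlem} records, everything reduces to bookkeeping about preimages of ideals.

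First I would dispose of (o). Ideal containment $\cI_1\subseteq\cI_2$ is preserved by ideal preimage, by the controlled-transform operation $\cI\mapsto\cI_E^{-a}\sigma^{-1}\cI$, and by the integral-closure operation $\cJ\mapsto\cJ^{(a)}$. The admissibility condition $\cI\subseteq\cJ^{(a)}$ is monotone in $\cI$, so a sequence admissible for $\ucI_2$ is automatically admissible for $\ucI_1$, and this monotonicity survives any pullback. The ``in particular'' assertion is then a formal squeeze.

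The common engine for (i)--(iii) is the following: fix a pullback $h\:Y'\to Y$ (either by a base change $B'\to B$ or by a logarithmically regular morphism) and a sequence $\sigma\:\tilY\dashrightarrow Y'$ of submonomial Kummer blow ups. Because $h^{-1}$ is monotone and exact on finitely generated ideals, product, power, and homogenized sum commute with $h^*$, so we may compare the pulled-back marked ideals on $Y'$ directly. Then (i) follows by invoking the hypothesis $\ucI_i\preccurlyeq\ucI_1$ on the sequence $\sigma$ over $Y'$ to conclude $\sigma$ is $h^*\ucI_i$-admissible for every $i$, and then applying Lemma~\ref{operlem}(i). Part (ii) is immediate from Lemma~\ref{operlem}(ii). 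For (iii), the forward direction combines (i) (applied termwise) with Lemma~\ref{operlem}(iii); the reverse direction is trivial by (o), since $\ucI_1$ embeds into the homogenized sum.

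For (iv) I would combine Lemma~\ref{operlem}(iv) with clause (iii): any $\ucI$-admissible $\sigma$ is $\cF^{(\le i)}\ucI$-admissible for every $i<a$ by Lemma~\ref{operlem}(iv), hence admissible for the full homogenized sum by (iii) together with Lemma~\ref{operlem}(iii). The reverse direction again uses (o), since $\ucI$ is a summand. The step most likely to need care is the functoriality in (iv): after a pullback $h$, we must compare $h^{-1}(\cF^{(\le i)}\cI)$ with differential operations available on $Y'$. Lemma~\ref{basechangelem} (for base changes) and Lemma~\ref{firstseqlem} together with \S\ref{difpullbacksec} (for logarithmically regular $h$) ensure the containment $h^{-1}(\cF^{(\le i)}\cI)\subseteq\cD_{Y'/B'}^{(\le i)}(h^{-1}\cI)$, which combined with monotonicity (part (o)) is exactly what is needed; no deeper obstruction should appear.
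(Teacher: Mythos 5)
Your proposal is correct in outline and follows the paper's own route: each clause is deduced from the corresponding part of Lemma~\ref{operlem} together with the earlier clauses, with (o) supplying the formal monotonicity, and (iv) obtained from (iii) plus Lemma~\ref{operlem}(iv), exactly as in the paper.

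Two small repairs are needed. First, in (iii) and (iv) you justify the reverse dominations by (o), saying that $\ucI_1$ (resp.\ $\ucI$) ``embeds into the homogenized sum''. This is not literally true: because of the homogenization, the sum carries weight $a=a_1\cdots a_n$ (resp.\ $a!$) and its ideal contains only the power $\cI_1^{a/a_1}$ (resp.\ $\cI^{(a-1)!}$), not $\cI_1$ itself, and the relation $\subseteq$ on marked ideals, hence (o), requires equal weights. The fix costs nothing: Lemma~\ref{operlem}(iii) is an ``if and only if'', so admissibility for the homogenized sum already implies admissibility for each summand, in particular for $\ucI_1$; alternatively apply (o) to $\ucI_1^{a/a_1}\subseteq\ucS$ and conclude with (ii). Second, in your functoriality check for (iv), Lemma~\ref{firstseqlem} gives surjectivity of $\cD_{Y'/B}\to h^!\cD_{Y/B}$ (hence the lifting of derivations and the containment you want) only when $h$ is logarithmically smooth, whereas functorial domination is tested against arbitrary logarithmically regular $Y'\to Y$; for base changes Lemma~\ref{basechangelem} does what you say, but for logarithmically regular pullbacks the compatibility of the $\cF$-derived ideals with $g^{-1}$ is the content the paper records (via the convention $\cF'=g^*\cF$ of \S\ref{difpullbacksec}) in Lemmas~\ref{functormarked1} and \ref{functormarked2}, and it is these you should invoke rather than the fundamental exact sequence.
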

\begin{proof}
Claim (o) is trivial. Claims (i)--(iv) follow from their counterparts in Lemma \ref{operlem} and the previous claims. For example, (iv) follows from (iii) and the fact that $\cF^{(\le i)}\ucI\preccurlyeq\ucI$ by Lemma~\ref{operlem}(iv).
\end{proof}

\subsection{Coefficients ideals}

\subsubsection{The definition}
As in \cite[\S6.1]{ATW-principalization}, given a marked ideal $\ucI=(\cI,a)$ and an $\cO_Y$-submodule $\cF\subseteq\cD_{Y/B}$ we define the  {\em $\cF$-coefficient marked ideal} of $\ucI$ by the formula $\ucC_\cF(\ucI):=\sum_{i=0}^{a-1}\cF^{(\le i)}\ucI$. {Note that we are using the homogenized addition of Section \ref{Homogenized-addition}}. In particular, for $\cF=\cD_{Y/B}$ one obtains the {natural \emph{relative} logarithmic version $\ucC_{Y/B}(\ucI)$ of the coefficient ideal defined in  \cite[\S6.1]{ATW-principalization}}, again homogenized according to Section \ref{Homogenized-addition}.

\begin{remark}
Usually, one defines the coefficient ideal using non-homogenized sums. Our definition is an analogue of Kollar's tuning ideal $W_{a!}(\cI)$, see \cite{Kollar}.
\end{remark}

\subsubsection{Equivalence}
Lemmas~\ref{equivlem}(v) and \ref{maxorderlem} {yield} the following result.

\begin{lemma}\label{coefcor}
Assume that $Y\to B$ is a logarithmically regular morphism of logarithmic DM stacks with $B\in\bB$, $\ucI$ is a marked ideal on $Y$ and $\cF\subseteq\cD_{Y/B}$ is an $\cO_Y$-submodule. Then $\ucI\approx\ucC_\cF(\ucI)$, and $\ucC_\cF(\ucI)$ is of maximal order whenever $\ucI$ is of maximal order.
\end{lemma}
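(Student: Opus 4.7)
The plan is to observe that both claims are essentially immediate consequences of lemmas already established in the excerpt, so the proof is a short bookkeeping argument.

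First, note that by definition $\cF^{(\le 0)}\ucI = (\cI,a) = \ucI$, so the coefficient ideal can be rewritten as
\[
\ucC_\cF(\ucI) \;=\; \ucI + \cF^{(\le 1)}\ucI + \cdots + \cF^{(\le a-1)}\ucI,
\]
where the addition is the homogenized sum of Section \ref{Homogenized-addition}. This is precisely the right-hand side appearing in Lemma \ref{equivlem}(iv), which yields the functorial equivalence $\ucI \approx \ucC_\cF(\ucI)$.

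For the second claim, assume $\ucI$ is of maximal order. Since $\ucI$ appears as the first summand ($i=0$) in the homogenized sum defining $\ucC_\cF(\ucI)$, Lemma \ref{maxorderlem}(ii) applied with $\ucI_1 = \ucI$ and $\ucI_{i+1} = \cF^{(\le i)}\ucI$ for $1 \le i \le a-1$ shows that $\ucC_\cF(\ucI)$ is of maximal order. I do not expect any obstacle here, since the lemma has been set up so that this bookkeeping works cleanly; the only point to double-check is that the convention $\cF^{(\le 0)}\ucI = \ucI$ is consistent with the definition of the homogenized sum, so that $\ucI$ genuinely appears as one of the summands in $\ucC_\cF(\ucI)$.
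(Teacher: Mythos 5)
Your proposal is correct and is precisely the paper's argument: the authors dispose of the lemma by citing Lemma~\ref{equivlem}(iv) (mislabeled there as ``(v)'') for the equivalence $\ucI\approx\ucC_\cF(\ucI)$ and Lemma~\ref{maxorderlem}(ii) for preservation of maximal order, exactly as you spell out. Your side remark about the convention $\cF^{(\le 0)}\ucI=\ucI$ is also consistent with the paper, since $\cD^{(\le 0)}_{Y/B}=\cO_Y$ and Lemma~\ref{equivlem}(iv) already writes $\ucI$ as the $i=0$ summand of the homogenized sum.
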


\subsubsection{Transforms}
The following result is an analogue of \cite[Proposition 6.1.3]{ATW-principalization}, which {makes the statement sharper}. Somewhat surprisingly, compatibility of our version of the coefficient ideal with transforms is as good as possible.

\begin{theorem}\label{coefprop}
Assume that $Y\to B$ is a logarithmically regular morphism of logarithmic DM stacks with $B\in\bB$, $\ucI$ is a marked ideal of maximal order, $\cF\subseteq\cD_{Y/B}$ is an $\cO_Y$-submodule, and $\sigma\:Y'\dashrightarrow Y$ is an $\ucI$-admissible sequence of $B$-submonomial Kummer blow ups. Then
$$\ucC_{\sigma^c(\cF)}(\sigma^c(\ucI))=\sigma^c(\ucC_\cF(\ucI)).$$
\end{theorem}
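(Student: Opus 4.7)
The strategy is to reduce to a single blow up and then expand both sides using Lemma~\ref{Flem} (equality form) and collapse the resulting double sum using the absorption principle of Lemma~\ref{equivlem}(iii).

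First I would induct on the length $n$ of $\sigma$: the case $n=0$ is trivial, and peeling off the first blow up as $\sigma=\tau\circ\sigma_1$ with $\sigma_1$ of length one, one applies the inductive hypothesis first to $\sigma_1$ and then to $\tau$ (using the inductive definitions $\sigma^c=\tau^c\circ\sigma_1^c$ and analogously for $(\sigma_1^c\cF)_\tau=\sigma^c\cF$). Hence I may assume $\sigma\colon Y'\to Y$ is a single $B$-submonomial Kummer blow up with exceptional ideal $\cI_E$. Set
$$\cA_j \ :=\ \sigma^{-1}\bigl(\cF^{(\le j)}\cI,\ a-j\bigr)\ =\ \cI_E^{-(a-j)}\sigma^{-1}\!\bigl(\cF^{(\le j)}\cI\bigr)\ \subseteq\ \cO_{Y'};$$
this is an honest $\cO_{Y'}$-ideal, because $\sigma$ is $\cF^{(\le j)}\ucI$-admissible by Lemma~\ref{operlem}(iv).

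For the right-hand side, Lemma~\ref{operlem}(iii) (applicable by the admissibilities just noted) gives
$$\sigma^c(\ucC_\cF\ucI)\ =\ \sum_{j=0}^{a-1}\sigma^c(\cF^{(\le j)}\ucI)\ =\ \sum_{j=0}^{a-1}(\cA_j,\ a-j).$$
For the left-hand side, Lemma~\ref{Flem} --- interpreted so that $\cF_\sigma^{(\le i)}(\cI')$ is the honest action of $(\sigma^c\cF)^{(\le i)}\subseteq\cD_{Y'/B}^{(\le i)}$ on $\cI'$, the two agreeing via the embedding $\cI_E^i\cF_\sigma^{(\le i)}\hookrightarrow\cD_{Y'/B}^{(\le i)}$ of Lemma~\ref{transformderivlem} --- yields
$$(\sigma^c\cF)^{(\le i)}(\sigma^c\ucI)\ =\ \Bigl(\sum_{j=0}^{i}\cA_j,\ a-i\Bigr)\ \approx\ \sum_{j=0}^{i}(\cA_j,\ a-i),$$
the last being the standard equivalence $(\sum_j\cJ_j,a)\approx\sum_j(\cJ_j,a)$ for marked ideals of equal weight. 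Summing over $i$ and swapping summation gives
$$\ucC_{\sigma^c\cF}(\sigma^c\ucI)\ \approx\ \sum_{j=0}^{a-1}\Bigl[(\cA_j,\,a-j)\;+\;\sum_{i=j+1}^{a-1}(\cA_j,\,a-i)\Bigr].$$

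Finally, for $j<i$ we have $a-i<a-j$, so any center $\cJ$ with $\cA_j\subseteq\cJ^{(a-j)}$ also satisfies $\cA_j\subseteq\cJ^{(a-i)}$, and this domination persists under every noetherian base change and every logarithmically regular pullback; hence $(\cA_j,a-i)\preccurlyeq(\cA_j,a-j)$. Lemma~\ref{equivlem}(iii) then absorbs every off-diagonal summand into the corresponding diagonal one, yielding the desired $\ucC_{\sigma^c\cF}(\sigma^c\ucI)\approx\sum_{j=0}^{a-1}(\cA_j,a-j)=\sigma^c(\ucC_\cF\ucI)$. The main subtlety is the identification just used in Lemma~\ref{Flem}: it is the upper-triangular output $\sum_{j\le i}\cA_j$ (rather than only the diagonal $\cA_i$) that generates exactly the redundancy collapsing under the homogenized sum, and this is what makes the conclusion an honest equality rather than just the inclusion $\sigma^c(\ucC_\cF\ucI)\subseteq\ucC_{\sigma^c\cF}(\sigma^c\ucI)$ that Lemma~\ref{operlem}(iv) alone would produce.
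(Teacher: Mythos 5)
Your skeleton is the same as the paper's: induction on the length of $\sigma$, reduction to a single Kummer blow up, Lemma~\ref{Flem} as the key input, and the observation that $a-j\ge a-i$ for $j\le i$ is indeed the combinatorial heart of the matter. But there is a genuine gap: the theorem asserts an \emph{equality} of marked ideals, while your argument only produces a functorial \emph{equivalence}. From the step $\bigl(\sum_{j\le i}\cA_j,\,a-i\bigr)\approx\sum_{j\le i}(\cA_j,\,a-i)$ onward you work entirely in the equivalence calculus: distributing a homogenized sum over its summands, "summing over $i$ and swapping summation", and the final absorption via Lemma~\ref{equivlem}(iii) are all valid only up to $\approx$ (homogenized addition is not even associative, cf.\ Remark~\ref{sumrem}(i)), and your chain accordingly ends with $\ucC_{\sigma^c\cF}(\sigma^c\ucI)\approx\sigma^c(\ucC_\cF\ucI)$, despite the closing sentence claiming an honest equality. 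This weakening is material, not cosmetic: the equality is exactly what the paper advertises as the improvement over the absolute case, and it is what gets used in Theorem~\ref{Htheorem}, where the identity is \emph{restricted} onto the maximal contact $H'$ via Lemma~\ref{restrictlem}(ii). Restriction to a submanifold is not among the test morphisms defining functorial equivalence (only base changes of $B$ and logarithmically regular morphisms $Y'\to Y$ are), so an equivalence on $Y'$ would not transfer to $H'$ and the application would break.

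To close the gap you must argue at the level of generators of the homogenized sums rather than equivalence classes, which is what the paper does. One inclusion, $\sigma^c(\ucC_\cF\ucI)=\sum_j(\cA_j,a-j)\subseteq\ucC_{\sigma^c\cF}(\sigma^c\ucI)$, follows from Lemma~\ref{operlem} as you note. For the reverse, a generator of the ideal of $\ucC_{\sigma^c\cF}(\sigma^c\ucI)$ is a product $\prod_{i=0}^{a-1}\bigl(\sum_{j\le i}\cA_j\bigr)^{n_i}$ with $\sum_i n_i(a-i)\ge a!$; expanding it into monomials $\prod_{0\le j\le i\le a-1}\cA_j^{n_{ij}}$ with $\sum_j n_{ij}=n_i$ and setting $l_j=\sum_i n_{ij}$, the inequality $a-j\ge a-i$ for $j\le i$ gives $\sum_j l_j(a-j)\ge\sum_i n_i(a-i)\ge a!$, so each monomial $\prod_j\cA_j^{l_j}$ is literally one of the generating monomials of the homogenized sum $\sum_j(\cA_j,a-j)$, i.e.\ lies in the ideal of $\sigma^c(\ucC_\cF\ucI)$. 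Your absorption step is the equivalence-class shadow of precisely this reindexing, but only the monomial-by-monomial containment yields the stated identity; the equivalence route cannot be upgraded afterwards, because the intermediate identities it relies on are themselves false as equalities of marked ideals.
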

\begin{proof}
The inclusion $\sigma^c(\ucC_\cF(\ucI)) \subseteq \ucC_{\sigma^c(\cF)}(\sigma^c(\ucI))$ follows from Lemma~\ref{operlem}, so let us prove that $\ucC_{\sigma^c(\cF)}(\sigma^c(\ucI))\subseteq\sigma^c(\ucC_\cF(\ucI)).$ Induction on the length reduces the claim to the case of a single blow up. We should prove that if $n_0\.n_{a-1}\in\NN$ satisfy $\sum_{i=0}^{a-1}n_i(a-i)\ge a!$, then $$\cI_n:=\prod_{i=0}^{a-1}\left(\cF_\sigma^{(\le i)}(\sigma^{-1}(\cI,a))\right)^{n_i}\subseteq\sigma^{-1}(\cC_\cF(\ucI),a!).$$
By Lemma~\ref{Flem} we have that
$$\cI_n=\prod_{i=0}^{a-1}\left(\sum_{j=0}^i\sigma^{-1}\left(\cF^{(\le j)}(\cI),a-j\right) \right)^{n_i}=\sum_{(n_{ij})}\prod_{0\le j\le i\le a-1}\left(\sigma^{-1}\left(\cF^{(\le j)}(\cI),a-j\right)\right)^{n_{ij}},$$
where the right hand sum is over the sets of partitions $n_i=n_{i0}+\ldots+n_{ii}$ for $0\le i\le a-1$. For each $j\in\{0\.a-1\}$ set $l_j=\sum_{i=0}^{a-1}n_{ij}$. Then $$\sum_{j=0}^{a-1}l_j(a-j)=\sum_{0\le j\le i\le a}n_{ij}(a-j)\ge\sum_{0\le j\le i\le a}n_{ij}(a-i)=\sum_{i=0}^{a-1}n_i(a-i)\ge a!$$ and hence
\begin{align*}
\prod_{0\le j\le i\le a-1}\left(\sigma^{-1}\left(\cF^{(\le j)}(\cI),a-j\right)\right)^{n_{ij}}=\prod_{j=0}^{a-1}\left(\sigma^{-1}\left(\cF^{(\le j)}(\cI),a-j\right)\right)^{l_{j}}=\\
\sigma^{-1}\left(\prod_{j=0}^{a-1}\left(\cF^{(\le j)}(\cI)\right)^{l_{j}},\sum_{j=0}^{a-1}l_j(a-j)\right)\subseteq
\sigma^{-1}\left(\prod_{j=0}^{a-1}\left(\cF^{(\le j)}(\cI)\right)^{l_{j}},a!\right)\subseteq\sigma^{-1}(\cC_\cF(\ucI),a!).
\end{align*}
\end{proof}

\section{The theory of maximal contact}\label{contactsec}
\addtocontents{toc}
{\noindent Maximal contact and its existence. Restriction onto maximal contact. Equivalence theorem.}

\subsection{Existence}

\subsubsection{Hypersurfaces of maximal contact}
Let $\ucI=(\cI,a)$ be a marked ideal of maximal order on a logarithmically regular morphism $Y\to B$. A \emph{hypersurface of maximal contact} or simply a {\em maximal contact} to $\ucI$ is a closed suborbifold $H\into Y$ of pure codimension 1 such that its ideal $\cI_H$ is contained in $\cT(\ucI):=  \cD_{Y/B}^{(\leq a-1)}(\cI)$.

\subsubsection{Local existence}
In the classical situation, maximal contact exists Zariski locally. For DM stacks one should use the \'etale topology instead.

\begin{theorem}\label{contactexists}
Assume that $X\to B$ is a relative logarithmic orbifold with $B\in\bB$ and $\ucI$ is a marked ideal on $X$ of maximal order. Then there exists an \'etale covering $g\:X'\to X$ with a hypersurface of maximal contact $H'\into X'$ to $\ucI'=g^{-1}\ucI$.
\end{theorem}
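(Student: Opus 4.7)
The plan is to construct a hypersurface of maximal contact locally at each geometric point by exhibiting a regular parameter lying in $\cT(\ucI) = \cD_{X/B}^{(\leq a-1)}(\cI)$, and to glue the local pieces via an \'etale cover. The key input from the hypothesis that $f$ is a relative logarithmic orbifold is that $\cD_{X/B}$ is logarithmically separating, in particular separating, so that Lemma~\ref{standardderiv}(i) supplies derivations dual to any family of regular parameters at a given point.

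First I would reduce to a local problem. Fix a geometric point $x \to X$ with $\logord_{\cI/B}(x) = a$ (points of strictly smaller order are automatically inside the triviality locus of $\cT(\ucI)$ and impose no condition on $H$). Next I would pick a family $(t_1,\dots,t_n)$ of regular parameters at $x$ and, using separating-ness, a dual family $\partial_1,\dots,\partial_n \in \cD_{X/B,x}$ satisfying $\partial_i(t_j) = \delta_{ij}$ and killing all monomial and constant parameters at $x$. These derivations are tangent to the logarithmic fiber $S = S_x$ (they annihilate the defining ideal of $S$ in $X$, since they are $\cO_B$-linear and kill monomial and constant parameters), and they restrict to the standard derivations on the regular scheme $S$ dual to the regular parameters $t_i|_S$.

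By Lemma~\ref{maxordertransform}(i), the maximal order hypothesis translates to $\cD_{X/B}^{(\leq a)}(\cI) = \cO_X$. Restricting to $S$, the ideal $\cI|_S$ has order $\leq a$ at $x$, and the Vandermonde-style argument in the proof of Lemma~\ref{logordlem} — which uses only the dual derivations $\partial_i|_S$ — yields $h \in \cI_x$ and indices $i_1,\dots,i_a$ with $\partial_{i_1}\cdots\partial_{i_a}(h) \in \cO_{X,x}^{\times}$. I would then set
$$t := \partial_{i_2}\cdots\partial_{i_a}(h) \in \cD_{X/B}^{(\leq a-1)}(\cI)_x = \cT(\ucI)_x,$$
so that $\partial_{i_1}(t) \in \cO_{X,x}^{\times}$. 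Since $\logord(x) = a$ forces $\cT(\ucI)_x \subseteq m_x$, we have $t \in m_x$. The crucial observation is that $\partial_{i_1}|_S$ is the classical derivation on $\cO_{S,x}$ dual to $t_{i_1}|_S$; pairing therefore shows that the image of $t|_S$ in $T^*_{S,x} = m_{S,x}/m_{S,x}^2$ is nonzero, so $t$ extends to a family of regular parameters at $x$. By Lemma~\ref{submanlem}, $V(t)$ defines a $B$-submanifold of pure codimension $1$ in a suitable \'etale neighborhood $X_x$ of $x$. Gluing these neighborhoods over all $x$ (together with trivial charts away from the maximal order locus) produces the desired \'etale cover $g \colon X' \to X$ and global hypersurface $H' \into X'$.

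The main technical point to get right is guaranteeing that the element $t$ so produced is a regular parameter rather than something involving monomial or constant parameters. This is precisely where the logarithmic orbifold hypothesis is indispensable: it allows one to choose dual derivations $\partial_i$ that simultaneously compute $\logord$ by the argument of Lemma~\ref{logordlem} and vanish on the monomial/constant coordinates, so that $\partial_{i_1}(t) \in \cO_x^\times$ forces the image of $t$ in $T^*_{S,x}$ to be nonzero rather than merely in $\Hom(\oM^\gp_{x/b}, k(x))$ or $\cD_{k(x)/k(\tilde z)}$. This is the feature distinguishing the argument from the purely classical setting and is the step where some care is needed; the remaining bookkeeping is routine.
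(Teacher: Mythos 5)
Your proof is correct and follows essentially the same route as the paper's: both extract from the identity $\cD^{(\le a)}_{X/B}(\cI)=\cO_X$ of Lemma~\ref{maxordertransform}(i) an element $t\in\cT(\ucI)$ admitting a derivation $\partial\in\cD_{X/B}$ with $\partial(t)$ a unit, and then conclude via Lemma~\ref{logordlem} (whose Vandermonde argument you unwind explicitly to produce $t$ as an iterated derivative of some $h\in\cI$) and Lemma~\ref{submanlem} that $V(t)$ is a codimension-one $B$-suborbifold \'etale-locally. The paper's three-line proof is a compressed version of exactly this argument, so no further comparison is needed.
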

\begin{proof}
Since $\cD_{X/B}^{(\le a)}(\cI)=1$ by Lemma \ref{maxordertransform}(i), one can find an \'etale covering $X'\to X$ and global sections $h\in\Gamma(X',\cT(\ucI))$ and $\partial\in\Gamma(\cD_{X'/B})$ such that $\partial(h)$ is a unit. By Lemma~\ref{logordlem} $\logord(h)\le 1$ on $X'$, hence $H'=V(h)$ is a logarithmic $B$-suborbifold as required.
\end{proof}

\subsection{Equivalence of marked ideals on suborbifolds}
In order to show that restriction to maximal contact preserves equivalence of certain marked ideals we should first extend the notion of equivalence that was defined in \S\ref{equivsec}. Our goal is to compare marked ideals $\cI_i$, $i=1,2$ on logarithmic $B$-orbifolds $Y_i$ via embeddings of $Y_i$ as $B$-suborbifolds into a logarithmic $B$-orbifold $Y$. We will push forward sequences of Kummer blow ups of $Y_i$ and compare the obtained sequences of Kummer blow ups of $Y$.

\subsubsection{$H$-admissibility}
Given a logarithmic relative morphism of logarithmic DM stacks $Y\to B$ with a $B$-suborbifold $i\:H\into Y$ let $\cI_H\subseteq\cO_Y$ be the ideal defining $H$ and $\ucI_H=(\cI_H,1)$. A submonomial Kummer blow up sequence $\sigma\:Y'\dashrightarrow Y$ is called {\em $H$-admissible} if it is $\ucI_H$-admissible.

\begin{lemma}\label{Hadmisslem}
Keep the above notation. Then a sequence $\sigma\:Y'\dashrightarrow Y$ is {\em $H$-admissible} if and only if it is a pushforward $i_*(\tau)$ of a submonomial Kummer blow up sequence $\sigma\:H'\dashrightarrow H$. Furthermore, in this case $\ucI_{H'}=\sigma^c(\ucI_H)$.
\end{lemma}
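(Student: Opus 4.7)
The strategy is induction on the length $n$ of $\sigma$, with the case $n=0$ vacuous. The core of the argument is the single-blow-up case: let $\sigma_1\:Y_1\to Y_0=Y$ be a $B$-submonomial Kummer blow up along a center $\cI_1$. By \S\ref{admisssec} combined with $\cJ^{(1)}=\cJ^\nor$ (cf.\ Remark~\ref{norrem} and Lemma~\ref{Jnorlem}), $\ucI_H$-admissibility of $\sigma_1$ is the inclusion $\cI_H\subseteq\cI_1^\nor$; using that submonomial Kummer blow ups are insensitive to integral closure of the center, we may assume $\cI_H\subseteq\cI_1$ outright. Under this inclusion Lemma~\ref{strictlem} applies and identifies the strict transform $H_1\into Y_1$ as a $B$-submanifold with $H_1\to H$ the $B$-submonomial Kummer blow up along $\cJ_1:=\cI_1\cO_{H\ket}$. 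The pushforward identity $i_*(\cJ_1)=\cI_1$ is then tautological, since $i_*(\cJ_1)$ is the preimage in $\cO_Y$ of $\cI_1\cO_H$, which equals $\cI_1+\cI_H=\cI_1$. This establishes the ``only if'' direction for one blow up; the ``if'' direction is immediate, because any pushforward $i_*(\cJ_1)$ contains $\cI_H$ by construction, hence is $\ucI_H$-admissible.

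For the controlled-transform identity in length one, one uses the defining property $\sigma_1^{-1}(\cI_H)=\cI_E\cdot\cI_{H_1}$ of the strict transform, yielding
$\sigma_1^c(\ucI_H)=(\cI_E^{-1}\sigma_1^{-1}(\cI_H),1)=(\cI_{H_1},1)=\ucI_{H_1}$.
The inductive step glues: by the above, $\sigma_1=(i_0)_*(\tau_1)$ and $\sigma_1^c(\ucI_H)=\ucI_{H_1}$; by Lemma~\ref{operlem} the tail $\sigma_2\circ\cdots\circ\sigma_n\:Y'\dashrightarrow Y_1$ is $\ucI_{H_1}$-admissible, so the inductive hypothesis for the relative logarithmic orbifold $Y_1\to B$ with $B$-submanifold $H_1\into Y_1$ produces a submonomial Kummer blow up sequence $\tau_2\circ\cdots\circ\tau_n\:H'\dashrightarrow H_1$ with $\sigma_2\circ\cdots\circ\sigma_n=(i_1)_*(\tau_2\circ\cdots\circ\tau_n)$ and $(\sigma_2\circ\cdots\circ\sigma_n)^c(\ucI_{H_1})=\ucI_{H'}$. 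Concatenating gives the full pushforward $\sigma=i_*(\tau)$ together with $\sigma^c(\ucI_H)=\ucI_{H'}$.

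\textbf{Main obstacle.} The only delicate point is the reduction from $\cI_H\subseteq\cI_1^\nor$ to $\cI_H\subseteq\cI_1$. This is handled by showing that the $B$-submonomial Kummer blow ups along $\cI_1$ and along $\cI_1^\nor$ coincide, which one verifies from the $\cProj$ construction of \S\ref{submonomblowsec} together with Corollary~\ref{modelkummercor}, since both centers produce the same saturated local model. Alternatively, using Lemma~\ref{Jnorlem}(i) one has $\cI_1^\nor=\cK+\cN^\sat$ where $\cI_1=\cK+\cN$ with $\cK$ a submanifold ideal and $\cN$ monomial; since $\cI_H$ is generated by regular parameters the containment in $\cK+\cN^\sat$ decouples into a submanifold contribution in $\cK$ and a monomial contribution in $\cN^\sat$, which may then be absorbed into $\cN$ by enlarging the center's monomial part without changing the blow up. Once this reduction is granted, all remaining arguments are formal manipulations using Lemmas~\ref{strictlem}, \ref{pushlem}, \ref{operlem} and the standard strict-transform identity.
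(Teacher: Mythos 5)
Your overall architecture (induction on the length, reduction to a single blow up, identification of the center as a pushforward, then the controlled-transform identity) is the same as the paper's, but two steps are asserted rather than proved, and both are exactly where the content lies. First, the identity $\sigma_1^{-1}(\cI_H)=\cI_E\cdot\cI_{H_1}$ is \emph{not} a defining property of the strict transform: in this paper $H_1$ is defined as the schematic closure of $H\setminus V(\cJ)$ in $Y_1$, and the displayed identity is precisely the assertion $\sigma_1^c(\ucI_H)=\ucI_{H_1}$ that the lemma claims. The paper proves it by an explicit chart computation: \'etale-locally one arranges $H=V(t_1\.t_l)$ and $\cJ=(t_1\.t_m,u_1\.u_s)$ with $l\le m$, and checks that on each $y$-chart (for $y$ among the $t_i$ and the $u_j$) both $\cI_{H_1}$ and $\cI_E^{-1}\sigma_1^{-1}(\cI_H)$ equal $(y^{-1}t_1\.y^{-1}t_l)$. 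In the Kummer setting this verification is genuinely needed, since the charts of Lemma~\ref{chart2lem} involve fractional powers of monomials and $\mu_n$-quotients; by citing the identity as ``standard'' you have assumed the conclusion. (The same chart choice is also what makes $\cJ\cO_{H\ket}$ submonomial on $H$, the hypothesis of Lemma~\ref{strictlem} that you invoke but do not check.)

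Second, your reduction from $\cI_H\subseteq\cI_1^\nor$ to $\cI_H\subseteq\cI_1$ rests on the claim that submonomial Kummer blow ups are insensitive to integral closure of the center. This is nowhere established in the paper: Corollary~\ref{modelkummercor} gives independence of the choice of \emph{generators} of a fixed Kummer ideal $\cJ$, not invariance under replacing $\cJ$ by $\cJ^\nor$, and your alternative ``absorb the monomial contribution into $\cN$ without changing the blow up'' appeals to the same unproved invariance. Moreover, even if the two blow ups agreed as morphisms, being a pushforward in the sense of Lemma~\ref{pushlem} is a condition on the centers themselves ($\cI_k=(i_k)_*(\cJ_k)$), so silently replacing the center $\cI_1$ by $\cI_1^\nor$ (or by $\cI_1+\cI_H$) changes the sequence as the decorated object the lemma is about. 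The paper avoids this detour entirely: it treats the first claim as unravelling the definitions, with admissibility read as the containment $\cI_H\subseteq\cJ$, which is equivalent to $\cJ=i_*(\cJ|_H)$ since $i_*(\cJ|_H)=\cJ+\cI_H$. If you want to address the discrepancy between $\cJ$ and $\cJ^{(1)}$ at all, you need an argument internal to the paper's framework, not the unproved integral-closure invariance.
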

\begin{proof}
Induction on the length of $\sigma$ reduces the claim to the case when it is a single Kummer blow up. Let $\cJ$ be its center. Since $\sigma$ is $H$-admissible if and only if $\cI\subseteq\cJ$, the first claim reduces to unravelling the definitions, see \S\ref{pushsec}.

To check that $\ucI_{H'}=\sigma^c(\ucI_H)$ we can work \'etale-locally on $Y$, and then one can assume that there exist regular parameters $t_1\.t_n$ such that $H=V(t_1\.t_l)$ and $\cJ=(t_1\.t_m,u_1\.u_s)$ for $l\le m\le n$ and monomials $u_1\.u_s$. In this case, for any $y\in\{t_1\.t_m,u_1\.u_s\}$ the restrictions of both $\cI_{H'}$ and $\sigma^{-1}(\cI_H)(1)$ to the $y$-chart equal $(y^{-1}t_1\.y^{-1}t_l)$. (In particular, both are trivial when $y\in\{t_1\.t_l\}$.) So, the corresponding marked ideals are equal.
\end{proof}

\subsubsection{$(H,\ucI)$-admissibility}
If $H\into Y$ is a suborbifold and $\ucI$ is a marked ideal on $H$, then a sequence $Y'\dashrightarrow Y$ is called {\em $(H,\ucI)$-admissible} if it is $H$-admissible and the induced sequence $H'\dashrightarrow H$ is $\ucI$-admissible. Thus, pushforward induces a bijection between $\ucI$-admissible sequences $H'\dashrightarrow H$ and $(H,\ucI)$-admissible sequences $Y'\dashrightarrow Y$.

\subsubsection{Equivalence on suborbifolds}\label{equivsubsec}
As in \S\ref{equivsec}, a pair $(H_1,\ucI_1)$ {\em dominates} $(H_2,\ucI_2)$ if $(H_1,\ucI_1)$-admissibility implies $(H_2,\ucI_2)$-admissibility, and equivalence is defined as mutual domination. Equivalence or domination is {\em functorial} if it is preserved under noetherian base changes $B'\to B$ and pullbacks with respect to logarithmically regular morphisms $Y'\to Y$. The functorial equivalence class of the pair $(H,\ucI)$ will be denoted $[H,\ucI]$, and we say that a sequence $Y'\dashrightarrow Y$ is {\em $[H,\ucI]$-admissible} if it is $(H,\ucI)$-admissible.

\subsubsection{Pushforwards}\label{markedpushsec}
If $i\:H\into Y$ is a suborbifold and $\ucI=(\cI,1)$ is a marked ideal on $H$ of weight 1, then we set $i_*\ucI=(i_*\cI,1)$. It is easy to see (and can be deduced from Lemma~\ref{restrictlem}(ii) below) that $(H,\ucI)\approx(Y,i_*\ucI)$.

\subsection{Restriction to maximal contact}

\subsubsection{Basic compatibilities}
Operations on marked ideals and restriction onto a suborbifold are related as follows.

\begin{lemma}\label{restrictlem}
Assume that $Y\to B$ is a logarithmically regular morphism of logarithmic DM stacks with $B\in\bB$ and $i\:H\into Y$ a $B$-suborbifold.

(i) If $\cJ$ is an $H$-admissible submonomial Kummer ideal on $Y$ and $a\ge 1$, then $\cJ^{(a)}|_H\subseteq(\cJ|_H)^{(a)}$.

(ii) If $\ucI$ is a marked ideal on $Y$ and $\tau\:H'\dashrightarrow H$ is an $\ucI|_H$-admissible sequence of submonomial Kummer blow ups, then the pushforward $\sigma=i_*(\tau)\:Y'\dashrightarrow Y$ is $\ucI$-admissible and $\sigma^c(\ucI)|_{H'}=\tau^c(\ucI|_H)$.
\end{lemma}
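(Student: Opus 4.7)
The plan is to handle (i) via an integral-closure manipulation and (ii) by induction on the length of $\tau$, using (i) and Lemma~\ref{strictlem} to compare transforms across $i$.

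For (i), I would work Kummer-locally, where $\cJ$ becomes an ordinary submonomial ideal, and exploit the characterization $\cJ^{(a)}=(\cJ^a)^\nor$. A local section $f$ of $\cJ^{(a)}$ satisfies an integral equation $f^n+c_1f^{n-1}+\cdots+c_n=0$ with each $c_i\in\cJ^{ai}$; restricting this equation to $H$ exhibits $f|_H$ as a root of an integral equation over $(\cJ|_H)^a$, so $f|_H\in\bigl((\cJ|_H)^a\bigr)^\nor=(\cJ|_H)^{(a)}$. Since Kummer integral closure is Kummer-local, this yields $\cJ^{(a)}|_H\subseteq(\cJ|_H)^{(a)}$ globally.

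For (ii), I would induct on the length of $\tau$; the base case is trivial, and we may assume $\tau$ is a single Kummer blow up of $H$ along a submonomial center $\cJ_H$. By Lemma~\ref{pushlem}, $\sigma=i_*(\tau)\:Y'\to Y$ is then the Kummer blow up of $Y$ along $\cJ=i_*(\cJ_H)$, and Lemma~\ref{Hadmisslem} identifies $H'$ with the strict transform of $H$ in $Y'$. Lemma~\ref{strictlem} gives $\cI_E|_{H'}=\cI_{E_H}$, so the transform equality reduces to
\[
\sigma^c(\ucI)|_{H'}=(\cI_E^{-a}\sigma^{-1}(\cI),a)|_{H'}=(\cI_{E_H}^{-a}\tau^{-1}(\cI|_H),a)=\tau^c(\ucI|_H),
\]
using compatibility of pullback with restriction to the strict transform. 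For $\ucI$-admissibility $\cI\subseteq\cJ^{(a)}$, the hypothesis $\cI|_H\subseteq\cJ_H^{(a)}$ says that any local section of $\cI$ lies, modulo $\cI_H$, in a lift of $\cJ_H^{(a)}$ to $\cO_Y$, and any such lift is automatically contained in $\cJ^{(a)}$; the missing piece $\cI\cap\cI_H\subseteq\cJ^{(a)}$ follows, under the maximal-order hypothesis on $\ucI$ implicit in the maximal-contact framework of Section~\ref{contactsec}, from the inclusion $V(\cJ)\subseteq H\cap\supp(\ucI)$ together with the Rees-valuation characterization of $\cJ^{(a)}$.

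\emph{Main obstacle.} The transform computation is mechanical given Lemma~\ref{strictlem}; the delicate point is admissibility, because naive restriction can artificially amplify logarithmic order. The implication $\cI|_H\subseteq\cJ_H^{(a)}\Rightarrow\cI\subseteq\cJ^{(a)}$ genuinely requires the structural control provided by working with marked ideals of maximal order---the setting in which this lemma is applied throughout Section~\ref{contactsec}---and part (i) only compares integral closures in the direction opposite to what the admissibility step demands.
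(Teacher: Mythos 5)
Your part (i) and the transform identity in part (ii) are correct and follow essentially the route the paper intends (its own proof is just ``from the definitions, and induction in the case of (ii)''): restricting an equation of integral dependence gives $\cJ^{(a)}|_H\subseteq(\cJ|_H)^{(a)}$, and for a single blow up the identifications $\cI_E|_{H'}=\cI_{E_H}$ and $\sigma^{-1}(\cI)|_{H'}=\tau^{-1}(\cI|_H)$ via Lemmas~\ref{pushlem}, \ref{Hadmisslem} and \ref{strictlem} give $\sigma^c(\ucI)|_{H'}=\tau^c(\ucI|_H)$, with the general case by induction on the length.

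The admissibility step, however, is a genuine gap. The lemma has no maximal-order or maximal-contact hypothesis, so appealing to assumptions ``implicit in Section~\ref{contactsec}'' proves at best a different statement; and the sketched deduction does not work even under those assumptions. From $\cI|_H\subseteq\cJ_H^{(a)}$ you only get $V(\cJ)\subseteq\supp(\ucI|_H)$, and $\supp(\ucI|_H)$ is in general strictly larger than $H\cap\supp(\ucI)$ (restriction to $H$ can only increase logarithmic order), so the inclusion $V(\cJ)\subseteq H\cap\supp(\ucI)$ you invoke is not available. More to the point, the correction term $h=f-g\in\cI_H$ in your decomposition need not lie in $\cJ^{(a)}$ once $a\ge 2$: take $Y=\AA^2$ with trivial logarithmic structure over $B=\Spec(k)$, $H=V(y)$, $\ucI=((x^2+y),2)$ and $\cJ_H=(x)$; then $\cI|_H=(x^2)\subseteq\cJ_H^{(2)}$, while $i_*(\cJ_H)=(x,y)$ and $x^2+y\notin((x,y)^2)^\nor=(x,y)^2$. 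So no argument built only from restriction of integral closures can produce the lifting of admissibility for $a\ge 2$; for $a=1$ it is immediate, since by construction every center of $i_*(\tau)$ contains the ideal of the corresponding strict transform of $H$. The genuine lifting statement in the paper is Proposition~\ref{liftlem}, which needs $\ucI$ of maximal order, $H$ a maximal contact, the homogenized coefficient ideal (which contains $\cI_H^{a!}$) and a Taylor-series argument; and in the one place where Lemma~\ref{restrictlem}(ii) is actually applied, in the proof of Theorem~\ref{Htheorem}, the admissibility of the pushforward is part of the induction hypothesis, so only the transform equality is used. Your closing remark correctly senses this tension, but the proposal as written does not resolve it.
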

\begin{proof}
These follows from the definitions, and induction in the case of (ii).
\end{proof}

\subsubsection{$H$-contracting modules of derivations}
Let $H$ be a $B$-suborbifold of pure codimension 1 in $Y$. As in \cite[\S6.2.3]{ATW-principalization}, a module $\cF\subseteq\cD_{Y/B}$ is called {\em $H$-contracting} if $\cF(\cI_H)=\cO_Y$. This property is preserved under submonomial Kummer blow ups:

\begin{lemma}\label{Hcontract}
If $\cF$ is $H$-contracting, $\sigma\:Y'\to Y$ an $H$-admissible sequence of submonomial Kummer blow ups, and $H'$ is the strict transform of $H$, then $\cF'=\sigma^c(\cF)$ is $H'$-contracting.
\end{lemma}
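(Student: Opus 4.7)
The plan is to reduce to a single submonomial Kummer blow up by induction on the length of $\sigma$, and then check the claim at each geometric point of $H'$ via a local computation.

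For a single $H$-admissible blow up $\sigma\:Y'\to Y$ with center $\cJ\supseteq\cI_H$, Lemma~\ref{Hadmisslem} provides the identity $\sigma^{-1}(\cI_H)=\cI_E\cdot\cI_{H'}$. Fixing a geometric point $y'\in H'$ with image $y=\sigma(y')\in H$, I would use the $H$-contracting hypothesis at $y$ to choose a local generator $t$ of $\cI_H$ and a derivation $\partial\in\cF_y$ with $\partial(t)$ a unit. Writing $\sigma^{-1}(t)=e\,t'$, where $e$ locally generates $\cI_E$ and $t'$ locally generates $\cI_{H'}$, Lemma~\ref{transformderivlem} produces a unique extension $\widetilde\partial\in\cF'$ of $e\partial$ to a logarithmic derivation on $\cO_{Y'}$.

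The computational core of the argument is the Leibniz identity
\[
e\,\partial(t)\ =\ \widetilde\partial(e\,t')\ =\ \widetilde\partial(e)\,t'+e\,\widetilde\partial(t'),
\]
which, after dividing by $e$, yields $\widetilde\partial(t')=\sigma^{-1}(\partial(t))-s\,t'$ with $s=\widetilde\partial(e)/e\in\cO_{Y'}$; integrality of $s$ holds because $e$ is a monomial generator of $\cI_E$ and $\widetilde\partial$ is a logarithmic derivation. Evaluating at $y'\in H'$ gives $t'(y')=0$, so $\widetilde\partial(t')(y')=\partial(t)(y)$, which is a unit by the choice of $\partial$. Since $t'\in\cI_{H'}$, this exhibits a unit in $\cF'(\cI_{H'})_{y'}$ and establishes the $H'$-contracting property at every point of $H'$.

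The main obstacle I expect to navigate is verifying the log-derivation formalism behind the key formula: the unique extension of $e\partial$ to a logarithmic derivation on $Y'$ and the membership $\widetilde\partial(e)/e\in\cO_{Y'}$ are both supplied by Lemma~\ref{transformderivlem}, so once that is invoked the induction on the length of $\sigma$ is immediate and the remaining work is a short Leibniz computation.
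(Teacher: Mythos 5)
Your proof is correct and follows essentially the same route as the paper's: reduce to a single blow up by induction on the length, pass to the transformed derivation $e\partial\in\cF'=\cI_E\sigma^*\cF$ via Lemma~\ref{transformderivlem}, and use the product rule to see that $\widetilde\partial(t')$ differs from $\sigma^{-1}(\partial(t))$ by a multiple of $t'$, hence is a unit along $H'$. The only (cosmetic) difference is that the paper performs the identical computation on an explicit chart where $\cI_E=(y)$ with $y$ pulled back from a generator of the center, so the error term is $t'_1\partial(y)$, whereas you work with an arbitrary local generator $e$ of $\cI_E$, invoking $\sigma^{-1}(\cI_H)=\cI_E\cdot\cI_{H'}$ from Lemma~\ref{Hadmisslem} and the monomiality of $\cI_E$ to get $\widetilde\partial(e)/e\in\cO_{Y'}$.
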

\begin{proof}
The proof is the same as in \cite[Lemma 6.2.4]{ATW-principalization}. By \'etale localization and induction on the length this reduces to the case of a single blow up along $\cJ=(t_1\.t_n,u_1\.u_s)$ such that $\cI_H=(t_1)$ and there exists $\partial\in\Gamma(\cF)$ with $\partial(t_1)$ a unit. On a $y$-chart one has that $\cI_{H'}=(t'_1)$ for $t'_1=y^{-1}t_1$ and $\partial':=y\partial\in\Gamma(\cF')$. Then it remains to note that $\partial'(t'_1)=\partial(t_1)-t'_1\partial(y)$ is a unit along $H'$ because $\partial(t_1)$ is a unit on $H$ and hence also on $H'$.
\end{proof}

\subsubsection{Lift of admissibility}
Despite a relatively simple proof, the following proposition is the only result where a simple induction on the order of differential operators does not work, and one has to use Taylor series.

\begin{proposition}\label{liftlem}
Let $f\:Y\to B$ be a logarithmically regular morphism of logarithmic DM stacks with $B\in\bB$ and let $\ucI$ be a marked ideal on $Y$ of maximal order. Assume that $H$ is a maximal contact to $\ucI$ and $\cF\subseteq\cD_{Y/B}$ is an $H$-contracting submodule. Let $\cJ$ be a submonomial Kummer ideal. Then $\cJ$ is $\ucI$-admissible if and only if it is $(H,\ucC_{\cF}(\ucI)|_H)$-admissible.
\end{proposition}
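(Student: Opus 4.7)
The proof splits into two directions.

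For the forward direction ($\cJ$ is $\ucI$-admissible $\Rightarrow$ $(H,\ucC_\cF(\ucI)|_H)$-admissible): assume $\cI\subseteq\cJ^{(a)}$. Applying $\cD^{(\le a-1)}_{Y/B}$ and using Lemma~\ref{Jnorlem}(ii), $\cT(\ucI)=\cD^{(\le a-1)}_{Y/B}(\cI)\subseteq\cJ^{(1)}=\cJ$, so $\cI_H\subseteq\cT(\ucI)\subseteq\cJ$ by the maximal contact property, giving $H$-admissibility. For the marked-ideal part, Lemma~\ref{operlem}(iv) gives that $\cJ$ is $\cF^{(\le i)}(\ucI)$-admissible for each $i$, Lemma~\ref{operlem}(iii) upgrades this to admissibility for the homogenized sum $\ucC_\cF(\ucI)$, and Lemma~\ref{restrictlem}(i) restricts to $H$ to yield $\ucC_\cF(\ucI)|_H\subseteq (\cJ|_H)^{(a!)}$.

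For the reverse direction, assume $\cJ$ is $H$-admissible and $\ucC_\cF(\ucI)|_H\subseteq(\cJ|_H)^{(a!)}$. Unpacking the homogenized sum defining $\ucC_\cF(\ucI)$ (its ideal part is generated by products $\prod_i(\cF^{(\le i)}(\cI))^{l_i}$ with $\sum_i l_i(a-i)\ge a!$), one summand is $\cF^{(\le i)}(\cI)^{a!/(a-i)}$. Restricting to $H$ and invoking the saturation property of Remark~\ref{norrem} gives $\cF^{(\le i)}(\cI)|_H\subseteq(\cJ|_H)^{(a-i)}$ for each $0\le i<a$.

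The main work is to derive $\cI\subseteq\cJ^{(a)}$ from these restricted inclusions via Taylor expansion. Working \'etale-locally at a geometric point $y\in Y$, choose a regular parameter $t_1\in\cO_y$ with $\cI_H=(t_1)$, and use $H$-contraction of $\cF$ to obtain $\partial\in\cF_y$ with $\partial(t_1)=1$ after a unit rescaling. By the characteristic-zero description of logarithmically regular completions (Lemma~\ref{logregchart}, via sharp factorization in the non-sharp case), $\hat\cO_y$ splits canonically as $\hat\cO_{H,y}\llbracket t_1\rrbracket$, so each $h\in\cI_y$ has a unique Taylor expansion $h=\sum_{j\ge 0}a_jt_1^j$ with $a_j=\frac{1}{j!}\partial^j(h)|_H$. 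For $j\ge a$, since $t_1\in\cJ$ by $H$-admissibility, $a_jt_1^j\in(t_1)^a\subseteq\cJ^a\subseteq\cJ^{(a)}$. For $j<a$, $a_j\in\cF^{(\le j)}(\cI)|_H\subseteq(\cJ|_H)^{(a-j)}$, and, by the submonomial description of $\cJ^{(a-j)}$ from Lemma~\ref{Jnorlem}(i), compatibly with the canonical inclusion $\hat\cO_{H,y}\hookrightarrow\hat\cO_y$, the element $a_j$ lies in $\cJ^{(a-j)}\hat\cO_y$. Applying the standard inclusion $I^\nor\cdot K\subseteq(IK)^\nor$ to the product, $a_jt_1^j\in\cJ^{(a-j)}\cdot\cJ^j\subseteq(\cJ^a)^\nor=\cJ^{(a)}$. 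Summing the Taylor expansion and descending via faithful flatness of $\cO_y\to\hat\cO_y$ yields $h\in\cJ^{(a)}$, so $\cI\subseteq\cJ^{(a)}$.

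The main obstacle is the lifting step for the Taylor coefficients: $a_j$, a priori in $(\cJ|_H)^{(a-j)}\subseteq\cO_{H,y}$, must be identified with an element of $\cJ^{(a-j)}\hat\cO_y$ through the canonical $t_1$-Taylor section $\hat\cO_{H,y}\hookrightarrow\hat\cO_y$. This rests on the submonomial structure of $\cJ$: the explicit generators of $\cJ^{(a-j)}$ from Lemma~\ref{Jnorlem}(i) are either regular parameters extending across $H$ or Kummer monomials insensitive to restriction, and both commute with the canonical splitting. This is precisely the step that, unlike every other reduction in the paper, cannot be handled by simple induction on differential order and genuinely requires the Taylor / Weierstrass-type description of the formal neighborhood of $H$.
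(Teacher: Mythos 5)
Your proof is correct and follows essentially the same route as the paper: the forward direction via Lemmas \ref{operlem} and \ref{restrictlem}, and the converse via the Taylor expansion $\hatcO_{Y,y}=\hatcO_{H,y}\llbracket t_1\rrbracket$ with coefficients $\tfrac{1}{j!}\partial^j(\cdot)|_H$ landing in $((\cJ|_H)^{a-j})^\nor$ and then lifted into $(\cJ^{a-j}\hatcO_{Y,y})^\nor$, which is exactly the paper's argument (including the lifting step you single out). The only cosmetic slip is writing $\cJ^{(1)}=\cJ$ and $t_1\in\cJ$ where $H$-admissibility only gives $t_1\in\cJ^\nor=\cJ^{(1)}$; this changes nothing, since all you use is $t_1^j\in\cJ^{(j)}$, and the discrepancy between a generator of $\cJ$ and its Taylor-constant part is a multiple of $t_1$, hence again absorbed by $\cJ^\nor$.
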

\begin{proof}
Set $\ucC=\ucC_{\cF}(\ucI)$ and $\cC_i:=\cF^{(\le i)}(\cI)^{a!/(a-i)}$, where $0\le i\le a-1$.
By Remark~\ref{sumrem}(ii) we have $\ucC\approx(\cC,a!)$,
where $\cC=\sum_{i=0}^{a-1}\cC_i$. Recall that $\ucI\approx\ucC$ by Lemma \ref{coefcor}, hence $\cJ$ is $\ucI$-admissible if and only if $\cC\subseteq\cJ^{(a!)}$. If $\cJ$ is $\ucI$-admissible, then $$\cC|_H\subseteq\cJ^{(a!)}|_H\subseteq(\cJ|_H)^{(a!)}$$ by Lemma~\ref{restrictlem}(i), so $\cJ|_H$ is $\ucC|_H$-admissible. In addition, $\cI_H^{a!}\subseteq\cC_{a-1}\subseteq\cJ^{(a!)}$, hence $\cI_H\subseteq\cJ^\nor$ and $\cJ$ is $H$-admissible.

Conversely, we should prove that if $\cC_i|_H\subseteq(\cJ|_H)^{(a!)}$ for $0\le i\le a-1$ and $\cI_H\subseteq\cJ^\nor$, then $\cI\subseteq\cJ^{(a)}$. This can be checked \'etale-locally, so we can assume that $Y$ and $B$ are schemes with Zariski logarithmic structure, $H=V(h)$, and $\partial\in\Gamma(\cD_{Y/B})$ is such that $u=\partial(h)$ is a unit. Moreover, by flatness of completions we can work on formal completions at a point $y\in Y$, where, crucially, we have an isomorphism $$\hatcO_{Y,y}=\hatcO_{H,y}\llbracket h\rrbracket.$$ Such an isomorphism exists by Lemma~\ref{logregchart} applied to the sharp factorization $Y\to\tilB$ of $f$ (see \S\ref{sharpfactorsec}) and a family of regular parameters $t_1=h,t_2\.t_n$. For any $\phi\in\cI_y$ we obtain a Taylor series presentation $\phi=\sum_{i=0}^\infty c_ih^i$ with $\frac{1}{i!u^i}\partial^i(\phi)|_H=c_i$. Then $c_i\in(\hatcJ_{H,y}^{a-i})^\nor\subset(\hatcJ_y^{a-i})^\nor$ and $h^a\in\cJ^{(a)}$, and hence $\phi\in(\hatcJ_y^a)^\nor$.
\end{proof}

\subsubsection{Equivalence of the restriction}
Now, we are ready to prove the main theorem of the theory of maximal contact.

\begin{theorem}\label{Htheorem}
Assume that $Y\to B$ is a logarithmically regular morphism of logarithmic DM stacks with $B\in\bB$, $\ucI$ is a marked ideal of maximal order on $Y$, $i\:H\into Y$ is a hypersurface of maximal contact to $\ucI$, and $\cF\subseteq\cD_{Y/B}$ is an $H$-contractible $\cO_Y$-submodule. Then $(Y,\ucI)\approx(H,\ucC_{\cF}(\ucI)|_H)$.
\end{theorem}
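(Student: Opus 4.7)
The plan is to proceed by induction on the length $n$ of the admissible sequence, with Proposition~\ref{liftlem} providing the base case and Theorem~\ref{coefprop} together with Lemma~\ref{Hcontract} providing the step that propagates the hypotheses forward.

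For the base case $n=1$, we must show that for a $B$-submonomial Kummer ideal $\cJ\subseteq\cO_Y$, the blow up $\sigma\:Y'\to Y$ along $\cJ$ is $\ucI$-admissible if and only if it is $(H,\ucC_\cF(\ucI)|_H)$-admissible. This is exactly the content of Proposition~\ref{liftlem}. The bijection between $\ucC_\cF(\ucI)|_H$-admissible sequences $H'\dashrightarrow H$ and $(H,\ucC_\cF(\ucI)|_H)$-admissible sequences $Y'\dashrightarrow Y$ is provided, as in \S\ref{equivsubsec}, by pushforward (Lemma~\ref{Hadmisslem}).

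For the inductive step, suppose $\sigma_1\:Y_1\to Y$ is $\ucI$-admissible (equivalently $(H,\ucC_\cF(\ucI)|_H)$-admissible), and write $\ucI_1=\sigma_1^c(\ucI)$, $\cF_1=\sigma_1^c(\cF)$, and $H_1\into Y_1$ for the strict transform of $H$. I will verify three facts which together let me apply the induction hypothesis on $Y_1$:
\begin{enumerate}
\item[(a)] $\ucI_1$ is of maximal order (Lemma~\ref{maxordertransform}(ii));
\item[(b)] $\cF_1\subseteq\cD_{Y_1/B}$ is $H_1$-contracting (Lemma~\ref{Hcontract});
\item[(c)] $H_1$ is a hypersurface of maximal contact to $\ucI_1$.
\end{enumerate}
For (c), by Lemma~\ref{Hcontract} there is $\partial\in\Gamma(\cF_1)$ with $\partial(\cI_{H_1})=\cO_{Y_1}$; combined with $\cI_{H_1}\subseteq\cD^{(\le a-1)}_{Y_1/B}(\cI_1)$, which follows from Theorem~\ref{coefprop} applied to $\ucC_\cF(\ucI)$ together with the fact that $\cI_{H}\subseteq\cT(\ucI)$ is preserved under controlled transform by Lemma~\ref{operlem}(iv), one obtains the maximal contact property for $H_1$. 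Moreover, Theorem~\ref{coefprop} yields the identity
$$\ucC_{\cF_1}(\ucI_1)=\sigma_1^c(\ucC_\cF(\ucI)),$$
and by Lemma~\ref{restrictlem}(ii) applied to the $H$-admissible blow up $\sigma_1$, the restriction of $\sigma_1^c(\ucC_\cF(\ucI))$ to $H_1$ agrees with $\tau_1^c(\ucC_\cF(\ucI)|_H)$, where $\tau_1\:H_1\to H$ is the induced blow up. Hence the data on $Y_1$ is again of the form to which the induction hypothesis applies, and a sequence $Y'\dashrightarrow Y_1$ is $\ucI_1$-admissible iff it is $(H_1,\ucC_{\cF_1}(\ucI_1)|_{H_1})$-admissible. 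Stitching the first step with the rest of the sequence yields the desired equivalence $(Y,\ucI)\sim(H,\ucC_\cF(\ucI)|_H)$.

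Finally, for functoriality: under any noetherian base change $B'\to B$ or logarithmically regular morphism $Y'\to Y$, the pullback of $H$ remains a $B$-suborbifold of pure codimension $1$ (Propositions~\ref{basechangeprop}(iv) and \ref{coverprop}(iv)); the pullback of $\cF$ is still $H$-contracting since the defining condition $\cF(\cI_H)=\cO_Y$ is preserved by base change of modules of derivations (Lemma~\ref{basechangelem}, Lemma~\ref{firstseqlem}); and the coefficient ideal and maximal order property are preserved by Lemmas~\ref{functorlogorder1} and \ref{functorlogorder2}. Hence the equivalence is functorial, giving $(Y,\ucI)\approx(H,\ucC_\cF(\ucI)|_H)$.

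The main obstacle is the inductive step, specifically verifying that the strict transform $H_1$ inherits the maximal contact property, since this requires the precise compatibility $\sigma_1^c(\ucC_\cF(\ucI))=\ucC_{\cF_1}(\ucI_1)$ given by Theorem~\ref{coefprop}; this is precisely where the choice of the homogenized coefficient ideal (rather than a naively-summed version) is crucial.
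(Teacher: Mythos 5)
Your proof is correct and follows essentially the same route as the paper's: induction on the length of the sequence, with Proposition~\ref{liftlem} handling each single Kummer blow up and Theorem~\ref{coefprop} together with Lemmas~\ref{restrictlem}(ii) and \ref{Hcontract} propagating the hypotheses to the transformed data. The only difference is cosmetic — you explicitly verify that the strict transform $H_1$ remains a maximal contact to $\ucI_1$ (via Lemma~\ref{operlem}(iv)), a point the paper leaves implicit since its inductive assertion is phrased so that Proposition~\ref{liftlem} is invoked only with the transformed contracting module.
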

\begin{proof}
Recall that $\ucI\approx\ucC:=\ucC_\cF(\ucI)$ by Lemma \ref{coefcor}, hence $(Y,\ucI)\preccurlyeq(H,\ucC_H)$, where we set $\ucC_H=\ucC|_H$. The main part is to prove the opposite domination. By induction on the length of the sequences it suffices to prove the following assertion:

Assume that $\tau\:H'\dashrightarrow H$ is $\ucC_H$-admissible and such that its pushforward $\sigma=i_*(\tau)\:Y'\to Y$ is $\ucI$-admissible, and assume that $\cJ$ is an $H'$-admissible submonomial Kummer ideal on $Y'$ such that $\cJ|_{H'}$ is $\tau^c(\ucC_H)$-admissible. Then $\cJ$ is $\ucI'$-admissible, where $\ucI'=\sigma^c(\ucI)$.

To prove the assertion, recall that by Theorem \ref{coefprop} $\ucC_{\cF'}(\ucI')=\sigma^c(\ucC)$, where $\cF'=\sigma^c(\cF)$. Restricting this onto $H'$ and applying Lemma \ref{restrictlem}(ii) we obtain that $$\ucC_{\cF'}(\ucI')|_{H'}=\sigma^c(\ucC)|_{H'}=\tau^c(\ucC_H),$$ and hence $\cJ|_{H'}$ is $\ucC_{\cF'}(\ucI')|_{H'}$-admissible. Since $\cF'$ is $H'$-contractible by by Lemma~\ref{Hcontract}, $\cJ$ is $\ucI'$-admissible by Proposition~\ref{liftlem}.
\end{proof}

\subsection{Functoriality}
We conclude Section \ref{contactsec} with a brief explanation of the fact that all constructions with marked ideals are compatible with logarithmically regular morphisms and noetherian base changes. If $\ucI=(\cI,a)$ is a marked ideal on $Y$ and $g\:Y'\to Y$ is a morphism, then the {\em pullback} marked ideal is $g^{-1}(\ucI)=(g^{-1}(\cI),a)$.

\begin{lemma}\label{functormarked1}
Let $f\:Y\to B$ be a logarithmically regular morphism of logarithmic DM stacks with $B\in\bB$, $\ucI$ a marked ideal on $Y$, $\cF\subseteq\cD_{Y/B}$ an $\cO_Y$-submodule, $g\:B'\to B$ a morphism of logarithmic DM stacks with $B'\in\bB$ and noetherian\footnote{Noetherian in the sense of  \S\ref{convsec}.} fs base changes $g'\:Y'\to Y$ and $f'\:Y'\to B'$, and $\ucI'=g'^{-1}\ucI$, $\cF'=g^*(\cF)$. Then

(i) Assume that $H$ is a maximal contact to $\ucI$. Then $H'=H\times_YY'$ is a maximal contact to $\ucI'$. In addition, if $\cF$ is $H$-contracting, then $\cF'$ is $H'$-contracting.

(ii) $g'^{-1}(\ucC_\cF(\ucI))=\ucC_{\cF'}(\ucI')$.

(iii) $g'^{-1}(\cW(\ucI))=\cW(\ucI')$ and $\mu_\ucI\circ g'=\mu_{\ucI'}$.

(iv) If a submonomial Kummer blow up sequence $Y_n\dashrightarrow Y$ is admissible for $\ucI$ (resp. is an order reduction of $\ucI$), then the same is true for the pullback sequence $Y'_n\dashrightarrow Y$ and $\ucI'$.
\end{lemma}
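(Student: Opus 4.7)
The four statements are all compatibility assertions between base change and the constructions of Sections~\ref{basicsec}--\ref{contactsec}. The key technical point is that by Lemma~\ref{basechangelem} we have $g'^!\cD_{Y/B}=\cD_{Y'/B'}$, and hence, writing $\cF'$ for the image of $g'^*\cF$ in $\cD_{Y'/B'}$, the pullback is compatible with the filtration $\cF^{(\le i)}$ in the sense that $(\cF')^{(\le i)}=g'^*(\cF^{(\le i)})$ as subsheaves of $\cD_{Y'/B'}^{(\le i)}$. Consequently, for any ideal $\cI\subseteq\cO_Y$ one has the equality $(\cF')^{(\le i)}(\cI')=\cF^{(\le i)}(\cI)\cdot\cO_{Y'}$, which is the computational heart of the argument.

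For (i), Proposition~\ref{basechangeprop}(iv) guarantees that $H'=H\times_YY'$ is a $B'$-submanifold of $Y'$ of the same pure codimension, so $\cI_{H'}=\cI_H\cO_{Y'}$. Applying the identity above with $\cF=\cD_{Y/B}$, the inclusion $\cI_H\subseteq\cD_{Y/B}^{(\le a-1)}(\cI)$ pulls back to $\cI_{H'}\subseteq\cD_{Y'/B'}^{(\le a-1)}(\cI')$, so $H'$ is a maximal contact to $\ucI'$. The $H'$-contracting property follows by pulling back a local section $\partial\in\Gamma(\cF)$ with $\partial(\cI_H)=\cO_Y$ and observing that its image in $\cF'$ satisfies the analogous identity over $Y'$.

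For (ii), pullback of ideals commutes with products and with the homogenized sum used in \S\ref{Homogenized-addition} (this is immediate from the definitions since $\cI\cO_{Y'}\cdot\cJ\cO_{Y'}=(\cI\cJ)\cO_{Y'}$ and analogously for homogenized sums). Combining this with the identity for $(\cF')^{(\le i)}(\cI')$ recalled above and the defining formula $\ucC_\cF(\ucI)=\sum_{i=0}^{a-1}\cF^{(\le i)}(\ucI)$ yields (ii). For (iii), the equality $\mu_{\ucI'}=\mu_\ucI\circ g'$ is just the normalized restatement of Lemma~\ref{functorlogorder1}(i), while $g'^{-1}(\cW(\ucI))=\cW(\ucI')$ reduces to compatibility of monomial saturation and of $*^\sat$ with base change: monomiality is preserved by pullback of logarithmic stacks in $\bB$ (and monomials map to monomials under $g'$), and saturation of a monomial ideal coincides with its normalization by Lemma~\ref{norsatlem}, so the computation reduces to a tautology at the level of monoids.

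For (iv), induction on the length reduces the claim to a single submonomial Kummer blow up $\sigma\:Y_1\to Y$ with center $\cJ$. By Theorem~\ref{kummerblowth}(iv) the saturated base change $\sigma'\:Y'_1\to Y'$ is the Kummer blow up along $\cJ'=\cJ\cO_{Y'}$, and admissibility pulls back: $\cI\subseteq\cJ^{(a)}$ implies $\cI'\subseteq(\cJ^{(a)})\cO_{Y'}\subseteq(\cJ')^{(a)}$, where the last inclusion uses the explicit description in Lemma~\ref{Jnorlem}(i) together with compatibility of $*^\sat$ with base change in $\bB$. Using Theorem~\ref{kummerblowth}(iii) one then sees that the twisted pullbacks agree, i.e.\ $(\sigma^c\ucI)\cO_{Y'_1}=\sigma'^c(\ucI')$, and iterating gives the pullback statement for the whole sequence. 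The order reduction statement (``$\sigma^c\ucI$ is resolved'') is the assertion $\mu_{\sigma^c\ucI}\equiv 0$, which transfers to $\ucI'$ via part (iii) applied at each stage. I expect the only mildly subtle step to be the compatibility of $(\,\cdot\,)^{(a)}$ with base change used above; this is precisely where the hypothesis $B'\in\bB$ enters, via Lemma~\ref{norsatlem}.
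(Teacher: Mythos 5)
Your parts (i), (ii) and (iv) follow essentially the same route as the paper: the paper's own proof is a few lines ("(iii) follows from Lemma~\ref{functorlogorder1}, (i) and (ii) by unravelling the definitions", with the $H$-contracting computation written out exactly as you do it, and (iv) left implicit), and your expanded version via the identity $(\cF')^{(\le i)}(\cI')=\cF^{(\le i)}(\cI)\cO_{Y'}$, Proposition~\ref{basechangeprop}(iv), Theorem~\ref{kummerblowth}(iii)--(iv) and the easy direction of Lemma~\ref{Jnorlem}(i) is a faithful elaboration of that.

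There is, however, a genuine gap in your treatment of the first half of (iii). You assert that $g'^{-1}(\cW(\ucI))=\cW(\ucI')$ "reduces to a tautology at the level of monoids" because monomial ideals pull back to monomial ideals. That argument only gives the easy inclusion $\cM(\cI')\subseteq g'^{-1}(\cM(\cI))$ (a monomial ideal containing $\cI$ pulls back to one containing $\cI'$). The reverse inclusion is a \emph{minimality} statement and is not combinatorial at all: under a base change $B'\to B$ the logarithmic structure of $Y'$ can be strictly larger than the pullback of that of $Y$ (already for $B$ with trivial and $B'$ with nontrivial logarithmic structure), so $Y'$ can have many more monomial ideals and there is no a priori reason why the pullback of the \emph{smallest} monomial ideal containing $\cI$ is again the smallest one containing $\cI'$. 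This is precisely the point where the paper's citation of Lemma~\ref{functorlogorder1} does real work: one compares $\cM$ with the differential saturation, using parts (ii) and (iii) of that lemma, i.e. $g'^{-1}(\cD^\infty_{Y/B}(\cI))=\cD^\infty_{Y'/B'}(\cI')$ when the saturation is monomial, to obtain $g'^{-1}(\cM(\cI))\subseteq\cM(\cI')$; Lemma~\ref{norsatlem} then handles the passage to $(\,\cdot\,)^\sat$ in $\cW$. So the fix is to replace your "tautology" step by an appeal to Lemma~\ref{functorlogorder1}(ii)--(iii) (noting the monomiality/cleanness hypothesis under which it applies), rather than to pure monoid considerations. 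The second half of (iii), $\mu_{\ucI'}=\mu_\ucI\circ g'$, is fine and is exactly the paper's use of Lemma~\ref{functorlogorder1}(i).
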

\begin{proof}
Claim (iii) follows from Lemma~\ref{functorlogorder1}, and claims (i) and (ii) are proved by unravelling the definitions. For example, if $\cF$ is $H$-contracting, then $\cF(\cI_H)=\cO_Y$ and applying $g^{-1}$ yields $\cF'(\cI_H')=\cO_{Y'}$, that is, $\cF'$ is $H'$-contracting.
\end{proof}

In the same way, but using  Lemma~\ref{functorlogorder2} as an input one proves

\begin{lemma}\label{functormarked2}
Assume that $g\:Y'\to Y$ and $f\:Y\to B$ are logarithmically regular morphism of logarithmic DM stacks with $B\in\bB$, $\cI\subseteq\cO_Y$ an ideal with $\cI'=g^{-1}\cI$, and $\cF\subseteq\cD_{Y/B}$ a submodule with $\cF'=g^*(\cF)$. Then,

(i) Assume that $H$ is a maximal contact to $\ucI$. Then $H'=H\times_YY'$ is a maximal contact to $\ucI'$. In addition, if $\cF$ is $H$-contracting, then $\cF'$ is $H'$-contracting.

(ii) $g^{-1}(\ucC_\cF(\ucI))=\ucC_{\cF'}(\ucI')$.

(iii) $g^{-1}(\cW(\ucI))=\cW(\ucI')$ and $\mu_\ucI\circ g=\mu_{\ucI'}$.

(iv) If a submonomial Kummer blow up sequence $Y_n\dashrightarrow Y$ is admissible for $\ucI$ (resp. is an order reduction of $\ucI$), then the same is true for the pullback sequence $Y'_n\dashrightarrow Y'$ and $\ucI'$.
\end{lemma}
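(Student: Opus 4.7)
My plan is to mirror the proof of Lemma~\ref{functormarked1} step by step, replacing each appeal to the base-change functoriality of Lemma~\ref{functorlogorder1} by the logarithmically regular functoriality of Lemma~\ref{functorlogorder2}. The underlying reason this works is that all four assertions are ultimately statements about ideals and submodules of the sheaf of differential operators, and the formation of $\cD^{(\le i)}_{Y/B}(\cI)$, the coefficient ideal, the monomial saturation, and the logarithmic order are all local operations on $Y$; they therefore commute with pullback along a logarithmically regular $g\:Y'\to Y$ whenever the corresponding statements for derivations do.

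For (iii), the equality $\mu_\ucI\circ g=\mu_{\ucI'}$ is immediate from Lemma~\ref{functorlogorder2}(i). The identity $g^{-1}(\cW(\ucI))=\cW(\ucI')$ reduces to $g^{-1}(\cM(\cI)^\sat)=\cM(\cI')^\sat$, which I would obtain by noting that $\cM(\cI)=\cD_{Y/B}^\infty(\cM(\cI))$ is monomial and applying Lemma~\ref{functorlogorder2}(ii) with $\cF=\cD_{Y/B}$, together with the fact that monomial ideals on objects of~$\bB$ pull back to monomial ideals under logarithmically regular maps (Theorem~\ref{kummerblowth}(v) gives the relevant local compatibility). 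For (i), the pulled-back ideal $\cI_{H'}=g^{-1}\cI_H$ lies inside $g^{-1}\cD_{Y/B}^{(\le a-1)}(\cI)$ which maps into $\cD_{Y'/B}^{(\le a-1)}(\cI')$ via the functoriality of differential operators (using the fundamental exact sequence of Lemma~\ref{firstseqlem} applied to $Y'\to Y\to B$), giving $\cI_{H'}\subseteq\cT(\ucI')$. Pure codimension one and the fact that $H'\to B$ is logarithmically regular follow from Proposition~\ref{coverprop}(iv) applied to the $B$-submanifold~$H$ and the logarithmically regular morphism~$g$; the $H$-contracting property is preserved because the equation $\cF(\cI_H)=\cO_Y$ pulls back verbatim to $\cF'(\cI_{H'})=\cO_{Y'}$. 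For (ii), the key input is that $g^{-1}(\cF^{(\le i)}(\cI))=\cF'^{(\le i)}(\cI')$: this is again a local statement which boils down to the fact that the natural map $g^*\cF\to\cD_{Y'/B}$ takes sections to operators with the same action on pulled-back local sections of $\cI$. Since the homogenized sum defining $\ucC_\cF$ is built from products and sums of ideals, and pullback commutes with both, this yields $g^{-1}\ucC_\cF(\ucI)=\ucC_{\cF'}(\ucI')$.

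For (iv), I would proceed by induction on the length of the sequence. For a single submonomial Kummer blow up $\sigma\:Y_1\to Y$ along a center $\cJ$, the logarithmically regular base change $Y'\to Y$ produces the pullback blow up along $\cJ'=g^{-1}\cJ$ by Theorem~\ref{kummerblowth}(v), and this is precisely the corresponding member of the pullback sequence over~$Y'$. The admissibility inclusion $\cI\subseteq\cJ^{(a)}$ pulls back to $\cI'\subseteq(\cJ')^{(a)}$ because $g^{-1}$ preserves inclusions and integral closure of submonomial ideals under logarithmically regular pullback (this last point reduces to the model description from Lemma~\ref{Jnorlem}(i), which only involves monomial saturations and powers of submanifold ideals---both commute with logarithmically regular pullback). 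Iterating, the controlled transform $\sigma^c\ucI$ pulls back to $(\sigma')^c\ucI'$, and the order reduction property---emptiness of the support of the final controlled transform---follows from (iii).

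The main obstacle I expect is the careful verification of the compatibility $g^{-1}(\cF^{(\le i)}(\cI))=\cF'^{(\le i)}(\cI')$ in part~(ii), since a priori $g^*\cD_{Y/B}$ may be strictly smaller than $g^!\cD_{Y/B}$, and the map $g^*\cF\to\cD_{Y'/B}$ factors through the first fundamental sequence of Lemma~\ref{firstseqlem}. However, because only the ideals generated by the action on $\cI$ matter, and these are computed locally on sections, the discrepancy between $g^*$ and $g^!$ does not affect the output; this is the same phenomenon that was already flagged in \S\ref{difpullbacksec} as ``technical and essentially ignorable''. Once this point is absorbed, all four claims reduce to routine unwinding of definitions exactly as in Lemma~\ref{functormarked1}.
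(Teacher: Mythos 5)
Your proposal is correct and takes essentially the same route as the paper: the paper's entire proof of this lemma is the remark that it is established ``in the same way'' as Lemma~\ref{functormarked1}, with Lemma~\ref{functorlogorder2} replacing Lemma~\ref{functorlogorder1} as input, which is precisely your strategy. The additional details you supply (pullback of $B$-submanifolds via Proposition~\ref{coverprop}(iv), unwinding of the coefficient ideal, induction on the length of the sequence using Theorem~\ref{kummerblowth}(v)) just make explicit what the paper leaves as ``unravelling the definitions.''
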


\section{Relative logarithmic order reduction}\label{princsec}
\addtocontents{toc}
{\noindent We describe invariants of equivalence classes of ideals, sufficient to prove a logarithmic version of Hironaka's trick. We use this to construct  relative order reduction and principalization.}

In this section we will construct the logarithmic relative principalization method and prove Theorem~\ref{Proj:principalization}.

\subsection{Reduction to marked ideals}
By an {\em order reduction method} we mean a method $\cF$, such that given a marked ideals $\ucI$ on a relative logarithmic orbifold $f\:X\to B$, it outputs either an order reduction $\cF(f,\ucI)\:X'\dashrightarrow X$ of $\ucI$ or the empty value. As in the classical case, relative logarithmic principalization of $\cI$ is nothing else but a relative order reduction of the marked ideal $(\cI,1)$. Thus Theorem~\ref{Proj:principalization} is a particular case of the following

\begin{theorem}[Order reduction]\label{mainth}
There exists an order reduction method $\cF$ satisfying the following properties:

(i) Existence: let $f\:X\to B$ be a relative logarithmic orbifold with a marked ideal $\ucI$, and assume that $B\in\bB$ and either $\dim(B)\le 1$ or $f$ has abundance of derivations. Then there exists a blow up $g\:B'\to B$ with the saturated pullback $f'\:X'\to B'$ and $\cI'=\cI\cO_{X'}$ such that $\cF(f',\cI')\neq\emptyset$ and the center of $g$ is monomial over the complement to the closure of the image of $V(\cI)$ in $B$.

(ii) Compatibility with base change: if $\cF(f,\ucI)\neq\emptyset$ and $g\:B'\to B$ is any morphism of logarithmic stacks from $\bB$ with noetherian saturated base changes $f'\:X'\to B'$ and $g'\:X'\to X$, and $\ucI'=g'^{-1}\ucI$, then the sequence $\cF(f',\ucI')$ is obtained from the saturated pullback sequence $\cF(f,\ucI)\times_BB'$ by removing Kummer blowings up with empty centers.

(iii) Functoriality: if $\cF(f,\ucI)\neq\emptyset$ and $\ucI'=g^{-1}\ucI$ for a logarithmically regular morphism $g\:X'\to X$ such that $X'\to B$ is a relative logarithmic orbifold, then $\cF(f',\ucI')$ is obtained from the saturated pullback sequence $\cF(f,\ucI)\times_XX'$ by removing Kummer blowings up with empty centers.

(iv) Dependence on the equivalence class: if $\of\:\oX\to B$ is another relative logarithmic orbifold and $i\:X\into\oX$ is a $B$-suborbifold embedding of a pure codimension, then the pushforward sequence  $i_*(\cF(f,\ucI))$ depends only on $\of$ and the functorial equivalence class $[X,\ucI]$. In particular, if $\ucI = (\cI,1)$ is of weight 1, then $i_*(\cF(f,\ucI))=\cF(\of,i_*\ucI)$.
\end{theorem}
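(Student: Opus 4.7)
The plan is to construct $\cF$ by induction on $d = \logdim(X/B)$, following the three-step structure outlined in \S\ref{relalgsec}. At the base case $d = 0$, the ideal $\cI$ is supported on the empty set of the relative fiber, so it is automatically monomial and one just blows up $\cW(\ucI)$. For the inductive step, given $\ucI = (\cI, a)$, the algorithm will run as follows.

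\emph{Step 1 (Initial cleaning).} Blow up the normalized weighted monomial saturation along $\cW(\ucI) = (\cM(\cI)^\sat)^{1/a}$, which is submonomial and hence admits a submonomial Kummer blow up $\sigma_1\:X_1 \to X$. If $\sigma_1^c(\ucI)$ is not clean, output the empty value (fail). Otherwise, by Theorem~\ref{cleaningth} the transform $\ucI_1 = \sigma_1^c(\ucI)$ is balanced, with $\cI_1 = \cN \cdot \cI_1^{\cln}$.

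\emph{Step 2 (Reducing the clean order).} By Corollary~\ref{balancedcor} it suffices to order-reduce the maximal-order marked ideal $\ucI_1^{\cln} = (\cI_1^{\cln}, b)$. Working étale-locally, Theorem~\ref{contactexists} provides a hypersurface of maximal contact $H \into X_1$ to $\ucI_1^{\cln}$. By Theorem~\ref{Htheorem}, $(X_1, \ucI_1^{\cln}) \approx (H, \ucC_{X_1/B}(\ucI_1^{\cln})|_H)$. Since $\logdim(H/B) < d$ and $H \to B$ is a relative logarithmic orbifold by Theorem~\ref{manifoldth}, we apply the inductive order-reduction $\cF(H \to B, \ucC|_H)$ and push forward using Lemma~\ref{pushlem} to obtain a sequence of submonomial Kummer blow ups of $X_1$. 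By property (iv) applied inductively, this pushforward depends only on the equivalence class $[H, \ucC|_H]$ on the ambient $X_1$; since different choices of maximal contact yield equivalent pairs in $X_1$ by Theorem~\ref{Htheorem}, the local pushforwards glue globally, yielding a canonical sequence $\sigma_2\:X_2 \dashrightarrow X_1$. After this stage, $\cI_2 = \sigma_2^c(\ucI_1)$ has clean part equal to the unit ideal, so $\cI_2$ is a monomial ideal $\cN_2$.

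\emph{Step 3 (Final cleaning).} Blow up $\cW((\cN_2, a)) = \cN_2^{1/a}$, which is a submonomial Kummer ideal. The resulting transform is resolved, completing the order reduction.

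For the functoriality properties (ii), (iii), (iv), one proceeds by induction on the length of the sequence. At each step, the constructions used --- $\cW$, $\cC$, $\cM$, maximal contact, and pushforward --- commute with saturated base changes $B' \to B$ and logarithmically regular morphisms $X' \to X$ by Lemmas~\ref{functormarked1} and~\ref{functormarked2}. Kummer blow ups whose centers become empty after the operation are removed from the sequence, which accounts for the precise wording of (ii) and (iii). Property (iv) is built into Step 2: the sequence is expressed entirely in terms of $\ucC_{X/B}(\ucI)|_H$ pushed forward from an auxiliary suborbifold, so embedding $X \into \oX$ and re-running the construction produces the same pushforward, as maximal contact in $\oX$ restricts to maximal contact in $X$ up to equivalence.

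For the existence claim (i), the only place where the algorithm may fail is Step 1, when $\cD_{X/B}^\infty(\cI)$ is not monomial. Applying Theorem~\ref{monomialization} to the $\cD$-saturated ideal $\cD_{X/B}^\infty(\cI)$ provides a blow up $g\:B' \to B$, monomial outside the closure of the image of $V(\cI)$, such that $\cD_{X'/B'}^\infty(\cI')$ becomes monomial; equivalently $\cI'$ is logarithmically clean. Then Step 1 succeeds on $X' \to B'$, and inductively Steps 2 and 3 succeed as well (since logarithmic cleanness is preserved by the transforms in Step 2 and Step 3 is terminal). The main obstacle in this plan is the global coherence of Step 2: the need to prove that pushing forward from the locally chosen maximal contact gives a sequence on $X$ that is independent of all choices. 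This is precisely the content of property (iv) combined with Theorem~\ref{Htheorem}, and the homogenized definition of the coefficient ideal together with Theorem~\ref{coefprop} is what makes the equivalence robust enough under submonomial Kummer blow ups to sustain the induction.
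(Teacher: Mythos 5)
Your construction of the method itself (induction on $\logdim$, the three-step structure, descent from an \'etale-local maximal contact via equivalence classes, and functoriality via Lemmas~\ref{functormarked1}--\ref{functormarked2}) matches the paper's proof in \S\ref{ordersec}. The genuine gap is in your existence argument for (i). You claim that the only possible point of failure is the top-level Step 1, and that after one application of Theorem~\ref{monomialization} ``Steps 2 and 3 succeed as well (since logarithmic cleanness is preserved by the transforms in Step 2).'' This is not correct: Step 2 invokes $\cF$ recursively on the coefficient ideal restricted to a maximal contact $H$, one logarithmic dimension lower, and that recursive call has its own initial cleaning which can perfectly well fail --- cleanness of $\cI_1$ over $B$ does not imply that $\cD^{\infty}_{H/B}\bigl(\cC(\cI_1^{\cln})|_H\bigr)$ is monomial, so the new ideal may again be non-clean after its own $\cW$-blow up. (Your base case at $d=0$ already exhibits this: for $X=B=\AA^1$ and $\cI=(x)$ as in Example~\ref{stuckexample}(iii) the ideal is not ``automatically monomial'' and the algorithm fails without base change.) The paper's existence proof therefore does \emph{not} stop after monomializing $\cD^{\infty}_{X/B}(\cI)$: it runs a decreasing induction on the normalized clean order $\mu$, showing that if $\cF_{>\mu}$ does not fail then a further permissible blow up $B''\to B'$ makes $\cF_{\ge\mu}$ succeed, using (a) the induction on $\logdim$ applied to the coefficient ideal on the local maximal contact, (b) the fact that all previously performed stages, being compatible with base change, remain valid and non-failing after the additional blow up, and (c) the bound $l\le\logord_{\cI'_\infty/B'}(X')$ on the number of stages so the iteration of base modifications terminates. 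Without this compounding of base modifications your proof of (i) does not go through. A related omission: Theorem~\ref{monomialization} is stated for $B\in\bB^{\st}$, while (i) assumes only $B\in\bB$; the paper first proves (i) under $B\in\bB^{\st}$ and then removes this via the absolute desingularization Theorem~\ref{absdesingth} (which gives $\bB^{\st}=\bB$), a bootstrapping step you would also need.

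A smaller point: your justification of (iv) is too quick. For Steps 1 and 3 one needs that the pushed-forward invariant $\cW_{X/B}(\ucI)$, and in Step 2 the equivalence class of the clean part, are determined by the functorial class $[X,\ucI]$ (together with the codimension); this is exactly Theorem~\ref{equivth} (the logarithmic Hironaka trick) and Corollary~\ref{equivcor}, and the comparison of representatives of possibly different codimensions uses Theorem~\ref{codimth} ($c_X\le c_{\tilX}$, with the case $c_X<c_{\tilX}$ handled by taking $\tilH=\tilX$). Appealing only to Theorem~\ref{Htheorem} and ``maximal contact restricts to maximal contact'' does not by itself give the independence of the cleaning blow ups or of the clean-part decomposition from the chosen representative.
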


\begin{remark}
Since there are many functorially equivalent marked ideals on $X$, part (iv) provides a non-trivial addendum even when $X=\oX$. However, it is important for our argument to prove (iv) for an arbitrary $i$.
\end{remark}

The rest of Section \ref{princsec} is devoted to proving the theorem.

\subsection{Invariants of functorial equivalence classes}
Similarly to its predecessors, our order reduction method runs by restricting to suborbifolds $i\:H\into X$. In this paper, we prove independence of choices using equivalence of marked ideals on different suborbifolds as defined in \S\ref{equivsubsec}. As a preparation, we are now going to study invariants of equivalence classes.

\subsubsection{A model case}\label{modelsec}
We start with the following simple observation. Assume that $X$ is a smooth scheme with a marked ideal $\ucI=(\cI,a)$ and a closed point $x\in X$. If $b=\ord_x(\cI)$ and $\sigma\:X'\to X$ is the blow up at $x$, then $\cI_E^b$ is the maximal power of $\cI_E$ that divides $\sigma^{-1}(\cI)$. Therefore, the sequence $X'(d)\to X'\to X$ with centers $m_x$ and $\cI_E^d$ is $\ucI$-admissible if and only if $da\le b-a$, that is $d\le \mu_x(\ucI)-1$. In our approach we allow $d$ to be a rational number, hence the equivalence class of $\ucI$ determines the normalized order in a very elementary way: $\mu_x(\ucI) = \max d+1$, the maximum taken over the above small class of admissible sequences. For comparison, in the classical approach one blows up only smooth centers, and the goal of determining an invariant from the collection of allowed transformations is achieved by Hironaka's trick, where one first replaces $X$ by $X\times \bfA^1$ with the logarithmic structure given by $X\times\{0\}$.

In general, we will have to solve a few minor technical problems: $x$ might not be closed, $m_x$ might not be submonomial, and the codimension of $H$ is not determined by $[H,\ucI]$. The following result will be used to deal with the first two issues.

\begin{lemma}\label{submonomiallem}
Let $f\:X\to B$ be a relative logarithmic orbifold and $x\in |X|$ a point. Then there exists a morphisms $B'\to B$ with $B'\in\bB$ and a noetherian $X\times_BB'$, a localized \'etale morphism $X'\to X\times_BB'$ with $X'$ a scheme, and a closed point $x'$ with image $x\in|X|$ such that the ideal $m_{x'}$ is submonomial.
\end{lemma}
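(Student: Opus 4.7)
Working \'etale-locally on $X$, by Theorem~\ref{neatth} we may assume $X$ is a scheme carrying a neat chart $h\:X\to B_P[Q]$ of $f$ at some geometric point $\bar x$ above $x$, with $P=\oM_b$, $Q=\oM_{\bar x}$, and $b=f(x)$. Form the sharp factorization $\tilB=B_P[\tilP]\to B$ with $\tilP:=(\phi^\gp)^{-1}(Q)\subseteq P^\gp$ as in \S\ref{sharpfactorsec}. The induced morphism $X\to\tilB$ is sharp at $\bar x$, and $\tilB\to B$ is logarithmically \'etale, so $\tilB$ is logarithmically regular by Lemma~\ref{logreglem} and hence lies in $\bB$. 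Let $\widetilde{b}\in\tilB$ be the image of $\bar x$.

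By log regularity of $\tilB$, choose a Zariski neighborhood $W\subseteq\tilB$ of $\widetilde{b}$ and a family of regular parameters $s_1,\dots,s_k$ of $\tilB$ at $\widetilde{b}$ defined on $W$. Apply Lemma~\ref{incrloglem} to enlarge the log structure of $W$ along $(s_1,\dots,s_k)$, producing a logarithmic scheme $B'$ with underlying scheme $W$ and $\oM_{b'}=\tilP\oplus\NN^k$ at the point $b'$ over $\widetilde{b}$; after possibly shrinking $W$, that lemma gives $B'\in\bB$. The natural composition $B'\to\tilB\to B$ is a logarithmic morphism, and the fs base change $X_{B'}:=X\times_BB'$ has the same underlying scheme as the restriction of $X\times_B\tilB=X_P[\tilP]$ over $W$, which is of finite type over $X$ and therefore noetherian.

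The chart $h$ induces a canonical point $\hat x\in X_{B'}$ above $\bar x\in X$. Set $X':=\Spec\cO_{X_{B'},\hat x}$ with closed point $x':=\hat x$; then $X'\to X_{B'}$ is localized \'etale and $x'$ has image $x\in X$. By Proposition~\ref{basechangeprop}, $f'\:X'\to B'$ is logarithmically regular, remains sharp at $x'$, and the images $t_1,\dots,t_n$ of regular parameters of $f$ at $\bar x$ are regular parameters of $f'$ at $x'$. Applying Lemma~\ref{logregchart} to $f'$ yields
\[
\hatcO_{x'}=(\hatcO_{b'})_{P^+}\llbracket Q^+\rrbracket\llbracket t_1,\dots,t_n\rrbracket\wtimes_{k(b')}k(x'),\qquad P^+:=\tilP\oplus\NN^k,\ Q^+:=Q\oplus\NN^k,
\]
where the extra $\NN^k$ factors record that $s_1,\dots,s_k$ have become monomial. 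By construction $m_{b'}=P^+_+\cdot\hatcO_{b'}$, and since the inclusion $P^+\hookrightarrow Q^+$ sends $P^+_+$ into $Q^+_+$, we obtain $m_{x'}=(t_1,\dots,t_n)+Q^+_+\cdot\hatcO_{x'}$. By Lemma~\ref{submanlem} the first summand defines a $B'$-submanifold ideal, while the second is a monomial Kummer ideal; hence $m_{x'}$ is submonomial. The principal technical obstacle is identifying $\oM_{x'}=Q^+$ via the fs base change, which reduces to verifying saturation of the pushout $Q\oplus_{\tilP}(\tilP\oplus\NN^k)$ of fs monoids.
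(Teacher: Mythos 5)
Your route is genuinely different from the paper's, and in fact it is essentially the alternative the authors themselves sketch in Remark~\ref{submonomialrem}: keep a ``large'' base, pass to the sharp factorization, and enlarge the logarithmic structure of the base so that $m_{b'}$ becomes monomial, after which $m_{x'}=(\text{monomial ideal})+(\text{regular parameters})$ by the formal description of Lemma~\ref{logregchart}. The paper instead chooses a much smaller base change: a morphism $B'=\Spec(O)\to B$ with $O$ a field or a DVR carrying the logarithmic structure $O\setminus\{0\}$ and with closed point over $b=f(x)$. Then $m_{b'}$ is automatically monomial, the fiber $X'_{b'}$ is a monomial subscheme, the logarithmic fiber $S_{x'}$ is its logarithmic stratum and hence monomial, and $m_{x'}$ is submonomial at the (localized) closed point $x'$ with no chart or completion bookkeeping at all. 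What your version buys is an explicit formal model and a base change that stays birational-looking over $B$; what it costs is all the chart-level verification that the paper's choice of $B'$ makes vacuous.

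As written, though, there is one step that is wrong in general and a few assertions left unverified. You begin with ``a neat chart $h\:X\to B_P[Q]$ with $P=\oM_b$, $Q=\oM_{\bar x}$'': a neat chart of the \emph{morphism} $f$ cannot in general be modeled on $\oM_b\to\oM_{\bar x}$ --- that is precisely a \emph{sharp} chart, and it exists only when $f$ is sharp at $\bar x$ (see \S\ref{Sec:sharp-chart} and the remark after the definition of neat charts: $Q$ is typically not sharp). The fix lies inside your own argument: take an arbitrary neat chart from Theorem~\ref{neatth}, and use that the sharp factorization $X\to\tilB=B_P[\tilP]\to B$ of \S\ref{sharpfactorsec} is sharp and exact at every point, so that sharp charts become available only after passing to $\tilB$; but then you must not pretend $Q=\oM_{\bar x}$ from the start. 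Relatedly, several identifications are asserted rather than proved: that $m_{b'}=P^+_+\cdot\hatcO_{b'}$ (this needs exactness of the chart of $\tilB$ at $\tilde b$ together with Kato's structure of logarithmically regular local rings, i.e.\ that $m_{\tilde b}$ is generated by the monomials and lifts of parameters of $\cO_{\tilde b}/I_{\tilde b}$); that $\oM_{x'}=Q\oplus\NN^k$ and that $f'$ is sharp at $x'$ (this is not what Proposition~\ref{basechangeprop} says --- it follows instead from computing the pushout of characteristic monoids for the fs base change and checking that the new elements create no units, which is more than the saturation issue you flag); and that $B'\in\bB$ after shrinking (Lemma~\ref{incrloglem} gives logarithmic regularity only at $b'$, so you need openness of the logarithmically regular locus). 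All of these are true and available, so the plan is repairable, but in its present form the proof has a genuine gap at the chart step and relies on citations that do not cover the claims attached to them.
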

\begin{proof}
It is easy to see that there always exists a morphism of logarithmic schemes $g\:B'=\Spec(O)\to B$ such that $O$ is either a field or a DVR, the logarithmic structure is $O\setminus\{0\}$, and $g$ takes the closed point $b'\in B'$ to $b:=f(x)$. Find an \'etale cover $X'\to X\times_BB'$ with $X'$ a scheme, and choose a point $x'\in X'$ over $x$. Then replacing $X'$ by its localization at $x'$ we can assume that $x'$ is closed. Now, the fiber $X'_{b'}$ is a monomial subscheme, and it is easy to see that the logarithmic fiber $S=S_{x'}$ coincides with the logarithmic stratum of $x'$ in $X'_{b'}$, hence $S_{x'}$ is monomial. Since $x'$ is a closed point in $S_{x'}$, the ideal $m_{x'}$ is submonomial.
\end{proof}

\begin{remark}\label{submonomialrem}
One could use other classes of base changes in the lemma. For example, one could replace $B$ by the localization at $B$, increase the logarithmic structure so that $m_b$ becomes monomial, and then apply a monomial blow up so that the base change of $f$ becomes exact at $x$.
\end{remark}

\subsubsection{Operations on equivalence classes}
By definition, functorial equivalence is preserved by base changes of $B$, pullbacks with respect to logarithmically regular morphisms $g\:X'\to X$, and controlled transforms under admissible blow us. Hence all these operations are defined on equivalence classes $[H,\cI]$. For example, $g^{-1}([H,\ucI])$ is defined to be $[H\times_XX',(g|_H)^{-1}\ucI]$.

\subsubsection{Codimensions}
The codimension of $H$ at a point $x\in|X|$ will be denoted $c_x(H)$. Let $\cC$ be a functorial equivalence class of marked ideals on suborbifolds of $X$. By the {\em codimension} $c_x(\cC)$ of $\cC$ at $x\in |X|$ we mean the maximal possible codimension $c_x(H')$, where $g\:X'\to X$ is an \'etale neighborhood of $x$ and $(H',\ucI')$ is a representative of $g^{-1}(\cC)$. Finally, if $\cJ$ is a submonomial Kummer ideal of the form $\cI_H+\cN$, where $\cN$ is Kummer monomial and $H$ is a suborbifold, then the number $c_x(H)$ is easily seen to depend only $\cJ$ and we call it the {\em monomial codimension} $c_x(\cJ)$ of $\cJ$ at $x$. We have the following natural inequality between these codimensions:


\begin{lemma}\label{codimlem}
Assume that $\cC$ is a functorial equivalence class on $X$ and a sequence $\sigma\:X_n\dashrightarrow X_0=X$ with centers $\cJ_i\subseteq\cO_{(X_i)\ket}$ is $\cC$-admissible. Let $i\in\{0\. n-1\}$, $x_i\in\supp(\cJ_i)$ and let $x\in|X|$ be its image. Then $c_{x_i}(\cJ_i)\ge c_{x}(\cC)$.
\end{lemma}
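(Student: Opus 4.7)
My plan is to reduce to a single representative of $\cC$ and then carry out a local analysis on the logarithmic fiber at $x_i$. Since $c_x(\cC)$ is defined as a supremum over representatives on \'etale neighborhoods of $x$, it suffices to show that for every representative $(H,\ucI)$ of $g^{-1}(\cC)$ on some \'etale neighborhood $g\:X_0\to X$ of $x$, one has $c_{x_i}(\cJ_i)\ge c_x(H)$. Working \'etale-locally --- the $\cC$-admissibility of $\sigma$ and the monomial codimension of $\cJ_i$ are both compatible with \'etale base change --- I may assume the representative $(H,\ucI)$ is defined on $X$ itself.

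Since $\sigma$ is $[H,\ucI]$-admissible, its initial segment $\sigma_{(0,i)}$ is $H$-admissible. By Lemma \ref{Hadmisslem} this segment is the pushforward of a unique sequence $\tau\:H_i'\dashrightarrow H$ of submonomial Kummer blow ups, and by Lemma \ref{strictlem} every successive strict transform $H_j'\hookrightarrow X_j$ is a $B$-suborbifold, with $H_{j+1}'\to H_j'$ itself a Kummer blow up along the restriction of the center. Codimension is preserved by such blow ups, so $c_{x_i}(H_i')=c_x(H)$. Moreover, the step $\sigma_i\:X_{i+1}\to X_i$ is itself $H_i'$-admissible, whence $\cI_{H_i'}\subseteq\cJ_i^\nor$; in particular $x_i\in V(\cJ_i)\subseteq V(\cI_{H_i'})=H_i'$.

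For the local step at $x_i$, write $\cJ_i=\cI_{H_i}+\cN_i$ with $H_i$ a $B$-submanifold satisfying $c_{x_i}(H_i)=c_{x_i}(\cJ_i)$, and $\cN_i$ a monomial Kummer ideal. Since $x_i\in V(\cJ_i)$, we may choose the generators of $\cN_i$ to be non-unit Kummer monomials; passing to a Kummer \'etale cover splitting the fractional exponents, I may assume these generators are ordinary monomials lying in $m_{x_i}$. Using a neat chart and Lemma \ref{logfibchartlem}, the logarithmic fiber $S=S_{x_i}$ is locally a connected component of a chart fiber at a point where all non-unit monomials vanish, so $\cN_i|_S=0$ and $\cJ_i|_S=\cI_{H_i}|_S$. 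By Lemma \ref{submanlem}, the latter is generated by a partial family of regular parameters of the regular local ring $\cO_{S,x_i}$ (regularity from Corollary \ref{logfibcor}), and such an ideal is integrally closed. Restricting the inclusion $\cI_{H_i'}\subseteq\cJ_i^\nor$ to $S$ therefore gives
\[\cI_{H_i'}|_S\ \subseteq\ (\cJ_i|_S)^\nor\ =\ \cI_{H_i}|_S.\]
Comparing images in the cotangent space $m_{S,x_i}/m_{S,x_i}^2$ yields $c_{x_i}(H_i')\le c_{x_i}(H_i)$, since both sides are the dimensions of the subspaces spanned by the corresponding partial families of regular parameters. Hence $c_{x_i}(\cJ_i)=c_{x_i}(H_i)\ge c_{x_i}(H_i')=c_x(H)$, and taking the supremum over $H$ completes the proof.

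The main obstacle will be the log-fiber analysis in the last paragraph: verifying that Kummer monomial ideals with non-unit generators vanish on $S_{x_i}$ after a suitable Kummer cover, and that restriction to $S$ commutes with integral closure for the ideals under consideration. Both points rely on the compatibility of Kummer covers with logarithmic fibers --- via the neat chart description of $S_{x_i}$ in Lemma \ref{logfibchartlem} and the Kummer-local nature of Kummer ideals from \S\ref{ketsec} --- and on the regularity of the log fiber established in Corollary \ref{logfibcor}, which makes the ideal generated by a partial family of regular parameters automatically integrally closed.
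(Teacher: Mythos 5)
Your proof is correct and follows essentially the same route as the paper's: étale-localize so that a representative $(H,\ucI)$ of $\cC$ lives on $X$ itself, pass to the strict transform $H'_i\into X_i$, and use the $H$-admissibility of the sequence (i.e.\ $\cI_{H'_i}\subseteq\cJ_i^\nor$) to compare codimensions at $x_i$. The only difference is that you spell out, via restriction to the logarithmic fiber and the cotangent space $m_{S,x_i}/m^2_{S,x_i}$, the final inequality $c_{x_i}(\cJ_i)\ge c_{x_i}(H'_i)$, which the paper treats as immediate from the definition of monomial codimension; this added verification is sound.
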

\begin{proof}
Replacing $X$ by an \'etale neighborhood of $x$ and replacing $\sigma$ by its pullback we can assume that $\cC$ contains a representative $(H,\cI)$ with $c_x(H)=c_x(\cC)$. Then the strict transform $H_i\into X_i$ satisfies $c_{x_i}(H_i)=c_x(H)$ and $\cI_{H_i}\subseteq\cJ_i$. Therefore, $c_{x_i}(\cJ_i)\ge c_{x_i}(H_i)=c_x(\cC)$, as required.
\end{proof}

Now we can show that codimensions detect whether a marked ideal is of maximal order.

\begin{theorem}\label{codimth}
Let $X\to B$ be a relative logarithmic orbifold, $i\:H\into X$ a suborbifold, $\ucI$ a marked ideal on $H$ with equivalence class $\cC=[H,\ucI]$, and $x\in\supp(\cI)$ a point. Then $c_x(\cC)>c_x(H)$ if and only if $\ucI$ is of maximal order at $x$.
\end{theorem}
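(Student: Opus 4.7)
\emph{Plan.} For the direction $(\Leftarrow)$, assume $\ucI$ is of maximal order at $x$. Shrinking $H$ to an \'etale neighborhood of $x$, Theorem~\ref{contactexists} produces a hypersurface of maximal contact $H''\into H$ through $x$, so $c_x(H'')=c_x(H)+1$. Extend this \'etale cover of $H$ to an \'etale cover $X''\to X$ of a neighborhood of $x$ (possible because $H\into X$ is a regular closed immersion of suborbifolds), so that $H''$ becomes a $B$-suborbifold of $X''$. Choosing an $H''$-contracting submodule $\cF\subseteq\cD_{H/B}$ and applying Theorem~\ref{Htheorem}, we obtain $(H,\ucI)\approx(H'',\cC_\cF(\ucI)|_{H''})$, so that $(H'',\cC_\cF(\ucI)|_{H''})$ represents the pullback of $\cC$ to $X''$. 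Hence $c_x(\cC)\ge c_x(H'')=c_x(H)+1>c_x(H)$.

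For the direction $(\Rightarrow)$, I prove the contrapositive: if $\ucI=(\cI,a)$ satisfies $b:=\logord_{\cI/B}(x)>a$, then $c_x(\cC)\le c_x(H)$ (the reverse inequality being automatic from $(H,\ucI)\in\cC$). Apply Lemma~\ref{submonomiallem} to $X$ to obtain, after a base change $B'\to B$ and a localized \'etale morphism $g\:X'\to X\times_BB'$, a closed point $x'\in X'$ over $x$ with $m_{x'}$ submonomial. Set $H'=H\times_XX'$ and let $\ucI'$ be the pullback of $\ucI$. Then $H'\into X'$ is a $B'$-suborbifold of codimension $c_x(H)$ by Propositions~\ref{basechangeprop}(iv) and~\ref{coverprop}(iv); $\ucI'$ still has $\logord=b>a$ at $x'$ by Lemmas~\ref{functorlogorder1}(i) and~\ref{functorlogorder2}(i); and $m_{x',H'}$ is submonomial on $H'$, since by Lemma~\ref{submanlem} the ideal of $H'$ is generated by a partial family of regular parameters of $X'$. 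Base change and logarithmically regular pullback transfer representatives, so $c_{x'}(g^{-1}\cC)\ge c_x(\cC)$; it therefore suffices to show $c_{x'}(g^{-1}\cC)\le c_{x'}(H')=c_x(H)$.

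I will exhibit a length-two $g^{-1}\cC$-admissible sequence on $X'$ whose second blow-up has monomial codimension $c_{x'}(H')$ at a point above $x'$, then invoke Lemma~\ref{codimlem}. Let $\tau_1\:H'_1\to H'$ be the submonomial Kummer blow-up of $H'$ along $\cJ_0=m_{x',H'}$ with weight one; it is $\ucI'$-admissible since $\cI'\subseteq(m_{x',H'})^a$ at $x'$ (as $b\ge a$). Its pushforward $\sigma_1=i_*\tau_1$ along $i\:H'\into X'$ (Lemma~\ref{pushlem}) has center $m_{x'}$ on $X'$. Let $x_1^0\in H'_1$ be the origin of the chart of $\tau_1$ corresponding to a non-monomial generator of $m_{x',H'}$; it lies in the exceptional divisor $E$ of $\tau_1$ over $x'$. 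A direct chart calculation, using that $\cI'|_{S_{x'}}$ has order $b$ in the regular local ring $\cO_{S_{x'},x'}$ whose maximal ideal is cut out by the non-monomial part of $m_{x',H'}$, shows $\sigma_1^{-1}(\cI')\subseteq\cI_E^b$ near $x_1^0$. Hence the controlled transform $\ucI'_1=\tau_1^c(\ucI')=(\cI'_1,a)$ satisfies $\cI'_1\subseteq\cI_E^{b-a}$ there. Since $b-a>0$ and $\cI_E$ is a proper monomial Kummer ideal on $H'_1$, the Kummer ideal $\cW(\ucI'_1)=(\cM(\cI'_1)^\sat)^{1/a}$ is monomial, non-unit at $x_1^0$, and $\ucI'_1$-admissible. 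Let $\tau_2\:H'_2\to H'_1$ be the Kummer blow-up along $\cW(\ucI'_1)$ and $\sigma_2$ its pushforward. The center of $\sigma_2$ is $\cJ^X_1=\cI_{H'_1}+\cW(\ucI'_1)$, a submonomial Kummer ideal with submanifold part $H'_1$ (codimension $c_{x_1^0}(H'_1)=c_{x'}(H')$ at $x_1^0$) and pure monomial part $\cW(\ucI'_1)$; its monomial codimension at $x_1^0$ is therefore $c_{x'}(H')$. Since $\sigma_1\circ\sigma_2$ is $g^{-1}\cC$-admissible and $x_1^0\in\supp(\cJ^X_1)$, Lemma~\ref{codimlem} yields
\[c_{x'}(g^{-1}\cC)\le c_{x_1^0}(\cJ^X_1)=c_{x'}(H'),\]
as required.

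\emph{Main obstacle.} The crux is the chart-level verification that $\sigma_1^{-1}(\cI')\subseteq\cI_E^b$ at $x_1^0$: one must check that the logarithmic-order-$b$ hypothesis translates into divisibility by the exceptional divisor to the $b$-th power after a \emph{weighted submonomial} Kummer blow-up whose center mixes regular parameters with monomial generators, and that the point $x_1^0$ can be chosen inside $\supp(\cW(\ucI'_1))$ rather than only on generic points of $E$. Both reduce, via the submonomiality of $m_{x',H'}$, to the classical point-blow-up picture on the regular local scheme $\cO_{S_{x'},x'}$, with the monomial generators of $m_{x',H'}$ absorbed harmlessly into the new logarithmic structure along $E$.
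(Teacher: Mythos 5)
Your direction $(\Leftarrow)$ is essentially the paper's argument (\'etale-local maximal contact via Theorem~\ref{contactexists} plus Theorem~\ref{Htheorem}), and the overall skeleton of your converse — reduce via Lemma~\ref{submonomiallem} to a closed point $x'$ with submonomial $m_{x'}$, build a two-step admissible sequence whose second pushed-forward center has monomial codimension $c_x(H)$, and conclude with Lemma~\ref{codimlem} — is also the paper's. But the converse has a genuine gap at its first and crucial step. Your blow-up $\tau_1$ along $\cJ_0=m_{x',H'}$ is in general not $\ucI'$-admissible: admissibility means $\cI'\subseteq\cJ_0^{(a)}$, and you justify it by ``$\cI'\subseteq(m_{x',H'})^a$ since $b\ge a$''. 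This conflates the logarithmic order with the ordinary order. The number $b=\logord_{\cI'/B'}(x')$ is the order of $\cI'|_{S_{x'}}$ on the logarithmic fiber, so writing $m_{x',H'}=(t,u)$ with $t$ regular parameters and $u$ monomials, it only gives $\cI'\subseteq(t)^b+(u)$: it controls vanishing in the $t$-directions but says nothing about the monomial directions. And $(u)\not\subseteq\cJ_0^{(a)}$ for $a\ge 2$, since by Lemma~\ref{Jnorlem} the term of $\cJ_0^{(a)}$ not involving $(t)$ is $((u)^a)^\sat$, which does not contain $u_1$. Concretely, $\cI'=(t_1^b,u_1)$ has logarithmic order $b$ at $x'$ but is not contained in $\cJ_0^{(a)}$, so $\tau_1$ is inadmissible and its controlled transform is not even defined; moreover $\sigma_1^{-1}(u_1)$ is divisible by $\cI_E$ only to the first power, so your chart claim $\sigma_1^{-1}(\cI')\subseteq\cI_E^b$ fails as well. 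This is exactly the point your ``main obstacle'' paragraph proposes to absorb ``harmlessly'' into the log structure along $E$, and it cannot be.

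The repair is the paper's choice of a weighted center: blow up the Kummer ideal $(t,u^{1/d})$ with $d=a+1$. Then $\cI\subseteq(t)^d+(u)\subseteq(t,u^{1/d})^{(d)}$, so the blow-up is $(\cI,d)$-admissible, whence $\cI_E$ divides the controlled transform $\tau^{-1}(\cI,a)$; blowing up $\cI_E^{1/a}$ next gives an $\ucI$-admissible two-step sequence whose pushforward's second center is $\cI_{H'}+\cI_E^{1/a}$, of monomial codimension $c_x(H)$ along the whole exceptional divisor over $x$ (so no special point $x_1^0$ needs to be located), and Lemma~\ref{codimlem} then contradicts $c_x(\cC)>c_x(H)$. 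In short: without taking $d$-th roots of the monomial generators in the first center, the admissibility and the divisibility by $\cI_E$ on which your length-two sequence rests both break down whenever $a\ge 2$ and the monomial part of $m_{x'}$ is nontrivial.
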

\begin{proof}
Assume that $\ucI$ is of maximal order at $x$. We can replace $X$ by an \'etale neighborhood of $x$, hence by Theorem~\ref{contactexists} we can assume that there exists a maximal contact $H'\into H$ to $\ucI$. By Theorem~\ref{Htheorem}, $\ucC(\ucI)|_{H'}$ is another representative of $\cC$, and its codimension is larger than that of $\ucI$.

Conversely, it suffices to obtain a contradiction to the following assumption: $\ucI=(\cI,a)$ is not of maximal order at $x$, but $c_x(\cC)>c_x(H)$. This situation persists after base changes and pullbacks with respect to \'etale localizations of $X$, hence by Lemma~\ref{submonomiallem} we can assume that $X$ is local, $x$ is closed and $m_{X,x}$ is submonomial. Then $m_{H,x}$ is submonomial too, say, $m_{H,x}=(t,u)$, where $t=(t_1\.t_n)\subset m_{H,x}$ is a family of regular parameters at $x$ and $u=(u_1\.u_r)$ monomials generating the maximal ideal of $\oM_x$.

By our assumption $\logord_{\ucI/B}(x)\ge d:=a+1$. Since $\cI\subseteq (t)^d+(u)\subseteq(t,u^{1/d})^d$, the blow up $\tau\:H'\to H$ along $(t,u^{1/d})$ is $(\cI,d)$-admissible, and hence, $\cI_E$ divides $\tau^{-1}(\cI,a)$. Let $H'(1/a)\to H'$ be the blow up along $\cI_E^{1/a}$. Then the sequence $H'(1/a)\to H'\to H$ is $\ucI$-admissible, and its pushforward $X'(1/a)\to X'\to X$ is $\cC$-admissible. Since the center of $X'(1/a)\to X'$ is of monomial codimension $c_x(H)$, Lemma~\ref{codimlem} yields a contradiction.
\end{proof}

\subsubsection{Normalized invariants}
One can define analogues of normalized invariants for equivalence classes $[H,\ucI]$. Since $H$ is not determined by the class, we take the normalized invariants of $\ucI$ and push them forward to $X$. Namely, by $\mu_{[H,\ucI]}\:|X|\to\NN$ we denote the normalized logarithmic order function $\mu_\ucI\:|H|\to\NN$ extended by zero outside of $|H|$, and by $\cW_{X/B}([H,\ucI])$ we denote the submonomial Kummer ideal $i_*(\cW_{H/B}(\ucI))$. We have slightly abused notation because these invariants certainly depend also on the codimension $c_H$ of $H$. In fact, they depend only on $[H,\ucI]$ and $c_H$, as will be proven soon.
\begin{remark}
It is important to push forward the invariants of $\cI$, instead of taking invariants of $i_*(\ucI)$. Indeed, if $H$ is of positive codimension, then the logarithmic order of $i_*(\cI)$ is $\leq 1$. In fact, $\logord_{i_*(\cI)/B}$ is the characteristic function of $\supp(\cI)$. In addition, $\cW_{X/B}(i_*\ucI)$ is monomial, while $\cW_{X/B}([H,\ucI])$ is only submonomial in general.
\end{remark}

\subsubsection{Hironaka's trick: the logarithmic version}
The following theorem is an analogue of \cite[Theorems 6.1, 6.2]{Bierstone-Milman-funct} and its proof runs along the same line and is even more straightforward.

\begin{theorem}\label{equivth}
Assume that $f\:X\to B$ is a relative logarithmic orbifold, $i\:H\into X$ is a suborbifold of pure codimension $c_H$, and $\ucI=(\cI,a)$ is a marked ideal on $H$. Then the function $\mu_{[H,\ucI]}\:|X|\to\NN$ and the Kummer ideal $\cW_{X/B}([H,\ucI])$ depend only on $\cC=[H,\ucI]$ and $c_H$ rather than on the choice of the pair $(H,\ucI)\in\cC$.
\end{theorem}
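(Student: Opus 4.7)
The plan is to recover each invariant pointwise at $x\in|X|$ from $\cC$ and $c_H$, using the logarithmic version of Hironaka's trick from \S\ref{modelsec}. Since $\mu_{[H,\ucI]}$ is pointwise by definition and $\cW_{X/B}([H,\ucI])$ is a local object, I fix $x$ and reduce as follows. By the base-change and logarithmically-regular-pullback functorialities of the normalized logarithmic order and the normalized monomial saturation (Lemmas~\ref{functormarked1}(iii) and~\ref{functormarked2}(iii)) and of functorial equivalence classes, Lemma~\ref{submonomiallem} allows me to replace $X\to B$ by a suitable strict \'etale localization over a base change $B'\to B$, after which I may assume $X$ is local, $x$ is a closed point, and $m_x$ is submonomial.

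To extract $\mu_x$, I choose any representative $(H,\ucI=(\cI,a))\in\cC$. If $x\notin\supp(\cC)$ then $\mu_x=0$, a condition intrinsic to $\cC$. Otherwise, submonomiality of $m_x$ ensures that the blow up $\sigma_1\:X_1\to X$ along $m_x$ is a legitimate submonomial Kummer blow up of $X$, and its strict transform on $H$ is the blow up of $H$ at $x$. Following $\sigma_1$ by the Kummer blow up $\sigma_2\:X(d)\to X_1$ along $\cI_E^d$ for rational $d\geq 0$, a formal-coordinate computation in the description of $\hatcO_{H,x}$ provided by Lemma~\ref{logregchart}, applied to a Taylor expansion of a generator of $\cI$ in regular parameters of $H$ at $x$, reproduces the model computation of \S\ref{modelsec}: the composition $\sigma_2\circ\sigma_1$ is $(H,\ucI)$-admissible if and only if $d\leq\mu_x-1$. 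Since $\cC$-admissibility of this test depends only on $\cC$ and on the monomial codimension $c_H$ of the first center, the supremum of admissible $d$ determines $\mu_x$.

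To extract $\cW_{X/B}([H,\ucI])$, I characterize it as the minimal submonomial Kummer ideal of $X$ of the form $\cI_{H'}+\cN'$, with $c_{H'}=c_H$ and $\cN'$ monomial, whose blow up is $\cC$-admissible. For any representative $(H,\ucI)$, such centers correspond bijectively to monomial Kummer centers $\cN$ on $H$ satisfying $\cI\subseteq\cN^{(a)}$, and the minimum among these is $\cW_{H/B}(\ucI)=(\cM(\cI)^\sat)^{1/a}$; pushing forward gives $\cW_{X/B}([H,\ucI])=i_*\cW_{H/B}(\ucI)$. Both the class of admissible centers and the partial order among them are visible from $\cC$ and $c_H$, so the minimum is determined by this data.

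The main obstacle is the formal-coordinate calculation underlying the $\mu_x$ step: verifying that the sharp admissibility threshold $d\leq\mu_x-1$ really holds in the presence of arbitrary codimension $c_H$ and nontrivial monomial parameters at $x$, and that the interaction between the pushforward $i_*$ and the exceptional divisor of $\sigma_1$ does not weaken this bound. This refines the formal arguments already used in Proposition~\ref{liftlem} and Theorem~\ref{codimth}, and is where the submonomiality reduction of Step~1 is essential, since it supplies the ambient coordinates in which the Taylor expansion makes sense.
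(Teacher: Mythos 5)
Your overall strategy is the paper's: reduce via Lemma~\ref{submonomiallem} to a local $X$ with closed point $x$ and submonomial $m_x$, read off $\cW$ from an extremal characterization among $\cC$-admissible submonomial centers of monomial codimension $c_H$, and recover $\mu_x$ from an admissibility threshold for a two-step test sequence. The $\cW$ part is essentially fine (your ``minimal under inclusion'' is the right way to phrase what the paper calls maximal). But the $\mu_x$ test as you set it up is wrong, and the issue is not the formal-coordinate verification you flag at the end --- it is the choice of the first center. Writing $m_{H,x}=(t,u)$ with $t$ regular parameters and $u$ the nonunit monomials, the blow up along the \emph{unweighted} ideal $(t,u)$ need not be $\ucI$-admissible even when $\mu_x>1$: admissibility requires $\cI\subseteq\bigl((t,u)^a\bigr)^\nor=\sum_{j}\bigl((u)^j\bigr)^\sat(t)^{a-j}$, and a monomial like $u_1$ lies in no such term for $a\ge 2$ (check with the valuation giving all $u_i$ value $1$). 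So for $\cI=(t_1^3,u_1)$, $a=2$ one has $\mu_x=3/2>1$ but your $\sigma_1$ is not admissible, and for $\cI=(u_1)$, $a=1$ one has $\mu_x=\infty$ while your threshold test returns the sup $d=0$, i.e.\ the value $1$. Hence the claimed equivalence ``admissible iff $d\le\mu_x-1$'' fails in both directions.

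The paper's fix is exactly at this point: the first test center is the \emph{weighted} submonomial ideal $(t,u^{1/l})$ with $l\ge d=\logord_x(\cI)$ (pushed forward, this is $m_{X,x}+(u^{1/l})$, which still depends only on $x$ and $l$, not on $H$). The fractional power makes the monomial part of $\cI$ vanish to order $\ge d$ along the exceptional divisor, so the blow up is $(\cI,d)$-admissible, the transform of $\ucI$ is balanced with monomial part $\cI_E^{d-a}$ (Corollary~\ref{balancedcor}), and the second blow up along $\cI_E^n$ is admissible precisely when $na\le d-a$, i.e.\ $n\le\mu_x-1$; no Taylor expansion is needed. Moreover the boundary values must be treated separately, as in the paper: $\mu_x=0$ via admissibility of a (again weighted) point-center, $\mu_x=1$ via Theorem~\ref{codimth} (comparing $c_H$ with $c_x(\cC)$), and $\mu_x=\infty$ via existence of a $\cC$-admissible proper submonomial center of monomial codimension $c_H$ through $x$ --- your uniform test cannot detect the $\infty$ case even after the weighting is corrected. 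Finally, note that your ``$x\notin\supp(\cC)$'' step presupposes that the support is an invariant of $\cC$; this is part of what must be proved, and it too requires the weighted test center.
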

\begin{proof}
Since $\cW_{H/B}(\ucI)$ is the maximal monomial $\ucI$-admissible Kummer ideal, $\cW_{X/B}(\cC)$ is the maximal submonomial Kummer ideal which is $\cC$-admissible and of monomial codimension $c_H$. This description only depends on $\cC$ and $c_H$.

The main task is to show that for any $x\in|X|$ the number $\mu=\mu_x(\ucI)$ is determined by $\cC$ and $c_H$. Since the logarithmic order is compatible with regular morphisms and base changes by Lemmas~\ref{functorlogorder1} and \ref{functorlogorder2}, as in the proof Theorem~\ref{codimth} we can use Lemma~\ref{submonomiallem} to reduce to the case when $X$ is a local scheme, $x$ is closed and $m_{H,x}=(t,u)$ is submonomial.

Case 0: $\mu=0$. The vanishing locus of $\mu_{[H,\ucI]}$ is precisely $X\setminus\supp(\ucI)$, and $\mu_x(\ucI)=0$ if and only if $d:=\logord_{\cI/B}(x)<a$ if and only if $m_{H,x}$ is not $\ucI$-admissible if and only if $m_{X,x}$ is not $\cC$-admissible. Thus, $\supp(\ucI)$ is detected by $\cC$ only.

Case 1: $\mu=1$. By Theorem~\ref{codimth} and Case 0, this happens if and only if $c_H>c_x(\cC)$ and $m_{X,x}$ is $\cC$-admissible.

Case 2: $\mu=\infty$. This happens if and only if there exists an $\ucI$-admissible monomial ideal $\cN\subsetneq\cO_H$ with $x\in V(\cN)$ (in fact, $\cN=(u^{1/a})$ will work). The latter happens if and only if $x$ lies in the support of a $\cC$-admissible submonomial ideal of monomial index $c_H$, the property depending on $\cC$ and $c_H$ only.

Case 3: $1<\mu<\infty$. By the above cases, this situation is also detected by $\cC$ and $c_H$. Let us show how to find $\mu$. Note that $a<d<\infty$ and for any $l\ge d$ we have that $\cI\subseteq(t)^d+(u)\subseteq (t,u^{1/l})^d$ and the blow up $\tau_l\:H_l\to H$ along $(t,u^{1/l})$ is $(\cI,d)$-admissible at $x$. For $n\in\QQ_{>0}$ let $\tau_{l,n}\:H_l(n)\to H_l$ denote the blow up along the monomial Kummer ideal $\cI_E^n$, and let $X_l(n)\stackrel{\sigma_{l,n}}\to X_l\stackrel{\sigma_l}\to X$ be the pushforward. By Corollary~\ref{balancedcor} $\tau_l^c(\ucI)$ is balanced with monomial part $\cI_E^{d-a}$, hence the sequence $\sigma$ is $\cC$-admissible if and only if $na\le d-a$. The latter happens if and only if $n\le\mu-1$. Taking $l\gg 0$ this characterizes $\mu$ once we show that the sequence $X_l(\mu-1)\to X_l\to X$ depends only on $\cC$ and $c_H$ (and not on $H$ via $\cI_E$). Clearly, $X_l\to X$ depends only on $l$, and the center of $\sigma_{l,\mu-1}$ is the maximal submonomial $\sigma^c_l(\cC)$-admissible Kummer ideal on $X_l$ of monomial codimension $c_H$, the property which depends only on $\cC$ and $c_H$.
\end{proof}

Combining the above results with Corollary \ref{balancedcor} one easily obtains the following result:

\begin{corollary}\label{equivcor}
Assume that $(H_1,\ucI_1)\approx(H_2,\ucI_2)$ with $\ucI_1$ balanced and $H_1, H_2$ of pure codimension $c$ in $X$. Then $\ucI_2$ is balanced too and $(H_1,\ucI_1^\cln)\approx(H_2,\ucI_2^\cln)$.
\end{corollary}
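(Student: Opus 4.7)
The plan is to apply Theorem~\ref{equivth} to obtain that the normalized invariants $\mu := \mu_{[H_i,\ucI_i]}$ and $\cW := \cW_{X/B}([H_i,\ucI_i])$ are common to $i=1,2$ (they only depend on the equivalence class and the common codimension $c$), and then to combine this with Corollary~\ref{balancedcor}. Since $\ucI_1$ is balanced, Lemma~\ref{balncedlem} gives $\cI_1 = \cN_1 \cdot \cI_1^{\cln}$ with $\cN_1=\cM(\cI_1)$ invertible monomial and $\cI_1^{\cln}$ clean. Invertible monomial ideals restrict to the unit ideal on every logarithmic fiber, so $\logord_{\cN_1/B}\equiv 0$ and $\mu$ is finite-valued. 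The equality $\mu=\mu_{[H_2,\ucI_2]}$ then forces $\logord_{\cI_2/B}$ to be finite on $H_2$, i.e.\ $\cI_2$ is clean; and since $H_2\to B$ is a relative logarithmic orbifold (Theorem~\ref{manifoldth}) with $\cD_{H_2/B}$ separating, the remark after Lemma~\ref{logordlem} upgrades this to logarithmic cleanness, so $\cN_2 := \cD^\infty_{H_2/B}(\cI_2) = \cM(\cI_2)$.

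The delicate point is the invertibility of $\cN_2$. Here I plan to work in the formal completion at a point of $X$, using Lemma~\ref{logregchart} to decouple regular parameters from monomial directions. In this setting the maximal Kummer monomial subideal of a submonomial Kummer ideal of the form $\cI_H + \cJ$, with $H$ cut out by regular parameters and $\cJ$ a Kummer monomial ideal, equals $\cJ$ intrinsically (since a nonzero monomial cannot lie in a regular-parameter ideal by a $m_y/m_y^2$-argument). Combined with the equality $(i_1)_*(\cN_1^{1/a_1}) = (i_2)_*((\cN_2^\sat)^{1/a_2})$ in $X$, this forces $(\cN_2^\sat)^{1/a_2}$ to be an invertible Kummer monomial ideal on $H_2$. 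A short monoid-theoretic argument then shows that an invertible saturation $\cN_2^\sat = (u^m)$ already forces $\cN_2=(u^m)$: from $u^m \in \cN_2^\sat$ one has $u^{nm}\in\cN_2^n$ for some $n$, and since every monomial generator of $\cN_2$ lies above $m$ in the monoid this implies $u^m\in\cN_2$. Hence $\ucI_2$ is balanced.

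For the equivalence $(H_1,\ucI_1^{\cln})\approx(H_2,\ucI_2^{\cln})$, take a $\ucI_1^{\cln}$-admissible sequence $\tau_1\colon H_1'\dashrightarrow H_1$. By Corollary~\ref{balancedcor} it is $\ucI_1$-admissible, so its pushforward $(i_1)_*\tau_1$ is $(H_1,\ucI_1)$-admissible on $X$; by the hypothesis it is $(H_2,\ucI_2)$-admissible, hence of the form $(i_2)_*\tau_2$ for a unique $\ucI_2$-admissible $\tau_2\colon H_2'\dashrightarrow H_2$. Functorial equivalence is preserved under simultaneously admissible controlled transforms (since a further blow up is admissible for the transform on one side iff the composed blow up sequence is admissible for the original on that side), so after each step the transformed pairs remain equivalent of the same codimension; applying the first two paragraphs inductively, they remain balanced, and by Corollary~\ref{balancedcor} their clean parts coincide with the transforms of the clean parts. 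The upgrade from $\ucI_2$-admissibility to $\ucI_2^{\cln}$-admissibility at each stage follows by detecting the maximal normalized order through the sequence construction in Theorem~\ref{equivth}, Case~3: the existence of a $\ucI^{\cln}$-admissible center is encoded by whether $\mu$ attains its local maximum there, a condition intrinsic to the common $\mu$. Symmetry gives the reverse implication.

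The main obstacle is the invertibility of $\cN_2$: extracting the intrinsic monomial contribution from the submonomial Kummer ideal $(i_k)_*\cW_k$ on $X$ requires the formal splitting of regular-parameter and monomial directions and a careful monoid-theoretic descent from saturations; once this is settled, the remainder is essentially bookkeeping with Corollary~\ref{balancedcor} and the functoriality of equivalence under admissible controlled transforms.
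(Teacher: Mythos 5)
Your high-level plan (deduce everything from Theorem~\ref{equivth} plus Corollary~\ref{balancedcor}) is indeed the route the paper indicates, but your execution breaks at the first step. The claim that an invertible monomial ideal restricts to the unit ideal on logarithmic fibers is false: a monomial $u^q$ is pulled back from the chart $Z_P[Q]$, hence is constant along the logarithmic fiber $S_y$, and it vanishes at $y$ whenever $q\neq 0$ in $\oM_y$; thus $\cN_1|_{S_y}=0$ and $\logord_{\cN_1/B}(y)=\infty$ at every point of $V(\cN_1)$ (equivalently, by Lemma~\ref{logordlem}, derivations preserve $(u^q)$, so the logarithmic order is infinite there). Consequently $\mu_{[H_1,\ucI_1]}$ takes the value $\infty$ along $V(\cM(\cI_1))$ whenever the monomial part is nontrivial, so your conclusion that $\cI_2$ is clean is unsupported and in general false; worse, it is incompatible with the rest of your argument, since by Lemma~\ref{logordlem} cleanness gives $\cD^\infty_{H_2/B}(\cI_2)=\cO$, which together with logarithmic cleanness would force $\cM(\cI_2)=\cO$, contradicting the (generally nontrivial) invertible monomial part you establish in the second paragraph. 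You have conflated ``clean'' with ``balanced''. As a result, the logarithmic cleanness of $\cI_2$, i.e.\ $\cD^\infty_{H_2/B}(\cI_2)=\cM(\cI_2)$ --- half of the definition of balanced, and precisely the delicate half in the relative setting, where $\cD$-saturation can be strictly smaller than monomial saturation (this is the whole point of Section~\ref{monomsec}, and Remark~\ref{cleanrem}(ii) even warns that logarithmic cleanness by itself is not an invariant of the equivalence class) --- is never proved. Your second paragraph only treats invertibility of the monomial saturation (that part is plausible in spirit, though it is phrased for $\cN_2=\cD^\infty_{H_2/B}(\cI_2)$, an identification you have not earned), so one of the two conditions defining ``balanced'' is simply missing.

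There is a second gap in the final paragraph: the decisive step, upgrading $\ucI_2$-admissibility of each center to $\ucI_2^\cln$-admissibility, is only asserted via a vague appeal to Case~3 of Theorem~\ref{equivth}. Since $\ucI_2^\cln$ carries the larger weight $b_2\ge a_2$ and no longer benefits from the invertible monomial factor, $\cI_2\subseteq\cJ^{(a_2)}$ does not imply $\cI_2^\cln\subseteq\cJ^{(b_2)}$; this implication is exactly the content of the asserted equivalence $(H_1,\ucI_1^\cln)\approx(H_2,\ucI_2^\cln)$, and ``detecting the maximal normalized order'' does not produce it. Note also a latent circularity: to keep the transforms of $\ucI_2$ balanced with clean part equal to the transform of $\ucI_2^\cln$ via Corollary~\ref{balancedcor}, you need $\ucI_2^\cln$-admissibility of the centers --- the very thing being proved --- unless you re-derive balancedness of the transformed pair from the (flawed) first two paragraphs. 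A correct argument must use the common invariants $\cW_{X/B}(\cC)$ and $\mu_\cC$ to identify and factor out the invertible monomial parts on both sides (for instance comparing admissible centers after dividing by $\cM(\cI_i)$, whose Kummer roots are identified through $\cW_{X/B}(\cC)$) before comparing admissibility for the clean parts; as written, this step would fail.
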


\subsection{The method}\label{ordersec}
Now, let us construct the method $\cF$ whose existence is asserted by Theorem~\ref{mainth}. In this section, we will only check that $\cF$ satisfies (ii), (iii) and (iv) as this is used to justify the construction. The method itself is a generalization of the absolute logarithmic order reduction in \cite[\S2.11]{ATW-principalization}.

\subsubsection{Induction scheme}\label{indscheme}
The method runs by induction on $n=\logdim(X/B)$, as defined in \S\ref{dimsec}. The induction base is trivial, say $n=-1$, and in what follows we establish the induction step for $n\ge 0$.

The dimension of $\oX$ in (iv) can be arbitrary, but the induction claim is as follows: if $\cC=[X,\ucI]$ denotes the functorial equivalence class in $\oX$, and $(\tilX,\tilucI)$ is another representative of $\cC$ with $\logdim(\tilX/B)\le n$ and the suborbifold embedding $\tili\:\tilX\into\oX$, then $i_*(\cF(f,\ucI))=\tili_*(\cF(f,\tilucI))$.

In (iii) we assume that $\logdim(X'/B)\le n$ and $\logdim(X/B)\le n$.

\subsubsection{Functoriality}
It will be clear from the construction of $\cF$ that it satisfies (ii) and (iii) because all intermediate constructions used in the process, including normalized invariants, Kummer blow ups, transforms, coefficient ideals, and maximal contacts, are compatible with logarithmically regular morphisms and base changes. This was shown in Lemmas~\ref{functorlogorder1}, \ref{functorlogorder2}, \ref{functormarked1}, \ref{functormarked2}, and Theorem~\ref{kummerblowth}. So we will only show how $\cF$ is defined and will check (iv).

\subsubsection{The maximal order case}\label{maxordercase}
First, let us construct $\sigma=\cF(f,\ucI)$ in the particular case when $\ucI$ is of maximal order. The idea is this: \'etale-locally $\ucI$ is equivalent to a marked ideal $\ucI_0$ on a maximal contact $H_0$. By induction, the order reduction of $\ucI_0$ is defined and depends only on the equivalence class of $\ucI_0$. This implies that the construction is independent of choices and hence descends to an order reduction of $\ucI$.

Now, let us work out details. By Theorem~\ref{codimth} there exists an \'etale covering $p\:X_0\to X$ such that the equivalence class $[X_0,p^{-1}(\ucI)]$ contains a representative $(H_0,\ucI_0)$ with $i_0\:H_0\into X_0$ of pure codimension 1. If $g_0=f\circ p\circ i_0$ is the relative logarithmic orbifold $H_0\to B$, then $\cF(g_0,\ucI_0)$ is already defined by induction on $n$. If it is empty, then we set $\cF(f,\ucI)=\emptyset$. Otherwise we define $\sigma_0\:X'_0\dashrightarrow X_0$ to be the pushforward of $\tau_0=\cF(g_0,\ucI_0)\:H'_0\dashrightarrow H_0$, and we claim that $\sigma_0$ descends to a Kummer blow up sequence $\sigma\:X'\dashrightarrow X$. (Note that $H_0$ does not have to descend to a suborbifold of $X$). Once we prove that $\sigma$ exists its independence of the covering $p$ follows: given another covering $p'$ consider a mutual refinement and use functoriality of $\cF$ with respect to \'etale morphisms.

Let $p_i$, $i\in\{1,2\}$ denote the projections of $X_1:=X_0\times_XX_0$ onto $X_0$. To prove that $\sigma_0$ descends it suffices to show that the two pullbacks $\sigma_i=p_i^{-1}\sigma_0$ coincide. Consider the suborbifolds $H_i=p_i^{-1}(H_0)$ of $X_1$ with the sequences $\tau_i\:H'_i\dashrightarrow H_i$ pullbacked from $\tau_0$, morphisms $g_i\:H_i\to B$ and marked ideals $\ucI_i=p_i^{-1}(\ucI_0)$. \'Etale pullback is compatible with pushing forward the blow ups sequence, hence $\sigma_i$ is the pushforward of $\tau_i$ under $H_i\into X_1$. In addition, $\logdim(H_i/B)=\logdim(H_0/B)<n$ hence by induction $\cF$ is functorial with respect to the \'etale morphisms $H_i\to H_0$ and we obtain that $\tau_i=\cF(g_i,\ucI_i)$. Also, by induction the equivalence $(H_0,\ucI_0)\approx(X_0,p^{-1}\ucI)$ is pulled back via $p_i$ to $(H_i,\ucI_i)\approx (X_1,q^{-1}\ucI)$, where $q=p\circ p_i$ is the projection $X_1\to X$. Therefore, $(H_1,\ucI_1)\approx(H_2,\ucI_2)$ and the induction assumption implies that $\sigma_1=\sigma_2$. This proves that $\sigma$ is well defined, and it is an order reduction of $\ucI$ because its pullback $\sigma_0$ to $X_0$ is an order reduction of $p^{-1}\ucI$.

\begin{remark}\label{indeprem}
The above paragraph shows that the local procedure is independent of the (\'etale-local) choice of maximal contact and hence globalizes. It critically uses equivalence on suborbifolds of different codimensions.
\end{remark}

It remains to check (iv) in the maximal order case, so let $\tili\:\tilX\into\oX$ and $(\tilX,\tilucI)\in\cC$ be as in \S\ref{indscheme}. Let $c_X$ and $c_\tilX$ be the codimensions of $X$ and $\tilX$ in $\oX$. By Theorem~\ref{equivth} $\supp(\ucI)=\supp(\tilucI)$. If the supports are empty, there is nothing to prove, so assume that this is not the case and choose a point $x\in\supp(\ucI)$. Then $c_X=\logdim_x(\tilX/B)-\logdim_x(X/B)$ and $c_\tilX=\logdim_x(\tilX/B)-\logdim_x(\tilX/B)$, and hence $c_X\le c_\tilX$.

We should check that $i_*\cF(f,\tilucI)=\tili_*\cF(\tilf,\tilucI)$. This can be done \'etale-locally on $\oX$, and since the \'etale topology of $\oX$ induces the \'etale topologies of $X$ and $\tilX$, passing to a fine enough \'etale covering of $\oX$ we can assume that there exists a logarithmic $B$-suborbifold $H\into X$ of pure codimension one and a marked ideal $\ucJ$ on $H$, such that $(H,\ucJ)\approx(X,\ucI)$ in $X$, and hence also in $\oX$. If $c_X=c_\tilX$, then $\tilucJ$ is also of maximal order, and we can assume in the same way that there exists $\tilH\into\tilX$ of pure codimension one and $\tilucJ$ on $\tilH$, such that $(\tilH,\tilucJ)\approx(\tilX,\tilucI)$. If $c_X<c_\tilX$, then we simply take $\tilH=\tilX$ and $\tilucJ=\tilucI$. Then we have equivalences $(H,\ucJ)\approx(X,\ucI)\approx(\tilX,\tilucI)\approx(\tilH,\tilucJ)$ in $\oX$, hence by the induction assumption the pushforwards of $\cF(H\to B,\ucI_0)$ and $\cF(\tilH\to B,\tilucI_0)$ to $\oX$ coincide. It remains to note that the latter are precisely $i_*\cF(f,\tilucI)$ and $\tili_*\cF(\tilf,\tilucI)$ by the construction of $\cF$ in the maximal order case.

\subsubsection{The general case}\label{generalcase}
Now, we will construct $\cF$ in general, using the maximal order case established above. The method outputs a sequence $$\sigma\:X_0\stackrel{\sigma_0}\to X_{\mu_l}\stackrel{\sigma_{\mu_l}}\dashrightarrow X_{\mu_{l-1}}\stackrel{\sigma_{\mu_{l-2}}}\dashrightarrow\ldots\ldots\stackrel{\sigma_{\mu_0}}\dashrightarrow X_{\mu_0}=X_\infty\stackrel{\sigma_\infty}\to X,$$ which will be  constructed in three steps below. It is convenient to use weights in the numbering because for each $i$ and $\mu=\mu_i$ we will have $\mu=\mu(\ucI^\cln_\mu)$, where $\ucI_{\mu}$ denotes the controlled transform of $\ucI$ to $X_{\mu}$. We will also denote the pushout sequence by $\osigma\:\oX_0\to\oX$ and consider the induced embeddings $i_{\mu}\:X_{\mu}\into\oX_{\mu}$ and equivalence classes $\cC_{\mu}=[X_{\mu},\ucI_{\mu}]$ in $\oX_{\mu}$. 

Step 1. {\em Initial cleaning.} Consider the blow up $\sigma_\infty\:X_\infty\to X$ along the monomial Kummer ideal $\cW_{X/B}(\ucI)$.
\begin{itemize} \item \emph{If the controlled transform $\ucI_\infty$ is not clean, the step outputs ``fail"}.
\item Otherwise, the step outputs $\sigma_\infty$, achieving that the controlled transform is clean, and hence balanced.
\end{itemize}
 The induced blow up $\osigma_\infty\:\oX_\infty\to\oX$ is along $\cW_{\oX/B}(\cC)=i_*\cW_{X/B}(\ucI)$, so it only depends on $\cC$ and $c_H$ by Theorem~\ref{equivth}.

\begin{remark}\label{cleanrem}
(i) By Theorem~\ref{cleaningth}, Step 1 never fails if $\cI$ is logarithmically clean.

(ii) Step 1 may also succeed for other ideals, and, moreover, it may even happen that the same equivalence class contains both logarithmically clean and not logarithmically clean ideals.
\end{remark}

Step 2. {\em Reducing the order of the clean part.} This step composes sequences $\sigma_{\mu}:=\cF(X_{\mu},\ucI_{\mu}^\cln)$, which are defined by \S\ref{maxordercase}. Starting with $\mu_0=\mu(\ucI_\infty)$ and $X_{\mu_0}=X_\infty$, we inductively define $\sigma_{\mu_i}$ by this rule and label its target by $\mu_{i+1}:=\mu((\sigma_{\mu_i}^c\ucI_{\mu_i})^\cln)$.

By induction on $i$ one obtains from Corollary~\ref{balancedcor} that each $\sigma_\mu$ is $\ucI_\mu$-admissible and $\sigma_\mu^c(\ucI_\mu)$ is balanced with the clean part being $\sigma_\mu^c(\ucI_\mu^\cln)$. In particular, the logarithmic order of the clean part drops on each step: $\mu_0>\mu_1>\dots$. Since $\mu_i\in\frac{1}{a}\NN$, after finitely many steps we arrive at $\mu_l=0$, obtaining $\ucI_{\mu_l}$ with a resolved clean part.

Let us show that this step also satisfies property (iv). The class $\cC_\mu^\cln=[X_\mu,\ucI_\mu^\cln]$ in $\oX_\mu$ is determined by $\cC_\mu$ and $c_X$ by Corollary ~\ref{equivcor}. By induction on $i$, the class $\cC_\mu$ is determined by $\cC$ and $c_X$, so it remains to note that $(i_\mu)_*(\sigma_\mu)$ is determined by $\cC_\mu^\cln$ and $c_X$ by \S\ref{maxordercase}.

Step 3. {\em Final cleaning.} At this step, the ideal is balanced with a resolved clean part, so $\sigma_0$ is simply the Kummer blow up of $\cW(\ucI_{\mu_l})$. The same argument as in step 1, shows that the pushforward to $\oX$ only depends on $\cC$ and $c_X$.

\begin{remark}
Note that the notation in the maximal order and general cases are consistent. If $\ucI$ is of maximal order and $\mu=\mu(\ucI)$, then the first and third steps are trivial and on the second step the composition reduces to taking the single sequence $\sigma_\mu=\cF(f, \ucI)$ as defined in \S\ref{maxordercase}. So, in this case the order reduction is the same as was defined in the maximal order case.
\end{remark}

\subsubsection{Addenda}
We conclude with two remarks about the algorithm.

\begin{remark}\label{uniquerem}
In the classical case, intensive study of resolution led to different descriptions of essentially the same algorithm, with the only variations in combinatorial parts of the algorithm. A natural question that interested us before starting this project, is whether the algorithm is indeed essentially unique, or this happened just because of the flow of ideas between different approaches. We expect that the first possibility is true and show that our algorithm 
is essentially unique.

It follows from the construction that our order reduction method is uniquely characterized by the following properties: (i) it only depends on the functorial equivalence class, (ii) it treats $\ucI$ and $\ucI^\cln$ in the same way, and (iii) it starts and finishes with blowing up $\cW(\ucI)$. Property (i) seems to be necessary for functorial algorithms that use maximal contact to induct on dimension. Properties (ii) and (iii) are not necessary, for example, one can first blow up $\cW(\ucI)^{1/2}$ and then the pullback of $\cW(\ucI)^{1/2}$, but it seems that avoiding them could only result in dealing with the monomial parts of ideals  in a less efficient and superficial way.
\end{remark}

\begin{remark}\label{invrem}
Similarly, to \cite[\S2.11.4]{ATW-principalization} one can assign to $\ucI$ an invariant $\inv_\ucI\:|X|\to\Inv$, where $\Inv$ is the set of sequences $(\mu_0\.\mu_n)$ with $\mu_i\in\QQ_{\ge 1}$ for $i<n$ and $\mu_n\in\QQ_{\ge 1}\cup\{0,\infty\}$. Its definition follows loc.cit. without changes: if \'etale-locally over a point $x$, one denotes by $\ucI_i$ the appropriate restriction onto the $i$-th maximal contact $H_i$, then $\mu_i(x)=\mu_x(\ucI_i^\cln)$. In particular, $\inv_\ucI$ only depends on $[X,\ucI]$.
\end{remark}

\subsection{Existence of order reduction}
It remains to establish claim (i) of Theorem~\ref{mainth}. We will do it under the additional assumption that $B\in\bB^\st$. In \S\ref{abscase}, the case $B=\Spec(\QQ)$ of the principalization theorem will be used to prove Theorem~\ref{absdesingth}, which implies that $\bB^\st=\bB$. Since $\Spec(\QQ)\in\bB^\st$, this will imply the assertion of Theorem~\ref{mainth} in full generality.

We say that a blow up $B'\to B$ is {\em permissible} if its center is monomial over the complement to the closure of the image of $V(\cI)$ in $B$. We should prove that $\cF$ does not fail after base change via an appropriate permissible blow up.

We, again, proceed by induction on $n=\logdim(X/B)$. Assume first that $\cF(f,\ucI)$ fails in Step 1 of Section \ref{generalcase}, that is, $\cD^{\infty}_{X/B}(\ucI)$ is not monomial. By the monomialization Theorem~\ref{monomialization} there exists a blow up $B'\to B$ with the base change $g\:X'=X\times_BB'\to X$ such that $g^{-1}\cD^{\infty}_{X/B}(\cI)$ is monomial. Let $f'$ denote the morphism $X'\to B'$ and let $\ucI'=g^{-1}(\ucI)$. Then $g^{-1}\cD^{\infty}_{X/B}(\ucI)=\cD^{\infty}_{X'/B'}(\ucI')$ by Lemmas~\ref{functorlogorder1} and \ref{logcleanlem}, and hence by Remark~\ref{cleanrem}(i) $\cF(f',\ucI')$ does not fail in Step 1 and outputs a Kummer blow up $\sigma'_\infty\:X'_\infty\to X'$.

Let us prove that after an additional permissible blow up $B''\to B'$ the algorithm also does not fail in Step 2 of Section \ref{generalcase}. This will complete the proof since the algorithm blows up an invertible monomial ideal in Step 3, and hence cannot fail there. Recall that in Step 2 the modification $\cF(f',\ucI')$ is the composition of sequences $\sigma'_{\mu_i}$ with $0\le i\le l-1$, and fails if one of those fails. For any $\mu\in\QQ_{\ge 1}$ let $\cF_{\ge\mu}(f',\ucI')$ (resp. $\cF_{>\mu}(f',\ucI')$) be the compositions of $\sigma'_\infty$ and $\sigma'_{\mu_i}$ with $\mu_i\ge \mu$ (resp. $\mu_i>\mu$). The length $l$ is bounded by $\logord_{\cI'_\infty/B'}(X')$, hence by decreasing induction on $\mu$ it suffices to prove that if $\cF_{>\mu}(f',\ucI')$ does not fail, then there exists a permissible blow up $B''\to B'$ such that $\cF_{\ge\mu}(f'',\ucI'')$ does not fail too.

Let $\sigma'_{>\mu}\:X'_{\mu}\dashrightarrow X'$ be the sequence $\cF_{>\mu}(\ucI')$. Then the clean part of $\ucI'_\mu:=(\sigma'_{>\mu})^c(\ucI')$ is of normalized logarithmic order at most $\mu$, and the sequence $\cF_{\ge\mu}(\ucI')$, if exists, is obtained by composing $\sigma'_{>\mu}$ with the order reduction of $(\ucI'_\mu)^\cln$. Moreover, the same is true after any base change $B''\to B'$, since all ingredients of $\cF$, including $\cF_{>\mu}$, are compatible with base changes.

The order reduction of $(\ucI'_\mu)^\cln$ was constructed by pushing forward the order reduction of a coefficient ideal from an \'etale-local maximal contact $H_0$. By induction on $n$, the latter order reduction does not fail after an appropriate permissible blow up $B''\to B'$. Let $g'\:B''\to B$ be the composition. Setting $\ucI''=g'^{-1}(\ucI)$, $\sigma''_{>\mu}=\cF_{>\mu}(\ucI'')$ and $\ucI''_\mu=(\sigma''_{>\mu})^c(\ucI'')$, we have now achieved that the order reduction of $(\ucI''_\mu)^\cln$ does not fail, and hence $\cF_{\ge\mu}(\ucI'')$ does not fail too. This concludes the induction in Step 2 and finishes the proof.

\section{Relative logarithmic desingularization}\label{desingsec}
\addtocontents{toc}
{\noindent We deduce the relative logarithmic desingularization theorems as well as its absolute version.}

This section is devoted to proving Theorem \ref{Proj:toroidalization}. As in the classical case, it is easy to give a local construction based on principalization and the main issue is to prove functoriality, including independence of the embedding. This will be easier than in \cite{ATW-principalization} because we have developed the theory for general logarithmically regular morphisms, hence once an algorithm (depending on choices) is constructed all its properties can be checked on formal completions rather than \'etale locally.

\subsection{The local construction}
As one always does in Hironaka's approach to desingularization, one applies the principalization algorithm and stops it one step before blowing up the strict transform.

\begin{proposition}\label{localdesingprop}
Let $f\:X\to B$ be a relative logarithmic orbifold and let $i\:Z\into X$ be a strict closed immersion of constant codimension such that the morphism $g\:Z\to B$ is generically logarithmically regular. Assume that the relative logarithmic principalization $\cF(f,\cI_Z)$ of the defining ideal $\cI_Z\subseteq\cO_X$ of $Z$ is defined and denote it $\sigma\:X_t\dashrightarrow X$. Then the generic points of $Z$ are blown up at the same stage $\sigma_l\:X_{l+1}\to X_l$, and the strict transform $Z_{l}\into X_l$ of $Z$ is a union of connected components of the center of $\sigma_l$. In particular, $g_l\:Z_l\to B$ is logarithmically regular and $Z_l\to Z$ is a relative logarithmic resolution of $g$ that will be denoted $g_\res\:Z_\res\to B$.
\end{proposition}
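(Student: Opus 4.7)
The plan is to trace the principalization algorithm applied to $\cI_Z$ in a neighborhood of the generic points of $Z$, and to use the strict decrease of the invariant (Remark~\ref{invrem}) to identify a single stage at which the strict transform is blown up.

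First, I would work at a generic point $\eta$ of an irreducible component of $Z$. By hypothesis $g$ is logarithmically regular at $\eta$ and $i$ has constant codimension $c$; Lemma~\ref{submanlem} then gives that $\cI_Z$ is locally generated by a partial family of regular parameters $t_1,\dots,t_c$ of $X/B$. Using this, I would compute the invariant $\inv_{\cI_Z}(\eta)$ of Remark~\ref{invrem}: the ideal is clean at $\eta$ (no monomial component), of maximal order $1$, so the initial cleaning is trivial, and a maximal contact is $H_1=V(t_1)$. By Theorem~\ref{Htheorem} the algorithm then passes to $\ucC_\cF((\cI_Z,1))|_{H_1}$, which locally equals $((t_2,\dots,t_c),1)$. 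Iterating $c$ times brings us to the zero marked ideal on $V(t_1,\dots,t_c)=Z$, whose normalized logarithmic order is $\infty$. Thus $\inv_{\cI_Z}(\eta)=(\underbrace{1,\dots,1}_{c},\infty)$, a value that is the same at every generic point of every component of $Z$.

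Next I would use the structure of $\cF$ from \S\ref{ordersec}: at each stage the center is the locus where the invariant attains its (strictly descending) maximum on $X_k$, so points sharing an invariant value are blown up at a single common stage $l$. Because the invariant at $\eta$ is strictly smaller than the invariants at any point which is blown up earlier, the iterates $\sigma_0,\dots,\sigma_{l-1}$ induce isomorphisms near $\eta$, and in particular preserve the local presentation $\cI_l=(t_1,\dots,t_c)$ of the controlled transform near the generic point $\eta_l$ of $Z_l$. Running the algorithm at $\eta_l$ then forces the center $\cJ_l$ of $\sigma_l$ to locally coincide with the submanifold ideal $(t_1,\dots,t_c)$ cutting out $Z_l$, with no monomial component.

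Finally I would upgrade this local statement to the claimed global one. Near every generic point of every component of $Z_l$ the center $V(\cJ_l)$ equals $Z_l$ scheme-theoretically; hence the irreducible components of $Z_l$ are irreducible components of $V(\cJ_l)$, and $Z_l$ is a union of connected components of the center. The main obstacle will be to show that $g_l\:Z_l\to B$ is logarithmically regular globally, not just at the generic points. The clean way I would argue this is: being logarithmically regular is local, and at each point $z\in Z_l$ the invariant value is $(1,\dots,1,\infty)$ (constant on the connected component of $z$ in the center, by constancy of the ``type'' of a submonomial stratum), which in turn forces the local structure of the center at $z$ to have no monomial component and to be a $B$-submanifold of codimension $c$. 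Therefore $Z_l$ is a $B$-submanifold of $X_l$, so $g_l$ is logarithmically regular and $Z_l\to Z$ is the desired resolution.
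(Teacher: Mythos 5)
Your local analysis at the generic points matches the paper's: one localizes, finds that $\cI_Z$ is generated by a partial family of regular parameters, and sees that the algorithm performs $c$ successive restrictions to maximal contact before blowing up, so that the invariant is $(1,\dots,1,\infty)$ and all generic points of $Z$ are treated identically. The problems are in the globalization. Your scheduling step leans on properties of $\inv$ — upper semicontinuity, constancy on centers, and the claim that each center is the maximal locus of a strictly descending invariant — none of which is established in this paper; Remark~\ref{invrem} only defines $\inv$ and records that it depends on the equivalence class (and, as defined, $\inv$ does not even record the monomial parts, so equal invariant values alone do not force a common blow-up stage). More seriously, the passage from ``the irreducible components of $Z_l$ are irreducible components of $V(\cJ_l)$'' to ``$Z_l$ is a union of connected components of the center'' does not follow: a connected component of the center could a priori contain a component of $Z_l$ together with further components of $V(\cJ_l)$, and your scheme-theoretic identification of $Z_l$ with the center is only established near the generic points. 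The appeal to ``constancy of the type of a submonomial stratum'' to close this and to deduce logarithmic regularity of $Z_l\to B$ is precisely the assertion that needs proof.

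The paper closes both gaps structurally rather than through the invariant. At stage $l$ the blow up is, by construction, the initial cleaning performed on the $d$-fold iterated maximal contact $H^d$ (with $d=\codim(Z)$, your $c$), pushed forward to $X_l$; hence the center is contained in the $B$-suborbifold $H^d$. Since $H^d$ is normal, its irreducible components are clopen, so the reduction $H$ of $Z_l$ is a union of connected components of $H^d$ — in particular a $B$-suborbifold, which gives logarithmic regularity of $Z_l\to B$ for free. The scheme-theoretic equality $Z_l=H$ is then extracted from the sandwich $\cI_{H^d}\subseteq\cI_l\subseteq\cI_{Z_l}\subseteq\cI_H$ (the first inclusion because $H^d$ is an iterated maximal contact to the weight-one ideal $\cI_l$, the second because $Z_l$ is a strict transform), which becomes a chain of equalities on the open set $X_l\setminus(H^d\setminus H)$ containing $Z_l$. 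You need some version of this — at minimum the observation that the center of $\sigma_l$ sits inside the normal suborbifold $H^d$ and an ideal-theoretic comparison away from the generic points — to complete the proof.
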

\begin{proof}
First, the claim is \'etale local on $X$ and $B$, hence we can assume that they are schemes. Recall that $\sigma$ is the order reduction of $(\cI_Z,1)$ and let $(\cI_i,1)$ denote its controlled transform to $X_i$. By $Z_i\into X_i$ we denote the strict transform of $Z$.

Let $z$ be a generic point of $Z$. Since $g$ is logarithmically regular at $z$, its image is the generic point $b\in B$. Consider first how the algorithm behaves on the localization $Z_z=\Spec(\cO_z)\to B_b=\Spec(\cO_b)$. Let $d$ be the codimension of $Z$ in $X$. Since $z\to b$ is logarithmically regular, $z$ is a suborbifold of $Z_z$ of codimension $d$. Therefore the algorithm simply restricts $d$ times to maximal contacts $H_z^i$, and blows up the $d$-th maximal contact $H_z^d=z$ at the initial cleaning step on $H^d$. In particular, the algorithm behaves similarly at all maximal points of $Z$, and they all are blown up at the same stage $l$. Moreover on this stage one works on a $d$-th maximal contact $H^d\into X_l$ and blows up $H^d$. 

Each generic point of $Z_l$ is a generic point of $H^d$, hence the reduction $H$ of $Z_l$ is the union of the connected components of $H^d$ contained in $Z_l$. In particular, $\cI_l\subseteq\cI_{Z_l}\subseteq\cI_H$. On the other hand, $H^d$ is the iterated maximal contact to $(\cI_l,1)$, hence $\cI_{H^d}\subseteq\cI_l$. It follows that all inclusions become equalities when restricted onto $X_l\setminus(H^d\setminus H)$, and hence $H=Z_l$.
\end{proof}

Here are two complements concerning the proposition.

\begin{remark}
(i) The value of the invariant at step $\sigma_l$ is $(1\.1,\infty)$ with 1 repeated $d$ times, see also \cite[Proof of Theorem~1.2.4]{ATW-principalization}.

(ii) The assumption on codimension in the proposition is essential. The assumption on generic logarithmic regularity can be removed similarly to \cite[\S7.2.7]{ATW-principalization}. We leave this to the interested reader.
\end{remark}

\subsection{Functoriality and independence of the embedding}
Our next task is to prove that $Z_\res$ is functorial. The argument is formal local and we need some preparations.

\subsubsection{Logarithmic rings}
By a local logarithmic ring we mean a local ring $A$ with a homomorphism of monoids $M\to (A,\cdot)$ such that $M^\times=A^\times$. Giving such a datum is equivalent to giving a local scheme $X=\Spec(A)$ with a Zariski logarithmic structure $M\to A$. A homomorphism of monoids $P\to A$ is called a chart of $(A,M)$ if it is a chart of the logarithmic scheme$(X,M)$.

\subsubsection{Minimal presentations}
Let $i\:O\into A$ be an embedding of local logarithmic rings with a complete $A$, and assume that it has a sharp chart $P\into O$, $P\into Q$, $Q\into A$. By a {\em logarithmically regular $O$-presentation} we mean a factorization $O\into C\onto A$ such that $C$ is a complete logarithmic ring, $O\into C$ is logarithmically regular, and $C\onto A$ is strict. Factorizations correspond to strict closed immersions of $\Spec(A)$ into schemes $\Spec(C)$ such that $C$ is a complete local logarithmic ring logarithmically regular over $O$.

\begin{lemma}\label{reemblem}
Let $i\:O\into A$ be as above, let $k=O/m_O$ and $l=A/m_A$, and let $x=(x_1\.x_n)\subset m_A$ be a set whose image form a basis $\ox$ of $m_\oA/m_\oA^2$, where $\oA=A/m_Oq$ and $q=u^{m_Q}A$ is the maximal monomial ideal of $A$. Then any logarithmically regular $O$-presentation of $A$ is of the form $O\into\hatO_P\llbracket Q\rrbracket\llbracket t_1\.t_m\rrbracket\wtimes_kl\stackrel\phi\to A$ with $\phi(t)=(x,0)$ (i.e. $m\ge n$ and $\phi(t_i)=0$ for $i>n$). In particular, for any pair of logarithmically regular $O$-presentations of $A$ one of them factors through the other one.
\end{lemma}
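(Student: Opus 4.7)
The plan is to apply Lemma~\ref{logregchart} to the complete local ring $C$, but with a carefully chosen family of regular parameters compatible with $\phi\colon C\twoheadrightarrow A$. Since $\phi$ is a strict surjection of local logarithmic rings, the residue fields agree ($k(C)=l$) and $\oM_C=Q$; hence the sharp chart $P\to Q$ of $O\to A$ extends to a sharp chart of $O\to C$. Lemma~\ref{logregchart} therefore gives an isomorphism
\[ C\ \cong\ \hatO_P\llbracket Q\rrbracket\llbracket s_1,\dots,s_m\rrbracket\wtimes_k l \]
for any choice of regular parameters $s_1,\dots,s_m$ of the logarithmic fiber $\oC:=C/(m_OC+u^{Q_+}C)$, and the only freedom I will exploit is in choosing the $s_i$.

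The induced surjection $\oC\twoheadrightarrow\oA$ has $\oC$ regular local of dimension $m$ and $\oA$ of embedding dimension $n$, so $m\ge n$ and the kernel of $m_\oC/m_\oC^2\to m_\oA/m_\oA^2$ has dimension $m-n$. I would pick $\sigma_{n+1},\dots,\sigma_m$ in $\oI:=\ker(\oC\to\oA)$ whose images in $m_\oC/m_\oC^2$ form a basis of that kernel, and take any preimages $s_1,\dots,s_n\in\oC$ of $\ox_1,\dots,\ox_n$ under $\oC\twoheadrightarrow\oA$. A dimension count shows that $(s_1,\dots,s_n,\sigma_{n+1},\dots,\sigma_m)$ is a regular system of parameters of $\oC$.

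The delicate step is lifting these back to $C$ so that $\phi(\tilde s_i)$ equals \emph{exactly} $x_i$ for $i\le n$ and $0$ for $i>n$, not merely congruent modulo $m_OA+u^{Q_+}A$. Strictness of $\phi$ gives $\phi(m_OC+u^{Q_+}C)=m_OA+u^{Q_+}A$, so the discrepancy $\phi(\tilde s_i')-x_i$ for an arbitrary lift $\tilde s_i'$ of $s_i$ can be absorbed by subtracting an element of $m_OC+u^{Q_+}C$, and such a subtraction does not affect the reduction to $\oC$. For $i>n$, an analogous adjustment of an arbitrary lift of $\sigma_i$ produces $\tilde s_i\in\ker\phi$. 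Feeding the resulting $\tilde s_i$ into Lemma~\ref{logregchart} yields the claimed presentation with $\phi(t_i)=x_i$ for $i\le n$ and $\phi(t_i)=0$ otherwise.

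For the final assertion, given two such presentations $\phi_j\colon C_j\to A$ with parameters $t^{(j)}=(t_1^{(j)},\dots,t_{m_j}^{(j)})$ and (after relabeling) $m_1\le m_2$, I would define a continuous $\hatO_P\llbracket Q\rrbracket\wtimes_k l$-algebra homomorphism $\psi\colon C_1\to C_2$ by $t_i^{(1)}\mapsto t_i^{(2)}$. Using $m_1\ge n$, both $\phi_j$ send $t_i^{(j)}$ to the same element ($x_i$ or $0$) for every $i\le m_1$, so $\phi_2\circ\psi=\phi_1$ on generators, hence everywhere by continuity. The main obstacle is the exact lifting in the third step; once strictness is exploited to write $m_OA+u^{Q_+}A=\phi(m_OC+u^{Q_+}C)$, the remainder of the argument is essentially a dimension count and a formal application of Lemma~\ref{logregchart}.
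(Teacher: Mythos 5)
Your proof is correct and follows essentially the same route as the paper: write $C$ in the normal form of Lemma~\ref{logregchart}, rechoose the images of the parameters in the logarithmic fiber $\oC$ so that the first $n$ hit $\ox$ and the rest span $\Ker(m_\oC/m_\oC^2\to m_\oA/m_\oA^2)$, and then lift compatibly to $C$. The only cosmetic difference is in the lifting step, where the paper proves surjectivity of $C\to\oC\times_\oA A$ by an exact-sequence diagram chase while you correct each lift element-wise using $\phi(m_OC+u^{Q_+}C)=\Ker(A\to\oA)$ — these are the same argument in different packaging.
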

\begin{proof}
Let $O\into C\onto A$ be a logarithmically regular presentation. Notice that $C/m_C=l$ and $\oM_C=Q$, and choose a family of regular parameters $t=(t_1\.t_m)$. By Lemma~\ref{logregchart} $C$ is of the form $\hatO_P\llbracket Q\rrbracket\llbracket t\rrbracket\wtimes_kl$ and it remains to show that we can rechoose $t$ so that $\phi(t)=(x,0)$.

Passing to the logarithmic fibers over $O$, that is, factoring by $m_Ou^{m_Q}$, we obtain a surjection $\ophi\:\oC\onto\oA$, where $\oC=l\llbracket\ot\rrbracket$ and $\ot$ is the image of $t$. Replace $\ot$ by a tuple such that $(\ot_1\.\ot_n)$ is mapped to $\ox$ and $(\ot_{n+1}\.\ot_m)$ lies in $\Ker(\ophi)$ and is mapped to a basis of $\Ker(m_\oC/m^2_\oC\to m_\oA/m^2\oA)$. Then $\ot$ is a basis of $m_\oC/m_\oC^2$, hence a family of coordinates of $\oC$, and any lift $t\in C$ of $\ot$ is a family of regular parameters of $C$ over $O$.

We claim that the map $\phi\:C\to\oC\times_\oA A$ is onto. If $I=\Ker(C\to A)$ and $J=m_Ou^{m_Q}C=\Ker(C\to\oC)$, then $\oA=C/(I+J)$. Dividing the exact sequence $0\to C\to C^2\to C\to 0$ with $c\mapsto (c,c)$ and $(c_1,c_2)\mapsto c_1-c_2$ by the exact subsequence $0\to I\cap J\to I\oplus J\to I+J\to 0$ we obtain an exact sequence $0\to C/(I\cap J)\to A\oplus \oC\to \oA\to 0$, and hence $C\onto C/(I\cap J)\toisom\oC\times_\oA A$. Since $\ophi(\ot)=(\ox,0)$, there exists a lift $t\in C$ such that $\phi(t)=(x,0)$.
\end{proof}

\begin{lemma}\label{liftpreslem}
Let $C\onto A$ be a strict surjective homomorphism of complete local rings. Then for any logarithmically regular homomorphism $A\to A'$ of complete local logarithmic rings can be lifted to a logarithmically regular homomorphism $C\to C'$ of complete local ring. Namely, the homomorphism $C\to A'$ factors as $C\to C'\onto A'$ such that $C\to C'$ is logarithmically regular and $A'=A\otimes_CC'$.
\end{lemma}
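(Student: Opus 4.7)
The plan is to build $C'$ by mimicking the explicit formal presentation of $A'$ over $A$ guaranteed by Lemma~\ref{logregchart}, using that the strict surjection $C\onto A$ identifies characteristic monoids and residue fields so that the very same ``coordinate data'' (chart monoid, regular parameters, coefficient field) can be interpreted over $C$. Concretely, set $k=k(C)=k(A)$, $P=\oM_C=\oM_A$ (equality by strictness), $Q=\oM_{A'}$, $l=k(A')$, and pick a chart $P\to C$ lifting the chosen chart $P\to A$. By the Cohen structure theorem in characteristic zero, I can also choose a coefficient field $k\into C$ mapping to the chosen coefficient field $k\into A$, and an extension $k\into l\into A'$ once the sharp case is in force.

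Next I reduce to the case where $A\to A'$ is sharp via sharp factorization (\S\ref{sharpfactorsec}): write $A\to A'$ as $A\stackrel{\alpha}\to\tilA\stackrel{\beta}\to A'$ with $\alpha$ logarithmically \'etale (modeled on the exact saturation $P\into\tilP$) and $\beta$ sharp. The logarithmically \'etale step $\alpha$ is purely combinatorial: lift it to $C\stackrel{\tilalpha}\to\tilC$ by applying the same monoid extension $P\into\tilP$ to the chart of $C$. Then $\tilC\to\tilA$ remains strict surjective and $\tilA=A\otimes_C\tilC$ in the fs category. So it suffices to replace $(C\onto A,A\to A')$ by $(\tilC\onto\tilA,\tilA\to A')$ and assume henceforth that $A\to A'$ is sharp.

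In the sharp case Lemma~\ref{logregchart} gives, after choosing regular parameters $t_1,\dots,t_n\in A'$ and a compatible lift of $l$, an isomorphism
\[
A'\;\toisom\; A_P\llbracket Q\rrbracket\llbracket t_1,\dots,t_n\rrbracket\wtimes_k l.
\]
I then define
\[
C'\;:=\; C_P\llbracket Q\rrbracket\llbracket t_1,\dots,t_n\rrbracket\wtimes_k l,
\]
with chart $Q\to C'$ extending $P\to C$, and with the obvious map $C'\to A'$ induced by $C\to A$. Logarithmic regularity of $C\to C'$ is immediate from Lemma~\ref{logregchart} applied in reverse: $C'$ has exactly the prescribed form, and the regular morphism criterion of Theorem~\ref{logregth} combined with Lemma~\ref{regularlem} identifies it as logarithmically regular. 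Strictness and surjectivity of $C'\to A'$ follow from the construction ($Q$ appears identically on both sides, and $C\onto A$ is strict surjective). Finally, the base change identity
\[
A\otimes_C C'\;=\;(C/I)_P\llbracket Q\rrbracket\llbracket t\rrbracket\wtimes_k l\;=\;A_P\llbracket Q\rrbracket\llbracket t\rrbracket\wtimes_k l\;=\;A',
\]
where $I=\Ker(C\to A)$, holds because the formal power series and completed tensor product constructions commute with quotienting by $I$ when taken relative to a coefficient field contained in both.

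The main obstacle I anticipate is the bookkeeping around the choice of compatible coefficient fields $k\into C$, $k\into A$, $l\into A'$, $l\into C'$, together with the interplay of sharp factorization and the formal tensor product $\wtimes_k$: one must verify that the base change computation in the last display is genuinely an equality of complete local logarithmic rings (rather than up to some subtle completion), and that my choice of $l\into C'$ can be made so that the surjection $C'\to A'$ sends it to the already-chosen $l\into A'$. Both issues are resolved by the characteristic zero assumption (every residue field extension is separable, so coefficient fields exist and can be chosen compatibly) together with the fact that $\hatO_P\llbracket Q\rrbracket\llbracket t\rrbracket$ is flat over $\hatO$, so quotienting commutes with the formal power series and completed tensor product operations.
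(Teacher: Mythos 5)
Your proposal is correct and is essentially the paper's own argument: the paper's proof is a one-liner that writes $A'=A_Q\llbracket Q'\rrbracket\llbracket t_1\.t_m\rrbracket\wtimes_l l'$ via Lemma~\ref{logregchart} and takes $C'=C_Q\llbracket Q'\rrbracket\llbracket t_1\.t_m\rrbracket\wtimes_l l'$, exactly your construction. Your added steps (sharp factorization, compatible coefficient fields, the explicit base-change check) are correct elaborations of details the paper leaves implicit.
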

\begin{proof}
Let $l=A/m_A$ and $l'=A'/m_{A'}$, and let $Q=\oM_A$ and $Q'=\oM_{A'}$. Then $A'=A_Q\llbracket Q'\rrbracket\llbracket t_1\.t_m\rrbracket\wtimes_ll'$, and one can take $C'=C_Q\llbracket Q'\rrbracket\llbracket t_1\.t_m\rrbracket\wtimes_ll'$ with the natural morphism $C'\to A'$.
\end{proof}

\subsubsection{Functoriality}
Now we can prove the main functoriality result about relative desingularization.

\begin{proposition}\label{indepprop}
Assume that $f_j\:X_j\to B$, $j=1,2$ are two logarithmic orbifolds and $i_j\:Z_j\into X_j$ are strict closed immersions of constant codimensions such that $\cF(f_j,\cI_{Z_j})$ are defined. Then for any pair of logarithmically regular morphisms $Z\to Z_1$ and $Z\to Z_2$ with a common source, the induced relative desingularizations of $Z$ coincide. Namely, there is an isomorphism of $Z$-stacks $(Z_1)_\res\times_{Z_1}Z=(Z_2)_\res\times_{Z_2}Z$.
\end{proposition}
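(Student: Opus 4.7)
The plan is to establish the isomorphism formal-locally on $Z$ by constructing a common ambient logarithmic orbifold, and then applying the functoriality of the principalization algorithm with respect to logarithmically regular morphisms.

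\textbf{First, I would reduce to a formal-local statement.} Both $(Z_j)_\res \times_{Z_j} Z$ are constructed by Proposition \ref{localdesingprop} via strict transforms of $Z_j$ inside a principalization sequence of $\cI_{Z_j}$ on $X_j$, then base-changed to $Z$. Since strict transform commutes with flat morphisms, and formal completion is flat, it suffices to check the isomorphism after passing to formal completions at an arbitrary geometric point $z \to Z$, with images $z_j \to Z_j$, $x_j \to X_j$, and $b \to B$.

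\textbf{Next, I would construct a common formal ambient orbifold.} Let $\hat{X}_j$ be the formal completion of $X_j$ at $x_j$ and $\hat{Z}_j \hookrightarrow \hat{X}_j$ the induced strict closed immersion. The logarithmically regular morphism $Z \to Z_j$ induces, on completion at $z$, a logarithmically regular morphism $\hat{Z} \to \hat{Z}_j$. By Lemma \ref{liftpreslem} applied to the strict surjection $\hat{\cO}_{X_j,x_j} \onto \hat{\cO}_{Z_j,z_j}$, this lifts to a logarithmically regular formal morphism $\hat{X}'_j \to \hat{X}_j$ with $\hat{Z} = \hat{Z}_j \times_{\hat{X}_j} \hat{X}'_j$. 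Both $\hat{X}'_1$ and $\hat{X}'_2$ are then logarithmically regular $\hat{\cO}_{B,b}$-presentations of $\hat{\cO}_{Z,z}$ in the sense of Lemma \ref{reemblem}. By that lemma, one such presentation factors through the other; after possibly renaming, I obtain a logarithmically regular $B$-orbifold $\hat{X}$ (in the formal sense) equipped with logarithmically regular morphisms $\hat{X} \to \hat{X}_j$ for $j=1,2$ and a strict closed immersion $\hat{Z} \hookrightarrow \hat{X}$ such that $\hat{Z} = \hat{Z}_j \times_{\hat{X}_j} \hat{X}$.

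\textbf{Then I would apply functoriality of the principalization.} The maps $\hat{X} \to X_j$ are logarithmically regular (by Lemma \ref{logreglem} composed with $\hat{X}_j \to X_j$), and they pull back $\cI_{Z_j}$ to $\cI_{\hat{Z}}$. By property (iii) of Theorem \ref{Proj:principalization}, the pullback of the principalization sequence $\cF(f_j, \cI_{Z_j})$ to $\hat{X}$ coincides, after removing Kummer blow ups with empty centers, with $\cF(\hat{X} \to B, \cI_{\hat{Z}})$; in particular this pulled-back sequence is the same for $j=1$ and $j=2$. Applying Proposition \ref{localdesingprop} on $\hat{X}$, the strict transform of $\hat{Z}$ at the appropriate stage yields a common formal desingularization, and this agrees with the pullback of the strict transform of $\hat{Z}_j$ under $\cF(f_j, \cI_{Z_j})$ since strict transforms are functorial under logarithmically regular morphisms (cf.\ Lemma \ref{strictlem} and Theorem \ref{kummerblowth}(v)). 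This provides the desired formal-local isomorphism; since this holds at every geometric point $z$, the global isomorphism of $Z$-stacks $(Z_1)_\res \times_{Z_1} Z \cong (Z_2)_\res \times_{Z_2} Z$ follows by descent.

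\textbf{The main obstacle} is bookkeeping the passage between the algebraic setting in which the principalization algorithm is formulated and the formal-local setting in which Lemmas \ref{reemblem} and \ref{liftpreslem} naturally live. Property (iii) of Theorem \ref{Proj:principalization} requires genuine logarithmically regular morphisms of noetherian DM stacks, whereas our $\hat{X} \to X_j$ factors through the formal completion $\hat{X}_j$. The cleanest way to handle this is to observe that principalization sequences are compatible with flat limits: the pullback of $\cF(f_j, \cI_{Z_j})$ to $\hat{X}_j$ (the formal completion) is determined by its pullback to any henselian \'etale neighborhood, via the flat limit $\hat{\cO}_{X_j, x_j} = \varinjlim A_\alpha$, and one can then compare pullbacks to $\hat{X}$ through any chosen henselian lift. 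Once this comparison is set up, everything else follows mechanically from the functoriality properties already established.
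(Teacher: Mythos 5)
Your overall route — complete at a point, lift the embeddings via Lemma \ref{liftpreslem}, compare the resulting presentations via Lemma \ref{reemblem}, and invoke functoriality — is the paper's route. But there is a genuine gap in the middle step. After producing the two logarithmically regular $\cO_b$-presentations $\hat{X}'_1$ and $\hat{X}'_2$ of $\hat\cO_{Z,z}$, you claim that Lemma \ref{reemblem} yields a single $\hat X$ admitting \emph{logarithmically regular} morphisms to both $\hat X_1$ and $\hat X_2$, and you then compare the two principalization sequences using only property (iii) of Theorem \ref{Proj:principalization}. This is not what Lemma \ref{reemblem} gives: it says one presentation factors through the other via a strict \emph{surjection} of rings, i.e.\ the map between the two ambient spaces is a closed immersion, not a logarithmically regular morphism, so property (iii) does not apply to it. The comparison of the desingularizations induced by the two presentations must go through the re-embedding principle, property (iv) of Theorem \ref{Proj:principalization} (compatibility with closed embeddings of pure codimension), which your argument never invokes. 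This is precisely the role (iv) plays in the paper's proof, and without it the two sequences are not identified.

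Your closing "main obstacle" is also a non-issue handled incorrectly: no flat-limit or henselian approximation argument is needed. Since $X_j$ is quasi-excellent, $\Spec(\hat\cO_{X_j,x_j})\to X_j$ is a regular, strict, hence logarithmically regular morphism of \emph{noetherian schemes}, and the principalization algorithm was built for arbitrary logarithmically regular morphisms (not just those of finite type). So property (iii) applies directly to the completion morphism, reducing everything to the case where $Z$, $Z_j$, $X_j$ are spectra of complete local rings; this is exactly why the paper develops principalization beyond the finite-type setting. With that reduction in place and with (iv) supplying the missing comparison between the two presentations, your argument closes up into the paper's proof.
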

\begin{proof}
By flat descent, it suffices to check the isomorphism of modifications after replacing $Z$ by a flat covering. For example, we can replace $Z$ by its \'etale covering, or we can replace $X_1$ by its \'etale covering $X'_1$ and replace $Z_1$ and $Z$ by their base changes with respect to $X'_1\to X$. In this way one easily reduces the claim to the case when $B,Z_i,X_i$ and $Z$ are schemes. Moreover, it suffices to prove that for any point $z\in Z$ both $\tau_j\:(Z_j)_\res\to Z_j$ are pulled back to the same modification of $\hatZ_z=\Spec(\hatcO_{Z,z})$.

Let $z_j\in Z_j$ be the images of $z$. For shortness we denote $i_j(z_j)$ by $z_j$ and set $\hatZ_j=\Spec(\hatcO_{Z_j,z_j})$, $\hatX_j=\Spec(\hatcO_{X_j,z_j})$. The morphism $\hatX_j\to X$ is regular by the quasi-excellence of $X$ and $\hati_j\:\hatZ_j\into\hatX_j$ is the base change of $i$, hence the principalization of $\hatZ_j$ in $\hatX_j$ is the pullback of the principalization of $Z_j$ in $X_j$ by the functoriality. In particular, the $B$-desingularization of $\hatZ_j$ obtained from $\hati_j$ is the base change of $\tau_j$. This reduces the claim to the particular case when $Z$, $Z_j$ and $X_j$ are spectra of complete local rings. In addition, we can assume that $B=\Spec(\cO_b)$, where $b=g(z)$.

By Lemma~\ref{liftpreslem}, the embedding $\hatZ_j\into\hatX_j$ can be lifted to an embedding $\alpha_j\:\hatZ_z\into Y_j$, where $Y_j$ is logarithmically regular over $\hatX_j$ and hence also over $B$. By functoriality of the principalization, see Theorem~\ref{Proj:principalization}(iii), the pullbacks of $\tau_j$ to $\hatZ_z$ coincide with the $B$-desingularizations of $\hatZ_z$ induced by the logarithmically regular $\cO_b$-presentations $\alpha_j$. It remains to note that the latter desingularizations of $\hatZ_z$ coincide by Lemma~\ref{reemblem} and the re-imbedding principle, see Theorem~\ref{Proj:principalization}(iv).
\end{proof}

As an immediate corollary we obtain

\begin{corollary}\label{indepcor}
The relative desingularization $Z_\res$ defined in Proposition~\ref{localdesingprop} depends only on the morphism $Z\to B$ and is independent of the embedding $i\:Z\into X$. Moreover, if $Z'$ is another logarithmic $B$-orbifold satisfying assumptions of the proposition and $Z'\to Z$ is a logarithmically regular $B$-morphism, then the desingularizations are compatible: $Z'_\res=Z_\res\times_ZZ'$.
\end{corollary}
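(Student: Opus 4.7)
The plan is to deduce both assertions directly from Proposition~\ref{indepprop} by specializing its hypotheses; no new technology is needed, and all the genuine work has already been done there.

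For \emph{independence of the embedding}, I would consider two embeddings $i_1\:Z\into X_1$ and $i_2\:Z\into X_2$ satisfying the hypotheses of Proposition~\ref{localdesingprop}, producing candidate desingularizations $Z_\res^{(1)}$ and $Z_\res^{(2)}$. Then I apply Proposition~\ref{indepprop} to the pair $(Z_1,Z_2):=(Z,Z)$ equipped with these embeddings, taking the common source to be $Z$ itself and both morphisms $Z\to Z_j$ to be the identity (which is certainly logarithmically regular). The conclusion gives
\[
Z_\res^{(1)} \;=\; Z_\res^{(1)}\times_ZZ \;=\; Z_\res^{(2)}\times_ZZ \;=\; Z_\res^{(2)},
\]
so the desingularization depends only on $Z\to B$.

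For the \emph{compatibility} claim, I would fix embeddings $i\:Z\into X$ and $i'\:Z'\into X'$ for which the respective principalizations are defined, and apply Proposition~\ref{indepprop} to $(Z_1,Z_2):=(Z,Z')$ with the common source $Z'$, using the two logarithmically regular morphisms $g\:Z'\to Z$ (the given one) and $\id\:Z'\to Z'$. The proposition then yields
\[
Z_\res\times_ZZ' \;=\; Z'_\res\times_{Z'}Z' \;=\; Z'_\res,
\]
which is precisely the asserted compatibility.

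The only subtlety is verifying the hypotheses of Proposition~\ref{indepprop} in each invocation, namely that both pairs $(X_j,Z_j)$ satisfy the assumptions of Proposition~\ref{localdesingprop}. This is either built into the hypothesis of the corollary or, in the independence-of-embedding case, part of the data of choosing the two embeddings. I do not expect any genuine obstacle here: the substantive content lies in Proposition~\ref{indepprop}, whose proof already handles reduction to complete local rings, lifting of logarithmically regular morphisms along strict surjections, and the re-embedding principle. The corollary is essentially a bookkeeping consequence.
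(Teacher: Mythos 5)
Your proposal is correct and is exactly the deduction the paper intends: the corollary is stated as an "immediate" consequence of Proposition~\ref{indepprop}, obtained precisely by the two specializations you describe (identity maps on a common source $Z$ for independence of embedding, and the pair $g\:Z'\to Z$, $\id\:Z'\to Z'$ for compatibility). No further comment is needed.
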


\subsection{The desingularization method}\label{desingmethodsec}
Now, let us prove Theorem~\ref{Proj:toroidalization}. We say that a morphism $g\:Z\to B$ is {\em locally embeddable} into a relative logarithmic orbifold if there exists an \'etale base change $u\:B'\to B$ and an \'etale covering $v\:Z'\to Z\times_BB'$ such that the morphism $g'\:Z'\to B'$ factors into the composition of a strict closed immersion $Z'\into X'$ and a relative logarithmic orbifold $Z'\to B'$. For any such $g$ we have defined in Proposition~\ref{localdesingprop} a resolution $g'_\res$ of $g'$ and showed in Corollary~\ref{indepcor} that it is independent of the embedding $Z'\into X'$.

We claim that $g'_\res$ descends to a desingularization $g_\res$ of $Z\times_BB'$. Assume first that $B=B'$. The two pullbacks of $g'_\res$ with respect to the projections $Z'\times_ZZ'\to Z'$ induce the same desingularization of the source by Proposition~\ref{indepprop}. Therefore,  $g'_\res$ descends to $g_\res$ by \'etale descent. Descent with respect to a base change $B'\to B$ is done similarly. Functoriality of the desingularization asserted in claim (iii) of Theorem~\ref{Proj:toroidalization} follows from Corollary~\ref{indepcor}. Parts (i) and (ii) about existence and base changes directly follow from the analogous claims in the principalization theorem \ref{Proj:principalization}.

\subsection{The absolute case}\label{abscase}
This section is devoted to proving Theorem \ref{absdesingth}. Analogously to \cite[Section 4]{Temkin-qe}, this will be proved by reducing to the case of schemes over complete local rings via an appropriate localization procedure performed by induction on codimension. The latter case is covered by Theorem~\ref{Proj:schemedesing}.

\subsubsection{Induction on codimension}\label{Sec:toroidalization-induction-on-dimension}
Let $Z$ be as in Theorem \ref{absdesingth}. For a logarithmic scheme $X$ let $X_\sing=X\setminus X_\reg$ denote its logarithmic singularity locus. We will construct a sequence of $(Z_i)_\sing$-supported blow ups $Z_{i+1}\to Z_i$ with $Z_0=Z$, such that the image $T_i\subset Z$ of $(Z_i)_\sing$ contains only points of codimension at least $i+1$. Given such a sequence, the closed sets $T_i$ form a decreasing family, and hence stabilize. By the codimension condition, this implies that $T_i=\emptyset$ for some $i$, and then $Z_i\to Z$ is a desingularization. Moreover, being a composition of $Z_\sing$-supported blow ups, it is a $Z_\sing$-supported blow up itself.

Assume now that a sequence $Z_i\to\dots\to Z_0$ as above is given, and let us construct a blow up $h\:Z_{i+1}\to Z_i$ as required. The closed set $T_i$ contains finitely many points of codimension $i+1$, let us denote them $z_1\.z_n$. Fix for a while $z=z_j$ and let $Z_z=\Spec(\cO_z)$ and $\hatZ=\Spec(\hatcO_z)$ be the localization and the completion with the induced logarithmic structures. The morphisms $\hatZ\to Z_z\to Z$ are strict and regular, hence the same is true for the base changes $\hatZ_i\to (Z_z)_i\to Z_i$, where $\hatZ_i=\hatZ\times_ZZ_i$ and $(Z_z)_i=Z_z\times_Z Z_i$. We will prove in Corollary~\ref{embcor} below that $\hatZ_i$ is locally embeddable into a logarithmic manifold, but let us use it to complete the proof first. Applying Theorem~\ref{Proj:schemedesing} to the morphism $\hatZ_i\to\Spec(\QQ)$, we obtain a blow up $\hatZ_{i+1}\to\hatZ_i$ whose source is logarithmically regular and whose center $\hatV$ is in $(\hatZ_i)_\sing$ and hence is mapped to $(Z_i)_\sing$. Therefore, the image of $(\hatZ_i)_\sing$ in $Z$ lies in the intersection of the localization $Z_z$ and $T_i$, and hence consists of the single point $z$.

We claim that $\hatV$ comes from a closed subscheme $V\into (Z_z)_i$. Indeed, $\hatV$ lies in a closed subscheme defined by $m_z^n\cO_{\hatZ_i}$ for a large enough $n$, and the latter is mapped isomorphically onto the closed subscheme of $(Z_z)_i$ defined by $m_z^n\cO_{(Z_z)_i}$. Since $\hatZ_i\to(Z_z)_i$ is regular, the desingularization $\hatZ_{i+1}\to\hatZ_i$ is the base change of the blow up $h\:(Z_z)_{i+1}\to (Z_z)_i$ with center $V$, and $h$ is a desingularization because $\hatZ_{i+1}\to(Z_z)_{i+1}$ is a logarithmically regular surjective morphism with a logarithmically regular source.

So far, for any $z_j$ we have constructed a desingularizing blow up $h_j\:(Z_{z_j})_{i+1}\to (Z_{z_j})_i$ whose center $V_j$ is mapped to $z_j$ in $Z_{z_j}$. Take $V$ to be the smallest closed subscheme restricting to $V_j$ in $Z_{z_j}$. Namely, we take $V$ to be the schematic image of $\coprod_jV_j$ under the morphism $\coprod_j(Z_{z_j})_i\to Z_i$, and notice that $\coprod_jV_j=V\times_Z\coprod_jZ_{z_j}$ is schematically dense in $V$. (For example, extend first to the open subscheme obtained from $Z_i$ by removing pairwise intersections $\overline{\{z_j\}}\cap\overline{\{z_k\}}$ of Zariski closures.) Since $V_j\subseteq((Z_{Z_j})_i)_\sing\subseteq(Z_i)_\sing$, we have that $V\subseteq(Z_i)_\sing$. We define $h\:Z_{i+1}\to Z_i$ to be the blow up along $V$, and claim that it is as required. Clearly, $h$ is trivial over any point of $Z$ of codimension at most $i+1$ except $z_1\. z_n$. As for each $z_j$, the base change $h\times_ZZ_{z_j}$ is the blow up along $V\times_ZZ_{z_j}=V_j$, hence it is the desingularization $h_j$. This shows that $Z_{i+1}$ is logarithmically regular over all points of $Z$ of codimension at least $i+1$, hence the blow up $h$ is as required.

\subsubsection{Functoriality}
If $h\:Z'\to Z$ is a logarithmically regular morphism, then $Z'_\reg=h^{-1}(Z_\reg)$ by \cite[Corollary~5.1.3(1) and (3)]{Molho-Temkin}. Completions of qe schemes are regular morphisms, hence for a point $z'\in Z'$ with $z=h(z')$ the completion $\hatZ'_{z'}\to\hatZ_z$ is logarithmically regular. Therefore, all ingredients of our construction, including the desingularization provided by Theorem~\ref{Proj:schemedesing}, are functorial with respect to logarithmically regular quasi-saturated morphisms. This proves the functoriality assertion in Theorem~\ref{absdesingth}.

\subsubsection{Local embeddability}
    It remains to prove Corollary \ref{embcor} --- the claim about existence of embeddings used in Section \ref{Sec:toroidalization-induction-on-dimension}. We start with a logarithmic Cohen's structure theorem.

\begin{lemma}\label{Cohen}
Let $Z$ be an fs logarithmic scheme whose underlying scheme is the spectrum of a complete local ring $A$ containing $\QQ$. Then there exists a strict closed immersion $Z\into X$, where $X$ is of the form $\Spec(k\llbracket P\rrbracket\llbracket t_1\. t_n\rrbracket)$ with $k$ a field and the logarithmic structure given by an fs monoid $P$.
\end{lemma}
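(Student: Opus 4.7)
The plan is to combine the classical Cohen structure theorem for the ring $A$ with the neat-chart theorem (Theorem~\ref{neatth}) for fs logarithmic structures. I would proceed in three steps: extract a ring-theoretic surjection, produce a global log chart, and assemble these into a strict closed immersion.

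First, since $A$ is a complete Noetherian local ring of equicharacteristic zero (as $\QQ\subset A$), Cohen's structure theorem furnishes a coefficient field $k\hookrightarrow A$ identifying $k\toisom A/m_A$. Lifting any system of generators $\bar x_1,\dots,\bar x_n$ of $m_A$ to elements $x_1,\dots,x_n\in A$ produces a continuous surjective $k$-algebra homomorphism $\pi_t\colon k\llbracket t_1,\dots,t_n\rrbracket\onto A$ sending $t_i\mapsto x_i$. Next, let $z$ be the closed point of $Z$ and $\bar z\to z$ a geometric point. Since $M_Z$ is fs, the characteristic monoid $P:=\overline{M}_{\bar z}$ is a sharp fs monoid. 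Applying Theorem~\ref{neatth} to the morphism $Z\to\Spec(\QQ)$ (with the trivial logarithmic structure on the target) yields a neat chart $P\to M_Z$ defined locally at $\bar z$; because $A$ is henselian, after a suitable choice of coefficient field this descends to a monoid homomorphism $\alpha\colon P\to A$ whose associated logarithmic structure equals $M_Z$.

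For the assembly: because $P$ is sharp and $A$ is $m_A$-adically complete, $\alpha$ factors uniquely through a continuous $k$-algebra homomorphism $\pi_P\colon k\llbracket P\rrbracket\to A$. Combining $\pi_P$ with $\pi_t$ produces a continuous surjective $k$-algebra homomorphism
\[
\pi\colon k\llbracket P\rrbracket\llbracket t_1,\dots,t_n\rrbracket\onto A,
\]
and hence a scheme-theoretic closed immersion $Z\hookrightarrow X:=\Spec(k\llbracket P\rrbracket\llbracket t_1,\dots,t_n\rrbracket)$. Equipping $X$ with the Zariski logarithmic structure induced by the tautological chart $P\to k\llbracket P\rrbracket\llbracket t_1,\dots,t_n\rrbracket$, the pullback of $M_X$ to $Z$ is by construction the logarithmic structure associated to $\alpha\colon P\to A$, which equals $M_Z$; thus the closed immersion is strict.

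The delicate point is the second step: producing a monoid homomorphism $\alpha\colon P\to A$ globally on $\Spec(A)$ rather than only on an étale neighborhood of $\bar z$. The henselian property of $A$ reduces this to controlling the Galois action of $k(\bar z)/k(z)$ on $\overline{M}_{\bar z}$; the finiteness of this action, together with the freedom to enlarge the coefficient data, allows one to realize $P$ as a chart over all of $\Spec(A)$.
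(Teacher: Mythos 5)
Your outline coincides with the paper's: the proof there is exactly ``choose a sharp chart $P\to A$, apply classical Cohen to get $k\llbracket t_1\.t_n\rrbracket\onto A$, and combine into $k\llbracket P\rrbracket\llbracket t_1\.t_n\rrbracket\onto A$, which is strict by construction.'' Your steps 1 and 3 are fine (modulo the small point that for $k\llbracket P\rrbracket\to A$ to be defined and continuous you need the chart to be local at the closed point, i.e.\ $\alpha(P\setminus\{0\})\subseteq m_A$, which a neat chart does give you).

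The problem is in your step 2, precisely the step you single out as delicate. Your closing claim --- that the finiteness of the Galois action of $k(\bar z)/k(z)$ on $\oM_{\bar z}$, ``together with the freedom to enlarge the coefficient data,'' always produces a global chart $P\to A$ --- is false. A chart $P\to\Gamma(Z,M_Z)=M_{\bar z}^{\mathrm{Gal}}$ must induce a surjection onto $\oM_{\bar z}$, but its image necessarily lands in the invariants $\oM_{\bar z}^{\mathrm{Gal}}$; so whenever the monodromy action on $\oM_{\bar z}$ is nontrivial, no global chart exists and no enlargement of $P$ helps. Concretely, take $A=\RR\llbracket x,y\rrbracket/(x^2+y^2)$ with the \'etale log structure generated on $A\otimes_\RR\CC=\CC\llbracket u,v\rrbracket/(uv)$ by $u=x+iy$ and $v=x-iy$: here $\oM_{\bar z}=\NN^2$ with complex conjugation swapping the factors, the invariants are the diagonal $\NN$, and no chart descends to $\Spec(A)$. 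So the existence of a global sharp chart is an \emph{hypothesis} one must secure (e.g.\ by assuming the log structure is Zariski, or by passing to a strict henselization so that $k(\bar z)=k(z)$), not something deducible from henselianness plus finiteness of the Galois action. The paper simply asserts the chart's existence and does not attempt the Galois-descent argument; if you keep your route, you need to either add the hypothesis that $M_Z$ admits a global chart or explain why it does in the situation where the lemma is invoked.
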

\begin{proof}
The logarithmic structure on $Z$ possesses a sharp chart $i\:P\to A$. By the classical Cohen's structure theorem there exists a surjection $k\llbracket t_1\. t_n\rrbracket\onto A$. Therefore, the surjection $k\llbracket P\rrbracket\llbracket t_1\. t_n\rrbracket\onto A$ mapping $P$ to $i(P)$ gives rise to a strict closed immersion as required.
\end{proof}

\begin{corollary}\label{embcor}
Let $Z$ be an fs logarithmic scheme whose underlying scheme is the spectrum of a complete local ring $A$ containing $\QQ$. Then any logarithmic scheme $Z'$ of finite type over $Z$ is locally embeddable into a qe logarithmic manifold.
\end{corollary}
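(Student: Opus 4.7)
The plan is to combine Lemma~\ref{Cohen} with the standard local factorization of a finite-type morphism through a logarithmically smooth morphism, and then to lift that factorization across the closed immersion provided by Cohen's theorem.

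First I would apply Lemma~\ref{Cohen} to obtain a strict closed immersion $i\:Z\into X$, where $X=\Spec(k\llbracket P\rrbracket\llbracket t_1\.t_n\rrbracket)$ for a field $k\supseteq\QQ$ and an fs monoid $P$. The underlying scheme of $X$ is a complete Noetherian local ring, so $X$ is qe and $X\to\Spec(\QQ)$ is logarithmically regular. Since $\dim(\Spec(\QQ))=0\le 1$, Lemma~\ref{logmanlem} upgrades this to the statement that $X$ is a logarithmic $\Spec(\QQ)$-manifold, equivalently a qe logarithmic manifold in the absolute sense.

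Next, exactly as in the opening paragraph of the proof of Theorem~\ref{manifoldth}, the finite-type morphism $Z'\to Z$ factors \'etale-locally on $Z'$ as a strict closed immersion $j\:Z'\into W$ followed by a logarithmically smooth morphism $h\:W\to Z$. It therefore suffices to embed $W$, locally on $W$, strictly into a qe logarithmic manifold over $\QQ$.

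For this I would fix a chart of $h$ near a given point of $W$: an injective homomorphism $P\into Q$ of fs monoids, a nonnegative integer $m$, and a strict \'etale morphism $W\to Z_P[Q][t_1\.t_m]$. Set $\widetilde W_0=X_P[Q][t_1\.t_m]$. The morphism $\widetilde W_0\to X$ is the pullback of the toric morphism $\bfA_Q[t_1\.t_m]\to\bfA_P$, hence is logarithmically smooth and of finite type, so Proposition~\ref{coverprop}(i) applied to $\widetilde W_0\to X\to\Spec(\QQ)$ shows that $\widetilde W_0$ is itself a qe logarithmic manifold. Pulling back $i$ along $\widetilde W_0\to X$ gives a strict closed immersion $Z_P[Q][t_1\.t_m]=\widetilde W_0\times_XZ\into\widetilde W_0$. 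The strict \'etale morphism $W\to\widetilde W_0\times_XZ$ then lifts, in a neighborhood of the chosen point, to a strict \'etale morphism $\widetilde W\to\widetilde W_0$: both logarithmic structures on the lift are pulled back from the chart monoid of $\widetilde W_0$, so the problem reduces to the classical one of lifting an \'etale morphism across a closed immersion. Then $\widetilde W\to X$ is logarithmically smooth, so $\widetilde W$ is a qe logarithmic manifold by Theorem~\ref{manifoldth}, and the composition $Z'\stackrel{j}\into W\into\widetilde W$ is the desired strict closed immersion.

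The main technical obstacle is the last lifting step, producing $\widetilde W\to\widetilde W_0$ whose fs restriction along $i$ recovers $W\to\widetilde W_0\times_XZ$. It is a routine application of the infinitesimal lifting property of \'etale morphisms, but it is the one place where the logarithmic compatibilities must be checked explicitly using the charts above; everything else is a formal assembly of Lemma~\ref{logmanlem}, Proposition~\ref{coverprop}(i), and Theorem~\ref{manifoldth}.
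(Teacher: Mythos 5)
Your proof is correct and takes essentially the same route as the paper: Cohen's structure theorem (Lemma~\ref{Cohen}) to embed $Z$ into $X=\Spec(k\llbracket P\rrbracket\llbracket t_1,\dots,t_n\rrbracket)$, Lemma~\ref{logmanlem} to see that $X$ is a qe logarithmic manifold, and Theorem~\ref{manifoldth} to propagate this to anything logarithmically smooth of finite type over $X$. The only difference is that you spell out, via charts and local lifting of \'etale morphisms along the closed immersion $Z\into X$, the step the paper dispatches with ``it follows easily'', namely that $Z'$ locally admits a strict closed immersion into a logarithmic scheme logarithmically smooth over $X$.
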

\begin{proof}
By Lemma \ref{Cohen}, $Z$ admits a strict closed immersion into a logarithmic scheme $Y=\Spec(C)$, where $C=k\llbracket P\rrbracket\llbracket t_1\. t_n\rrbracket$. It follows easily, that $Z'$ locally admits a strict closed immersion into a logarithmic scheme $Y'$ which is logarithmically smooth over $Y$. It remains to recall that $Y$ is a logarithmic manifold by Lemma~\ref{logmanlem}, and hence $Y'$ is a logarithmic manifold by Theorem~\ref{manifoldth}.
\end{proof}

\section{Extension of main theorems to other categories}
\addtocontents{toc}
{\noindent We extend relative principalization and desingularization to analytic spaces and schemes over valuation rings.}

In this section we will use functoriality to extend Theorems \ref{Proj:principalization}, \ref{Proj:toroidalization} and \ref{Proj:schemedesing} to other settings.

\subsection{Morphisms of finite presentation}\label{fpsec}
In \S\ref{fpsec} we will work with qcqs (quasi-compact and quasi-separated) stacks and schemes of characteristic zero, but they do not have to be noetherian. Our main goal is to desingularize schemes over arbitrary valuation rings, but our arguments apply to general non-noetherian bases as well.

\subsubsection{Approximation}
We briefly recall some facts, the main reference is \cite[${\rm IV}_3$, \S8]{ega}. By \cite[C.9]{ThomasonTrobaugh} any qcqs scheme $B$ of characteristic zero is a filtered limit of a family $\{B_i,f_{ij}\}$ with $B_i$ of finite type over $\QQ$ and $f_{ij}$ affine. If $B$ is integral of characteristic zero, we can take all $B_i$ to be integral of characteristic zero. Furthermore, any $B$-scheme $X$ of finite presentation is the base change of a $B_i$-scheme $X_i$ of finite type for a large enough $i$, and any two choices of such an $X_i$ become isomorphic already after base change to some $B_j$. If $X\to B$ is smooth, \'etale, proper, etc., then the same is true for $X_j\to B_j$ for a large enough $j$. A similar theory exists for morphisms of $B$-schemes, coherent sheaves on them, etc.

The above theory easily extends to logarithmic schemes: a quasi-coherent saturated logarithmic structure $M\to\cO_B$ is a direct colimit of its fs logarithmic substructures $M_\alpha\into M$, and each $M_\alpha$ is obtained by pullback of an fs logarithmic structure $M_{i\alpha}$ on some $X_i$. Therefore, $(X,M_\alpha)$ is the filtered limit of fs logarithmic scheme $(X_j,f_{ij}^*M_{i\alpha})$, and varying $\alpha$ we obtain that $(X,M)$ is the limit of fs logarithmic scheme of finite type over $\ZZ$. In the same way one approximates morphisms of logarithmic schemes, their properties, etc. In particular, using the chart criterion it is easy to see that $X\to B$ is logarithmically smooth if and only if the same is true for approximations $X_i\to B_i$ with a large enough $i$.

\subsubsection{Proof of Theorem \ref{finpres}}
The proof is absolutely the same for $\cF$, $\cR$ and $\cR'$, so we will work with $\cR$ for concreteness. Assume that $g\:Z\to B$ possesses a $\bB$-approximation \'etale locally, that is there exists \'etale covers by schemes $Z_0\to Z$ and $B_0\to B$ and a morphism $g_0\:Z_0\to B_0$ compatible with $g$ and admitting an approximation $\tilg\:\tilZ\to\tilB$ with $B\in\bB$. If $\cR(\tilg)$ fails for any choice of such $\tilg$, then we set $\cR(g)=\cR(g_0)=\emptyset$. Otherwise, choose $\tilg$ with a non-empty $\cR(\tilg)$ and define $\cR(g_0)$ to be the base change of $\cR(\tilg)$. We claim that this is well defined. Indeed, assume that $\tilg'$ is another $\bB$-approximation with $\cR(\tilg')\neq\emptyset$. Since $g_0$ is a filtered limit of approximations, there exist its approximation $\tilg''\:\tilZ''\to\tilB''$ such that both $g_0\to\tilg$ and $g\to\tilg'$ factor through it. Already the pullbacks of $\cR(\tilg)$ and $\cR(\tilg')$ to $\tilZ''$ coincide by compatibility of $\cR$ with base changes.

Next we claim that $\cR(g_0)$ descends to $Z$. Assume first that $B_0=B$. Then the morphism $g_1\:Z_1=Z_0\times_ZZ_0\to B_0$ admits approximations induced from $g_0$. By the above paragraph, both induce the same desingularization $\cR(g_1)$ of $Z_1$, and it follows that the pullbacks of $\cR(g_0)$ to $Z_1$ coincide. Therefore, $\cR(g_0)$ is the pullback of a desingularization of $g$ that we denote $\cR(g)$. Descent with respect to $B_0\to B$ is done similarly. Our construction easily implies that $\cR(g)$ is functorial with respect to logarithmically smooth morphisms and base changes.

It remains to prove the claim about existence, so assume that $g\:Z\to B$ is nice and $B$ is integral as a stack and has a generically trivial logarithmic structure. Assume first that $B$ is a scheme. Then $g$ possesses an approximation $\tilg\:\tilZ\to\tilB$, where $\tilB$ satisfies the same properties. By Theorem~\ref{Proj:toroidalization} there exists a blow up $\tilB'\to\tilB$ with $\tilB'\in\bB$ such that if $\tilg'\:\tilZ'\to\tilB'$ denotes the saturated base change, then $\cR(\tilg\times_{\tilB}\tilB')$ succeeds. By approximation there exists a blow up $B'\to B$ such that the morphism $B'\to \tilB$ factors through $\tilB'$. It satisfies the assertions of the theorem because the saturated pullback $g'\: Z'\to B'$ of $g$ possesses the $\bB$-approximation $\tilg$.

In general, find an \'etale covering by a scheme $B_0\to B$ and let $g_0\:Z_0\to B_0$ be the base change. By the above case, here exists a blow up $B'_0\to B_0$ such that $\cR(g'_0)$ is defined. By \cite[Theorem A]{Rydh} there exists a blow up $B'\to B$ such that $B'\times_BB_0\to B_0$ factors through $B'_0$. It is easy to see that it is a required modification of $B$.

\subsection{Analytic spaces and formal varieties}\label{lastsec}
We conclude the paper with discussing how our results might extend to formal and complex or non-archimedean analytic geometries. First, analogs of logarithmic structures and DM stacks in these categories are defined as their analogs in the theory of schemes. The rest is based on the ideas and results of \cite[\S6.2]{AT2}. For simplicity of notation we assume that the logarithmic structures are trivial.

By a {\em formal variety} $\fX$ we mean a formal scheme locally isomorphic to completion of a variety over a field. So, $\fX$ is covered by opens of the form $\Spf(A)$ with $A=k[x_1\.x_m]\llbracket t_1\.t_n\rrbracket/I$. Let $\fSp$ be a category of analytic spaces or formal schemes as in \cite[\S6.2]{AT2}, where in case (i) -- qe formal schemes, we restrict the category to formal varieties. It is shown in loc.cit. how any object $\fX$ of $\fSp$ is obtained by gluing {\em affinoid} objects $\fX_i$ with excellent rings $A_i=\Gamma(\cO_{\fX_i})$. If $\ff\:\fX\to\fB$ is a regular morphism of affinoid objects with algebras $A$ and $O$, then the morphism $f\:X=\Spec(A)\to B=\Spec(O)$ is regular.

Is $f$ a relative manifold? Probably it is not when $\dim(\fB)>1$. A negative indication is  the following fact communicated to us by Ofer Gabber: if $n\ge 2,m>0$, then the generic fiber of the morphism $$\Spec(k\llbracket x_1\.x_n,y_1\.y_m\rrbracket)\to\Spec(k\llbracket x_1\.x_n\rrbracket)$$ is of dimension $m+n-2$, which is larger than $m$ for $n>2$. Despite this one might still hope for some positive results, as was noticed in Remark~\ref{intromonomrem}.

Assume now that $\dim(\fB)\le 1$. Then our expectation is that $f$ is a relative manifold and this seems plausible especially in light of the special case covered by Lemma~\ref{logmanlem}. This question will be studied in detail elsewhere, but assuming the expectation is true one obtains that principalization of ideals is always possible for $f$, and its analytification or completion induces principalization of ideals for $\ff$. It remains to recall that morphisms of affine schemes corresponding to open immersions in $\fSp$ are regular, hence functoriality of principalization implies that the local principalization we have constructed in $\fSp$ extends to arbitrary regular morphisms. As a corollary one would obtain that analogs of the main Theorems \ref{Proj:principalization}, \ref{Proj:toroidalization}, \ref{Proj:schemedesing} and \ref{Th:semistable} extend to the category $\fSp$.

\appendix

\section{Relative destackification}\label{destackapp}
In \cite[Section 4]{ATW-destackification} we introduced a destackification functor that associates to a simple toroidal orbifold $X$ a blow up $\cD(X)$ such that the coarse moduli space $\cD(X)_\cs$ is also logarithmically regular. This construction was used in \cite[Section~8]{ATW-principalization} to deduce scheme-theoretic resolution of logarithmic varieties from a stack-theoretic one. To resolve morphisms, we need a relative version of this result. Fortunately, the same destackification functor works, but we have to check the additional property that it also preserves logarithmic regularity of certain morphisms. This will require to open the box, and we will have to generalize certain results from \cite{AT1}.

In addition, we will construct a relative version of the destackification $\cD_S(X)_{\cs/S}$ with respect to a given stack $S$. Analogously to defining relative coarse spaces, starting with $\cD$ this will be  done without opening the box by a standard \'etale descent. Using this relativization we obtain projective resolution of morphisms of stacks rather than schemes.

\subsection{\'Etale descent construction}\label{etdessec}

\subsubsection{Assumptions}\label{appassumsec}
Let $X\to S$ be a morphism of logarithmic DM stacks of characteristic zero. We assume that $X$ is logarithmically regular and the relative inertia $I_{X/S}$ is finite diagonalizable and acts trivially on the monoids $\oM_\ox$.

\subsubsection{The relative destackification}
Consider an \'etale presentation $S_1\toto S_0$ of $S$ and set $S_2=S_1\times_{S_0}S_1$ and $X_i=X\times_SS_i$. By \cite[\S2.1.1]{ATW-destackification}, $I_{X_i}=I_{X/S}\times_XX_i$ and hence $X_i$ are simple toroidal orbifolds in the sense of \cite{ATW-destackification} and both morphisms $p_{1,2}\:X_1\toto X_0$ are surjective strict inert \'etale. In particular, the destackification functor from \cite[Theorem~4.1.5]{ATW-destackification} applies to $X_i$ and is compatible with both $p_i$. We will denote it $X_i\mapsto \cD(X_i)$. The same argument applied to the morphisms $X_2\to X_1$ implies that $\cD(X_1)\toto\cD(X_0)$ is a strict \'etale groupoid in DM stacks, hence the quotient stack $[\cD(X_0)/\cD(X_1)]$ is defined by \cite[Lemma~2.1.4]{ATW-destackification}. We denote the latter stack $\cD_S(X)$ and call it the {\em $S$-destackification of $X$}.

\begin{theorem}\label{destackth}
Let $X$ and $S$ be as above, then

(i) The $X$-stack $\cD_S(X)$ depends only on $X\to S$ up to an isomorphism unique up to a unique 2-isomorphism.

(ii) The construction of $\cD_S(X)$ is compatible with representable \'etale morphisms $S'\to S$, that is, $\cD_{S'}(X\times_SS')=\cD_S(X)\times_SS'$.

(iii) The stack $\cD_S(X)$ is logarithmically regular and the $S$-coarsening $\cD_S(X)\to\cD_S(X)_{\cs/S}$ is logarithmically \'etale. In particular, $\cD_S(X)_{\cs/S}$ is logarithmically regular.

(iv) The morphisms $h\:\cD_S(X)\to X$ and $h_0\:\cD_S(X)_{\cs/S}\to X_{\cs/S}$ have a natural blow up structure.

(v) If $X'\to X$ is a surjective logarithmically smooth inert morphism, then the blow ups $\cD_S(X')\to X'$ and $\cD_S(X')_{\cs/S}\to X'_{\cs/S}$ are pullbacks of $h$ and $h_0$.
\end{theorem}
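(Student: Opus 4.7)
The overall strategy is étale descent from the absolute destackification functor of \cite[Theorem~4.1.5]{ATW-destackification}. Under the assumptions of \S\ref{appassumsec}, for any strict étale morphism $S_i\to S$, the inertia $I_{X_i}=I_{X/S}\times_XX_i$ is finite diagonalizable and acts trivially on monoids; combined with logarithmic regularity this makes each $X_i$ a \emph{simple toroidal orbifold} in the sense of \cite{ATW-destackification}. Since the absolute functor $\cD$ commutes with strict inert étale morphisms (and both projections $p_{1,2}\:X_1\toto X_0$ are of this type by \cite[\S2.1.1]{ATW-destackification}), the pair $\cD(X_1)\toto\cD(X_0)$ is a strict étale groupoid in DM stacks, and the quotient $\cD_S(X)=[\cD(X_0)/\cD(X_1)]$ is a well-defined logarithmic DM stack.

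For (i), I would take two étale presentations $S_1\toto S_0$ and $S_1'\toto S_0'$ and form their fiber product as a common refinement; the compatibility of $\cD$ with strict inert étale morphisms then identifies the two groupoid quotients canonically, with the 2-isomorphism unique because inert étale maps admit at most one 2-automorphism. For (ii), given a representable étale $S'\to S$, I would choose an étale presentation $S_0\to S$ and base change to obtain a presentation $S_0\times_SS'\to S'$; then representability ensures $X\times_SS_0\times_SS'=X\times_{S'}(S_0\times_SS')$, and applying $\cD$ termwise yields the claimed identification $\cD_{S'}(X\times_SS')=\cD_S(X)\times_SS'$.

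For (iii), logarithmic regularity of $\cD_S(X)$ is étale-local on the base (Lemma~\ref{logreglem}(i)), so it reduces to the absolute statement that each $\cD(X_i)$ is logarithmically regular over $\Spec(\QQ)$. The $S$-coarsening is defined as the relative coarse moduli space, whose existence and compatibility with étale base change on $S$ follow from Olsson's theorem; comparing with the absolute coarse space \'etale-locally on $S$ (where $\cD_S(X)_{\cs/S}\times_SS_0=\cD(X_0)_\cs$), the logarithmic étaleness of $\cD_S(X)\to\cD_S(X)_{\cs/S}$ descends from the absolute case in \cite[Theorem~4.1.5]{ATW-destackification}. For (iv), since each $\cD(X_i)\to X_i$ is a blow up by the absolute theorem, and blow ups commute with strict étale pullback, the blow up centers $\cJ_i\subset\cO_{X_i}$ are compatible with the groupoid $X_1\toto X_0$ and descend to a center $\cJ\subset\cO_X$; similarly on the coarse side.

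For (v), I would use that the absolute destackification satisfies the analogous functoriality for surjective logarithmically smooth inert morphisms (this is part of \cite[Theorem~4.1.5]{ATW-destackification}, which asserts compatibility with strict \'etale surjections and with logarithmically smooth inert morphisms). Choosing an étale presentation $S_0\to S$ and pulling back to $X'$ gives compatible presentations $X'_0\to X_0$, and the absolute compatibility yields $\cD(X'_0)=\cD(X_0)\times_{X_0}X'_0$; this identification is equivariant for the groupoid structure, and descends to $\cD_S(X')=\cD_S(X)\times_XX'$, whence the blow up identification of (v). The main technical point to verify carefully is that in relativizing via \'etale descent, the various identifications (fiber products, quotients, coarse spaces, blow up centers) all commute, which ultimately reduces to the rigidity of strict inert étale morphisms coming from the fact that $I_{X/S}$ acts trivially on monoids.
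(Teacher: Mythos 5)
Your proposal is correct and follows essentially the same route as the paper: étale descent from the absolute destackification theorem, with (i) handled by comparing presentations via a common refinement, (ii) by base-changing the presentation and invoking the absolute compatibility with étale morphisms of the base, and (iii)--(v) deduced by descent from their absolute analogs in \cite[Theorem~4.1.5]{ATW-destackification}. The paper disposes of (iii)--(v) in a single sentence, so your added detail (descent of the blow up centers, pullback of presentations along $X'\to X$) is simply a fuller writing-out of the same argument.
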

\begin{proof}
To prove (i) we should check that $\cD_S(X)$ is independent of the presentation of $S$. It suffices to consider the case of refinement, that is, $S'_1\toto S'_0$ is another presentation with $S'_0\to S$ factoring through $S_0$. Then the natural equivalence of groupoids $S'_i\to S_i$ gives rise to an equivalence of groupoids $\cD(X'_i)\to\cD(X_i)$, hence to a canonical isomorphism of the quotients.

Let us prove (ii). Choose a presentation $S_1\toto S_0$ of $S$ and set $X_i=X\times_SS_i$, $S'_i=S'\times_SS_i$ and $X'_i=X'\times_SS_i$, where $X'=X\times_SS'$. Then $X'_i\to X_i$ is the base change of the \'etale morphism of schemes $S'_i\to S_i$ and by \cite[Theorem~4.1.5]{ATW-destackification} $\cD(X'_i)=\cD(X_i)\times_{S_i}S'_i=\cD(X_i)\times_SS'$. By descent, passing to the quotients we obtain the asserted isomorphism $\cD_{S'}(X')=\cD_S(X)\times_SS'$.

Properties (iii)--(v) are deduced by \'etale descent from their absolute analogs, which hold for $\cD(X_0)$ by \cite[Theorem~4.1.5]{ATW-destackification}.
\end{proof}

\subsection{Logarithmic regularity}
Now, we are going to strengthen claim (ii) above by showing that $\cD_S(X)_{\cs/S}$ also preserves relative logarithmic regularity.

\begin{theorem}\label{destackth1}
Let $X$ and $S$ be as in \S\ref{appassumsec} and assume that $Z$ is a logarithmically regular logarithmic DM stack and $S\to Z$ is a morphism such that the composition $f\:X\to Z$ is logarithmically regular. Then,

(i) The morphisms $\cD_S(X)\to Z$ and $\cD_S(X)_{\cs/S}\to Z$ are logarithmically regular too.

(ii) If $Z'\to Z$ is a morphism of logarithmically regular logarithmic DM stacks, $S'=S\times_ZZ'$ and $X'=X\times_ZZ'$, then $\cD_{S'}(X')=\cD_S(X)\times_ZZ'$ and $\cD_{S'}(X')_{\cs/S'}=\cD_S(X)_{\cs/S}\times_ZZ'$.
\end{theorem}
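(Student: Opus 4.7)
The plan is to reduce both claims to the absolute destackification via an étale presentation of $S$, and then exploit functoriality of the absolute destackification from \cite[Theorem~4.1.5]{ATW-destackification} with respect to logarithmically regular morphisms. Throughout, we fix an étale presentation $S_0\to S$ and set $X_0=X\times_SS_0$; by construction in \S\ref{etdessec}, $\cD_S(X)\times_SS_0=\cD(X_0)$ and $\cD_S(X)_{\cs/S}\times_SS_0=\cD(X_0)_{\cs}$, so every property we seek can be checked after this strict étale base change.

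For claim (i), the key observation is that the absolute destackification morphisms $\cD(X_0)\to X_0$ and $\cD(X_0)_{\cs}\to X_0$ are \emph{logarithmically smooth}, being obtained as compositions of smooth stacky blow-ups along toroidal centers and root stacks (see \cite[Theorem~4.1.5]{ATW-destackification}). Since $X\to Z$ is logarithmically regular by hypothesis and $S_0\to S$ is strict étale, the composition $X_0\to X\to Z$ is logarithmically regular by Lemma~\ref{logreglem}(ii); hence so are $\cD(X_0)\to X_0\to Z$ and $\cD(X_0)_{\cs}\to X_0\to Z$, again by Lemma~\ref{logreglem}(ii). Now $\cD(X_0)\to\cD_S(X)$ and $\cD(X_0)_{\cs}\to\cD_S(X)_{\cs/S}$ are strict étale and surjective (coming from the presentation), so logarithmic regularity descends by Lemma~\ref{logreglem}(i), giving claim (i).

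For claim (ii), choose compatible étale presentations $S_0\to S$ and $S'_0:=S_0\times_Z Z'\to S'$, and set $X'_0=X_0\times_Z Z'$. Then $X'_0\to X_0$ is the base change of the logarithmically regular morphism $Z'\to Z$, hence logarithmically regular. I claim that the absolute destackification is compatible with this base change, namely $\cD(X'_0)=\cD(X_0)\times_{X_0}X'_0$ and the same identity for coarse spaces. Granting this, passing to the quotient stack presentations as in \S\ref{etdessec} yields both identifications $\cD_{S'}(X')=\cD_S(X)\times_Z Z'$ and $\cD_{S'}(X')_{\cs/S'}=\cD_S(X)_{\cs/S}\times_Z Z'$, proving (ii). Part (v) of Theorem~\ref{destackth} already gives this compatibility for logarithmically smooth surjective inert morphisms; to upgrade to logarithmically regular $X'_0\to X_0$, we observe that the construction of $\cD$ depends only on the stratification by monodromy ranks and on the fs monoid structure $\oM_\ox$ at each geometric point, both of which are preserved \emph{stalkwise} under logarithmically regular morphisms (the stalks of $\oM$ agree and the inertia pulls back by Theorem~\ref{kummerblowth}(vi)-style triviality on monoids).

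The main obstacle is precisely this upgrading of functoriality from logarithmically smooth to logarithmically regular morphisms, which is needed because a base change $Z'\to Z$ between logarithmically regular stacks is generally not logarithmically smooth (for example, when $Z'\to Z$ is a completion). Opening the box on $\cD$ means checking that at every step of the destackification algorithm, the chosen centers and root-stack weights are determined by invariants which are compatible with logarithmically regular base change; since these invariants are read off from the monoids $\oM$, the stabilizers, and the local toric combinatorics, and since all of these are insensitive to regular extensions of the residue fields and to completion of the local rings, the algorithm commutes with pullback along any logarithmically regular morphism. This is a direct analogue of the classical fact in \cite{Temkin-qe} that Hironaka's desingularization algorithm commutes with regular morphisms, and it amounts to a routine but careful verification throughout the steps of \cite[Section~4]{ATW-destackification}.
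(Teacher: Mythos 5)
Your reduction to the absolute case via an étale presentation of $S$ and strict étale descent (Lemma~\ref{logreglem}(i)) is the same first step the paper takes, but the core of your argument for (i) rests on a false claim: that $\cD(X_0)\to X_0$ and $\cD(X_0)_{\cs}\to X_0$ are logarithmically smooth. The destackification is a composition of blow ups along \emph{torific} ideals, and these are not monomial with respect to the logarithmic structure of $X_0$. For instance, for $\mu_n$ acting on $\AA^2=\Spec(k[x,y])$ with weights $(1,1)$ and trivial logarithmic structure, the stack $[\AA^2/\mu_n]$ satisfies the hypotheses of \S\ref{appassumsec} and the torific ideal is $(x,y)$; the blow up of the origin, even after enlarging the logarithmic structure by the exceptional divisor, is not logarithmically smooth over $\AA^2$ with its trivial logarithmic structure (the relative logarithmic differentials jump along the exceptional divisor). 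If your claim were true, the whole theorem would follow formally from Theorem~\ref{destackth} together with Lemma~\ref{logreglem}(ii), and there would be no need to ``open the box'' — which the appendix explicitly says is unavoidable. The actual content of the theorem is a transversality statement: after reducing (as the paper does) to a local model $X=[\Spec(A)/G]$ with $G$ acting $Z$-equivariantly, one chooses $G$-equivariant parameters and observes that the regular parameters $t$ pulled back from $Z$ are $G$-invariant, hence have weight zero and do not enter the torific ideals; the explicit formulas of \cite[Lemma~4.5.4]{AT1} then show that the torification is logarithmically regular \emph{over $Z_0$}, not merely absolutely. Your proposal never establishes this transversality, and no formal argument can, since it is a property of the specific centers chosen by the algorithm.

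For (ii) there is a second, independent problem: you read the hypothesis as saying that $Z'\to Z$ is a logarithmically regular \emph{morphism}, and conclude that $X'_0\to X_0$ is logarithmically regular. The theorem only assumes that $Z'$ and $Z$ are logarithmically regular \emph{stacks}; the morphism $Z'\to Z$ is arbitrary (this generality is what the application to Theorem~\ref{Proj:schemedesing} requires, where $B'\to B$ can be any base change). So $X'_0\to X_0$ need not be logarithmically regular, and your proposed upgrade of Theorem~\ref{destackth}(v) from logarithmically smooth to logarithmically regular test morphisms — which is in any case only sketched — does not address the statement being proved. The paper instead obtains (ii) by rerunning the local analysis of (i) and checking that each ingredient (equivariant charts, parameters, torific ideals in the model $\Spec(\QQ[Q][s,t])$) is compatible with arbitrary base change in the $Z$-direction.
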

\begin{proof}
For shortness we will only check (i). Tracking the same steps one can also see that all ingredients are compatible with the base change $Z'\to Z$, obtaining (ii).

Logarithmic regularity can be checked \'etale locally on the source, hence by Theorem~\ref{destackth}(ii) the claim is \'etale-local on $S$ and we can assume that $S$ is a scheme. This reduces the claim to the absolute case: if $Z$ and $X\to Z$ are logarithmically regular and the map of inertias $I_X\to I_Z$ is trivial (factors through the unit $Z\into I_Z$), then $\cD(X)$ and $\cD(X)_\cs$ are logarithmically regular over $Z$. In the same way, one checks that the problem is \'etale local on $Z$, and hence one can assume that $Z=\Spec(O)$ is affine. Finally, destackification is compatible with inert \'etale morphisms $X'\to X$, and any geometric point $x\to X$ factors through an inert morphism $X'\to X$ whose source is a global $X'=[\Spec(A)/G_x]$. Therefore, we can assume that $X=[\Spec(A)/G]$, where $G$ is an \'etale diagonalizable group acting $Z$-equivariantly, and we can work locally at a $G$-invariant point $x\in X$. In this case, $\cD(X)\to X$ pulls back to the torification $\cT(W)\to W$ of the $G$-action on $W$ (see \cite[\S4.2.1]{ATW-destackification}), and our problem reduces to showing that $\cT(W)$ and $\cT(W)/G$ are logarithmically regular over $Z$.

Recall that torification of balanced actions were constructed in \cite[Theorem~4.6.3 and 5.4.2]{AT1}. To complete the proof we should show that it possesses the relative logarithmic regularity property over $Z$. We assume a familiarity with that paper and briefly indicate the argument.

Since $G$ is a finite group scheme, the localization at $x$ is equivariant, and it suffices to consider the case $X$ and $Z$ are local schemes. Furthermore, by \'etale localization we can assume that both logarithmic structures are Zariski. Fix a sharp chart $Z\to\bfA_P$. By \S\ref{sharpfactorsec} $f$ factors through the sharp factorization $\tilf\:X\to \tilZ=Z_P[\tilP]$. Clearly, $\tilf$ is $G$-equivariant with respect to the trivial action on $\tilZ$. Since, $\tilZ\to Z$ is logarithmically \'etale, it suffices to prove the claim for the morphism $\tilf$. So, in the sequel we also assume that $f$ is sharp.

It is easy to see that $Z\to\bfA_P$ can be extended to a sharp $G$-equivariant chart $Y\to\bfA_Q$. (One can show this directly, or use the first part of the proof of \cite[Proposition~1.2]{Illusie-Temkin}.) The logarithmic fiber $S$ at $x$ is $G$-equivariant, hence we can choose a family of equivariant parameters $s=(s_1\.s_n)$. (The action of $G$ corresponds to a grading, so equivariance simply means that we choose homogeneous parameters that lift a $G$-equivariant basis of the cotangent space.) Also, fix a family of regular parameters $t=(t_1\.t_n)$ at $f(z)$, obtaining a regular morpihsm $Z\to Z_0=\Spec(\QQ[P][t])$. Then $(s,t)$ is the family of regular parameters of $X$ at $x$ and $h\:X\to X_0=\Spec(\QQ[Q][s,t])$ is an inertia preserving regular morphism, where the right hand side is provided with the natural action via the gradings on $Q$ and $s$.
By \cite[Theorem~4.6.3]{AT1}, $\cT(X)=\cT(X_0)\times_{X_0}X$, and this reduces us to the model case claim that $\cT(X_0)$ and $\cT(X_0)/G$ are logarithmically regular over $Z_0$. The explicit computation of $\cT(X_0)$ is given in the proof of \cite[Lemma~4.5.4]{AT1}, and a direct inspection shows that not only $\cT(X_0)$ and $\cT(X_0)/G$ are logarithmically regular, they are also logarithmically regular over $Z_0$. (The parameters $t$ are sort of dummy ones -- they are $G$-invariant, hence do not show up in the formulas for torific ideals and the torification blow up.)
\end{proof}

\begin{remark}
In fact, blowing up the same torific ideal as defined in \cite{AT1} in the absolute case, one obtains a torification for logarithmically regular morphisms $X\to Z$ with an arbitrary logarithmic scheme $Z$, and it is compatible with arbitrary base changes $Z'\to Z$. The latter would then extend to destackification of logarithmically regular stacks over an arbitrary $Z$. However, proving this would require to repeat too many arguments from \cite{AT1} and \cite{ATW-destackification}, and we decided to only consider the case when $Z$ is logarithmically regular, and hence $X$ is logarithmically regular and the absolute torification and destackification has been already defined in the cited papers.
\end{remark}

\section{Regular morphisms}\label{regularapp}

\subsection{The definition}
Recall that a morphism $f\:Y\to Z$ of noetherian schemes is {\em regular} if it is flat and has geometrically regular fibers. This notion is smooth local on $Y$ and fppf local on $Z$, in particular, it extends to morphisms between algebraic stacks. If $f$ is of finite type, then $f$ is regular if and only if it is smooth. Thus, regularity is a natural extension of smoothness to arbitrary morphisms. In fact, the famous theorem of Popescu states that any regular morphism is a filtered limit of smooth ones.

\subsection{Parameters}
Assume that $f\:Y\to Z$ is a regular morphism of schemes, $y\in Y$ is a point, and $S=Y\times_Z\Spec(k(z))$ is the fiber over $z=f(y)$. By a {\em family of regular parameters} of $f$ at $y$ we mean any family $t_1\.t_n\in\cO_y$ whose image is a family of regular parameters of the regular ring $\cO_{S,y}$.

\begin{lemma}\label{regularlem}
Assume that $f\:Y\to Z$ is a morphism of qe stacks of characteristic zero such that $Y$ is a scheme and $f$ is regular at a point $y$.

(i) Let $t_1\.t_l$ be global functions on $Y$. Then the morphism $Y\to Z\times\AA^l$ induced by $f$ and $t_1\.t_l$ is regular at $y$ if and only if $t_1\.t_l$ is a partial family of regular parameters of $f$ at $y$.

(ii) A closed subscheme $X\into Y$ is regular at $y$ over $Z$ if and only if it is given by the vanishing of a partial family $t_1\.t_l$ of regular parameters at $f$ at $y$.
\end{lemma}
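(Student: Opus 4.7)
The plan is to reduce both parts to classical commutative algebra on the regular local ring $R := \cO_{S,y}$, the local ring of the fiber $S = f^{-1}(z)$ at $y$, using the hypothesis that $f$ is regular (hence flat) at $y$. Note that once flatness is in place, regularity of the fibers in characteristic zero is automatically geometric, so the only non-trivial conditions to track are flatness and regularity of the closed fiber.

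For part (i), set $g\: Y \to Z \times \AA^l$ to be the morphism determined by $f$ and $(t_1,\dots,t_l)$, and let $a = (t_1(y),\dots,t_l(y))$. The fiber of $g$ at $g(y)$ is $\Spec\bigl(R/(t_1-a_1,\dots,t_l-a_l)R\bigr)$. I would apply the local criterion for flatness along the smooth projection $Z \times \AA^l \to Z$: given that $f$ is already flat at $y$, the morphism $g$ is flat at $y$ if and only if $(t_1-a_1,\dots,t_l-a_l)$ is a regular sequence on $R$. In the regular local ring $R$, a sequence of elements is regular iff their images in $m_R/m_R^2$ are linearly independent, iff they form part of a regular system of parameters, iff the quotient $R/(t_1-a_1,\dots,t_l-a_l)R$ is again a regular local ring of dimension $\dim R - l$. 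These equivalent conditions are precisely the statement that $t_1,\dots,t_l$ is a partial family of regular parameters of $f$ at $y$, which proves (i).

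For part (ii), the ``if'' direction is immediate from (i): if $X = V(t_1,\dots,t_l)$ for a partial family of regular parameters, then $X$ is the fiber of the regular morphism $g$ over the closed subscheme $Z \times \{0\}$; hence $X \to Z$ is the base change of $g$ along $Z \hookrightarrow Z \times \AA^l$, and regularity is preserved by arbitrary base change. For the converse, let $I \subseteq \cO_{Y,y}$ be the ideal of $X$ and set $\bar I = I \cdot R \subseteq R$. Since $\cO_{X,y} = \cO_{Y,y}/I$ is flat over $\cO_{Z,z}$, the short exact sequence $0 \to I \to \cO_{Y,y} \to \cO_{X,y} \to 0$ remains exact after tensoring with $k(z)$, so $I/m_z I \hookrightarrow R$ is injective with image $\bar I$. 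Regularity of $X \to Z$ at $y$ gives that $R/\bar I$ is a regular local ring, and by the standard fact about regular local rings this forces $\bar I$ to be generated by elements $\bar t_1,\dots,\bar t_l$ forming part of a regular system of parameters of $R$. Lift these to $t_1,\dots,t_l \in I$; then they generate $I/m_y I$ (since generation of $\bar I/\bar m_y \bar I$ over $k(y)$ lifts through the chain $I/m_z I \to I/m_y I$), so by Nakayama's lemma they generate $I$. By construction they form a partial family of regular parameters at $y$, completing the proof.

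The main subtlety is the interplay between flatness, the local criterion of flatness, and Nakayama; none of the steps is deep, but one must be careful to distinguish $m_z$-adic and $m_y$-adic reductions and to use injectivity of $I/m_z I \hookrightarrow R$ supplied by the flatness hypothesis.
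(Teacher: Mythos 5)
Your overall strategy for (i) — reduce to the fiber via the fibrewise flatness criterion and then argue inside the regular local ring $R=\cO_{S,y}$ — is sound, but the chain of equivalences you assert in the middle contains a false step, and it is exactly the step where the fiber-regularity half of the definition of a regular morphism gets silently dropped. In a regular local ring it is \emph{not} true that a sequence is a regular sequence if and only if its images are linearly independent in $m_R/m_R^2$ (equivalently, form part of a regular system of parameters): already $t_1=x^2$ in $R=k\llbracket x,y\rrbracket$ is a regular sequence of length one whose image in $m_R/m_R^2$ vanishes. Geometrically, for $Y=\AA^2_k\to Z=\Spec(k)$ and $t_1=x^2$ the induced map $g\:Y\to Z\times\AA^1$ is flat at the origin but not regular there (the fiber $k[x,y]/(x^2)$ is not reduced), and $x^2$ is not a partial family of regular parameters — so your argument, which deduces the parameter condition from flatness of $g$ alone, proves too much. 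The repair is short but must be made: regularity of $g$ at $y$ means flatness \emph{and} (geometric) regularity of the fiber of $g$ at $y$; the flatness criterion gives that $(t_1-a_1,\dots,t_l-a_l)$ is an $R$-regular sequence (hence $\dim R/(t-a)=\dim R-l$), and it is the additional requirement that $R/(t_1-a_1,\dots,t_l-a_l)$ be regular which upgrades this to ``part of a regular system of parameters''; conversely a partial regular system of parameters gives both halves. Two smaller points: your identification of the fiber of $g$ with $\Spec\bigl(R/(t-a)\bigr)$ needs the coordinates $a_i=t_i(y)$ to lie in $k(z)$ (otherwise the fiber sits over a non-rational point of $\AA^l_{k(z)}$ and is not a quotient of $R$), and the final sentence concludes the parameter property for the $t_i$ while the argument concerns $t_i-a_i$; both issues disappear if you reduce to, or assume, $t_i(y)=0$, which is the situation in which the lemma is applied (and which the paper's own proof also assumes implicitly when it writes the completed target as $A\llbracket t_1\.t_l\rrbracket$).

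For comparison, the paper argues quite differently: it reduces to complete local rings, using quasi-excellence to pass regularity through completion, and then invokes the characteristic-zero structure theorem for regular homomorphisms of complete local rings ($C\cong B\wtimes_k l\llbracket s\rrbracket$ with coefficient fields), from which both directions of (i) are read off by completing a partial family of parameters. Your route, once corrected as above, is more elementary and avoids completions, quasi-excellence and coefficient fields (characteristic zero entering only through ``regular fibers are geometrically regular''), which is a genuine advantage. Your treatment of (ii) — the ``if'' direction by base change of $g$ along $Z\times\{0\}\into Z\times\AA^l$, and the converse via flatness of $\cO_{X,y}$ over $\cO_{Z,z}$ to get $I/m_zI\into R$, the standard fact that a regular quotient of a regular local ring is cut out by part of a regular system of parameters, and Nakayama to lift generators — is correct and in fact more detailed than the paper, which only says (ii) is proved similarly.
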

\begin{proof}
We will prove only (i), since the other claim is proved similarly. The claim is flat-local on the base and local at $y$, hence we can assume that the schemes are local with closed points $y$ and $z=f(y)$.

Recall that a homomorphism $\phi\:A\to B$ of qe local rings is regular if and only if its completion $\hatphi\:\hatA\to\hatB$ is regular. Indeed, the completion $A\to\hatA$ is regular, so if $\hatphi$ is regular, then $A\to\hatB$ is regular. The latter is the composition of $\phi$ and the regular homomorphism $B\to\hatB$, hence in this case $\phi$ is regular. Conversely, if $\phi$ is regular, then $\hatphi$ is regular by \cite[Corollary~2.4.5]{Temkin-qe}.

Set $C=\hatcO_y$ and $A=\hatcO_z$. By the above paragraph the homomorphism $\psi\:A\to C$ is regular, and we should prove that the homomorphism $\phi\:B=A\llbracket t_1\.t_l\rrbracket\to C$ is regular if and only if $t_1\.t_l$ is a partial family of regular parameters.

Recall that $\phi$ is regular if and only if there exists an isomorphism of $B$-algebras $B\wtimes_kl\llbracket s_1\.s_m\rrbracket\toisom C$, where $k$ and $l$ are compatible fields of definition (e.g., see \cite[Remark~2.2.12]{ATLuna})). Moreover, one can take $s_1\.s_m$ to be any family of regular parameters of $\phi$. If $\phi$ is regular, then choosing such an $s$ we obtain that $(t,s)$ is a family of regular parameters of $\psi$, and hence $t$ is a partial family. Conversely, if $t$ is a partial family, complete it to a full family $(t,s)$ obtaining an isomorphism $A\wtimes_kl\llbracket s,t\rrbracket\toisom C$. Hence $B\wtimes_kl\llbracket s\rrbracket\toisom C$, and $\phi$ is regular.
\end{proof}

\begin{remark}
(i) The same argument applies in any characteristic when the extension $k(y)/k(z)$ is separable (that is, $y$ is a simple regular point). The assertion is false when the extension is inseparable.

(ii) The lemma holds for noetherian schemes without the quasi-excellence assumption, but proving this would require more work. (For example, one could use the characterization of regular homomorphisms in terms of the cotangent complex.)
\end{remark}

\section{Relative Riemann-Zariski spaces}\label{RZappend}
In this appendix we work with arbitrary schemes. Our goal is to prove Theorem \ref{cofinalth}. Its claim is an easy corollary of the flattening theorem of Raynaud-Gruson in the case when $h$ is flat over $Z\setminus T$. However, in our application $h$ can be an arbitrary Kummer cover, so it does not have to be flat, and we have to find another argument. Our proof uses a technique of relative RZ spaces developed in \cite{Temkin-RZ}, which can also be used to prove the flattening theorem.

\subsection{The space $\RZ_U(Z)$}
Recall that to any qcqs (i.e. quasi-compact and quasi-separated) scheme $Z$ with a schematically dense open $U\into Z$ one associates the relative RZ space $\fZ=\RZ_U(Z)$, which is the limit of all $U$-modifications of $Z$ considered in the category of locally ringed spaces. By a strong version of Chow lemma (or by the flattening theorem), the family of $T$-supported blow ups is cofinal, where $T=Z\setminus U$, hence $\fZ$ is also the limit of all $T$-supported blow ups of $Z$.

\begin{lemma}\label{cofinallem}
With the above notation, a family of $U$-modifications $\{Z_i\}_i$ is cofinal if and only if the morphism of locally ringed spaces $p\:\RZ_U(Z)\to\lim_iZ_i$ is an isomorphism.
\end{lemma}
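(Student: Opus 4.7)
The forward implication is essentially formal. If $\{Z_i\}_i$ is cofinal in the filtered system of all $U$-modifications of $Z$, then its limit agrees with the limit over the whole system, both computed in the category of locally ringed spaces. Since $\RZ_U(Z)$ is by definition the latter limit, the canonical comparison morphism $p\:\RZ_U(Z)\to\lim_i Z_i$ is an isomorphism. This step requires only the general fact that passing to a cofinal subdiagram does not change a limit, which holds in any category with the relevant limits.

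For the converse, assume $p$ is an isomorphism and let $W\to Z$ be an arbitrary $U$-modification; we must produce some $i$ such that $Z_i\to Z$ factors through $W$. The canonical projection $\RZ_U(Z)\to W$, composed with $p^{-1}$, yields a morphism of locally ringed spaces $q\:\lim_i Z_i\to W$. The plan is to show that $q$ factors through one of the projections $\lim_i Z_i\to Z_j$ by a scheme morphism $Z_j\to W$, which will then give the required $Z$-dominance.

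The factorization argument is the crux. Reducing to the cofinal family of $T$-supported blow ups (which are qcqs over $Z$), one may replace the $Z_i$ by such blow ups and use that the transition morphisms are affine over any fixed $Z_i$, so that $\lim_i Z_i$ agrees on the level of locally ringed spaces with the pro-scheme they form. Then the standard cofiltered-limit principle for qcqs schemes (EGA IV$_3$, $\S$8) says that morphisms from such a limit into a qcqs scheme $W$ descend, possibly after enlarging $i$, to an actual scheme morphism $Z_i\to W$ over $Z$. Because $W\to Z$ is proper and both $Z_i\to Z$ and $Z_i\to W$ restrict to isomorphisms over $U$, this factorization is automatically a $Z$-morphism of $U$-modifications, proving that $\{Z_i\}_i$ is cofinal.

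The main obstacle is the final step above: justifying that a morphism of locally ringed spaces from $\lim_i Z_i$ to the qcqs scheme $W$ actually descends to some finite stage. One must be careful that $\RZ_U(Z)$ is \emph{a priori} only a locally ringed space, not a scheme, so the cited descent principle needs to be applied to the pro-system $\{Z_i\}$ itself (where it is standard) rather than to $\RZ_U(Z)$ directly. Once the factorization is produced at a scheme level, the rest of the argument is formal.
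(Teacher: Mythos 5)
Your forward direction is fine and matches the paper, which dismisses it as not needing proof. The converse, however, has a genuine gap at exactly the step you flag as the crux. You propose to apply the EGA IV$_3$, \S 8 limit formalism to the pro-system $\{Z_i\}$ itself, justified by the claim that ``the transition morphisms are affine over any fixed $Z_i$.'' This is false: the transition morphisms between $T$-supported blow ups are blow ups, hence projective and proper but not affine (an exceptional divisor is projective, not affine, over its center). Consequently $\lim_i Z_i$ is not a scheme, and the descent principle for morphisms out of cofiltered limits --- which requires affine transition morphisms --- does not apply to the global system. This is precisely the difficulty the lemma is designed to handle, so it cannot be discharged by citation.

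The paper's proof circumvents this by localizing at a point $\fz\in\RZ_U(Z)$: there $\Spec(\cO_\fz)=\lim_i\Spec(\cO_{z_i})$ is a cofiltered limit of \emph{affine} schemes, where the limit formalism does apply. Two further ingredients are then needed that your proposal omits. First, since $Z$ is only qcqs (the appendix works with arbitrary schemes, not noetherian ones), the modification $Z'$ need not be of finite presentation over $Z$, so EGA IV$_3$, 8.14.2 cannot be invoked directly; the paper first embeds $Z'$ into a finitely presented $Z$-scheme $\widetilde Z'$ (Rydh) and, by passing to a finitely generated subideal, arranges that $Z'$ is the schematic closure of $U$ in $\widetilde Z'$, so that a factorization through $\widetilde Z'$ automatically lands in $Z'$. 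Second, the resulting factorizations are only local, through open subschemes $Z_{ij}\subseteq Z_i$, and must be glued: this uses quasi-compactness of $\RZ_U(Z)$ (a cited, nontrivial fact) together with uniqueness of the factorization, which follows from schematic density of $U$. Your remark that the factorization is ``automatically a $Z$-morphism'' is the germ of that uniqueness argument, but as written your proof never produces the factorization in the first place.
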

\begin{proof}
Only the opposite direction needs a proof. So, assume that $p$ is an isomorphism and for a given $U$-modification $g\:Z'\to Z$ let us find a finer $U$-modification of the form $Z_i\to Z$. Fix for a while a point $\fz\in\fZ=\RZ_U(Z)$ and let $Z\in Z$, $z'\in Z'$ and $z_i\in Z_i$ be its images. Then $\fZ_\fz=\Spec(\cO_\fz)$ coincides with $\lim_i\Spec(\cO_{z_i})=\lim_{ij}Z_{ij}$, where $\{Z_{ij}\}_j$ is the family of open neighborhoods of $z_i$. We claim that the morphism $\fZ_\fz\to Z'$ factors through some $Z_{ij}$.

By \cite[Theorem~D(i)]{Rydh} there exists a closed immersion $Z'\into\tilZ'$ with $\tilZ'$ of finite presentation over $Z$. (For our argument, we could also replace $Z'$ by a small enough affine neighborhood of $z'$, and then existence of such $\tilZ'$ is obvious.) Let $Z'$ be given by $\cI\subset\cO_{\tilZ'}$. This ideal is finitely generated on the open subscheme $\tilU'=U\times_Z\tilZ'$ because $U\into Z$ is finitely presented. By \cite[Lemma~6.9.10.1]{egaI} there exists a finitely generated ideal $\cI'\subseteq\cI$ such that $\cI'|_U=\cI|_U$. Replacing $Z'$ by $V(\cI')$ we can also achieve that $\tilU'=U$, and then $U$ is an open subscheme of $\tilZ'$ whose schematic closure is $Z'$. By \cite[${\rm IV}_3$, Theorem~8.14.2]{ega}, $\fZ_\fz\to\tilZ'$ factors through some $Z_{ij}$. Since $U\times_ZZ_{ij}$ is schematically dense in $Z_{ij}$ and is mapped to $U\subseteq\tilZ'$, the morphism $Z_{ij}\to\tilZ'$ factors through $Z'$.

Now, for any $\fz\in\fZ$ fix an appropriate $Z_{ij}$ that we denote $Z_{\fz}\subseteq Z_{i(\fz)}$. Their preimages form an open covering of $\fZ$, and since the latter is quasi-compact by \cite[Proposition~3.3.1]{Temkin-stable}, there exist $\fz_1\.\fz_n$ such that the preimages of $Z_{\fz_1}\.Z_{\fz_n}$ cover $\fZ$. Taking $i$ which dominates $i(\fz_1)\.i(\fz_n)$ and replacing $Z_{\fz_j}$ by their preimages in $Z_i$ we can assume that $i=i_1=\ldots=i_n$. The map $\fZ\to Z_i$ is surjective (for example, this follows from the valuative description of $\fZ=\RZ_U(Z_i)$ below), hence $Z_i=\cup_{j=1}^nZ_{\fz_j}$. By the construction the morphisms $Z_{\fz_j}\to Z$ factor through $Z'$, and they agree on the intersections because all maps are uniquely determined by their behaviour on the preimage of $U$. This proves that $Z_i\to Z$ factors through $Z'$, as claimed.
\end{proof}

\subsection{The space $\Val_U(Z)$}
Our next goal is to describe the RZ spaces explicitly in terms of semivaluations.

\subsubsection{Recollections}
The set $\fZ=\RZ_U(Z)$ is described in \cite[Proposition 2.2.1 and Corollary 3.4.7]{Temkin-RZ} by constructing an explicit locally ringed space $\Val_U(Z)$ with a homeomorphism $\Val_U(Z)\toisom\fZ$. Points of $\Val_U(Z)$ are minimal semivaluations on $Z$ with kernel on $U$, which can also be described as follows: a point is a pair $u\in U$, $\psi\:\Spec(R)\to Z$, where $R$ is a valuation ring of $k(u)$ and $\psi$ extends the morphism $u\to Z$, such that $u\to U$ cannot be extended to an open subscheme of $\Spec(R)$ containing at least two points. In particular, $u$ is uniquely determined by $\fz$ (and this gives a retraction $\fZ\to U$, which sends each to point to its minimal generization contained in $U$). In addition, $\cO_\fz$ is the preimage of $R$ under $\cO_u\onto k(u)$.

\subsubsection{Semivaluation rings}
The ring $\cO_\fZ$ composed from $\cO_u$ and $R$ is an example of the notion of a {\em semivaluation ring $O$ with a semifraction ring $A$} introduced in \cite[\S2.1]{Temkin-RZ}: it consists of a local ring $(A,m)$ and a subring $O\subseteq A$ such that $m\subseteq O$ and $R=O/m$ is a valuation ring with fraction field $A/m$. Equivalently, $O$ is the preimage of a valuation ring $R$ of $A/m$.

\begin{remark}
(i) The diagram formed by $A,O,R$ and $A/m$ is bicartesian, and the same is true for the dual diagram of affine schemes. In a sense, $\Spec(O)$ is obtained by gluing $\Spec(A)$ and $\Spec(R)$ along the point $\Spec(A/m)$, which is closed in $\Spec(A)$ and generic in $\Spec(R)$.

(ii) The notion is chosen due to the fact that this datum induces a semiavaluation on $A$ with kernel $m$, and any semivaluation gives rise to an appropriate semivaluation ring.
\end{remark}

The following result generalizes the classical fact that if $A$ is a valuation ring and $B$ is a local normal $A$-algebra with $\Frac(B)/\Frac(A)$ algebraic, then $B$ is a valuation ring.

\begin{lemma}\label{semivallem}
Assume that $O$ is a semivaluation ring with semifraction ring $(A,m)$ and $O'$ a normal local $O$-algebra such that the fiber over $m$ is non-empty and for any prime of $O'$ over $m$ the extension of residue fields is algebraic. Then $O'$ is a semivaluation ring with semifraction field $A'=O'\otimes_OA$.
\end{lemma}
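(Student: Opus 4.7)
The plan is to identify $A'$ as the localization of $O'$ at a canonical prime lying over $m$, and then to verify directly the three conditions defining a semivaluation ring. First observe that $A = O_m$: any $s \in O \setminus m$ has nonzero image in $R = O/m$, hence is a unit in $A/m = \Frac(R)$, and since $A$ is local with maximal ideal $m$, the element $s$ is already a unit of $A$. Therefore $A' = O' \otimes_O A$ coincides with the localization $O'[(O \setminus m)^{-1}]$.

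Choose any prime $\mathfrak{p}$ of $O'$ lying over $m$; one exists by the nonempty-fiber hypothesis. The plan is to establish the following three statements: (a) $\mathfrak{p}$ is the unique such prime, so $A' = O'_\mathfrak{p}$ is local with maximal ideal $m' := \mathfrak{p} A'$; (b) $m' = \mathfrak{p}$ as subsets of $A'$, giving the inclusion $m' \subseteq O'$; and (c) the quotient $R' := O'/\mathfrak{p}$ is a valuation ring of $A'/m' = k(\mathfrak{p})$.

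Statement (c) is the heart of the argument: $R'$ is a local domain dominating the valuation ring $R = O/m$, with $\Frac(R')/\Frac(R) = k(\mathfrak{p})/k(m)$ algebraic by hypothesis, so once one verifies normality of $R'$, the classical theorem of valuation theory---that a normal local domain dominating a valuation ring in an algebraic extension of fraction fields is itself a valuation ring---applies. Once (c) is known, statement (b) follows because for $p \in \mathfrak{p}$ and $s \in O' \setminus \mathfrak{p}$ the element $p/s \in A'$ reduces to $0$ in the valuation ring $R'$, and the valuation-ring structure forces $p/s \in \mathfrak{p} \subseteq O'$. Statement (a) follows from the uniqueness of a valuation ring of an algebraic extension dominating a given valuation ring, transferred to the level of $A'$ via the same mechanism.

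The main obstacle is establishing that the fiber $O' \otimes_O k(m)$ is a field, which simultaneously yields uniqueness of $\mathfrak{p}$ and the normality input needed for $R' = O'/\mathfrak{p}$. Every prime of this fiber has algebraic residue extension over $k(m)$ by hypothesis, which forces the fiber to be zero-dimensional; one must then use the normality of $O'$ and the locality of both $O$ and $O'$ to rule out multiple components and nilpotents in the fiber. This step ties together the three ingredients---normality of $O'$, algebraicity of residue extensions at $m$, and locality---and is what allows the classical valuation-theoretic proof to carry over to the semivaluation setting.
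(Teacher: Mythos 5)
Your skeleton matches the paper's proof: localize at $S=O\setminus m$ (so $A'=O'_S$), show that the quotient of $O'$ by the relevant prime over $m$ is a normal local domain, and finish with the classical fact that a normal local domain dominating a valuation ring with algebraic extension of fraction fields is a valuation ring. The trouble is that the one step carrying all the content is not proved, and the route you propose for it cannot work. The paper sets $m'=\sqrt{mO'}$ and proves that $R'=O'/m'$ is \emph{normal} by lifting a putative integral dependence relation $\oa^n+\sum_i\oc_i\oa^i\ob^{n-i}=0$ to $O'$, noting the error lies in $m'$, that $b$ is invertible in $O'_S$, and that $S^{-1}m'=m'$ (this is where normality of $O'$ and the fact that $m$ is an ideal of $A$ enter: $(x/s)^n\in mO'$ forces $x/s\in O'$), and then correcting $c_0$; primeness of $m'$, uniqueness of the prime over $m$, and locality of $A'$ are all consequences of this. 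You instead propose to prove that the fiber ring $O'\otimes_Ok(m)$ is a field, ``ruling out nilpotents''. That statement is false under the hypotheses: take $A=O=\QQ[x]_{(x)}$ with $m=(x)$, $R=\QQ$ the (trivial) valuation ring of $A/m$, and $O'=\QQ[y]_{(y)}$ with $x\mapsto y^2$; all hypotheses hold, yet the fiber is $\QQ[y]/(y^2)$. (A similar example with $R=k[t]_{(t)}$ nontrivial is obtained by running the same picture over $k(t)$, with $O=k[t]_{(t)}+xk(t)[x]_{(x)}$ and $O'=k[t]_{(t)}+yk(t)[y]_{(y)}$.) Only the \emph{reduced} fiber is a point, which is exactly why the paper works with $\sqrt{mO'}$ rather than $mO'$, and why reducedness cannot be an input: irreducibility and uniqueness of $\mathfrak p$ come out of the normality of $R'$, not the other way around.

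Two of your three steps are also non sequiturs as justified. In (a), uniqueness of the prime $\mathfrak p$ over $m$ does not give $A'=O'_{\mathfrak p}$: $A'=O'_S$ inverts only the images of $O\setminus m$, and its maximal ideals correspond to primes of $O'$ maximal among those disjoint from $S$, which a priori need not lie over $m$ (there is no going-up, since $O\to O'$ is not integral). Locality of $A'$ requires $1+mA'\subseteq(A')^\times$, and the available argument again passes through $S^{-1}m'=m'\subseteq\mathfrak m_{O'}$, i.e.\ through the normality trick; it is an output of the paper's computation, not a formal consequence of the fiber being a single point. In (b), arguing that $p/s$ maps to $0$ in $R'$ and that ``the valuation-ring structure forces $p/s\in\mathfrak p\subseteq O'$'' presupposes that $O'$ is the preimage of $R'$ in $A'$, which is precisely the conclusion being proved; the correct argument is the integrality one above ($(p/s)^n\in mO'$, hence $p/s\in O'$ by normality). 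So the proposal identifies the right endgame, but the computation that actually proves the lemma is missing, and the intermediate claim meant to replace it is false in general.
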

\begin{proof}
Set $m'=\sqrt{mO'}$ and $S=O\setminus m$. Then $A=O_S$ and $m_S=m$, and hence $A'=O'_S$ and $m'=m'_S$. We claim that the reduced ring $R'=O'/m'$ is normal. If not, then there exists $\oa,\ob\in R'$ such that $\ob$ is regular, $\oa\notin\ob R'$ and $\oa^n+\sum_{i=0}^{n-1}\oc_i\oa^i\ob^{n-i}=0$ for some $n$ and $\oc_i\in R'$. Choosing arbitrary lifts $a,b,c_i\in O'$ we obtain that $x=a^n+\sum_{i=0}^{n-1}c_ia^ib^{n-i}\in m'$. Since $\ob$ is regular, $b$ is not contained in any prime ideal of $O'$ over $m$, and hence $b$ becomes a unit in $O'_S$. In particular, $b^{-n}x\in S^{-1}m'=m'$ and replacing $c_0$ by $c_0-b^{-n}x$ we achieve that $a^n+\sum_{i=0}^{n-1}c_ia^ib^{n-i}=0$. Thus $a/b\in O'$ by the normality of $O'$, and reducing modulo $m'$ we obtain that $\oa/\ob\in R'$. A contradiction.

Thus, $R'$ is a local normal ring, hence a domain. In particular, the ideal $m'$ is prime and it is the whole fiber over $m$. Furthermore, $R'$ contains the valuation ring $R$ and $\Frac(O'/m')$ is algebraic over $\Frac(O/m)$ by the assumptions. So, by the classical theory, $R'$ is a valuation ring. Finally, $A'=O'_S$ is a local ring with the maximal ideal $m'_S=m'$ and field of fractions $\Frac(R')$. So, $O'$ is a semivaluation ring with semifraction field $A'$.
\end{proof}

\subsection{A cofinality theorem}
Now we can prove the main result of the appendix. Let us call a morphism of schemes $V\to U$ {\em quasi-integral} if for any point $v\in V$ with $u=h(v)$ the extension $k(v)/k(u)$ is algebraic. This class of morphisms contains quasi-finite and integral ones, but is wider than compositions of open immersions and integral morphisms. For example, it also contains localizations.

\begin{theorem}\label{cofinalth}
Let $Z$ be a qcqs scheme with a schematically dense normal open subscheme $U$ and $T=Z\setminus U$. Assume that $h\:Y\to Z$ is a dominant morphism such that $V=U\times_ZY$ is normal, quasi-integral over $U$ and schematically dense in $Y$. Let $\{Z_i\}_i$ be the family of $T$-supported blow ups of $Z$, and for each $i$ let $Y_i$ be the schematic closure of $V$ in $Z_i\times_ZY$. Then the family of all finite $T$-modifications of the schemes $Y_i$ is cofinal in the family of all $T$-modifications of $Y$.
\end{theorem}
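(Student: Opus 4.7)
My plan is to prove the theorem by means of relative Riemann–Zariski spaces, following the pattern established by Lemma \ref{cofinallem}. Let $\{W_k\}_k$ denote the family of all finite $T$-modifications of the various $Y_i$, each viewed as a $T$-modification of $Y$ via the composition $W_k \to Y_i \to Y$. By Lemma \ref{cofinallem} applied to the pair $(Y, V)$, cofinality of $\{W_k\}$ inside the family of all $T$-modifications of $Y$ is equivalent to the natural morphism of locally ringed spaces
\[
p \colon \fY := \RZ_V(Y) \longrightarrow \lim_k W_k
\]
being an isomorphism, so that is what I aim to prove.

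The morphism $h$ induces $\tilh \colon \fY \to \fZ := \RZ_U(Z)$, and the classical case (already built into the setup above) gives $\fZ = \lim_i Z_i$. This yields a commutative square in which the right-hand map $\lim_k W_k \to \lim_i Z_i$ is induced by $W_k \to Y_i \to Z_i$. I would work fiber by fiber over $\fZ$. Unwinding the valuative description, a point $\fz \in \fZ$ is given by $u \in U$ together with a valuation ring $R \subseteq k(u)$, and $\cO_\fz$ is the semivaluation ring assembled from $\cO_u$ and $R$. A point $\fy \in \fY$ above $\fz$ is given by $v \in V$ with $h(v) = u$ together with an extension $R \subseteq R' \subseteq k(v)$ of valuation rings. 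On the other hand, a point of $\lim_k W_k$ over $\fz$ corresponds to a compatible system $(w_k)_k$, and its stalk is the filtered colimit of the $\cO_{W_k, w_k}$; each such ring is a finite normal extension of some $\cO_{Y_i, y_i}$ inside the appropriate function field of $V$. Because $V$ is normal and quasi-integral over $U$, the residue field extensions above $m_\fz$ in such a colimit are algebraic, and Lemma \ref{semivallem} identifies the colimit as a semivaluation ring of precisely the same shape as $\cO_\fy$.

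It remains to show bijectivity: every $\fy \in \fY$ is realized by some compatible system $(w_k)$. Given $\fy$ with data $(v, R')$, I would fix an index $i$ large enough that the specialization of $v$ through $R$ is centered at a specific point of $Z_i$, let $y_i \in Y_i$ be the corresponding point (using that $v \hookrightarrow V \hookrightarrow Y_i$), and exploit that $k(v)/k(u)$ is algebraic to write $R'$ as the filtered union of its finitely generated $R$-subalgebras. The integral closure of $\cO_{Y_i, y_i}$ in each such finite subextension is a finite $\cO_{Y_i, y_i}$-algebra; globalizing, it corresponds to a finite $T$-modification $W_k$ of $Y_i$ carrying a distinguished point $w_k$ above $y_i$, and the compatible limit of these local rings is $\cO_\fy$. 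The hard part, which I expect to be the main technical obstacle, is the globalization step: assembling the pointwise constructions of the finite extensions into bona fide finite $T$-modifications of the schemes $Y_i$, and verifying that the resulting map on stalks is an isomorphism (not merely on some Zariski-dense subset). This should follow from a compactness argument in the spirit of the end of the proof of Lemma \ref{cofinallem}, using the quasi-compactness of relative RZ spaces of \cite[Proposition~3.3.1]{Temkin-stable} together with the fact that normalization commutes with localization; combined with the pointwise analysis above, this yields that $p$ is an isomorphism of locally ringed spaces and hence the desired cofinality.
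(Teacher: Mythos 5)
Your opening moves coincide with the paper's: reduce via Lemma \ref{cofinallem} to showing that $\RZ_V(Y)\to\lim_k W_k$ is an isomorphism (the paper writes this limit as $\fY'=\lim_i\tilY_i$ with $\tilY_i=\lim_j Y_{ij}$), and then apply Lemma \ref{semivallem}, using normality of $V$ and quasi-integrality over $U$, to identify the stalks of the target as semivaluation rings whose semifraction rings are local rings of points of $V$. The gap is in how you finish. The assertion that ``the integral closure of $\cO_{Y_i,y_i}$ in each such finite subextension is a finite $\cO_{Y_i,y_i}$-algebra'' is unjustified: the theorem assumes only that $Z$ is qcqs, with no excellence or Nagata hypothesis, and finiteness of integral closure in a finite extension can fail in that generality (Akizuki--Nagata type examples); avoiding any such finiteness is precisely why the RZ-space formalism is used. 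What is true, and what the paper actually relies on, is the weaker standard fact that the relative normalization of $Y_i$ in $V$ is the filtered union of its \emph{finite type} (hence finite, being integral) $\cO_{Y_i}$-subalgebras, so that $\lim_j Y_{ij}$ is the normalization of $Y_i$ in $\cO_V$ --- one never takes the full integral closure inside a finite field extension, and the globalization step you flag as ``the main technical obstacle'' then never arises, since no specific $W_k$ has to be constructed. (Also note $Y_i$ need not be integral, so ``the appropriate function field of $V$'' needs care, and the individual $W_k$ need not be normal, only their limit is.)

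Second, the bijectivity bookkeeping is incomplete. What has to be proved is that every point $\fy'$ of $\lim_k W_k$ has \emph{exactly one} preimage in $\fY$, with the same local ring. Your construction runs in the opposite direction (from a given $\fy\in\fY$ to a compatible system), and even if repaired it would neither show that an arbitrary compatible system arises this way nor that two distinct points of $\fY$ cannot map to the same one; saying the stalk is ``of precisely the same shape as $\cO_\fy$'' does not pin down the point of $V$ or the valuation. The paper closes this using the minimality built into the description of $\Val_V(Y)$: once $\cO_{\fy'}$ is identified as a semivaluation ring with semifraction ring $\cO_{v'}$ for some $v'\in V$, any point $\fy$ above $\fy'$ corresponds to a local homomorphism $\cO_{\fy'}\to\cO_\fy$ with $\cO_\fy$ built from some $v\in V$ related to $v'$ by specialization, and minimality of the semivaluation forces $v=v'$ and then $\cO_{\fy'}=\cO_\fy$ by locality. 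Some version of this uniqueness argument is indispensable; without it and without a correct replacement for the finiteness claim, the proposal does not yet yield the theorem.
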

\begin{proof}
If $\{Y_{ij}\}_j$ denotes the family of all finite $T$-modifications of $Y_i$, then $\tilY_i:=\lim_j Y_{ij}$ is normal (it corresponds to the integral closure of $\cO_{Y_i}$ in $\cO_V$). Set $\fZ=\RZ_U(Z)$, $\fY=\RZ_V(Y)$ and $\fY':=\lim_i\tilY_i$. By Lemma~\ref{cofinallem} it suffices to prove that the map $p\:\fY\to\fY'$ is an isomorphism. Choose any point $\fy'\in\fY'$ and let $\fz\in\fZ$, $y_i\in\tilY_i$ and $z_i\in Z_i$ be its images. Since $\cO_{\fy'}=\colim_i\cO_{y_i}$ is a filtered colimit of normal rings, it is a normal $\cO_\fz$-algebra. Set $\fZ_\fz=\Spec(\cO_\fz)$ and $\fY'_{\fy'}=\Spec(\cO_{\fy'})$. By the valuative description of RZ spaces, $\cO_\fz$ is a semivaluation ring with semifraction field $\cO_u$ for some $u\in U$ and $U_u=\Spec(\cO_u)$ is isomorphic to $\fZ_\fz\times_ZU$. Therefore $\fY'_{\fy'}\times_{\fZ_\fz}U_u$ is isomorphic to $\lim_i\Spec(\cO_{y_i})\times_ZU$. It follows that $\fY'_{\fy'}\times_{\fZ_\fz}U_u$ is the limit of localizations of $V$, in particular, the map $\fY'_{\fy'}\times_{\fZ_\fz}U_u\to U_u$ is quasi-integral, and the fiber over $u$ is quasi-integral. Thus, Lemma~\ref{semivallem} applies to the extension $\cO_\fz\subset\cO_{\fy'}$ and we obtain that $\cO_{\fy'}$ is a semivaluation ring with semifraction ring $\cO_{\fy'}\otimes_{\cO_\fz}\cO_u$. Spectrum of the latter is a limit of open subschemes of $V$, hence it is of the form $\Spec(\cO_{v'})$ for a point $v'\in V$.

By the valuative description of $\fY$, giving a point $\fy$ above $\fy'$ is equivalent to giving a local homomorphism $\cO_{\fy'}\to\cO_\fy$, where the target is a semivaluation ring composed from its semifraction ring $\cO_v$ and a valuation ring $R\subseteq k(v)$, where $v\in V$ and $\cO_{v'}\to\cO_v$ is a specialization. Since the morphism $u\to Y$ cannot be extended to a non-trivial part of $\Spec(R)$, it follows easily that $v=v'$, and then by locality $\cO_{\fy'}=\cO_\fy$. This shows that the fiber over $\fy'$ in $\fY$ consists of a single point with the same local ring, and hence $\fY=\fY'$, as required.
\end{proof}

\bibliographystyle{amsalpha}
\bibliography{principalization}

\end{document}